\title{K-theoretic Hall algebras for quivers with potential}
\author{Tudor P\u adurariu}
\address{Department of Mathematics, Massachusetts Institute of Techonology, 
182 Memorial Drive, Cambridge, MA 02139}
\email{tpad@mit.edu}
\date{}
\jot \setlength{\topmargin}{0.1\topmargin}
\newtheorem{thm}{Theorem}[section]
\newtheorem{cor}[thm]{Corollary}
\newtheorem{prop}[thm]{Proposition}
\theoremstyle{definition}
\newtheorem{defn}[thm]{Definition}
\newtheorem{thm*}[thm]{Theorem$^*$}
\newtheorem{conjecture}[thm]{Conjecture}
\newcommand{\comment}[1]{}
\renewcommand{\leq}{\leqslant}
\renewcommand{\geq}{\geqslant}
\newcommand{\NN}{\overline{\mathbb{N}}}
\newcommand{\A}{\mathcal A}
\newcommand{\OO}{\mathcal O}
\newcommand{\WW}{\overline{\mathbb{W}}}
\newcommand{\X}{\mathcal{X}}
\newcommand{\Y}{\mathcal{Y}}
\newcommand{\MM}{\mathbb{M}}
\newcommand{\oM}{\overline{\mathbb{M}}}
\newcommand{\C}{\mathbb{C}}
\newcommand{\ee}{\underline{e}}
\newcommand{\dd}{\underline{d}}
\begin{document}
\maketitle
\setcounter{tocdepth}{1}
\tableofcontents

\section{Introduction}

 Given a quiver with potential $(Q,W)$, Kontsevich-Soibelman constructed a Hall algebra on the cohomology of the stack of representations of $(Q,W)$. As shown by Davison-Meinhardt, this algebra comes with a filtration whose associated graded algebra is supercommutative. A special case of this construction is related to work of Nakajima, Varagnolo, Maulik-Okounkov etc. about geometric constructions of Yangians and their representations; indeed, given a quiver $Q$, there exists an associated pair $(\widetilde{Q},\widetilde{W})$ for which the CoHA is conjecturally the positive half of the Yangian $Y_{\text{MO}}(\mathfrak{g}_Q)$. %For more general pairs $(Q,W)$, we get a broader collection of Yangian-type algebras.

 The goal of this article is to extend these ideas to K-theory.  More precisely, we construct a K-theoretic Hall algebra using category of singularities, define a filtration whose associated graded algebra is a deformation of a symmetric algebra, and compare the $\text{KHA}$ and the $\text{CoHA}$ using the Chern character. As before, we expect our construction for the special class of quivers $(\widetilde{Q},\widetilde{W})$ to recover the positive part of quantum affine algebra $U_q(\widehat{\mathfrak{g}_Q})$ defined by Okounkov-Smirnov, but for general pairs $(Q,W)$ we expect new phenomena.

\subsection{Quivers with potential.}
The Donaldson-Thomas invariants of a Calabi-Yau $3$-fold $Y$ are virtual counts of curves on $Y,$ and they can be defined using the geometry of 
the moduli stack of sheaves $\mathcal{M}(Y,\beta)$ of a given class $\beta\in H^{\text{even}}(Y,\mathbb{Z})$.
One can define DT invariants for other categories, such as the categories $\text{Rep}\,(Q,W)$ of representations of quivers with potential $(Q,W)$. These invariants are defined using the vanishing cycle sheaf $\varphi_{\text{Tr}\,(W)}\mathbb{Q}$ of the regular function 
$$\text{Tr}\,(W):\X(d)\to\mathbb{A}^1$$
on the stack $\X(d)$ of representation of $Q$ of a given dimension $d$.

For any Calabi-Yau $3$-fold $Y,$ the Hilbert-Chow morphism from the stack $\mathcal{M}(Y,\beta)$ to the coarse space $M(Y,\beta)$
$$\pi:\mathcal{M}(Y,\beta)\to M(Y,\beta)$$
can be locally described using the Hilbert-Chow morphism from the stack $\X(d)$ to the coarse space $X(d)$ of representations of $(Q,W)$:
$$\pi:\text{crit}\,(\text{Tr}\,(W))\subset \X(d) \to
\text{crit}\,(\text{Tr}\,(W))\subset X(d),$$ see \cite{j}, \cite{t}.
In some particular cases this description is global, such as for $Y=\C^3$ and $\beta\in\mathbb{N}$, where $Q_3$ is the quiver with one vertex and three loops $x,y,$ and $z$, and $W=xyz-xzy$. 
It is thus worthwhile to study the DT theory of these categories $\text{Rep}\,(Q,W)$, and try to generalize the constructions and results to the general case of a Calabi-Yau $3$-fold.
\\

\subsection{Cohomological Hall algebras.} 
Hall algebras originate in work of Steinitz \cite{ste}, Hall \cite{hall} who studied an algebra with basis isomorphism classes of abelian $p$-groups and multiplication encoding the possible extensions of two such groups. In general, a Hall algebra is an algebra with basis isomorphism classes of objects in a given category and with multiplication encoding extension of such objects.
Ringel \cite{rin} showed that the Hall algebra of the abelian category of representations of a quiver $Q$ associated to $\mathfrak{g}$ over a finite field recovers the positive part of the quantum groups $U_q(\mathfrak{g})$ for $\mathfrak{g}$ a simple Lie algebra. For a quiver with potential $(Q,W)$, Kontsevich-Soibelman \cite{ks} constructed a Hall algebra generated by vector spaces originating in Donaldson-Thomas theory that we now explain.
\\

For a quiver with potential $(Q,W)$, dimension vector $d\in \mathbb{N}^{I}$ and generic King stability condition $\theta\in \mathbb{Q}^{I}$, we denote by $\X^{ss}(d)\subset \X(d)$ the locus of $\theta$ semistable representations, and by $X^{ss}(d)$ the coarse space of $\X^{ss}(d)$. 
The CoHA, constructed by Kontsevich-Soibelman \cite{ks}, is an algebra with underlying $\mathbb{N}^{I}$-graded
vector space:
$$\text{CoHA}\,(Q, W):=\bigoplus_{d\in\mathbb{N}^{Q_0}}H^{\cdot}(\X^{ss}(d), \varphi_{\text{Tr}\,W}\mathbb{Q}),$$ where the multiplication $m=p_*q^*$ is defined using the stack $\X(d,e)$ parametrizing pairs of representations $0\subset A\subset B$ with $A$ of dimension $d$ and $B$ of dimension $d+e$ and the natural maps: 
$$\X^{ss}(d)\times \X^{ss}(e)\xleftarrow{q} \X^{ss}(d,e)\xrightarrow{p} \X^{ss}(d+e).$$

The critical locus of $\text{Tr}(W):\X(d)\to\mathbb{A}^1$ over zero is $\X(Q,W,d)$, the moduli of representations of dimension $d$ of the Jacobi algebra $$\text{Jac}(Q,W)=\mathbb{C}\langle Q\rangle/\left(\frac{\partial W}{\partial e}, e\in E\right),$$ so the vector space $H^{\cdot}(\X(d),\varphi_{\text{Tr}\,W}\mathbb{Q})$ is the cohomology of the (usually singular) space $\X(Q,W,d)$ with coefficients in (a shift of) a perverse sheaf.
Using framed quivers, Davison-Meinhardt \cite{dm} and Soibelman \cite{s} constructed representations of these algebras.
\\

%For a general quiver $Q$, the algebra $\text{CoHA}(\widetilde{Q},\widetilde{W})$ acts on the Borel-Moore homology of Nakajima quiver varieties, which are certain symplectic varieties associated to a quiver $Q$ generalizing $\text{Hilb}\,(\C^2,n)$. Maulik-Okounkov \cite{mo} constructed a Yangian $Y(\mathfrak{g}_{Q})$ that acts on the (quantum, equivariant) cohomology of Nakajima quiver varieties, and Davison \cite{d} conjectured that $\text{CoHA}\,(\widetilde{Q},\widetilde{W})$ is the positive half of $Y(\mathfrak{g}_{Q})$ \cite{d}. 

\subsection{CoHAs and Yangians}
There are multiple constructions called CoHAs and Yangians in the literature. The Kontsevich-Soibelman CoHAs are defined in a more general framework than Yangians, but CoHAs
recover only the positive part of the Yangians. It is an interesting problem to construct full Yangians using Hall algebras. 
\\

We mention some results about geometric construction of Yangians.  
In \cite{nakd}, Nakajima showed that Kac-Moody algebras act by correspondences on the Borel-Moore homology of Nakajima quiver varieties associated to quivers with no loops. Ginzburg-Vasserot in type A and Varagnolo for 
%symmetric
Kac-Moody algebras \cite{vara} give a geometric construction of the Yangian. Schiffmann-Vasserot \cite{sv3} construct the corresponding Yangian for the Jordan quiver and show it acts on the Borel-Moore homology of $\text{Hilb}(\mathbb{A}^2,n)$,
and Maulik-Okounkov \cite{mo} construct an Yangian $Y_{\text{MO}}$ for a general quiver which acts on homology of Nakajima quiver varieties.
\\

We next mention some of the other constructions called CoHA in the literature related to quivers.
Schiffmann-Vasserot \cite{sv2} consider algebras also called $\text{CoHA}$ for a general quiver which are conjecturally related to $Y_{\text{MO}}$. In \cite{yz}, Yang-Zhao construct algebras corresponding to the positive part of the Schiffmann-Vasserot CoHAs, and they discuss the relation to the Yangian defined by Drinfeld when $Q$ has no loops \cite{yz3}.  
The Kontsevich-Soibelman $\text{CoHA}$ is related to these other $\text{CoHA}$s. Indeed, the positive part of the Schiffmann-Vasserot algebra can be recovered as the Kontsevich-Soibelman $\text{CoHA}$ for a certain quiver with potential $(\widetilde{Q},\widetilde{W})$ introduced by Ginzburg \cite{gi} that one constructs from $Q$, called the tripled quiver \cite{rs}, \cite{yz2}, see Subsection \ref{tripledef}. In \cite{d}, Davison conjectured that $\text{CoHA}(\widetilde{Q}, \widetilde{W})$ is the positive part of the Maulik-Okounkov Yangian $Y_{\text{MO}}$.
\\

There are also $\text{CoHA}$s beyond the quiver case, for example the Minets \cite{min} and Sala-Schiffmann \cite{ss} $\text{CoHA}$s associated to Higgs bundles on a curve $C$, and the Kapranov-Vasserot \cite{kv} $\text{CoHA}$ for sheaves on a surface $S$. One expects to construct a Kontsevich-Soibelman $\text{CoHA}$ for all Calabi-Yau $3$-folds, and that the Minets, Sala-Schiffmann and the Kaparnov-Vasserot CoHAs are particular cases of the Kontsevich-Soibelman $\text{CoHA}$ using dimensional reduction for $\text{Tot}(\omega_C)\times \mathbb{A}^1$
and $\text{Tot}(\omega_S)$, respectively.

\subsection{The Jordan quiver}
For the Jordan quiver $Q$
\begin{tikzcd}
1 \arrow[out=0,in=90,loop,swap,"f"]
\end{tikzcd}
with one vertex and one loop, the tripled quiver
\begin{tikzcd}
1 \arrow[out=0,in=90,loop,swap,"x"]
  \arrow[out=120,in=210,loop,swap,"y"]
  \arrow[out=240,in=330,loop,swap,"z"]
\end{tikzcd}
has one vertex, three loops, and potential $W=xyz-xzy.$ 
By dimensional reduction \cite[Appendix A]{d}, there exists an isomorphism
$$\text{CoHA}(\widetilde{Q},\widetilde{W})\cong
\bigoplus_{d\geq 0} H^{\cdot}_c(\mathcal{P}(d),\mathbb{Q}),$$ where $\mathcal{P}(d)$ is the stack of commuting matrices of dimension $d$, and the right hand side has an algebra structure defined by correspondences. 
The framed representations \cite{dm}, \cite{s} of the $\text{CoHA}(\widetilde{Q},\widetilde{W})$ for vector $f=1$ in this case are given by 
$$\bigoplus_{d\geq 0} H^{\cdot}(\text{Hilb}\,(\C^3,n),\varphi_{\text{Tr}\,W}\mathbb{Q}).$$ 

In this example, $\text{CoHA}(\widetilde{Q},\widetilde{W})$ is the positive part of the affine Yangian of $\mathfrak{gl}_1$ \cite{d}, \cite{s}, \cite{yz} and it acts on:
$$\bigoplus_{d\geq 0}H^{BM}_{\cdot}(\text{Hilb}\,(\C^2,n), \mathbb{Q}).$$
Schiffmann-Vasserot \cite{sv1}, Maulik-Okounkov \cite{mo} constructed an action of the full affine Yangian of $\mathfrak{gl}_1$ on the above vector space.
\\

\subsection{The PBW theorem for COHA} The $\text{CoHA}(Q,W)$ has nice properties even if $(Q,W)$ is not a tripled quiver, such as the existence of a localized coproduct \cite{d} compatible with the product, a wall-crossing theorem \cite[Theorem B]{dm}, and a PBW theorem for a generic stability condition \cite[Theorem C]{dm} that we now briefly explain. The map from the stack of representations to the coarse space $\pi: \X^{ss}(d)\to X^{ss}(d)$ induces a perverse filtration on $H^{\cdot}(\X(d)^{ss},\varphi_{\text{Tr}\,W}\mathbb{Q})$ such that its first piece is:
$$P^{\leq 1}= \bigoplus_{d\in\mathbb{N}^{Q_0}} H^{\cdot}(X^{ss}(d), \varphi_{\text{Tr}\,W}IC_d).$$  
Davison-Meinhardt \cite[Theorem C]{dm} proved that the associated graded of (a twisted version of) CoHA with respect to this perverse filtration is a supercommutative algebra on $P^{\leq 1}\otimes H^{\cdot}(B\C^*)$: $$\text{gr}^P\, \text{CoHA}\,(Q, W)=\text{Sym}\,\left(\bigoplus_{d\in\mathbb{N}^{Q_0}} H^{\cdot}(X^{ss}(d), \varphi_{\text{Tr}\,W}IC)\otimes H(B\C^*)\right).$$
As a corollary, there exists a natural Lie algebra structure on $P^{\leq 1}$ called the BPS Lie algebra. To prove the PBW theorem, Davison and Meinhardt first prove a more general version at the level of sheaves on $X^{ss}(d)$ in the potential zero case, statement which follows from an explicit description of the summands that appear in the decomposition theorem for the map $\pi:\X^{ss}(d)\to X^{ss}(d)$ \cite[Proposition 4.3 and Theorem 4.6]{mr}. They deduce the general statement by applying the vanishing cycle functor, which commutes with pushforward along the map $\pi:\X^{ss}(d)\to X^{ss}(d)$ because it can be approximated by proper maps.

\subsection{K-theoretic Hall algebras.} The purpose of the present article is to formulate and prove analogous results in K-theory. In \cite[Section 8.1]{ks}, Kontsevich-Soibelman propose the category of singularities $D_{sg}(\X(d)_0)$ as a categorification of the vanishing cohomology $H^{\cdot}(\X(d), \varphi_{\text{Tr}\,W}\mathbb{Q})$. Indeed, given a smooth variety $X$ with a regular function $f:X\to \mathbb{A}^1$ with $0$ the only critical value, the ($\mathbb{Z}/2\mathbb{Z}$ periodic) vanishing cohomology $H^{\cdot}(X,\varphi_f\mathbb{Q})$ can be recovered as the periodic cyclic homology of the category of singularities of the central fiber $D_{sg}(X_0) =\text{Coh}\,(X_0)/\,\text{Perf}\,(X_0)$ by a theorem of Efimov \cite{e}. The category $D_{sg}(X_0)$ satisfies a Thom-Sebastiani theorem and dimensional reduction theorem analogous to the ones for $H^{\cdot}(X,\varphi_f\mathbb{Q})$.
\\

For a quiver with potential $(Q,W)$, we construct two versions of $K$-theoretic Hall algebra using $K_0$ of the category of singularities $D_{sg}(\X_0)$ and of its idempotent completion.
The definition of the KHA as a $\mathbb{N}^{I}$-graded vector space is:
$$\text{KHA}(Q,W)=\bigoplus_{d\in\mathbb{N}^{I}} K_0(D_{sg}(\X(d)_0)),$$ and similarly for $\text{KHA}^{\text{id}}(Q,W)$.
For $Y=\text{crit}\,(f)$, we will also use the notation $K_{\text{crit}}(Y)$ for $K_0(D_{sg}(X_0))$. This definition depends on the function $f$, not only on the critical locus $Y$. Using this notation, the graded $d$ piece of the KHA is $K_{\text{crit}}(\X(Q,W,d))$.
The multiplication is defined using correspondences as in cohomology. 
We construct $T$-equivariant version of the $\text{KHA}$, where $T$ is a torus fixing the potential. We also define a compatible coproduct in Section \ref{bialgebra2}.

\begin{thm}\label{thm:1}
There exist natural braided bialgebra structures on $KHA$, $KHA^{id}$, and their $T$-equivariant versions.
\end{thm}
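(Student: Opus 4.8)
The plan is to transport the Kontsevich--Soibelman construction into $K_0$ of categories of singularities by first setting up the functorial calculus that $D_{sg}$ of a Landau--Ginzburg pair enjoys, and then running the standard Hall-algebra diagram chases. Write $D_{sg}(\X,f):=D_{sg}(\X_0)$ for $f\colon\X\to\mathbb{A}^1$, everything done $G(d)$-equivariantly on the presentations $\X(d)=[R(d)/G(d)]$ and $\X(d,e)=[R(d,e)/P(d,e)]$. I would first record: (i) for a flat morphism $g\colon\Y\to\X$ with $g^*f_\X=f_\Y$, flat pullback is exact on $\Coh$ and preserves $\text{Perf}$, hence descends to $g^*\colon D_{sg}(\X,f_\X)\to D_{sg}(\Y,f_\Y)$; (ii) for a proper morphism $g$ of finite Tor-dimension, $Rg_*$ preserves $\Coh$ and $\text{Perf}$, hence descends to $g_*\colon D_{sg}(\Y,f_\Y)\to D_{sg}(\X,f_\X)$; (iii) flat base change and the projection formula hold in $D_{sg}$, since the ambient statements in $D^b\Coh$ respect $\text{Perf}$; (iv) Thom--Sebastiani gives a K\"unneth map $K_0D_{sg}(\X,f)\otimes K_0D_{sg}(\Y,g)\to K_0D_{sg}(\X\times\Y,f\boxplus g)$. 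The one quiver-specific input is that $\text{Tr}(W)$ of a block-upper-triangular representation sees only the diagonal blocks, so along the flag correspondence $\text{Tr}(W)|_{\X(d,e)}=q^*(\text{Tr}(W)_d\boxplus\text{Tr}(W)_e)$ and $\text{Tr}(W)|_{\X(d,e)}=p^*\text{Tr}(W)_{d+e}$.

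With this calculus, define $m:=p_*\circ q^*$ on $\bigoplus_d K_0D_{sg}(\X^{ss}(d),\text{Tr}\,W)$ via the correspondence
$$\X^{ss}(d)\times\X^{ss}(e)\xleftarrow{q}\X^{ss}(d,e)\xrightarrow{p}\X^{ss}(d+e),$$
where $q$ is a composition of an affine-space bundle and a bundle of classifying stacks of the unipotent radical of $P(d,e)$, hence flat, so $q^*$ is defined by (i) and (iv), while $p$ is a partial-flag bundle onto its closed image, hence proper of finite Tor-dimension, so $p_*$ is defined by (ii). Associativity of $m$ is the diagram chase over the three-step flag stack $\X^{ss}(d_1,d_2,d_3)$, the two triple products being identified by (iii). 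The formulas are $\mathbb{N}^I$-graded and $\mathbb{Z}/2\mathbb{Z}$-graded; since a torus $T$ fixing $W$ acts compatibly on all the stacks and maps, the same formulas give the $T$-equivariant product, and the identical argument applied to the idempotent completion (whose $K_0$ receives the extended functors) yields the product on $\text{KHA}^{\mathrm{id}}$.

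For the coproduct I would read the same correspondence backwards, $\Delta:=q_*\circ p^*$; here $p^*$ is fine but $q$ is not proper, so $q_*$ is only available after passing to the $T$-equivariant setting and inverting the $K$-theoretic Euler class of the relative tangent complex of $q$ --- which is $R\Hom_Q(C,A)[1]$, a shift of $\Ext^\bullet$ between the quotient and the subrepresentation --- so that $\Delta$ lands in a completed localized tensor product $\widehat{\bigoplus}_{d'+e'}K_0D_{sg}(\X(d'))\otimes K_0D_{sg}(\X(e'))$, just as for Davison's localized coproduct in cohomology. Coassociativity is a further flag diagram chase. The braided bialgebra axiom $\Delta m=(m\otimes m)(\id\otimes\Psi\otimes\id)(\Delta\otimes\Delta)$ follows by computing the fibre product of the product and coproduct correspondences --- the stack of a subrepresentation together with a compatible quotient inside a common extension --- and applying the base change of (iii); the braiding $\Psi$ on the $d$- and $e$-graded pieces is the flip twisted by the $K$-theoretic Euler class of $\Ext^\bullet_Q(d,e)$, precisely the term recording the non-transversality in that fibre product. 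This is the familiar shape of a Hall bialgebra, now realized in $K_0D_{sg}$.

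The main obstacle is foundational rather than combinatorial: making (i)--(iv) precise for equivariant quotient stacks carrying potentials --- in particular proper pushforward for proper finite-Tor-dimension maps and base change/projection formula in $D_{sg}$ --- and then giving the coproduct meaning despite the non-properness of $q$. I expect the $T$-equivariant framework together with dimensional reduction and support/boundedness estimates --- the $K$-theoretic analogue of the ``approximation by proper maps'' used for vanishing cycles --- to be what forces $\Delta$ to converge and to remain coassociative and compatible with $m$; establishing that is the technical heart of the theorem.
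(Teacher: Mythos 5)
Your product $m=p_*q^*$ and the accompanying functorial calculus for $D_{sg}$ match what the paper does in Section \ref{3} (Theorem \ref{thm1} is exactly your flag diagram chase), and your observation that $\text{Tr}(W)$ restricts compatibly along the correspondence is the same input the paper uses. The divergence — and the genuine gap — is the coproduct. You propose $\Delta=q_*p^*$, observe that $q$ is not proper, and retreat to $T$-equivariance plus inversion of a $K$-theoretic Euler class of $\text{Ext}^{\bullet}$, so that $\Delta$ lands in a completed/localized tensor product whose convergence you leave as an expectation. The paper never does this. Instead (Section \ref{bialgebra2}, Definition \ref{coproduct}) it sets $\Delta_{d,e}=TS^{-1}\,\text{sw}_{e,d}\,\widetilde q_{e,d*}\,\tau^*p_{e,d}^*$, where $\widetilde q$ is the projection from a partial compactification $\widetilde\X(d,e)=\left(R(d,e)\oplus\C\right)/\left(G(d,e)\times\C^*\right)$ of the $q$-direction. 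The pushforward $\widetilde q_*$ is ``cohomologically proper'' in the sense of Definition \ref{proper}: it factors as a closed immersion, an open restriction, and a proper projective-bundle map, each of which already extends to $D_{sg}$, so no localization, no Euler-class inversion, and no completion are needed. That is what makes the theorem literally a braided bialgebra structure on $\text{KHA}$ (and $\text{KHA}^{\text{id}}$ and their $T$-equivariant versions), not a topological one on a completion. This compactification device is the missing idea in your proposal; without it, the non-properness of $q$ is not merely a ``foundational obstacle'' but an obstruction to the statement as phrased.

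A second, softer gap is the compatibility $\Delta m=(m\boxtimes m)(1\boxtimes R\boxtimes 1)(\Delta\boxtimes\Delta)$. You invoke base change for the fiber product of the two correspondences and point to the excess normal bundle as the source of the braiding, but there is no clean excess-intersection formula for proper pushforward in $D_{sg}$ of quotient stacks, and the fiber product here is genuinely non-transverse. The paper's proof (Theorem \ref{bialgebra}) instead localizes $K$-theory at the ideal $I_{d,e}$ for the one-parameter subgroup $\lambda=(z\,\text{Id}_d,\text{Id}_e):\C^*\to G(d+e)$ via Theorem \ref{loc}, identifies the $\lambda$-fixed loci of $\X(d',e')$ with the quadruples in $P$, reduces to the abelian multiplication formula (Proposition \ref{abel}) and the explicit $R$-matrix computation (Proposition \ref{compo}), matches Euler factors on the two sides, and finally descends the identity from the localization using the non-zero-divisor statement Proposition \ref{zerodiv}. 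Your fiber-product narrative captures the intuition — the braiding records the discrepancy between $\text{Ext}^{\bullet}(d,e)$ and $\text{Ext}^{\bullet}(e,d)$ — but the actual verification goes through $\C^*$-localization, not derived base change, and that substitution is essential to making the argument close.
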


Using framed representations, we can construct representations of the $\text{KHA}s$ analogoues to the ones constructed in \cite{dm}, \cite{s} in cohomology.

\begin{thm}\label{exrep}
There is a natural action of $KHA(\widetilde{Q},\widetilde{W})$ on $$\bigoplus_{d\in\mathbb{N}^I} K_0(D_{sg}(\X^{ss}(f,d)_0)),$$ where $f$ is a framing vector $f\in\mathbb{N}^I$. 
\end{thm}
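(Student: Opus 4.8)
The plan is to mimic, in the K-theoretic setting, the construction of framed representations from \cite{dm}, \cite{s}, replacing vanishing cohomology by $K_0$ of categories of singularities. Recall that for a framing vector $f\in\mathbb{N}^I$ one enlarges the quiver $\widetilde{Q}$ by adding a framing vertex $\infty$ together with $f_i$ arrows $\infty\to i$, and the framed potential $\widetilde{W}$ is pulled back (it does not involve the new arrows). This gives a stack $\X^{fr}(f,d)$ of framed representations with a function $\operatorname{Tr}(\widetilde{W})$, and $\X^{ss}(f,d)$ is the open locus of stable framed representations (those with no subrepresentation of dimension $(0,d')$, $d'\neq 0$, containing the image of the framing). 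The action should be given by a correspondence analogous to the multiplication: one considers the stack $\X^{ss}(f; d,e)$ parametrizing a stable framed representation of dimension $(1,d+e)$ together with a subrepresentation of dimension $(0,d)$ \emph{not} meeting the framing, with its two projections
$$\X^{ss}(d)\times \X^{ss}(f,e)\xleftarrow{\,q\,}\X^{ss}(f;d,e)\xrightarrow{\,p\,}\X^{ss}(f,d+e),$$
where the left map records the sub (an unframed representation of dimension $d$) and the framed quotient of dimension $e$, and $p$ records the total framed representation. The action map is then $a=p_*q^*$ at the level of $K_0(D_{sg}(-)_0)$.

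The key steps, in order: First, I would make the correspondence precise, checking that $p$ is proper and $q$ is (a composition of) a smooth affine-bundle-type map, exactly as in the CoHA case, and that $\operatorname{Tr}(\widetilde{W})$ pulls back compatibly along both legs so that $q^*$ and $p_*$ are defined on categories of singularities (using the functoriality of $D_{sg}$ established for the KHA multiplication in Theorem \ref{thm:1} --- proper pushforward and pullback along maps of critical loci, together with the Thom--Sebastiani/dimensional-reduction compatibilities invoked earlier in the paper). Second, I would verify that $a=p_*q^*$ lands in the framed $K_0$, i.e.\ that semistability is an open condition preserved appropriately and that the relevant fibers of $p$ over $\X^{ss}(f,d+e)$ are proper, so the pushforward is well-defined on the singularity category of the framed central fiber. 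Third, I would prove associativity of the action, $a\circ(m\otimes\operatorname{id}) = a\circ(\operatorname{id}\otimes a)$, by exhibiting the common ``triple-flag'' stack $\X^{ss}(f;d,e,g)$ parametrizing $0\subset A\subset A'\subset B$ with $B$ framed stable of dimension $(1,d+e+g)$, $A,A'$ unframed subs not meeting the framing, and checking that both composites equal $p_*q^*$ for this stack via base change; this is the standard argument and reduces to two Cartesian-square identities plus the compatibility of $p_*$, $q^*$ with base change, all of which are available from the bialgebra package of Theorem \ref{thm:1}.

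The main obstacle I expect is establishing the base-change/derived-fiber-product identities at the level of categories of singularities rather than at the level of constructible sheaves: in cohomology one has clean proper base change and smooth base change for $\varphi$, but for $D_{sg}$ one must ensure the relevant fiber products are of the expected dimension (no excess intersection) so that $L$-pullback and proper pushforward on $\operatorname{Coh}$ descend to $D_{sg}$ and the Beck--Chevalley square commutes; the framing locus is cut out by an open condition, which helps, but one still needs the moduli of flags of framed representations to be a global critical locus of the pulled-back potential on a smooth stack, so that the earlier functoriality results apply verbatim. A secondary point is checking that the construction is independent of the (generic) choice of stability parameter and compatible with the coproduct of Section \ref{bialgebra2}, which I would only sketch, since it follows the same template as the corresponding statements for the KHA multiplication.
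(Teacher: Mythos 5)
Your construction is correct in outline and does land on the right correspondence, but you miss the one observation that turns the whole theorem into a one-line corollary, and that observation also dissolves the obstacles you flag at the end. The framed stack $\X^{ss}(f,d)$ is nothing but $\X^{ss}(Q^f,(1,d))$ for the framed quiver $Q^f$ with stability $\theta^f$, and the unframed stack $\X^{ss}(d)$ is $\X^{ss}(Q^f,(0,d))$. Your correspondence stack $\X^{ss}(f;d,e)$ is exactly the flag stack $\X^{ss}(Q^f,(0,d),(1,e))$, so the action map $p_*q^*$ \emph{is} the multiplication map $m_{(0,d),(1,e)}$ of $\text{KHA}(Q^f,W^f,\theta^f)$, already constructed and shown to be associative in Theorem \ref{thm1}. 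Associativity of the module structure, $a\circ(m\boxtimes\mathrm{id})=a\circ(\mathrm{id}\boxtimes a)$, is then literally the associativity square of Theorem \ref{thm1} specialized to dimension vectors $(0,c)$, $(0,d)$, $(1,e)$; there is no triple-flag stack to build and no fresh base change or Thom--Sebastiani compatibility to verify, because the flag stacks you want are the $\X^{ss}(Q^f,(0,c),(0,d),(1,e))$ already appearing there. This is exactly what the paper does in Proposition \ref{rep}. Your route — re-exhibiting the triple-flag stack and re-running the Beck--Chevalley checks for $D_{sg}$ — would also work, but it re-proves arguments that are already available once you recognize $\X^{ss}(f,d)$ as a graded piece of $\text{KHA}(Q^f,W^f)$; in particular, your worries about excess intersection and about the flag moduli being global critical loci on smooth stacks are moot, since all the relevant stacks are of the form $R(Q^f,\underline{d}')/G(\underline{d}')$ and were handled in Section \ref{3}.
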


For example, for the pair $(Q_3,W)$ considered above, the above theorem says that $\text{KHA}_T(Q_3,W)$ acts on $\bigoplus_{d\geq 0} K_{\text{crit}}^T(\text{Hilb}\,(\mathbb{A}^3,d))$.
\\

\subsection{KHAs and quantum affine algebras}
For a quiver $Q$, we expect $\text{KHA}(\widetilde{Q}, \widetilde{W})$ to be related to positive parts of quantum affine algebras. We mention some previous geometric constructions of quantum affine algebras in the literature. Ginzburg-Vasserot \cite{gv} constructed geometric representations of $U_q(\widehat{\mathfrak{sl}}_n)$ using the K-theory of flag varieties. Nakajima \cite{n} gave a geometric construction of the quantum affine algebras $U_q(\widehat{\mathfrak{g}})$ for a Kac-Moody algebra $\mathfrak{g}$ using the K-theory of quiver varieties. Schiffmann-Vasserot \cite{sv1} and Feigin-Tsymbaliuk \cite{ft} discussed the case of the Jordan quiver, when the corresponding algebra $U_q(\widehat{\widehat{\mathfrak{gl}_1}})$ appears in the literature also as the Elliptic Hall algebra, the Feigin-Odeskii shuffle algebra etc. Okounkov-Smironov \cite{os} constructed a quantum group $U_q(\widehat{\mathfrak{g}}_Q)$ for a general quiver $Q$ which acts on the K-theory of Nakajima quiver varieties; here, the Lie algebra $\mathfrak{g}_Q$ was constructed by Maulik-Okounkov \cite{mo} and, for general $Q$, it is larger than the corresponding Kac-Moody Lie algebra $\mathfrak{g}_{KM}$.
\\

We mention a couple of other constructions called KHA in the literature, see Subsection \ref{further} for more examples.
Yang-Zhao \cite{yz} consider a K-theoretic Hall algebra that should be related to our construction via dimensional reduction. For the affine $\widehat{A}_n$ quiver, Negu\c{t} \cite{ne1} constructs the toroidal algebra of type $A_n$ using a KHA construction.
\\

We expect the $\text{KHA}(\widetilde{Q}, \widetilde{W})$ constructed in this article to be related to the positive part $U^{>}_q\left(\widehat{\mathfrak{g}_Q}\right)$ of the Okounkov-Smirnov quantum group:

\begin{conjecture}
There exists an isomorphism for a natural one dimensional torus: $$\text{KHA}_{\C^*}(\widetilde{Q},\widetilde{W})\cong U^{>}_q\left(\widehat{\mathfrak{g}_Q}\right).$$  
\end{conjecture}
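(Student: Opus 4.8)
The plan is to prove the conjecture by giving both sides an explicit shuffle-type presentation, matching generators and relations, and then using the geometric action on framed objects as a rigidity statement that rules out any discrepancy in size.

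First I would apply the $K$-theoretic dimensional reduction theorem for categories of singularities (the analogue of the cohomological dimensional reduction of \cite{d} used in the Jordan case above) to rewrite $\text{KHA}_{\C^*}(\widetilde{Q},\widetilde{W})$ as a Hall algebra on the $\C^*$-equivariant $K$-theory of the stack of representations of the preprojective algebra $\Pi_Q$; here the $\C^*$ is the scaling of the reduced (cotangent) direction, which is the source of the parameter $q$. On this model the Hall product is the ordinary pushforward-pullback along the correspondences $\X(d,e)$, and after localization one obtains a shuffle algebra whose kernels are rational functions built from the Chern roots of the preprojective obstruction theory of $Q$ --- the same ``wall functions'' that enter the Okounkov--Smirnov construction. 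The braided bialgebra structure of Theorem \ref{thm:1} becomes, on this model, a Drinfeld-type coproduct.

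Second, on the quantum-group side I would use the description of $U^>_q(\widehat{\mathfrak{g}_Q})$ through the Maulik--Okounkov Lie algebra $\mathfrak{g}_Q$ \cite{mo} and the slope/wall factorization of the geometric $R$-matrix underlying \cite{os}: the positive half is generated by root subalgebras attached to the walls, which carry a shuffle presentation with the same kernels. The comparison then has two halves. Surjectivity: the generators of $U^>_q(\widehat{\mathfrak{g}_Q})$ --- the simple-root currents together with the imaginary-root contributions --- must be realized inside the KHA, and this is where the $K$-theoretic PBW theorem of this paper enters, identifying the bottom piece of the coarse-space filtration on the KHA with a ``$K$-theoretic BPS'' space carrying a Lie (co)algebra that should surject onto $\mathfrak{g}_Q$. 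Size from above: the KHA must be no larger than $U^>_q(\widehat{\mathfrak{g}_Q})$, which I would deduce from faithfulness of the KHA action of Theorem \ref{exrep} --- dimensional reduction identifies the framed module with the $T\times\C^*$-equivariant $K$-theory of the Nakajima quiver varieties of $Q$ --- together with the corresponding faithfulness of the Okounkov--Smirnov action on that same space. As a consistency check, applying the Chern character and degenerating $q\to 1$ should recover Davison's conjectural identification of $\text{CoHA}(\widetilde{Q},\widetilde{W})$ with the positive half of $Y_{\text{MO}}(\mathfrak{g}_Q)$.

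The main obstacle is the size-from-above step. The $K$-theoretic PBW theorem bounds the KHA by a deformed symmetric algebra on its BPS space, but identifying that space with the span of positive root vectors of $\mathfrak{g}_Q$ --- getting the imaginary/loop directions and the multiplicities of the higher root spaces exactly right --- is already open at the cohomological level, where it is Davison's conjecture that $\text{CoHA}(\widetilde{Q},\widetilde{W})$ is the positive half of $Y_{\text{MO}}(\mathfrak{g}_Q)$. A second, more structural difficulty is that $U^>_q(\widehat{\mathfrak{g}_Q})$ is defined via $K$-theoretic stable envelopes and an a priori transcendental-looking $R$-matrix, so one first needs an intrinsic generators-and-relations (or shuffle) presentation of it for general $Q$ before the isomorphism can even be formulated precisely; producing such a presentation is itself a substantial input, currently available only for special quivers such as the Jordan and affine type $A$ cases.
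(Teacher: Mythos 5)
The statement you were asked to address is labeled a \emph{Conjecture} in the paper, and the paper does not prove it in general. What the paper does establish is partial evidence: a natural action of $\text{KHA}(\widetilde{Q},\widetilde{W})$ on the $K$-theory of Nakajima quiver varieties (the theorem immediately following the conjecture), and the isomorphism in two special cases --- tripled Dynkin quivers of type $A$, where $\text{KHA}_T(\widetilde{A_n},\widetilde{W}) \cong U^>_q(L\mathfrak{sl}_{n+1})$, and the tripled Jordan quiver, where the Feigin--Odeskii shuffle algebra embeds (Theorem \ref{ex}, proved in Section \ref{6}). So there is no full proof in the paper to compare against.

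Your sketch is consistent with the strategy the paper uses in those special cases: dimensional reduction to the preprojective stack, the injection $\text{KHA}_T(\widetilde{Q},\widetilde{W}) \hookrightarrow \text{KHA}_T(\widetilde{Q},0)$ from Proposition \ref{def}, the shuffle presentation of the target, and matching kernels against a shuffle presentation of the quantum group. You also correctly identify exactly the two obstructions that keep the general statement conjectural --- the lack of a generators-and-relations or shuffle presentation of $U^>_q(\widehat{\mathfrak{g}_Q})$ for general $Q$, and the unresolved identification of the ($K$-)BPS space with the positive roots of $\mathfrak{g}_Q$ (open even cohomologically as Davison's conjecture). In the paper's terms, the first obstacle is circumvented in type $A$ and Jordan by the explicit results of Tsymbaliuk and Negu\c{t}, and the second by Proposition \ref{liecomp} which computes the KBPS Lie algebra directly in type $A$; neither step is available in general. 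In short, your outline is a reasonable roadmap and your obstacles are the genuine ones, but this is a plan for a conjecture rather than a proof.
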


In this direction, we have proved:

\begin{thm}
Let $Q$ be an arbitrary quiver, and consider the tripled quiver $(\widetilde{Q},\widetilde{W})$. 
Then the $KHA(\widetilde{Q},\widetilde{W})$ acts naturally on the $K_0$ of Nakajima quiver varieties 
$$\bigoplus_{d\in\mathbb{N}^I} K_0(N^{\text{ss}}(f,d)).$$ 
Similar statements also hold in the $T$-equivariant case.
\end{thm}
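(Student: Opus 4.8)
We will deduce the statement from Theorem~\ref{exrep} together with dimensional reduction, in exact analogy with the way $\text{CoHA}(\widetilde{Q},\widetilde{W})$ is expected (and known in examples) to act on the Borel--Moore homology of Nakajima quiver varieties. Recall from Subsection~\ref{tripledef} that $\widetilde{Q}$ is obtained from $Q$ by adjoining, for each arrow $a\in Q_1$, a reversed arrow $a^{*}$ and, for each vertex $i\in I$, a loop $\omega_i$, and that $\widetilde{W}=\sum_{i\in I}\text{Tr}(\omega_i\mu_i)$, where $\mu=(\mu_i)_{i\in I}$ is the moment map of the doubled quiver $\overline{Q}$; adjoining a framing vector $f$ in the sense of \cite{dm},~\cite{s} adds framing maps $\C^{f_i}\to V_i$ and $V_i\to\C^{f_i}$ and replaces $\mu$ by the framed moment map, which we still denote $\mu$. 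The essential point is that $\text{Tr}(\widetilde{W})$ is \emph{linear} in the loop variables $\omega_i$: realising $\X^{ss}(f,d)$ as the total space of the vector bundle $E=\bigoplus_{i\in I}\mathfrak{gl}(V_i)$ over the stack $\RR^{ss}(f,d)$ of $\theta$-semistable representations of the doubled framed quiver, the function $\text{Tr}(\widetilde{W})$ is the pairing of the tautological section of $E$ with the section $\mu$ of $E^\vee$, the pairing coming from the trace form on $\mathfrak{gl}(V_i)$.

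First I would invoke the dimensional reduction theorem for categories of singularities recalled in the excerpt, which gives for each $d$ an equivalence
\[
D_{sg}\big(\X^{ss}(f,d)_0\big)\;\simeq\;D^b\text{Coh}\big([\mu^{-1}(0)^{ss}/\GL_d]\big),
\]
whose target is the bounded derived category of the stack of $\theta$-semistable representations of the framed preprojective algebra. For $\theta$ generic one has semistable $=$ stable, $\GL_d$ acts freely on $\mu^{-1}(0)^{ss}$, the quotient stack is the smooth Nakajima quiver variety $N^{ss}(f,d)$, so $D^b\text{Coh}=\text{Perf}$ and $K_0\big(D_{sg}(\X^{ss}(f,d)_0)\big)\cong K_0\big(N^{ss}(f,d)\big)$. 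Summing over $d$, Theorem~\ref{exrep} then produces an action of $\text{KHA}(\widetilde{Q},\widetilde{W})$ on $\bigoplus_{d\in\mathbb{N}^I}K_0(N^{ss}(f,d))$; the $T$-equivariant statement is obtained by carrying out the same steps $T$-equivariantly, using that $T$ (which fixes $\widetilde{W}$) induces the standard torus action on the $N^{ss}(f,d)$ and that dimensional reduction is $T$-equivariant.

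The remaining point, which I expect to be the crux, is to verify that these equivalences intertwine the module structure of Theorem~\ref{exrep} with the Hecke-type correspondences on $\bigoplus_d K_0(N^{ss}(f,d))$, so that the transported action is the natural one and is functorial in $f$. For this one must show that each correspondence stack $\X^{ss}(f;d,e)$ appearing in Theorem~\ref{exrep} is again the total space of a bundle of $\omega$-directions over the corresponding Hecke correspondence of framed preprojective modules, compatibly with the two maps $q$ (an affine-type fibration, hence a pullback in $K$-theory) and $p$ (a proper pushforward), and that dimensional reduction is functorial for the resulting diagrams. Compatibility with $q^{*}$ should be formal, dimensional reduction being a Koszul-type equivalence that commutes with smooth base change; compatibility with $p_{*}$ is the delicate step, since on the singularity-category side the pushforward is defined only after checking that $p$ is proper (of finite Tor-dimension) on the relevant semistable loci, and it must be matched with the ordinary derived pushforward of coherent sheaves along the Hecke correspondence of quiver varieties --- the $K$-theoretic counterpart of the proper-approximation argument used by Davison--Meinhardt in the cohomological setting. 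Once the functoriality of dimensional reduction with respect to proper pushforward and to the stacky quotients is in place, the theorem follows, and naturality in $f$ is then formal from the same compatibilities.
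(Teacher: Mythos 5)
Your overall route — apply Theorem~\ref{exrep} to the framed tripled quiver and then use dimensional reduction to transport the action to $K_0$ of the Nakajima quiver varieties — is the same strategy the paper takes. However, there is a genuine gap in the first step. You write ``realising $\X^{ss}(f,d)$ as the total space of the vector bundle $E=\bigoplus_{i\in I}\mathfrak{gl}(V_i)$ over the stack $\RR^{ss}(f,d)$ of $\theta$-semistable representations of the doubled framed quiver,'' but this identification is \emph{not} automatic: the semistable locus $\X^{\text{ss}}(\widetilde{Q^f},1,d)$ used in Theorem~\ref{exrep} is cut out by stability for the \emph{tripled} framed quiver, whereas the total space of the $\omega$-directions over $\RR^{ss}(f,d)$ is $\pi^{-1}\X^{\text{ss}}(\overline{Q^f},1,d)$, the preimage of the semistable locus for the \emph{doubled} framed quiver under the map $\pi$ that forgets the loops $\omega_i$. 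These two open substacks of $\X(\widetilde{Q^f},1,d)$ are in general different, and dimensional reduction applies to the latter, not the former. The paper bridges this gap by citing Davison's lemma (\cite[Lemma~6.5]{d2}, Proposition~\ref{davi} in the text), which shows that the two loci have the \emph{same intersection with the critical locus} of $\text{Tr}\,\widetilde{W^f}$, and therefore the same category of singularities of the zero fiber. Without this input your dimensional-reduction step does not apply to the stack produced by Theorem~\ref{exrep}.

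Once Davison's equality of critical loci is in place, your remaining discussion of dimensional reduction and its compatibility with the correspondence maps $q^*$ and $p_*$ is on the right track and matches the paper's treatment (the paper also appeals to the superfluousness of the moment-map relation at the framing vertex and to generic $\theta$ to pass from the preprojective Koszul stack to the smooth Nakajima variety). You should make explicit that you are invoking this coincidence of semistable critical loci rather than silently conflating the two stability conditions.
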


For example, when $Q$ is the Jordan quiver,
dimensional reduction gives a description of the underlying vector space of $\text{KHA}(\widetilde{Q},\widetilde{W})$ using the stack $\mathcal{P}(d)$ of representations of the preprojective algebra of $Q$:
$$\text{KHA}_T(\widetilde{Q},\widetilde{W})=\bigoplus_{d\geq 0} K^T_0(\text{Coh}\,\mathcal{P}(d)).$$
The above theorem says that $KHA_T(\widetilde{Q},\widetilde{W})$ acts on 
$\bigoplus_{n\geq 0} K^T_0(\text{Hilb}\,(\C^2,n))$.

%Nakajima \cite{n} has constructed actions of quantum affine algebras on the $K_0$ of Nakajima quiver varieties, so we expect these $\text{KHA}s$ to be related to quantum affine algebras. In this direction we have computed:

\begin{thm}\label{ex}
(a) Let $A_n$ be the Dynkin quiver 
and consider its tripled quiver $(\widetilde{A_n},\widetilde{W})$, where $\widetilde{A_n}$ is the quiver
\begin{tikzcd}
1 \arrow[out=60, in=120,loop] \arrow[r, shift left=2, "x_1"] 
& 2 \arrow[out=60, in=120,loop] \arrow[r, shift left=2, "x_2"] \arrow[l, "y_1"]
& \cdots \arrow[l, "y_2"] \arrow[r, shift left=2, "x_{n}"]
& n \arrow[out=60, in=120,loop]  \arrow[l, "y_{n}"]
\end{tikzcd}
with potential $W=\sum_{i=1}^{n}\omega_i(y_ix_i-x_{i-1}y_{i-1}).$ Here the set of vertices is $I=\{1,\cdots, n\}$ and $\omega_i$ is the loop at the vertex $i$. 
Then for a certain one dimensional torus $T$ defined in Section \ref{Dynkin}, we have an isomorphism:
$$\text{KHA}_T(\widetilde{A_n},\widetilde{W})\cong U_q^{>}(L\mathfrak{sl}_{n+1}).$$ 

(b) Let $(\widetilde{Q},\widetilde{W})$ be the tripled quiver for the Jordan quiver. Then for a certain two dimensional torus $T$ defined in Section \ref{Jordan}, there exists a natural inclusion of the Feigin-Odeskii shuffle algebra:
$$\text{SH}\subset \text{KHA}_T(\widetilde{Q},\widetilde{W}).$$
\end{thm}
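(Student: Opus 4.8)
The plan is to treat both parts uniformly: first reduce, by dimensional reduction, to the K-theoretic Hall algebra of the preprojective algebra $\Pi_Q$; then embed the latter into an explicit shuffle algebra; and finally identify the relevant subalgebra using the PBW theorem. For any quiver $Q$ the tripled potential $\widetilde W$ is linear in the added loops $\omega_i$: on the stack $\X_{\widetilde Q}(d)$ of representations of $\widetilde Q$ one has $\text{Tr}\,\widetilde W=\sum_i\text{Tr}(\omega_i\mu_i)$, where $\mu=(\mu_i)_i$ is the moment-map section of the bundle $\bigoplus_i\Hom(V_i,V_i)$ over the stack $\X_{\overline Q}(d)$ of representations of the doubled quiver $\overline Q$. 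The dimensional reduction theorem for categories of singularities therefore identifies $K_0(D_{sg}(\X_{\widetilde Q}(d)_0))$ with $K_0$ of the derived zero locus of $\mu$, that is, with $K_0^T(\Coh\,\mathcal P(d))$ for $\mathcal P(d)=\mu^{-1}(0)/\GL(d)$ the stack of $d$-dimensional representations of $\Pi_Q$; the torus $T$ of Section \ref{Dynkin} (resp.\ Section \ref{Jordan}) is exactly the subtorus of the arrow-rescaling torus of $\widetilde Q$ fixing $\widetilde W$, acting on the preprojective side by rescaling the arrows of $\overline Q$ and supplying the deformation parameter $q$. Under this identification the KHA product becomes the Hall product on $\bigoplus_d K_0^T(\Coh\,\mathcal P(d))$ coming from the stack of short exact sequences of $\Pi_Q$-modules, so it suffices to study this algebra for $Q=A_n$ and for $Q$ the Jordan quiver.

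Next I would build the K-theoretic shuffle embedding. For $d=\sum_v d_v\delta_v$ let $\mathcal P(\vec d)\subset\mathcal P(d)$ be the quasi-smooth locus of $\Pi_Q$-modules carrying a complete flag of submodules by the coordinate vectors; the maps $\mathcal P(\vec d)\to BT_d$ and $\mathcal P(\vec d)\to\mathcal P(d)$ (the second a derived refinement of Lusztig's Lagrangian embedding) give, after inverting the equivariant parameters, an algebra map $\Psi\colon\bigoplus_d K_0^T(\Coh\,\mathcal P(d))_{\mathrm{loc}}\to\mathcal S$ into the shuffle algebra $\mathcal S=\bigoplus_d K_0^T(BT_d)_{\mathrm{loc}}^{W_d}$. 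Iterating the Hall product shows $\Psi$ carries a product of classes to their shuffle product with kernel the K-theoretic Euler class of $\Ext^{\bullet}_{\Pi_Q}$ between one-dimensional modules; by the $\mathrm{CY}_2$ self-duality of this complex the kernel is, up to an explicit monomial, a product of factors $(1-q^{a}z_j/z_i)(1-q^{-a}z_j/z_i)/(1-z_j/z_i)$ with $a$ governed by the Cartan matrix of $Q$, i.e.\ the structure functions of the shuffle presentation of $U^{>}_q(L\mathfrak g_Q)$. The crucial assertion is that $\Psi$ is injective, which I would deduce from the paper's K-theoretic PBW theorem applied to $\bigoplus_d K_0^T(\Coh\,\mathcal P(d))$: it identifies the associated graded with a deformed symmetric algebra on $\bigoplus_d\mathrm{BPS}_d\otimes K_0(B\C^*)$, and, combined with the known computation of the BPS invariants of a Dynkin preprojective algebra (supported with multiplicity one on the positive roots of $\mathfrak{sl}_{n+1}$, with BPS Lie algebra $\mathfrak n^{+}(\mathfrak{sl}_{n+1})$, hence generated in the simple degrees), it shows at once that $\text{KHA}_T(\widetilde{A_n},\widetilde W)$ is generated in the degrees $\delta_i$ and that $\Psi$ is injective in each $\mathbb{N}^I$-degree, the image already having the correct graded size.

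Granting the injectivity of $\Psi$, part (a) follows: send the Drinfeld generators $e_{i,k}$ of $U^{>}_q(L\mathfrak{sl}_{n+1})$ ($i\in I$, $k\in\Z$) to the classes $z^k\cdot[\mathcal O_{\mathcal P(\delta_i)}]$ in $K_0^T(\Coh\,\mathcal P(\delta_i))=K_0^T(\Coh\,B\C^*)$; the defining relations of $U^{>}_q(L\mathfrak{sl}_{n+1})$ (the $q$-Serre and the Drinfeld loop relations) hold among these classes because their images under $\Psi$ are exactly the relations satisfied by the generators of the shuffle presentation, and $\Psi$ is injective. This yields a homomorphism $\Phi\colon U^{>}_q(L\mathfrak{sl}_{n+1})\to\text{KHA}_T(\widetilde{A_n},\widetilde W)$, which is injective since $\Psi\circ\Phi$ is the faithful shuffle embedding, surjective since the $e_{i,k}$ generate (the $\mathfrak n^{+}$-generation above), hence an isomorphism. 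For part (b) the same $\Psi$ (again injective) embeds $\text{KHA}_T(\widetilde Q,\widetilde W)$ for the Jordan quiver into the shuffle algebra $\mathcal S$ of symmetric Laurent polynomials, whose two equivariant parameters are the two free parameters of the Feigin-Odeskii algebra and whose shuffle kernel, computed from the Koszul complex over $\Pi_{\mathrm{Jordan}}=\C[x,y]$, is the Feigin-Odeskii kernel; since $\text{SH}$ is by definition the subalgebra of $\mathcal S$ generated in degree one, $\Psi$ restricts to an isomorphism from the subalgebra of $\text{KHA}_T(\widetilde Q,\widetilde W)$ generated by its degree-one part $K_0^T(\Coh\,B\C^*)$ onto $\text{SH}$, giving the asserted inclusion.

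The main obstacle is the injectivity of the K-theoretic shuffle map $\Psi$. In cohomology this is Schiffmann-Vasserot's faithfulness of the shuffle presentation for preprojective CoHAs, resting on purity and Lagrangian arguments; transporting it to $\bigoplus_d K_0^T(\Coh\,\mathcal P(d))$ requires either invoking the paper's PBW theorem as a replacement for purity (as above) or a direct study of the pushforward (``integration'') map along the stacky, non-proper maps $\mathcal P(\vec d)\to\mathcal P(d)$ at the level of categories of singularities, which is the most delicate step. Secondary difficulties are the relation check inside $\mathcal S$ and, for part (a), matching the image of $\Psi$ with the wheel-condition presentation of the type-$A$ K-theoretic shuffle algebra following Negu\c{t}.
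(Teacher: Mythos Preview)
Your overall strategy is sound and would likely work, but it differs from the paper's argument in one essential shortcut that makes everything easier. You pass through dimensional reduction to the preprojective side and then build a shuffle map $\Psi$ via complete flags in $\mathcal P(d)$, which forces you to worry about non-proper pushforwards, localization, and the injectivity of $\Psi$ (exactly the obstacle you flag at the end). The paper never does this. Instead it uses Proposition~\ref{def}: the pushforward along the closed immersion $i\colon \X(d)_0\hookrightarrow \X(d)$ already gives an algebra homomorphism
\[
i_*\colon \text{KHA}_T(\widetilde Q,\widetilde W)\longrightarrow \text{KHA}_T(\widetilde Q,0),
\]
and the target is \emph{itself} an explicit shuffle algebra by Proposition~\ref{shuffle}. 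So the ``shuffle embedding'' is simply $i_*$; no preprojective flags, no localization, no Lagrangian correspondences. One then writes down the ratio $\tau_{i,i'}/\zeta_{i,i'}$ of the zero-potential shuffle kernel and the quantum-group shuffle kernel, checks it is a polynomial (so multiplication by it defines an algebra map $\Phi\colon SH_n'\to \text{KHA}_T(\widetilde Q,0)$), and observes that $\Phi$ and $i_*$ agree on the degree-$e_i$ generators. Since both $SH_n'$ and $\text{KHA}_T(\widetilde Q,\widetilde W)$ are generated in these degrees (the latter by the KBPS computation of Proposition~\ref{liecomp}, which you also use), their images in $\text{KHA}_T(\widetilde Q,0)$ coincide, and Tsymbaliuk's identification $SH_n'\cong U_q^{>}(L\mathfrak{sl}_{n+1})$ finishes~(a). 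Part~(b) is identical: $i_*$ is injective on the spherical subalgebra, and the twist $\Phi$ carries $SH$ isomorphically onto it, using Negu\c{t}'s result that $SH$ is generated in degree one.

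What this buys over your approach: the delicate step you isolate (injectivity of $\Psi$ built from $\mathcal P(\vec d)\to\mathcal P(d)$) disappears entirely, replaced by the elementary observation that $i_*$ is an isomorphism in the generating degrees (where $\X(e_i)_0=\X(e_i)$ since there is nothing to cut). Your route is closer to the Schiffmann--Vasserot paradigm and would recover more structure (e.g.\ wheel conditions), but for the bare isomorphism statement the paper's $i_*$ trick is both shorter and avoids the technical point you correctly identify as the hardest.
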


\subsection{The PBW theorem for KHA} We next discuss the PBW theorem. Assume that $Q$ is symmetric, $W$ is arbitrary, and that the stability condition $\theta$ is trivial, but we expect these results to hold for general $(Q,W)$ and generic $\theta$, which is the generality in which the cohomological PBW theorem holds \cite{dm}. 

In order to prove a PBW theorem for these algebras, we need to find a replacement for the perverse filtration and the decomposition theorem for the Hilbert-Chow morphism $\pi:\X(d)\to X(d)$ and for the BPS invariants 
$H^{\cdot}(X(d), \varphi_{\text{Tr}\,W} IC_d)$. In general, there is no categorification for intersection cohomology, and also no categorification for the decomposition theorem. For example, for an elliptic $K3$ surface $\pi: X\to\mathbb{P}^1$, the decomposition $R\pi_*IC_X$ is not concentrated in one perverse dimension, but the category $D^b\text{Coh}\,(X)$ does not admit semi-orthogonal decompositions because $X$ is Calabi-Yau.
Even more, there are no replacements of intersection cohomology or the decomposition theorem in K-theory.

A striking feature of symmetric quiver geometries is that these notions have analogues that we can use to state and prove a PBW theorem.
The BPS invariants are replaced with certain categories $\mathbb{M}(d)\subset D_{sg}(\X(d)_0)$ inspired by work of \v{S}penko-van den Bergh \cite{sp}; the decomposition theorem for the map $\pi:\X(d)\to X(d)$ is categorified by semi-orthogonal decompositions of the categories $D_{sg}(\X(d)_0)$ coming from geometric invariant theory, inspired by work of Halpern-Leistner \cite{hl} and \v{S}penko-van den Bergh \cite{sp}; and the perverse filtration is replaced by the filtration induced by the above semi-orthogonal decomposition.
\\

We define the categories $\mathbb{M}(d)$ in Subsection \ref{defMM}. 
They are subcategories of $\oM(d)$ with certain boundary conditions explained in Subsection \ref{defMM}. In the zero potential case, the category $\overline{\mathbb{N}}(d)\subset D^b(\X(d))$ is defined as the full subcategory generated by the sheaves $V(\chi)\otimes\OO_{\X(d)}$ for $\chi$ such that:
$$\chi+\rho\in\frac{1}{2}\bigoplus_{\beta\text{ wt of }R(d)}[0,\beta]\subset M_{\mathbb{R}},$$ where $\rho$ is half the sum of positive roots of $G(d)$ and where the right hand side is the Minkowski sum after all weights $\beta$ of the $G(d)$-representation $R(d)$. 
In the case of a general potential $W,$ consider the inclusion of the central fiber $i:\X(d)_0\to \X(d)$, and define $\oM(d)\subset D_{sg}(\X(d)_0)$ as the subcategory 
$$\oM(d)=\text{Coh}'(\X(d)_0)/\text{Perf}'(\X(d)_0),$$ where $\text{Coh}'(\X(d)_0)\subset \text{Coh}(\X(d)_0)$ is the subcategory of sheaves $F$ such that $i_*F\in\overline{\mathbb{N}}$, and $\text{Perf}'(\X(d)_0)\subset \text{Coh}'(\X(d)_0)$ is the subcategory
of perfect complexes.
\\

The categories $\oM(d)$ admit a decomposition in $\oM(d)_w$, where $w\in\mathbb{Z}$ is the weight with respect to the diagonal character. 
In Section \ref{5}, we construct a semi-orthogonal decomposition: 
 $$D_{sg}(\X(d)_0)=\langle \cdots, \oM(d)\rangle, $$ where the complement of $\oM(d)$ has a further semi-orthogonal decomposition with factors $p_{\dd *}q_{\dd}^*\left(\oM(d_1)_{w_1}\boxtimes\cdots\boxtimes \oM(d_k)_{w_k}\right)$, where $p_{\dd}: \X(\dd)\to \X(d)$. We first prove the statement in the zero potential case, and then deduce from it the general potential case.
\\

The categories $\overline{\mathbb{N}}(d)$ and $\oM(d)$ may contain sheaves supported on attracting loci, and in general we cannot remove them with a further semi-orthogonal decomposition. 
It is possible to find such a decomposition if there exists a generic $\delta\in M_{\mathbb{R}}^W$ as in \cite[Definition 3.1]{hl2}, and in this case the category $\mathbb{N}(d)\subset \overline{\mathbb{N}}(d)$ is the same as the category defined by \v{S}penko-van den Bergh in \cite{sp}. Their category is Calabi-Yau, and thus cannot be further decomposed. 
A sheaf $F$ is in $\mathbb{N}(d)\subset \overline{\mathbb{N}}(d)$ or in $\MM(d)\subset \oM(d)$ if its restriction on these attracting loci contained in $\overline{\mathbb{N}}(d)$ or in $\oM(d)$ satisfy some further conditions as described in Subsection \ref{defMM}. The K-theory of $\oM(d)$ can be then decomposed in $K_0(\MM(d))$ and summands corresponding to sheaves supported on attracting loci. The following is proved in Section \ref{5}:

\begin{thm}
Let $(Q,W)$ be a symmetric quiver with potential. There exists a filtration induced by the inclusion $\MM(d)\subset \oM(d)$ and by the above semi-orthogonal decomposition of $D_{sg}(\X(d)_0)$ such that:
$$\text{gr}^F\text{KHA}\,(Q,W)= \text{dSym}\,\left(\bigoplus_{d\in\mathbb{N}^{I}} K(\mathbb{M}(d))\right),$$
where the rights hand side is an algebra called the deformed symmetric algebra which has relations:
$$x_{d,w}x_{e,v}=\left(x_{e,v}q^{f(e,d)}\right)\left(x_{d,w}q^{-g(e,d)}\right),$$ 
for $x_{d,w}\in K_0(\MM(d)_w)$ and $x_{e,v}\in K_0(\MM(e)_v)$, and factors $q^{f(e,d)}$ and $q^{-g(e,d)}$ which depend on the $d$ and $e$ only.
The same result holds for $\text{KHA}^{id}$.
\end{thm}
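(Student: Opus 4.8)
The plan is to deduce the K-theoretic PBW theorem from the semi-orthogonal decomposition $D_{sg}(\X(d)_0)=\langle\cdots,\oM(d)\rangle$ established in Section~\ref{5}, following the strategy of Davison--Meinhardt but with categories replacing sheaves. First I would pass to $K$-theory: since $K_0$ sends a semi-orthogonal decomposition to a direct sum, the decomposition of $D_{sg}(\X(d)_0)$ together with the further decomposition of the complement of $\oM(d)$ gives
\[
K_0(D_{sg}(\X(d)_0))=\bigoplus_{\dd} K_0\!\left(p_{\dd *}q_{\dd}^*\left(\oM(d_1)_{w_1}\boxtimes\cdots\boxtimes\oM(d_k)_{w_k}\right)\right),
\]
the sum over ordered partitions $\dd=(d_1,\ldots,d_k)$ of $d$ with the weights $w_i$ in the appropriate chamber. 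This is the analogue of the perverse filtration; the filtration $F$ on $\text{KHA}(Q,W)$ is the one whose associated graded is this direct sum, with the length-one pieces $K_0(\oM(d))$ further split as $K_0(\MM(d))$ plus the attracting-locus summands, using the decomposition of $K_0(\oM(d))$ recalled before the statement.

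The second step is to identify the multiplication on the associated graded. Here I would check that the correspondence defining the KHA product, $m=p_*q^*$ for $\X^{ss}(d)\times\X^{ss}(e)\xleftarrow{q}\X^{ss}(d,e)\xrightarrow{p}\X^{ss}(d+e)$, is strictly compatible with the filtration $F$, so that $\text{gr}^F m$ is well-defined. Compatibility should follow from the fact that the induction functors $p_{\dd *}q_{\dd}^*$ compose as expected: inducing up $\oM(d_1)\boxtimes\cdots$ and then multiplying by another induced object lands in the span of the further-refined induced objects, because the maps $p_{\dd}$ factor through intermediate partial-flag stacks. On the associated graded, the product of a generator $x_{d,w}\in K_0(\MM(d)_w)$ with $x_{e,v}\in K_0(\MM(e)_v)$ is by construction the induced class $p_{(d,e)*}q_{(d,e)}^*(x_{d,w}\boxtimes x_{e,v})$, and this is nonzero in the associated graded. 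Thus $\text{gr}^F\text{KHA}(Q,W)$ is generated as an algebra by $\bigoplus_d K_0(\MM(d))$.

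The third and most delicate step is to establish the defining relation
\[
x_{d,w}x_{e,v}=\left(x_{e,v}q^{f(e,d)}\right)\left(x_{d,w}q^{-g(e,d)}\right)
\]
in $\text{gr}^F$. The point is that both $x_{d,w}x_{e,v}$ and $x_{e,v}x_{d,w}$, after inducing up from $\X(d,e)$ respectively $\X(e,d)$ to $\X(d+e)$, land in the \emph{same} piece of the filtration — the one indexed by the two-step partition — but the two induced objects differ by a change of $G(d+e)$-chamber, i.e.\ by reordering the Levi blocks. Comparing them amounts to computing the Euler class in $K$-theory of the relevant normal bundle of the attracting loci, which differs between the two orderings by the $K$-theoretic weight of $\Hom(d,e)\oplus\Hom(e,d)$ inside $R(d+e)$; this produces the scalars $q^{f(e,d)}$ and $q^{-g(e,d)}$, which depend only on the Euler form of $Q$ evaluated on $(d,e)$ (here symmetry of $Q$ and triviality of $\theta$ are used so that the chamber combinatorics are clean and the BPS categories $\MM(d)$ are stable under the reordering). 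The main obstacle is exactly this: making precise the $K$-theoretic comparison of the two induction functors across the wall in $M_{\mathbb{R}}$, i.e.\ a categorical/$K$-theoretic version of the cohomological computation in \cite[Theorem C]{dm} of how the two orderings of a shuffle product differ, and checking that the correction factor is a genuine scalar (a power of $q$) rather than a more complicated $K$-theory class. Once this is in hand, the deformed symmetric algebra $\text{dSym}$ is generated by $\bigoplus_d K_0(\MM(d))$ subject to exactly these relations, so the surjection $\text{dSym}\left(\bigoplus_d K_0(\MM(d))\right)\twoheadrightarrow\text{gr}^F\text{KHA}(Q,W)$ is an isomorphism by comparing graded pieces with the direct-sum decomposition of the first step. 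The argument for $\text{KHA}^{id}$ is identical, using that idempotent completion does not change $K_0$ up to the relevant group and that the semi-orthogonal decompositions pass to idempotent completions.
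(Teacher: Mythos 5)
Your proposal has the right overall shape: pass the semi-orthogonal decomposition to $K$-theory, put a filtration on $\text{KHA}$ whose bottom piece is $K_0(\MM(d))$, extract the commutation relation from a mutation comparing $p_{d,e*}q_{d,e}^*$ with $p_{e,d*}q_{e,d}^*$ twisted by $\det\mathcal{N}_{e,d}$, and then match $\text{gr}^F\text{KHA}$ with $\text{dSym}$. The paper's Theorem~\ref{mut} is precisely the natural map you sketch, together with the crucial estimate that its cone lands in strictly lower filtration.

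There are, however, three genuine gaps in the proposal. First, the filtration is not "the one whose associated graded is the direct sum over ordered partitions": the SOD summands are indexed by \emph{admissible sets} $\{(d_1,w_1),\dots,(d_k,w_k)\}$ with $r>\tfrac12$, a notion controlled by the $r$-invariant, the polytope $\overline{\mathbb{W}}$, and the standard form of a weight (Subsections~\ref{polytope}--\ref{admissible}); the filtration is then a graph distance (Definitions~\ref{filtration} and~\ref{secondfiltrations}). Without this structure you cannot make "both products land in the same filtration piece and differ by a wall crossing" precise, and you cannot prove that the cone of the mutation map is strictly lower — that estimate rests on Propositions~\ref{rgoesdown} and~\ref{filtcomp}, not on generalities about induction. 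Second, you treat the replacement of $\oM(d)$ by $\MM(d)$ as already done, but the decomposition $\bigoplus_S(\bigoplus_{\mathfrak{S}_k}\boxtimes_i K_0(\MM(d_i)_{w_i}))^{\mathfrak{S}_k}\cong K_0(\oM(d))$ (Theorem~\ref{decomp}) is a substantial step in its own right, using localization (Theorem~\ref{loc}), the functors $\Phi_S$, and the categories $\mathcal{A}_S$ that define $\MM(d)$; it is exactly what is needed to prove the relation in the boundary case $r=\tfrac12$ (Corollary~\ref{change}), a case your proposal never separates out from $r>\tfrac12$ even though the arguments are different. Third, "the surjection $\text{dSym}\twoheadrightarrow\text{gr}^F\text{KHA}$ is an isomorphism by comparing graded pieces" hides a nontrivial combinatorial lemma (Proposition~\ref{PBWdef}): one must show that every set of pairs is conjugate, under the defining moves of $\text{dSym}$, to an admissible set, so that the two sides really have the same set of basis monomials; this is proved by an induction involving the minimization of the $r$-invariant over orderings, and without it you cannot upgrade the surjection to an isomorphism.
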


Using semi-orthogonal decompositions, we also show a wall-crossing theorem that explains how $\text{KHA}(Q,W)$ decomposes into smaller algebras with fixed slope $\text{KHA}(Q,W,\mu)$. 
\\

\subsection{Comparison between KHA and CoHA} Next, we compare the $\text{KHA}$ and the $\text{CoHA}$. We assume that $W$ is homogeneous with respect to some nonnegative weights associated to the edges.
In section \ref{ch}, we construct a Chern character: 
$$\text{ch}: K_0(D_{sg}(\X(d)_0))\to H^{\cdot}(\X(d),\varphi_{\text{Tr}\,W}\mathbb{Q})^{\wedge}$$ with values in the completion of vanishing cohomology. We expect it to be related to the Chern character for the category $D_{sg}(\X(d)_0)$: 
$$\text{ch}: K_0(D_{sg}(\X(d)_0))\to HP(D_{sg}(\X(d)_0)).$$
Let $\text{gr}\,\text{KHA}$ be the associated graded with respect to the cohomological degree filtration.

\begin{thm}
The Chern character induces a bialgebra morphism: 
$$\text{ch}:\text{gr}\, \text{KHA}(Q,W)\to \text{CoHA}(Q,W)^{\wedge}.$$
\end{thm}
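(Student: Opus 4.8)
## Proof Proposal

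The plan is to verify that the Chern character $\mathrm{ch}: K_0(D_{sg}(\X(d)_0))\to H^{\cdot}(\X(d),\varphi_{\mathrm{Tr}\,W}\mathbb{Q})^{\wedge}$ is compatible with the structure maps that define the bialgebra structures on both sides, namely the product $m=p_*q^*$ and the coproduct constructed in Section \ref{bialgebra2}, after passing to the associated graded $\mathrm{gr}\,\mathrm{KHA}$ with respect to the cohomological-degree filtration. The first step is to recall the functoriality properties of the Chern character that make this plausible: for a proper map $f$ of (quasi-smooth) stacks, $\mathrm{ch}$ commutes with $f_*$ up to a Todd-class correction, and for a quasi-smooth (or at least lci) map $g$ it commutes with $g^*$, again up to a Todd correction. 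In our setting $q$ is smooth and $p$ is proper, both between the relevant critical loci, so we get $\mathrm{ch}\circ p_* = (p_*\circ (\mathrm{td}\cdot(-)))\circ\mathrm{ch}$ and $\mathrm{ch}\circ q^* = (\mathrm{td}'\cdot(-))\circ q^*\circ\mathrm{ch}$. The key point is that the product in $\mathrm{CoHA}$ is the \emph{twisted} pushforward-pullback incorporating exactly these Euler/Todd-class twists (this is the ``twisted version'' referenced in the Davison–Meinhardt PBW discussion), so the correction terms are precisely absorbed into the definition of the CoHA multiplication. One must check that the vanishing-cycle Chern character constructed in Section \ref{ch}, which lands in vanishing cohomology rather than ordinary cohomology of the smooth ambient stack, still enjoys these functorialities — this should follow from the compatibility of $\varphi_{\mathrm{Tr}\,W}$ with proper pushforward and (via the dimensional/Thom–Sebastiani apparatus) with the relevant pullbacks, together with the homogeneity assumption on $W$ which is what allows the Chern character to be defined in the first place.

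The second step is the filtration compatibility. The Chern character of a class in $K_0(D_{sg}(\X(d)_0))$ is a (completed) sum of homogeneous components in various cohomological degrees; filtering $\mathrm{KHA}$ by cohomological degree and filtering $\mathrm{CoHA}$ by its natural cohomological grading, $\mathrm{ch}$ becomes a filtered map, and on the associated graded it becomes the genuinely graded ``leading term'' map. Passing to $\mathrm{gr}$ is essential because the Todd corrections above are themselves sums of terms of strictly positive cohomological degree beyond the leading degree-zero term; on the associated graded only the leading term survives, but — and this is the subtle bookkeeping — the CoHA product is \emph{defined} with the twist already built in, so on $\mathrm{gr}\,\mathrm{KHA}$ the map $\mathrm{ch}$ intertwines the (untwisted leading part of the) KHA product with the twisted CoHA product. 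I would spell out this degree count carefully: show that $m_{\mathrm{CoHA}}(\mathrm{ch}(a),\mathrm{ch}(b)) - \mathrm{ch}(m_{\mathrm{KHA}}(a,b))$ lies in strictly higher filtration than the leading term of $\mathrm{ch}(m_{\mathrm{KHA}}(a,b))$, hence vanishes in $\mathrm{gr}$.

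The third step is the coalgebra side. The coproduct on $\mathrm{KHA}$ from Section \ref{bialgebra2} is again built from a correspondence (a localized or non-localized restriction-to-a-Levi construction, as in Davison's localized coproduct for $\mathrm{CoHA}$), so the same functoriality-plus-twist argument applies; one additionally needs that $\mathrm{ch}$ is compatible with the external (Künneth/Thom–Sebastiani) products $K_0(D_{sg}(\X(d)_0))\otimes K_0(D_{sg}(\X(e)_0))\to K_0(D_{sg}(\X(d)_0\times\X(e)_0))$ and their cohomological counterparts, which is a standard multiplicativity property of the Chern character combined with the Thom–Sebastiani isomorphism for vanishing cycles. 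Finally, one checks the braiding / compatibility datum: the braided-bialgebra axioms in Theorem \ref{thm:1} involve a sign or a $q$-type twist, and one must confirm $\mathrm{ch}$ sends the KHA braiding to the CoHA braiding (the latter being a pure sign coming from the $\Z/2$-grading on vanishing cohomology), which follows from tracking parities through the Chern character.

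The main obstacle I anticipate is the \emph{precise matching of the Todd/Euler-class twists} between the two sides — ensuring that the twist appearing in the definition of $\mathrm{ch}$ (coming from the quasi-smooth structure on the critical locus and the need to compare $K$-theory of $D_{sg}$ with cohomology of a shifted perverse sheaf) exactly matches, on the associated graded, the twist built into the Kontsevich–Soibelman CoHA product. This is where the homogeneity hypothesis on $W$ is doing real work, since it is what pins down the virtual dimensions and hence the degree-zero normalization. A secondary difficulty is that $\mathrm{ch}$ genuinely takes values in a \emph{completion} of vanishing cohomology, so at several points one must argue that the relevant identities, a priori only valid after completion, already descend to $\mathrm{gr}$ where everything is finite in each degree; this requires checking that the filtrations on both sides are exhaustive and separated degree-by-degree, which is immediate for $\mathrm{CoHA}$ and needs the boundedness of cohomological degrees coming from the finite-dimensionality of the relevant cohomology groups.
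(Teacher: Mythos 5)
Your overall strategy -- prove that $\text{ch}$ on the associated graded commutes with each of the structure maps (proper pushforward $p_*$, smooth pullback $q^*$, closed restrictions, Thom--Sebastiani), then observe that $m$, $\Delta$, and $R$ are built from exactly these functors -- is essentially the paper's proof (Proposition \ref{grr} followed by the Corollary closing Section \ref{ch}). Passing to the associated graded is what makes the Grothendieck--Riemann--Roch Todd correction disappear, since $\text{td}=1+(\text{higher degree})$, and you correctly identify this in your second step via the observation that the discrepancy lies in strictly higher filtration.

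However, your first step contains a concrete error that you then fail to reconcile with your second step. You assert that the CoHA product is a ``twisted pushforward-pullback incorporating exactly these Euler/Todd-class twists'' and that this is ``the twisted version referenced in the Davison--Meinhardt PBW discussion.'' Both halves of this claim are wrong. The Kontsevich--Soibelman CoHA multiplication as used in this paper is plain $m=p_{d,e*}q_{d,e}^*$ with no cohomological twist (Subsection \ref{CoHA}), and the $\psi$-twist in Definition \ref{supercom} is a \emph{sign} bookkeeping device (a bilinear map into $\Z/2\Z$ making the associated graded supercommutative) -- it has nothing to do with Todd or Euler classes and plays no role until the PBW theorem, not in the statement you are proving. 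So there is no twist in CoHA for the GRR Todd classes to be ``absorbed into,'' and the ``main obstacle'' you flag -- precisely matching a twist in $\text{ch}$ against a twist built into the CoHA product -- is a phantom: there is nothing to match. The only mechanism at work is the one from your second step; you should drop the twisted-CoHA story entirely and commit to the degree-filtration argument, which is sound and is what the paper actually does. A minor point: the bulk of the paper's Proposition \ref{grr} is a careful case analysis depending on whether the source and target potentials are zero or nonzero, which you omit; this is where the technical content of the functoriality claims lives, since when one side has vanishing potential the Chern character is built differently (through $K_0(\X)$ rather than $G_0(\X_0)$), and the compatibility is not a one-line citation of GRR.
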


Next, we focus on the study of $\text{gr}\,\text{KHA}$. The first result is that it satisfies a PBW theorem:

\begin{thm}
There exists a filtration $E^{\cdot}$ on (a twisted version of) $\text{gr}_I\text{KHA}$ induced by the semi-orthogonal decomposition from Section \ref{5} such that the associated graded is a symmetric algebra: $$\text{gr}^E\,\text{gr}\,\text{KHA}\,(Q,W)= \text{Sym}\,\left(\bigoplus_{d\in\mathbb{N}^{I}} \text{gr}\,K(\mathbb{M}_0(d))[\![u]\!]\right).$$ As a corollary, there exists a Lie algebra, called the $\text{KBPS}$ Lie algebra, on the first piece of the filtration $E^{\leq 1}=\bigoplus_{d\in\mathbb{N}^{I}} \text{gr}\,K(\mathbb{M}_0(d))$.
\end{thm}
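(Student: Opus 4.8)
The plan is to obtain the PBW theorem for $\operatorname{gr}\operatorname{KHA}$ as a formal consequence of the KHA-level PBW theorem proved earlier, by passing to the associated graded with respect to the cohomological degree filtration and checking that the $q$-deformation in the ``deformed symmetric algebra'' degenerates to an honest symmetric algebra. Concretely, the earlier theorem gives $\operatorname{gr}^F\operatorname{KHA}(Q,W)=\operatorname{dSym}\big(\bigoplus_d K(\mathbb{M}(d))\big)$ with commutation relations $x_{d,w}x_{e,v}=\big(x_{e,v}q^{f(e,d)}\big)\big(x_{d,w}q^{-g(e,d)}\big)$; the class $q$ arises from $H^{\cdot}(B\C^*)=\Q[\![u]\!]$ (or its $K$-theoretic incarnation $\Q[q^{\pm}]$), so it carries positive cohomological degree, and the displayed relation says precisely that $x_{d,w}$ and $x_{e,v}$ commute up to terms of strictly higher cohomological degree. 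Thus after taking $\operatorname{gr}$ with respect to the cohomological degree filtration, the correction factors $q^{f(e,d)}$, $q^{-g(e,d)}$ become central and the relations collapse to strict (super)commutativity, giving $\operatorname{Sym}$ of the associated graded of the generating space.

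First I would set up the cohomological degree filtration on $\operatorname{KHA}(Q,W)$ coming from the Chern character $\operatorname{ch}\colon K_0(D_{sg}(\X(d)_0))\to H^{\cdot}(\X(d),\varphi_{\operatorname{Tr}W}\Q)^{\wedge}$ of the previous section, and record that it is a filtration by subalgebras (using that $\operatorname{ch}$ is a bialgebra morphism after passing to $\operatorname{gr}$, which is the content of the Chern character comparison theorem). Second, I would verify that this filtration is compatible with the filtration $F^{\cdot}$ induced by the semi-orthogonal decomposition of Section~\ref{5}: the two filtrations commute, so one gets a bifiltration, and the associated graded $\operatorname{gr}^E\operatorname{gr}\operatorname{KHA}$ can be computed by taking the two associated gradeds in either order. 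Third, I would take $\operatorname{gr}^F$ first, landing in $\operatorname{dSym}\big(\bigoplus_d K(\mathbb{M}(d))\big)$, then take the cohomological $\operatorname{gr}$ of this: here one uses that $K(\mathbb{M}(d))$ carries its own cohomological filtration whose associated graded is $\operatorname{gr}K(\mathbb{M}_0(d))[\![u]\!]$ (the extra $[\![u]\!]$ coming from the $B\C^*$-factor visible already in the cohomological PBW theorem of Davison--Meinhardt), and that the $q$-factors in the deformed relations have positive $u$-degree, hence die in $\operatorname{gr}$. The upshot is $\operatorname{Sym}\big(\bigoplus_d \operatorname{gr}K(\mathbb{M}_0(d))[\![u]\!]\big)$, as claimed; the Lie algebra corollary on $E^{\leq 1}$ is then the standard primitive-elements / bracket-from-associative-product argument using the braided bialgebra structure of Theorem~\ref{thm:1}, the bracket being $[x,y]=xy-yx$ read off at the top of the filtration.

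The main obstacle I expect is the bookkeeping around the interaction of the two filtrations and, relatedly, the precise appearance of the power series variable $u$. One has to be careful that the cohomological degree filtration is genuinely a multiplicative filtration on $\operatorname{KHA}$ (not merely after $\operatorname{gr}$), or else work throughout with $\operatorname{gr}\operatorname{KHA}$ from the start and show the semi-orthogonal filtration descends to it; the phrasing ``a twisted version of $\operatorname{gr}_I\operatorname{KHA}$'' signals that a sign/shift twist — as in the cohomological case, where one twists the CoHA product to make the associated graded honestly (super)commutative rather than merely braided-commutative — is needed to kill the braiding, and identifying the correct twist is the delicate point. I would also need to check that the degeneration of $\operatorname{dSym}$ to $\operatorname{Sym}$ is not obstructed by convergence issues in the completion $H^{\cdot}(\cdots)^{\wedge}$, i.e.\ that passing to $\operatorname{gr}$ is compatible with the completion; this should follow because each graded piece of $\operatorname{KHA}$ in fixed dimension vector $d$ has cohomological degrees bounded below, so the filtration is exhaustive and separated in each degree. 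Everything else — flatness of $\operatorname{dSym}$ over $\Q[q^{\pm}]$, the identification $K(\mathbb{M}(d))\rightsquigarrow \operatorname{gr}K(\mathbb{M}_0(d))[\![u]\!]$, the Lie bracket — is routine once the filtrations are correctly aligned.
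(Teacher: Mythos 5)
Your proposal captures the central mechanism correctly: since $q=1+h$ with $h=q-1$ of cohomological degree $2$, the deformation factors $q^{f(e,d)}$, $q^{-g(e,d)}$ in the relation from Theorem~\ref{thm5} are congruent to $1$ modulo higher cohomological degree, so the deformed (super)commutativity collapses to honest supercommutativity after twisting by $\psi$. This is essentially what Proposition~\ref{commutative} in the paper makes precise. However, two steps you dismiss as routine are exactly where the paper's proof does real work, and a third point in your setup is imprecise.

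First, the filtration $E^{\cdot}$ of Definition~\ref{E} is not the filtration $F^{\cdot}$ passed through $\mathrm{gr}_I$, nor does the paper ever form a commuting bifiltration. Rather, $E^{\cdot}$ is built \emph{after} using Proposition~\ref{lemmaimp} to identify $\mathrm{gr}\,K_0(\MM(d))\cong \mathrm{gr}\,K_0(\MM(d)_0)[\![h_d]\!]$, and then assigning each generator $x_d h_d^i$ the degree $1+2i$. That extra $+2i$ — which records the $h_d$-power, i.e.\ the $u$-degree on the right-hand side — is not visible in the length filtration $F^{\cdot}$ (where $x_d h_d^i$ has $F$-length $1$ for every $i$), so describing $E$ as the residue of $F$ on $\mathrm{gr}_I\text{KHA}$ produces a coarser filtration that would not yield $\mathrm{Sym}$ as the associated graded. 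You need to construct the finer $E^{\cdot}$ explicitly.

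Second, the identification $\mathrm{gr}\,K_0(\MM(d))\cong \mathrm{gr}\,K_0(\MM(d)_0)[\![h_d]\!]$ (Proposition~\ref{lemmaimp}) is not routine: it requires factoring through the groups $S(d)=\ker(\det)$ and $PG(d)=G(d)/\C^*$ and comparing with vanishing cohomology via the Chern character, exploiting that $\mathrm{gr}\,K_0(BPL)\cong H^{\cdot}(BPL)\cong H^{\cdot}(BSL)\cong \mathrm{gr}\,K_0(BSL)$ for Levi subgroups $L$. Third, and most seriously, your argument only produces a \emph{surjection} $\mathrm{Sym}\bigl(\bigoplus \mathrm{gr}\,K_0(\MM(d)_0)[\![h_d]\!]\bigr)\twoheadrightarrow \mathrm{gr}^E\mathrm{gr}\,\text{KHA}$; the injectivity (Proposition~\ref{injec}) is proved in the paper by pushing forward along the Chern character and invoking Proposition~\ref{filtchern}, which in turn rests on the injectivity of the pushforward $p_*\colon H^{\cdot}(\Y(\dd),\varphi\Q)\to H^{\cdot}(\Y(d),\varphi\Q)$ and ultimately the Davison--Meinhardt cohomological PBW theorem. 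Without an argument for injectivity the theorem is not established, and I do not see how to obtain it purely within $K$-theory.
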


We next show that the two filtrations $E^{\cdot}$ on $\text{gr}\,\text{KHA}$ and $P^{\cdot}$ on $\text{CoHA}$ are compatible. This result is surprising because $E^{\cdot}$ is defined using semi-orthogonal decompositions, while $P^{\cdot}$ is defined using the topology of the Hilbert-Chow morphism $\pi:\X(d)\to X(d)$. We show this statement by observing that $E^{\leq 1}$ and $P^{\leq 1}$ are the subspaces of primitive elements in the two algebras. In order to show that $E^{\leq 1}$ is primitive for the coproduct defined in Section \ref{bialgebra}, we give an equivalent construction of the coproduct using semi-orthogonal decompositions in Subsection \ref{sodcop}.

\begin{thm}
The filtration $E^{\cdot}$ on $\text{gr}\, KHA$ coming from the semi-orthogonal decomposition and the perverse filtration $P^{\cdot}$ on $H^{\cdot}(\X(d),\varphi_{\text{Tr}\,W}\mathbb{Q})$ are compatible via $\text{ch}$, so we obtain a Lie algebra morphism: $$\text{ch}:\text{KBPS}\to \text{BPS}.$$
\end{thm}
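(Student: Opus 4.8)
The plan is to reduce the statement to the previous theorem (compatibility of the Chern character with bialgebra structures) together with the PBW theorems for both $\text{gr}\,\text{KHA}$ and $\text{CoHA}$, by identifying the source and target Lie algebras as spaces of primitive elements. Concretely, recall from the excerpt that $E^{\leq 1}=\bigoplus_{d} \text{gr}\,K(\MM_0(d))$ is the first piece of the filtration $E^\cdot$ on $\text{gr}\,\text{KHA}$, and it carries the $\text{KBPS}$ Lie bracket obtained from the associated graded being a symmetric algebra; similarly $P^{\leq 1}=\bigoplus_d H^\cdot(X^{ss}(d),\varphi_{\text{Tr}\,W}IC_d)$ carries the $\text{BPS}$ Lie bracket of Davison--Meinhardt. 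So to get a Lie algebra map $\text{ch}:\text{KBPS}\to\text{BPS}$, it suffices to show that $\text{ch}$ sends $E^{\leq 1}$ into $P^{\leq 1}$, and that this restriction is compatible with the brackets.

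First I would establish the key structural input: in both the $\text{gr}\,\text{KHA}$ and the $\text{CoHA}$, the subspace $E^{\leq 1}$ (resp. $P^{\leq 1}$) coincides with the space of \emph{primitive} elements for the respective coproduct. For $\text{CoHA}$ this is essentially Davison--Meinhardt's PBW/Cartier-type statement: the associated graded for the perverse filtration is $\text{Sym}(P^{\leq 1}\otimes H(B\C^*))$ with $P^{\leq 1}$ the primitives, and the coproduct is cocommutative up to the braiding so that the Milnor--Moore/Cartier description of primitives applies to a suitable (super)commutative Hopf-algebra completion. For $\text{gr}\,\text{KHA}$ the analogous statement follows from the PBW theorem stated above, $\text{gr}^E\,\text{gr}\,\text{KHA}=\text{Sym}\bigl(\bigoplus_d \text{gr}\,K(\MM_0(d))[\![u]\!]\bigr)$, \emph{provided} one knows the coproduct restricts correctly to the filtration pieces — this is exactly why the excerpt emphasizes the alternative construction of the coproduct via semi-orthogonal decompositions in Subsection \ref{sodcop}: that description makes transparent that $E^{\leq 1}=\bigoplus_d \text{gr}\,K(\MM_0(d))$ is annihilated by the reduced coproduct, i.e. is primitive, because the off-diagonal SOD factors $p_{\dd *}q_{\dd}^*(\cdots)$ detect exactly the non-primitive directions.

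Next, granting that $\text{ch}$ is a bialgebra morphism (the previous theorem), it automatically sends primitives to primitives, hence $\text{ch}(E^{\leq 1})\subseteq P^{\leq 1}$; combined with the identification of primitives this is precisely the compatibility of the two filtrations in degree $\le 1$. The bracket compatibility is then formal: in any (braided, super)bialgebra the commutator $[x,y]=xy-(\text{braiding applied})yx$ of two primitives is again primitive and agrees with the intrinsic Lie bracket coming from the PBW/symmetric-algebra description, and a bialgebra morphism intertwines these commutators; therefore $\text{ch}|_{E^{\leq 1}}$ is a Lie algebra homomorphism $\text{KBPS}\to\text{BPS}$. One should check the signs/braiding factors $q^{f},q^{-g}$ appearing in the deformed symmetric algebra degenerate, under passing to $\text{gr}$ with respect to cohomological degree, to the supercommutative braiding on $\text{CoHA}$, so that ``commutator'' means the same thing on both sides after applying $\text{ch}$; this is where the homogeneity hypothesis on $W$ and the grading conventions enter.

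The main obstacle I expect is the primitivity statement for $E^{\leq 1}$ in $\text{gr}\,\text{KHA}$, i.e. genuinely matching the filtration coming from the semi-orthogonal decomposition of $D_{sg}(\X(d)_0)$ with the coalgebra filtration by primitives. Unlike the perverse filtration, which is manifestly coalgebra-compatible by functoriality of the Hilbert--Chow map, the SOD filtration is a priori only an additive/K-theoretic gadget, so one must verify that the comultiplication $K_0(D_{sg}(\X(d)_0))\to\bigoplus_{d_1+d_2=d}K_0(D_{sg}(\X(d_1)_0))\otimes K_0(D_{sg}(\X(d_2)_0))$ is \emph{strict} for $E^\cdot$ and that its degree-$(\ge 1,\ge 1)$ part is injective on $E^{\ge 2}/E^{\le 1}$; this is exactly the content that the SOD-based construction of the coproduct in Subsection \ref{sodcop} is designed to supply, and invoking it carefully — together with the symmetric-algebra conclusion of the KHA PBW theorem, which forces the primitives to be no larger than $E^{\leq 1}$ — is the crux. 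Once that is in place, everything else is a formal consequence of ``a bialgebra map preserves primitives and their commutators.''
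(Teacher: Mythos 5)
Your proposal matches the paper's proof: the paper establishes that $E^{\leq 1}$ and $P^{\leq 1}$ are exactly the primitive elements (Propositions \ref{Kprimitive}–\ref{primitivecoha}, using the SOD description of the coproduct from Subsection \ref{sodcop} and the PBW theorems), then concludes via the bialgebra-morphism property of $\text{ch}$, extending from degree $\leq 1$ to all degrees by multiplicativity. Your identification of the primitivity of $E^{\leq 1}$ as the crux, and of the SOD-based coproduct as the tool that supplies it, is precisely the paper's strategy.
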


%For $Q$ a quiver with no loops or $Q$ the Jordan quiver, the above map is an isomorphism for the tripled quiver:$$\text{ch}: KBPS(\widetilde{Q},\widetilde{W})\cong BPS(\widetilde{Q},\widetilde{W}).$$
When $Q$ is arbitrary and $W=0$, the Chern character $\text{ch}:\text{gr}\,\text{KHA}\to\text{CoHA}$ induces an isomorphism of bialgebras and $\text{ch}:\text{KHA}\to \text{CoHA}$ is injective, so we can recover part the intersection cohomology of the coarse spaces $X(d)$ using the category $\MM(d)$:

\begin{cor}
The Chern character map $\text{ch}: \text{gr}\,K_0(\MM(d))\to IH^{\cdot}(X(d))$ is an isomorphism. Thus:
$$\text{dim}\,K_0(\MM(d))=\text{dim}\,\text{gr}\,K_0(\MM(d))=\text{dim}\,IH^{\text{even}}(X(d)).$$
\end{cor}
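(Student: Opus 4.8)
The plan is to combine the isomorphism of bialgebras $\text{ch}:\text{gr}\,\text{KHA}(Q,0)\xrightarrow{\sim}\text{CoHA}(Q,0)$ (valid for $W=0$) with the compatibility of the filtrations $E^\bullet$ and $P^\bullet$ established in the previous theorem. First I would recall that for $W=0$ the perverse filtration on $\text{CoHA}(Q,0)$ has associated graded $\text{Sym}\big(\bigoplus_d IH^\bullet(X(d))\otimes H(B\C^*)\big)$ by the Davison--Meinhardt PBW theorem, with the degree-$\le 1$ piece being $\bigoplus_d IH^\bullet(X(d))$. On the K-theoretic side, the PBW theorem for $\text{gr}\,\text{KHA}$ gives $\text{gr}^E\,\text{gr}\,\text{KHA}(Q,0)=\text{Sym}\big(\bigoplus_d \text{gr}\,K_0(\MM_0(d))[\![u]\!]\big)$ with $E^{\le 1}=\bigoplus_d \text{gr}\,K_0(\MM_0(d))$.

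Next I would invoke the Lie algebra morphism $\text{ch}:\text{KBPS}\to\text{BPS}$ from the preceding theorem: since $\text{ch}$ is an isomorphism of bialgebras and both $E^{\le 1}$ and $P^{\le 1}$ are exactly the subspaces of primitive elements (this is precisely how the compatibility of the two filtrations was proved), the Chern character restricts to an isomorphism on primitives. Stripping off the free polynomial generator $u$ (equivalently, the factor $H(B\C^*)$ on the cohomology side, which matches the $[\![u]\!]$ on the K-theory side under $\text{ch}$ since $\text{ch}$ sends the diagonal $B\C^*$-equivariant parameter to the corresponding cohomological class), this gives an isomorphism $\text{ch}:\text{gr}\,K_0(\MM(d))=\text{gr}\,K_0(\MM_0(d))\xrightarrow{\sim}IH^\bullet(X(d))$. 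Since $X(d)$ is a (possibly singular) affine variety carrying a $\C^*$-action contracting it to the origin, its intersection cohomology is pure and concentrated in even degrees, so $IH^\bullet(X(d))=IH^{\text{even}}(X(d))$; this also identifies the source, which lives in even cohomological degree. The dimension count then follows, using additionally that $\text{ch}:\text{KHA}\to\text{CoHA}$ is injective for $W=0$, so that $\dim K_0(\MM(d))=\dim\text{gr}\,K_0(\MM(d))$: the associated-graded operation for a filtration on a finite-dimensional space preserves dimension, and injectivity of $\text{ch}$ on $K_0(\MM(d))$ shows $K_0(\MM(d))$ is finite-dimensional in the first place.

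The main obstacle I anticipate is bookkeeping the extra polynomial variable: one must check carefully that the isomorphism $\text{gr}^E\,\text{gr}\,\text{KHA}=\text{Sym}(\bigoplus_d\text{gr}\,K_0(\MM_0(d))[\![u]\!])$ matches $\text{gr}^P\,\text{CoHA}=\text{Sym}(\bigoplus_d IH^\bullet(X(d))\otimes H(B\C^*))$ in such a way that the generating spaces $E^{\le 1}$ and $P^{\le 1}$ correspond \emph{before} tensoring with the polynomial ring, and that $\text{ch}$ is compatible with the $[\![u]\!]$-module, resp. $H(B\C^*)$-module, structures. This is exactly the content packaged in the statement that $\text{ch}:\text{KBPS}\to\text{BPS}$ is a Lie algebra isomorphism together with the identification of primitives, so the argument is essentially formal once those inputs are in hand; the only genuinely geometric input is the purity and even-degree vanishing of $IH^\bullet(X(d))$ for the contracting affine coarse space $X(d)$, which is standard.
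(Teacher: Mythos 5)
Your proposal is essentially the paper's argument, and it is correct: the paper's own proof of the body-text version (Corollary 8.21) is the one-liner ``all the other statements are immediate,'' which you have usefully unpacked via the isomorphism of bialgebras for $W=0$, the identification of $E^{\leq 1}$ and $P^{\leq 1}$ as the primitives, and the purity of $IH^{\cdot}(X(d))$. One small caveat: the equality $\text{gr}\,K_0(\MM(d))=\text{gr}\,K_0(\MM(d)_0)$ you write in passing is not literal (Proposition \ref{lemmaimp} gives $\text{gr}\,K_0(\MM(d))\cong\text{gr}\,K_0(\MM(d)_0)[\![h_d]\!]$); the corollary's $\MM(d)$ is an abbreviation for $\MM(d)_0$, and once you note that $E^{\leq 1}=\bigoplus_d\text{gr}\,K_0(\MM(d)_0)$ and $P^{\leq 1}=\bigoplus_d IH^{\cdot}(X(d))$ are the primitives, the ``stripping off $u$'' step is unnecessary.
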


\subsection{Further directions}\label{further}
We expect to define a $\text{KHA}$ for any Calabi-Yau $3$-fold $X$, but the definition seems out of reach at the moment because there is no categorical/ K-theoretic replacement of the Joyce sheaf in general, even if there are replacements in the local case. However, using dimensional reduction, we expect to define the $\text{KHA}$ for $\text{Tot}\,(\omega_S)$ using the moduli stacks of sheaves on the surface $S$. We should thus obtain a $\text{KHA}$ for every surface $S$, including $\text{Tot}(\omega_C)$ for $C$ a curve. In future work, we hope to prove a PBW theorem and define a $\text{KBPS}$ Lie algebra for these geometries, which might require replacing $K_0$ with coarser invariants, such as $K_0^{\text{num}}$. 

It is interesting to see whether, for a curve $C$, the KBPS Lie algebras for the local surfaces $\text{Tot}\,(\omega_C)$ satisfy any of the properties in \cite[Section 8]{sch}. It is also interesting to see whether there any similarities between the KBPS of a surface and the Lie algebras defined by Looijenga-Lunts \cite{ll} and Verbitsky \cite{ver}.
\\

There are other Hall algebras constructions in the literature using K-theory, for example those of Kapranov-Vasseot \cite{kv}, Negu\c t \cite{ne2}, Yang-Zhao \cite{yz}, Zhao \cite{z}. We expect these constructions to be related to the $\text{KHA}$ above. 
\\

As we mentioned in the introduction, we expect the $\text{KHA}$ for a tripled quiver $(\widetilde{Q},\widetilde{W})$ to be related to the positive parts of the quantum groups constructed by Okounkov and Smirnov \cite{os}. In particular, we expect $\text{KHA}(\widetilde{Q},\widetilde{W})$ to be
the positive part of a quantum group, and it is an interesting problem to construct the full quantum group using Hall algebras. 
Further, it is natural to guess that $\text{KHA}(Q,W)$ is the positive part of a quantum group for more general pairs $(Q,W)$. 
\\

In future work, we plan to compute explicitly some the categories $\mathbb{M}(d)$ for the tripled quiver $Q_3$ with potential $W=xyz-xzy$, which give the categorical BPS invariants for $\C^3$. There are other CY$3$ geometries which can be described globally using quiver with potentials, such as the orbifold $\C^3/(\mathbb{Z}/n\mathbb{Z})$, $\text{Tot}_{\mathbb{P}^1}(\OO(-1)\oplus \OO(-1))$, and $\text{Tot}\,(\omega_{\mathbb{P}^2})$, and it will be interesting to compute the categories $\mathbb{M}(d)$ in these cases as well and see how they compare to be the motivic or cohomological DT invariants.
\\

We expect the KBPS and BPS Lie algebras for a tripled quiver $(\widetilde{Q}, \widetilde{W})$ to be isomorphic. For this, we need to show that the Chern character map $\text{ch}: G_0(\mathcal{P}(d))\to H^{BM}_{\cdot}(\mathcal{P}(d))$ is an isomorphism, where $\mathcal{P}(d)$ is the stack of representations of dimension $d$ of the preprojective algebra of $Q$. This expectation is motivated by a theorem of Davison \cite{d2} which says that the cohomology with compact supports of $\mathcal{P}(d)$ is pure, of Tate type.
\\

A striking feature of the symmetric quiver geometries is the existence of categorifications of the intersection cohomology of $X(d)$ and of the decomposition theorem for the map $\pi:\X(d)\to X(d)$. It is interesting to see other examples where these constructions can be categorified.

\subsection{Outline of the paper} In Section \ref{2} we review the notions needed about quivers with potentials, derived categories, and geometric invariant theory. We discuss some features of critical K-theory, for example the Thom-Sebastiani theorem, dimensional reduction, and localization.

In Section \ref{3}, we define the multiplications on $\text{KHA}$, $\text{KHA}^{\text{id}}$, and on their $T$-equivariant versions, and show they are associative. We also show that the product can be defined using $T(d)$-equivariant K-theory.

In section \ref{bialgebra2}, we define a coassociative coproduct on $\text{KHA}$ and a braiding $R$. We then show that the product, coproduct, and braiding define a braided bialgebra structure on $\text{KHA}$. We also show that the analogous result holds for $\text{CoHA}$.

In section \ref{ch}, we define a Chern character $\text{ch}:K_0(D_{sg}(\X(d)_0))\to H^{\cdot}(\X(d),\varphi_{\text{Tr}\,W}\mathbb{Q})^{\vee}$ and show that $\text{ch}$ defines a bialgebra morphism $\text{ch}:\text{gr}\,\text{KHA}\to\text{CoHA}.$

In section \ref{4}, we prove a wall-crossing formula for $\text{KHA}$ and show that the analogue of a theorem of \cite{r} holds in K-theory.

From section \ref{5} on, we assume that $Q$ is symmetric and that the stability condition $\theta$ is zero. In section \ref{5}, we prove the PBW theorem for $\text{KHA}$ and $\text{KHA}^{\text{id}}$. For more details about the proof, see the first page of that section.

In section \ref{8}, we discuss properties of  $\text{gr}\,\text{KHA}$. We first provide a different construction of the coproduct of $\text{KHA}$ using semi-orthogonal decompositions. 
We prove a PBW theorem for $\text{gr}\,\text{KHA}$, and deduce as a corollary the existence of a $\text{KBPS}$ Lie algebra. We show that $\text{BPS}$ and $\text{KBPS}$ are the space of primitives of the bialgebras $\text{CoHA}$ and $\text{KHA}$, respectively.
This implies that the filtrations on $\text{gr}\,\text{KHA}$ and $\text{CoHA}$ are compatible, and thus that the Chern character induces a Lie algebra morphism $\text{ch}: \text{KBPS}\to \text{BPS}$. 

In Section \ref{6}, we compute examples of $\text{KHA}$. We discuss the zero potential case, when $\text{KHA}$ has a shuffle product description, and the tripled Dynkin type A and Jordan cases, when the $\text{KHA}$s are affine quantum groups. 

In Section \ref{7}, we construct representations of $\text{KHA}$ using framed quivers, and explain how starting from a quiver $Q$ we can construct a pair $(\widetilde{Q},\widetilde{W})$ such that $\text{KHA}(\widetilde{Q},\widetilde{W})$ acts on the K-theory of the Nakajima quiver varieties of $Q$. 
\\

\subsection{Acknowledgements} I would like to thank my PhD advisor Davesh Maulik for suggesting the problems discussed in the present paper and for his constant help and encouragement throughout the project. Many thanks to Ben Davison and Daniel Halpern-Leistner for answering my questions related to their work. I would like to thank Pavel Etingof, Andrei Okounkov, and Andrei Negu\c{t} for useful conversations about the project.
\\

\subsection{Notations} All the schemes and stacks considered are over $\mathbb{C}$. We will denote by $(Q,W)$ a quiver with potential, by $d\in\mathbb{N}^I$ a dimension vector, and by $\theta\in\mathbb{Q}^I$ a King stability condition.

We denote by $\mathcal{X}(d)=R(d)/G(d)$ the moduli of representations of $Q$ of dimension $d$. We also consider the related stacks $\Y(d)=R(d)/T(d)$ and $\mathcal{Z}(d)=R(d)/B(d)$, where $T(d)\subset B(d)\subset G(d)$ is a compatible choice under multiplication of maximal torus and Borel of $G(d)$. We denote by $X(d)$ the coarse space of $\X(d)$. We use the notation $\mathcal{K}$ for a Koszul stack and $\mathcal{P}$ for the stack of representations of a preprojective algebra. We will use Roman letters $X$ etc. for schemes , and calligraphic letters $\X$ etc. for stacks. 

The maps related to attracting loci from geometric invariant theory are denoted by $p$ for the proper map and $q$ for the affine bundle map, see Subsection \ref{window}. %We will also use the maps $\pi:\mathcal{Z}(d)\to\X(d)$ and $\Pi:\mathcal{Z}(d)\to\Y(d)$. 

We denote by $D^b(-)$ the derived category of coherent sheaves on a scheme or stack, by $\text{Perf}(-)\subset D^b(-)$ its subcategory of perfect complexes, and by $D_{sg}(-)$ the category of singularities. The categories $\overline{\mathbb{N}}(d)$, $\mathbb{N}(d)$, $\oM(d)$, and $\MM(d)$ are defined in Subsections \ref{defnmagic} and \ref{defMM}. All the categories considered are triangulated except for Proposition \ref{sod}, when we need a dg enhancement for $D^b(\X(d))$ and for $\overline{\mathbb{N}}(d)$. %We denote by $\Phi$ the adjoint to inclusions, when they exist.

We will use weights only for the groups $T(d)$ and $G(d)$ as above. We will use the notation $\chi$ for a weight of $T(d)$, $\rho$ for half the sum of positive roots of a given reductive group, $\beta$ for a simple root of $T(d)$, and $W_d$ for the Weyl group of $G(d)$. We use $w\chi$ for the standard Weyl group action and $w*\chi=w(\chi+\rho)-\rho$ for the shifted Weyl group action.
For a weight $\chi$ of $G(d)$, let $V(\chi)$ be the representation with highest weight $\chi^+$ in the shifted Weyl orbit of $\chi$ if there is such a weight, and zero otherwise. 
For more notations and conventions about reductive groups, see Subsection \ref{notations}. The definition of the polytope $\overline{\mathbb{W}}$ is in Subsection \ref{polytope}.

We use the notation $\dd$ for a partition of $d$. We use superscripts $d=(d^i)$ to denote the dimensions at a vertex $i\in I$, and subscripts $\dd=(d_i)$ to denote dimension vectors $d_i\in\mathbb{N}^I$ that appear in a partition of $d$. We will use two ordering in the set of partitions: the lexicographic order $\succ$, which is total, and a partial one defined in Subsection \ref{compa}. %We use the notation $\mathfrak{S}_{\dd}=\mathfrak{S}_{m_1}\times\cdots\times\mathfrak{S}_{m_k}$ for a partition $\dd$ where $d_i$ appear $m_i$ times for $1\leq i\leq k$ and $d_i\neq d_k$ for $i\neq j$.

We will use three filtrations on the K-theoretic algebras: the semi-orthogonal filtration $F^{\cdot}$ on $\text{KHA}$, see Definitions \ref{filtration} and \ref{secondfiltrations}; the cohomological filtration $I^{\cdot}$ on $\text{KHA}$, and we drop $I$ from the notation in the associated graded in this case $\text{gr}\,\text{KHA}$; and the filtration $E^{\cdot}$ on $\text{gr}\,\text{KHA}$ induced by $F^{\cdot}$, see Definition \ref{E}. 

We denote by $K_{\cdot}(-)$ the algebraic K-theory of the category $\text{Perf}(-)$ of a space and by $G_{\cdot}(-)$ the algebraic K-theory of the category $D^b(-)$ of a space. We denote by $T$ a torus preserving the potential, see Subsection \ref{multi}, by $\text{ch}$ the Chern character, and by $\varphi$ the vanishing cycle functor.

\section{Background material}\label{2}

\subsection{Quivers with potential}\label{quivers}
Let $Q=(I, E, s, t)$ be a quiver with vertex set $I$, edge set $E$, and source and target maps $s, t: E\to I$. Let $d=(d^i)_{i\in I}\in \mathbb{N}^{I}$ be a dimension vector of $Q$, and $f\in\mathbb{N}^{I}$ a framing vector. The stack of representations of dimension $d$ of the quiver $Q$ is the quotient stack $\X(d)=R(d)/ G(d)$, where $$R(d)=\prod_{e\in E} \text{Hom}\,(V^{s(e)}, V^{t(e)})$$ for vector spaces $V^i$ of dimension $d^i$, and $G(d)$ is the group $$G(d)=\prod_{I\in I} GL(V^i)$$ that naturally acts on $R(d)$ by changing the basis of the vector spaces $V^i$. We denote by $X(d)=R(d)//G(d)$ the coarse space of $\X(d)$.

A potential $W$ is a linear combination of cycles in $Q$. Such a potential determines a regular function: 
$$\text{Tr}\,(W):\X(d)\to\mathbb{A}^1.$$ We will assume throughout the paper that $0$ is the only critical value. The critical locus of this function is the moduli of representations of the Jacobi algebra $\text{Jac}\,(Q,W)=\mathbb{C}Q/(\frac{\partial{W}}{\partial{e}})$, where $\mathbb{C}Q$ is the path algebra of $Q$, and $(\frac{\partial{W}}{\partial{e}})$ is the two-sided ideal in $\mathbb{C}Q$ generated by the derivatives $\frac{\partial W}{\partial e}$ of $W$ along all edges $e\in E$.
\\

\subsection{King stability conditions and moduli of representations of a quiver}
Given a tuple of integers $\theta=(\theta^i)\in\mathbb{Q}^{I}$, we define the slope of a dimension vector $d\in\mathbb{N}^{I}$ by: $$\mu(d):=\frac{\sum_{i\in I}\theta^id^i}{\sum_{i\in I}d^i}\in\mathbb{Q}\cup\{\infty\}.$$ 
For a slope $\mu\in \mathbb{Q}\cup \{\infty\}$, define $\Lambda_{\mu}\subset \mathbb{N}^{I}$ the monoid of dimension vectors $d$ with slope $\mu$ together with $d=0$. We define a pairing on the dimension vectors $\mathbb{N}^{I}$ by the formula: $$(d, e)=\sum_{i\in I} d^ie^i-\sum_{a\in E} d^{s(a)}e^{t(a)}.$$ Observe that $\text{dim}\,\X(d)=-(d,d)$.
We call the tuple of integers $\theta$ $\mu-$generic if $(d,e)=(e,d)$ for every $d,e\in \Lambda_{\mu}$, and generic if it holds for all slopes $\mu$. 

Given $\theta$, we call a representation $V$ \text{it}{(semi)stable} if for every proper subrepresentation $W\subset V$, $$\mu(W)<(\leq) \mu(V).$$ The locus of stable $R^{s}(d)$ and semistable representations $R^{ss}(d)$ inside $R(d)$ are open. We consider the moduli stack $\X^{ss}(d)=R^{ss}(d)/G(d)$ of semistable representations of dimension $d$ with corresponding coarse space $X^{ss}(d)=R^{ss}(d)//G(d)$; it has an open substack $\X^{s}=R^{s}(d)/G(d)$ with corresponding coarse space $X^{s}(d)=R^{s}(d)//G(d)\subset X^{ss}(d).$ 
We will use the notation $IC_d=IC_{X^{ss}(d)}$ when $X^{s}(d)$ is non-empty, and $0$ otherwise.
\\

\subsection{Moduli of framed representations}\label{framed}
Fix a framing vector $f\in \mathbb{N}^{I}$. We define a new quiver $Q^f$ by adding a new vertex $v$ to the vertices $I$ of the initial quiver, and by adding $f^i$ edges from the vertex $v$ to the vertex $i\in I$.
The dimension vector $d\in\mathbb{N}^{I}$ can be extended to a dimension vector for the new quiver $(1,d)\in\mathbb{N}\times\mathbb{N}^{I}$. 
The stability condition $\theta$ is extended to a stability condition for the new quiver $\theta^f=(\theta', \theta)\in \mathbb{Q}\times\mathbb{Q}^{I}$ as follows: let $\mu\in\mathbb{Q}$, and define $\theta'=\mu+\varepsilon$ for a small positive rational $\varepsilon>0$.
Assume that $d\in\mathbb{N}^I$ has slope $\mu$. Then a $(1,d)$-representation $V^f$ of $Q^f$ is $\theta^f$-semistable if and only if it is $\theta^f$-stable, and this holds if and only if the underlying $Q$-representation $V$ is semistable, and for all proper $Q^f$ subrepresentations $U^f\subset V^f$, the underlying $Q$ subrepresentation $U\subset V$ has slope $\mu(U)<\mu=\mu(V)$.

We denote by $\X^{ss}(f,d)= R^{ss}(f,d)/G(d)$ the moduli space of $\theta^f$ stable representations of $Q^f$ of dimension $(1,d)$. It comes with a proper map:
$$\pi_{f,d}: \X^{ss}(f,d)\to X^{ss}(d).$$
For any dimension vector $d$, we consider $\eta\in H^2(\X^{ss}(f,d))$ be the Chern class of the tautological line bundle. It is also the image of $l$ under the natural map: 
$$\mathbb{C}[l]=H^{\cdot}(\C^*)\to H^{\cdot}(R(d)\times\C^{fd}/G(d)\times\C^*)\to H^{\cdot}(\X^{ss}(f,d)).$$
Meinhardt-Reineke identified the explicit summands appearing in the decomposition theorem for the map $\pi_{f,d}$ \cite[Proposition 4.3 and Theorem 4.6]{mr}, \cite[Theorem 4.10]{dm}, see Subsection \ref{succ} for the definition of a partition of $d\in\mathbb{N}^I$.

\begin{thm}\label{mere}
The decomposition theorem for the proper map $\pi_{f,d}:\X^{ss}(f,d)\to X^{ss}(d)$ has the form:
$$R\pi_{f,d*}\mathbb{Q}[-(d,d)]=\bigoplus S^{m_1}IC_{d_1}\boxtimes\cdots\boxtimes S^{m_k}IC_{d_k}[-\deg{\tau_1}-\cdots-\deg{\tau_k}-\sigma(\dd)],$$ where the sum is after all the partitions $\dd: m_1d_1+\cdots+m_kd_k=d$ of $d$ with $d_1>\cdots>d_k$, all monomials $\tau_i\in \mathbb{Q}[\eta_1,\cdots,\eta_{m_i}]^{\mathfrak{S}_{m_i}}/(\eta^{fd_i})$, and $\sigma$ is defined by $\sigma(\dd)=m_1+\cdots+m_k$.
\end{thm}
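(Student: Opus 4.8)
Since $\X^{ss}(f,d)$ is a smooth variety and $\pi_{f,d}$ is proper, the plan is to apply the decomposition theorem of Beilinson--Bernstein--Deligne: $R\pi_{f,d*}\mathbb{Q}[-(d,d)]$ splits, non-canonically, into a direct sum of shifted intersection complexes $IC_Z(L)$ for closed irreducible $Z\subset X^{ss}(d)$ with local system $L$ on a smooth open part of $Z$, and the whole content of the theorem is to pin down the $Z$, the $L$, and the shifts. The relevant stratification of $X^{ss}(d)$ is by polystable type: recall $d$ has slope $\mu$, and for a collection $\dd$ of pairwise distinct dimension vectors $d_1\succ\cdots\succ d_k$ of slope $\mu$ with multiplicities $m_i$ and $\sum_i m_id_i=d$, let $X_{\dd}\subset X^{ss}(d)$ be the locally closed set of points $[V]$ with $V\cong\bigoplus_i V_i^{\oplus m_i}$, each $V_i$ stable of dimension $d_i$. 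Two geometric inputs drive the argument and I would establish them first: $X_{\dd}$ is isomorphic to an open subset of $\prod_i \operatorname{Sym}^{m_i}X^{s}(d_i)$, so the intersection complex of $\overline{X_{\dd}}$ is $S^{m_1}IC_{d_1}\boxtimes\cdots\boxtimes S^{m_k}IC_{d_k}$; and over the open locus of $X_{\dd}$ on which the $m_i$ stable summands are pairwise non-isomorphic, $\pi_{f,d}$ is a Zariski-locally trivial fibration with fibre $\prod_i(\mathbb{P}^{fd_i-1})^{m_i}$, where $fd_i:=\sum_j f^jd_i^j$ is the dimension of the space of framings of $V_i$, the $\theta^f$-stability condition forcing each of the $m_i$ framings in the $V_i$-block to be nonzero and the monodromy group $\mathfrak{S}_{m_i}$ permuting the copies of $\mathbb{P}^{fd_i-1}$.

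The second step is to compute the cohomology of the fibre. One has $H^{\cdot}\big(\prod_i(\mathbb{P}^{fd_i-1})^{m_i}\big)=\bigotimes_i\mathbb{Q}[\eta_{i,1},\cdots,\eta_{i,m_i}]/(\eta_{i,j}^{fd_i})$, on which $\prod_i\mathfrak{S}_{m_i}$ acts by permuting the $\eta_{i,j}$, with $\eta_{i,j}$ the hyperplane class of the $j$-th copy of $\mathbb{P}^{fd_i-1}$ and $\eta$ the class pulled back from the tautological line bundle on $\X^{ss}(f,d)$; the invariant part, which is what pairs with $S^{m_1}IC_{d_1}\boxtimes\cdots\boxtimes S^{m_k}IC_{d_k}$, is $\bigotimes_i\mathbb{Q}[\eta_1,\cdots,\eta_{m_i}]^{\mathfrak{S}_{m_i}}/(\eta^{fd_i})$. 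In particular the fibres are pure of Tate type with cohomology in even degrees only, so the decomposition theorem produces no odd shifts and no nontrivial semisimple local systems other than those built from the permutation monodromy; restricting to the open stratum $X^{s}(d)$ already produces the summands indexed by the trivial partition, carrying the monomial basis of $\mathbb{Q}[\eta]/(\eta^{fd})$ and the shifts $-\deg\tau-\sigma(\dd)$ of the statement, with $\sigma$ the codimension-type correction.

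To get the full statement I would run a descending induction on the strata, ordered by dimension. Assuming the summands supported on the larger strata are exactly the claimed ones, I restrict $R\pi_{f,d*}\mathbb{Q}$ near a point of $X_{\dd}$ via an \'etale Luna slice, where $\X^{ss}(f,d)$ and $X^{ss}(d)$ become the framed and unframed versions of $\Ext^1(V,V)//\Aut(V)$; the fibration structure over $X_{\dd}$ together with the fibre computation pins down the stalk of $R\pi_{f,d*}\mathbb{Q}$ along $X_{\dd}$, and subtracting the contribution of the already-identified summands forces the appearance of $S^{m_1}IC_{d_1}\boxtimes\cdots\boxtimes S^{m_k}IC_{d_k}$ with multiplicity space $\bigotimes_i\mathbb{Q}[\eta_1,\cdots,\eta_{m_i}]^{\mathfrak{S}_{m_i}}/(\eta^{fd_i})$ and nothing further, once the gradings are sorted out. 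An equivalent and more bookkeeping-friendly device, which is how \cite{mr} organizes the proof, is to compare both sides as classes in a Grothendieck group of mixed Hodge modules, or simply via $E$-polynomials, where the fibration makes both sides explicitly computable and the identity becomes a generating-function statement.

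The main obstacle is exactly this support estimate: \emph{a priori} the decomposition theorem could contribute extra summands on the smaller $\overline{X_{\dd}}$ carrying nontrivial local systems built from the permutation action on the fibres, and ruling these out, so that exactly the $\mathfrak{S}_{m_i}$-invariant part of the fibre cohomology survives, for an arbitrary and not necessarily symmetric quiver, is the content of \cite[Proposition 4.3 and Theorem 4.6]{mr} and \cite[Theorem 4.10]{dm}; in the write-up I would cite those for the support and multiplicity analysis and present in detail only the fibre computation and the identification of $IC_{\overline{X_{\dd}}}$ with the external symmetric products. A secondary and purely routine point is matching the shifts $\deg\tau_i$ and $\sigma(\dd)$ against $\dim\X^{ss}(f,d)=-(d,d)+\sum_i f^id^i$, the fibre dimensions $\sum_i m_i(fd_i-1)$, and the normalization conventions for $IC_d$.
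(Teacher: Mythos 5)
The paper does not actually prove Theorem \ref{mere}: it states the result and attributes it to Meinhardt--Reineke \cite[Proposition 4.3 and Theorem 4.6]{mr} and Davison--Meinhardt \cite[Theorem 4.10]{dm}, with no further argument. Your sketch is a correct outline of how those references establish the theorem --- the polystable stratification, the identification of $X_{\dd}$ with an open subset of $\prod_i\operatorname{Sym}^{m_i}X^{s}(d_i)$, the fibre $\prod_i(\mathbb{P}^{fd_i-1})^{m_i}$ with $\prod_i\mathfrak{S}_{m_i}$ monodromy, and the support estimate ruling out additional nontrivial local systems, which you correctly flag as the real content and defer to \cite{mr}. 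Since you end by citing the same sources for the hard step that the paper itself cites, the ``proof'' you propose is, in effect, the same one the paper relies on; the only difference is that you expand what is in the paper a one-line citation into a readable synopsis of the references' strategy. One small point worth making explicit if you were to write this out in full: the crucial case to understand is a single stratum $\dd=(m_1d_1)$, and the support theorem of \cite{mr} is proved there by an Euler-characteristic/purity argument comparing $R\pi_{f,d*}\mathbb{Q}$ for all framing vectors $f$ simultaneously, rather than by a fibrewise Luna-slice computation; your Luna-slice formulation is the right heuristic but, as you note, the actual exclusion of non-invariant local systems needs the global input.
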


\subsection{PBW theorem for CoHA}\label{CoHA}
The cohomological Hall algebra for $(Q,W)$ and King stability condition $\theta$ is defined as: $$\text{CoHA}(Q,W)=\bigoplus_{d\in\mathbb{N}^I} H^{\cdot}(\X^{ss}(d), \varphi_{\text{Tr}\,W}IC_{\X^{ss}(d)}),$$ where the multiplication $m=p_{d,e*}q_{d,e}^*$ is defined via the stack of extensions of dimensions $d$ by dimension $e$: $$\X^{ss}(d)\times \X^{ss}(e)\xleftarrow{q_{d,e}} \X^{ss}(d,e)\xrightarrow{p_{d,e}} \X^{ss}(d+e).$$
Here $\X(d,e)=R(d,e)/G(d,e)$ the stack of pairs of representations $0\subset A\subset B$, where $A$ is a dimension $d$ representation, and $B/A$ is a dimension $e$ representation. 
For fixed dimension vector $d\in\mathbb{N}^I$ and cohomological degree $i$, we have that: $$H^i(\X^{ss}(d),\varphi_{\text{Tr}\,W} IC_{\X^{ss}(d)})=H^i(\X^{ss}(f,d),\varphi_{\text{Tr}\,W} IC_{\X^{ss}(d)}),$$ for $f$ large enough. 
The map $\pi_{f,d}:\X^{ss}(f,d)\to X^{ss}(d)$ induces a perverse filtration: 
\begin{multline*}
    P^{\leq i}=H^{\cdot}(X^{ss}(d),\varphi_{\text{Tr}\,W} {}^pR^{\leq i}\pi_{f,d*}\mathbb{Q}[-(d,d)])\to H^{\cdot}(X^{ss}(d),\varphi_{\text{Tr}\,W} R\pi_{f,d*}\mathbb{Q}[-(d,d)])\cong\\
    H^{\cdot}(\X^{ss}(d), \varphi_{\text{Tr}\,W}IC_{d})
\end{multline*}
for large enough $f$, and thus induces a filtration $P^{\leq i}$ of $\text{CoHA}$.
Theorem \ref{mere} implies that:
$$P^{\leq 1}=\bigoplus_{d\in\mathbb{N}^I} H^{\cdot}(X(d),\varphi_{\text{Tr}\,W} IC_{\X(d)}).$$
We next explain a modification of the multiplication \cite[Section 2.6]{ks}, \cite[Section 1.6]{dm} which is used in the formulation of the PBW theorem for CoHAs.
\begin{defn}
\label{supercom} 
Let $V=(\mathbb{Z}/2\mathbb{Z})^I$, and define the bilinear map $\tau:V\otimes V\to \mathbb{Z}/2\mathbb{Z}$ by the formula:
$$\tau(d,e)=\chi(d,e)+\chi(d,d)\chi(e,e).$$
We have that $\tau(d,d)=0$, so there exists a bilinear form $\psi:V\otimes V\to\mathbb{Z}/2\mathbb{Z}$ such that: $$\psi(d,e)+\psi(e,d)=\tau(d,e).$$ Define the $\psi$-twisted multiplication by the formula:
$$a*b:=(-1)^{\psi(d,e)}ab.$$
One checks that the $\psi$-twisted multiplication is associative from the associativity of the standard multiplication and the bilinearity of $\psi$. Let the degree of $a$ in $H^{\cdot}(\X(d), \varphi_{\text{Tr}\,W}IC_{\X(d)})$ be $\text{deg}\,(a)=\psi(d,d)$. 
We have that: $$a*b=(-1)^{\psi(d,d)+\psi(e,e)}b*a,$$ so the algebra is supercommutative for the above grading. Use the notation $\text{CoHA}^{\psi}$ for $\text{CoHA}$ with the twisted multiplication $\psi$.
\end{defn}

The following PBW theorem was proved in \cite[Theorem C]{dm}:

\begin{thm}
The inclusion $P^{\leq 1}\subset \text{CoHA}^{\psi}$ induces an isomorphism of supercommutative algebras:
$$\text{Sym}\,(P^{\leq 1}[u])\to \text{gr}^P\,\text{CoHA}^{\psi},$$
where $u$ has perverse degree $2$. In particular, $P^{\leq 1}\subset \text{CoHA}^{\psi}$ is closed under the Lie bracket $[x,y]=xy-yx$, and thus it is a Lie algebra under $[\,,]$ called the BPS Lie algebra.
\end{thm}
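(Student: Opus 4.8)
The plan is to follow the strategy of Davison--Meinhardt \cite{dm}: first prove a sharper, sheaf-theoretic version of the statement on the coarse spaces $X^{ss}(d)$ in the case $W=0$, where the decomposition theorem is available in the explicit form of Theorem \ref{mere}, and then deduce the case of an arbitrary $W$ by applying the vanishing cycle functor $\varphi_{\text{Tr}\,W}$, which is compatible with the relevant pushforwards and with the product defining the CoHA.

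For $W=0$ I would argue at the level of constructible sheaves on the coarse spaces, using the framed moduli $\X^{ss}(f,d)$ and the proper Hilbert--Chow maps $\pi_{f,d}\colon\X^{ss}(f,d)\to X^{ss}(d)$, whose derived pushforwards compute $H^{\cdot}(\X^{ss}(d),IC_{\X^{ss}(d)})$ in a range of cohomological degrees that grows with $f$. Theorem \ref{mere} decomposes $R\pi_{f,d*}\mathbb{Q}[-(d,d)]$ as a sum over partitions $\dd\colon\sum m_id_i=d$ with $d_1>\cdots>d_k$ and monomials $\tau_i\in\mathbb{Q}[\eta_1,\dots,\eta_{m_i}]^{\mathfrak{S}_{m_i}}/(\eta^{fd_i})$; as $f\to\infty$ these truncated rings exhaust $\mathbb{Q}[\eta_1,\dots,\eta_{m_i}]^{\mathfrak{S}_{m_i}}$, which is exactly the contribution of the factors $H^{\cdot}(B\C^*)=\mathbb{Q}[u]$ and of the symmetric powers in the target $\text{Sym}\big(P^{\leq 1}[u]\big)$. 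The trivial partition with $\tau=1$ contributes the summand $IC_d$, which by the normalization of the perverse filtration identifies $P^{\leq 1}$ with $\bigoplus_dH^{\cdot}(X^{ss}(d),IC_{X^{ss}(d)})$, while each factor of $\eta$ raises perverse degree by $2$, accounting for $u$ in perverse degree $2$. It then remains to show that the CoHA multiplication $m=p_{d,e*}q_{d,e}^{*}$ is compatible with these decompositions: $q_{d,e}$ is smooth with affine-space fibres, so $q_{d,e}^{*}$ respects perverse filtrations after the appropriate shift, while $p_{d,e}$ is proper and a comparison with the Hilbert--Chow maps to the coarse spaces shows that the product is filtered, $P^{\leq i}\cdot P^{\leq j}\subset P^{\leq i+j}$. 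Feeding in the support information in Theorem \ref{mere}, namely that only the sheaves $IC_{d'}$ of coarse spaces occur, one obtains that the induced map $\text{Sym}(P^{\leq 1}[u])\to\text{gr}^P\text{CoHA}$ is surjective; injectivity follows by comparing graded dimensions, both sides being governed by the same sum over partitions coming from Theorem \ref{mere}. The $\psi$-twist of Definition \ref{supercom} is precisely what turns the answer into a genuinely supercommutative algebra rather than one commutative up to a sign depending on Euler forms.

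Next I would pass to an arbitrary potential by applying $\varphi_{\text{Tr}\,W}$. This functor commutes with proper pushforward, and although $\pi\colon\X^{ss}(d)\to X^{ss}(d)$ is not proper it is approximated by the proper maps $\pi_{f,d}$, so $\varphi_{\text{Tr}\,W}$ commutes with the relevant pushforwards; together with the Thom--Sebastiani property already used to define the CoHA product, this converts the $W=0$ sheaf-level statement into the assertions that $P^{\leq 1}=\bigoplus_dH^{\cdot}(X^{ss}(d),\varphi_{\text{Tr}\,W}IC_d)$ and that the inclusion $P^{\leq 1}\subset\text{CoHA}^{\psi}$ induces an isomorphism of supercommutative algebras $\text{Sym}(P^{\leq 1}[u])\xrightarrow{\ \sim\ }\text{gr}^P\text{CoHA}^{\psi}$, with $u$ in perverse degree $2$. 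The Lie algebra statement is then formal: for $x\in P^{\leq 1}$ of dimension $d$ and $y\in P^{\leq 1}$ of dimension $e$, the products $x*y$ and $y*x$ lie in $P^{\leq 2}$ and, by supercommutativity of $\text{gr}^P$, have equal images in $\text{gr}^P_2$ up to the sign $(-1)^{\psi(d,d)+\psi(e,e)}$, so the commutator $[x,y]=x*y-y*x$ lands in $P^{\leq 1}$; the Jacobi identity is inherited from associativity of $*$, yielding the BPS Lie algebra.

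The step I expect to be the main obstacle is the sheaf-level multiplicativity in the $W=0$ case: verifying that the correspondence $\X^{ss}(d)\times\X^{ss}(e)\xleftarrow{q_{d,e}}\X^{ss}(d,e)\xrightarrow{p_{d,e}}\X^{ss}(d+e)$, pushed down to the coarse spaces, interacts with the partition-indexed decomposition of Theorem \ref{mere} so as to produce exactly the free supercommutative structure on the associated graded, including the careful bookkeeping of the $\eta$-variables and of the shifts $\deg\tau_i$ and $\sigma(\dd)$. By contrast, the descent to a general potential via $\varphi_{\text{Tr}\,W}$ is comparatively formal once the approximation by proper maps and the Thom--Sebastiani property are in place.
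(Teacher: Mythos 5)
Your proposal follows exactly the strategy that the paper itself sketches for this theorem (which it does not prove but cites to Davison--Meinhardt, Theorem C): first establish a sheaf-level statement on the coarse spaces in the $W=0$ case using the explicit decomposition theorem of Meinhardt--Reineke (Theorem \ref{mere}), with the $\eta$-variables stabilizing to $\mathbb{Q}[u]$ as $f\to\infty$ and the partition data matching $\text{Sym}(P^{\leq 1}[u])$, then pass to general $W$ by applying the vanishing cycle functor, which commutes with the relevant pushforwards because $\pi$ is approximated by the proper maps $\pi_{f,d}$. The bookkeeping of the $\psi$-twist and the formal derivation of the BPS Lie algebra are also as intended, so the proposal is correct and takes essentially the same route as the paper.
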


\subsection{The tripled quiver}\label{tripledef}
The following construction was introduced by Ginzburg \cite{gi} and it is used in conjunction with dimensional reduction to obtain representations of a preprojective CoHA or KHA or the cohomology of K-theory of Nakajima quiver varieties.

Let $Q=(I,E)$ be a quiver.
The double quiver $\overline{Q}$ has vertex set $I$ and edge set $E\cup \overline{E}$, where for every edge $e\in E$ we add an edge of opposite orientation $\overline{e}$. The tripled quiver $\widetilde{Q}$ has vertex set $I$, and to the edges of $\overline{Q}$ we add a loop $\omega_i$ at every vertex $i\in I$. The potential $\widetilde{W}$ is defined by: $$\widetilde{W}=\sum_{e\in E} \omega_{s(e)}\bar{e}e-\omega_{t(e)}e\bar{e}.$$

\subsection{Nakajima quiver varieties}\label{nakquivvar}

Let $Q$ be a quiver, $d\in\mathbb{N}^I$ a dimension vector, $\theta$ a stability condition, and $f\in\mathbb{N}^I$ a framing vector. Extend $\theta$ to the stability condition $\theta'$ for $Q^f$ as in Subsection \ref{framed}. Associated to $\theta$ we have a character $\chi_{\theta}=\prod_{i\in I} \det(g^i)^{m\theta^i}: G(d)\to\mathbb{C}^*$ for $m$ a positive integer such that $m\theta_i$ are all integers.
The action of $G(d)=G(1,d)/\mathbb{C}^*$ on $R(1,d)$ induces a moment map:
$$\mu: T^*R(1,d)\to \mathfrak{g}(d)^{\vee}\cong \mathfrak{g}(d).$$
Define the Nakajima quiver variety $N^{\text{ss}}(f,d)$ by the GIT quotient:
$$N^{\text{ss}}(f,d):=R(1,d)^{\theta^f-\text{ss}}//_{\chi_{\theta}}G(d).$$

\subsection{Semi-orthogonal decompositions.}\label{sodadjoint}
Let $\A$ be a triangulated category, and let $\A_i\subset \A$, for $1\leq i\leq n$, be full triangulated subcategories. We say that $\A$ has a semi-orthogonal decomposition: $$\A=\langle \A_m,\cdots,\A_1\rangle$$ if
for every objects $A_i\in\mathcal{A}_i$ and $A_j\in\mathcal{A}_j$ and $i<j$ we have
$\text{Hom}\,(A_i,A_j)=0$, 
and the smallest full triangulated subcategory of $\mathcal{A}$ containing $\mathcal{A}_i$ for $1\leq i\leq m$ is $\mathcal{A}$.

Equivalently, the subcategories $\A_i$ for $1\leq i\leq n$ form a semi-orthogonal decomposition of $\A$ if the inclusion $\A_i\subset \langle \A_i,\cdots,\A_1\rangle$ admits a left adjoint for all $i$, or, alternatively, the inclusion $\A_i\subset \langle \A_n,\cdots,\A_i\rangle$ admits a right adjoint for all $i$, see \cite{b}, \cite[Section 2]{ku} for more details.

\subsection{Window categories.}\label{window}
Let $\X=V/G$ be a quotient stack where $G$ be a reductive group and $V$ is a $G$ representation. 
Given a linearization $\mathcal{L}$, the semistable stack is $\X^{\mathcal{L}-ss}=V^{\mathcal{L}-ss}/G$, where $V^{\mathcal{L}-ss}\subset V$ is the open locus of semistable points. We will drop the linearization $\mathcal{L}$ from notation. Halpern-Leistner \cite{hl} constructed categories $\mathbb{G}_w\subset D^b(\X)$ which are equivalent to $D^b(\X^{ss})$ under the restriction map to the open locus $\X^{ss}\subset \X$.
\\

The semistable locus is obtained by removing unstable points. We next recall the criterion for determining the unstable locus. 
For any pair $(\lambda, Z)$, where $\lambda$ is a character of $G$ and $Z$ is a component of the fixed locus of $\lambda$, define: 
$$\text{inv}\,(\lambda, Z):=-\frac{\langle \lambda,\mathcal{L}|_Z\rangle}{|\lambda|},$$
where $\langle \lambda, \mathcal{L}|_Z\rangle=\text{weight of }\lambda\text{ on }\mathcal{L}|_Z$ and $|\lambda|$ is a Weyl invariant quadratic form on the group of characters of $G$. 
Further, for any pair $(\lambda, Z)$ as above, consider the diagram:

\begin{tikzcd}
\mathcal{S}=S/P\arrow{r}{p} \arrow{d}{q}& \mathcal{X}=V/G\\
\mathcal{Z}=Z/L,
\end{tikzcd}
\\
where $S\subset V$ is the subset of points $x$ such that $\lim_{z\to 0} \lambda(z)x\in Z$, and $L$ and $P$ are the Levi and parabolic groups corresponding to $\lambda$. The map $q$ is an affine bundle map and the map $p$ is proper. If $G$ is abelian, then the map $p$ is a closed immersion.
\\

If there are no pairs with $\text{inv}<0$, then there are no unstable points. If there are pairs with $\text{inv}<0$, choose the pair $(\lambda, Z)$ that maximises $\text{inv}(\lambda, Z)$ and consider the diagram defined above:
$$\mathcal{Z}=Z/L\xleftarrow{q}\mathcal{S}=S/P
\xrightarrow{p} \X=V/G.$$
For this choice of $(\lambda, Z)$, the map $p$ is a closed immersion.
\\

The image of $p$ in $S\times_P G/G\subset V/G$ is unstable and closed. We repeat the above process for its complement and continue until there are no pairs $(\lambda, Z)$ such that $\text{inv}(\lambda, Z)<0$. These unstable loci are called Kempf-Ness loci, and they stratify $\X^{\text{uns}}=\X-\X^{\text{ss}}$.
For more details on the Kempf-Ness stratification, see \cite[Section 2.1]{hl}.
\\
The category $D^b(\mathcal{Z})$ has a semi-orthogonal decomposition in subcategories $$D^b(\mathcal{Z})_w=\{E\in D^b(\mathcal{Z})\text{ such that the }\lambda-\text{weights of }E\text{ are }w\}$$ for $w\in \mathbb{Z}$. The category $D^b(\mathcal{S})$ has a similar semi-orthogonal decomposition:
$$D^b(\mathcal{S})=\langle D^b(\mathcal{S})_w, w\in\mathbb{Z}\rangle \text{ and }q^*:D^b(\mathcal{Z})_w\cong D^b(\mathcal{S})_w.$$
Assume that the Kempf-Ness loci are given by $(\lambda_i, Z_i)$, and that the attracting loci are $S_i$. Let $j_i:Z_i\to V$ be the inclusion.
Choose an integer $w_i$ for any index $i$ of a stratum, and let $n_i=\langle \lambda_i, \det N_{\mathcal{S}/\X}\rangle$. Consider the full subcategory category $\mathbb{G}_w$ of $D^b(\X)$ defined by:
$$\mathbb{G}_w=\{F\in D^b(\X)\text{ such that } w_i\leq \langle \lambda_i, j_i^*F\rangle\leq w_i+n_i-1\}.$$

The main theorem in \cite{hl} is:
\begin{thm}\label{hl}
There exists a semi-orthogonal decomposition: $$D^b(\X)=\langle U_{<w},\mathbb{G}_w, U_{\geq w}\rangle,$$ where the categories $U_{<w}$ and $U_{\geq w}$ contain sheaves supported on the unstable locus.
The restriction functor 
$\text{res}:D^b(\X)\to D^b(\X^{ss})$ induces an equivalence of categories: $$\text{res}:\mathbb{G}_w\to D^b(\X^{ss}).$$
\end{thm}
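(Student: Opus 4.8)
This is the main theorem of \cite{hl}; the plan is to reconstruct its argument in three steps. Write $k:\X^{ss}\hookrightarrow\X$ for the open inclusion; then $Lk^*$ exhibits $D^b(\X^{ss})$ as the Verdier quotient of $D^b(\X)$ by the full subcategory $D^b_{|\mathcal{S}|}(\X)$ of complexes supported set-theoretically on the unstable locus $\X^{\mathrm{uns}}=\bigsqcup_i\mathcal{S}_i$, so it suffices to exhibit inside $D^b(\X)$ a subcategory $\mathbb{G}_w$ complementary to $D^b_{|\mathcal{S}|}(\X)$ and split off everything in between. First I would treat a single closed stratum; then assemble the windows at all strata; then check compatibility.

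\textbf{Step 1 (one stratum).} Fix a closed stratum $\mathcal{S}=S/P\xrightarrow{p}\X$ with center $\mathcal{Z}=Z/L\xleftarrow{q}\mathcal{S}$, cocharacter $\lambda$, and $j:\mathcal{Z}\to\X$ the composite. The category $D^b(\mathcal{Z})$ decomposes into $\lambda$-eigencategories $D^b(\mathcal{Z})_w$, and $q^*:D^b(\mathcal{Z})_w\xrightarrow{\sim}D^b(\mathcal{S})_w$ since $q$ is an affine bundle. The key input I would establish is the two-sided semiorthogonal decomposition
$$D^b_{|\mathcal{S}|}(\X)=\langle\ \ldots,\ p_*q^*D^b(\mathcal{Z})_w,\ p_*q^*D^b(\mathcal{Z})_{w+1},\ \ldots\ \rangle$$
with each $p_*q^*$ fully faithful on $D^b(\mathcal{Z})_w$. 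This would follow by resolving $\OO_{\mathcal{S}}$ on $\X$ by a Koszul-type complex built from the conormal bundle $N^\vee_{\mathcal{S}/\X}$: because $\mathcal{S}$ is the $\lambda$-attracting locus, $N_{\mathcal{S}/\X}$ has $\lambda$-weights of one sign, so the Koszul filtration on $Lj^*(-)$ respects the weight grading and lets one reconstruct any $F\in D^b_{|\mathcal{S}|}(\X)$ from its weight pieces; full faithfulness of $p_*q^*$ then comes from the projection formula and base change along $p$ (when $G$ is non-abelian $p$ is only proper, so one also uses the Grassmannian-bundle structure of $p$ over its image to compute $p^!p_*$).

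\textbf{Step 2 (windows).} Set $n=\langle\lambda,\det N_{\mathcal{S}/\X}\rangle$, fix a window integer $w_{\mathcal{S}}$, and let $\mathbb{G}\subset D^b(\X)$ consist of the $F$ whose $\lambda$-weights on the center all lie in the length-$n$ window, $w_{\mathcal{S}}\leq\langle\lambda,Lj^*F\rangle\leq w_{\mathcal{S}}+n-1$. From Step 1 I would deduce that $\mathbb{G}\cap D^b_{|\mathcal{S}|}(\X)=0$ --- this is exactly where $n=\langle\lambda,\det N_{\mathcal{S}/\X}\rangle$ enters, since $Lj^*$ of a nonzero object supported on $\mathcal{S}$ cannot be confined to any window of length $n$ --- while every $F\in D^b(\X)$ can be mutated through the weight pieces of $D^b_{|\mathcal{S}|}(\X)$ lying outside the window into an object of $\mathbb{G}$ without changing $Lk^*F$. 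This produces $D^b(\X)=\langle U_<,\mathbb{G},U_\geq\rangle$ with $U_<,U_\geq\subset D^b_{|\mathcal{S}|}(\X)$ (the low- and high-weight pieces), and since $\langle U_<,U_\geq\rangle=D^b_{|\mathcal{S}|}(\X)=\ker(Lk^*)$ the composite $\mathbb{G}\hookrightarrow D^b(\X)\xrightarrow{Lk^*}D^b(\X^{ss})$ is an equivalence; semiorthogonality of the three pieces is a Hom computation using the adjoints $p^*,p^!$ and the fact that the dualizing twist of $p$ shifts $\lambda$-weight by exactly $n$.

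\textbf{Step 3 (all strata).} Finally, order the Kempf--Ness strata so that $\bigcup_{i\geq m}\mathcal{S}_i$ is closed in $\X$ for each $m$, and iterate Step 2: first at $\mathcal{S}_1\subset\X$, then at $\mathcal{S}_2\subset\X\setminus\mathcal{S}_1$, and so on. The window conditions at the various strata are mutually independent, since $\langle\lambda_i,Lj_i^*F\rangle$ depends only on $F$ near the fixed center $Z_i$, which is untouched by excising the remaining strata from the ambient open substack; collecting all of them recovers the category $\mathbb{G}_w$ of the statement, and composing the equivalences of Step 2 gives $\mathrm{res}:\mathbb{G}_w\xrightarrow{\sim}D^b(\X^{ss})$. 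The main obstacle is Step 1: proving the two-sided weight semiorthogonal decomposition of $D^b_{|\mathcal{S}|}(\X)$ with $p_*q^*$ fully faithful, and pinning the window length to the single integer $n=\langle\lambda,\det N_{\mathcal{S}/\X}\rangle$ --- this is where the precise geometry of the attracting locus does the work (the sign of the $\lambda$-weights on $N_{\mathcal{S}/\X}$, the quasi-smoothness of the closed immersion underlying $p$, and the projection/base-change identities), and in the quiver setting $V=R(d)$, $G=G(d)$ all of it is explicit.
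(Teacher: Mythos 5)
The paper does not prove this statement; it is quoted verbatim as the main theorem of Halpern-Leistner's article \cite{hl}, and the text immediately after the statement only spells out the one-stratum form of the decomposition and records the formula for the adjoint functor $\Phi$. So there is no ``paper's own proof'' to compare against --- the right benchmark is the argument in \cite{hl} itself, and your sketch tracks it faithfully: the baric (weight-graded) semi-orthogonal decomposition of $D^b_{|\mathcal{S}|}(\X)$ via the $\lambda$-weight filtration coming from the conormal/Koszul data, the window of width $n=\langle\lambda,\det N_{\mathcal{S}/\X}\rangle$ isolated by the clash between that baric structure and a bounded weight range, and the induction over Kempf--Ness strata are exactly the three pillars of Halpern-Leistner's proof.

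Two places in your outline are stated more loosely than what the argument actually requires. First, the claim that ``$Lj^*$ of a nonzero object supported on $\mathcal{S}$ cannot be confined to any window of length $n$'' is the right intuition, but the mechanism in \cite{hl} is really a double bound: the baric structure on $D^b_{\mathcal{S}}(\X)$ controls the lowest weight of $\sigma^*F$ and the highest weight of $\sigma^!F$ simultaneously, and since $\sigma^!\simeq\sigma^*\otimes\det N^\vee[-\mathrm{codim}]$ shifts weight by exactly $n$, an object satisfying the window condition with both functors must vanish. For a skyscraper the bare $\sigma^*$ spread is indeed $n+1$, which already does it, but for general objects of $D^b_{\mathcal{S}}(\X)$ one cannot avoid the baric truncations --- potential cancellations between weight pieces have to be ruled out, which is precisely what the baric structure buys. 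Second, in Step 3 the statement that the window conditions at the various strata are ``mutually independent'' elides a real check: $\mathbb{G}_w$ is a priori defined iteratively on the shrinking open substacks $\X\setminus(\mathcal{S}_1\cup\cdots\cup\mathcal{S}_{i-1})$, and one has to verify that the iterated windows are captured by weight conditions on $j_i^*F$ for $F$ on the whole of $\X$, which is \cite[Amplification 2.11]{hl}. Neither point is an error in your plan --- they are exactly the two lemmas where the real work in \cite{hl} lives, and you have already flagged Step 1 as the crux --- but if you were to write this out, the baric truncations and the global-vs-iterative comparison of windows are where the argument cannot stay at the level of a sketch.
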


We state the precise statement in the one-stratum case. Consider the categories:
$$D^b(\mathcal{S})_{<w}=\langle q^*D^b(\mathcal{Z})_i\text{ for }i<w\rangle\text{ and }D^b(\mathcal{S})_{\geq w}=\langle q^*D^b(\mathcal{Z})_i\text{ for }i\geq w\rangle.$$ We denote by $D^b_{\mathcal{S}}(\X)$ the category of complexes on $\X$ supported on $\mathcal{S}$, and let $D^b_{\mathcal{S}}(\X)_{<w}$ be the subcategory of $D^b(\X)$ generated by $p_*\left(D^b(\mathcal{S})_{<w}\right)$ and $D^b_{\mathcal{S}}(\X)_{\geq w}$ the subcategory of $D^b(\X)$ generated by 
$p_*\left(D^b(\mathcal{S})_{\geq w}\right).$ There is a semi-orthogonal decomposition:
$$D^b_{\mathcal{S}}(\X)=\langle D^b_{\mathcal{S}}(\X)_{<w},\,D^b_{\mathcal{S}}(\X)_{\geq w}\rangle.$$
The semi-orthogonal decomposition from Theorem \ref{hl} is:
$$D^b(\X)=\langle D^b_{\mathcal{S}}(\X)_{<w},\mathbb{G}_w, D^b_{\mathcal{S}}(\X)_{\geq w}\rangle.$$
The adjoint to the inclusion $D^b_{\mathcal{S}}(\X)_{\geq w}\subset D^b(\X)$ is given by \cite[Lemma 3.37]{hl}:
$$\Phi(F):=\beta_{\geq w}R\Gamma_{\mathcal{S}}(F):=\beta_{\geq w}\left(R\mathcal{H}om(\OO_\X/I_{\mathcal{S}}^N,F)\right)\in D^b_{\mathcal{S}}(\X)_{\geq w}$$ for large enough $N$. The adjoint to the inclusion $D^b_{\mathcal{S}}(\X)_{< w}\subset D^b(\X)$ is given by $\Phi(F):=\mathbb{D}\beta_{\geq n+1-w}R\Gamma_{\mathcal{S}}\mathbb{D}(F)\in D^b_{\mathcal{S}}(\X)_{< w}$, where $\mathbb{D}$ is the Serre duality functor.

\subsection{The \v{S}penko-van den Bergh noncommutative resolution.}\label{sp}
Let $\X=V/G$ be a quotient stack, where $G$ is a reductive group and $V$ is a symmetric linear $G-$representation. 
Fix a maximal torus and a Borel subgroup $T\subset B\subset G$. 
Let $M$ be lattice of $G$ weights, $M_{\mathbb{R}}=M\otimes_{\mathbb{Z}}\mathbb{R}$, and let $W$ be the Weyl group of $G$. Assume that the weights of $V$ span $M_{\mathbb{R}}$.
We denote by: $$\overline{\mathbb{W}}:=\bigoplus_{\beta\text{ wt of }V} [0,\beta]\subset M_{\mathbb{R}},$$
where the sum is taken after all weights $\beta$ of the $G$-representation $V$.
Let $\delta\in M_{\mathbb{R}}^W$. Define $\mathbb{N}\subset D^b(\X)$ as the full subcategory generated by sheaves $V(\chi)\otimes\OO_{\X}$, where $\chi$ is a dominant $G$-weight such that:
$$\chi+\rho+\delta\in\frac{1}{2}\overline{\mathbb{W}}.$$
Here $\rho$ is half the sum of positive roots of $G$.

The category can be also characterized as follows.
For any character $\lambda:\C^*\to G$, define $$n_{\lambda}=\langle \lambda, V^{\lambda>0}\rangle-\langle \lambda, 2\rho^{\lambda>0}\rangle,$$ where $2\rho^{\lambda>0}$ is the sum of the weights which pair positively with $\lambda$ in the adjoint representation of $G$. 
Then $\mathbb{N}\subset D^b(\X)$ is the subcategory of sheaves $F$ such that:
$$-\frac{1}{2}n_{\lambda}+\langle \lambda, \delta\rangle \leq \langle \lambda, i_{\lambda}^*F\rangle \leq \frac{1}{2}n_{\lambda}+\langle \lambda, \delta\rangle$$ for any character $\lambda:\C^*\to G$, where $i_{\lambda}:V^{\lambda}\to V$ is the inclusion of the $\lambda$-fixed locus.  
\\

For a character $\lambda: \C^*\to G$, consider the fixed locus $Z$, the attracting locus $S$, the Levi group $L$, and the parabolic group $P$:
$$Z/L \xleftarrow{q} S/P \xrightarrow{p} V/G.$$
\v{S}penko-van den Bergh \cite{sp}, \cite{sp2} proved the following:

\begin{thm}
For any $\delta\in M^W_{\mathbb{R}}$, there exists a semi-orthogonal decomposition 
$$D^b(\X)=\langle \cdots, \mathbb{N} \rangle,$$
where the complement of $\mathbb{N}$ is generated by complexes supported on attracting loci. 

Further, if there exists $\delta\in M^W_{\mathbb{R}}$ such that $\langle \lambda, \delta\rangle+\frac{1}{2}n_{\lambda}$ is not an integer for all non-trivial characters $\lambda$, then $\mathbb{N}$ is a Calabi-Yau category and a crepant non-commutative resolution of the singularity $V//G$.
\end{thm}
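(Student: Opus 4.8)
The plan is to realise $\mathbb{N}$ as a \emph{magic window} --- a grade restriction window that is simultaneously compatible with every one-parameter subgroup of $G$ --- and then to read off both assertions from the Halpern-Leistner machinery of Subsection~\ref{window} together with a self-duality analysis of the generating bundle. Write $T_\delta=\bigoplus_\chi V(\chi)\tensor\OO_\X$, the sum over dominant weights $\chi$ with $\chi+\rho+\delta\in\tfrac12\WW$, so that $\mathbb{N}=\langle T_\delta\rangle$ and, by the second description of Subsection~\ref{sp}, $F\in\mathbb{N}$ iff $-\tfrac12 n_\lambda+\langle\lambda,\delta\rangle\leq\langle\lambda,i_\lambda^*F\rangle\leq\tfrac12 n_\lambda+\langle\lambda,\delta\rangle$ for all $\lambda\colon\C^*\to G$. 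The first thing to prove is the \emph{magic window property}: for each $\lambda$ the interval cut out by these inequalities has length exactly $n_\lambda$ and coincides, up to the $\langle\lambda,\delta\rangle$-translation, with a Halpern-Leistner window for the Kempf-Ness stratum attached to $(\lambda,Z_\lambda)$. The combinatorial input is that $\WW=\bigoplus_\beta[0,\beta]$ is a zonotope: it is centrally symmetric about the origin (its centre $\tfrac12\sum_\beta\beta$ vanishes because $V\isom V^\vee$), it is $W$-invariant, and its support function in the direction $\lambda$ equals $\sum_{\langle\lambda,\beta\rangle>0}\langle\lambda,\beta\rangle=\langle\lambda,V^{\lambda>0}\rangle$. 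Pairing the membership condition with $\lambda$ and with $-\lambda$ and absorbing the $\rho$-shift --- which contributes $\langle\lambda,2\rho^{\lambda>0}\rangle$ through the passage from the weight $\chi$ to the extremal $\lambda$-weight of $V(\chi)$ --- produces precisely the width $n_\lambda=\langle\lambda,V^{\lambda>0}\rangle-\langle\lambda,2\rho^{\lambda>0}\rangle$. This is the step where ``$V$ symmetric'' is essential, so that the windows for $\lambda$ and $-\lambda$ are centred consistently.

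For the first assertion I would run a Kempf-Ness induction: iterate the one-stratum case of Theorem~\ref{hl} along the nontrivial cocharacters of $G$, ordered by decreasing $\mathrm{inv}$ as in Subsection~\ref{window}, so that at each step the ``window'' piece of the resulting semi-orthogonal decomposition of $D^b(\X)$ contains $\mathbb{N}$ --- here the magic window property enters, since the inequalities defining $\mathbb{N}$ hold for all $\lambda$ at once --- while the complementary pieces are categories $D^b_{\mathcal{S}_i}(\X)_\bullet$ generated by $p_{i*}$ of objects supported on attracting loci $\mathcal{S}_i=S_i/P_i$, which one treats recursively on the Levi subgroups $L_i$ (induction on $\dim G$), peeling off the part that already lies in $\mathbb{N}$ and identifying the remainder as generated by complexes on attracting loci. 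This yields $D^b(\X)=\langle\cdots,\mathbb{N}\rangle$ with the asserted complement; the adjoints involved are the functors $\beta_{\geq w}R\Gamma_{\mathcal{S}}$ and their Serre duals recalled at the end of Subsection~\ref{window}. Arranging this induction coherently is the bulk of the first assertion.

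For the second assertion, assume $\langle\lambda,\delta\rangle+\tfrac12 n_\lambda\notin\Z$ for all nontrivial $\lambda$. Then no weight of $T_\delta$ sits on the boundary of the shifted polytope, so every grade restriction window above is strict and the induction goes to completion: the pieces supported on attracting loci are exhausted, $\mathbb{N}$ contains no nonzero object supported on a proper attracting locus, and $\mathbb{N}\isom D^b(\Lambda\text{-mod})$ for the finite $\OO_{V//G}$-algebra $\Lambda:=\operatorname{End}_\X(T_\delta)$. I would then verify Van den Bergh's criterion for a noncommutative crepant resolution: (i) $\Lambda$ is maximal Cohen-Macaulay over $\OO_{V//G}$, which reduces to the vanishing of the higher local cohomologies of $T_\delta\tensor T_\delta^\vee$ along the boundary strata --- exactly what the strict window conditions guarantee; (ii) $\Lambda$ has finite global dimension, since $T_\delta$ generates $D^b(\X)$ by the semi-orthogonal decomposition and $\X$ is smooth; and (iii) crepancy, since $V\isom V^\vee$ forces $\det V$ to be two-torsion, so $\omega_\X$ is trivial up to a character and $V//G$ is Gorenstein with trivial canonical class. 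The Calabi-Yau property of $\mathbb{N}$ itself then follows from self-duality of the generator: the weight set $\{\chi:\chi+\rho+\delta\in\tfrac12\WW\}$ is preserved by $\chi\mapsto-w_0\chi-2\delta$ (central symmetry and $W$-invariance of $\WW$, together with $-w_0\rho=\rho$ and $w_0\delta=\delta$), so $T_\delta$ is self-dual up to a line bundle, $\Lambda$ is a symmetric $\OO_{V//G}$-algebra, and computing the Serre functor of $\mathbb{N}$ via $R\lHom$ on $\X$ shows it is the shift by $\dim V//G$.

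The step I expect to be the main obstacle is the magic window property of the first paragraph: showing that one fixed collection of weights defines a Halpern-Leistner window for \emph{every} cocharacter at once. This is the content of the zonotope/support-function computation, and it is exactly the phenomenon that distinguishes $\mathbb{N}$ from the many non-canonical windows produced by Theorem~\ref{hl}. Once it is in place, the Kempf-Ness induction and the verification of the Cohen-Macaulay, finite-global-dimension and Calabi-Yau properties are bookkeeping with the adjoints of Subsection~\ref{window} and standard commutative algebra.
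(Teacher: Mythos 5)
The paper does not prove this statement: it is attributed to \v{S}penko--van den Bergh and cited as \cite{sp}, \cite{sp2}, so there is no proof in the text to compare your attempt against. Judged against the actual argument in those papers (and the streamlined ``magic window'' account of Halpern-Leistner and Sam \cite{hls}, which is closer in spirit to what you wrote), your outline is essentially the right shape: the central combinatorial input is indeed that the zonotope $\overline{\mathbb{W}}$ is centrally symmetric and $W$-invariant, so that one fixed weight condition is a Halpern-Leistner grade-restriction window simultaneously for every cocharacter; and the first half of the theorem is a Kempf--Ness induction on strata, while the second half is Van den Bergh's NCCR criterion plus a self-duality argument for $T_\delta$ (your computation that $\chi\mapsto -w_0\chi-2\delta$ preserves the defining weight set is correct, using $-w_0\rho=\rho$ and $w_0\delta=\delta$). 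So the high-level route you chose is the one the cited sources take.

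There is, however, one concrete error you would need to fix before this became a proof. You write that $\Lambda=\operatorname{End}_\X(T_\delta)$ has finite global dimension ``since $T_\delta$ generates $D^b(\X)$.'' This is false: $T_\delta$ generates only the admissible subcategory $\mathbb{N}\subsetneq D^b(\X)$, and the whole point of the first half of the theorem is that its orthogonal complement is nontrivial (it is generated by objects on attracting loci). The correct mechanism is subtler: one shows $\mathbb{N}\cong D^b(\Lambda)$ and deduces regularity of $\Lambda$ from the fact that $\mathbb{N}$ is an admissible subcategory of the \emph{smooth} (over $V//G$) category $D^b(\X)$, so has a classical generator of finite projective dimension. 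Beyond that single error, the remaining points are gaps of precision rather than gaps of idea --- the width computation must account for whether $\chi$ ranges over highest weights or all $T$-weights of $V(\chi)$ (the $\rho$-shift you mention), the Cohen--Macaulayness of $T_\delta\tensor T_\delta^\vee$ as a $\Gamma(V//G,-)$-module is where most of the work in \cite{sp} actually lies, and ``$\det V$ is $2$-torsion'' does not by itself give a trivial canonical class (the Gorenstein/crepancy argument needs the genericity hypothesis to land on the right twist). These are things the cited papers handle carefully, and you flag the right hard spot (the magic window property), so the sketch is defensible as a sketch.
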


We want to apply the above theorem for the stacks $\X(d)$ defined in Subsection \ref{quivers}, but in general there are no generic $\delta\in M^W_{\mathbb{R}}$ as in the second paragraph of the above theorem; for example, there are no such $\delta$ for any quiver with one vertex. 
\\

We will make explicit the complement of the above semi-orthogonal decomposition for $\X(d)$ when $\delta=0$ in Section \ref{5}. 
For $\chi\in M_{\mathbb{R}}$, consider the shifted Weyl action $w*\chi:=w(\chi+\rho)-\rho$, and denote by $\chi^+$ the dominant weight in the orbit of $\chi$ for shifted Weyl action if there exists such a weight, and zero otherwise.
An important ingredient in the above semi-orthogonal decomposition is the following:

\begin{prop}
Let $\chi$ be a $P$-weight. The complex $p_{\lambda *}(V(\chi)\otimes\OO_{\mathcal{S}})$ has a resolution by the vector bundles $V((\chi-\sigma_I)^+)\otimes\OO_\X$ where $\sigma_I$ is a partial sum of weights for $I\subset\{\beta\text{ weight of }N^{\lambda<0}\}$.
\end{prop}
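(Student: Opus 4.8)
The plan is to compute $p_{\lambda *}(V(\chi)\otimes\OO_{\mathcal{S}})$ directly, using the factorization $\mathcal{S}=S/P\xrightarrow{q}\mathcal{S}'=S/L'$ where $L'$ is the subgroup generated by $L$ and the unipotent radical $U_P$ of $P$; equivalently, one factors $p_\lambda$ through the relative flag-type bundle and applies the projection formula together with Borel--Weil--Bott along the fibers. Since $q$ is an affine bundle (the map $S\to Z$ is $L$-equivariant and $S$ retracts onto $Z$ along $\lambda$), we have $q_*\OO_{\mathcal S}=\OO_{\mathcal Z}$, but here the relevant direction is the proper piece: the induced map $\overline{p}_\lambda:S/P\to V/G$ factors as $S/P\to (S\times_P G)/G\hookrightarrow V/G$, and pushing forward $\OO$ along $G/P$ with a $P$-equivariant twist by $V(\chi)$ is governed by the cohomology of line bundles on $G/P$.

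First I would reduce to the case where $\chi$ is a character of $L$ (so that $V(\chi)$ restricted to $P$ is pulled back from $L$): a general $P$-weight $\chi$ differs from an $L$-weight by weights of $\mathfrak{u}_P = \operatorname{Lie}(U_P)$, and the associated graded of the Koszul-type filtration on $\OO_{\mathcal S}$ as a sheaf on the ambient $V/G$ introduces precisely the twists by partial sums $\sigma_I$ of weights of $N^{\lambda<0}$ — note $\mathfrak{u}_P$-weights and $N^{\lambda<0}$-weights together account for the ``negative'' directions being contracted. Concretely, $p_\lambda$ is the composition of the closed immersion $S/P\hookrightarrow \widetilde V/G$ (where $\widetilde V = V\times_P G$, using the affine bundle structure) with a smooth proper map whose fibers are $G/P$; resolving $\OO_{S/P}$ by a Koszul complex built on the conormal bundle $N^\vee_{S/P \subset \widetilde V/G}$, whose weights are exactly the $\lambda<0$ weights of $V$ not in $S$, reduces the computation to $p'_* (V(\chi-\sigma_I)\otimes\OO)$ for the smooth proper $G/P$-bundle $p'$.

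Next I would apply Bott's theorem fiberwise: $Rp'_*$ of the line-bundle-type sheaf $V(\chi-\sigma_I)\otimes\OO_{\widetilde V/G}$ over the $G/P$-bundle is, up to a shift recording the length of the Weyl element bringing $\chi-\sigma_I$ to the dominant chamber for the shifted action, the vector bundle $V((\chi-\sigma_I)^+)\otimes\OO_{\X}$, and it vanishes when $\chi-\sigma_I+\rho$ lies on a wall — which is consistent with the convention $(\chi-\sigma_I)^+=0$ in that case. Assembling the spectral sequence (or filtration) of the Koszul resolution with these Bott computations on each graded piece yields the claimed resolution of $p_{\lambda *}(V(\chi)\otimes\OO_{\mathcal S})$ by the $V((\chi-\sigma_I)^+)\otimes\OO_\X$. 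The main obstacle I anticipate is bookkeeping the shifts and signs coherently: one must check that the cohomological shifts coming from Bott's theorem on the $G/P$-fibers match up with the homological degrees in the Koszul complex on $N^{\lambda<0}$, so that the alternating sum actually organizes into an honest (bounded) resolution rather than merely a virtual identity in K-theory; this is where care with the shifted Weyl action $w*\chi = w(\chi+\rho)-\rho$ and with which weights count as ``$N^{\lambda<0}$'' versus ``$\mathfrak u_P$'' is essential. The affine-bundle collapse $q^*:D^b(\mathcal Z)_w\cong D^b(\mathcal S)_w$ from Subsection~\ref{window} can be invoked to handle the $q$-direction cleanly, so that the only genuine cohomology computation is the $G/P$-Bott one.
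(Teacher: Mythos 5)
Your argument follows the same route as the paper: factor $p_\lambda$ as the closed immersion $S/P\hookrightarrow V/P$ followed by the $G/P$-bundle $V/P\to V/G$, resolve $\OO_{S/P}$ by the Koszul complex on $N^{\lambda<0}$ (which produces the twists by partial sums $\sigma_I$), and then apply Borel--Bott--Weyl fiberwise along $V/P\to V/G$ to obtain $V((\chi-\sigma_I)^+)\otimes\OO_\X$. The aside about ``reducing to $L$-weights'' is unnecessary --- an irreducible $P$-representation already factors through $L$ because $U_P$ acts trivially --- but this does not affect the argument, which otherwise matches the paper's proof.
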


Indeed, for the closed immersion $i:S/P\to V/P$, the complex $i_*(V(\chi)\otimes\OO_{\mathcal{S}})$ has a resolution by the bundles $V(\chi-\sigma_I)\otimes\OO_\X$, where $\sigma_I$ is a partial sum of weights for $I\subset\{\beta\text{ weight of }N^{\lambda<0}\}$, and these bundles appear in cohomological position $|I|$. 
By the Borel-Bott-Weyl Theorem for the map $V/P\to V/G$, we obtain that the complex $p_*(V(\chi)\otimes\OO_{\mathcal{S}})$
has a resolution by the vector bundles $V((\chi-\sigma_I)^+)\otimes\OO_\X$ for all sets $I$ as above.
See \cite[Section 3.2]{hls} for more details.

%One of the steps in the proof of the above theorem is showing that for any sheaf $F$, one can modify it by sheaves supported on attracting loci such that the resulting sheaf is in $\mathbb{N}$. It is enough to show this for a vector bundle $V(\chi)\otimes\OO_X$, where $\chi$ is a dominant weight of $G$. Define $r$ the minimal real number such that $\chi+\rho+\delta\in r\mathbb{W}$, and let $p$ be the minimimal number of coefficients $a_i=-r$ is an expression $\chi=\sum c_i\beta_i,$ where $\beta_i$ is a weight of $X$. Then one shows that there exists an anti-dominant character $\lambda$ such that $c_i=-r$ if $\langle \lambda, \chi\rangle >0$, $c_i=0$ if $\langle \lambda, \chi\rangle<0$, and $-r<c_i<0$ if $\langle \lambda, \chi\rangle=0$. To go from $\chi$ to weights with smaller $(r,p)$ invariant, one uses the complexes $$p_{\lambda *}q_{\lambda}^*\OO_S\otimes V(\chi)$$ supported on the attracting locus of $\lambda$. There is a spectral sequence with terms $V(\chi-\sigma_I)\otimes\OO_X$ converging to $p_{\lambda *}q_{\lambda}^*\OO_S\otimes V(\chi)$, where we denote by $\{\beta_1,\cdots,\beta_k\}$ the set of weights with $\langle \lambda, \beta\rangle>0$, and for $I\subset \{1,\cdots, k\}$, we let $\sigma_I=\sum_{i\in I} \beta_i$. 

\subsection{Categories of singularities.}\label{singul}
Let $\mathcal{Y}=Y/G$ be a quotient stack, where $Y$ is an affine scheme with an action of a reductive group $G$. The category of singularities of $\mathcal{Y}$ is a triangulated category defined as he quotient of triangulated categories:
$$D_{sg}(\mathcal{Y}):=D^b\,(\mathcal{Y})/\text{Perf}(\mathcal{Y}),$$
where $\text{Perf}(\mathcal{Y})\subset D^b\,(\mathcal{Y})$ is the full subcategory of perfect complexes. 
If $\mathcal{Y}$ is smooth, the category of singularities is trivial. 
In our situation, $\mathcal{Y}$ will be the zero fiber of a regular function on a smooth quotient stack:
$$f:\X\to\mathbb{A}^1.$$
For a category $\mathcal{A}$, denote by $\mathcal{A}^{hf}\subset \mathcal{A}$ the full subcategory of homologically finite objects, that is, objects $A$ such that for every object $B$ of $\mathcal{A}$, the morphism space $\text{Hom}\,(A,B[i])$ is trivial except for finitely many shifts $i\in\mathbb{Z}$. If $\mathcal{A}\subset D^b(\X_0)$ is a category which admits an adjoint $\Phi:D^b(\X_0)\to\mathcal{A}$, then $\mathcal{A}^{hf}\subset \mathcal{A}$ is the full subcategory of perfect complexes \cite[Proposition 1.11]{o1}.
For a subcategory $\mathcal{A}\subset D^b(\X)$, define: 
$$\mathcal{A}^0=\{F\in D^b(\X_0)\text{ such that }i_*F\in \mathcal{A}\},$$
where $i:\X_0\to\X$ is the closed immersion. We use the notation $\mathcal{A}^{hf}$ for $(\mathcal{A}^0)^{hf}$.
For a subcategory $\mathcal{A}\subset D^b(\X_0)$, definea triangulated category: $$D_{sg}(\mathcal{A})=\mathcal{A}^0/\mathcal{A}^{hf}.$$
The categories $D^b(\X)$ for $\X$ a smooth quotient stack and $\mathbb{N}\subset D^b(\X)$ defined in Subsection \ref{sp} admit dg enhancements, and they are $D^b(\mathbb{A}^1)$-linear with respect to a potential $f:\X\to\mathbb{A}^1$. 
The next proposition is due to Halpern-Leistner-Pomerleano \cite[Lemmas 1.17 and 1.18 ]{hlp} and Orlov \cite[Proposition 1.10]{o1}:

\begin{prop}\label{sod}
Let $\X$ be a smooth quotient stack with a regular function $f:\X\to\mathbb{A}^1$. Consider $D^b(\mathbb{A}^1)$-linear dg categories $\mathcal{A}_i\subset D^b(\X)$ such that: $$D^b(\X)=\langle \mathcal{A}_i\rangle.$$
(a) We have semi-orthogonal decomposition:
$$D^b(\X_0)=\langle \mathcal{A}^0_i\rangle\text{ and }\text{Perf}\,(\X_0)=\langle \mathcal{A}_i^{hf}\rangle.$$
(b) We have a 
semi-orthogonal decomposition:
$$D_{sg}(\X_0)=\langle D_{sg}(\mathcal{A}_i)\rangle.$$ Further, $\mathcal{A}_i^{hf}\subset \mathcal{A}^0_i$ is the subcategory of perfect complexes in $\mathcal{A}^0_i$.
\end{prop}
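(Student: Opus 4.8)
\textbf{Proof plan for Proposition \ref{sod}.}

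The plan is to reduce both parts to Orlov's description of the category of singularities of a fiber and to the behavior of semi-orthogonal decompositions under the base-change along $i:\X_0\to\X$. For part (a), I would start from the given $D^b(\mathbb{A}^1)$-linear semi-orthogonal decomposition $D^b(\X)=\langle \mathcal{A}_i\rangle$ and apply $Li^*$. The key point is that each $\mathcal{A}_i$ is $D^b(\mathbb{A}^1)$-linear, hence stable under tensoring by $\OO_{\X_0}=\OO_\X\Ltensor_{\OO_{\mathbb{A}^1}}\OO_0$, so the "restriction to the zero fiber" $\mathcal{A}_i^0$ is well-behaved; more precisely, I would invoke the standard fact (Halpern-Leistner-Pomerleano \cite[Lemmas 1.17, 1.18]{hlp}) that a $D^b(\mathbb{A}^1)$-linear semi-orthogonal decomposition induces one on the fiber, giving $D^b(\X_0)=\langle \mathcal{A}_i^0\rangle$, where one must check the semi-orthogonality $\Hom(A,B[j])=0$ for $A\in\mathcal{A}_i^0$, $B\in\mathcal{A}_k^0$, $i>k$, using the projection formula and $D^b(\mathbb{A}^1)$-linearity, and check generation by lifting a generating set through $i_*$. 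For $\text{Perf}(\X_0)=\langle \mathcal{A}_i^{hf}\rangle$, I would use that $\text{Perf}(\X_0)\subset D^b(\X_0)$ is the subcategory of homologically finite objects (since $\X_0$ is the zero fiber of a regular function on a smooth stack, $\text{Perf}$ coincides with homologically finite objects — here one uses smoothness of $\X$, so $D^b(\X)$ has finite global dimension and $i_*$ of a perfect complex on $\X$ is again bounded), together with the observation that $F\in D^b(\X_0)$ is perfect iff each of its components in the semi-orthogonal decomposition $\langle\mathcal{A}_i^0\rangle$ is homologically finite in $\mathcal{A}_i^0$; this uses that the projection functors of a semi-orthogonal decomposition send perfect/homologically-finite objects to perfect/homologically-finite objects, so the decomposition restricts to $\text{Perf}(\X_0)$.

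For part (b), I would pass to Verdier quotients. By definition $D_{sg}(\X_0)=D^b(\X_0)/\text{Perf}(\X_0)$, and $D_{sg}(\mathcal{A}_i)=\mathcal{A}_i^0/\mathcal{A}_i^{hf}$. The mechanism is the general principle that if $\mathcal{T}=\langle \mathcal{T}_i\rangle$ is a semi-orthogonal decomposition and $\mathcal{S}=\langle\mathcal{S}_i\rangle$ with $\mathcal{S}_i=\mathcal{S}\cap\mathcal{T}_i$ a compatible one on a thick subcategory, then the quotients assemble into a semi-orthogonal decomposition $\mathcal{T}/\mathcal{S}=\langle \mathcal{T}_i/\mathcal{S}_i\rangle$. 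I would verify the hypotheses: from part (a) we have $D^b(\X_0)=\langle\mathcal{A}_i^0\rangle$ and $\text{Perf}(\X_0)=\langle\mathcal{A}_i^{hf}\rangle$ with $\mathcal{A}_i^{hf}=\text{Perf}(\X_0)\cap\mathcal{A}_i^0$ (this last identification is Orlov \cite[Proposition 1.10, 1.11]{o1}: for a category admitting an adjoint $\Phi:D^b(\X_0)\to\mathcal{A}_i^0$, the homologically finite objects are exactly the perfect complexes, and the adjoint exists because $\mathcal{A}_i^0$ is a piece of a semi-orthogonal decomposition). Then semi-orthogonality in the quotient is inherited because $\Hom$ groups only shrink under localization, and generation is clear since the images of the $\mathcal{A}_i^0$ generate the quotient; the functors $D_{sg}(\mathcal{A}_i)\to D_{sg}(\X_0)$ are fully faithful because the adjoints descend to the quotients. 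This simultaneously gives the final sentence of the proposition: $\mathcal{A}_i^{hf}\subset\mathcal{A}_i^0$ is the subcategory of perfect complexes, which is precisely the content of the identification $\mathcal{A}_i^{hf}=\text{Perf}(\X_0)\cap\mathcal{A}_i^0$ combined with the characterization of $\text{Perf}(\mathcal{A}_i^0)$ via homological finiteness.

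The main obstacle I expect is the careful handling of dg enhancements and the compatibility of the localizations: Verdier quotients of triangulated categories do not a priori interact well with semi-orthogonal decompositions unless one knows the relevant projection (co)functors are compatible with the subcategory being quotiented. Concretely, the delicate step is showing that the adjoints $\Phi_i:D^b(\X_0)\to\mathcal{A}_i^0$ restrict to $\text{Perf}(\X_0)\to\mathcal{A}_i^{hf}$ and hence descend to well-defined functors on $D_{sg}$, which is where the $D^b(\mathbb{A}^1)$-linearity and smoothness of $\X$ are really used (through \cite{hlp} and \cite{o1}) — without linearity the fiber restriction of a semi-orthogonal decomposition can fail. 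A secondary technical point is that $\mathcal{A}_i^0$ and $\mathcal{A}_i^{hf}$ must be taken with their induced dg structures so that the quotient $D_{sg}(\mathcal{A}_i)$ is a triangulated category in the first place, and so that the semi-orthogonal decomposition statement in (b) is meaningful; but since the proposition only claims a triangulated statement, I would simply cite \cite[Lemmas 1.17, 1.18]{hlp} for the dg-level assertions and extract the triangulated consequence, and cite \cite[Propositions 1.10, 1.11]{o1} for the identification of homologically finite objects with perfect complexes.
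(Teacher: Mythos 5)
Your proposal is correct and follows essentially the same route as the paper: both cite Halpern-Leistner–Pomerleano for the $D^b(\mathbb{A}^1)$-linear semi-orthogonal decomposition passing to the fiber in (a), and Orlov's Propositions 1.10/1.11 for the ``Perf'' decomposition, the quotient semi-orthogonal decomposition in (b), and the identification $\mathcal{A}_i^{hf}=\text{Perf}\cap\mathcal{A}_i^0$. The only cosmetic difference is that the paper realizes $D^b(\X_0)$ and $\mathcal{A}_i^0$ as functor categories $\text{Fun}_{D^b(\mathbb{A}^1)}(D^b(\text{pt}),-)$ via the Fourier–Mukai dictionary of Ben-Zvi–Francis–Nadler, whereas you argue more directly with the projection formula and lifting generators along $i_*$; both are legitimate ways to invoke the HLP lemma, and you correctly flag the dg-enhancement subtlety that the paper also mentions.
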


\begin{proof}
(a) The first semi-orthogonal decomposition follows from \cite[Lemma $1.17$]{hlp} and the interpretation of the categories $D^b(\X_0)$ and $\mathcal{A}^0_i$ as: $$D^b(\X_0)=\text{Fun}_{D^b(\mathbb{A}^1)}(D^b(\text{pt}), D^b(\X))\text{ and }
\mathcal{A}^0_i=\text{Fun}_{D^b(\mathbb{A}^1)}(D^b(\text{pt}), \mathcal{A}_i)$$ via the Fourier-Mukai transformation \cite{bzfn}. The second semi-orthogonal decomposition follows from the first one and \cite[Proposition 1.10]{o1}.

(b) Both statements follow from \cite[Proposition 1.10]{o1}.
\end{proof}

\textbf{Remark.} One can define $D_{sg}^{id}(\X_0)$ directly from $D^b(\X_0)$ as in \cite[Section 1]{hlp}; the analogoue of part $(b)$ is Proposition $1.11$
in loc. cit.
\\

An application of this proposition is the following. Assume that we are given a stability condition $\theta$ for the action of $G$ on a smooth variety $V$, and let $\X=V/G$.
Then the semi-orthogonal decomposition $$D^b(\X)=\langle \cdots, \mathbb{G}_w, \cdots \rangle,$$ where $\mathbb{G}_w\cong D^b(\X^{ss})$ implies that there exists a semi-orthogonal decomposition 
$$D_{sg}(\X_0)=\langle \cdots, \mathbb{D}_w, \cdots\rangle,$$ where $\mathbb{D}_w\cong D_{sg}(\X^{ss}_0)$.
The following result also follows directly from Proposition \ref{sod}:

\begin{cor}\label{uns}
Let $\mathcal{X}$ be a quotient stack with Kempf-Ness stratum $\mathcal{Z}\xleftarrow{q}\mathcal{S}\xrightarrow{p}\X$. By Theorem \ref{hl}, the functor:
$$p_*q^*: D^b(\mathcal{Z})_w\to D^b(\mathcal{X})$$ is fully faithful. 
Theorem \ref{hl} gives a semi-orthogonal decomposition:
$$D^b(\mathcal{Y})=\langle p_*q^*D^b(\mathcal{Z})_{< w}, \mathbb{G}_w, p_*q^*D^b(\mathcal{Z})_{\geq w}\rangle.$$ 
Then Proposition \ref{sod} says that we have a semi-orthogonal decomposition: 
$$D_{sg}(\mathcal{Y}_0)=\langle p_*q^*D_{sg}(\mathcal{Z}_0)_{< w}, D_{sg}(\mathbb{G}_w), p_*q^*D_{sg}(\mathcal{Z}_0)_{\geq w}\rangle.$$
\end{cor}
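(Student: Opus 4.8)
The plan is to deduce the statement by feeding the semi-orthogonal decomposition of Theorem~\ref{hl} into Proposition~\ref{sod}(b). Throughout, $f:\X\to\mathbb{A}^1$ denotes the ambient potential and $\X_0$ its zero fibre (the stack written $\mathcal{Y}_0$ in the statement). The only thing one must check before invoking Proposition~\ref{sod} is that the three subcategories occurring in
$$D^b(\X)=\langle p_*q^*D^b(\mathcal{Z})_{<w}, \mathbb{G}_w, p_*q^*D^b(\mathcal{Z})_{\geq w}\rangle$$
are $D^b(\mathbb{A}^1)$-linear with respect to $f$.

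The key geometric input is that $f\circ p:\mathcal{S}\to\mathbb{A}^1$ factors through $q:\mathcal{S}\to\mathcal{Z}$. Writing $\mathcal{S}=S/P$ with $S=\bigoplus_{n\geq 0}V_n$ the $\lambda$-attracting subspace of $V$, $Z=V_0=V^{\lambda}$, and $q$ the linear projection $S\to Z$, the function $f$ is $G$-invariant, hence $\lambda$-invariant, so every monomial of $f|_S$ has total $\lambda$-weight $0$; since $S$ carries no coordinate of negative $\lambda$-weight, such a monomial can only involve the weight-zero coordinates, namely those of $Z$. Thus $f|_{\mathcal{S}}=q^*f_{\mathcal{Z}}$ for a regular function $f_{\mathcal{Z}}:\mathcal{Z}\to\mathbb{A}^1$, and, $q$ being flat, $\mathcal{S}_0=\mathcal{S}\times_{\mathcal{Z}}\mathcal{Z}_0$. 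Hence $q^*:D^b(\mathcal{Z})\to D^b(\mathcal{S})$ and $p_*:D^b(\mathcal{S})\to D^b(\X)$ are $D^b(\mathbb{A}^1)$-linear (the former since $q$ lies over $\mathbb{A}^1$, the latter by the projection formula for the proper map $p$ over $\mathbb{A}^1$), so $p_*q^*D^b(\mathcal{Z})_{<w}$ and $p_*q^*D^b(\mathcal{Z})_{\geq w}$ are $D^b(\mathbb{A}^1)$-linear triangulated subcategories; and $\mathbb{G}_w$ is automatically $D^b(\mathbb{A}^1)$-linear, being cut out by the weight inequalities on $j_i^*(-)$, which are unaffected by tensoring with $f^*E$ for $E\in D^b(\mathbb{A}^1)$.

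Now Proposition~\ref{sod}(b) applied to this $D^b(\mathbb{A}^1)$-linear decomposition gives
$$D_{sg}(\X_0)=\langle D_{sg}(p_*q^*D^b(\mathcal{Z})_{<w}), D_{sg}(\mathbb{G}_w), D_{sg}(p_*q^*D^b(\mathcal{Z})_{\geq w})\rangle,$$
the homologically finite objects of each factor being exactly its perfect complexes. To put the outer factors into the asserted shape I would use that $q^*$ is a $D^b(\mathbb{A}^1)$-linear equivalence onto the weight pieces $D^b(\mathcal{S})_i$ (Theorem~\ref{hl}) which commutes with restriction to central fibres (flat base change $q^*i_{\mathcal{Z}*}\cong i_{\mathcal{S}*}q_0^*$, using $\mathcal{S}_0=\mathcal{S}\times_{\mathcal{Z}}\mathcal{Z}_0$), together with the fact that $p_*$ is $D^b(\mathbb{A}^1)$-linear and restricts to a fully faithful embedding $D^b(\mathcal{S})_{<w}=q^*D^b(\mathcal{Z})_{<w}\hookrightarrow D^b(\X)$. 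Functoriality of $\mathcal{A}\mapsto D_{sg}(\mathcal{A})$ under $D^b(\mathbb{A}^1)$-linear functors, which sends equivalences to equivalences, then identifies $D_{sg}(p_*q^*D^b(\mathcal{Z})_{<w})$ with $p_*q^*D_{sg}(\mathcal{Z}_0)_{<w}$, and symmetrically for $\geq w$; moreover $D_{sg}(\mathbb{G}_w)\cong D_{sg}(\X^{ss}_0)$ because the restriction equivalence $\mathbb{G}_w\xrightarrow{\sim}D^b(\X^{ss})$ is $D^b(\mathbb{A}^1)$-linear.

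The one point that genuinely requires verification is the $D^b(\mathbb{A}^1)$-linearity, concretely the factorisation $f|_{\mathcal{S}}=q^*f_{\mathcal{Z}}$ and the ensuing compatibility of $q^*$ and $p_*$ with passage to the central fibres; once this is in place, everything else is a formal consequence of Proposition~\ref{sod}, namely the $D_{sg}$-analogue of a semi-orthogonal decomposition of $D^b(\X)$ by $D^b(\mathbb{A}^1)$-linear factors together with the stability of $D_{sg}(-)$ under $D^b(\mathbb{A}^1)$-linear equivalences.
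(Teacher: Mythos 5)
Your proof is correct and takes the same route the paper intends: feed the semi-orthogonal decomposition of Theorem \ref{hl} into Proposition \ref{sod}(b), which is exactly what the corollary's own text gestures at ("follows directly from Proposition \ref{sod}"). You usefully supply the one hypothesis the paper leaves implicit, namely $D^b(\mathbb{A}^1)$-linearity of the three factors, via the observation that a $\lambda$-invariant potential restricted to the attracting locus factors through $q$ (the same fact the paper silently uses when asserting $p_{d,e}^*\mathrm{Tr}(W_{d+e})=q_{d,e}^*(\mathrm{Tr}(W_d)+\mathrm{Tr}(W_e))$).
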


\subsection{Dimensional reduction}

In this section, we prove the following:

\begin{thm}\label{dimred0}
Let $V\times\mathbb{A}^n$ be a representation of a reductive group $G$, and let $\X=X\times\mathbb{A}^n/G$. Consider a regular function:
$$g=t_1f_1+\cdots+t_nf_n:\X\to\mathbb{A}^1,$$
where $t_1,\cdots, t_n$ are the coordinates on 
$\mathbb{A}^n$ and 
$f_1,\cdots, f_n: X\to\mathbb{A}^1$ are 
$G$-equivariant regular functions. Let $\X_0\subset\X$ be the zero locus of $g$ and let $i:\mathcal{Z}\hookrightarrow \mathcal{X}_0$ be the zero locus of $f_1,\cdots, f_n$. The pushforward map induces an isomorphism:
$$i_*: G_0(\mathcal{Z})\cong K_0(D_{sg}(\X_0)).$$
\end{thm}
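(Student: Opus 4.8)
The plan is to reduce the statement to the known dimensional reduction theorem at the level of derived categories and then pass to Grothendieck groups. The key geometric input is that the central fiber $\X_0 \subset \X = X \times \mathbb{A}^n/G$ carries an extra structure: writing $\mathbb{A}^n = \operatorname{Spec} \mathbb{C}[t_1,\dots,t_n]$, the function $g = \sum t_i f_i$ is linear in the $t_i$, so $\X_0$ is (up to $G$-action) the total space of a vector bundle-type construction over $X$. More precisely, $\X$ maps to $X/G$ with fibers $\mathbb{A}^n$, and $\X_0$ is cut out by one equation linear in the fiber coordinates; dually, the Koszul complex of $g$ realizes $\OO_{\X_0}$ as a matrix factorization, and there is a well-known equivalence (Isik \cite{isik}, Hirano, Shipman) between the category of singularities $D_{sg}(\X_0)$ — equivalently the category of matrix factorizations $\mathrm{MF}(\X, g)$ — and $D^b(\mathcal{Z})$, where $\mathcal{Z} = Z(f_1,\dots,f_n) \subset X/G$ is the zero locus of the $f_i$ viewed as a section of $\OO_X^{\oplus n}$.

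First I would set up the comparison functor. Let $\mathcal{E} = X \times \mathbb{A}^n/G \to X/G$ be the projection; the $f_i$ define a section $s$ of the trivial bundle $\mathcal{E}^\vee$ over $X/G$, and $g$ is the pairing of the tautological section with $s$. Isik's theorem (or its equivariant/stacky version, which applies since everything is a quotient by a reductive group and the statement is local on $X/G$) gives an equivalence $D^b(\mathcal{Z}) \xrightarrow{\sim} D_{sg}(\X_0)$ realized, on the level of objects, essentially by pushforward along $i: \mathcal{Z} \hookrightarrow \X_0$ composed with the inclusion $\mathcal{Z} \hookrightarrow X/G \hookrightarrow \X_0$ (the zero section of $\mathbb{A}^n$ lands in $\X_0$ since $g$ vanishes there). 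One must check that this equivalence is compatible with the naive pushforward $i_*: D^b(\mathcal{Z}) \to D^b(\X_0) \to D_{sg}(\X_0)$ that appears in the statement; this is a matter of unwinding the definition of Isik's functor and matching it with the Koszul resolution of $\OO_{\X_0}$ as a $\mathbb{C}[t]$-module, using that $\mathcal{Z}$ is the derived zero locus and hence $i_* \OO_{\mathcal{Z}}$ has the expected Koszul-type resolution over $\X_0$.

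Then I would pass to $K_0$. An equivalence of triangulated categories induces an isomorphism on $K_0$, and since $G_0(\mathcal{Z}) = K_0(D^b(\mathcal{Z}))$ by definition, we get $G_0(\mathcal{Z}) \cong K_0(D_{sg}(\X_0))$, and by the compatibility just discussed this isomorphism is exactly $i_*$. There are two points requiring a little care here. One is that $D_{sg}(\X_0)$ as defined need not be idempotent-complete, so strictly the cited equivalence may be with the idempotent completion; but since $K_0$ is insensitive to idempotent completion (it only affects $K_{-1}$ and below, by Schlichting), the $K_0$ statement is unaffected — I would remark on this rather than belabor it. The other is equivariance: Isik's argument is written for schemes, so I would either invoke the stacky version in Hirano's or Toda's work, or note that $\X, \X_0, \mathcal{Z}$ are all global quotients by the reductive $G$ and the equivalence can be constructed $G$-equivariantly throughout (all the functors involved — pushforward, tensoring with Koszul complexes, the relevant adjoints — are $G$-equivariant), which is the line I expect the paper to take given its heavy use of equivariant derived categories.

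The main obstacle, I expect, is not any single deep step but rather assembling the right form of the dimensional reduction equivalence — stacky, in families over $X/G$, and with the pushforward $i_*$ explicitly identified as the equivalence — from the literature, which states it in slightly varying generality (affine $X$, schemes rather than stacks, single function $f$ rather than a tuple). The tuple $(f_1,\dots,f_n)$ versus a single function is harmless: one packages $g = \sum t_i f_i$ as the canonical function on the total space of $\OO_{X/G}^{\oplus n}$ associated to the section $(f_1,\dots,f_n)$, and $\mathcal{Z}$ is its (derived) vanishing locus. Provided the smoothness hypothesis on $X$ (hence on $\X$) is in force — which is what makes $D_{sg}(\X_0)$ well-behaved and the Koszul resolutions finite — the argument goes through, and the only genuinely new content relative to the cited theorems is the identification of the abstract equivalence with the geometric map $i_*$, which I would verify by evaluating both on generating objects (structure sheaves of closed substacks of $\mathcal{Z}$, or a compact generator).
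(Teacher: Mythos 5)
Your approach is genuinely different from the one the paper takes, and in fact the paper explicitly considers and declines it. The remark immediately following the theorem says: ``The isomorphism $G_0(\mathcal{Z}')\cong K_0(D_{sg}(\X_0))$, where $\mathcal{Z}'$ is the Koszul stack associated to the regular functions $f_1,\cdots, f_n$ follows from a derived equivalence $D^b(\mathcal{Z}')\cong D_{sg}^{gr}(\X_0)$ due to Isik \ldots\ The derived equivalence constructed by Isik uses Koszul duality, and it is not clear whether it is compatible with the dimensional reduction in cohomology.'' The paper instead runs a direct K-theoretic argument: it proves $G_0(\mathcal{Z})\hookrightarrow G_0(\X_0)$ and $G_1(\X_0)\twoheadrightarrow G_1(\mathcal{U}_0)$ where $\mathcal{U}_0=\X_0\setminus\mathcal{Z}$ (by induction on $n$ using excision); it shows $i_*:K_0(\X_0)\to K_0(\X)$ is zero (Proposition \ref{pushzero}); it shows the map $K_0(\X_0)\to G_0(\X_0)$ factors through $K_0(\X_0)\to G_0(\mathcal{U}_0)$ (Proposition \ref{factorsthru}); and then a diagram chase with the excision sequence shows that $G_0(\mathcal{Z})$ and the image of $K_0(\X_0)$ intersect only in zero inside $G_0(\X_0)$, which yields the theorem since $K_0(D_{sg}(\X_0))=G_0(\X_0)/\operatorname{im}K_0(\X_0)$. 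The isomorphism so constructed is $i_*$ by fiat.

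The gap in your proposal is precisely the step you flag as ``a matter of unwinding the definition of Isik's functor'' and later propose to settle by ``evaluating both on generating objects.'' The paper treats this identification of the Koszul-duality equivalence with the geometric pushforward $i_*$ as genuinely unresolved, not as a routine bookkeeping exercise, and it is the reason the authors do not use Isik's theorem: Theorem \ref{dimred0} is used later (Proposition \ref{dimred}) exactly to compare with the cohomological dimensional reduction $i_*:H^{BM}_\cdot(\mathcal{Z})\to H^{BM}_\cdot(\X_0,\varphi_g\mathbb{Q})$ via the Chern character, so it is essential that the K-theoretic isomorphism really is $i_*$ and not merely abstractly isomorphic to it. Until you actually carry out that verification, your argument proves there exists \emph{some} isomorphism $G_0(\mathcal{Z})\cong K_0(D_{sg}(\X_0))$, not the stated one. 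Separately, your remark that ``$K_0$ is insensitive to idempotent completion (it only affects $K_{-1}$ and below)'' is incorrect: by Thomason's cofinality theorem the inclusion of a triangulated category into its idempotent completion induces an \emph{injection} $K_0(\mathcal{A})\hookrightarrow K_0(\mathcal{A}^{\mathrm{id}})$ which need not be surjective; it is $K_1$ and above that are insensitive. This matters in this paper, which deliberately tracks $KHA$ and $KHA^{\mathrm{id}}$ as potentially different algebras.
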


\textbf{Remark.} The isomorphism $G_0(\mathcal{Z}')\cong K_0(D_{sg}(\X_0))$, where $\mathcal{Z}'$ is the Koszul stack associated to the regular functions $f_1,\cdots, f_n$ follows from a derived equivalence $$D^b(\mathcal{Z}')\cong D_{sg}^{gr}(\X_0)$$ due to Isik \cite{i}. Here $D_{sg}^{gr}$ is the graded category of singularities, defined as the quotient of triangulated categories $$D_{sg}^{gr}(\X_0)=D^b\text{Coh}^{gr}(\X_0)/\text{Perf}^{gr}(\X_0)$$ where $\text{Coh}^{gr}(\X_0)$ denotes the category of graded coherent with respect to the grading induced by the weight $1$ action of $\mathbb{G}_m$ on $\mathbb{A}^n$. 
We obtain $D_{sg}(\X_0)$ by collapsing the $\mathbb{Z}$-grading to a $\mathbb{Z}/2\mathbb{Z}$ and we thus have an isomorphism $K_0(D_{sg}^{gr}(\X_0))=K_0(D_{sg}(\X_0))$ \cite[Proposition 1.22]{hlp}. The derived equivalence constructed by Isik uses Koszul duality, and it is not clear whether it is compatible with the dimensional reduction in cohomology \cite[Appendix A]{d}:
$$i_*: H_{\cdot}^{BM}(\mathcal{Z})\to H_{\cdot}^{BM}(\X_0,\varphi_g\mathbb{Q}).$$

We prefer to use the above statement because the dimensional reduction theorems in K-theory and cohomology are compatible. The only place where we actually need to use Theorem \ref{dimred0} and not Isik's theorem is Proposition \ref{dimred}, but in that case a straightforward argument shows that $G_0(\mathcal{Z})\twoheadrightarrow K_0(D_{sg}(\X_0))$ and thus $\text{gr}\,G_0(\mathcal{Z})\cong \text{gr}\,K_0(D_{sg}(\X_0))$.

\begin{prop}
Let $\mathcal{U}_0=\X_0-\mathcal{Z}$. 
Then:

(a) $G_1(\X_0)\twoheadrightarrow G_1(\mathcal{U}_0)$ and

(b) $G_0(\mathcal{Z})\hookrightarrow G_0(\mathcal{X}_0)$.
\end{prop}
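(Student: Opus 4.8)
The plan is to use the localization sequence in $G$-theory for the open-closed decomposition $\mathcal{Z} \hookrightarrow \X_0 \hookleftarrow \mathcal{U}_0$, which reads
$$G_1(\X_0)\to G_1(\mathcal{U}_0)\xrightarrow{\partial} G_0(\mathcal{Z})\xrightarrow{i_*} G_0(\X_0)\to G_0(\mathcal{U}_0)\to 0.$$
Both claims follow at once if I can show that the connecting map $\partial: G_1(\mathcal{U}_0)\to G_0(\mathcal{Z})$ vanishes, since exactness then gives surjectivity of $G_1(\X_0)\to G_1(\mathcal{U}_0)$ (that is part (a)) and injectivity of $i_*: G_0(\mathcal{Z})\to G_0(\X_0)$ (part (b)). So the whole proposition reduces to the single statement $\partial = 0$, equivalently $G_1(\X_0)\twoheadrightarrow G_1(\mathcal{U}_0)$.

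First I would identify $\mathcal{U}_0$ explicitly. On $\mathcal{U}_0 = \X_0 \setminus \mathcal{Z}$ the functions $f_1,\dots,f_n$ do not all vanish, so $\mathcal{U}_0$ is covered by the opens where $f_j \neq 0$; on such an open, the equation $g = t_1 f_1 + \cdots + t_n f_n = 0$ can be solved for $t_j$, exhibiting that open as (the quotient of) a vector bundle — an $\mathbb{A}^{n-1}$-bundle over the corresponding open of $X\setminus\{f_1=\cdots=f_n=0\}$ inside $V$. The key geometric input is that away from $\mathcal{Z}$ the vanishing locus of $g$ is smooth over the base in the fiber directions; more precisely, $\mathcal{U}_0$ should be realized as (the stack quotient of) the total space of a vector bundle $E$ of rank $n-1$ over $\mathcal{V} := (X\setminus\{f_1=\dots=f_n=0\})/G$. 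Granting this, homotopy invariance of $G$-theory gives $G_\bullet(\mathcal{U}_0) \cong G_\bullet(\mathcal{V})$, and in particular $G_1(\mathcal{U}_0)\cong G_1(\mathcal{V})$.

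Now I would relate $G_1(\X_0)$ to $G_1(\mathcal{V})$. The point is that the composite $\mathcal{U}_0 \hookrightarrow \X_0$, followed by (a piece of) the flat pullback along the bundle projection $\X = V\times\mathbb{A}^n/G \to V/G$ restricted appropriately, should be compatible with pullback $G_\bullet(\mathcal{V}) \to G_\bullet(\mathcal{U}_0)$. Concretely: restriction $G_1(\X_0)\to G_1(\mathcal{U}_0)$ factors (after the identification $G_1(\mathcal{U}_0)\cong G_1(\mathcal{V})$) through the flat pullback $G_1(V/G) \to G_1(\mathcal{V})$ induced by the open immersion $\mathcal{V}\hookrightarrow V/G$, composed with a pullback $G_1(V/G)\to G_1(\X_0)$ coming from a section. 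The cleanest route is to exhibit an explicit class-level splitting: every class in $G_1(\mathcal{U}_0)$ is pulled back from $\mathcal{V}$ via homotopy invariance, and $\mathcal{V}$ is an open in the smooth stack $V/G$, so its $G_1$ is generated by restrictions of vector bundles on $V/G$ together with automorphisms; these extend over $\X_0$ because $\X_0$ maps to $V/G$ (via the projection killing the $\mathbb{A}^n$-factor) and one can pull back. Hence every generator of $G_1(\mathcal{U}_0)$ lifts to $G_1(\X_0)$, giving surjectivity.

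The main obstacle I anticipate is the second step: carefully justifying that $\mathcal{U}_0$ is a vector bundle (or at least an affine-space bundle, which suffices for homotopy invariance of $G$-theory) over $\mathcal{V}$, including handling the non-reduced structure if $\X_0$ is not reduced and making the argument $G$-equivariantly/stack-theoretically rather than just on a Zariski cover. One must check that the $f_j \neq 0$ charts glue to a genuine bundle structure and that the transition functions are affine-linear; alternatively, one can avoid gluing by using a Mayer–Vietoris / Zariski-descent spectral sequence for $G$-theory over the cover $\{f_j\neq 0\}$ of $\mathcal{U}_0$, reducing to each chart where homotopy invariance applies directly, at the cost of a more involved bookkeeping argument. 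Either way, once $G_1(\mathcal{U}_0)$ is controlled by homotopy invariance and the lifting over $\X_0$ is in place, exactness of the localization sequence finishes both (a) and (b).
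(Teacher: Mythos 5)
Your reduction of both claims to a single statement via the localization sequence matches the paper exactly, and your geometric observation that $\mathcal{U}_0$ is the total space of a rank $n-1$ vector subbundle of the trivial bundle $\mathbb{A}^n\times\mathcal{V}$ over $\mathcal{V}=(X\setminus\{f_1=\dots=f_n=0\})/G$ (namely $\ker\bigl((t_i)\mapsto \sum t_if_i\bigr)$, which is a genuine subbundle because the $f_i$ do not all vanish on $\mathcal{V}$) is correct and, contrary to your worry, is not where the difficulty lies. Thomason's homotopy invariance for quotient stacks then gives $G_1(\mathcal{U}_0)\cong G_1(\mathcal{V})$ without any issue about non-reducedness.

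The gap is in the lifting step. You assert that $G_1(\mathcal{V})$ is ``generated by restrictions of vector bundles on $V/G$ together with automorphisms'' and that ``these extend over $\X_0$.'' Neither half of this is valid. First, the restriction $G_1(V/G)\to G_1(\mathcal{V})$ is \emph{not} surjective in general: it sits in the localization sequence $G_1(V/G)\to G_1(\mathcal{V})\to G_0(\mathcal{Z}')\to G_0(V/G)$ where $\mathcal{Z}'=\{f_1=\dots=f_n=0\}/G$, and the map $G_0(\mathcal{Z}')\to G_0(V/G)$ into the $G$-theory of a smooth affine quotient stack routinely has a large kernel (already for $G$ trivial and $V=\mathbb{A}^2$, $Z'=\{0\}$, the class $[\mathcal{O}_0]$ dies by its Koszul resolution). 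So there are classes on $\mathcal{V}$ that simply do not come from $V/G$. Second, even for classes $(E|_\mathcal{V},\alpha)$ with $E$ a bundle on $V/G$, the automorphism $\alpha$ of $E|_\mathcal{V}$ need not extend, and the projection $\X_0\to V/G$ is not flat (fiber dimension jumps along $\mathcal{Z}$), so there is no pullback $G_1(V/G)\to G_1(\X_0)$; the only available map, pushforward along the zero section $X\times\{0\}\hookrightarrow\X_0$, is not compatible with open restriction in the way the argument needs (it differs from the homotopy-invariance isomorphism by multiplication by a Koszul class). In short, the split you want does not exist, and this is precisely the content of the proposition, not a formality to be cited.

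The paper circumvents this by inducting on $n$. For $n=1$ the geometry degenerates: $\X_0=(X\times\{0\})\cup(Z\times\mathbb{A}^1)$, so $\mathcal{Z}=Z\times\mathbb{A}^1$ is a union of components of $\X_0$ with nonempty interior $Z\times\mathbb{G}_m$, and one can prove (b) directly by checking that the composite $G_0(Z)\xrightarrow{\pi^*}G_0(\mathcal{Z})\xrightarrow{i_*}G_0(\X_0)\to G_0(Z\times\mathbb{G}_m)$ (the last map being open restriction) is an isomorphism. For $n>1$ they split $\X_0$ by $\{f_n\neq 0\}$ versus $\{f_n=0\}$ and run the inductive hypothesis through a diagram of localization sequences. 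If you want to salvage a direct argument along your lines, you would need a substitute for the non-existent splitting, e.g., finding an open of $\X_0$ contained in $\mathcal{Z}$ on which $G_0$ computes $G_0(\mathcal{Z})$ — but for $n>1$ the interior of $\mathcal{Z}$ in $\X_0$ can be empty (when $Z$ has codimension $>1$ in $X$), which is exactly why the paper resorts to induction rather than a one-shot argument.
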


\begin{proof}
The two statements are equivalent from the excision long exact sequence in equivariant K-theory:
$$\cdots\to G_1(\X_0)\to G_1(\mathcal{U}_0)\to G_0(\mathcal{Z})\to G_0(\mathcal{X}_0)\to\cdots$$
We assume first that $n=1$. Then $\X_0=(Z\times\mathbb{A}^1)/G\cup (X\times 0)/G$. The two statements are clear if $f=0$, so we assume otherwise. Consider the composition:
$$G_0(Z/G)\xrightarrow{\pi^*} G_0(Z\times\mathbb{A}^1/G)\to G_0(\X_0)\twoheadrightarrow G_0(Z\times\mathbb{G}_m/G).$$ The second map is surjective because $Z\times\mathbb{G}_m\subset\X_0$ is open. The composition is an isomorphism, and $\pi^*$ is also an isomorphism, so the map (b) is indeed injective.

Assume next that $n>1$. Let $\mathcal{V}_0\subset \mathcal{X}_0$ be the locus where $f_n\neq 0$ and let $\mathcal{W}_0=(X\times\mathbb{A}^{n-1})_0/G$ be its complement. The regular function considered here is: 
$$t_1f_1+\cdots+t_{n-1}f_{n-1}: X\times\mathbb{A}^{n-1}/G\to\mathbb{A}^1.$$
Then $\mathcal{V}_0\subset \mathcal{U}_0$.  Its complement is $\mathcal{W}_0\cap\mathcal{U}_0\subset \mathcal{W}_0$, which is the locus of points with $t_1f_1+\cdots+t_{n-1}f_{n-1}=0$ but such that not all the functions $f_i$ are zero. 
By the induction hypothesis, we have a surjection $G_1(\mathcal{W}_0)\twoheadrightarrow G_1(\mathcal{W}_0\cap \mathcal{U}_0)$.
The diagram:

\begin{tikzcd}
G_1(\mathcal{W}_0)\arrow[r, twoheadrightarrow]\arrow{d}& G_1(\mathcal{W}_0\cap \mathcal{U})\arrow{d}\\
G_1(\X_0)\arrow{d}\arrow{r}& G_1(\mathcal{U})\arrow{d}\\
G_1(\mathcal{V}_0)\arrow{r}{\text{id}}& G_1(\mathcal{V}_0)
\end{tikzcd}
\\
implies that indeed $G_1(\X_0)\twoheadrightarrow G_1(\mathcal{U})$.

\end{proof}

\begin{prop}\label{pushzero}
The pushforward map $i_*:K_0(\X_0)\to K_0(\X)$ is zero.
\end{prop}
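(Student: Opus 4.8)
\emph{Proof strategy.} The plan is to exploit that $\X_0\hookrightarrow\X$ is a \emph{principal} effective Cartier divisor, hence has trivial conormal bundle, and that $K_0(\X)$ is detected by restriction to the $G$-fixed origin. We are in the setting of Theorem~\ref{dimred0}: $\X=(V\times\mathbb{A}^n)/G$ with $V\times\mathbb{A}^n$ a $G$-representation, $g=\sum t_if_i$ the relevant global function on $\X$, and $\X_0=g^{-1}(0)$; we may assume $g\neq 0$ (otherwise $\X_0=\X$ is smooth and there is nothing to reduce).

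First, since $\X$ is a smooth quotient stack, $D^b(\X)=\text{Perf}(\X)$, so $i_*F\in\text{Perf}(\X)$ for every $F\in\text{Perf}(\X_0)$ and the map is well defined. Let $\pi\colon\X\to BG$ be the projection (a vector bundle) and $j_0\colon BG\hookrightarrow\X$ the zero section, i.e.\ the $G$-fixed point $0\in V\times\mathbb{A}^n$. By homotopy invariance of $K$-theory for regular stacks, $\pi^*\colon K_0(BG)\to K_0(\X)$ is an isomorphism; since $\pi\circ j_0=\operatorname{id}$, its inverse is $Lj_0^*\colon K_0(\X)\xrightarrow{\ \sim\ }K_0(BG)=R(G)$. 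So it is enough to prove $Lj_0^*[i_*F]=0$.

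Next I would compute $Lj_0^*(i_*F)$. As $g$ is a nonzero global function on the integral stack $\X$, the closed immersion $i\colon\X_0\hookrightarrow\X$ is an effective Cartier divisor with ideal sheaf $\mathcal{I}_{\X_0}=g\,\OO_\X\cong\OO_\X$, so its conormal bundle $\mathcal{I}_{\X_0}/\mathcal{I}_{\X_0}^2\cong\OO_{\X_0}$ is trivial; hence the self-intersection formula for a divisor gives $Li^*i_*F\cong F\oplus F[1]$ for any $F\in D^b(\X_0)$. Since $g(0)=0$, the origin lies on $\X_0$, so $j_0$ factors as $BG\xrightarrow{\,j\,}\X_0\xrightarrow{\,i\,}\X$, and therefore
$$Lj_0^*(i_*F)\ \cong\ Lj^*\!\bigl(Li^*i_*F\bigr)\ \cong\ Lj^*(F\oplus F[1])\ \cong\ A\oplus A[1],\qquad A:=Lj^*F.$$
Because $F$ is perfect on $\X_0$, its pullback $A$ is perfect on $BG$, in particular bounded, so $[A]\in K_0(BG)$ is defined and $Lj_0^*[i_*F]=[A]-[A]=0$. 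Hence $[i_*F]=0$ in $K_0(\X)$, which is the claim.

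The statement is essentially the observation that $i_*\OO_{\X_0}$ has the $2$-term resolution $\bigl[\OO_\X\xrightarrow{g}\OO_\X\bigr]$, so that $[i_*\OO_{\X_0}]=0$; the only real content is to propagate this to an arbitrary $F\in\text{Perf}(\X_0)$. The main point to get right is that the origin sits \emph{inside} $\X_0$, so that $Lj_0^*$ may be computed through $\X_0$ and the trivial normal bundle manufactures the self-annihilating term $A\oplus A[1]$ — without that one would only see the weaker fact that restriction to a generic point vanishes. A more pedestrian alternative, resolving $i_*F$ by vector bundles on $\X$ arranged around the Koszul complex of $g$, runs into the awkwardness of extending bundles from $\X_0$ to $\X$, which is why I would prefer the restriction argument above.
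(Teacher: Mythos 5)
Your proof is correct, and it takes a genuinely different route from the paper's. Both arguments hinge on the Koszul resolution $\bigl[\OO_\X\xrightarrow{\,g\,}\OO_\X\bigr]\to i_*\OO_{\X_0}$, but the way each propagates this to an arbitrary perfect complex $F$ differs. The paper first reduces to the generators $\OO_{\X_0}\otimes\Gamma$ with $\Gamma\in\operatorname{Rep}(G)$: it observes that the map $K_0(\X_0)\to K_0(\X)$ factors through a numerical quotient $K_0^{\text{num}}(\X_0)$, for which these classes generate, and then applies the Koszul resolution directly to each such generator. The reduction step there is stated very tersely (the line ``$K_0(\X)=K_0^{\text{num}}(\X_0)$'' looks like a typo and is not fully justified), so it carries some implicit content. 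Your argument sidesteps the numerical reduction entirely: you use homotopy invariance for the vector bundle $\X\to BG$ to identify $K_0(\X)\cong K_0(BG)$ via restriction $Lj_0^*$ to the $G$-fixed origin, then factor $j_0$ through $\X_0$ and invoke the self-intersection computation $Li^*i_*F\cong F\oplus F[1]$ for a principal Cartier divisor with trivial conormal. This kills the class in $K_0(BG)$ for every perfect $F$ at once. Your route is a bit more structural (it uses the splitting of $Li^*i_*$, which needs the projection-formula argument you allude to), while the paper's is more elementary modulo the reduction to $\OO\otimes\Gamma$, but your version makes the mechanism --- origin lies \emph{inside} $\X_0$, trivial normal bundle annihilates --- cleaner and fully explicit. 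You also correctly note the implicit hypothesis $g\neq0$, without which the statement is false; the paper relies on this silently (the two-term resolution requires $g$ to be a nonzerodivisor).
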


\begin{proof}
We have that $K_0(\X)=K_0^{\text{num}}(\X_0)$, and thus the map factors through $i_*:K_0^{\text{num}}(\X_0)\to K_0^{\text{num}}(\X)$. 
The space $K_0^{\text{num}}(\X_0)$ is generated by vector bundles $\mathcal{O}\otimes \Gamma$, where $\Gamma$ is a representation of $G(d)$. The pushforward $i_*(\OO\otimes \Gamma)$ has a resolution $$0\to \OO_{\X(d)}\otimes \Gamma\to \OO_{\X(d)}\otimes \Gamma\to i_*(\OO\otimes \Gamma)\to 0,$$ so its image is indeed zero in $K_0(\X(d))$. 
\end{proof}

\begin{prop}\label{factorsthru}
The map $K_0(\X_0)\to G_0(\X_0)$ factors through $K_0(\X_0)\to K_0(\mathcal{U}_0)$.
\end{prop}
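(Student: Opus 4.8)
\emph{Proof proposal.} The plan is to show that $\phi\colon K_0(\X_0)\to G_0(\X_0)$ — the canonical map from the $K$-theory of perfect complexes to the $G$-theory of coherent sheaves — annihilates the kernel of the restriction $\operatorname{res}\colon K_0(\X_0)\to K_0(\mathcal U_0)$; since $\operatorname{res}$ is surjective (by the Thomason--Trobaugh localization sequence of the open immersion $\mathcal U_0\hookrightarrow\X_0$, arguing on the boundary term as in the previous Proposition), this produces the desired factorization $\phi=\psi\circ\operatorname{res}$. By the localization theorem $\ker(\operatorname{res})$ is the image of $K_0\big(\operatorname{Perf}_{\mathcal Z}(\X_0)\big)$, the Grothendieck group of perfect complexes on $\X_0$ with support contained in $\mathcal Z$, so it is enough to show $\phi([Q])=0$ for every $Q\in\operatorname{Perf}_{\mathcal Z}(\X_0)$. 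Writing $\phi([Q])=\sum_i(-1)^i[\mathcal H^i(Q)]$ and applying dévissage to the coherent sheaves $\mathcal H^i(Q)$ by powers of the ideal of $\mathcal Z$, the class $\phi([Q])$ lies in the image of the push-forward $G_0(\mathcal Z)\to G_0(\X_0)$, which is injective by part (b) of the previous Proposition; hence $\phi([Q])=0$ is equivalent to the vanishing of the image of $[Q]$ under the forgetful map $c\colon K_0(\operatorname{Perf}_{\mathcal Z}(\X_0))\to K_0(D^b_{\mathcal Z}(\X_0))\cong G_0(\mathcal Z)$.

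To compute $c$ I would push everything forward along $i\colon\X_0\hookrightarrow\X$. Since $\X$ is smooth, $D^b_{\mathcal Z}(\X)=\operatorname{Perf}_{\mathcal Z}(\X)$ and $K_0(\operatorname{Perf}_{\mathcal Z}(\X))\cong G_0(\mathcal Z)$; dévissage identifies both $K_0(D^b_{\mathcal Z}(\X_0))$ and $K_0(\operatorname{Perf}_{\mathcal Z}(\X))$ with $G_0(\mathcal Z)$ compatibly with the identity induced by $i$, and under these identifications $c$ becomes the push-forward $i_*\colon K_0(\operatorname{Perf}_{\mathcal Z}(\X_0))\to K_0(\operatorname{Perf}_{\mathcal Z}(\X))$. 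Now $\operatorname{Perf}_{\mathcal Z}(\X_0)$ is generated, as a thick subcategory, by the Koszul complex $K=K(f_1|_{\X_0},\dots,f_n|_{\X_0};\OO_{\X_0})$, whose support is exactly $\mathcal Z$; and any $Q$ in it is a retract of its derived tensor with a Koszul complex on high powers of the $f_i$, which lies in the triangulated span (cones and shifts only) of the complexes $K\otimes_{\OO_{\X_0}}P$ with $P\in\operatorname{Perf}(\X_0)$. For such a building block the projection formula gives
$$i_*\big(K\otimes_{\OO_{\X_0}}P\big)\cong K(f_1,\dots,f_n;\OO_{\X})\otimes^{\mathbf L}_{\OO_{\X}}i_*P ,$$
whose class in the $K_0(\X)$-module $K_0(\operatorname{Perf}_{\mathcal Z}(\X))$ equals $[K(f_1,\dots,f_n;\OO_{\X})]\cdot[i_*P]$, and this vanishes because $[i_*P]=0$ in $K_0(\X)$ by Proposition \ref{pushzero} (equivalently, because $[K(f_1,\dots,f_n;\OO_{\X})]=\prod_i(1-[\OO_{\X}])=0$; one can also see the vanishing from the fact that $g=\sum t_if_i$ acts null-homotopically on the Koszul complex $K(f_1,\dots,f_n;\OO_{\X})$, so that $i_*\OO_{\X_0}\otimes^{\mathbf L}_{\OO_{\X}}K(f_1,\dots,f_n;\OO_{\X})=\operatorname{cone}(g)$ splits as $K(f_1,\dots,f_n;\OO_{\X})\oplus K(f_1,\dots,f_n;\OO_{\X})[1]$). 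Thus $i_*$ kills the classes of all these generators.

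The step I expect to be the main obstacle is upgrading this to $i_*=0$ on all of $K_0(\operatorname{Perf}_{\mathcal Z}(\X_0))$: passing to the idempotent completion of a triangulated category can enlarge $K_0$, so the vanishing of $i_*$ on the Koszul-type generators does not formally force $i_*[Q]=0$ for an arbitrary retract $Q$. I would resolve this either via Thomason's classification of thick subcategories — using the localization sequence, in which $\ker(\operatorname{res})$ is precisely the image in $K_0(\X_0)$ and is thereby controlled by the generators already understood — or, more in the spirit of the earlier arguments, by a diagram chase in the commutative ladder of Thomason--Trobaugh localization sequences for $\mathcal Z\hookrightarrow\X_0\hookleftarrow\mathcal U_0$ and for $\mathcal Z\hookrightarrow\X\hookleftarrow(\X\setminus\mathcal Z)$, linked by $i_*$, feeding in Proposition \ref{pushzero} together with part (a) of the previous Proposition (surjectivity of $G_1(\X_0)\to G_1(\mathcal U_0)$, which makes the relevant connecting maps into the $\mathcal Z$-supported $K_0$ surjective and eliminates the ambiguity). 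Once $c=0$, the factorization $\phi=\psi\circ\operatorname{res}$ is immediate.
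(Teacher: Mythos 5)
Your approach is genuinely different from the paper's. The paper stays entirely inside $G$-theory: it first establishes by induction on $n$ that $G_0(\mathcal{U})\to G_0(\mathcal{U}-\mathcal{U}_0)$ is an isomorphism, then chases a ladder of excision sequences for $\X_0\subset\X$ and $\mathcal{U}_0\subset\mathcal{U}$, using only Proposition \ref{pushzero} (so that the image of $K_0(\X_0)$ in $G_0(\X_0)$ lands in the image of the boundary map from $G_1(\X-\X_0)=G_1(\mathcal{U}-\mathcal{U}_0)$) and the splitting $G_1(\mathcal{U}-\mathcal{U}_0)\cong G_0(\mathcal{U}_0)\oplus G_1(\mathcal{U}_0)$ to conclude. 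You instead go through Thomason--Trobaugh localization in $K$-theory, dévissage, and the projection formula. The mechanism that makes both arguments go is the same input, Proposition \ref{pushzero}; but your route requires controlling all of $K_0(\operatorname{Perf}_{\mathcal{Z}}(\X_0))$, which is where the difficulty lies.

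The reduction to showing $c=i_*\colon K_0(\operatorname{Perf}_{\mathcal{Z}}(\X_0))\to K_0(\operatorname{Perf}_{\mathcal{Z}}(\X))\cong G_0(\mathcal{Z})$ is zero is correct, and the projection-formula computation $[i_*(K\otimes P)]=[i_*P]\cdot[K]=0$ is right for the building blocks $K\otimes P$. (Your parenthetical ``equivalently, because $[K(f_1,\dots,f_n;\OO_{\X})]=\prod_i(1-[\OO_{\X}])=0$'' is not correct, however: the class of the Koszul complex in $K_0(\operatorname{Perf}_{\mathcal{Z}}(\X))\cong G_0(\mathcal{Z})$ is $\pm[\OO_{\mathcal{Z}}]$ up to a twist, not zero; only its image in $K_0(\X)$ vanishes, and that is not the group where the product lives. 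This remark is not load-bearing, but it points to the kind of confusion that makes the final step delicate.)

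The real gap is the one you flag yourself: $\operatorname{Perf}_{\mathcal{Z}}(\X_0)$ is generated by the $K\otimes P$ only as a \emph{thick} ($=$ idempotent-closed) subcategory, so the class of an arbitrary $Q\in\operatorname{Perf}_{\mathcal{Z}}(\X_0)$ in $K_0$ need not lie in the subgroup generated by classes of $K\otimes P$'s, and vanishing of $c$ on those classes does not formally give $c=0$. Your proposed repairs do not close this. Option (a) (``Thomason's classification, using that $\ker(\operatorname{res})$ is the image in $K_0(\X_0)$'') restates what must be proved without new input: Thomason's dense-subcategory theorem \cite[Cor. 2.3]{Th} gives injectivity of $K_0(\mathcal{A})\to K_0(\operatorname{Perf}_{\mathcal{Z}}(\X_0))$ for a dense $\mathcal{A}$, not surjectivity, so it does not let you descend the vanishing. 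Option (b) relies on ``surjectivity of $G_1(\X_0)\to G_1(\mathcal{U}_0)$''; but the term feeding into $K_0(\X_0\text{ on }\mathcal{Z})$ in the Thomason--Trobaugh ladder is $K_1(\mathcal{U}_0)$, not $G_1(\mathcal{U}_0)$, and for the singular stack $\X_0$ these are different, so part (a) of the previous Proposition does not supply what you need. Until this step is resolved you have not proved the statement; the paper's $G$-theoretic diagram chase sidesteps the issue entirely by never leaving the dévissage-friendly world of coherent sheaves.
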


\begin{proof}
Let $\mathcal{U}\subset\X$ be the locus where not all $f_i$ are zero. We have that $\X-\X_0=\mathcal{U}-\mathcal{U}_0$.
We argue that $G_0(\mathcal{U})\to G_0(\mathcal{U}-\mathcal{U}_0)$ is an isomorphism. When $n=1$ this is immediate. 
Let $\mathcal{V}\subset \mathcal{U}$ be the locus where $f_n\neq 0$, and let $\mathcal{W}\subset \mathcal{U}$ be its complement. Consider the diagram:

\begin{tikzcd}
G_0(\mathcal{W})\arrow{r} \arrow{d}{\text{iso}}& G_0(\mathcal{U})\arrow[r, twoheadrightarrow] \arrow{d} & G_0(\mathcal{U}-\mathcal{W})\arrow{d}{\text{iso}}\\
G_0(\mathcal{W}-\mathcal{W}_0)\arrow{r}& G_0(\mathcal{U}-\mathcal{U}_0)\arrow[r, twoheadrightarrow] & G_0(\mathcal{U}-\mathcal{W}).
\end{tikzcd}
\\
By induction, the left and right vertical maps are isomorphisms, so the middle horizontal map is an isomorphism as well.
\\

Next, 
consider the diagram:

\begin{tikzcd}
 & K_0(\X_0)\arrow{dl} \arrow{d} \arrow{dr}{\text{zero}} & & \\
 G_1(\X-\X_0)\arrow{r}\arrow{d}{\text{id}}& G_0(\X_0) \arrow{r} \arrow{d}& G_0(\X)\arrow{d} \arrow[r, twoheadrightarrow]& G_0(\X-\X_0)\arrow{d}{\text{id}}\\
 G_1(\mathcal{U}-\mathcal{U}_0)\arrow[r, twoheadrightarrow] & G_0(\mathcal{U}_0)\arrow{r}{\text{zero}}& G_0(\mathcal{U})\arrow{r}{\text{iso}} & G_0(\mathcal{U}-\mathcal{U}_0).
\end{tikzcd}
\\
and assume that the map $G_0(\mathcal{U})\to G_0(\mathcal{U}-\mathcal{U}_0)$ is an isomorphism.
The horizontal sequences are excision long exact sequences.
 The map $K_0(\X_0)\to G_0(\X_0)$ factors through: $$K_0(\X_0)\to G_1(\X-\X_0)=G_0(\mathcal{U}_0)\oplus G_1(\mathcal{U}_0),$$ and thus through $K_0(\X_0)\to G_0(\mathcal{U}_0)$.
\end{proof}

\begin{proof}[Proof of Theorem \ref{dimred0}]
Consider the diagram:

\begin{tikzcd}
 & K_0(\X_0)\arrow{d}\arrow[r, twoheadrightarrow]& K_0(\mathcal{U})\arrow{d}{\text{id}}\\
G_0(\mathcal{Z})\arrow[r, hookrightarrow] \arrow{dr}& G_0(\X_0)\arrow[r, twoheadrightarrow] \arrow[d, twoheadrightarrow]& G_0(\mathcal{U})\\
 & K_0(D_{sg}(\X_0))&
\end{tikzcd}
\\
The middle horizontal line is an excision long exact sequence, and the middle vertical line follows from the definition $D_{sg}(\X_0)=D^b(\X_0)/\text{Perf}(\X_0)$.
Proposition \ref{factorsthru} and diagram chasing implies that the images of $G_0(\mathcal{Z})\to G_0(\X_0)$ and $K_0(\X_0)\to G_0(\X_0)$ intersect in zero only, so the induced map $G_0(\mathcal{Z})\to K_0(D_{sg}(\X_0))$ is indeed an isomorphism.

\end{proof}

\subsection{Thom-Sebastiani theorem.}\label{TSiso} Consider quotient stacks $\X$ and $\mathcal{Y}$ with regular functions $f:\X\to\mathbb{A}^1$ and $g:\mathcal{Y}\to\mathbb{A}^1$. Consider the regular function: 
$$f+g:\X\times\mathcal{Y}\to \mathbb{A}^1.$$

\begin{thm}
The inclusion $i:\X_0\times \mathcal{Y}_0\to (\X\times\mathcal{Y})_0$ induces an equivalence 
$$i_*: D_{sg}(\X_0)\times D_{sg}(\mathcal{Y}_0)\cong D_{sg}((\X\times\mathcal{Y})_0).$$
\end{thm}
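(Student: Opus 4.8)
The plan is to reduce the Thom--Sebastiani statement for categories of singularities to the analogous statement for bounded derived categories and perfect complexes, using Proposition \ref{sod} in the same spirit as dimensional reduction was handled above. First I would record the elementary observation that $(\X\times\Y)_0$ is the union of the two divisors $\X_0\times\Y$ and $\X\times\Y_0$, and that their scheme-theoretic intersection is $\X_0\times\Y_0$, which is precisely the zero locus of $f\oplus g$ viewed as a section of $\OO^{\oplus 2}$ on $\X\times\Y$. The morphism $f+g$ factors through the addition map $\mathbb{A}^1\times\mathbb{A}^1\to\mathbb{A}^1$, so $\X\times\Y$ is $D^b(\mathbb{A}^1\times\mathbb{A}^1)$-linear; this is the structure I will exploit.

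Next I would set up the Knörrer-type semi-orthogonal input. Write $\mathcal{A}=D^b(\X)$ with its $D^b(\mathbb{A}^1)$-linear structure coming from $f$, and similarly $\mathcal{B}=D^b(\Y)$ from $g$. Then $D^b(\X\times\Y)\cong \mathcal{A}\boxtimes_{D^b(\mathrm{pt})}\mathcal{B}$ is $D^b(\mathbb{A}^1\times\mathbb{A}^1)$-linear, and base-changing along $\mathrm{pt}\to\mathbb{A}^1$ (inclusion of the origin) in the first factor gives $D^b(\X_0\times\Y)\cong D^b(\X_0)\boxtimes\mathcal{B}$ as in the proof of Proposition \ref{sod}(a), via the Fourier--Mukai/base-change formalism of \cite{bzfn}. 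The key point, following \cite[Lemmas 1.17, 1.18]{hlp}, is that taking the fiber over $0\in\mathbb{A}^1$ for the $(f+g)$-linear structure can be computed in two stages: restrict the first $\mathbb{A}^1$ to $0$, then restrict the diagonal $\mathbb{A}^1\hookrightarrow \mathbb{A}^1\times\mathbb{A}^1$ to $0$. Concretely, $(\X\times\Y)_0$ fits in a diagram where one recovers $D^b((\X\times\Y)_0)$ and $\mathrm{Perf}((\X\times\Y)_0)$ from $D^b(\X_0\times\Y_0)$ and $\mathrm{Perf}$ via a further base change along $g|_{\X_0\times\Y}$, which is supported on $\Y_0$; iterating Proposition \ref{sod} then produces compatible semi-orthogonal decompositions of $D^b((\X\times\Y)_0)$, $\mathrm{Perf}((\X\times\Y)_0)$, and hence of $D_{sg}((\X\times\Y)_0)$, with the distinguished piece being $D_{sg}(\X_0)\otimes D_{sg}(\Y_0)$ (or rather the Knörrer tensor product $D_{sg}(\X_0)\times D_{sg}(\Y_0)$, which by definition is that piece). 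Finally one checks the complementary pieces vanish: this is where one uses that $\X$ and $\Y$ are smooth, so that complexes pulled back from $\X$ (resp. $\Y$) that are $(f+g)$-acyclic off $\X_0\times\Y_0$ are already perfect on $(\X\times\Y)_0$; equivalently, objects supported set-theoretically on $\X_0\times\Y\setminus\X_0\times\Y_0$ or on $\X\times\Y_0\setminus\X_0\times\Y_0$ become perfect, since there $f$ or $g$ is invertible and one is in the smooth locus of the central fiber.

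The main obstacle I expect is the bookkeeping in the two-stage base change: one must be careful that the $D^b(\mathbb{A}^1\times\mathbb{A}^1)$-linear structures match, that restriction to the diagonal-origin commutes appropriately with the product decomposition, and that the resulting semi-orthogonal factors are exactly $D_{sg}(\X_0)\times D_{sg}(\Y_0)$ and complements which are trivial after passing to singularity categories — rather than merely supported on proper closed subsets. An alternative, and possibly cleaner, route would bypass the abstract linear-category machinery and instead use the concrete resolution argument: realize any object of $D^b((\X\times\Y)_0)$ via its image under $i_*$ from $\X_0\times\Y_0$ up to perfect complexes, using that on $(\X\times\Y)_0\setminus(\X_0\times\Y_0)$ the category of singularities is trivial (smoothness of the punctured central fiber), together with a Koszul-type resolution of $i_*\OO_{\X_0\times\Y_0}$ on $\X\times\Y$ restricted to the central fiber — analogous to the two-term resolution in Proposition \ref{pushzero}. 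Either way, the essential surjectivity and full faithfulness of $i_*$ both come down to identifying the kernel/cokernel of the relevant $K$- and $\mathrm{Hom}$-level maps with things supported where the singularity category vanishes; I would present the semi-orthogonal-decomposition argument as the main proof and remark on the dimensional-reduction analogy.
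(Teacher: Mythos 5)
The paper does not actually prove this theorem: it is stated without proof as a known result (Thom--Sebastiani for singularity categories, cf. Preygel's work on matrix factorizations), and only Corollary~\ref{corts} is then derived from it. There is therefore no argument in the paper to compare against; what follows assesses your sketch on its own terms.

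Your ``elementary'' route correctly isolates the geometric heart: since $0$ is the only critical value of $f$ and of $g$ (the paper's standing assumption from Subsection~\ref{quivers}), $\mathrm{crit}(f+g)=\mathrm{crit}(f)\times\mathrm{crit}(g)$ is contained in $\X_0\times\Y_0$, so $(\X\times\Y)_0$ is smooth away from $\X_0\times\Y_0$ and the singularity category is concentrated there. But this support observation is not a proof. Full faithfulness of $i_*$ does not follow from it --- pushforward from a closed subscheme is essentially never fully faithful on $D^b$, and showing the obstruction is killed after passing to $D_{sg}$ requires a concrete $\mathrm{Hom}$-computation (for instance via a Koszul resolution of $i_*\OO_{\X_0\times\Y_0}$ inside $(\X\times\Y)_0$, or via the matrix-factorization model), which you flag as an option but do not carry out. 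Essential surjectivity likewise does not follow merely from knowing the support: one must actually exhibit a preimage of an arbitrary object, and this uses the smoothness of $\X$ and $\Y$ in a nontrivial way. The meaning of the tensor product $D_{sg}(\X_0)\times D_{sg}(\Y_0)$ also needs to be pinned down; the naive $\boxtimes$ of small dg categories is not the right thing, and a completion (or a $\mathbb{Z}/2$-periodic model) is required --- this is where most of the actual work in the standard proofs lies. Your first route, via two-stage $D^b(\mathbb{A}^1)$-linear base change, is additionally confused at the level of geometry: restricting the first $\mathbb{A}^1$-factor to $0$ gives $\X_0\times\Y$, which is not a subscheme of $(\X\times\Y)_0=\{f+g=0\}$, so the two stages as stated do not land where you want; and the step that the complementary semi-orthogonal pieces vanish in $D_{sg}$ is asserted rather than argued, although it is exactly the content of the theorem. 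Neither route, as written, rises to the level of a proof, though the critical-locus-is-a-product observation is the correct entry point and is where the hypothesis on critical values is used.
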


We have the following corollary for summands of geometric categories:
\begin{cor}\label{corts}
Let $f:\X\to\mathbb{A}^1$ and $g:\Y\to\mathbb{A}^1$ be two regular functions on quotient stacks, and consider the regular function $f+g:\X\times\Y\to\mathbb{A}^1$. Consider subcategories $A\subset D^b(\X)$ and $B\subset D^b(\Y)$ whose inclusions admit (left or right) adjoints; we thus have an inclusion $A\boxtimes B\subset D^b(\X\times\Y)$ which admits an adjoint. Pushforward along the closed immersion $i:\X_0\times\Y_0\to (\X\times\Y)_0$ induces an equivalence of
categories of singularities:  $$i_*: D_{sg}(A)\boxtimes D_{sg}(B)\cong D_{sg}(A\boxtimes B).$$
\end{cor}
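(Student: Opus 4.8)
The plan is to deduce Corollary~\ref{corts} from the Thom--Sebastiani Theorem for the full categories of singularities together with Proposition~\ref{sod}, by realizing $D_{sg}(A)\boxtimes D_{sg}(B)$ as a semi-orthogonal summand of $D_{sg}((\X\times\Y)_0)$ compatible with the Thom--Sebastiani equivalence. First I would complete $A\subset D^b(\X)$ to a semi-orthogonal decomposition $D^b(\X)=\langle \mathcal{A}_i\rangle$ with one of the $\mathcal{A}_i$ equal to $A$: this is possible because the inclusion $A\subset D^b(\X)$ admits an adjoint, so $A$ is an admissible subcategory and its (left or right) orthogonal complement provides the remaining factor. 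Similarly write $D^b(\Y)=\langle \mathcal{B}_j\rangle$ with some $\mathcal{B}_j=B$. Since these are $D^b(\mathbb{A}^1)$-linear dg categories (with respect to $f$ and $g$ respectively), the external products $\mathcal{A}_i\boxtimes\mathcal{B}_j$ assemble into a semi-orthogonal decomposition $D^b(\X\times\Y)=\langle \mathcal{A}_i\boxtimes\mathcal{B}_j\rangle$, which is $D^b(\mathbb{A}^1)$-linear with respect to $f+g$, with $A\boxtimes B$ one of the factors; this uses the standard fact that an external product of semi-orthogonal decompositions is a semi-orthogonal decomposition, together with the observation that $(f+g)$-linearity is automatic from $f$-linearity on the first factor and $g$-linearity on the second.

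Next I would apply Proposition~\ref{sod}(b) to the decomposition $D^b(\X\times\Y)=\langle \mathcal{A}_i\boxtimes\mathcal{B}_j\rangle$ and the potential $f+g$. This yields a semi-orthogonal decomposition
$$D_{sg}\big((\X\times\Y)_0\big)=\big\langle D_{sg}(\mathcal{A}_i\boxtimes\mathcal{B}_j)\big\rangle,$$
so in particular $D_{sg}(A\boxtimes B)$ is an admissible subcategory of $D_{sg}((\X\times\Y)_0)$, and $(A\boxtimes B)^{hf}\subset (A\boxtimes B)^0$ is exactly the subcategory of perfect complexes. In parallel, the Thom--Sebastiani Theorem gives an equivalence $i_*\colon D_{sg}(\X_0)\times D_{sg}(\Y_0)\xrightarrow{\sim} D_{sg}((\X\times\Y)_0)$ via pushforward along the closed immersion $i\colon \X_0\times\Y_0\to(\X\times\Y)_0$. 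The point is then to check that under this equivalence the summand $D_{sg}(A)\boxtimes D_{sg}(B)$ of the source maps onto the summand $D_{sg}(A\boxtimes B)$ of the target. Concretely, the composite $\X_0\times\Y_0\hookrightarrow \X\times\Y$ factors through $\X_0\times\Y_0\hookrightarrow \X_0\times\Y\hookrightarrow\X\times\Y$ and symmetrically, so for $F\in A^0$ and $G\in B^0$ one has $i_*(F\boxtimes G)$ lying in $(A\boxtimes B)^0$ precisely because its pushforward to $\X\times\Y$ is $(j_{\X,*}F)\boxtimes(j_{\Y,*}G)\in A\boxtimes B$, where $j_\X\colon\X_0\to\X$, $j_\Y\colon\Y_0\to\Y$; this shows $i_*$ carries $D_{sg}(A)\boxtimes D_{sg}(B)$ into $D_{sg}(A\boxtimes B)$.

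Finally I would argue that this restricted functor $i_*\colon D_{sg}(A)\boxtimes D_{sg}(B)\to D_{sg}(A\boxtimes B)$ is an equivalence. One way is essential surjectivity plus fully faithfulness inherited from the ambient Thom--Sebastiani equivalence: fully faithfulness is automatic since $i_*$ is fully faithful on all of $D_{sg}(\X_0)\boxtimes D_{sg}(\Y_0)$; for essential surjectivity one notes that $A^0$ is generated under shifts, cones, and summands by objects $F\boxtimes G$ pushed forward from $\X_0$ and $\Y_0$ that already lie in the source (using that $A^0$ is itself built from $A\subset D^b(\X)$ via the Fourier--Mukai description in Proposition~\ref{sod}), and these hit a set of generators of $D_{sg}(A\boxtimes B)$. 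Alternatively, and perhaps more cleanly, one observes that both $D_{sg}(A)\boxtimes D_{sg}(B)$ and $D_{sg}(A\boxtimes B)$ sit as the ``$A\boxtimes B$-summand'' inside $D_{sg}((\X\times\Y)_0)=\langle D_{sg}(\mathcal{A}_i\boxtimes\mathcal{B}_j)\rangle$ — the first via Thom--Sebastiani applied to the decompositions $D^b(\X)=\langle\mathcal{A}_i\rangle$, $D^b(\Y)=\langle\mathcal{B}_j\rangle$ and then taking the product of summands, the second directly — and these identifications are compatible, so uniqueness of semi-orthogonal summands forces the two to agree. I expect the main obstacle to be the bookkeeping in this last step: making precise that the external product of the semi-orthogonal decompositions of $D_{sg}(\X_0)$ and $D_{sg}(\Y_0)$ obtained from Proposition~\ref{sod} agrees, summand by summand, with the semi-orthogonal decomposition of $D_{sg}((\X\times\Y)_0)$ obtained from the single decomposition $\langle\mathcal{A}_i\boxtimes\mathcal{B}_j\rangle$, i.e. that Thom--Sebastiani intertwines the two; this is a compatibility of functors that is intuitively clear from the $D^b(\mathbb{A}^1)$-linear picture but requires care with the dg enhancements and the identification $\mathcal{A}^0=\mathrm{Fun}_{D^b(\mathbb{A}^1)}(D^b(\mathrm{pt}),\mathcal{A})$ used in the proof of Proposition~\ref{sod}.
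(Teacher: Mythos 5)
Your proposal is plausible and would likely go through, but it is a different and more elaborate route than what the paper does, and the ``bookkeeping'' concern you flag at the end is exactly the point the paper's argument is designed to avoid. The paper never forms the external semi-orthogonal decomposition $D^b(\X\times\Y)=\langle\mathcal{A}_i\boxtimes\mathcal{B}_j\rangle$ or compares it against the tensor of two separate decompositions of $D_{sg}(\X_0)$ and $D_{sg}(\Y_0)$. Instead, it uses the two-step square
$$
D_{sg}(A)\boxtimes D_{sg}(B)\xrightarrow{\ \mathrm{ff}\ } D_{sg}(\X_0)\boxtimes D_{sg}(\Y_0)\xrightarrow{\ \mathrm{TS},\ \cong\ }D_{sg}\big((\X\times\Y)_0\big)\xleftarrow{\ \mathrm{ff}\ } D_{sg}(A\boxtimes B),
$$
checks directly (as you do, by pushing $j_*i_*(F\boxtimes G)=(j_{\X*}F)\boxtimes(j_{\Y*}G)$) that $i_*$ carries the left summand into the right one, deduces fully faithfulness of the restricted $i_*$ from that square, and then handles essential surjectivity by invoking the adjoints $\Phi$ to each of the two fully faithful inclusions. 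Since each $\Phi$ is the projection to an admissible subcategory, it is essentially surjective, and commutativity of the $\Phi$-square together with the Thom--Sebastiani equivalence gives essential surjectivity of $i_*$ directly. This removes the need to align two full semi-orthogonal decompositions summand by summand — that alignment is genuinely the delicate point in your version, and you are right that it is the main obstacle — so the paper's route is worth knowing as the cleaner way to package the argument. If you want to salvage your version as written, you would need to actually carry out the intertwining of Thom--Sebastiani with the two-variable semi-orthogonal decomposition at the dg level, which is more work than the corollary deserves.
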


\begin{proof}
Consider the commutative diagram:

\begin{tikzcd}
D_{sg}\left(A\boxtimes B\right) \arrow{r}{\text{ff}} & D_{sg}\left(\X\boxtimes\Y\right) \\ D_{sg}\left(A\right)\boxtimes D_{sg}\left(B\right) \arrow{u}{i_*} \arrow{r}{\text{ff}} & 
D_{sg}\left(\X\right)\boxtimes D_{sg}\left(\Y\right), \arrow{u}{\text{eq}}
\end{tikzcd}
\\
where $\text{ff}$ denotes a fully faithful functor and $\text{eq}$ denotes an equivalence of categories. 
This implies that the left vertical map is also fully faithful. Let $\Phi$ be the adjoint to the inclusion of the horizontal maps in the above diagram. The following diagram commutes:

\begin{tikzcd}
D_{sg}\left(A\boxtimes B\right)  & D_{sg}\left(\X\boxtimes\Y\right) \arrow{l}{\Phi}\\ D_{sg}\left(A\right)\boxtimes D_{sg}\left(B\right) \arrow{u}{i_*}  & 
D_{sg}\left(\X\right)\boxtimes D_{sg}\left(\Y\right). \arrow{l}{\Phi} \arrow{u}{\text{eq}}
\end{tikzcd}
\\
The maps $\Phi$ are essentially surjective, so the left vertical map is essentially surjective as well, and thus we obtain the desired conclusion.
\end{proof}

\subsection{Functoriality of categories of singularities.}\label{functo}
A reference for this subsection is \cite{pv}. Let $\mathcal{X}$ be a quotient stack, and let $f:\mathcal{X}\to\mathbb{A}^1$ be a regular function with $0$ the only critical value. A morphism $q:\mathcal{Y}\to \X$ induces a functor: $$q^*: D_{sg}(\X_0)\to D_{sg}(\mathcal{Y}_0),$$ where $\mathcal{Y}_0$ is the zero fiber of the regular function $qf:\mathcal{Y}\to\mathbb{A}^1$. 
If the morphism $q$ is proper, it also induces a functor:
$$q_*:D_{sg}(\mathcal{Y}_0)\to D_{sg}(\X_0).$$
Let $T$ be a torus and assume that $\mathcal{X}$ and $\mathcal{Y}$ have a $T$ action and that $q$ is $T$ equivariant. Then the above functors admit $T$ versions.

\subsection{Localization theorems for categories of singularities.}\label{local}
Let $\X=V/G$ be a smooth quotient stack with $V$ a representation of $G$ and let $f:\X\to\mathbb{A}^1$ a regular function with $0$ a critical value. Assume that we are also given a linearization $\chi:G\to\C^*$, and let $\X^{\text{ss}}\subset\X$ be the open locus of semistable points. Then $K_0(BG)$ acts on $K_0(D_{sg}(\X^{\text{ss}}_0))$ via the tensor product.

Next, assume that $E\to V$ is a $G$-equivariant bundle, and let $\mathcal{E}=E/G$. Define the Euler class: $$\text{eu}(\mathcal{E}):=i^*i_*(1)\in K_0(BG).$$

\begin{prop}\label{zerodiv}
Let $E\to V$ and $\mathcal{E}$ be as above.
Assume that the exists $\C^*\subset G$ such that $E^{\C^*}=V$. 
Then the class $\text{eu}\,(\mathcal{E})$ is not a zero divisor in $K_0(D_{sg}(\X^{\text{ss}}_0)).$
\end{prop}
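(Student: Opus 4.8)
The plan is to reduce the statement to a cohomological computation that we can control, by exhibiting a torus $\C^* \subset G$ (the one assumed in the hypothesis) along which the Euler class $\mathrm{eu}(\mathcal{E})$ becomes, after a suitable filtration or grading, a genuine non-zero-divisor. First I would unwind the definition: since $E^{\C^*} = V$, the bundle $\mathcal{E} = E/G$ restricted to the fixed locus is trivial, so $E$ is built entirely out of $\C^*$-weight spaces of nonzero weight; concretely $E = \bigoplus_j E_j$ with $E_j$ the weight-$j$ part, $j \neq 0$. Then $i^* i_*(1) = \lambda_{-1}(\mathcal{E}^\vee) = \prod_j \lambda_{-1}(E_j^\vee)$ in $K_0(BG)$, and each factor $\lambda_{-1}(E_j^\vee) = 1 - [\text{line bundle with nonzero }\C^*\text{-weight}] + \cdots$ is a unit times a power of $(1 - t^j)$-type elements, where $t$ is the class of the standard character of $\C^*$. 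The key point is that such an element is not a zero divisor in $K_0(B\C^*) = \Z[t^{\pm 1}]$, hence (after restricting the $G$-action to $\C^*$) not a zero divisor in the $K_0(B\C^*)$-module $K_0(D_{sg}(\X^{ss}_0))$.

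Second, I would make precise the module structure: by Subsection \ref{local}, $K_0(BG)$ acts on $K_0(D_{sg}(\X^{ss}_0))$ by tensoring with vector bundles pulled back from $BG$, and restricting along $\C^* \hookrightarrow G$ gives a $K_0(B\C^*) = \Z[t^{\pm 1}]$-module structure. Multiplication by $\mathrm{eu}(\mathcal{E})$ on $K_0(D_{sg}(\X^{ss}_0))$ factors through multiplication by the image of $\mathrm{eu}(\mathcal{E})$ in $K_0(B\C^*)$, which is (up to a unit) a product of terms $(1-t^{a})$ with $a \neq 0$. So it suffices to show: for $a \neq 0$, multiplication by $(1 - t^a)$ is injective on $K_0(D_{sg}(\X^{ss}_0))$ viewed as a $\Z[t^{\pm 1}]$-module. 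This is where the argument needs the most care: one cannot simply invoke that $\Z[t^{\pm 1}]$ is a domain, because the module need not be torsion-free. The natural remedy is to produce a compatible $\Z$-grading (or increasing filtration) on $K_0(D_{sg}(\X^{ss}_0))$ on which $t$ acts with degree $\neq 0$ — for instance the grading coming from the weight decomposition of $D^b(\X)$ or $\oM(d)$ under the same $\C^*$, which descends to the category of singularities — so that in the associated graded, multiplication by $t$ is an isomorphism onto a shifted copy, and multiplication by $1 - t^a$ is therefore injective. A standard filtration argument then lifts injectivity on the associated graded to injectivity on the module itself.

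The main obstacle, then, is establishing the existence of such a grading or filtration that is (i) compatible with the $\Z[t^{\pm 1}]$-module structure in the required way and (ii) exhaustive and bounded-below enough to run the filtration-to-graded lifting. The $\C^*$ in the hypothesis acts on $V$ with $E^{\C^*}=V$, meaning $\C^*$ acts trivially on the base $V$ but nontrivially on the fibres of $E$; however, what we actually need is a $\C^*$-action (possibly a different, auxiliary cocharacter, or the diagonal scaling $\C^*$ already used to define $\oM(d)_w$) acting on $\X$ itself and inducing a grading on $D_{sg}(\X_0^{ss})$ under which the image of $t$ is homogeneous of nonzero degree. If the $\C^*$ from the hypothesis does act nontrivially on $V$ after all — or if one can twist by the diagonal torus — then this is immediate; otherwise one argues directly that $1-t^a$ acts injectively because $K_0(D_{sg}(\X^{ss}_0))$ is a finitely generated module over a Noetherian subring in which $1-t^a$ is regular, using that the support of any torsion is a proper closed subscheme of $\Spec \Z[t^{\pm 1}]$ not containing the generic point, which contradicts the module being generated by classes of $\C^*$-equivariant bundles on which $t$ acts with distinct weights. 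I would present the graded/filtered version as the clean proof and remark on the direct argument.
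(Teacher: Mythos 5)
Your proposal correctly identifies two of the paper's ideas — that the hypothesis forces the image of $\mathrm{eu}(\mathcal{E})$ in $K_0(B\C^*)$ to be a Laurent polynomial $\prod_j(1-q^{a_j})$ with all $a_j\neq 0$, and that a $\C^*$-weight grading on $K_0(D_{sg}(\X^{ss}_0))$ should let one run a lowest-weight argument with a $\pm q^v$ leading term. But there are two concrete gaps and one misconception that prevent the sketch from closing.

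First, the statement that ``multiplication by $\mathrm{eu}(\mathcal{E})$ factors through multiplication by the image of $\mathrm{eu}(\mathcal{E})$ in $K_0(B\C^*)$'' is false as written. The restriction $K_0(BG)\to K_0(B\C^*)$ exhibits $K_0(B\C^*)$ as a \emph{quotient} of $K_0(BG)$, not a subring, so the $K_0(BG)$-module structure on $K_0(D_{sg}(\X^{ss}_0))$ does not descend to a $K_0(B\C^*)$-module structure; two elements of $K_0(BG)$ with the same image in $K_0(B\C^*)$ can act very differently, differing by the kernel ideal $I$. What the paper does is introduce the decreasing filtration by powers of $I$ on $K_0(BG)$, and a compatible filtration on $K_0(D_{sg}(\X^{ss}_0))$ given by the kernel of the map to $K_0(D_{sg}(V^{ss}_0/\C^*))$. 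Only on the associated graded does the action factor through $\mathrm{gr}_0\,K_0(BG)\cong K_0(B\C^*)$, and there the lowest-weight, lowest-filtration piece of $\mathrm{eu}(\mathcal{E})$ is the unit $\pm q^v$; injectivity on the associated graded then lifts. This bifiltration (the $\C^*$-weight grading \emph{together with} the $I$-adic filtration) is exactly the missing mechanism you were gesturing at when you wrote ``the main obstacle is establishing the existence of such a grading or filtration.''

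Second, you never reduce to $G=T$ abelian, but this reduction is essential. For nonabelian $G$ the weight spaces $E_j$ are typically of rank $>1$, and $\lambda_{-1}(E_j^\vee)\in K_0(BG)$ is \emph{not} a product of rank-one Euler classes $(1-q^\chi)$; only its image in $K_0(B\C^*)$ factors as $(1-q^{-j})^{\mathrm{rk}\,E_j}$. The paper handles this by first invoking Proposition \ref{abel2} to reduce to the torus case: $K_0(D_{sg}(X_0/G))\cong K_0(D_{sg}(X_0/T))^W$, $\mathrm{eu}(\mathcal{E})$ is $W$-invariant, and non-zero-divisor in the ambient space implies non-zero-divisor on the invariants.

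Third, your hesitation about whether the $\C^*$ from the hypothesis gives a grading is a misunderstanding. The condition $E^{\C^*}=V$ forces $\C^*$ to act \emph{trivially} on $V$ (otherwise $V$ would not lie in the fixed locus), and that is precisely what produces the grading: $\C^*\subset G$ is then contained in the stabilizer of every point of $V^{ss}$, so every object of $D_{sg}(\X^{ss}_0)$ decomposes into $\C^*$-isotypes. No auxiliary cocharacter is needed. Finally, the fallback argument via finite generation over a Noetherian subring should be dropped: $K_0(D_{sg}(\X^{ss}_0))$ is not known to be finitely generated over $\mathbb{Z}[t^{\pm 1}]$, and the torsion-support reasoning does not give injectivity without further hypotheses.
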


\begin{proof}
It suffices to prove the statement for $G=T$ a torus; the statement follows then from Proposition \ref{abel2}.
The action of $K_0(BG)$ on $K_0(D_{sg}(\X^{\text{ss}}_0))$ respects the $\mathbb{C}^*$ weights, so we get maps:
$$K_0(BG)_v\otimes K_0(D_{sg}(\X^{\text{ss}}_0))_w\to K_0(D_{sg}(\X^{\text{ss}}_0))_{v+w}.$$
Consider the filtrations $I^j\subset K_0(BG)$ and $K_0(D_{sg}(\X^{\text{ss}}_0))$ induced by the powers of the kernel of the maps $K_0(BG)\to K_0(B\C^*)$ and $K_0(D_{sg}(\X^{\text{ss}}_0))\to K_0(D_{sg}(V^{\text{ss}}_0/\C^*))$. These filtrations respect the $\C^*$-weights and they are compatible under the above action. We thus get maps:
$$\text{gr}_0K_0(BG)_v\otimes \text{gr}_{\cdot}K_0(D_{sg}(\X^{\text{ss}}_0))\to \text{gr}_{\cdot}K_0(D_{sg}(\X^{\text{ss}}_0)).$$ 
Let $I$ be the set of $\C^*$ weights on the normal bundle $N_{V/E}$; the hypothesis says that $0$ is not in $I$. We have that:
$$\text{eu}(\mathcal{E})=\prod_{i\in I}(1-q^i)\in K_0(B\C^*).$$ Choose $v\in\mathbb{Z}$ the smallest exponent with nonzero coefficient in the above product. Then: $$\text{gr}_0\,\text{eu}(\mathcal{E})=\pm 1\in \text{gr}_0K_0(B\C^*)_v,$$ and this implies that $\text{eu}(\mathcal{E})$ is not a zero divisor for $K_0(D_{sg}(\X^{\text{ss}}_0))$.

\end{proof}

\begin{thm}\label{loc}
Let $G$ be a reductive acting on a smooth variety $V$, and let $\X=V/G$. Consider a regular function $f:\X\to\mathbb{A}^1$.
Let $\C^*\hookrightarrow G$ be a subgroup, and define the ideal: 
$$I:=\text{ker}\,(K_0(BG)\to K_0(B\C^*)).$$ Let $\mathcal{Z}=V^{\C^*}/G$.
The restriction map to the fixed locus:
$$K_0(D_{sg}(\X_0))_I\to K(D_{sg}(\mathcal{Z}_0))_I$$
is an isomorphism.
\end{thm}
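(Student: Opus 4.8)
The plan is to transport Thomason's concentration theorem from equivariant $K$- and $G$-theory to categories of singularities, carrying out the geometric part on the smooth ambient stacks and only passing to zero fibres at the end. First I would reduce to the case $G=T$ a torus: exactly as in the proof of Proposition~\ref{zerodiv}, Proposition~\ref{abel2} recovers $K_0(D_{sg}(\X_0))$ from its $T$-equivariant version as the $W_d$-invariant part, and the ideal $I$ is compatible with restriction $K_0(BG)\to K_0(BT)$, so the reductive case follows from the torus case.

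Assume $G=T$. Then $Z:=V^{\C^*}$ is a $T$-subrepresentation; write $V=Z\oplus N$ with $N^{\C^*}=0$, and set $\mathcal{U}=(V\setminus Z)/T$ with open immersion $j\colon\mathcal{U}\hookrightarrow\X$ and closed complement $i\colon\ZZ=Z/T\hookrightarrow\X$, each endowed with the restriction of $f$. The arithmetic input is that $R(T)/I$ is a subring of the domain $K_0(B\C^*)$, so $I$ is prime, and for every character $\lambda$ of $T$ that is nontrivial on $\C^*$ the element $1-\lambda$ lies outside $I$, hence becomes invertible in $(-)_I$. In particular the $K$-theoretic Euler class $\text{eu}(N_{\ZZ/\X})=\prod_j(1-t^{-a_j})$, with all $a_j\neq 0$, is a unit in $(-)_I$. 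Using the functoriality of Subsection~\ref{functo} and the self-intersection formula $i^*i_*=\text{eu}(N_{\ZZ/\X})\otimes(-)$ on $D_{sg}$, the maps $i^*$ and $i_*$ on $K_0(D_{sg}(-))_I$ are mutually inverse up to a unit, so it suffices to show that $i_*\colon K_0(D_{sg}(\ZZ_0))_I\to K_0(D_{sg}(\X_0))_I$ is an isomorphism.

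For this I would use a localization sequence. On the smooth stack $\X$ there is a Verdier localization sequence $D^b_{\ZZ}(\X)\to D^b(\X)\to D^b(\mathcal{U})$ of $D^b(\mathbb{A}^1)$-linear categories, together with a matching one for $\operatorname{Perf}$; since $\ZZ\hookrightarrow\X$ is a regular closed embedding of smooth stacks, Thomason dévissage identifies $D^b_{\ZZ}(\X)$, up to perfect complexes, with $D^b(\ZZ)$, compatibly with the $\mathbb{A}^1$-linear structure. Passing to zero fibres and then to categories of singularities as in Proposition~\ref{sod} and Corollary~\ref{uns} produces a Verdier sequence $D_{sg}(\ZZ_0)\to D_{sg}(\X_0)\to D_{sg}(\mathcal{U}_0)$, hence an exact sequence $K_0(D_{sg}(\ZZ_0))\xrightarrow{i_*}K_0(D_{sg}(\X_0))\to K_0(D_{sg}(\mathcal{U}_0))\to 0$. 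Now $\mathcal{U}=(V\setminus Z)/T$ has empty $\C^*$-fixed locus, as does its closed substack $\mathcal{U}_0$, so Thomason concentration gives $G_*(\mathcal{U}_0)_I=0$; since $K_0(D_{sg}(\mathcal{U}_0))$ is a quotient of $G_0(\mathcal{U}_0)$, this forces $K_0(D_{sg}(\mathcal{U}_0))_I=0$. Thus $i_*$ is surjective after localization, and combined with the self-intersection argument it is an isomorphism, and so is $i^*$.

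\textbf{Main obstacle.} The delicate step is matching dévissage with the passage to zero fibres: although $\ZZ\hookrightarrow\X$ is a regular embedding of smooth stacks, the embedding of the (derived) zero fibres $\ZZ_0\hookrightarrow\X_0$ need not be regular, so dévissage cannot be run directly on $\X_0$. The fix is to perform the stratification, the Euler-class invertibility, the localization sequence, and the dévissage entirely $D^b(\mathbb{A}^1)$-linearly on $\X$ and $\ZZ$, and only then restrict to the fibre over $0$ and take $K_0$, using the compatibility of all these constructions with the Fourier--Mukai/linearity formalism recalled in Subsection~\ref{singul}. A minor additional point is the usual Thomason--Trobaugh caveat that the $\operatorname{Perf}$ localization sequence is exact only after restricting to $K_0$ (or group-completing), which is harmless here since only $K_0$ is needed.
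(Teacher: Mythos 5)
Your overall strategy reaches the same destination as the paper's — reduce to $G=T$ via Proposition~\ref{abel2}, and make the ``complement'' vanish after localization at $I$ using Thomason/Takeda concentration for $G$-theory together with the surjection $G_0\twoheadrightarrow K_0(D_{sg})$ — but the mechanism you use to split off the fixed locus is genuinely different from the paper's. The paper does not stratify by $\mathcal{Z}=V^{\C^*}/T$ versus $\mathcal{U}=(V\setminus Z)/T$; it uses Halpern-Leistner's one-stratum window decomposition for the attracting locus $\mathcal{S}$ of the cocharacter $\lambda$, producing via Proposition~\ref{sod} and Corollary~\ref{uns} a semi-orthogonal decomposition $D_{sg}(\X_0)=\langle D_{sg}(\mathcal{Z}_0)_{<w},\mathbb{D}_w,D_{sg}(\mathcal{Z}_0)_{\geq w}\rangle$ with $\mathbb{D}_w\cong D_{sg}(\mathcal{U}_0)$ for $\mathcal{U}=\X\setminus\mathcal{S}$. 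Passing to $K_0$ this is a direct sum $K_0(D_{sg}(\mathcal{Z}_0))\oplus K_0(D_{sg}(\mathcal{U}_0))$, so the only thing left to check is $K_0(D_{sg}(\mathcal{U}_0))_I=0$ — injectivity of the restriction comes for free from the splitting, and no self-intersection formula or Verdier sequence for $D_{sg}$ is ever invoked. The crucial advantage is that the decomposition is built at the level of $D^b(\X)$ where $\X$ and $\mathcal{Z}$ are smooth, and only then pushed down to $D_{sg}$ by the formal machinery of Proposition~\ref{sod}; all the delicate questions about what happens after base change to the zero fibre are absorbed into that proposition.

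Your route is more classical Thomason-style concentration, and it is essentially correct, but the two things you single out as ``the fix'' are precisely where it needs real work, and more than what is written. First, the formula $i^*i_*\cong \mathrm{eu}(N_{\ZZ/\X})\otimes(-)$ on $D_{sg}(\ZZ_0)$ is not automatic: the closed immersion $\ZZ_0\hookrightarrow\X_0$ is not regular in general (e.g.\ when $f$ vanishes identically on $\ZZ$, so $\ZZ_0$ is a derived thickening), and the na\"{i}ve $\mathcal{O}_{\ZZ_0}\otimes^L_{\mathcal{O}_{\X_0}}\mathcal{O}_{\ZZ_0}$ already has unbounded Tor. Saying ``do it $D^b(\mathbb{A}^1)$-linearly on the smooth ambient stacks'' is the right instinct, but it is not a formal base-change statement; one needs to actually exhibit $i^*_0 i_{0*}$ as tensoring by the Koszul complex of $N_{\ZZ/\X}$ after base change along $\{0\}\to\mathbb{A}^1$, which requires working with the derived zero fibres throughout. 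Second, the localization/Verdier sequence for $D_{sg}$ along the open/closed pair $(\ZZ_0,\mathcal{U}_0)$ is also not immediate in the $D^b/\mathrm{Perf}$ presentation: one must check that $i_{0*}$ preserves $\mathrm{Perf}$ (again depending on $i_0$ having finite Tor-dimension, i.e.\ on using derived fibres), and the d\'evissage step identifying $K_0(D_{sg}(\ZZ_0))$ with the $K_0$ of the supported-on-$\ZZ_0$ part is a $K$-theory statement, not a categorical equivalence ``up to perfect complexes'' as you phrase it. None of this is fatal — each gap can be filled by appealing to the Polishchuk–Vaintrob framework and being careful about derived structures — but the paper's window-category route is cleaner precisely because it never needs these facts. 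For the surjectivity half you actually don't need the Verdier sequence at all: Takeda gives $G_0(\ZZ_0)_I\xrightarrow{\sim}G_0(\X_0)_I$, and composing with $G_0(\X_0)_I\twoheadrightarrow K_0(D_{sg}(\X_0))_I$ already surjects through $K_0(D_{sg}(\ZZ_0))_I$ once $i_{0*}$ is known to preserve $\mathrm{Perf}$, so the localization sequence is only needed for the injectivity direction — which the paper again gets for free from the direct-sum decomposition.
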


\begin{proof}
It is enough to show the statement for the case of $G=T$ a torus, the general result follows from Proposition \ref{abel2}. 

Consider the diagram $\mathcal{Z}\leftarrow \mathcal{S} \rightarrow \X$ for the attracting locus of the above character $\lambda:\C^*\to T.$ Let $\mathcal{U}\subset \X$ be the complement of $\mathcal{S}$.
By Proposition \ref{uns} and Theorem \ref{hl}, we have semi-orthogonal decompositions:
$$D_{sg}(\X_0)=\langle D_{sg}(\mathcal{Z}_0)_{<w}, \mathbb{D}_w, D_{sg}(\mathcal{Z}_0)_{\geq w}\rangle\text{ and }D_{sg}(\mathcal{Z}_0)=\langle D_{sg}(\mathcal{Z}_0)_{<w}, D_{sg}(\mathcal{Z}_0)_{\geq w}\rangle,$$
where $\mathbb{D}_w\cong D_{sg}(\mathcal{U}_0)$ by Theorem \ref{hl}.
By passing to $K_0$, we have the decomposition:
$$K_0(D_{sg}(\X_0))=K_0(D_{sg}(\mathcal{Z}_0))\oplus K_0(D_{sg}(\mathcal{U}_0)).$$
It is thus enough to show that $K_0(D_{sg}(\mathcal{U}_0))_I=0$. By the definition of the category of singularities, the map:
$$G_0(\mathcal{U}_0)\to K_0(D_{sg}(\mathcal{U}_0))$$ is surjective, so it suffices to show that $G_0(\mathcal{U}_0)_I=0$. Using the excision long exact sequence:
$$\cdots\to G_0(\mathcal{Z}_0)\to G_0(\X_0)\to G_0(\mathcal{U}_0)\to 0,$$ it is enough to show that: $$i_*:G_0(\mathcal{Z}_0)_I\to G_0(\X_0)_I$$ is an isomorphism, which follows from the localization theorem of Takeda \cite{tak}.

\end{proof}

\subsection{Chern character for singular varieties}\label{singularGRR}
Consider a singular quotient stack $\X_0$ with an embedding in a smooth quotient stack $\X$.
We construct a Chern character map 
$$\text{ch}': G_0(\X_0)\to H^{BM}_{\cdot}(\X_0)$$ such that its associated graded with respect to the cohomological filtration:
$$\text{ch}': \text{gr}\,G_0(\X_0)\to H^{BM}_{\cdot}(\X_0)$$

(a) commutes with proper pushforwards to schemes $\mathcal{Y}_0$ which fit in a diagram:

\begin{tikzcd}
\X_0 \arrow{d}{p_0} \arrow{r}{i} & \X \arrow{d}{p}\\
\mathcal{Y}_0 \arrow{r}{j} & \mathcal{Y},
\end{tikzcd}
\\
where $i$ and $j$ are closed immersions and $p$ is proper, and

(b) commutes with smooth pullbacks and pullbacks to a closed substack $\mathcal{Y}_0$ which fit in a diagram:

\begin{tikzcd}
\Y_0 \arrow{d}{q_0} \arrow{r}{i} & \Y \arrow{d}{q}\\
\mathcal{X}_0 \arrow{r}{j} & \mathcal{X},
\end{tikzcd}
\\
where $i$ and $j$ are closed immersions and $q$ is smooth or a closed immersion.
\\

First, there are Chern characters $\text{ch}: K_0(\X)\to H^{BM}_{\cdot}(\X)$ that commute with proper pushforward up to a Todd class, and with smooth pullbacks; this follows from the analogous statements for smooth varieties via the Edidin-Graham \cite{eg}, Totaro \cite{tot} construction. The result about restriction to closed smooth subvarieties follows, for example, from the fact that the cycle map for a smooth variety $A_{\cdot}(X)\to H^{BM}_{\cdot}(X)$ for a smooth variety commutes with restrictions to smooth subvarieties $Y\subset X$ \cite[Section 19]{f}; the result extends to a closed immersion $\Y\to\X$ where $\X$ and $\Y$ are smooth stacks by the Edidin-Graham, Totaro construction.
Consider the diagram:

\begin{tikzcd}
K_1(\X-\X_0)\arrow{r} \arrow{d}{\text{ch}}& K_0(\X_0) \arrow{r}  & K_0(\X)  \arrow{d}{\text{ch}} \\
H^{BM}_{\cdot+1}(\X-\X_0) \arrow{r} & H^{BM}_{\cdot}(\X_0) \arrow{r} & H^{BM}_{\cdot}(\X).
\end{tikzcd}
\\
The stacks $\X$ and $\X-\X_0$ are smooth, so we have Chern characters $\text{ch}$ for $\X$ and $\X-\X_0$ with the above properties.
We also obtain a Chern character $\text{ch}': G_0(\X_0)\to H^{BM}_{\cdot}(\X_0)$ which makes the above diagram commutative. Because the Chern characters for $\X$ and $\X-\X_0$ have the above properties, a standard diagram chasing shows that $\text{ch}'$ will also have the above properties.

\subsection{Abelian quotient stacks.}\label{abeliansubsection}
Consider the quotient stacks $\mathcal{Y}(d)=R(d)/T(d)$ and $\mathcal{Z}(d)=R(d)/B(d)$. There are natural morphisms:
$$\mathcal{Y}(d) \xleftarrow{\Pi} \mathcal{Z}(d) \xrightarrow{\pi} \X(d).$$
The map $\Pi^*: K_0(D_{sg}(\X_0/T))\to K_0(D_{sg}(X_0/B))$ is an isomorphism. 
We can describe the underlying vector space for the KHA in terms of the stacks $\mathcal{Y}(d)$ using the following:

\begin{prop}\label{abel2}
(a) Let $G$ be a reductive group with Weyl group $W$, $X$ a variety with a $G$ action, $f:X\to \C$ a $G$ invariant regular function. 
Then the image of $$\pi_*\Pi^*:K_0(D_{sg}(X_0/T))\to K_0(D_{sg}(X_0/G))$$ factors through the symmetrization map 
$S:K_0(D_{sg}(X_0/T))\to K_0(D_{sg}(X_0/T))^{W}$ which sends $x$ to $\sum_{w\in W} wx$ and induces an isomorphism:
$$\pi_*\Pi^*: K_0(D_{sg}(X_0/T))^{W}\cong K_0(D_{sg}(X_0/G)).$$
%We thus also have an isomorphism:
%$$\pi_*\Pi^*: K_0(D_{sg}(X_0/T))^{\mathfrak{S}}\cong K_0(D_{sg}(X_0/G)).$$
(b) The map:
$$(\Pi^*)^{-1}\pi^*: K_0(D_{sg}(\X_0/G))\to K_0(D_{sg}(X_0/T))$$ is an injection and has image $K_0(D_{sg}(X_0/T))^{W}$.
\end{prop}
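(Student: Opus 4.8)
Proposition \ref{abel2} should be proved by reducing $K_0(D_{sg})$ to the ordinary $G$-theory of the central fibers $X_0/T$ and $X_0/G$, where the corresponding statements are classical (they are essentially Thomason's descent theorem for $G$-theory along the quotient by a Weyl group). First I would use the definition $D_{sg}(X_0/H) = D^b(X_0/H)/\mathrm{Perf}(X_0/H)$ together with the excision/localization sequences of the previous subsections to pass between $G_0$ and $K_0(D_{sg})$. Concretely, the map $G_0(X_0/H)\twoheadrightarrow K_0(D_{sg}(X_0/H))$ is surjective by definition, with kernel the image of $K_0(X_0/H)$ (i.e.\ the classes of perfect complexes). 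So I would prove the analogue of (a) and (b) first at the level of $G$-theory of $X_0/T$ versus $X_0/G$, and then check that the statements descend to the quotients by $\mathrm{Perf}$ because the functors $\pi_*\Pi^*$ and $(\Pi^*)^{-1}\pi^*$ respect perfect complexes (the maps $\pi$ and $\Pi$ are of finite Tor-dimension, $\Pi$ is in fact smooth affine, and $\pi$ is smooth proper with fibers $G/B$).

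\textbf{Key steps.} (1) Recall/establish that $\Pi:X_0/B\to X_0/T$ induces an isomorphism on $G_0$ and on $K_0(D_{sg})$: this is because $\Pi$ is an affine-bundle-type map (a torsor under the unipotent radical of $B$), so $\Pi^*$ is an equivalence on $D^b$ up to the usual homotopy-invariance, giving $\Pi^*: G_0(X_0/T)\xrightarrow{\sim}G_0(X_0/B)$ and likewise after quotienting by perfect complexes. This is already asserted in Subsection \ref{abeliansubsection} and I would just cite it. (2) For $\pi:X_0/B\to X_0/G$, which is a $G/B$-fibration, use the projective bundle / flag bundle formula in $G$-theory: $G_0(X_0/B)$ is a free module over $G_0(X_0/G)$ with basis indexed by $W$ (Schubert classes / the classes $\OO(\chi)$ for $\chi$ in a set of coset representatives, after twisting), and the composite $\pi_*\pi^*$ is multiplication by $\chi(\OO_{G/B})=1$, hence $\pi^*$ is split injective and $\pi_*\pi^* = \mathrm{id}$. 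The Weyl group acts on $G_0(X_0/B)\cong G_0(X_0/T)$ via the $T\subset B$ identification, and the standard computation (Thomason) identifies the image of $\pi^*$ with the $W$-invariants and shows $\pi_*$ kills the non-invariant part, with $\pi_*\Pi^*$ factoring through the symmetrization $S$ and inducing $G_0(X_0/T)^W\cong G_0(X_0/G)$. (3) Transport all of this through the localization sequences $\cdots\to K_0\to G_0\to K_0(D_{sg})\to 0$: since $\pi_*,\Pi^*,\pi^*$ all preserve $\mathrm{Perf}$, each map descends, the isomorphism of (2) descends to an isomorphism on $K_0(D_{sg})$, and (b) follows from (a) by the standard argument that $(\Pi^*)^{-1}\pi^*$ is a section of $\pi_*\Pi^*/|W|$ up to the subtlety that we are not inverting $|W|$ — so one argues directly that $\pi^*$ is injective with image the invariants using the flag-bundle basis, exactly as for $G$-theory.

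\textbf{The main obstacle.} The delicate point is that we work integrally and with $K_0(D_{sg})$ rather than a well-behaved $G$-theory spectrum, so I cannot simply invoke $|W|$-inversion or a clean projection-formula argument; I must genuinely use the flag-bundle decomposition of $D^b(X_0/B)$ over $D^b(X_0/G)$ (a semiorthogonal decomposition with $|W|$ pieces, via Kapranov-type exceptional collections on $G/B$, together with Proposition \ref{sod} applied to the central fiber) to get $K_0(D_{sg}(X_0/B))$ as a direct sum of $|W|$ copies of $K_0(D_{sg}(X_0/G))$, and then identify the $W$-action and the image of $\pi^*$ combinatorially. Checking that the symmetrization map $S$ and the image of $\pi_*\Pi^*$ match up on the nose — rather than up to $|W|$ — is where the real work is, and it is the part I would expect to require the most care.
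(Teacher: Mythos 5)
Your proposal lands on essentially the same argument as the paper. After some preliminary discussion of reducing to $G$-theory, your ``main obstacle'' paragraph correctly identifies the actual mechanism: the flag-bundle semi-orthogonal decomposition of $D^b(X\times_B G/G)$ into $|K_0(G/B)| = |W|$ copies of $D^b(X/G)$, descended to the category of singularities via Proposition~\ref{sod}, giving $K_0(D_{sg}(X_0\times_B G/G)) \cong K_0(D_{sg}(X_0/G)) \otimes K_0(G/B)$, together with the identification of $\pi_*$ on $K_0(G/B)$ with the Weyl symmetrization. That is precisely the paper's proof, modulo the cosmetic difference that you phrase it over $X_0/B$ while the paper uses the equivalent stack $X_0\times_B G/G$. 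For (b), the paper's one-liner $\pi_*\pi^* = \OO$ coincides with your observation that $\pi^*$ is split injective with $\pi_*\pi^* = \mathrm{id}$, so the image-equals-invariants claim follows from (a) exactly as you indicate. The only thing I would flag is that the detour through the excision sequence $K_0 \to G_0 \to K_0(D_{sg}) \to 0$ in your ``key steps'' is unnecessary and slightly misleading (it is not enough that the functors preserve $\mathrm{Perf}$; one must actually produce the decomposition at the $D_{sg}$ level, which is what Proposition~\ref{sod} buys you), but you recognized this yourself before the end.
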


\begin{proof}

First, observe that:
$$\Pi^*: K_0(D_{sg}(X_0/T))\cong K_0(D_{sg}(X_0/B)) \cong K_0(D_{sg}(X_0\times_B G/G)).$$ 
The map $\pi$ is an extension of $\mathbb{P}(E)$ bundles over $X$ for $E$ vector bundles over $X$, so the derived category $D^b(X\times_B G)$ admits a semi-ortohogonal decomposition where all the summands are equivalent to $D^b(X)$, and the summands are in bijection to the number of cells $K_0(G/B)$. Further, $D_{sg}(X_0\times_B G/G)$ admits a similar semi-orthogonal decomposition with summands $D_{sg}(X_0/T)$.
This implies that 
$$K_0(D_{sg}(X_0\times_B G/G))=K_0(D_{sg}(X_0/G))\otimes K_0(G/B).$$
The action of the Weyl group $W$ on the right hand side is trivial on $K_0(D_{sg}(X_0/G))$, and is the natural one on $K_0(G/B)$.
The projection map $\pi_*:K_0(G/B)\to K_0(\text{pt.})$ 
has the form: $$\pi_*(y)=\sum_{w\in W} wy,$$ from which we get that indeed the map $\pi_*: K_0(D_{sg}(X_0\times_B G/G))\to K_0(D_{sg}(X_0/G))$ is $\pi_*(x)=\sum_{w\in W} wx.$
This implies the desired conclusion.

(b) We have that $\pi_*\pi^*=\OO_{X_0/G}$ and the conclusion follow from $(a)$.
\end{proof}

\section{Definition of the KHA}\label{3}
\subsection{Definition of the multiplication.}\label{multi}
Let $(Q,W)$ be a quiver with potential. 
For $d\in\mathbb{N}^I$ a dimension vector, we consider the stack of representations $\X(d)=R(d)/G(d)$, see Subsection \ref{quivers} for definitions. 
The potential $W$ determines a regular function 
$$\text{Tr}(W_d):\X(d)\to\mathbb{A}^1.$$ For $d,e\in\mathbb{N}^I$ two dimension vectors, 
consider the stack $\X(d,e)=R(d,e)/G(d,e)$ of pairs of representations $0\subset A\subset B$, where $A$ has dimension $d$ and $B/A$ has dimension $e$. Let $\theta\in\mathbb{Q}^I$ be a King stability condition, and define the slope: $$\mu(d)=\frac{\sum_{i\in I}\theta^i d^i}{\sum_{i\in I} d^i}.$$
For a fixed slope $\mu$, let $\Lambda_{\mu}\subset \mathbb{N}^I$ be the monoid of dimension vectors with slope $\mu$. We denote by $\X^{ss}(d)\subset \X(d)$ the locus of $\theta$-semistable representations.
Consider the diagram of attracting loci, see Subsection \ref{window}, where $q$ is an affine bundle and $p$ is a proper map: 

\begin{tikzcd}
  \X^{ss}(d,e) \arrow[r, "p_{d,e}"] \arrow[d, "q_{d,e}"]
    & \X^{ss}(d+e) \\
  \X^{ss}(d)\times\X^{ss}(e).  
  \end{tikzcd}
  \\
The potentials are compatible with respect to these maps: 
$$p_{d,e}^*\text{Tr}(W_{d+e})=q_{d,e}^*(\text{Tr}(W_d)+\text{Tr}(W_e)).$$ 
We use $p$ and $q$ instead of $p_{d,e}$ and $q_{d,e}$ when there is no danger of confusion.
For every edge $e\in I$, let $\mathbb{G}_m$ act on $\text{Hom}\,(\C^{s(e)},\C^{t(e)})$ by scalar multiplication. We denote by $T\subset \prod_{i\in I}\mathbb{G}_m$ a torus which leaves $W$ invariant.
Consider the maps: 
$$\mu: BT\times BT\to BT \text{ and } \delta: BT\to BT\times BT$$ induced by the multiplication map and the diagonal map, respectively:
$$\mu: T\times T\to T \text{ and }\delta: T\to T\times T.$$ 

\begin{defn}
Consider the functors:
   
\begin{multline}
D_{sg}(\X^{ss}(d)_0)\boxtimes D_{sg}(\X^{ss}(e)_0)\xrightarrow{TS}  D_{sg}\left((\X^{ss}(d)\times\X^{ss}(e))_0\right)\xrightarrow{q^*}\\
D_{sg}(\X^{ss}(d,e)_0)\xrightarrow{p_*} D_{sg}(\X^{ss}(d+e)_0).
\end{multline}
The functor $\text{TS}$ is the Thom-Sebastiani equivalence, see Subsection \ref{TSiso}; the functors $q^*$ and $p_*$ are induced by the results in Subsection \ref{functo} from the functors $$D^b(\X^{ss}(d)\times\X^{ss}(e))\xrightarrow{q^*} D^b(\X^{ss}(d,e))\xrightarrow{p} D^b(\X^{ss}(d+e)).$$ 
Taking idempotent completions, we obtain functors:
\begin{multline}
D^{id}_{sg}(\X^{ss}(d)_0)\boxtimes D^{id}_{sg}(\X^{ss}(e)_0)\xrightarrow{TS}  D^{id}_{sg}\left((\X^{ss}(d)\times\X^{ss}(e))_0\right)\xrightarrow{q^*}\\
D^{id}_{sg}(\X^{ss}(d,e)_0)\xrightarrow{p_*} D^{id}_{sg}(\X^{ss}(d+e)_0).
\end{multline}

\end{defn}

Consider the $\Lambda_{\mu}$-graded vector spaces: 
\begin{itemize}
\item $KHA(Q,W)=\bigoplus_{d\in\Lambda_{\mu}} K_0\left(D_{sg}(\X^{ss}(d)_0)\right),$
\item $KHA^{id}(Q,W)=\bigoplus_{d\in\Lambda_{\mu}} K_0\left(D^{id}_{sg}(\X^{ss}(d)_0)\right).$

\end{itemize}

The multiplication maps on $KHA$ and $KHA^{id}$ are induced by these functors after passing to $K_0$.
\\

We next define the $T$-equivariant multiplication:

\begin{defn}\label{multiplication}
Consider the maps:

\begin{multline*}
    \X^{ss}(d)/T\times\X^{ss}(e)/T \xleftarrow{\delta} (\X^{ss}(d)\times\X^{ss}(e))/T \xleftarrow{q_{d,e}} \X^{ss}(d,e)/T \xrightarrow{p_{d,e}}\\ \X^{ss}(d+e)/T. 
\end{multline*}
They induce functors:
\begin{multline}
D_{sg}\left(\X^{ss}(d)_0/T\right)\boxtimes D_{sg}\left(\X^{ss}(e)_0/T\right)\xrightarrow{TS}  D_{sg}\left((\X^{ss}(d)/T\times\X^{ss}(e)/T)_0\right)\xrightarrow{\delta^*}\\
D_{sg}\left(\left(\X^{ss}(d)\times\X^{ss}(e)\right)_0/T\right)\xrightarrow{q^*}
D_{sg}\left(\X^{ss}(d,e)_0/T\right)\xrightarrow{p_*} D_{sg}\left(\X^{ss}(d+e)_0/T\right).
\end{multline}
We obtain similar functors after passing to $D_{sg}^{id}$.
\end{defn}
The functors above induce a multiplication of the $\Lambda_{\mu}$-graded vector space:
$$KHA_T(Q,W)=\bigoplus_{d\in\Lambda_{\mu}} K^T_0\left(D_{sg}\left(\X^{ss}(d)_0\right)\right).$$
There are similar definitions for the multiplication functors for $D^{id}_{sg}$ and they induce algebra structures on $\text{KHA}_T^{id}$. 
\\

In this section, we prove the following:

\begin{thm}\label{thm1}
The multiplications defined above for $KHA$, $KHA^{id}$, and their $T$-equivariant versions are associative. 
\end{thm}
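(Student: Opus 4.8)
The plan is to reduce associativity for the Hall-type multiplication on $\text{KHA}$ to a statement about the compatibility of the geometric correspondences $\X^{ss}(d,e)$ with triple flags, exactly as one proves associativity of the $\text{CoHA}$ product, but working at the level of categories of singularities rather than vanishing cohomology. First I would introduce, for a triple of dimension vectors $(d,e,f)$ with $d,e,f\in\Lambda_\mu$, the stack $\X^{ss}(d,e,f)=R^{ss}(d,e,f)/G(d,e,f)$ parametrizing flags $0\subset A\subset B\subset C$ of representations with $A$ of dimension $d$, $B/A$ of dimension $e$, $C/B$ of dimension $f$. This stack sits in a commutative diagram with the two ``partial flag'' stacks $\X^{ss}(d,e+f)$, $\X^{ss}(d+e,f)$, together with the intermediate $\X^{ss}(d,e)$ and $\X^{ss}(e,f)$, and the three factors $\X^{ss}(d)\times\X^{ss}(e)\times\X^{ss}(f)$. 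The point is that both iterated products $m\circ(m\boxtimes\mathrm{id})$ and $m\circ(\mathrm{id}\boxtimes m)$ factor through $p_*q^*$ on $\X^{ss}(d,e,f)$.

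The key steps, in order, are: (1) Set up the two towers of maps computing the two triple products and exhibit the large commutative diagram of smooth stacks connecting them through $\X^{ss}(d,e,f)$; on the $\X(d)$ side (before taking semistable loci) all the relevant squares are cartesian, the ``$q$-type'' maps are affine bundles and the ``$p$-type'' maps are proper (iterated flag-bundle maps), and these properties are preserved after restricting to the semistable loci by the King stability analysis of Subsection \ref{framed}. (2) Check the potentials are compatible along every edge of the diagram, which follows by iterating the identity $p_{d,e}^*\text{Tr}(W_{d+e})=q_{d,e}^*(\text{Tr}(W_d)+\text{Tr}(W_e))$ already recorded above, so that every map in the diagram descends to a functor on categories of singularities by the functoriality of Subsection \ref{functo} and the Thom--Sebastiani equivalence of Subsection \ref{TSiso}. (3) Prove base-change: for each cartesian square in the diagram in which the vertical maps are proper and the horizontal maps are (compositions of) affine-bundle maps and closed immersions, the functors satisfy $q^*p_* \simeq p'_* q'^*$ on $D_{sg}$ of the zero fibers. (4) Assemble these squares to get a natural isomorphism of the two composite functors $D_{sg}(\X^{ss}(d)_0)\boxtimes D_{sg}(\X^{ss}(e)_0)\boxtimes D_{sg}(\X^{ss}(f)_0)\to D_{sg}(\X^{ss}(d+e+f)_0)$, using also the obvious associativity and coherence of the Thom--Sebastiani equivalence (which is symmetric monoidal). (5) Pass to $K_0$, then repeat verbatim after idempotent completion (taking $K_0$ of idempotent completions changes nothing in the argument since all functors in sight are exact and extend to the idempotent completions), and finally add the torus $T$: every stack, map, potential, and square above is $T$-equivariant, so the same diagram chase works in $K_0^T$, using the extra diagonal/multiplication maps $\delta,\mu$ on $BT$ which are themselves coherently compatible.

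The main obstacle I expect is step (3): establishing the base-change isomorphism $q^*p_*\simeq p'_*q'^*$ \emph{on categories of singularities of the central fibers}, rather than on the ambient smooth stacks. For the smooth ambient stacks flat base change is standard, but one must check that this isomorphism descends compatibly through the quotient $D^b(\X_0)\to D_{sg}(\X_0)=D^b(\X_0)/\text{Perf}(\X_0)$ and is compatible with the Thom--Sebastiani identifications; concretely one needs that proper pushforward along $p$ preserves the subcategory of perfect complexes after restriction to the non-flat central fibers, and that the natural transformation witnessing base change is an isomorphism on $D_{sg}$. The cleanest route is probably to phrase everything in terms of the $D^b(\mathbb{A}^1)$-linear (or matrix-factorization) description: the functors $q^*$, $p_*$ are $D^b(\mathbb{A}^1)$-linear, base change holds on the ambient $D^b$'s as $\mathbb{A}^1$-linear functors, and then Proposition \ref{sod} (together with the dimensional-reduction-style formalism of Subsection \ref{singul}) transports it to $D_{sg}$ of the fibers. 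A secondary technical point is tracking the affine-bundle maps $q$ carefully: $q^*$ for an affine bundle is fully faithful on $D^b$ but not essentially surjective, so one must make sure the base-change squares are arranged so that only honest pullbacks along smooth maps (affine bundles) and proper pushforwards along closed-immersion-composed-with-flag-bundle maps occur, which is exactly the structure of the triple-flag diagram.
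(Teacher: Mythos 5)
Your approach is essentially the same as the paper's: both set up the triple-flag stack $\X^{ss}(d,e,f)$, build the $3\times 3$ diagram connecting the two iterated products through it, use proper base change on the cartesian corners and direct verification on the other two corners, check compatibility with Thom--Sebastiani, and then repeat the diagram chase with the extra $BT$-maps in the $T$-equivariant case. The one place where your plan diverges is in how you propose to push the base-change isomorphism $q^*p_*\simeq p'_*q'^*$ down to $D_{sg}$: you suggest routing through the $D^b(\mathbb{A}^1)$-linear formalism and Proposition~\ref{sod}, whereas the paper's actual resolution (its Propositions~\ref{com1} and~\ref{com2}) is more elementary --- it proves commutativity of the relevant squares directly on the derived categories $D^b(\X_0)$ of the zero fibers, which follows from ordinary base change for the cartesian squares of stacks, and then observes that all the functors in play ($q^*$, $p_*$ for $p$ proper, $\delta^*$) preserve perfect complexes of the central fibers because the ambient stacks are smooth, hence descend to $D_{sg}$ with the commuting diagrams coming along for free. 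Your proposed route would also work but is heavier machinery than needed; the paper's route requires only the elementary observation that proper pushforward between zero fibers of proper maps of smooth stacks preserves perfect complexes, which is already built into the functoriality of Subsection~\ref{functo}.
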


Before we start proving the theorem, we collect some technical results.

\begin{prop}\label{com1}
Consider the cartesian diagram of smooth quotient stacks:

\begin{tikzcd}
  \mathcal{W} \arrow[r, "a"] \arrow[d, "d"]
    & \mathcal{Z} \arrow[d, "b"] \\
  \mathcal{X} \arrow[r, "c"]
&\mathcal{Y}
\end{tikzcd}
\\
with $a$ and $c$ proper. Consider a regular function $f:\Y\to\C$, and let $\mathcal{Y}_0$ etc. denote the zero fibers of $f$ etc. inside the quotient stacks. The following diagram commutes:

\begin{tikzcd}
  D_{sg}(\mathcal{X}_0) \arrow[r, "c_*"] \arrow[d, "d^*"]
    & D_{sg}(\mathcal{Y}_0) \arrow[d, "b^*"] \\
  D_{sg}(\mathcal{W}_0)\arrow[r, "a_*"]
&D_{sg}(\mathcal{Z}_0).
\end{tikzcd}
\\
 The similar statement holds for $D^{id}_{sg}$.
\end{prop}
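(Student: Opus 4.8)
The plan is to reduce Proposition~\ref{com1} to base change for (quasi-)coherent sheaves on the \emph{smooth} ambient stacks $\mathcal{W},\mathcal{Z},\mathcal{X},\mathcal{Y}$. By the stack version of Orlov's equivalence, in the form of \cite{pv}, there are equivalences $D_{sg}(\mathcal{X}_0)\cong\text{MF}(\mathcal{X},c^{*}f)$, $D_{sg}(\mathcal{Y}_0)\cong\text{MF}(\mathcal{Y},f)$, $D_{sg}(\mathcal{Z}_0)\cong\text{MF}(\mathcal{Z},b^{*}f)$ and $D_{sg}(\mathcal{W}_0)\cong\text{MF}(\mathcal{W},(cd)^{*}f)$ with categories of matrix factorizations of the smooth ambients, and under these equivalences the functors $c_*$ and $a_*$ of Subsection~\ref{functo} become the matrix-factorization pushforwards $Rc_{*}$ and $Ra_{*}$ (well defined because $c$ and $a$ are proper), while $d^{*}$ and $b^{*}$ become the matrix-factorization pullbacks $Ld^{*}$ and $Lb^{*}$. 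Hence it suffices to produce a natural isomorphism $Lb^{*}\,Rc_{*}\cong Ra_{*}\,Ld^{*}$ of functors $\text{MF}(\mathcal{X},c^{*}f)\to\text{MF}(\mathcal{Z},b^{*}f)$.

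A matrix factorization is a $2$-periodic twisted complex of coherent sheaves, and $Rc_{*}$, $Ld^{*}$ act through the usual pushforward and pullback on the derived categories of (quasi-)coherent sheaves on the smooth stacks, compatibly with the twisted differential (the projection formula turns multiplication by $c^{*}f$ into multiplication by $f$). So the desired isomorphism follows from the base-change isomorphism $Lb^{*}\,Rc_{*}\cong Ra_{*}\,Ld^{*}$ for $D(\mathrm{QCoh})$ along the cartesian square $\mathcal{W}=\mathcal{X}\times_{\mathcal{Y}}\mathcal{Z}$, once one checks, which is routine, that it intertwines the two potentials. This base change holds because the square is Tor-independent --- equivalently, $\mathcal{W}$ is the \emph{derived} fiber product, i.e. $\mathcal{O}_{\mathcal{X}}\otimes^{\mathbf{L}}_{\mathcal{O}_{\mathcal{Y}}}\mathcal{O}_{\mathcal{Z}}\cong\mathcal{O}_{\mathcal{W}}$ --- which is guaranteed by the hypothesis that $\mathcal{W}$ is smooth of the expected dimension $\dim\mathcal{X}+\dim\mathcal{Z}-\dim\mathcal{Y}$. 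For the quotient stacks $V/G$ occurring here, with $G$ reductive, one verifies this base change on smooth atlases, or invokes the perfect-stack formalism of \cite{bzfn}.

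Transporting the isomorphism back through the equivalences of \cite{pv} yields the commutativity of the square in the statement; no separate compatibility with the Verdier quotients defining $D_{sg}$ is needed, since that is built into the equivalences $\text{MF}\cong D_{sg}$. For the $T$-equivariant statement one runs the identical argument with $\mathcal{W},\mathcal{Z},\mathcal{X},\mathcal{Y}$ replaced by $\mathcal{W}/T,\mathcal{Z}/T,\mathcal{X}/T,\mathcal{Y}/T$ and the pulled-back potentials. For $D^{id}_{sg}$ one passes to idempotent completions: a natural isomorphism of exact functors between triangulated categories extends uniquely to their Karoubi envelopes, so the commutative square persists.

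I expect the only genuine input to be the Tor-independence of the cartesian square of smooth stacks; everything else is formal bookkeeping around base change and the functoriality recalled in Subsection~\ref{functo}. In the applications of this proposition (associativity of the multiplication and the related compatibilities in Section~\ref{3}) the relevant fiber products are of attracting-locus type and are automatically of the expected dimension, so the hypothesis is met --- this is the sense in which ``cartesian diagram of smooth quotient stacks'' should be read. If one prefers to argue directly with $D^{b}$ of the zero fibers rather than with matrix factorizations, the same Tor-independence passes to the square $(\mathcal{W}_0,\mathcal{Z}_0,\mathcal{X}_0,\mathcal{Y}_0)$, because the zero fibers are cut out of the smooth ambients by nonzerodivisors; one then additionally observes that $Rc_{0*},Ra_{0*}$ and $Lb_0^{*},Ld_0^{*}$ preserve perfect complexes --- the former because $c_0,a_0$ are proper of finite Tor-dimension, the latter because $b,d$ (hence $b_0,d_0$) have finite Tor-dimension --- so that the base-change isomorphism on $D^{b}$ descends to $D_{sg}$.
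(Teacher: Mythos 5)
Your argument is correct, but it follows a different route than the paper. The paper's proof is much shorter: it simply asserts that the corresponding square at the level of $D^b(\mathcal{X}_0)$, $D^b(\mathcal{Y}_0)$, $D^b(\mathcal{W}_0)$, $D^b(\mathcal{Z}_0)$ commutes by base change, and then observes that the four functors preserve perfect complexes (pullbacks because they are functors on $\text{Perf}$; proper pushforwards because of finite Tor-dimension), so the natural isomorphism descends through the Verdier quotient defining $D_{sg}$, and survives idempotent completion. This is exactly the alternative you sketch in your final paragraph. Your main argument instead routes through the Orlov/Polishchuk--Vaintrob equivalence with matrix factorizations $\text{MF}(\mathcal{X},c^*f)$, and reduces to base change for $D(\mathrm{QCoh})$ of the smooth ambient stacks. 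Both are valid; the paper's route avoids invoking the MF equivalence and works directly with the definitions from Subsection \ref{singul}, which is more in the spirit of the rest of Section \ref{2}.

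Where your write-up adds something the paper leaves implicit: you correctly flag that a cartesian square of smooth stacks need not be Tor-independent (e.g. $\mathrm{pt}\to\mathbb{A}^1\leftarrow\mathrm{pt}$), so that the proper base-change isomorphism requires an additional hypothesis, and you observe that in the paper's applications (attracting-locus squares, or squares where one leg is flat) the fiber product does have the expected dimension and Tor-independence holds. The paper simply says ``the diagram for the zero fibers commutes'' without addressing this point, so you have identified a hypothesis that should really be read into the statement. Your remark that the Tor-independence of the ambient square passes to the square of zero fibers (since those are cut out by compatible nonzerodivisors) is also the right way to close that gap in the direct $D^b$ approach.
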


\begin{proof}
The diagram for the zero fibers commutes:

\begin{tikzcd}
  D^b(\mathcal{X}_0) \arrow[r, "c_*"] \arrow[d, "d^*"]
    & D^b(\mathcal{Y}_0) \arrow[d, "b^*"] \\
  D^b(\mathcal{W}_0) \arrow[r, "a_*"]
& D^b(\mathcal{Z}_0).
\end{tikzcd}
\\
The functors $a_*,c_*, b^*, d^*$ extend to the categories $D_{sg}$, so the diagram will commute in those cases as well. After taking idempotent completions of the categories involved, the diagram commutes.
\end{proof}

\begin{prop}\label{com2}
Consider the commutative diagram of smooth quotient stacks: 

\begin{tikzcd}
  \mathcal{W} \arrow[r, "a"] \arrow[d, "d"]
    & \mathcal{Z} \arrow[d, "b"] \\
  \mathcal{X} \arrow[r, "c"]
&\mathcal{Y}. \end{tikzcd}
\\
Consider a regular function $f:\Y\to\C$, and let $\mathcal{Y}_0$ etc. denote the zero fibers of $f$ etc. inside the quotient stacks. The following diagram commutes:

\begin{tikzcd}
  D_{sg}(\mathcal{Y}_0) \arrow[r, "c^*"] \arrow[d, "b^*"]
    & D_{sg}(\mathcal{X}_0) \arrow[d, "b^*"] \\
  D_{sg}(\mathcal{Z}_0) \arrow[r, "a^*"]
&D_{sg}(\mathcal{W}_0).
\end{tikzcd}
\\
The similar diagram commutes for $D^{id}_{sg}$. Also, if all the morphisms are proper, the diagram with only proper pushforwards commutes for $D_{sg}$ and $D_{sg}^{id}$.
\end{prop}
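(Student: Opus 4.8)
The plan is to deduce the statement from the pseudo-functoriality of derived pullback (and, in the proper case, of derived pushforward) together with the compatibility of these operations with the passage to zero fibers and to categories of singularities set up in Subsection \ref{functo}. The argument is formal and parallels the proof of Proposition \ref{com1}.

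First I would work on the ambient smooth quotient stacks. Derived pullback is pseudo-functorial, so there are natural isomorphisms $(g\circ h)^*\cong h^*\circ g^*$; since the square of stacks commutes we have $c\circ d = b\circ a$ as morphisms $\mathcal{W}\to\mathcal{Y}$, hence $d^*\circ c^*\cong a^*\circ b^*$ as functors $D^b(\mathcal{Y})\to D^b(\mathcal{W})$. Pulling back the function $f$ identifies $\mathcal{W}_0$ — the zero fiber of $(c\circ d)^*f=(b\circ a)^*f$ — so that $d$ restricts to $d_0\colon\mathcal{W}_0\to\mathcal{X}_0$ and $a$ to $a_0\colon\mathcal{W}_0\to\mathcal{Z}_0$, consistently. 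As recalled in Subsection \ref{functo} (following \cite{pv}), these data induce the functors $c_0^*,d_0^*,a_0^*,b_0^*$ on the derived categories of the singular zero fibers, preserving $\operatorname{Perf}$; the same pseudo-functoriality isomorphism gives $d_0^*\circ c_0^*\cong a_0^*\circ b_0^*$ at the level of $D^b$ of the zero fibers. Passing to the Verdier quotients $D_{sg}(-)=D^b(-)/\operatorname{Perf}(-)$: each of these functors descends because it preserves perfect complexes, and a natural isomorphism of perfect-preserving functors on $D^b$ descends to a natural isomorphism on $D_{sg}$; then, since any natural isomorphism of functors extends uniquely to idempotent completions, the statement for $D_{sg}^{id}$ follows as well. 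For the last assertion I would run the identical argument with proper pushforward in place of pullback, using $(g\circ h)_*\cong g_*\circ h_*$ and $c\circ d=b\circ a$ (so $c_*\circ d_*\cong b_*\circ a_*$), together with the fact, again from Subsection \ref{functo}, that proper pushforward between smooth quotient stacks preserves boundedness and perfectness and hence induces functors on $D_{sg}$ and $D_{sg}^{id}$.

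I do not expect a genuine obstacle: once the framework of Subsection \ref{functo} is granted, the content is entirely formal. The one point deserving a line of care — the analogue of the subtlety handled in Proposition \ref{com1} — is that the pseudo-functoriality isomorphisms must be checked to be compatible with the restriction-to-the-zero-fiber construction of $q^*$ (and $p_*$) on $D_{sg}$, not merely with the naive pullback on $D^b$ of the smooth ambient stacks; this is immediate upon unwinding the definitions in \cite{pv}, since those functors are produced precisely from the pullbacks (resp. pushforwards) on the ambient smooth stacks.
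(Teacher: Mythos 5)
Your proof is correct and follows essentially the same route as the paper's: commutativity of the square of pullbacks (respectively pushforwards) on $D^b$ of the zero fibers follows from pseudo-functoriality together with $c\circ d=b\circ a$, and then one descends along $D^b\to D_{sg}$ and to idempotent completions because the relevant functors preserve perfect complexes. Two tiny imprecisions in your write-up do not affect the argument: the needed fact is that proper pushforward between the (singular) zero fibers — not between the ambient smooth stacks — preserves perfect complexes (this is the content of Subsection~\ref{functo}); and the caveat you flag at the end is really the concern for Proposition~\ref{com1}, where the square is cartesian and base change is invoked, whereas here only pseudo-functoriality is used.
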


\begin{proof}
The diagram for the zero fibers commutes:

\begin{tikzcd}
  D^b(\mathcal{Y}_0) \arrow[r, "c^*"] \arrow[d, "b^*"]
    & D^b(\mathcal{X}_0) \arrow[d, "b^*"] \\
  D^b(\mathcal{Z}_0) \arrow[r, "a^*"]
&D^b(\mathcal{W}_0)
\end{tikzcd}
\\
Pullback functors extend to $D_{sg}$, so the diagram commutes after passing to these categories. Further, if the maps are proper, the diagram

\begin{tikzcd}
  D^b(\mathcal{W}_0) \arrow[r, "a_*"] \arrow[d, "d_*"]
    & D^b(\mathcal{Z}_0) \arrow[d, "b_*"] \\
  D^b(\mathcal{X}_0) \arrow[r, "c_*"]
&D^b(\mathcal{Y}_0) \end{tikzcd}
\\
commutes. Proper pushforward functors extend to $D_{sg}$, so the diagram commutes after passing to these categories. By passing to idempotent completions, the diagrams also commute.

\end{proof}

We now begin the proof of Theorem \ref{thm1}:

\begin{proof}
We will show that multiplication is associative at the categorical level. We assume that $\theta=0$, and the general case follows from this one by restricting to the appropriate open sets. First, we treat the zero potential case. We need to show that the following diagram is commutative:

\begin{tikzcd}
  D^b(\X(d))\boxtimes D^b(\X(e))\boxtimes D^b(\X(f)) \arrow[r, "q_{e,f}^*"] \arrow[d, "q_{d,e}^*"]
    & D^b(\X(d))\boxtimes D^b(\X(e,f)) \arrow[d, "q_1^*"] \arrow[r, "p_{e,f*}"] & D^b(\X(d))\boxtimes D^b(\X(e+f)) \arrow[d, "q_{d,e+f}^*"] \\
  D^b(\X(d,e))\boxtimes D^b(\X(f)) \arrow[d, "p_{d,e*}"] \arrow[r, "q_2^*"] 
& D^b(\X(d,e,f)) \arrow[d, "p_*"] \arrow[r, "p_{e,f*}"] & D^b(\X(d,e+f)) \arrow[d, "p_{d,e+f*}"] \\
D^b(\X(d+e))\boxtimes D^b(\X(f)) \arrow[r, "q_{d+e,f*}"] & D^b(\X(d+e,f)) \arrow[r, "p_{d+e,f*}"] & D^b(\X(d+e+f)).
\end{tikzcd}
\\
Here $\X(d,e,f)$ is the stacks of triples of representations $0\subset A\subset B\subset C$ such that $A$ is of dimension $d$, $B/A$ is of dimension $e$, and $C/B$ is of dimension $f$. The upper right and the lower left corners come from cartesian diagrams, for example for the upper right corner:

\begin{tikzcd}
  \X(d,e,f) \arrow[r] \arrow[d]
    & \X(d,e+f) \arrow[d] \\
  \X(d)\times\X(e,f) \arrow[r]
&\X(d)\times\X(e+f).
\end{tikzcd}
\\
The maps from and to $\X(d,e,f)$ in the above diagram are defined via the cartesian diagrams for the upper right and lower left corners. In particular, these two corners commute from proper base change.
For the upper left corner, the following diagram commutes:

\begin{tikzcd}
  \X(d,e,f) \arrow[r] \arrow[d]
    & \X(d,e)\times \X(f) \arrow[d] \\
  \X(d+e,f) \arrow[r]
&\X(d+e)\times\X(f) 
\end{tikzcd}
\\
because both compositions send 
$$(V_0\subset V_1\subset V)\in \X(d,e,f)\text{ to }(V_1, V/V_1)\in \X(d+e)\times \X(f).$$ A similar proof works for the lower right corner.
\\

We next explain why the $T$ equivariant multiplication is associative. We need to show that the following diagram commutes:

\begin{tikzcd}
  D^b(\X(d)/T\times \X(e)/T\times \X(f)/T) \arrow[r, "q_{e,f}^*"] \arrow[d, "q_{d,e}^*"]
    & D^b(\X(d)/T\times \X(e,f)/T) \arrow[d, "q_1^*"] \arrow[r, "p_{e,f*}"] & D^b(\X(d)/T\times \X(e+f)/T) \arrow[d, "q_{d,e+f}^*"] \\
  D^b(\X(d,e)/T\times \X(f)/T) \arrow[d, "p_{d,e*}"] \arrow[r, "q_2^*"] 
& D^b(\X(d,e,f)/T) \arrow[d, "p_*"] \arrow[r, "p_{e,f*}"] & D^b(\X(d,e+f)/T) \arrow[d, "p_{d,e+f*}"] \\
D^b(\X(d+e)/T\times\X(f)/T) \arrow[r, "q_{d+e,f*}"] & D^b(\X(d+e,f)/T) \arrow[r, "p_{d+e,f*}"] & D^b(\X(d+e+f)/T).
\end{tikzcd}
\\
The lower left and upper right corners come from cartesian diagrams; the maps from and to $\X(d,e,f)/T$ are defined using these cartesian diagrams. Further, they commute by proper base change. The lower right corner commutes as before. For the upper left corner, consider the diagram:

\begin{tikzcd}
\X(d)/T\times\X(e)/T\times\X(f)/T &
(\X(d)\times\X(e))/T\times\X(f)/T\arrow{l}&
\X(d)/T\times\X(e,f)/T \arrow{l}\\
(\X(d)\times\X(e))/T\times \X(f)/T \arrow{u} &
(\X(d)\times\X(e)\times \X(f))/T \arrow{u}\arrow{l}
&(\X(d)\times\X(e,f))/T \arrow{l} \arrow{u}\\
\X(d,e)/T\times\X(f)/T\arrow{u}&
(\X(d,e)\times\X(f))/T\arrow{u}\arrow{l}& \X(d,e,f)/T.\arrow{u}\arrow{l}
\end{tikzcd}
\\
The lower right corner is commutative, and upper right and lower left corners are both cartesian. The upper left corner is also cartesian because the maps are induced from $$(\X(d)/T)\times(\X(e)/T)\times (\X(f)/T)\to BT\times BT\times BT$$ via the maps in the cartesian diagram
\begin{tikzcd}
  BT \arrow[r, "\delta"] \arrow[d, "\delta"]
    & BT\times BT \arrow[d, "\delta\times 1"] \\
  BT\times BT \arrow[r, "1\times\delta"]
& BT \times BT\times BT, 
\end{tikzcd}
which are induced by the maps in the commutative diagram
\begin{tikzcd}
  T \arrow[r, "\Delta"] \arrow[d, "\Delta"]
    & T\times T \arrow[d, "\Delta\times 1"] \\
  T\times T \arrow[r, "1\times \Delta"]
& T \times T\times T. 
\end{tikzcd}
\\
This proves the statement in the zero potential case. 
\\

Assume that $W$ is an arbitrary potential. Use Propositions \ref{com1} and \ref{com2} to see that the following diagram commutes:

\begin{tikzcd}
  D_{sg}\left((\X(d)\times\X(e)\times\X(f))_0\right) \arrow[r, "q_{e,f}^*"] \arrow[d, "q_{d,e}^*"]
    & D_{sg}\left((\X(d)\times \X(e,f))_0\right) \arrow[d, "q_1^*"] \arrow[r, "p_{e,f*}"] & D_{sg}\left((\X(d)\times \X(e+f))_0\right) \arrow[d, "q_{d,e+f}^*"] \\
  D_{sg}\left((\X(d,e)\times \X(f))_0\right) \arrow[d, "p_{d,e*}"] \arrow[r, "q_2^*"] 
& D_{sg}\left(\X(d,e,f)_0\right) \arrow[d, "p_*"] \arrow[r, "p_{e,f*}"] & D_{sg}\left(\X(d,e+f)_0\right) \arrow[d, "p_{d,e+f*}"] \\
D_{sg}\left((\X(d+e)\times \X(f))_0\right) \arrow[r, "q_{d+e,f*}"] & D_{sg}\left(\X(d+e,f)_0\right) \arrow[r, "p_{d+e,f*}"] & D_{sg}\left(\X(d+e+f)_0\right).
\end{tikzcd}
\\
Further, the diagrams for $D^{id}_{sg}$ and for the $T$-equivariant version commute as well. 
The diagrams we want to show commute are:

\begin{tikzcd}
D_{sg}(\X(d)_0)\boxtimes D_{sg}(\X(e)_0)\boxtimes D_{sg}(\X(f)_0) \arrow{d}{m_{d,e}\times\text{id}} \arrow{r}{\text{id}\times m_{e,f}} & D_{sg}(\X(d)_0)\boxtimes D_{sg}(\X(e+f)_0) \arrow{d}{m_{d,e+f}}\\
D_{sg}(\X(d+e)_0)\boxtimes D_{sg}(\X(f)_0) \arrow{r}{m_{d+e,f}} & D_{sg}(\X(d+e+f)_0),
\end{tikzcd}
\\
and its analogues. They differ by the above diagram by some Thom-Sebastiani equivalences, which we now take into account. First, the following diagram commutes:

\begin{tikzcd}
  D_{sg}(\X(d)_0)\boxtimes D_{sg}(\X(e)_0)\boxtimes D_{sg}(\X(f)_0) \arrow[r, "\text{id}\times i_*"] \arrow[d, "i_*\times \text{id}"]
    & D_{sg}(\X(d)_0)\boxtimes D_{sg}\left((\X(e)\times \X(f))_0\right) \arrow[d, "i_*"]  \\
      D_{sg}\left((\X(d)\times \X(e))_0\right)\boxtimes D_{sg}(\X(f)_0) \arrow[r, "i_*"] \arrow[d, "q_{d,e}^*"]
    & D_{sg}\left((\X(d)\times\X(e)\times \X(f))_0\right) \arrow[d, "q_1^*"] \\
  D_{sg}(\X(d,e)_0)\boxtimes D_{sg}(\X(f)_0) \arrow[d, "p_{d,e*}"] \arrow[r, "i_*"] 
& D_{sg}\left((\X(d,e)\times \X(f))_0\right) \arrow[d, "p_*"]\\
D_{sg}(\X(d+e)_0)\boxtimes D_{sg}(\X(f)_0) \arrow[r, "i_*"] & D_{sg}\left((\X(d+e)\times \X(f))_0\right).
\end{tikzcd}
\\
The top and lower squares are immediate. The middle square follows from proper base change applied to the diagram:

\begin{tikzcd}
\X(d,e)_0\times\X(f)_0 \arrow{r}{i} \arrow{d}{q_{d,e}} & (\X(d,e)\times\X(f))_0 \arrow{d}{q_{d,e}} \\
(\X(d)\times\X(e))_0\times \X(f)_0 \arrow{r}{i} & 
(\X(d)\times \X(e)\times \X(f))_0.
\end{tikzcd}
\\
There are similar commutative diagrams for $D_{sg}^{id}$ and their $T$-equivariant versions.
We can similarly show that the following diagram commutes:

\begin{tikzcd}
D_{sg}(\X(d)_0)\boxtimes D_{sg}\left((\X(e)\times \X(f))_0\right) \arrow{r}{p_{e,f*}q_{e,f}^*} \arrow{d}{i_*} & 
D_{sg}(\X(d)_0)\boxtimes D_{sg}(\X(e+f)_0) \arrow{d}{i_*} \\
D_{sg}\left((\X(d)\times\X(e)\times\X(f))_0\right) \arrow{r}{p_{e,f*}q_{e,f}^*} &
D_{sg}\left((\X(d)\times\X(e+f))_0\right).
\end{tikzcd}
\\
Once again, the analogous diagrams for $D_{sg}^{id}$ and their $T$-equivariant versions commute. Putting together these diagrams for the Thom-Sebastiani equivalences, we obtain that the multiplication is associative.

\end{proof}

\subsection{KHA using abelian stacks.} In \cite[Section 4.2]{d}, Davison explains how to define the multiplication on $\text{CoHA}$ using $T$-equivariant cohomology. In this subsection, we discuss the analogue for $\text{KHA}$. Consider the stacks $\mathcal{Y}(d)=R(d)/T(d)$ and $\mathcal{Z}(d)=R(d)/B(d)$. They come with attracting loci maps $p$ and $q$ as the stacks $\X(d)$. There are affine bundle maps $\Pi$ and proper maps $\pi$ as follows:
$$\Y(d)\xleftarrow{\Pi} \mathcal{Z}(d)\xrightarrow{\pi}\X(d).$$
We have a commutative diagram:

\begin{tikzcd}
\mathcal{Y}(d)\times\mathcal{Y}(e) & \mathcal{Z}(d)\times\mathcal{Z}(e) \arrow{l}{\Pi} \arrow{r}{\pi} & \X(d)\times\X(e) \\
\mathcal{Y}(d,e) \arrow{u}{q} \arrow{d}{p} & \mathcal{Z}(d,e) \arrow{l}{\Pi} \arrow{r}{\pi} \arrow{u}{q} \arrow{d}{p} & \X(d,e) \arrow{u}{q} \arrow{d}{p} \\
\mathcal{Y}(d+e) & \mathcal{Z}(d+e) \arrow{l}{\Pi} \arrow{r}{\pi} & \X(d+e).
\end{tikzcd}
\\
Consider the closed immersion $i:R(d,e)\to R(d+e),$ and define
$$\text{eu}_1(d,e):=i^*i_*(1)\in K_0(BT(d)).$$ Let $\text{eu}_2(d,e)$ be the Euler class of the 
proper map $G(d+e)/G(d,e)\to \text{pt}$. Denote by $K_0(D_{sg}(\mathcal{Y}(d+e)_0))_{\text{loc}}$ the localization of $K_0(D_{sg}(\mathcal{Y}(d+e)_0))$ at the ideal of $K_0(BG(d+e))$ generated by $\text{eu}_2(d,e)$. 

Define the abelian multiplication for the stacks $\mathcal{Y}(d)$ by:
$$m^{ab}:=p_*q^*:K_0(D_{sg}(\mathcal{Y}(d)_0))\boxtimes K_0(D_{sg}(\mathcal{Y}(e)_0))\to K_0(D_{sg}(\mathcal{Y}(d+e)_0)).$$
Consider the symmetrization map $$S_d: K_0(D_{sg}(\mathcal{Y}(d)_0))\to K_0(D_{sg}(\mathcal{Y}(d)_0))^{W_d}$$ which sends $x\to \sum_{\sigma\in W_{d}}\sigma\,x.$
Define similarly
$$S_{d,e}: K_0(D_{sg}(\mathcal{Y}(d+e)_0))^{W_d\times W_e}\to K_0(D_{sg}(\mathcal{Y}(d+e)_0))^{W_{d+e}}$$ by sending $x\to \sum_{\sigma\in W_{d+e}/W_d\times W_e} \sigma x$.
Define the multiplication $\widetilde{m}$ to be compared with the the multiplication $m$ from Definition \ref{multiplication} as follows:
\begin{multline*}
    \widetilde{m}:= S_{d,e}\,\text{eu}_2(d,e)^{-1}p_*q^*:K_0(D_{sg}(\mathcal{Y}(d)_0))^{W_d}\boxtimes K_0(D_{sg}(\mathcal{Y}(e)_0))^{W_e}\to\\ K_0(D_{sg}(\mathcal{Y}(d+e)_0))^{W_{d+e}}_{\text{loc}}.\end{multline*}

\begin{prop}\label{abel}
The multiplication $\widetilde{m}$ defined above coincides with the multiplication $m$ from Definition \ref{multiplication} via the isomorphism:
\begin{multline*}
\Theta_d: K_0(D_{sg}(\X(d)_0))\xrightarrow{\pi^*} 
K_0(D_{sg}(\mathcal{Z}(d)_0))\xrightarrow{(\Pi^*)^{-1}}K_0(D_{sg}(\mathcal{Y}(d)_0))\xrightarrow{S_d} \\
K_0(D_{sg}(\mathcal{Y}(d)_0))^{W_d}.
\end{multline*}
In other words, we have a commutative diagram:

\begin{tikzcd}
K_0(D_{sg}(\mathcal{X}(d)_0))\boxtimes K_0(D_{sg}(\mathcal{X}(e)_0))\arrow{r}{m_{d,e}} \arrow{d}{\Theta_d\boxtimes\Theta_e}&  K_0(D_{sg}(\X(d+e)_0))_{\text{loc}}\arrow{d}{\Theta_{d+e}} \\
K_0(D_{sg}(\mathcal{Y}(d)_0))^{W_d}\boxtimes K_0(D_{sg}(\mathcal{Y}(e)_0))^{W_e}\arrow{r}{\widetilde{m}_{d,e}} &  K_0(D_{sg}(\mathcal{Y}(d+e)_0))^{W_{d+e}}_{\text{loc}}.
\end{tikzcd}
\end{prop}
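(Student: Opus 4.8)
The plan is to prove the square in the statement commutes by a diagram chase in the large commutative diagram of stacks relating $\mathcal{Y}(-)$, $\mathcal{Z}(-)$ and $\X(-)$ displayed above, reducing everything to a local computation for the flag bundle $\pi$ and the partial flag bundle $R(d+e)/G(d,e)\to\X(d+e)$. Since $\Pi^*$ is an isomorphism on $K_0$ by Subsection \ref{abeliansubsection}, I first replace the abelian stacks $\mathcal{Y}(-)$ by the Borel quotients $\mathcal{Z}(-)$, and unravel both $\Theta$ and $\widetilde m$ into their elementary pieces: the Thom--Sebastiani equivalence, the affine-bundle pullback $q^*$, the flag-bundle pullback $\pi^*$, the proper pushforward $p_*$, multiplication by $\text{eu}_1(d,e)$ and by $\text{eu}_2(d,e)^{-1}$, and the symmetrizations $S_d$ and $S_{d,e}$. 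The Thom--Sebastiani equivalences occur symmetrically on both sides and are compatible with the other functors by Corollary \ref{corts}, so they drop out of the comparison.

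The routine part is that all squares in the large diagram built only from the maps $q$, $\Pi$ and the pullbacks $\pi^*$ already commute, being either cartesian or mere functoriality; by Subsection \ref{functo} together with Propositions \ref{com1} and \ref{com2} these compatibilities lift to the categories of singularities, and $\Pi^*$, $q^*$ induce isomorphisms on $K_0$. Moreover $\pi_*\pi^*$ is the identity and $(\Pi^*)^{-1}\pi^*$ identifies $K_0(D_{sg}(\X(-)_0))$ with the Weyl-invariants in $K_0(D_{sg}(\mathcal{Y}(-)_0))$ (proof of Proposition \ref{abel2}); hence $\Theta_d$ is pinned down by the geometry of $\pi$ alone, and the whole identity reduces to the following statement about flag bundles: restricting a class on $\X(d,e)$ to $\mathcal{Z}(d+e)$ through $p_{d,e}$ agrees with first restricting it to $\mathcal{Z}(d,e)$, applying the closed-immersion pushforward $p_*$, multiplying by $\text{eu}_2(d,e)^{-1}$, and Weyl-averaging by $S_{d,e}$ — as maps into $K_0(D_{sg}(\mathcal{Z}(d+e)_0))_{\mathrm{loc}}$.

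The heart of the argument is the single square built from $p_{d,e}\colon\X(d,e)\to\X(d+e)$ and $p\colon\mathcal{Z}(d,e)\to\mathcal{Z}(d+e)$: it commutes but is \emph{not} cartesian, the discrepancy being precisely the closed locus inside $\mathcal{Z}(d+e)\times_{\X(d+e)}\X(d,e)$ where the $d$-th step of the flag coincides with the chosen subrepresentation. I would resolve this by factoring $p_{d,e}$ through the intermediate stack $R(d+e)/G(d,e)$ as a closed immersion $\bar j_{d,e}$ followed by the $G(d+e)/G(d,e)$-flag bundle $\mathrm{fl}_{d,e}$, and factoring $\pi_{d+e}$ through the same stack. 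The closed-immersion part $\bar j_{d,e}$ produces, via the self-intersection formula $\bar j_{d,e}^*\bar j_{d,e*}(1)=\lambda_{-1}(N^\vee)=\text{eu}_1(d,e)$ for the normal bundle $N$ of $R(d,e)\subset R(d+e)$, the Euler-class factor already carried by the closed-immersion pushforward $p_*$ on the abelian side when paired with $q^*$; the flag-bundle part $\mathrm{fl}_{d,e}$ produces, via the Borel--Bott--Weyl theorem and the semiorthogonal decomposition of $D^b$ of a flag bundle exactly as in the proof of Proposition \ref{abel2} but now for the Levi $G(d)\times G(e)\subset G(d+e)$, the Weyl average $S_{d,e}$ together with the normalizing Euler class $\text{eu}_2(d,e)^{-1}$ of the flag variety $G(d+e)/G(d,e)$. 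Reassembling these pieces identifies $\pi_{d+e}^*\,p_{d,e*}$ with the required composite.

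I expect the principal obstacle to be the bookkeeping in this non-cartesian square: one must control the excess intersection (the extra flag directions) carefully enough that exactly $\text{eu}_1(d,e)$, $\text{eu}_2(d,e)^{-1}$ and $S_{d,e}$ emerge, with no spurious factors. Two secondary points also need care. First, multiplication by $\text{eu}_2(d,e)^{-1}$ is only defined after localization, so the identity must be phrased in $K_0(D_{sg}(-))_{\mathrm{loc}}$, and one invokes Proposition \ref{zerodiv} (and Theorem \ref{loc}) to know $\text{eu}_2(d,e)$ is not a zero divisor and the localized comparison is faithful. Second, while the pullbacks and closed-immersion pushforwards lift to functors of categories of singularities by Subsection \ref{functo}, the decomposition of the flag-bundle pushforward into Weyl summands is genuinely a statement at the level of $K_0$, so the argument must descend to $K_0$ at that point and cannot be carried out entirely at the categorical level.
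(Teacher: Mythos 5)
Your proposal is correct in its architecture and lands close to the paper's own proof: both reduce via $\Pi^*$ to the Borel quotients $\mathcal{Z}(-)$, identify the interesting part as the square relating $p_{d,e}\colon\X(d,e)\to\X(d+e)$ with the closed immersion $\mathcal{Z}(d,e)\hookrightarrow\mathcal{Z}(d+e)$, and see that the discrepancy lives on a closed locus of the fiber product whose Euler class is $\text{eu}_2(d,e)$. The difference is in how that square is resolved. The paper introduces the fiber product $\mathcal{W}=\bigl(R(d,e)\times_{G(d,e)}G(d+e)\bigr)/B(d+e)$ directly, uses proper base change on the cartesian square over $\X(d+e)$ to identify $\pi_{d+e}^*p_{d,e*}$ with $\widetilde p_*\widetilde\pi^*$, and then observes that $\mathcal{Z}(d,e)$ sits in $\mathcal{W}$ as a closed immersion $i$ with $i^*i_*(1)=\text{eu}_2(d,e)$. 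You instead factor $p_{d,e}=\mathrm{fl}_{d,e}\circ\bar j_{d,e}$ through $R(d+e)/G(d,e)$ and $\pi_{d+e}$ through the same stack, and push the flag-bundle push-pull $\mathrm{fl}_{d,e}^*\mathrm{fl}_{d,e*}$ through Borel--Bott--Weyl. Both lead to the same Euler class, but the paper's route avoids having to compute $\mathrm{fl}^*\mathrm{fl}_*$ at all — the single closed-immersion self-intersection is easier to control. The symmetrization $S_{d,e}$ is also cleaner in the paper, which treats it by one additional commutative square comparing $S_d\boxtimes S_e$, $S_{d+e}$, and $S_{d,e}$ after the Euler-class bookkeeping is already done, rather than trying to extract it from the flag-bundle pushforward.

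One genuine confusion in your write-up: you claim the closed-immersion factor $\bar j_{d,e}$ "produces, via the self-intersection formula, $\text{eu}_1(d,e)$." But the square built from $\bar j_{d,e}$ and the corresponding closed immersion on the Borel side is cartesian — it is the pullback of the flag bundle $R(d+e)/B(d+e)\to R(d+e)/G(d,e)$ along the closed immersion $R(d,e)\hookrightarrow R(d+e)$ — so smooth base change applies with no Euler correction, and $\text{eu}_1$ never enters Proposition \ref{abel} at all. The only Euler class the statement requires is $\text{eu}_2$, and it comes entirely from the flag/Weyl direction, not from the attracting-locus normal bundle. You are likely conflating this with Proposition \ref{compo}, where $\text{eu}_1$ does appear because there one composes $i^*$ with $p_*q^*$ along the zero section. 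This confusion doesn't break the skeleton of your argument, but it is worth sorting out before writing the details: if you insert an $\text{eu}_1$ on the non-abelian side expecting it to cancel, you will have added a factor that was never there to begin with.
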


\begin{proof}
We suppress $D_{sg}$ from the notations to make the diagram easier to read. We first show that the following diagram commutes:

\begin{tikzcd}
K_0(\mathcal{Y}(d)\times\mathcal{Y}(e)) \arrow{d}{q^*} \arrow{r}{\Pi^*} & K_0(\mathcal{Z}(d)\times\mathcal{Z}(e)) \arrow{d}{q^*} & K_0(\X(d)\times\X(e)) \arrow{l}{\pi^*} \arrow{d}{q^*}\\
K_0(\mathcal{Y}(d,e))  \arrow{d}{p_*} \arrow{r}{\Pi^*} & K_0(\mathcal{Z}(d,e))   \arrow{d}{p_*} & K_0(\X(d,e))  \arrow{d}{p_*\,\text{eu}_2} \arrow{l}{\pi^*}\\
K_0(\mathcal{Y}(d+e)) \arrow{r}{\Pi^*} & K_0(\mathcal{Z}(d+e))  & K_0(\X(d+e)). \arrow{l}{\pi^*} \\
\end{tikzcd}
\\
The top squares clearly commute. The maps $\Pi^*$ are isomorphisms, and the maps $\pi^*$ are isomorphisms onto the $W$-fixed points. The lower left corner commutes from proper base change. To show that the lower right corner commutes, let 
$$\mathcal{W}=\left(R(d,e)\times_{P(d,e)} G(d+e)\right)/B(d+e),$$
which fits in a cartesian diagram:

\begin{tikzcd}
\mathcal{W} \arrow{r}{\widetilde{\pi}} \arrow{d}{\widetilde{p}} & \X(d,e) \arrow{d}{p}\\
\mathcal{Z}(d+e) \arrow{r}{\pi} & \X(d+e).
\end{tikzcd}
\\
Consider the inclusion map $i: \mathcal{Z}(d,e) \to \mathcal{W}$. Then $i\widetilde{\pi}=\pi$, $\widetilde{p}i=p$, and $i_*i^*(1)=\text{eu}_2$, so the following diagram commutes:

\begin{tikzcd}
 K_0(\mathcal{Z}(d,e))   \arrow{d}{p_*} & K_0(\X(d,e))  \arrow{d}{p_*\,\text{eu}_2} \arrow{l}{\Pi^*}\\
K_0(\mathcal{Z}(d+e))   & K_0(\X(d+e)). \arrow{l}{\Pi^*} \\
\end{tikzcd}
\\
Finally, we need to compare the abelian K-theory with its Weyl invariant part. This follows from the following commutative diagram:

\begin{tikzcd}
K_0(D_{sg}(\Y(d)_0))\boxtimes K_0(D_{sg}(\Y(e)_0))\arrow{r}\arrow{d}{S_d\boxtimes S_e}& K_0(D_{sg}(\Y(d+e)_0))\arrow{d}{S_{d+e}}\\
K_0(D_{sg}(\Y(d)_0))^{W_d}\boxtimes K_0(D_{sg}(\Y(e)_0))^{W_e}\arrow{r}{S_{d,e}}& K_0(D_{sg}(\Y(d+e)_0))^{W_{d+e}}.
\end{tikzcd}
\end{proof}

An immediate corollary is:
\begin{cor}\label{image}
 The image of $\widetilde{m}_{d,e}$ lies in $$K_0\left(D_{sg}(\mathcal{Y}(d+e)_0)\right)^{W_{d+e}}\subset K_0\left(D_{sg}(\mathcal{Y}(d+e)_0)\right)_{\text{loc}}^{W_{d+e}}.$$
\end{cor}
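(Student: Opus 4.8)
The plan is to read the statement off directly from Proposition~\ref{abel}, the only real content being to keep careful track of where the localization at $\text{eu}_2(d,e)$ is genuinely needed and where it is spurious. First I would observe that the multiplication $m_{d,e}$ of Definition~\ref{multiplication} is constructed purely from the honest functors $TS$, $\delta^*$, $q^*$ and $p_*$ between categories of singularities; in particular, after passing to $K_0$, its image is automatically contained in the \emph{un-localized} group $K_0\bigl(D_{sg}(\mathcal{X}(d+e)_0)\bigr)$, since no class is inverted in its construction. The appearance of $K_0\bigl(D_{sg}(\mathcal{X}(d+e)_0)\bigr)_{\text{loc}}$ in the diagram of Proposition~\ref{abel} is only a matter of bookkeeping so that the formula for $\widetilde{m}_{d,e}$, which does invert $\text{eu}_2(d,e)$, makes sense.

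Next I would recall from Proposition~\ref{abel2} that the composite $(\Pi^*)^{-1}\pi^*$ is injective with image exactly the Weyl-invariant subspace $K_0\bigl(D_{sg}(\mathcal{Y}(d+e)_0)\bigr)^{W_{d+e}}$, and that on this subspace $S_{d+e}$ acts as multiplication by $|W_{d+e}|$; hence $\Theta_{d+e}$ carries the un-localized group $K_0\bigl(D_{sg}(\mathcal{X}(d+e)_0)\bigr)$ isomorphically onto the un-localized subspace $K_0\bigl(D_{sg}(\mathcal{Y}(d+e)_0)\bigr)^{W_{d+e}}$, and not merely into its localization. Combining this with the commutativity of the square in Proposition~\ref{abel}, which exhibits $\widetilde{m}_{d,e}=\Theta_{d+e}\circ m_{d,e}\circ(\Theta_d\boxtimes\Theta_e)^{-1}$, I conclude that the image of $\widetilde{m}_{d,e}$ is contained in $K_0\bigl(D_{sg}(\mathcal{Y}(d+e)_0)\bigr)^{W_{d+e}}\subset K_0\bigl(D_{sg}(\mathcal{Y}(d+e)_0)\bigr)^{W_{d+e}}_{\text{loc}}$.

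There is no serious obstacle here; the whole point is the observation made in the first paragraph, namely that the categorical description of the product already takes values in the un-localized $K$-theory and that $\Theta_{d+e}$ respects this, so that the inversion of $\text{eu}_2(d,e)$ in the explicit formula for $\widetilde{m}_{d,e}$ turns out to be unnecessary on the image.
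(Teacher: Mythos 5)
Your proposal is correct and is precisely the unwritten argument behind the paper's ``immediate corollary'': $m_{d,e}$ is built from $K_0$-functors on categories of singularities and hence lands in the un-localized $K_0(D_{sg}(\X(d+e)_0))$, and $\Theta_{d+e}$ carries that group into the un-localized $K_0(D_{sg}(\Y(d+e)_0))^{W_{d+e}}$, so the commutative square of Proposition~\ref{abel} forces the image of $\widetilde{m}_{d,e}$ to lie there too. One minor caveat, inherited from the paper rather than introduced by you: your final step tacitly uses that $\Theta_d\boxtimes\Theta_e$ is surjective onto the full domain of $\widetilde{m}_{d,e}$, which is the ``isomorphism'' asserted in Proposition~\ref{abel}; since $S_d$ restricted to Weyl-invariants is multiplication by $|W_d|$, this is literally an isomorphism only after inverting $|W_d|$, so strictly speaking one should either work rationally or read $\Theta_d$ as $(\Pi^*)^{-1}\pi^*$ alone, which by Proposition~\ref{abel2}(b) already surjects onto the Weyl-invariants.
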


\textbf{Remark.} One can define $m^{ab}$ and $\widetilde{m}$ for $D_{sg}^{id}$ and in the $T$-equivariant case. They can be compared to the multiplication from Definition \ref{multiplication} as above.

\section{KHA as a bialgebra}\label{bialgebra2}
\subsection{Definition of the coproduct}
We will continue using the notations from the previous section, so $(Q,W)$ is a quiver with potential and $\theta\in\mathbb{Q}^I$ is a King stability condition.
Let $d,e\in\mathbb{N}^I$ be dimension vectors. Recall the maps:
$$\X^{ss}(e)\times\X^{ss}(e)\xleftarrow{q_{d,e}}\X^{ss}(d,e)\xrightarrow{p_{d,e}}\X^{ss}(d+e)$$
and we denote by the same letters the maps on the representations, on the corresponding groups, and on the stacks $\Y(d)=R(d)/T(d)$, and on their semistable loci, for example:
$$R(d)\times R(e)\xleftarrow{q_{d,e}}R(d,e)\xrightarrow{p_{d,e}} R(d+e).$$
Consider the $G(d,e)\times\C^*$ representation $R(d,e)\oplus\C$, 
where $\C^*$ acts on $R(d,e)$ by the character
$$\lambda_{d,e}=(z\text{Id}_d,\text{Id}_e):\C^*\to G(d,e),$$
so in particular with weights $0$ on $R(d)\times R(e)$ and with weights $1$ on $R(d,e)^{\lambda_{d,e}>0}$, $\C^*$ acts with weights $1$ on $\C$, and $G(d,e)$ acts trivially on $\C$. Consider the quotient stacks:
$$\widetilde{\X}^{ss}(d,e)=R^{ss}(d,e)\oplus\C/G(d,e)\times\C^*\subset\widetilde{\X}(d,e)=R(d,e)\oplus\C/G(d,e)\times\C^*.$$
The stack $\widetilde{\X}(d,e)$ comes with natural maps:

\begin{tikzcd}
\widetilde{\mathcal{X}}(d,e) \arrow{d}{\widetilde{q}_{d,e}} \arrow{r}{\tau} & \mathcal{X}(d,e) \arrow{r}{p_{d,e}} & \mathcal{X}(d+e)\\
\mathcal{X}(d)\times\mathcal{X}(e)
\end{tikzcd}
\\
where 
$\tau:\widetilde{X}(d,e)\to\X(d,e)=R(d,e)/G(d,e)$ is induced by the projections onto the first factors $\text{pr}_1: R(d,e)\oplus\C\to R(d,e)$ and $\text{pr}_1: G(d,e)\times\C^*\to G(d,e)$. The morphism: 
$$\widetilde{q}_{d,e}:\widetilde{X}(d,e)\to \X(d)\times\X(e)$$ is induced by
the projections $R(d,e)\oplus\C\xrightarrow{\text{pr}_1} R(d,e)\xrightarrow{q_{d,e}} R(d)\times R(e)$ and $G(d,e)\times\C^*\xrightarrow{\text{pr}_1} G(d,e)\xrightarrow{q_{d,e}} G(d)\times G(e)$.
These maps restricts to similar maps for $\widetilde{\X}^{ss}$.
Define similarly the stacks:
$$\widetilde{\mathcal{Y}}^{ss}(d,e):=R^{ss}(d,e)\oplus\C/T(d,e)\times\C^*\subset\widetilde{\mathcal{Y}}(d,e):=R(d,e)\oplus\C/T(d,e)\times\C^*.$$
The stack $\widetilde{\Y}(d,e)$ comes with analogous maps:

\begin{tikzcd}
\widetilde{\mathcal{Y}}(d,e) \arrow{d}{\widetilde{q}_{d,e}} \arrow{r}{\tau} & \mathcal{Y}(d,e) \arrow{r}{p_{d,e}} & \mathcal{Y}(d+e)\\
\mathcal{Y}(d)\times\mathcal{Y}(e).
\end{tikzcd}
\\
These maps restricts to similar maps for $\widetilde{\Y}^{ss}$.
Both maps $\widetilde{q}$ are cohomologically proper, and we can define pushforward along them as follows.

\begin{defn}\label{proper}
Suppose we have a map of quotient stacks $$\pi: W/P\times\C^*\to V/L,$$ where $P$ and $L$ are affine algebraic groups with $L\subset P$, $W$ is a $P$ representation, $V$ is an $L$ representation such that $W^L=V$, 
and $\C^*$ fixes $V$ and acts with weights $1$ on $N_{V/W}$. We denote by 
$$\pi_*:D^b\,(W/P\times\C^*)\to D^b\,(V/L)$$
the composition of the maps $\pi_*:=p_*j^*i_*$, where 
\begin{itemize}
    \item $i:W/P\times\C^*\hookrightarrow W/L\times\C^*$ is a closed immersion,
    \item $j:W-V/L\times\C^*\hookrightarrow W/L$ is an open immersion,
    \item $p:W-V/L\times\C^*=\mathbb{P}_{V/L}(N_{V/W})\to V/L$ is a proper map.
\end{itemize}
We will use the same notation for the map induced on categories of singularities and on their $K_0$. The analogous functors exist for vanishing cohomology and we will use the same notations in that context.
\end{defn}

\begin{prop}
Consider a cartesian diagram of quotient stacks of the form $V/G$, where $G$ is an affine algebraic group and 
$V$ is a $G$ representation:

\begin{tikzcd}
\X^1\arrow{d}{b} \arrow{r}{\pi^1}& \Y^1 \arrow{d}{a}\\
\X^2 \arrow{r}{\pi^2}& \Y^2,
\end{tikzcd}
\\ 
such that the maps $\pi^1$ and $\pi^2$ are proper as in Definition \ref{proper}. Then proper base change holds: $a^*\pi^1_*=\pi^2_*b^*$.
\end{prop}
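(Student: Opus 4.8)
The plan is to factor the problem through the three elementary functors making up $\pi_*$ in Definition~\ref{proper} and to base-change each one separately. Recall that $\pi_* = p_*\,j^*\,i_*$, where $i$ is a closed immersion, $j$ is an open immersion, and $p$ is the projection of the projective bundle $\mathbb{P}_{V/L}(N_{V/W})\to V/L$. Each of these is an ordinary geometric operation — pushforward along a closed immersion, restriction to an open substack, and pushforward along a projective morphism — so each satisfies classical proper base change on $D^b$; since all three preserve perfect complexes and are compatible with the vanishing cycle functor and with equivariant structures (Subsection~\ref{functo}), the same holds on $D_{sg}$, on $D^{id}_{sg}$, on their $T$-equivariant versions, on $K_0$, and for vanishing cohomology. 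Thus the entire content of the proposition is to show that the given cartesian square of the $\pi^i$'s refines, level by level, into a diagram in which each of the three constituent squares is cartesian.

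Concretely, the first step — and the one I expect to be the main obstacle — is to check that the Definition~\ref{proper} factorizations of $\pi^1$ and $\pi^2$ are compatible with the base change $a\colon\Y^1\to\Y^2$, i.e. that the intermediate stacks attached to $\pi^1$ are the base changes along $a$ of those attached to $\pi^2$. Here one must verify that the fixed/attracting data ($W^L=V$) and, crucially, the normal bundle $N_{V/W}$ together with its weight-one $\C^*$-action pull back correctly, so that $\mathbb{P}_{V/L}(N_{V/W})$ for $\pi^1$ is literally the base change of the corresponding projective bundle for $\pi^2$, and likewise the open locus $W-V$ pulls back to the open locus; the bookkeeping of the $\C^*$-factors and of the groups $P$ versus $L$ after base change, and the identification of $N_{V/W}$ for $\pi^1$ with the honest pullback (rather than with a merely sub- or quotient bundle) of the one for $\pi^2$, is the delicate point. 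Granting this, one obtains a commutative diagram, all three of whose constituent squares are cartesian:

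\begin{tikzcd}
\X^1 \arrow[r, "i^1"] \arrow[d, "b"] & \mathcal{A}^1 \arrow[d, "b_1"] & \mathcal{B}^1 \arrow[l, "j^1"] \arrow[r, "p^1"] \arrow[d, "b_2"] & \Y^1 \arrow[d, "a"]\\
\X^2 \arrow[r, "i^2"] & \mathcal{A}^2 & \mathcal{B}^2 \arrow[l, "j^2"] \arrow[r, "p^2"] & \Y^2.
\end{tikzcd}

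With the refined diagram in hand the rest is formal. Proper base change for the closed immersion gives $b_1^* i^2_*\cong i^1_* b^*$; for the open immersion one has $b_2^*(j^2)^*\cong (j^1)^* b_1^*$ since both sides are canonically pullback along the common composite $\mathcal{B}^1\to\mathcal{A}^2$; and proper base change for the projective bundle projection gives $a^* p^2_*\cong p^1_* b_2^*$. Concatenating these,
\begin{align*}
a^*\pi^2_* &= a^* p^2_*(j^2)^* i^2_* \cong p^1_* b_2^*(j^2)^* i^2_* \cong p^1_*(j^1)^* b_1^* i^2_*\\
&\cong p^1_*(j^1)^* i^1_* b^* = \pi^1_* b^*,
\end{align*}
which is the asserted base change identity, and the identical chain of isomorphisms is valid verbatim on $D_{sg}$, on $D^{id}_{sg}$, on their $T$-equivariant versions, on $K_0$, and for vanishing cohomology. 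Once the refinement of the square has been established, no input beyond classical proper base change is needed.
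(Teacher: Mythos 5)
Your proof is correct and follows essentially the same route as the paper's: both factor $\pi_*=p_*j^*i_*$, observe that the intermediate stacks of Definition~\ref{proper} form cartesian refinements of the given square, apply base change (closed pushforward, open restriction, projective pushforward) to each constituent square, and concatenate. You are somewhat more explicit than the paper in flagging that the cartesianness of the refined squares (compatibility of $L$, $P$, $\C^*$-factors, and the normal bundle $N_{V/W}$ with base change) is the point that needs checking, but this does not amount to a different argument.
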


\begin{proof}
Consider the diagram:

\begin{tikzcd}
\mathcal{W}^1\arrow{d}{d} \arrow{r}{j^1}& \mathcal{Z}^1\arrow{d}{c}& \X^1\arrow{l}{i^1}\arrow{r}{\pi^1}\arrow{d}{b}& \Y^1\arrow{d}{a}\\
\mathcal{W}^2\arrow{r}{j^2}& \mathcal{Z}^2& \X^2\arrow{l}{i^2}\arrow{r}{\pi^2}& \Y^2,
\end{tikzcd}
\\
where the stacks are in Definition \ref{proper}, and all squares are cartesian. In particular, we have that $d^*j^{1*}=j^{2*}c^*$ and $c^*i^1_*=i_*^2b^*$. We also have a cartesian square:

\begin{tikzcd}
\mathcal{W}^1\arrow{d}{b} \arrow{r}{p^1}& \Y^1 \arrow{d}{a}\\
\mathcal{W}^2 \arrow{r}{p^2}& \Y^2.
\end{tikzcd}
\\
Using proper base change for this square, we get that $a^*p^1_{*}=p^2_*d^*$.
We need to check that: $$a^*\pi_{*}^1=a^*p^1_{*}j^{1*}i^1_*=p_*^2j^{2*}i^2_*a^*=\pi^2_*b^*,$$
which follows by combining the above three equalities.
\end{proof}

\begin{defn}\label{coproduct}
Define a coproduct map using the stacks $\X$ by:
\begin{multline*}
    \Delta_{d,e}:=TS^{-1}\text{sw}_{e,d}\,\widetilde{q}_{e,d*}\tau^*p_{e,d}^*: K_0\left(D_{sg}(\X^{ss}(d+e)_0)\right)\to\\ K_0\left(D_{sg}(\X^{ss}(d)_0)\right)\boxtimes K_0\left(D_{sg}(\X^{ss}(e)_0)\right).
\end{multline*}
We use the notation $TS$ for the Thom-Sebastiani isomorphism and 
$$ K_0\left(D_{sg}(\X^{ss}(e)_0)\right)\boxtimes K_0\left(D_{sg}(\X^{ss}(d)_0)\right)\xrightarrow{\text{sw}_{e,d}} K_0\left(D_{sg}(\X^{ss}(d)_0)\right)\boxtimes K_0\left(D_{sg}(\X^{ss}(e)_0)\right)$$ is the functor that sends $A\boxtimes B$ to $B\boxtimes A$.
The $T$-equivariant version is defined by:
\begin{multline*}
D_{sg}(\X^{ss}(d+e)_0/T)\xrightarrow{\text{sw}_{e,d}\,\widetilde{q}_{e,d*}\tau^*p_{e,d}^*} D_{sg}\left((\X^{ss}(d)\times\X^{ss}(e))_0/T\right) \xrightarrow{\mu^*}\\
D_{sg}\left((\X^{ss}(d)\times\X^{ss}(e))_0/T\times T\right)\xrightarrow{TS^{-1}} D_{sg}(\X^{ss}(d)_0/T)\boxtimes D_{sg}(\X^{ss}(e)_0/T).
\end{multline*}
We obtain functors for $D_{sg}^{id}$ after passing to the idempotent completion.
We define the abelian coproduct by:
\begin{multline*}
    \Delta^{ab}_{d,e}:=TS^{-1}\,\widetilde{q}_{e,d*}\tau^*p_{e,d}^*: K_0\left(D_{sg}(\mathcal{Y}^{ss}(d+e)_0)\right)\to\\ K_0\left(D_{sg}(\mathcal{Y}^{ss}(e)_0)\right)\boxtimes K_0\left(D_{sg}(\mathcal{Y}^{ss}(d)_0)\right).
\end{multline*}
We define the coproduct to be compared with $\Delta$ by: 
\begin{multline*}
    \widetilde{\Delta}_{d,e}:=TS^{-1}\text{sw}_{e,d}\,\widetilde{q}_{e,d*}\tau^*p_{e,d}^*(\text{eu}_2(d,e)-): K_0\left(D_{sg}(\mathcal{Y}^{ss}(d+e)_0)\right)\to\\ K_0\left(D_{sg}(\mathcal{Y}^{ss}(d)_0)\right)\boxtimes K_0\left(D_{sg}(\mathcal{Y}^{ss}(e)_0)\right).
\end{multline*}
We will use the same notation for the above restriction to the Weyl-fixed subspace:
$$\widetilde{\Delta}_{d,e}: K_0\left(D_{sg}(\mathcal{Y}^{ss}(d+e)_0)\right)^{W_{d+e}}\to\\ K_0\left(D_{sg}(\mathcal{Y}^{ss}(d)_0)\right)^{W_d}\boxtimes K_0\left(D_{sg}(\mathcal{Y}^{ss}(e)_0)\right)^{W_e}.$$
There are similar definitions for $D_{sg}^{id}$ and in the $T$-equivariant versions.
\\

All the functors admit version in cohomology because vanishing cycles commute with proper pushforward, we have natural maps $\tau^*p_{d,e}^*\varphi\to \varphi \tau^*p_{d,e}^*$, and the Thom-Sebastiani theorem holds.
\end{defn}

\begin{prop}\label{abelianyetagain}
Recall the injections from Subsection \ref{abeliansubsection}: $$(\Pi^*)^{-1}\pi^*:K_0(D_{sg}(\X^{ss}(d)_0))\to K_0(D_{sg}(\Y^{ss}(d)_0)).$$ The following diagram commutes:

\begin{tikzcd}
K_0(D_{sg}(\X^{ss}(d+e)_0)) \arrow{r}{\Delta_{d,e}} \arrow{d}{(\Pi^*)^{-1}\pi^*}& K_0(D_{sg}(\X^{ss}(d)_0))\boxtimes\,K_0(D_{sg}(\X^{ss}(e)_0))\arrow{d}{(\Pi^*)^{-1}\pi^*}\\
K_0(D_{sg}(\Y^{ss}(d+e)_0))\arrow{r}{\widetilde{\Delta}_{d,e}}& K_0(D_{sg}(\Y^{ss}(d)_0))\boxtimes\,K_0(D_{sg}(\Y^{ss}(e)_0)).
\end{tikzcd}
\\
Further, consider the isomorphism from Subsection \ref{abeliansubsection}:
$$\Theta_d: K_0(D_{sg}(\X^{ss}(d)_0))\xrightarrow{(\Pi^*)^{-1}\pi^*} 
K_0(D_{sg}(\mathcal{Y}^{ss}(d)_0))\xrightarrow{S_d} K_0(D_{sg}(\mathcal{Y}^{ss}(d)_0))^{W_d}.$$
Let $S_{d,e}$ be the map:
$$S_{d,e}: K_0(D_{sg}(\mathcal{Y}^{ss}(d+e)_0))^{W_d\times W_e}\to K_0(D_{sg}(\mathcal{Y}^{ss}(d+e)_0))^{W_{d+e}}$$ that sends $x\to \sum_{\sigma\in W_{d+e}/W_d\times W_e} \sigma x$.
The following diagram commutes:

\begin{tikzcd}
K_0(D_{sg}(\X^{ss}(d+e)_0))\arrow{r}{\Delta_{d,e}}\arrow{d}{\Theta_{d+e}} &
K_0(D_{sg}(\mathcal{X}^{ss}(d)_0))\boxtimes K_0(D_{sg}(\mathcal{X}^{ss}(e)_0)) \arrow{d}{S_{d,e}(\Theta_d\boxtimes\Theta_e)} \\
K_0(D_{sg}(\mathcal{Y}^{ss}(d+e)_0))^{W_{d+e}}\arrow{r}{\widetilde{\Delta}_{d,e}}&
K_0(D_{sg}(\mathcal{Y}^{ss}(d)_0))^{W_d}\boxtimes K_0(D_{sg}(\mathcal{Y}^{ss}(e)_0))^{W_e}.
\end{tikzcd}
\end{prop}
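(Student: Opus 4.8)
The plan is to follow the proof of Proposition~\ref{abel} essentially verbatim: the factor $\text{eu}_2(d,e)$ built into $\widetilde{\Delta}_{d,e}$ plays for the coproduct exactly the role that $\text{eu}_2(d,e)^{-1}$ plays for the product. As there, I suppress $D_{sg}$ from the notation and introduce the intermediate Borel-quotient stacks $\mathcal{Z}(e,d)=R(e,d)/B(e,d)$ and $\widetilde{\mathcal{Z}}(e,d)=(R(e,d)\oplus\C)/(B(e,d)\times\C^{*})$, which carry the maps $\tau$, $\widetilde{q}_{e,d}$ and the affine-bundle and flag maps $\Pi$, $\pi$ compatibly with the corresponding maps for $\Y$ and $\X$. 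The first step is to assemble the large diagram with three vertical columns $\Y$, $\mathcal{Z}$, $\X$ (the columns joined by the isomorphism $\Pi^{*}$ and by $\pi^{*}$, which is an isomorphism onto the Weyl invariants), and with horizontal rows running through the stages $\X(d+e)\xrightarrow{p_{e,d}^{*}}\X(e,d)\xrightarrow{\tau^{*}}\widetilde{\X}(e,d)\xrightarrow{\widetilde{q}_{e,d*}}\X(e)\times\X(d)$ of the coproduct, and to prove that this diagram commutes.

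For the pullback stages this is routine: the squares involving $p_{e,d}^{*}$ and $\tau^{*}$ commute by functoriality of pullback, since the underlying squares of quotient stacks commute (for instance $\mathcal{Z}(e,d)\to\mathcal{Z}(d+e)$ over $\X(e,d)\to\X(d+e)$, and the analogues with $\widetilde{\mathcal{Z}}$); descent of these identities from $D^{b}$ to $D_{sg}$ is exactly the content of Propositions~\ref{com1} and~\ref{com2}. The only delicate square is the one for the cohomologically proper pushforward $\widetilde{q}_{e,d*}$ of Definition~\ref{proper}. The comparison between the $\Y$-column and the $\mathcal{Z}$-column follows from the proper base change statement for such maps proved just after Definition~\ref{proper}, applied to the cartesian square formed by $\widetilde{q}_{e,d}$ and the affine bundles $\Pi$. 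The comparison between the $\mathcal{Z}$-column and the $\X$-column is where $\text{eu}_2(d,e)$ appears: mirroring Proposition~\ref{abel}, the fibres of $\pi$ over $\X(d+e)$ and over $\X(e,d)$ differ (a full flag variety versus the smaller flag variety of the parabolic $P(e,d)$), and one corrects for this with a Grothendieck--Springer type stack $\widetilde{\mathcal{W}}$ obtained from $\widetilde{\mathcal{Z}}(e,d)$ by inducing the $P(e,d)$-action up to $G(d+e)$, together with the closed inclusion $i$ of the small-flag locus into $\widetilde{\mathcal{W}}$, for which $i_{*}i^{*}(1)=\text{eu}_2(d,e)$. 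Feeding this through the projection formula and proper base change for the cartesian square relating $\widetilde{\mathcal{W}}$, $\widetilde{q}_{e,d}$ and $\pi$ produces precisely the $\text{eu}_2(d,e)$-twist, and hence the first diagram of the statement.

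The second diagram then follows by appending the symmetrization squares, parallel to the final step of the proof of Proposition~\ref{abel}: the unsymmetrized coproduct $\widetilde{q}_{e,d*}\tau^{*}p_{e,d}^{*}$ is equivariant for the residual Weyl-group actions, so the square relating $S_{d+e}$ with $S_{d,e}\circ(S_{d}\boxtimes S_{e})$ commutes; together with the fact, from Proposition~\ref{abel2}, that $\Theta_{d}$ is an isomorphism onto the $W_{d}$-invariants and that $\widetilde{\Delta}_{d,e}$ preserves Weyl invariance (which is what makes the restriction in the statement meaningful), this gives the commutativity of the second diagram. Throughout, the Thom--Sebastiani isomorphisms and the swap $\text{sw}_{e,d}$ commute with all the functors in sight and are carried along passively, as in Section~\ref{3}; the same argument applies to $D^{id}_{sg}$ and to the $T$-equivariant versions.

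I expect the main obstacle to be the bookkeeping in the $\widetilde{q}_{e,d*}$ square: verifying that the squares involving the cohomologically proper map $\widetilde{q}_{e,d}$ are genuinely cartesian, so that the base-change statement near Definition~\ref{proper} applies, and locating the Euler class $\text{eu}_2(d,e)$ precisely within the decomposition $\widetilde{q}_{e,d*}=p_{*}j^{*}i_{*}$. Everything else is a formal consequence of functoriality, the projection formula, and the results already established in Sections~\ref{2} and~\ref{3}.
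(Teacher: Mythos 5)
Your overall strategy — assemble a $3\times 3$ diagram with $\Y$, $\mathcal{Z}$, $\X$ columns and reduce the only nontrivial square to an Euler class computation — is the right one and matches the paper. However, you have placed the Euler class correction in the wrong square, and the argument you give for the square where it actually lives is incomplete.

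Concretely: for the $\widetilde{q}_{e,d*}$ row, the $\mathcal{Z}$-to-$\X$ comparison is \emph{already} a proper base change situation, with no Euler class. The fibers of $\pi:\mathcal{Z}(e)\times\mathcal{Z}(d)\to\X(e)\times\X(d)$ and of $\pi:\widetilde{\mathcal{Z}}(e,d)\to\widetilde{\mathcal{X}}(e,d)$ are \emph{identical}, namely $G(e,d)/B(d+e)\cong (G(e)\times G(d))/(B(e)\times B(d))$, because the Borel $B(d+e)$ contains the unipotent radical of the parabolic $G(e,d)$ and intersects the Levi in $B(e)\times B(d)$. So that square is cartesian and commutes on the nose; no Grothendieck--Springer intermediate is needed, and your proposed stack $\widetilde{\mathcal{W}}$ does not produce an $\text{eu}_2(d,e)$-twist in that square. (You may be misled by the fact that the fibers of $\pi$ over $\X(d+e)$ and over $\X(e,d)$ differ, as in the \emph{product} proof of Proposition \ref{abel} — but that discrepancy lives in the $\tau^*p^*$ row, where the vertical maps are pullbacks and the squares commute trivially, not in the $\widetilde{q}_{e,d*}$ row.)

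By contrast, your $\Y$-to-$\mathcal{Z}$ claim is where the gap is. The fiber product of $\widetilde{\Y}(e,d)$ with $\mathcal{Z}(e)\times\mathcal{Z}(d)$ over $\Y(e)\times\Y(d)$ is \emph{not} $\widetilde{\mathcal{Z}}(e,d)=(R(e,d)\oplus\C)/(B(d+e)\times\C^*)$, but rather the stack $\mathcal{Z}'=(R(e,d)\oplus\C)/(B(e)\times B(d)\times\C^*)$, whose Borel has the smaller unipotent radical. Proper base change for the cohomologically proper $\widetilde{q}$ then gives commutativity relative to $\mathcal{Z}'$, and one still has to reconcile $\mathcal{Z}'$ with $\widetilde{\mathcal{Z}}(e,d)$. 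It is precisely this reconciliation — the inclusion $B(e)\times B(d)\hookrightarrow B(d+e)$ — that produces $\text{eu}_2(d,e)$ via $i^*i_*(1)$; that is where the twist in $\widetilde{\Delta}_{d,e}$ comes from. As written, your argument never introduces $\mathcal{Z}'$ and so never recovers the Euler class, while simultaneously inserting an unnecessary correction on the other side. The closing paragraph about the symmetrization squares and $\Theta_d$ is fine and matches the paper.
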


\begin{proof}
The proof is similar to the one for Proposition \ref{abel}; we will use the same notations for the morphisms and we will drop $D_{sg}$ and the semistable notations. Consider the diagram:

\begin{tikzcd}
K_0(\mathcal{Y}(e)\times\mathcal{Y}(d))  \arrow{r}{\Pi^*} & K_0(\mathcal{Z}(e)\times\mathcal{Z}(d))  & K_0(\X(e)\times\X(d)) \arrow{l}{\pi^*} \\
K_0(\widetilde{\mathcal{Y}}(e,d))  \arrow{r}{\Pi^*} \arrow{u}{\widetilde{q}_*(\text{eu}_2)}& K_0(\widetilde{\mathcal{Z}}(e,d))   \arrow{u}{\widetilde{q}_*} & K_0(\widetilde{\mathcal{X}}(e,d))  \arrow{u}{\widetilde{q}_*} \arrow{l}{\pi^*}\\
K_0(\mathcal{Y}(d+e)) \arrow{u}{\tau^* p^*}\arrow{r}{\Pi^*} & K_0(\mathcal{Z}(d+e)) \arrow{u}{\tau^* p^*}  & K_0(\X(d+e)). \arrow{l}{\pi^*} \arrow{u}{\tau^* p^*}\\
\end{tikzcd}
\\
It is immediate to see that the bottom two squares commute. The top right square commutes from proper base change. For the top left corner, let 
$$\mathcal{Z}'=R(e,d)\oplus\C/B(e)\times B(d)\times\C^*.$$ The stack $\mathcal{Z}'$ fits in a cartesian diagram:

\begin{tikzcd}
\widetilde{\Y}(e,d)\arrow{d}{\widetilde{q}} & \mathcal{Z}' \arrow{d}{\widetilde{q}} \arrow{l}{\Pi}\\
\Y(e)\times\Y(d) & \mathcal{Z}(e)\times\mathcal{Z}(d). \arrow{l}{\Pi\times\Pi} 
\end{tikzcd}
\\
After using proper base change for this diagram, it is enough to show that $i^*i_*(1)=\text{eu}_2(d,e)$ for the closed immersion 
$$i:\widetilde{\mathcal{Z}}(e,d)=R(e,d)\oplus\C/B(d+e)\times\C^*\to \mathcal{Z}'=R(e,d)\oplus\C/B(e)\times B(d)\times\C^*,$$ which follows from the fact that $\text{eu}_2=s^*s_*(1)$ is the Euler class of the immersion $$s:B(e)\times B(d)\hookrightarrow B(d+e).$$
This ends the proof that the first diagram commutes.
Finally, we compare the abelian K-theory and its Weyl invariant part by the diagram:

\begin{tikzcd}
K_0(\Y(d+e)) \arrow{d} \arrow{r}{S_{d+e}}& K_0(\Y(d+e))^{W_{d+e}}\arrow{d}\\
K_0(\Y(d))\boxtimes K_0(\Y(e)) \arrow[r, "S_{d,e}(S_d\boxtimes S_e)"]& K_0(\Y(d))^{W_d}\boxtimes K_0(\Y(e))^{W_e}.
\end{tikzcd}

\end{proof}

\textbf{Remark.} The analogues statements hold for $D_{sg}^{id}$, in the $T$-equivariant cases, and in cohomology.
\\

%\textbf{Notation.} In the rest of the section, we will use the notation $$K(d)=K_0(D_{sg}(\X(d)_0))\cong K_0(D_{sg}(\Y(d)_0))^{\mathfrak{S}_d}$$and the analogue for $K(d,e)$ in order to simplify the commutative diagrams appearing later.\\

\begin{prop}\label{coassoc}
The coproduct $\Delta$ on $\text{KHA}_T$ is coassociative.
\end{prop}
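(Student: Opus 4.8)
The plan is to follow the proof of Theorem \ref{thm1} almost verbatim. As there, it suffices to establish coassociativity at the categorical level, and, after restricting to semistable loci to reduce to the case $W=0$, to prove it for the categorical coproduct on $D^b$ of the relevant stacks of flagged representations. Concretely, writing $\Delta^0_{a,b}=\mathrm{sw}_{b,a}\,\widetilde{q}_{b,a*}\,\tau^*p_{b,a}^{*}$ for the underlying functor of $\Delta_{a,b}$ on $D^b$, I would show
\[
(\mathrm{id}\boxtimes\Delta^0_{e,f})\circ\Delta^0_{d,e+f}=(\Delta^0_{d,e}\boxtimes\mathrm{id})\circ\Delta^0_{d+e,f},
\]
keeping careful track of the transposition functors $\mathrm{sw}$ built into each $\Delta^0_{a,b}$. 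The general potential then follows by transporting all the diagrams through the categories of singularities via Propositions \ref{com1} and \ref{com2} (for the genuine pullbacks and proper pushforwards) together with the base-change statement for the proper maps $\widetilde{q}$ proved just before Definition \ref{coproduct}, and by reinserting the Thom--Sebastiani equivalences exactly as in the last part of the proof of Theorem \ref{thm1}; finally the $T$-equivariant statement is obtained by adjoining the diagram of classifying stacks $BT$, $BT\times BT$, $BT\times BT\times BT$ used there, and the abelian version follows in the same way using Proposition \ref{abelianyetagain}.

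The geometric heart of the matter is the construction of a three-step analogue $\widetilde{\X}(d,e,f)$ of the stacks $\widetilde{\X}(d,e)$ defined above: starting from the stack $\X(d,e,f)$ of flags $0\subset V_1\subset V_2\subset V$ with successive quotients of dimensions $d,e,f$, one adjoins a trivial rank-two bundle $\C^{2}$ and a rank-two torus $(\C^{*})^{2}$ acting through the two nested cocharacters of $G(d,e,f)$ that scale, respectively, the off-diagonal block of $(V_2\subset V)$ and the off-diagonal block of $(V_1\subset V_2)$. I would then assemble the coproduct analogue of the $3\times 3$ grid of stacks appearing in the proof of Theorem \ref{thm1}, whose inner squares are of two kinds: those built from the maps $\widetilde{q}$, $\tau$ and $p$ are \emph{cartesian}, and proper base change applies to them by the proposition preceding Definition \ref{coproduct}; those obtained by reindexing flags are \emph{manifestly commutative}, since both composites send a flag together with its degeneration data to the same sub/quotient pair. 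The one point that is genuinely new relative to Theorem \ref{thm1} is that the two $\C^{*}$-degenerations must nest compatibly; this is encoded, exactly as the torus diagram in the proof of Theorem \ref{thm1}, by a cartesian square of classifying stacks of one-dimensional tori induced from a commutative square of tori, which forces the corresponding square of the big diagram to be cartesian so that proper base change again applies.

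Once the $W=0$ categorical identity is in hand, reinserting the Thom--Sebastiani equivalences requires only the coproduct analogues of the two explicit commutative diagrams at the end of the proof of Theorem \ref{thm1}: that pushforward $i_{*}$ along the closed immersions $\X(a)_0\times\X(b)_0\hookrightarrow(\X(a)\times\X(b))_0$ commutes with $\widetilde{q}$-pushforward, $\tau$-pullback and $p$-pullback, each instance following from proper base change along the relevant square of zero-fibers, together with the evident compatibility of $\mathrm{sw}$ with all of these. Descending to $D_{sg}$, then to $D_{sg}^{id}$ by idempotent completion, and upgrading to the $T$-equivariant setting is then purely formal. \textbf{I expect the main obstacle} to be organizing the large diagram of nested flag stacks and, within it, pinning down exactly which squares are cartesian --- in particular arranging the two degeneration parameters so that splitting $d+e+f$ as $(d+e,f)$ then $(d,e)$ matches splitting it as $(d,e+f)$ then $(e,f)$; once this bookkeeping, together with the attendant $\mathrm{sw}$/$TS$ coherence, is settled, the rest is formal as in Theorem \ref{thm1}.
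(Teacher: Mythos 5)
Your proposal is correct in outline and would yield a valid proof, but it takes a genuinely different route from the paper's. You propose to work directly on the non-abelian stacks, constructing $\widetilde{\X}(d,e,f)=R(d,e,f)\oplus\C^2/G(d,e,f)\times(\C^*)^2$ and running the $3\times 3$ grid there. The paper instead abelianizes first: it invokes Proposition \ref{abelianyetagain}, which says that under the injection $(\Pi^*)^{-1}\pi^*$ the coproduct $\Delta$ on $\X$-stacks corresponds to $\widetilde{\Delta}_{d,e}=\Delta^{ab}_{d,e}\circ(\text{eu}_2(d,e)\cdot-)$ on the abelian stacks $\Y(d)=R(d)/T(d)$. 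The identity $\text{eu}_2(d,e+f)\,\text{eu}_2(e,f)=\text{eu}_2(d+e,f)\,\text{eu}_2(d,e)$ then cancels the correction factors, reducing coassociativity of $\Delta$ to that of the simpler $\Delta^{ab}$ on $\Y$-stacks; the $3\times 3$ grid the paper assembles has middle stack $R(f,e,d)\oplus\C^2/T(d+e+f)\times(\C^*)^2$. What your direct route buys is closer parallelism with the proof of Theorem \ref{thm1} for associativity, which does work on the $\X$-stacks; what the paper's abelian route buys is that the required cartesian squares involve only intersections of $T(d)$-stable coordinate subspaces, and all the Weyl-group and Euler-factor bookkeeping is packaged once into Proposition \ref{abelianyetagain}. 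Your plan is viable but needs the additional check that the relevant flag-stack squares remain cartesian with the non-abelian groups, e.g.\ that $G(f,e,d)=G(f,d+e)\cap\bigl(G(f)\times G(e,d)\bigr)$ inside $G(d+e+f)$. Two small corrections: the cocharacters attached to $(\C^*)^2$ should, as in the paper's middle stack, be $(\text{Id},z\text{Id},\text{Id})$ and $(\text{Id},\text{Id},w\text{Id})$, which scale diagonal blocks of the Levi and only thereby act on the off-diagonal pieces of $R(d,e,f)$ (not literally \emph{scaling the off-diagonal block}); and restricting to semistable loci reduces to trivial stability $\theta$, while the reduction to $W=0$ is a separate step done by proving the zero-potential case first and then deducing the general case via the Thom--Sebastiani functors.
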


\begin{proof}
Let $d,e,f\in\mathbb{N}^I$ be dimension vectors. We need to show that the following diagram commutes:

\begin{tikzcd}
K^T(d+e+f) \arrow{r}{\Delta_{d,e+f}} \arrow{d}{\Delta_{d+e,f}}& K^T(d)\boxtimes K^T(e+f) \arrow{d}{1\boxtimes \Delta_{e,f}}\\
K^T(d+e)\boxtimes K^T(f) \arrow{r}{\Delta_{d,e}\boxtimes 1} & K^T(d)\boxtimes K^T(e) \boxtimes K^T(f),
\end{tikzcd}
\\
where we use the notation $K^T(d)=K^T_0(D_{sg}(\X^{ss}(d)_0))$. We will prove the corresponding categorical statement. We assume that the stability condition is trivial, and the conclusion for arbitrary $\theta$ follows by restricting to the appropriate open sets. We first discuss the non-equivariant statement.
We will use the definition of comultiplication $\widetilde{\Delta}$ via the stacks $\mathcal{Y}$; the conclusion will follow from Proposition \ref{abelianyetagain}.  
The $\text{eu}_2$ factors appearing in the definition of $\widetilde{\Delta}$ are equal because: 
$$\text{eu}_2(d,e+f)\,\text{eu}_2(e,f)=\text{eu}_2(d+e,f)\,\text{eu}_2(d,e).$$
We need to check that the coproduct $\Delta^{ab}$ is coassociative for categories:

\begin{tikzcd}
D_{sg}(\Y(d+e+f)_0)\arrow{r}{\Delta^{ab}_{d,e+f}}\arrow{d}{\Delta^{ab}_{d+e,f}}& D_{sg}(\Y(e+f)_0)\boxtimes D_{sg}(\Y(d)_0)\arrow{d}{\text{Id}\boxtimes \Delta^{ab}_{e,f}}\\
D_{sg}(\Y(f)_0)\boxtimes D_{sg}(\Y(d+e)_0)\arrow{r}{\Delta^{ab}_{d,e}\boxtimes\text{Id}}& D_{sg}(\Y(f)_0)\boxtimes D_{sg}(\Y(e)_0)\boxtimes D_{sg}(\Y(d)_0).
\end{tikzcd}
\\
We first check the statement in the potential zero case, so we need to show that the following diagram commutes:

\begin{tikzcd}
D^b(\mathcal{Y}(d+e+f)) \arrow{d}{\tau^*p^*} \arrow{r}{\tau^*p^*} & D^b(\widetilde{\mathcal{Y}}(e+f,d)) \arrow{d}{\tau^*p^*} \arrow{r}{\widetilde{q}_*} & D^b(\mathcal{Y}(e+f)\times \mathcal{Y}(d)) \arrow{d}{\tau^*p^*}\\
D^b(\widetilde{\mathcal{Y}}(f,d+e)) \arrow{d}{\widetilde{q}_*} \arrow{r}{\tau^*p^*} & D^b(R(f,e,d)\oplus\C^2/T'\times(\C^*)^2) \arrow{d}{\widetilde{q}_*} \arrow{r}{\widetilde{q}_*} & D^b(\widetilde{\mathcal{Y}}(f,e)\times \mathcal{Y}(d)) \arrow{d}{\widetilde{q}_*}\\
D^b(\mathcal{Y}(f)\times \mathcal{Y}(d+e)) \arrow{r}{\tau^*p^*} & D^b(\mathcal{Y}(f) \times \widetilde{\mathcal{Y}}(e,d)) \arrow{r}{\widetilde{q}_*} & D^b(\mathcal{Y}(f)\times \mathcal{Y}(e)\times \mathcal{Y}(d)). 
\end{tikzcd}
\\
Here, $(\C^*)^2\subset T'=T(d+e+f)$ is the torus generated by the characters $(\text{Id}_d,z\text{Id}_e,\text{Id}_f)$ and $(\text{Id}_d,\text{Id}_e,w\text{Id}_f)$, and $\C^2$ is its adjoint representations. The group $(\C^*)^2$ acts on $R(f,e,d)$ via $T'$. 
The definition is such that the lower left corner commutes by proper base change; the top right corner also commutes by proper base change.
It is clear that the other two squares commute. 
\\

In the $T$-equivariant case, we need to show that the diagram:

\begin{tikzcd}
D^b(\Y(d+e+f)/T)\arrow{r}{\Delta^{ab}_{d,e+f}}\arrow{d}{\Delta^{ab}_{d+e,f}}& D^b(\Y(e+f)/T\times \Y(d)/T)\arrow{d}{\Delta^{ab}_{e,f}\boxtimes\text{Id}}\\
D^b(\Y(f)/T\times \Y(d+e)/T)\arrow{r}{\text{Id}\boxtimes\Delta^{ab}_{d,e}}& D^b(\Y(f)/T\times \Y(e)/T\times \Y(e)/T)
\end{tikzcd}
\\
commutes. We write the diagram as follows:

\begin{tikzcd}
D^b_T(\Y(d+e+f)) \arrow{r}{\Delta^{ab}_{d,e+f}} \arrow{d}{\Delta^{ab}_{d+e,f}}&
D^b_T\left(\Y(e+f)\times \Y(d)\right) \arrow{r}{\mu^*} \arrow{d}{\Delta^{ab}_{e,f}\boxtimes\text{Id}}&
D^b_{T^2}(\Y(e+f)\times \Y(d)) \arrow{d}{\Delta^{ab}_{e,f}\boxtimes\text{Id}}\\
D^b_T(\Y(f)\times\Y(d+e)) \arrow{d}{\mu^*} \arrow{r}{\text{Id}\boxtimes\Delta^{ab}_{d,e}}&
D^b_T\left(\Y(f)\times \Y(e)\times \Y(d)\right) \arrow{r}{\mu^*}\arrow{d}{\mu^*}&
D^b_{T^2}\left(\left(\Y(f)\times \Y(e)\right)\times\Y(d)\right)  \arrow{d}{\mu^*\boxtimes\text{Id}}\\
D^b_{T^2}(\Y(f)\times\Y(d+e)) \arrow{r}{\text{Id}\boxtimes\Delta^{ab}_{d,e}}& D^b_{T^2}\left(\Y(f)\times (\Y(d)\times\Y(e))\right) \arrow{r}{\text{Id}\boxtimes\mu^*}& D^b_{T^3}\left(\Y(f)\times\Y(e)\times\Y(d)\right).
\end{tikzcd}
\\
One proceeds as in the proof of Theorem \ref{thm1} to argue that the four squares commute. 
The top left corner commutes from the proof in the non-equivariant case.
The top right and bottom left corners can be split each in two squares, one of which is immediate to see to commute, and the other commutes from proper base change. The bottom right corner commutes as in Theorem \ref{thm1}, it amounts to proving that the following diagram commutes:

\begin{tikzcd}
D^b(BT)\arrow{r}{\mu^*}\arrow{d}{\mu^*}& D^b(BT\times BT)\arrow{d}{\text{Id}\times \mu^*}\\
D^b(BT\times BT)\arrow{r}{\mu^*\times \text{Id}}& D^b(BT\times BT\times BT),
\end{tikzcd}
\\
which follows from the associativity of multiplication on $T$.
\\

We next assume that $W$ is arbitrary. 
For simplicity of notation, we write the proof in the non-equivariant case, but the proof in the general case is the same. From the zero potential case, we deduce that the following diagram commutes:

\begin{tikzcd}
D_{sg}(\Y(d+e+f)_0)\arrow{r}{\Delta^{ab}_{d,e+f}}\arrow{d}{\Delta^{ab}_{d+e,f}}& D_{sg}((\Y(e+f)\times\Y(d))_0)\arrow{d}{\Delta^{ab}_{e,f}\times\text{Id}}\\
D_{sg}((\Y(f)\times \Y(d+e))_0)\arrow{r}{\text{Id}\times\Delta^{ab}_{d,e}}& D_{sg}((\Y(f)\times\Y(e)\times\Y(d))_0).
\end{tikzcd}
\\
This diagram differs from the coassociativity diagram by some Thom-Sebastiani isomorphisms, and we deduce the statement as in the proof of Theorem \ref{thm1}, for example, the following diagram commutes, where we use the top square by proper base change:

\begin{tikzcd}
D_{sg}((\Y(e+f)\times\Y(d))_0)\arrow{d}{\tau^*p^*}& D_{sg}(\Y(e+f)_0)\boxtimes D_{sg}(\Y(d)_0)\arrow{d}{\tau^*p^*}\arrow{l}{i_*}\\
D_{sg}((\widetilde{\Y}(f,e)\times \Y(d))_0)\arrow{d}{\widetilde{q}_*}& D_{sg}(\widetilde{\Y}(f,e)_0)\boxtimes D_{sg}(\Y(d)_0)\arrow{d}{\widetilde{q}_*}\arrow{l}{i_*}\\
D_{sg}((\Y(f)\times\Y(e)\times\Y(d))_0)& D_{sg}((\Y(f)\times\Y(e)_0)\boxtimes D_{sg}(\Y(d)_0).\arrow{l}{i_*}
\end{tikzcd}
\\
The proofs for $D_{sg}^{id}$ and in the $T$-equivariant cases are similar.

\end{proof}

\subsection{Braiding} 
We next define a braiding $R$ on the $\text{KHA}$. Let $d, e\in\mathbb{N}^I$ be two dimension vectors. We will use the notation $$K(d)=K_0(D_{sg}(\X^{ss}(d)_0))\cong K_0(D_{sg}(\Y^{ss}(d)_0))^{W_d}$$ in this subsection.
Define $$R_{d,e}^{ab}=\text{sw}_{d,e}\Delta^{ab}_{e,d}m_{d,e}^{ab}:
D_{sg}\left(\mathcal{Y}^{ss}(d)_0\times\mathcal{Y}^{ss}(e)_0\right)\to D_{sg}\left(\mathcal{Y}^{ss}(e)_0\times\mathcal{Y}^{ss}(d)_0\right).$$
%To define the $T$-equivariant version, let $T\times T$ act on $R(d+e)$ as follows: by $T\times T$ on $R(d)\times R(e)$, and trivially on $R(d+e)^{\lambda_{d,e}>0}$ and $R(d+e)^{\lambda_{e,d}>0}$. The $T$-equivariant braiding is defined by:\begin{multline*}    D_{sg}\left(\mathcal{Y}^{ss}(d)_0/T)\boxtimes D_{sg}(\mathcal{Y}^{ss}(e)_0/T\right)\xrightarrow{m^{ab}_{d,e}} D_{sg}\left(\mathcal{Y}^{ss}(d+e)_0/T\times T\right)\xrightarrow{\text{sw}_{d,e}\Delta^{ab}_{e,d}}\\ D_{sg}\left(\mathcal{Y}^{ss}(e)_0/T)\boxtimes D_{sg}(\mathcal{Y}^{ss}(d)_0/T\right).\end{multline*}
Passing to idempotent completions we obtain a definition of the braiding for $D_{sg}^{id}$.
\\

Let $m_{d,e}'=m^{ab}_{d,e}\,\text{eu}_2^{-1}(d,e)$ and 
$\Delta'_{e,d}=\Delta^{ab}_{e,d}\,\text{eu}_2(e,d)$.
Define $R_{d,e}=\Delta'_{e,d}m_{d,e}'$:
$$
K_0(D_{sg}\left(\mathcal{Y}^{ss}(d)_0))\boxtimes K_0(D_{sg}(\mathcal{Y}^{ss}(e)_0\right))\xrightarrow{R_{d,e}} 
K_0(D_{sg}\left(\mathcal{Y}^{ss}(e)_0))\boxtimes K_0(D_{sg}(\mathcal{Y}^{ss}(d)_0\right)).$$ We do not need to localize in the above definition by Corollary \ref{image}. After taking $W_d\times W_e$ invariants, we obtain maps:
$$R_{d,e}, R_{d,e}^{ab}:
K(d)\boxtimes \, K(e) \to
K(e)\boxtimes K(d).$$
The definitions for $D_{sg}^{id}$ and in the $T$-equivariant cases are similar.

\begin{prop}\label{braiding}
The matrices $R_{d,e}^{ab}$ and $R_{d,e}$ define braidings, that is, 
$$R^{ab}_{d+e,f}=\left(R^{ab}_{d,f}\boxtimes 1\right)\left(1\boxtimes R^{ab}_{e,f}\right)\text{ and }R_{d+e,f}=\left(R_{d,f}\boxtimes 1\right)\left(1\boxtimes R_{e,f}\right).$$
\end{prop}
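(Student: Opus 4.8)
The plan is to reduce the braiding hexagon-type identity to a statement about comparing two iterated products and coproducts built from the stacks $\Y$, and then to verify it at the categorical level by proper base change, exactly in the spirit of the proofs of Theorem \ref{thm1} and Proposition \ref{coassoc}. First I would work with the ``abelian'' versions $m^{ab}$ and $\Delta^{ab}$ on the $K$-theory of $D_{sg}(\Y^{ss}(-)_0)$, since Propositions \ref{abel} and \ref{abelianyetagain} show that the non-abelian statements follow from the abelian ones after restricting to Weyl-invariants and keeping track of the $\mathrm{eu}_2$ factors. Because $R_{d,e}=\Delta'_{e,d}m'_{d,e}$ with $m'$ and $\Delta'$ differing from $m^{ab},\Delta^{ab}$ by inverse factors $\mathrm{eu}_2^{-1}(d,e)$ and $\mathrm{eu}_2(e,d)$, and since by the genericity pairing $\mathrm{eu}_2$ is symmetric in its arguments up to the factors already handled in Proposition \ref{abel}, the $\mathrm{eu}_2$ bookkeeping in the composite $R_{d+e,f}$ versus $(R_{d,f}\boxtimes 1)(1\boxtimes R_{e,f})$ will match using the multiplicativity relation $\mathrm{eu}_2(d+e,f)=\mathrm{eu}_2(d,f)\,\mathrm{eu}_2(e,f)$ coming from the block decomposition $R(d+e,f)=R(d,f)\oplus R(e,f)$ and $G(d+e)/G(d+e,f)$; so it suffices to prove the identity for $R^{ab}$.

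For the abelian statement, I would spell out $R^{ab}_{d+e,f}=\mathrm{sw}\,\Delta^{ab}_{f,d+e}m^{ab}_{d+e,f}$ and $(R^{ab}_{d,f}\boxtimes 1)(1\boxtimes R^{ab}_{e,f})$ as compositions of $\tau^*$, $p^*$, $\widetilde q_*$, $q^*$, $p_*$ over a diagram of quotient stacks of the form $R(\cdot)\oplus\C^{\oplus k}/T(\cdot)\times(\C^*)^k$. The key geometric input is that the stack parametrizing a flag $0\subset A\subset A'\subset B$ with $\dim A=d$, $\dim A'/A=e$, $\dim B/A'=f$, decorated with the extra $\C^*$-directions used in Definition \ref{coproduct}, can be assembled in two ways — first extending by $f$ and then splitting $d+e$, versus first splitting the $f$-part off each of the two sub-extensions — and that the two resulting correspondences fit into a large commutative grid in which every square is either manifestly commutative (because the underlying maps on representations/groups literally agree) or cartesian (so that proper base change for the maps $\widetilde q$, established in the proposition preceding Definition \ref{coproduct}, applies). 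I would draw this grid of stacks (a $3\times 3$ array of the sort appearing in the proof of Proposition \ref{coassoc}, but mixing the $p_*$ of multiplication with the $\widetilde q_*$ of comultiplication), identify the cartesian squares, and invoke proper base change for $\widetilde q_*$ and ordinary proper base change for $p_*$; the $\mathrm{TS}$ and $\mathrm{sw}$ factors are reconciled exactly as in the last paragraph of the proof of Theorem \ref{thm1}, by inserting the commuting diagrams of Thom–Sebastiani equivalences and the closed immersions $i_*$.

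Then I would pass from the zero-potential case to general $W$ as in the earlier proofs: the diagram of categories of singularities $D_{sg}((\,\cdot\,)_0)$ inherits commutativity from the diagram of $D^b$ of the ambient smooth stacks via Propositions \ref{com1} and \ref{com2} (pullbacks and proper pushforwards extend to $D_{sg}$), and the Thom–Sebastiani corrections are handled by the commuting squares involving $i_*$ as in Theorem \ref{thm1}. Finally, taking $W_d\times W_e\times W_f$-invariants and applying the symmetrization maps $S_{\dd}$ together with Propositions \ref{abel} and \ref{abelianyetagain} transports the identity from the $\Y$-picture to $R_{d,e}$ on $K(d)$, $K(e)$, $K(f)$; the $T$-equivariant and $D_{sg}^{id}$ cases follow by the same argument with the extra $\mu^*$ maps inserted and the diagrams of $BT$'s reconciled via associativity of multiplication on $T$, as in Proposition \ref{coassoc}.

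I expect the main obstacle to be organizing the large commutative grid correctly: unlike the pure product (Theorem \ref{thm1}) or pure coproduct (Proposition \ref{coassoc}) cases, here one must interleave a multiplication correspondence $\X(d+e,f)$ (with its proper $p$) and two comultiplication correspondences $\widetilde\X(d,f)$, $\widetilde\X(e,f)$ (with their cohomologically proper $\widetilde q$ built from the extra $\C$-factors), and one has to check that the relevant intermediate stack — morally $\widetilde\X(d,e,f)$ with two independent $\C^*$-directions, one for the $\lambda_{d+e,f}$ scaling and one for the $\lambda_{d,e}$ scaling — sits in cartesian squares on both sides so that proper base change can be applied twice. Verifying that these squares are genuinely cartesian (not merely commutative), and that the $\C^*$-weight conventions on the $\C$-summands are compatible under the reshuffling, is the delicate bookkeeping step; once that is set up, the rest is routine diagram-chasing of the kind already carried out in Theorem \ref{thm1} and Proposition \ref{coassoc}.
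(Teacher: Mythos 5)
Your overall strategy --- reduce to the abelian stacks $\Y$, handle the $\mathrm{eu}_2$ factors by multiplicativity, build a grid of stacks in the zero-potential case, commute via proper base change and cartesian squares, reconcile the Thom--Sebastiani factors, pass to general $W$, and finish by taking Weyl invariants --- is exactly the shape of the paper's argument, and each of those pieces is handled as you describe. The place where your proposal diverges, and where I think you have a genuine gap, is the organization of the grid of stacks.

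You envision a grand intermediate stack ``morally $\widetilde{\X}(d,e,f)$ with two independent $\C^*$-directions, one for the $\lambda_{d+e,f}$ scaling and one for the $\lambda_{d,e}$ scaling,'' and you acknowledge that verifying the relevant squares are actually cartesian (and that the weight conventions on the two $\C$-summands are compatible) is the hard step, which you have not carried out. The paper avoids this entirely. It first writes the hexagon identity $R^{ab}_{d+e,f}=(R^{ab}_{d,f}\boxtimes 1)(1\boxtimes R^{ab}_{e,f})$ as the outer boundary of a $3\times 3$ array of $K$-groups, and then observes that the top-left square commutes by the already-proved associativity of $m^{ab}$ (Theorem \ref{thm1}) and the bottom-right square commutes by the already-proved coassociativity of $\Delta^{ab}$ (Proposition \ref{coassoc}). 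Only the top-right ``interface'' square --- which compares $\text{sw}\,\Delta^{ab}_{e,d+f}\circ m^{ab}_{d+e,f}$ with $(1\boxtimes m^{ab}_{d,f})\circ(\text{sw}\,\Delta^{ab}_{e,d}\boxtimes 1)$ --- is genuinely new. Crucially, both coproducts appearing in that one square split off the same factor $e$, so only a single extra $\C^*$-direction is required: the paper introduces
\[
R'(d,e,f)=R(d)\oplus R(e)\oplus R(f)\oplus R(d,e)^{>0}\oplus R(d,f)^{>0}
\]
with the $\C^*$-action coming only from $\lambda_{d,e}$, and builds $\widetilde{\Y}(d,e,f)$ and $\overline{\Y}(d,e,f)$ from it. Your two-$\C^*$ picture appears to be aimed at directly comparing $\Delta^{ab}_{f,d+e}$ against $\Delta^{ab}_{f,d}$ and $\Delta^{ab}_{f,e}$ without first peeling off the known squares, which is precisely the harder route. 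Without the reduction to the interface square, you are left with the cartesianness/weight-compatibility verification you flag as ``delicate'' but do not perform --- and that verification is the whole content of the argument. You should instead invoke Theorem \ref{thm1} and Proposition \ref{coassoc} to isolate the single new square, at which point the remaining diagram of stacks (the paper's $3\times 4$ grid built from $R'(d,e,f)$) does fall to proper base change exactly as you outline.
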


\begin{proof}
We first discuss the proof for $D_{sg}$, the arguments for $D_{sg}^{id}$ is similar. 
It suffices to prove the first statement, because the $\text{eu}_2$ factors are equal:
$$\frac{\text{eu}_2(f,d+e)}{\text{eu}_2(d+e,f)}=\frac{\text{eu}_2(f,d)}{\text{eu}_2(d,f)}\,\frac{\text{eu}_2(f,e)}{\text{eu}_2(e,f)}.$$
Further, it suffices to prove the braiding property before taking Weyl invariants. We need to show that the following diagram commutes:

\begin{tikzcd}
K(d)\boxtimes K(e)\boxtimes K(f) \arrow{r}{m^{ab}_{d,e}\boxtimes 1}\arrow{d}{1\boxtimes m^{ab}_{e,f}}& K(d+e)\boxtimes K(f) \arrow{r}{\text{sw}\,\Delta^{ab}_{e,d}\boxtimes 1} \arrow{d}{m^{ab}_{d+e,f}} & K(e)\boxtimes K(d)\boxtimes K(f)\arrow{d}{1\boxtimes m^{ab}_{d,f}}\\
K(d)\boxtimes K(e+f)\arrow{r}{m^{ab}_{d+e,f}}& 
K(d+e+f)\arrow{d}{\text{sw}\,\Delta^{ab}_{e+f,d}}\arrow{r}{\text{sw}\,\Delta^{ab}_{e,d+f}}& K(e)\boxtimes K(d+f)\arrow{d}{1\boxtimes \text{sw}\,\Delta^{ab}_{f,d}}\\
 & K(e+f)\boxtimes K(d)\arrow{r}{\text{sw}\,\Delta^{ab}_{e,f}\boxtimes 1}& K(e)\boxtimes K(f)\boxtimes K(d).
\end{tikzcd}
\\
The top left corner and the lower right corner commute from Theorem \ref{thm1} and Proposition \ref{coassoc}. 
To show that the top right corner commutes, we prove the corresponding categorical statement. We can thus assume that the stability condition is trivial. We first prove the zero potential case, and then deduce the general potential case as in the proofs of Theorem \ref{thm1} and Propositions \ref{coassoc}. Consider the $T\times \C^*$ representation: 
$$R'(d,e,f)=R(d)\oplus R(e)\oplus R(f)\oplus R(d,e)^{>0}\oplus R(d,f)^{>0},$$ where the $\C^*$ action is induced by $\lambda_{d,e}:\C^*\to T(d+e)\hookrightarrow T(d+e+f)$.
Define the stacks $\Y(d,e,f)$ and $\overline{\Y}(d,e,f)$ by: 
$$\widetilde{\Y}(d,e,f)=R'(d,e,f)/T\text{ and } \overline{\Y}(d,e,f)=R'(d,e,f)\oplus\C/T\times\C^*.$$
The following diagram commutes:

\begin{tikzcd}
\Y(d+e)\times\Y(f) & \Y(d,e)\times\Y(f) \arrow{l}{p_{d,e}} & 
\widetilde{\Y}(d,e)\times\Y(f) \arrow{l}{\tau} \arrow{r}{\widetilde{q}_{d,e}} & \Y(d)\times\Y(e)\times\Y(f)\\
\Y(d+e,f) \arrow{d}{p_{d+e,f}} \arrow{u}{q_{d+e,f}} & \widetilde{\Y}(d,e,f) \arrow{d}{p} \arrow{l}{p} \arrow{u}{q} & \overline{\Y}(d,e,f) \arrow{u}{q} \arrow{l}{\tau} \arrow{r}{\widetilde{q}} \arrow{d}{p} & \Y(e)\times \Y(d,f)\arrow{u}{q_{d,f}} \arrow{d}{p_{d,f}}\\
\Y(d+e+f) & \Y(d+f,e) \arrow{l}{p_{e,d+f}} & \widetilde{\Y}(d+f,e) \arrow{l}{\tau} \arrow{r}{\widetilde{q}} & \Y(e)\times\Y(d+f).
\end{tikzcd}
\\
The maps denoted by $q$ above are the natural projections. 
The lower left, lower center, and top right squares are all cartesian, and we use them to define all the other maps from or to $\widetilde{\Y}(d,e,f)$ and $\overline{\Y}(d,e,f)$.
Using proper base change along these squares and the commutativity of the other squares, we obtain that the following diagram commutes:

\begin{tikzcd}
D^b(\Y(d+e))\boxtimes D^b(\Y(f)) \arrow{d}{m^{ab}_{d+e,f}} \arrow{r}{\text{sw}\,\Delta^{ab}_{e,d}\boxtimes\text{Id}}&
D^b(\Y(e))\boxtimes D^b(\Y(d))\boxtimes D^b(\Y(f)) \arrow{d}{\text{Id}\boxtimes m^{ab}_{d,f}}\\
D^b(\Y(d+e+f))\arrow{r}{\text{sw}\,\Delta^{ab}_{e,d+f}}& D^b(\Y(e))\boxtimes D^b(\Y(d+f)).
\end{tikzcd}
\\
For an arbitrary potential $W$, the diagram above implies that the following diagram commutes:

\begin{tikzcd}
D_{sg}(\Y(d+e)_0)\boxtimes D_{sg}(\Y(f)_0) \arrow{d}{m^{ab}_{d+e,f}} \arrow{r}{\text{sw}\,\Delta^{ab}_{e,d}\boxtimes\text{Id}}&
D_{sg}(\Y(e)_0)\boxtimes D_{sg}(\Y(d)_0)\boxtimes D_{sg}(\Y(f)_0) \arrow{d}{\text{Id}\boxtimes m^{ab}_{d,f}}\\
D_{sg}(\Y(d+e+f)_0)\arrow{r}{\text{sw}\,\Delta^{ab}_{e,d+f}}& D_{sg}(\Y(e)_0)\boxtimes D_{sg}(\Y(d+f)_0).
\end{tikzcd}
\\
One can check the compatibility with the Thom-Sebastiani isomorphism as in the proofs of Theorem \ref{thm1} and Proposition \ref{coassoc}, and this ends the proof.

\end{proof}

\subsection{Compatibility between the product and the coproduct} 
In this subsection, we use the notations $K(d)=K_0(D_{sg}(\X^{ss}(d)_0))$ and $K(d,e)=K_0(D_{sg}(\X^{ss}(d,e)_0))$. 
The theorem that we prove in this section is:

\begin{thm}\label{bialgebra}
The product, coproduct, and the $R$ matrix $(m, \Delta, R)$ define a braided bialgebra structure on $\text{gr}\,KHA$, that is, for dimension vectors $d,d',e,e'\in\mathbb{N}^I$ such that $d+e=d'+e'$, the following diagram commutes:

\begin{tikzcd}
  K(d)\boxtimes K(e) \arrow[r, "m"] \arrow[d, "\Delta\boxtimes \Delta"]
    & K(d+e) \arrow[d, "\Delta"] \\
  \bigoplus_P K(d_1)\boxtimes K(d_2)\boxtimes K(e_1)\boxtimes K(e_2) \arrow[r, "\widetilde{m}^2"]
&K(d')\boxtimes K(e').
\end{tikzcd}
\\
Here $\widetilde{m}^2:=(m\boxtimes m)(1\boxtimes R\boxtimes 1)$ and
$P$ be the set of quadruples of dimension vectors $(d_1,d_2,e_1,e_2)$ such that: 
\[d=d_1+d_2,\, e=e_1+e_2,\,d'=d_1+e_1,\text{ and } e'=d_2+e_2.\]

\end{thm}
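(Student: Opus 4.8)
The plan is to prove the bialgebra compatibility at the categorical level for arbitrary potential, reducing first to the zero-potential case and then to a statement about stacks, exactly in the spirit of the proofs of Theorem \ref{thm1}, Proposition \ref{coassoc}, and Proposition \ref{braiding}. By Propositions \ref{abel} and \ref{abelianyetagain}, it suffices to verify the identity for the abelian versions $m^{ab}$, $\Delta^{ab}$, $R^{ab}$ on the stacks $\Y(d)$, since the $\text{eu}_2$-factors entering $\widetilde m$, $\widetilde\Delta$, and $R$ cancel in the combination $(m\boxtimes m)(1\boxtimes R\boxtimes 1)$ on one side and $\Delta$ on the other. I would first reduce to trivial stability condition $\theta=0$ by restricting to semistable open loci, and then prove the potential-zero statement at the level of derived categories $D^b$ of the relevant stacks, deducing the general-potential statement by applying the functoriality of categories of singularities (Propositions \ref{com1}, \ref{com2}) together with the Thom--Sebastiani equivalences, managing the bookkeeping of the $TS$ isomorphisms exactly as at the end of the proof of Theorem \ref{thm1}.

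For the zero-potential, abelian case, the heart of the matter is to build a single stack $\widetilde{\Y}$ parametrizing the combined data of a two-step filtration on a dimension-$(d+e)$ representation together with the induced filtrations $d=d_1+d_2$, $e=e_1+e_2$, $d'=d_1+e_1$, $e'=d_2+e_2$ on the pieces, equipped with enough linearizing $\mathbb{A}^1$-factors (and tori $\C^*$) so that all the maps $\tau^*p^*$, $\widetilde q_*$, $q^*$, $p_*$ appearing in $m^{ab}$ and $\Delta^{ab}$ factor through it. Concretely, I would introduce a stack $\widetilde{\Y}(d_1,d_2,e_1,e_2)$ whose underlying representation space is $R(d_1)\oplus R(d_2)\oplus R(e_1)\oplus R(e_2)$ augmented by the appropriate Hom-spaces ($R(d,e)^{>0}$, $R(d',e')^{>0}$, etc.) and by the $\mathbb{A}^1$'s for the two relevant $\C^*$-characters $\lambda_{d,e}$ and $\lambda_{d',e'}$; then I would exhibit the big commuting diagram of stacks in which the square corresponding to "first multiply $d,e$ then comultiply $d',e'$" and the square corresponding to "first comultiply then multiply, with the $R$-matrix shuffle in the middle" are connected by a collection of cartesian squares. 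Proper base change along those cartesian squares, combined with the obvious commutativity of the remaining squares (those involving only projections and diagonal/multiplication maps on $BT$), yields the claim; the $R$-matrix in the middle row is precisely what records the difference between the ordering $(d_1,d_2,e_1,e_2)$ coming from the product-then-coproduct route and the ordering coming from the coproduct-then-product route, and it is built from $m^{ab}_{d_2,e_1}$ and $\Delta^{ab}_{e_1,d_2}$ on the middle factors, as in Proposition \ref{braiding}.

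The main obstacle will be precisely the combinatorics of the intermediate stack and of the $\C^*$-linearizations: one must choose the $\mathbb{A}^1$-factors and the torus characters so that \emph{all} of the maps $\widetilde q$ (pushforward along the cohomologically proper maps of Definition \ref{proper}) fit into cartesian squares allowing proper base change, and one must check that the two $\C^*$'s (one for the $p_{d,e}$/$\widetilde q_{d,e}$ pair, one for the $p_{d',e'}$/$\widetilde q_{e',d'}$ pair) act compatibly — i.e.\ that the attracting loci for the two characters are transverse in the relevant sense so that the order of taking the two limits does not matter. This is the analogue of the classical check, in the cohomological Hall algebra, that the stack $R(d_1,d_2,e_1,e_2)$ of "compatible pairs of filtrations" surjects properly onto both $R(d+e,\cdot)$-type stacks and $R(d',e')$-type stacks; once the stack and its maps are set up correctly the verification is a diagram chase of cartesian squares plus the by-now-routine Thom--Sebastiani and $D_{sg}$-functoriality arguments. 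Finally, the same proof runs verbatim in the $T$-equivariant setting after inserting the maps $\mu^*$ and using associativity of multiplication on $T$ (as in the displayed $BT$-square at the end of the proof of Proposition \ref{coassoc}), and for $D_{sg}^{id}$ by passing to idempotent completions.
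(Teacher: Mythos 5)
Your proposal has a genuine gap in the central step. You correctly anticipate that the argument reduces to a combinatorial statement involving intermediate stacks and the index set $P$, but the strategy of proving the compatibility at the level of derived categories by proper base change along cartesian squares of smooth quotient stacks cannot succeed here. The decomposition of the fiber product $\X(d,e)\times_{\X(d+e)}\X(d',e')$ into pieces indexed by $P$ is a \emph{stratification}, not a disjoint union of smooth stacks: the fiber product parametrizes a representation $V$ of dimension $d+e$ together with two (in general transverse but possibly degenerate) subrepresentations of dimensions $d$ and $d'$, and the dimension of the intersection jumps on closed subsets. There is simply no single ``big intermediate stack'' through which all the relevant maps factor in a cartesian way; the diagram-chase-plus-base-change pattern that works verbatim for associativity (Theorem \ref{thm1}), coassociativity (Proposition \ref{coassoc}), and the braiding property (Proposition \ref{braiding}) breaks down because those proofs only involve fiber products that really are smooth quotient stacks, and the combinatorics of $P$ never appears there.

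The paper's actual proof sidesteps this by working in localized $K$-theory. It applies the localization theorem (Theorem \ref{loc}) for the one-parameter subgroup $\lambda=(z\mathrm{Id}_d,\mathrm{Id}_e)\colon\C^*\to G(d+e)$ and computes the $\lambda$-fixed loci on $\X(d,e)$, $\X(d+e)$, and $\X(d',e')$; the fixed locus on $\X(d',e')$ is exactly the disjoint union indexed by $P$, so the sum over $P$ that appears in the statement \emph{is} the fixed-point decomposition. After localizing at the ideal $I_{d,e}$, Proposition \ref{compo} expresses both the $R$-matrix and the compositions $i^*m^{ab}$ as multiplication by explicit ratios of Euler classes, and the two routes around the diagram become an identity of Euler factors checked by hand (using the abelianization of Proposition \ref{abel} and the module structure of Proposition \ref{resp}). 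Proposition \ref{zerodiv}, which asserts that the Euler class $\mathrm{eu}(\mathcal{E})$ is not a zero divisor, is what allows one to deduce the equality \emph{before} localization from the equality \emph{after} localization. If you want to rescue a stack-level argument, you would essentially have to reprove a form of concentration/localization internally, which is harder than the $K$-theoretic route the paper takes.
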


Before we start the proof of Theorem \ref{bialgebra}, we collect some preliminary results. We will be using localization with respect to a $\C^*\subset G(d+e)$, and these computations are done in the localization of $\text{KHA}$ with respect to
the ideal $I_{d,e}$ defined in Theorem \ref{loc}. 
\begin{prop}\label{resp}
The abelian product and coproduct $m^{ab}_{d,e}$ and $\Delta^{ab}_{d,e}$ respect the $\bigoplus_{d\in\mathbb{N}^I}K_0(BT(d))$-module structure, see Subsection \ref{local}.  
\end{prop}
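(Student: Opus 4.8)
The plan is to factor $m^{ab}_{d,e}$ and $\Delta^{ab}_{d,e}$ into the elementary functors out of which they are built and to check that each such functor is linear over the relevant ring of characters. First, recall that the maximal torus of $G(d+e)$ may be chosen so that $T(d+e)=T(d,e)=T(d)\times T(e)$, the maximal torus of the Levi $G(d)\times G(e)$ of $G(d,e)$; hence $K_0(BT(d+e))\cong K_0(BT(d))\otimes K_0(BT(e))$, and this ring acts on $K_0(D_{sg}(\Y(d)_0))\boxtimes K_0(D_{sg}(\Y(e)_0))$ on the one hand and on $K_0(D_{sg}(\Y(d+e)_0))$ on the other. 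The action is by tensoring with the pullback of a $T$-representation; since such a pullback is a perfect complex, the action descends from $D^b(\Y(-))$ through $D^b(\Y(-)_0)$ to $D_{sg}(\Y(-)_0)$, and the content of the proposition is exactly that $m^{ab}_{d,e}$ and $\Delta^{ab}_{d,e}$ intertwine these $K_0(BT(d+e))$-actions.

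For the product I would write $m^{ab}_{d,e}=p_*q^*$ for the maps $\Y(d)\times\Y(e)\xleftarrow{q}\Y(d,e)\xrightarrow{p}\Y(d+e)$, each of which covers the identity on $BT(d+e)$ (using $T(d,e)=T(d+e)$). The functors $q^*$ and $p_*$ on categories of singularities are induced from their counterparts on $D^b$, for which the projection formula $p_*(F\otimes p^*E)\cong p_*F\otimes E$ and the identity $q^*(F\otimes E)\cong q^*F\otimes q^*E$ hold; taking $E$ to be the pullback of a $T(d+e)$-representation, we get that $q^*$ and $p_*$, and hence $m^{ab}_{d,e}$, are $K_0(BT(d+e))$-linear.

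For the coproduct I would write $\Delta^{ab}_{d,e}=TS^{-1}\,\widetilde q_{e,d*}\,\tau^*\,p_{e,d}^*$ and treat the factors in turn: $p_{e,d}^*$ is pullback along the closed immersion $\Y(e,d)\to\Y(d+e)$; $\tau^*$ is pullback along $\widetilde\Y(e,d)\to\Y(e,d)$, which only adjoins the auxiliary $\C^*$-factor and so covers the identity on $BT(d+e)$; $\widetilde q_{e,d*}=p_*j^*i_*$ (Definition \ref{proper}) is a composite of a closed-immersion pushforward, an open restriction, and a projective-bundle pushforward, each covering the identity on the relevant classifying stack; and $TS^{-1}$ together with the swap $\text{sw}$ are equivalences of $K_0(BT(d+e))$-module categories. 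Applying the projection formula (and its evident analogues for pullbacks and open restrictions) to each factor shows that the composite $\Delta^{ab}_{d,e}$ is $K_0(BT(d+e))$-linear, which is the asserted compatibility. The same argument gives the statement for $D_{sg}^{id}$ and for the $T$-equivariant versions, with $K_0(BT(d))$ replaced by $K_0^T(BT(d))$ throughout.

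The one point that requires care is the bookkeeping of groups: I must check that each functor entering the two composites genuinely covers the identity on $BT(d+e)$ — in particular that $T(d,e)=T(d)\times T(e)=T(d+e)$ and that the auxiliary $\C^*$ appearing in $\widetilde\Y(e,d)$, $\tau$, and $\widetilde q_{e,d}$ is harmless for the $T(d+e)$-module structure. Once this is in place, the projection formula on $D^b$ does all the work, and descending everything to $D_{sg}$ is routine because the objects being tensored are perfect.
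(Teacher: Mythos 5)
Your proposal is correct and takes essentially the same route as the paper: identify $K_0(BT(d+e))\cong K_0(BT(d))\otimes K_0(BT(e))$ via $T(d,e)=T(d)\times T(e)$, and then apply the projection formula for $p_*$ together with the multiplicativity of $q^*$. The paper's proof is exactly your computation $p_{d,e*}\bigl(q_{d,e}^*(xl_d\boxtimes yl_e)\bigr)=p_{d,e*}\bigl(q_{d,e}^*(x\boxtimes y)\otimes p_{d,e}^*l_{d+e}\bigr)=p_{d,e*}q_{d,e}^*(x\boxtimes y)\,l_{d+e}$ for the product, and it dismisses the coproduct as "similar," which you helpfully spell out factor by factor.
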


\begin{proof}
Let $x\in K(d), y\in K(e), l_d\in K_0(BT(d)), l_e\in K_0(BT(e))$. Denote by $l_{d+e}:=l_{d}\boxtimes l_e$ the product in $ K_0(BT(d))\boxtimes K_0(BT(e))\cong K_0(BT(d+e))$. 
Use the projection formula to write:
$$p_{d,e*}\left(q_{d,e}^*(xl_d\boxtimes yl_e)\right)=p_{d,e*}\left( q^*_{d,e}(x\boxtimes y)\otimes p_{d,e}^*(l_{d+e})\right)=p_{d,e*}q_{d,e}^*(x\boxtimes y)\,l_{d+e}.$$ The proof for comultiplication is similar.
\end{proof}

Note that for the natural section $s:R(d)\times R(e)\to R(d,e)$ we have:
$$\text{eu}(s: R(d)\times R(e)\to R(d,e))=\text{eu}_1(e,d).$$

\begin{prop}\label{compo}
(a) Let $i:\Y^{ss}(d)\times\Y^{ss}(e)\to \Y^{ss}(d+e)$ be the natural closed immersion. The composition: 
\begin{multline*}
    K_0(D_{sg}(\Y^{ss}(d)_0))\boxtimes K_0(D_{sg}(\Y^{ss}(e)_0)) \xrightarrow{m^{ab}_{d,e}} K_0(D_{sg}(\Y^{ss}(d+e)_0))\xrightarrow{TS^{-1}i^*}\\ K_0(D_{sg}(\Y^{ss}(d)_0))\boxtimes K_0(D_{sg}(\Y^{ss}(e)_0))
\end{multline*} 
is multiplication by $\text{eu}_1\,(d,e)$.

(b) After localization with respect to the ideal $I_{d,e}$, the $R_{d,e}^{ab}$ and $R_{d,e}$ matrices
are equal to:
$$R_{d,e}^{ab}=\frac{\text{eu}_1(d,e)}{\text{eu}_1(e,d)}\,\text{sw}\text{ and }R_{d,e}=\frac{\text{eu}(d,e)}{\text{eu}(e,d)}\,\text{sw}.$$
\end{prop}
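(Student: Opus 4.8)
The plan is to reduce both statements to Euler‑class computations performed via base change along cartesian squares of the smooth stacks $\Y(d,e),\ \Y(d+e),\ \widetilde\Y(d,e)$, and then transported to $K_0$ of the categories of singularities using the compatibility of pushforward, pullback, Thom--Sebastiani and the $\bigoplus_d K_0(BT(d))$‑module structure from Subsections \ref{functo} and \ref{local}. For (a) I would factor the closed immersion $i\colon\Y^{ss}(d)\times\Y^{ss}(e)\to\Y^{ss}(d+e)$ as $i=p_{d,e}\circ s$, where $s$ is the natural section of the affine bundle $q_{d,e}$; then $s^*q_{d,e}^*=\text{id}$, so $TS^{-1}i^*m^{ab}_{d,e}=s^*p_{d,e}^*p_{d,e*}q_{d,e}^*$. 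Now $p_{d,e}$ is a regular closed immersion whose normal bundle is the bundle $\mathcal N$ attached to the $T(d+e)$‑representation $R(d+e)/R(d,e)$, and its restriction to the zero fibre of $\text{Tr}(W_{d+e})$ is still regular of the same codimension; hence the self‑intersection formula gives $p_{d,e}^*p_{d,e*}(F)=F\otimes\lambda_{-1}(\mathcal N^\vee)$, a computation that may be carried out on the ambient smooth stacks and is compatible with passing to $D_{sg}$ and $K_0$. Since $\mathcal N=q_{d,e}^*\mathcal N_0$ for the corresponding bundle $\mathcal N_0$ on $\Y(d)\times\Y(e)$, this yields $TS^{-1}i^*m^{ab}_{d,e}(F)=F\otimes s^*\lambda_{-1}(\mathcal N^\vee)=F\otimes\text{eu}(\mathcal N_0)$, and $\text{eu}(\mathcal N_0)=\text{eu}_1(d,e)$ by the definition of $\text{eu}_1$. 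The same argument works for $D_{sg}^{\,id}$ and in the $T$‑equivariant setting.

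For (b) the core is the composite $\Delta^{ab}_{e,d}\circ m^{ab}_{d,e}$. Writing $m^{ab}_{d,e}=p_{d,e*}q_{d,e}^*$ and $\Delta^{ab}_{e,d}=\widetilde q_{*}\,\tau^*\,p^{*}$ in terms of the correspondence $\Y(d+e)\xleftarrow{\,p\tau\,}\widetilde\Y(e,d)\xrightarrow{\,\widetilde q\,}\Y(d)\times\Y(e)$, I would form the fibre product of the two correspondences over $\Y(d+e)$: its reduced locus is the diagonal $\Y(d)\times\Y(e)\hookrightarrow\Y(d+e)$, namely the intersection inside $\Y(d+e)$ of the two attracting loci of $\lambda_{d,e}$ and $\lambda_{d,e}^{-1}$, and as a derived fibre product it carries excess‑intersection Koszul factors built from the normal bundles $\mathcal N_0$ (whose $\lambda_{d,e}$‑weights are negative) and $N_s$ (whose $\lambda_{d,e}$‑weights are positive). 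Passing to $K_0$ and localizing at the ideal $I_{d,e}=\ker\!\big(K_0(BG(d+e))\to K_0(B\C^*)\big)$ of Theorem \ref{loc}, the contributions of the boundary divisor $\mathbb P(N_s)$ of the compactification used to define $\widetilde q_{*}$, and of all strata other than the diagonal, vanish because the relevant $\lambda_{d,e}$‑Euler classes become invertible (Proposition \ref{zerodiv}). The surviving diagonal term is evaluated as in (a): the excess factor $\mathcal N_0$ contributes $\text{eu}_1(d,e)$, while compactified pushforward along the affine bundle with fibre $N_s$ contributes $\text{eu}(N_s)^{-1}=\text{eu}_1(e,d)^{-1}$. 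Hence $\Delta^{ab}_{e,d}m^{ab}_{d,e}=\dfrac{\text{eu}_1(d,e)}{\text{eu}_1(e,d)}\,\text{id}$ after localization and $R^{ab}_{d,e}=\text{sw}_{d,e}\,\Delta^{ab}_{e,d}m^{ab}_{d,e}=\dfrac{\text{eu}_1(d,e)}{\text{eu}_1(e,d)}\,\text{sw}$. Finally, since $R_{d,e}=\Delta'_{e,d}m'_{d,e}$ with $m'_{d,e}=m^{ab}_{d,e}\,\text{eu}_2(d,e)^{-1}$ and $\Delta'_{e,d}=\Delta^{ab}_{e,d}\,\text{eu}_2(e,d)$, and $\text{eu}(d,e)=\text{eu}_1(d,e)\,\text{eu}_2(d,e)$, the same computation gives $R_{d,e}=\dfrac{\text{eu}(d,e)}{\text{eu}(e,d)}\,\text{sw}$, and likewise for $D_{sg}^{\,id}$ and in the equivariant setting.

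The main obstacle is this fibre‑product/localization step: identifying the derived fibre product $\widetilde\Y(e,d)\times_{\Y(d+e)}\Y(d,e)$ and its excess‑intersection correction precisely, controlling the cohomologically proper pushforward $\widetilde q_{*}$ (in particular the boundary divisor of the compactification) at the level of $D_{sg}$, and checking that localizing at $I_{d,e}$ annihilates everything except the diagonal term. A secondary, bookkeeping difficulty is that the Koszul complexes computing the self‑intersections and the excess factors naturally live on the smooth ambient stacks, so one must track their images in $D_{sg}$ of the singular zero fibres and in the $K_0(BT(d))$‑module structure — exactly the place where Proposition \ref{zerodiv} and the localization theorem are used.
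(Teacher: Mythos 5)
Part (a) of your proof matches the paper's argument: factor $i=p_{d,e}\circ s$, observe $p_{d,e}^*p_{d,e*}$ is multiplication by the Euler class of the normal bundle (which by definition is $\text{eu}_1(d,e)$), and apply $s^*q_{d,e}^*=\text{id}$. The compatibility with $D_{sg}$ is exactly as in Subsections \ref{functo} and \ref{local}, so this part is sound.

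Part (b) takes a genuinely different route, and the obstacles you list at the end are real ones that your sketch does not resolve. You propose to form the derived fibre product $\widetilde\Y(e,d)\times_{\Y(d+e)}\Y(d,e)$, identify its reduced locus with $\Y(d)\times\Y(e)$, extract an excess‑intersection Koszul factor, and then show that localization at $I_{d,e}$ kills the other strata and the boundary divisor $\mathbb P(N_s)$ of the compactified pushforward $\widetilde q_*$. None of these steps is written out, and there is a specific point that looks wrong as stated: you claim that the compactified pushforward along the affine bundle with fibre $N_s$ "contributes $\text{eu}(N_s)^{-1}=\text{eu}_1(e,d)^{-1}$." But $\widetilde q_*$ composed with $q^*$ along that bundle is essentially the identity on structure sheaves (this is how Proposition \ref{abelinverse} is proved), not multiplication by an inverse Euler class; the inverse factor $\text{eu}_1(e,d)^{-1}$ has to be extracted by first \emph{multiplying} by $\text{eu}_1(e,d)$ somewhere and then cancelling it in the localization, not produced directly by the compactified pushforward. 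This is precisely the device the paper uses, and your sketch does not supply a substitute.

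The paper's argument is much shorter and sidesteps the fibre‑product analysis entirely: it computes the single quantity $\widetilde q_{d,e*}\tau^*p_{d,e}^*i_*\bigl(i^*p_{d,e*}q_{d,e}^*(x)\bigr)$ in two ways. On one hand it equals $\text{eu}_1(d,e)^2\,x$, using part (a) for $i^*p_{d,e*}q_{d,e}^*$ and a base‑change computation (via the section $t$ of $\widetilde\tau$ in the cartesian square) for $\widetilde q_{d,e*}\tau^*p^*_{d,e}i_*$. On the other hand, since $i^*i_*(x)=\text{eu}_1(d,e)\text{eu}_1(e,d)\,x$ and $m^{ab},\Delta^{ab}$ commute with $K_0(BT)$‑multiplications (Proposition \ref{resp}), it equals $\text{eu}_1(d,e)\text{eu}_1(e,d)\,\text{sw}\,R^{ab}_{d,e}(x)$. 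Dividing in the localization by the non‑zero‑divisor $\text{eu}_1(d,e)\text{eu}_1(e,d)$ gives $R_{d,e}^{ab}=\frac{\text{eu}_1(d,e)}{\text{eu}_1(e,d)}\,\text{sw}$, and the passage from $R^{ab}$ to $R$ is the elementary $\text{eu}_2$ rescaling you already noted. If you want to salvage your excess‑intersection route, the key missing ingredients are: (i) a statement of base change for the non‑flat composite correspondence at the level of $D_{sg}$, (ii) a concrete identification of the excess bundle including the role of the auxiliary $\C$ factor in $\widetilde\Y(e,d)$, and (iii) a proof that after inverting $\text{eu}_1(e,d)$ the boundary contributions vanish — at which point you will likely find you have reproduced the paper's $i_*i^*$ computation in a heavier notation.
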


\begin{proof}
(a) Let $s:\Y(d)\times\Y(e)\to\Y(d,e)$ be the natural section of the bundle $q_{d,e}$. Then $i=p_{d,e}\,s.$ 
Consider the maps:
\[\mathcal{Y}(d)\times\Y(e)\xleftarrow{q}\Y(d,e)\xrightarrow{p}\Y(d+e).\]
For $x\in K_0(D_{sg}(\Y^{ss}(d)_0))\boxtimes K_0(D_{sg}(\Y^{ss}(e)_0))$ we have that:
$$p^*p_*(x)=x\,\text{eu}_1(d,e).$$
We now compute:
$$i^*m^{ab}_{d,e}(x)=i^*p_*q^*(x)=s^*q^*x\,\text{eu}_1(d,e)=x\,\text{eu}_1(d,e).$$
(b) In the localization with respect to $I_{d,e}$, we have that:
$$R_{d,e}=R_{d,e}^{ab}\frac{\text{eu}_2(e,d)}{\text{eu}_2(d,e)},$$
so it suffices to show the result for $R_{d,e}^{ab}$. We will use Proposition \ref{resp} to commute Euler factors past $m^{ab}$ and $\Delta^{ab}$.
The definition of the $R_{d,e}^{ab}$ matrix is, after composing with the Thom-Sebastiani isomorphisms:

\begin{tikzcd}
K_0\left(D_{sg}\left((\mathcal{Y}(d)\times\mathcal{Y}(e))_0\right)\right)\arrow{r}{p_{d,e*}q_{d,e}^*} & K_0(D_{sg}(\mathcal{Y}(d+e)_0)) \arrow{r}{\widetilde{q}_{d,e*}\tau^*p^*_{d,e}}&  K_0\left(D_{sg}\left((\mathcal{Y}(e)\times\mathcal{Y}(d))_0\right)\right)\\
 & K_0\left(D_{sg}\left((\mathcal{Y}(d)\times\mathcal{Y}(e))_0\right)\right). \arrow{u}{i_*}& 
\end{tikzcd}
\\
For $x\in K_0\left(D_{sg}\left((\mathcal{Y}(d)\times\mathcal{Y}(e))_0\right)\right)$, we have that: 
$$\left(\widetilde{q}_{d,e*}\tau^*p_{d,e}^*i_*\right)\left(i^*p_{d,e*}q_{d,e}^*(x)\right)=\text{sw}\,R^{ab}_{d,e}\left(i_*i^*(x)\right)=\text{eu}_1(d,e)\,\text{eu}_1(e,d)\,\text{sw}\,R^{ab}_{d,e}(x).$$
By part (a), we have that $i^*p_{d,e*}q_{d,e}^*(x)=
\text{eu}_1(d,e)\,(x)$.
Next, for $s$ the natural section $s: R(d)\times R(e)\to R(d,e)$, we have that:
\begin{multline*}
    \left(\widetilde{q}_{d,e*}\tau^*p_{d,e}^*i_*(x)\right)=
\left(\widetilde{q}_{d,e*}\tau^*p_{d,e}^*p_{d,e*}s_*(x)\right)=
\left(\widetilde{q}_{d,e*}\tau^*\left(\text{eu}_1(d,e)s_*(x)\right)\right)=\\
\text{eu}_1(d,e)\left(\widetilde{q}_{d,e*}\tau^*s_*(x)\right).
\end{multline*}
Consider the cartesian diagram:

\begin{tikzcd}
\widetilde{\Y}(d,e) \arrow{d}{\tau} & R(d)\times R(e)\oplus\C/T(d+e)\times\C^*\arrow{l}{\widetilde{s}} \arrow{d}{\widetilde{\tau}}\\
\Y(d,e) & \Y(d)\times \Y(e).\arrow{l}{s}
\end{tikzcd}
\\
Consider the section of $\widetilde{\tau}$: $$t:\Y(d)\times \Y(e)\to R(d)\times R(e)\oplus\C/T(d+e)\times\C^*.$$ We have that $t_*(x)=\widetilde{\tau}^*(x).$ Continuing the computation from above, we have that:
$$\widetilde{q}_{d,e*}\tau^*s_*(x)=\widetilde{q}_{d,e*}\widetilde{s}_*\widetilde{\tau}^*(x)=
\widetilde{q}_{d,e*}\widetilde{s}_*t_*(x)=x.$$
Putting together these computations, we get that:
$$R_{d,e}^{ab}=\frac{\text{eu}_1(d,e)}{\text{eu}_1(e,d)}\,\text{sw}.$$
\end{proof}

\begin{proof}[Proof of Theorem \ref{bialgebra}]
%We first prove the statement in the non-equivariant case.
Unraveling the definitions of $m$ and $\Delta$, the diagram we need to prove commutes is:

\begin{tikzcd}
  K(d)\boxtimes K(e) \arrow[r] \arrow[d, ]
    & K(d,e)  \arrow[r, ""] & K(d+e) \arrow[d] \\
  \bigoplus_P K(d_1,d_2)\boxtimes K(e_1,e_2) \arrow[d, ""]  
&  & K(d',e') \arrow[d, ""] \\
\bigoplus_P K(d_1)\boxtimes K(d_2)\boxtimes K(e_1)\boxtimes K(e_2) \arrow[r, ""] & \bigoplus_P K(d_1,e_1)\boxtimes K(d_2,e_2) \arrow[r, ""] & K(d')\boxtimes K(e')
\end{tikzcd}
\\
We use the localization theorem for the character 
$$\lambda: \C^*\xrightarrow{(z\text{Id}_d, \text{Id}_e)} G(d)\times G(e)\hookrightarrow G(d+e).$$ The fixed loci of $\C^*$ on $\X(d,e)$ and $\X(d+e)$ are both $\X(d)\times\X(e)$.
The fixed loci of $\C^*$ on $\X(d',e')$ are $\bigoplus_P \X(d_1,d_2)\times\X(e_1,e_2)$. Indeed, consider the diagram:

\begin{tikzcd}
  R(d',e')\times_{G(d',e')} G(d+e) \arrow[r, "i_{d',e'}"] 
    & R(d+e)\times_{G(d',e')} G(d+e) \arrow[d, "\pi_{d',e'}"] \\
&R(d+e),
\end{tikzcd}
\\
where $i_{d',e'}$ is a closed immersion and $\pi_{d',e'}$ is a projection.
The $\C^*$ fixed locus on $R(d+e)$ is $R(d)\times R(e)$. The $\C^*$ fixed locus on the fiber $$G(d+e)/G(d',e')=\prod_{v\in I} \text{Gr}(d'^{v}, d^v+e^v)$$ has components indexed by the set $P$; 
indeed, if $V$ is a vector space of dimension $d^v+e^v$ and we have a splitting $V=D\oplus E$, where $D$ had dimension $d^v$ and is the locus fixed by $\C^*$ and $E$ has dimension $e^v$ and is the locus of weight $1$, a subspace $W\subset V$ of dimension $d'^v$ can have a subspace of dimension $0\leq d^v_1\leq d'^v$ fixed by $\C^*$, and this choices determine the fixed components of $\C^*$ on $\text{Gr}(d'^v,d^v+e^v)$. Choose such a subspace $D'\subset V$ of dimension $d'^v$; it determines a splitting $D'=D_1\oplus E_1$ where $\C^*$ acts with weight $0$ on $D_1$ of dimension $d^v_1$ and with weight $1$ on $E_1$ of dimension $e^v_1$; we thus obtain two dimension vectors $d_1$ and $e_1\in \mathbb{N}^I$.
Let $d_2=d-d_1$ and $e_2=e-e_1$. The dimension vectors $(d_1,d_2,e_1,e_2)$ give an element of $P$. Further, each such component is isomorphic to: 
$$\prod_{v\in I}\left(\text{Gr}(d^v_1,d^v)\times \text{Gr}(d^v_2,e^v)\right)=G(d)/G(d_1,d_2)\times G(e)/G(e_1,e_2).$$ The $\C^*$ fixed component of $R(d+e)\times_{G(d',e')} G(d+e)$
corresponding to the partition $(d_1,d_2,e_1,e_2)$ is thus isomorphic to:
 $$\left(R(d)\times_{G(d_1,d_2)} G(d)\right)\oplus \left(R(e)\times_{G(e_1,e_2)} G(e)\right).$$
By intersecting these fixed loci with $R(d',e')\times_{G(d',e')}G(d+e)$, we see that the $\C^*$ fixed loci on $R(d',e')\times_{G(d',e')} G(d+e)$ are: $$\left(R(d_1, d_2)\times_{G(d_1,d_2)} G(d)\right)\oplus \left(R(e_1, e_2)\times_{G(e_1,e_2)} G(e)\right).$$ After taking the quotient by $G(d+e)$, we get that the $\C^*$ fixed loci on $\X(d',e')$ are indeed $\X(d_1,d_2)\times\X(e_1,e_2)$. 
The diagram of fixed loci from above becomes:

\begin{tikzcd}
K(d)\boxtimes K(e) \arrow[r] \arrow[d, ]
    &  K(d)\boxtimes K(e) \arrow[d] \\
  \bigoplus_P K(d_1,d_2)\boxtimes K(e_1,e_2) \arrow[d, ""]  
 & \bigoplus_P K(d_1,d_2)\boxtimes K(e_1,e_2) \arrow[d, ""] \\
\bigoplus_P K(d_1)\boxtimes K(d_2)\boxtimes K(e_1)\boxtimes K(e_2) \arrow[r, ""] &  \bigoplus_P K(d_1)\boxtimes K(d_2)\boxtimes K(e_1)\boxtimes K(e_2).
\end{tikzcd}
\\
From now on, we fix an element $(d_1,d_2,e_1,e_2)\in P$. It is enough to show that the diagram where we replace the sum by the term corresponding to $(d_1,d_2,e_1,e_2)$ commutes. 
Using Proposition \ref{zerodiv}, it is enough to show it commutes after localization at the ideal $I_{d,e}$, see Proposition \ref{zerodiv} for its definition. 
We will use the abelian formulas for multiplication and comultiplication. Recall from Proposition \ref{change} that both these operations respect the $K_0(BG)$-module structure, so they commute with arbitrary Euler classes. The Euler classes that appear in the definitions for multiplication, comultiplication, and the R-matrix are the same for both compositions:
\[\frac{\text{eu}_2(d_1,d_2)\text{eu}_2(e_1,e_2)}{\text{eu}_2(d_1,e_1)\text{eu}_2(d_2,e_2)}=\frac{\text{eu}_2(d',e')}{\text{eu}_2(d,e)}\,\frac{\text{eu}_2(d_2,e_1)}{\text{eu}_2(e_1,d_2)}.\]
Using Proposition \ref{compo} for the $R^{ab}$ matrix and for $m^{ab}_{d_1,e_1}$ and $m^{ab}_{d_2,e_2}$, we get that the second line
is multiplication by:  
$$1\boxtimes\,\text{sw}\boxtimes\,1\,\left(\text{eu}_1(d_1,e_1)\boxtimes \text{eu}_1(d_2,e_2)\frac{\text{eu}_1(d_2,e_1)}{\text{eu}_1(e_1,d_2)}\right).$$
This means that the composition of the left and down arrows is: 
$$1\boxtimes\,\text{sw}\boxtimes\,1\left(
\text{eu}_1(d_1,e_1)\boxtimes \text{eu}_1(d_2,e_2)\,\frac{\text{eu}_1(d_2,e_1)}{\text{eu}_1(e_1,d_2)}\Delta^{ab}_{d_1,d_2}(x)\boxtimes \Delta^{ab}_{e_1,e_2}(y)\right),$$
which can also be written as:
\begin{equation}\label{result}
1\boxtimes\,\text{sw}\boxtimes\,1\left(
\frac{\text{eu}_1(d,e)}{\text{eu}_1\,(d_1,e_2)\text{eu}_1\,(e_1,d_2)}\,
\Delta_{d_1,d_2}(x)\boxtimes \Delta_{e_1,e_2}(y)\right).
\end{equation}
To compute the composition of the up and right arrow, consider the diagram:

\begin{tikzcd}
K(d)\boxtimes K(e) \arrow{r}{p_*q^*}& K(d+e) \arrow{r}{i_{d,e}^*} \arrow{d}{p^*} & K(d)\boxtimes K(e) \arrow{d}{p^*\times p^*}\\
 & K(e',d') \arrow{d}{\tau^*} \arrow{r} &
K_0(d_2,d_1)\boxtimes K(e_2,e_1) \arrow{d}{\tau^*\times\tau^*}\\
 & K_0(D_{sg}(\widetilde{\Y}(e',d')_0)) \arrow{d}{\widetilde{q}_*} \arrow{r} & 
K_0(D_{sg}(\widetilde{\Y}(d_2,d_1)_0)\boxtimes D_{sg}(\widetilde{\Y}(e_2,e_1)_0)) \arrow{d}{e^{-1}\widetilde{q}_*\times\widetilde{q}_*}\\
 & K(e')\boxtimes\,K(d') \arrow{r}{i^*} &
 K(d_2)\,\boxtimes\,K(d_1)\,\boxtimes\,K(e_2)\,\boxtimes\,K(e_1),
\end{tikzcd}
\\
where $e=\text{eu}_1\,(d_1,e_2)\,\text{eu}_2\,(e_1,d_2).$ The top two squares commute.
To explain why the lower square commutes, let 
 $$\mathcal{Z}:=R(d_2,d_1)\oplus R(e_2,e_1)\oplus R(e_2, d_1)^{>0}\oplus R(d_2, e_1)^{>0}\oplus\C/T\times \C^*,$$ where the $\C^*$ action is induced from $\widetilde{\Y}(d',e')$. We denote by $\mathcal{W}$ the product without the $\C$ summand and the $\C^*$ quotient. By proper base change and the Thom-Sebastiani isomorphisms, the following diagram commutes:

\begin{tikzcd}
K_0(D_{sg}(\widetilde{\Y}(e', d')_0)) \arrow{d}{\widetilde{q}_*} \arrow{r} & 
K_0(D_{sg}(\mathcal{Z})_0) \arrow{d}{\widetilde{q}_*\times\widetilde{q}_*}\\
K_0(e')\boxtimes\,K_0(d') \arrow{r}{i^*} &
 K_0(d_2)\,\boxtimes\,K_0(d_1)\,\boxtimes\,K_0(e_2)\,\boxtimes\,K_0(e_1).
\end{tikzcd}
\\
Let $i: \widetilde{\Y}(d_2,d_1)\times\widetilde{\Y}(e_2,e_1)\to \mathcal{Z}$.
It is enough to check that the following diagram commutes:

\begin{tikzcd}
K_0(D_{sg}(\mathcal{Z}_0)) \arrow{r}{i^*} \arrow{d}{\text{e}\tilde{q}_*} & K_0(D_{sg}(\widetilde{\Y}(d_2,d_1)_0)\boxtimes D_{sg}(\widetilde{\Y}(e_2,e_1)_0)) \arrow{dl}{\widetilde{q}_*\times\widetilde{q}_*}\\
K(d_2)\boxtimes K(d_1)\boxtimes K(e_2)\boxtimes K(e_1).
\end{tikzcd}
\\
We have that $\widetilde{q}\boxtimes\widetilde{q}=\widetilde{q}i$, so it is enough to show that $i_*(1)=e$. 
For this, let $\Y'=R(d_2,d_1)\oplus R(e_2,e_1)\oplus\C/T\times\C^*,$ where the $\C^*$ action is as in $\widetilde{\Y}(e',d')$. The map $i$ factors as follows:

\begin{tikzcd}
\widetilde{\Y}(d_2,d_1)\times\widetilde{\Y}(e_2,e_1)\arrow{d}{i} \arrow{r}{r} & \Y'\arrow{dl}{i'}\\
\mathcal{Z}.
\end{tikzcd}
\\
The map $i'$ is a closed immersion. We have that $r_*(1)=1$, so it suffices to show that $i'_*(1)=e$.
Consider the following cartesian diagram:

\begin{tikzcd}
\Y' \arrow{r}{i'} & \mathcal{Z} \\
\Y(d^2)\times\Y(d^1)\times\Y(e^2)\times\Y(e^1) \arrow{r}{s} \arrow{u}{\pi} & \Y(d^2,d^1)\times\Y(e^2,e^1). \arrow{u}{\pi}
\end{tikzcd}
\\
Use proper base change along the above square and $s_*(1)=e$ to obtain that $i'_*(1)=e$.
Now, we compute the composition along the up and right arrow to be:
$$1\boxtimes\,\text{sw}\boxtimes\,1\left(\frac{\text{eu}_1(d,e)}{\text{eu}_1\,(d^1,e^2)\text{eu}_1\,(e^1,d^2)}\,
\Delta^{ab}_{d^1,d^2}\boxtimes\Delta^{ab}_{e^1,e^2}\right).$$
Comparing with Equations \ref{result}, we see that the product and the coproduct are indeed compatible.

\end{proof}

\begin{thm}
The conclusion of Theorem \ref{bialgebra} holds for $\text{CoHA}$ as well. 
\end{thm}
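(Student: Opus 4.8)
The plan is to mirror, word for word, the proof of Theorem~\ref{bialgebra} given above, replacing the category of singularities $D_{sg}(\X^{ss}(d)_0)$ and its $K_0$ by the vanishing cohomology $H^{\cdot}(\X^{ss}(d),\varphi_{\text{Tr}\,W}\mathbb{Q})$, and the functors $p_*$, $q^*$, $\tau^*$, $\widetilde{q}_*$, $i_*$, $i^*$ by their cohomological analogues. The key point, already recorded in the remark following Definition~\ref{coproduct} and in Definition~\ref{proper}, is that all the structures used in the $K$-theoretic argument have cohomological counterparts: the vanishing cycle functor commutes with proper pushforward, there are natural comparison maps $\tau^*p_{d,e}^*\varphi\to\varphi\tau^*p_{d,e}^*$, the Thom--Sebastiani theorem holds, pushforward along the cohomologically proper maps $\widetilde{q}$ is defined exactly as in Definition~\ref{proper}, and the localization (fixed-point) theorem holds in equivariant Borel--Moore homology / vanishing cohomology just as Takeda's theorem was used in Theorem~\ref{loc}. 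So the cohomological versions of $m$, $\Delta$, and $R$ are defined, associative (cf.\ the $\text{CoHA}$ multiplication in Subsection~\ref{CoHA}), coassociative, and braided by the same diagram-chases as in Propositions~\ref{coassoc} and \ref{braiding}.

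First I would set up the abelian cohomological product $m^{ab}$ and coproduct $\Delta^{ab}$ on $\bigoplus_d H^{\cdot}(\mathcal{Y}^{ss}(d),\varphi_{\text{Tr}\,W}\mathbb{Q})$ using the stacks $\mathcal{Y}(d)=R(d)/T(d)$ and $\widetilde{\mathcal{Y}}(d,e)$, together with the comparison isomorphism with the $W_d$-invariants of the abelian cohomology (the cohomological analogue of Proposition~\ref{abel2}; this is the content of \cite[Section 4.2]{d}). Then I would record the cohomological analogues of Propositions~\ref{resp} and \ref{compo}: the abelian product and coproduct respect the $\bigoplus_d H^{\cdot}(BT(d))$-module structure (projection formula), and after inverting the ideal $I_{d,e}$ one has $R_{d,e}^{ab}=\frac{\mathrm{eu}_1(d,e)}{\mathrm{eu}_1(e,d)}\,\mathrm{sw}$ and $R_{d,e}=\frac{\mathrm{eu}(d,e)}{\mathrm{eu}(e,d)}\,\mathrm{sw}$, where now $\mathrm{eu}$ denotes the $K$-theoretic Euler class replaced by the cohomological Euler class $i^*i_*(1)$. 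The proofs are identical: $p^*p_*(x)=x\,\mathrm{eu}_1(d,e)$ for the affine bundle $q_{d,e}$ with its section, $t_*(x)=\widetilde{\tau}^*(x)$ for the section of $\widetilde{\tau}$, and the cartesian-square base changes.

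With these in hand, the proof of compatibility is the diagram-chase of the proof of Theorem~\ref{bialgebra} verbatim. One localizes at the character $\lambda:\C^*\xrightarrow{(z\mathrm{Id}_d,\mathrm{Id}_e)} G(d)\times G(e)\hookrightarrow G(d+e)$; the fixed-locus computation identifying the $\C^*$-fixed loci of $\X(d+e)$, $\X(d,e)$, $\X(d',e')$ with $\X(d)\times\X(e)$ and $\bigsqcup_P \X(d_1,d_2)\times\X(e_1,e_2)$ is purely geometric and is reused unchanged. One then checks the Euler-class identity
$$\frac{\mathrm{eu}_2(d_1,d_2)\mathrm{eu}_2(e_1,e_2)}{\mathrm{eu}_2(d_1,e_1)\mathrm{eu}_2(d_2,e_2)}=\frac{\mathrm{eu}_2(d',e')}{\mathrm{eu}_2(d,e)}\,\frac{\mathrm{eu}_2(d_2,e_1)}{\mathrm{eu}_2(e_1,d_2)},$$
and computes both compositions in the square, obtaining in each case
$$1\boxtimes\mathrm{sw}\boxtimes 1\left(\frac{\mathrm{eu}_1(d,e)}{\mathrm{eu}_1(d_1,e_2)\,\mathrm{eu}_1(e_1,d_2)}\,\Delta_{d_1,d_2}(x)\boxtimes\Delta_{e_1,e_2}(y)\right),$$
via the same auxiliary stacks $\mathcal{Z}$, $\mathcal{W}$, $\Y'$ and the computation $i'_*(1)=e=\mathrm{eu}_1(d_1,e_2)\mathrm{eu}_2(e_1,d_2)$ obtained from proper base change and $s_*(1)=e$.

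I do not expect a genuine obstacle, only bookkeeping: the one point requiring a little care is that the non-equivariant statement must be reduced to the $T(d)$-equivariant one via the cohomological analogue of Proposition~\ref{abel2}, so that the localization (fixed-point) theorem is available; this is exactly the role played by \cite[Section 4.2]{d} and by Takeda's theorem in the $K$-theoretic case, and the vanishing-cycle functor's compatibility with the relevant pullbacks and proper pushforwards (already invoked in Definition~\ref{coproduct}) is what makes the substitution mechanical. Hence the cohomological $(m,\Delta,R)$ satisfies the same commuting square, i.e.\ $\text{CoHA}(Q,W)$ is a braided bialgebra.
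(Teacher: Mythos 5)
Your proposal is correct and follows essentially the same route as the paper: both observe that the proof of Theorem~\ref{bialgebra} reduces to Propositions~\ref{resp} and \ref{compo}, the Localization Theorem~\ref{loc}, and Proposition~\ref{zerodiv}, and that all of these have critical-cohomology counterparts (with the Euler-class formula $i^*i_*(x)=\mathrm{eu}(N_{Y/X})\,x$, the cohomological localization theorem, and the non-zero-divisor property supplied by \cite{d}). The paper's proof is simply a compressed version of yours, citing \cite[Prop.~2.16]{d}, \cite[Prop.~4.1]{d}, and the proof of \cite[Thm.~5.13]{d} where you invoke the analogues by name.
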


\begin{proof}
%We only discuss the non-equivariant case.
The functors used to define the product, coproduct, and the $R$-matrix in $K$-theory are also defined for critical cohomology, and we can use them to define these operations for $\text{CoHA}$. The proof of Theorem \ref{bialgebra} uses Propositions \ref{change} and \ref{compo}; all the functors and theorems used in their proofs exist and hold in cohomology, for example for $i:Y\to X$ a $T$-equivariant closed immersion and a regular function $f:X/T\to\mathbb{A}^1$, we have that $$i^*i_*(x)=\text{eu}_1(N_{Y/X})\,x\in H^{\cdot}(Y,\varphi_f\mathbb{Q}),$$ see \cite[Proposition 2.16]{d}.  Further, the proof of Theorem \ref{bialgebra} uses the Localization Theorem \ref{loc} and Proposition \ref{zerodiv}, both of which hold in critical cohomology and were proved in \cite{d}, see Proposition $4.1$ and the proof of Theorem $5.13$ in loc. cit.
\end{proof}

\section{The Chern character}\label{ch}
In this section, we assume that the potential $W$ is
homogeneous of positive degree with respect to some non-negative weights attached to the edges. We will construct a Chern character in the following situation. %This implies that the fibers of $\text{Tr}\,W$ over non-zero points are all isomorphic for all dimension vectors $d\in\mathbb{N}^I$.
%Let $n+1=\text{dim}\,\X(d)$. 
Let $\X=V/G$ be a quotient stack, where $G$ is a reductive group and $V$ is an affine variety with a $G$-action. Consider a regular map $f:\X\to\mathbb{A}^1$ homogeneous of positive degree with respect to an action of $\mathbb{G}_m$ with non-negative weights on $\X$. Let $n+1=\text{dim}\,\X$. The fibers of $f$ over non-zero points are all isomorphic. 

We construct a Chern character map: 
$$\text{ch}:K_0(D_{sg}(\X_0))\to H^{\cdot}(\X_0,\varphi_f \mathbb{Q})$$ such that its associated graded with respect to the cohomological filtration:
$$\text{ch}: \text{gr}\,K_0(D_{sg}(\X_0))\to H^{\cdot}(\X_0,\varphi_f \mathbb{Q})$$
commutes with pullbacks $f:\Y\to\X$, where $f$ is smooth or a closed immersion, and
commutes with proper pushforward $p:\X\to\Y$. 
\\

If $f$ is zero on $\X$, then $$\X_0=\X\times_{\mathbb{A}^1}0=\text{Spec}\,\left(\OO_{\X}\xrightarrow{0}\OO_{\X}\right).$$
Consider the morphism $i:\X_0\to\X$ induced by 
\begin{tikzcd}
\OO_{\X}\arrow{r}{0}& \OO_{\X}\\
0\arrow{u} \arrow{r}& \OO_{\X}. \arrow{u}{\text{id}}
\end{tikzcd}
\\
The pushforward $i_*$ induces an isomorphism:
$$i_*: K_0(D_{sg}(\X_0))\cong K_0(\X).$$
Consider the Chern character induced by the Chern character for $\X$:
$$K_0(D_{sg}(\X_0))\xrightarrow{i_*} K_0(\X)\xrightarrow{\text{ch}} H^{\cdot}(\X).$$
Next, assume that $f$ is nonzero.
Consider the diagram:

\begin{tikzcd}
K_0(\X_0)\arrow{d}{\text{ch}} \arrow{r}{i_0^*}& G_0(\X_0) \arrow{d}{\text{ch}} \arrow[r, twoheadrightarrow] & K_0(D_{sg}(\X_0)) \\
H^{\cdot}(\X_0) \arrow{r}{\cap \,o} & H^{\cdot+2}(\X,\X-\X_0)=H^{BM}_{2n-\cdot}(\X_0), & 
\end{tikzcd}
\\
where $o\in H^2(\X,\X-\X_0)=H^{BM}_{2n}(\X_0)$ is the fundamental class of $\X_0$ and the left lower map is $$H^{\cdot}(\X_0)\cong H^{\cdot}(\X)\xrightarrow{\cap\, o} H^{BM}_{2n-\cdot}(\X_0).$$

We recall the definition of vanishing and nearby cycles \cite[Constructible sheaves]{kash}.
Consider the diagram:

\begin{tikzcd}
\widetilde{X} \arrow{r}{\widetilde{f}}\arrow{d}{p} & \C \arrow{d}{\text{exp}}\\
X \arrow{r}{f}& \C.
\end{tikzcd}
\\
Let $i_0:\X_0\to \X$ be the inclusion of the zero fiber. Consider the complex $K=[p_!\mathbb{Q}_{\C}\to \mathbb{Q}_{\C}]$, and define the nearby and vanishing cycle sheaves as: 
$$\psi_f\mathbb{Q}=i_0^*R\mathcal{H}om(f^*p_!\mathbb{Q}_{\mathbb{C}},\mathbb{Q})\text{ and }\varphi_f\mathbb{Q}=i_0^*R\mathcal{H}om(f^*K,\mathbb{Q}),$$
where we have used $\mathcal{H}om$ for the $\text{Hom}$ sheaf in the category of constructible sheaves on $\X$. Both $\varphi_f$ and $\psi_f$ come with an action $T$ of the monodromy around $0$. 
They fit is a distinguished triangle:
$$\psi_f\mathbb{Q}[-1]\xrightarrow{\text{can}} \varphi_f\mathbb{Q}\to i^*\mathbb{Q}\to.$$
Using the results in \cite[Section 7]{dp} for homogeneous regular functions, we can identify the long exact sequence from the above distinguished triangle with a long exact sequence in relative cohomology:

\begin{tikzcd}
H^{\cdot}(\X_0,\varphi_f\mathbb{Q}) \arrow{r}& H^{\cdot}(\X_0) \arrow{r}& H^{\cdot}(\X_0,\psi_f\mathbb{Q})\\
H^{\cdot}(\X,\X_1)\arrow{r}\arrow{u}{\text{iso.}} &H^{\cdot}(\X)\arrow{r} \arrow{u}{\text{id.}}& H^{\cdot}(\X_1).\arrow{u}{\text{iso.}}
\end{tikzcd}
\\
Using the Wang long exact sequence, the monodromy $T$ fits in a long exact sequence:
$$\cdots\to H^{\cdot-1}(\X_1)\xrightarrow{T-\text{id}} H^{\cdot-1}(\X_1)\cong H^{\cdot}(\X-\X_0,\X_1)\to H^{\cdot}(\X-\X_0)\to H^{\cdot}(\X_1)\to\cdots.$$
We also need to recall the definition of the variation triangle. For this, 
consider the distinguished triangle:

\begin{tikzcd}
p_!\mathbb{Q}_{\C}\arrow{d} \arrow{r}{1-T} & p_!\mathbb{Q}_{\C} \arrow{d}{\text{tr}} \arrow{r}{\text{id}}& \mathbb{Q}_{\mathbb{C}^*}\arrow{d}\\
0\arrow{r}& \mathbb{Q}_{\C}\arrow{r}{\text{id}}& \mathbb{Q}_{\C}.
\end{tikzcd}
\\
It induces the variation triangle:
$$i^!\mathbb{Q}\to \varphi_f\mathbb{Q}\xrightarrow{\text{var}} \psi_f\mathbb{Q}\to.$$
The map $H^{\cdot}(\X_0,\varphi_f\mathbb{Q})\to H^{\cdot}(\X_0,\psi_f\mathbb{Q})$ can be identified with the map $H^{\cdot}(\X,\X_1)\to H^{\cdot-1}(\X_1)\cong H^{\cdot}(\X-\X_0,\X_1)$ from the diagram:

\begin{tikzcd}
H^{\cdot-1}(\X_1)\arrow{d}{\text{id.}} \arrow{r}& H^{\cdot}(\X,\X_1) \arrow{d} \arrow{r}& H^{\cdot}(\X)\arrow{d}\\
H^{\cdot-1}(\X_1)\arrow{r}{T-\text{id}}& H^{\cdot}(\X-\X_0,\X_1)\arrow{r} & H^{\cdot}(\X-\X_0).
\end{tikzcd}
\\
The map $b:H^{\cdot}(\X,\X-\X_0)\to H^{\cdot}(\X,\X_1)$ from the long exact sequence induced by the variation triangle is obtained from the natural restriction.

\begin{prop}\label{factors}
The map $H^{\cdot}(\X)\xrightarrow{\cap \,o} H^{\cdot+2}(\X,\X-\X_0)$ factors through $$H^{\cdot}(\X)\xrightarrow{i_1^*}H^{\cdot}(\X_1)\cong H^{\cdot+1}(\X-\X_0,\X_1)\to
H^{\cdot+1}(\X-\X_0)\xrightarrow{\partial} H^{\cdot+2}(\X,\X-\X_0).$$
\end{prop}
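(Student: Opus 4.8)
The plan is to compare the two maps $H^{\cdot}(\X) \to H^{\cdot+2}(\X,\X-\X_0)$ by factoring both through the cohomology of the generic fiber $\X_1$ and the complement $\X-\X_0$. First I would recall that, since $f$ is homogeneous of positive degree with non-negative weights, the map $o \colon \X \to \X$ contracting onto $\X_0$ (equivalently, the $\mathbb{G}_m$-flow) identifies $\X$ with a neighborhood of $\X_0$ up to homotopy, so that $H^{\cdot}(\X) \cong H^{\cdot}(\X_0)$ and the cap product $\cap\, o$ with the fundamental class $o \in H^2(\X,\X-\X_0)$ is exactly the connecting/Gysin map in the long exact sequence of the pair $(\X,\X-\X_0)$ preceded by the isomorphism $H^{\cdot}(\X) \cong H^{\cdot}(\X_0)$. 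This is the sense in which the statement is a statement about the long exact sequence of $(\X,\X-\X_0)$.

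The key step is the observation that $\X-\X_0$ is a fiber bundle over $\mathbb{G}_m$ with fiber $\X_1$, so the inclusion $\X_1 \hookrightarrow \X-\X_0$ together with the Wang sequence gives the isomorphism $H^{\cdot}(\X_1) \cong H^{\cdot+1}(\X-\X_0,\X_1)$ already recorded in the excerpt (the Wang long exact sequence with the monodromy $T$). I would then contemplate the commutative ladder relating the long exact sequence of $(\X,\X-\X_0)$ to the long exact sequence of $(\X,\X_1)$ via the natural map $\X_1 \to \X-\X_0$; concretely, restriction along $\X_1 \hookrightarrow \X$ and along $\X_1 \hookrightarrow \X - \X_0$ fit into a commuting square whose bottom row, after the Wang identification, produces the composite
\[
H^{\cdot}(\X) \xrightarrow{i_1^*} H^{\cdot}(\X_1) \cong H^{\cdot+1}(\X-\X_0,\X_1) \to H^{\cdot+1}(\X-\X_0) \xrightarrow{\partial} H^{\cdot+2}(\X,\X-\X_0).
\]
Chasing this square (using that the fundamental class $o$ restricts, under the flow contraction, to the class dual to the fiber inclusion) shows this composite equals $\cap\, o$ up to the identifications already in place. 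The homogeneity hypothesis enters precisely to guarantee that $\X_1$ is a deformation retract of $\X-\X_0$ onto which $o$ restricts correctly, and that $H^{\cdot}(\X) \cong H^{\cdot}(\X_1)$-type statements compose compatibly.

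I expect the main obstacle to be bookkeeping: keeping track of the degree shifts and signs across the three long exact sequences (the pair $(\X,\X-\X_0)$, the pair $(\X,\X_1)$, and the Wang sequence), and verifying that the fundamental class $o$ is genuinely the image of the Thom/fiber class under the maps in question rather than only up to a unit. The cited results \cite[Section 7]{dp} on homogeneous regular functions are what make these identifications hold on the nose, and I would invoke them to close the diagram chase rather than re-deriving the retraction by hand.
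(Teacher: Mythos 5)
Your proposal correctly identifies the diagrammatic skeleton — the long exact sequences of the pair $(\X,\X-\X_0)$ and the triple $(\X,\X-\X_0,\X_1)$, the Wang isomorphism $H^{\cdot}(\X_1)\cong H^{\cdot+1}(\X-\X_0,\X_1)$, and the idea of factoring $\cap\,o$ by locating $o$ in the image of a boundary map — but it stops short of the two steps that actually carry the proof.

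First, the reduction is more specific than "chase the square": because cap product is compatible with connecting homomorphisms, to factor $x\mapsto x\cap o$ through $i_1^*$ it suffices to exhibit a class $u\in H^{1}(\X-\X_0,\X_1)\cong H^{0}(\X_1)$ whose image under the composite $H^{1}(\X-\X_0,\X_1)\to H^{1}(\X-\X_0)\xrightarrow{\partial}H^{2}(\X,\X-\X_0)$ is $o$. You allude to this only as "the fundamental class $o$ restricts ... to the class dual to the fiber inclusion," which conflates a restriction with a lift along a boundary map and never states the reduction cleanly. Second, and more importantly, producing such a $u$ is the nontrivial content and you defer it entirely to \cite[Section 7]{dp}. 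The paper does something concrete: it observes that the bottom row $H^{1}(\X-\X_0,\X_1)\to H^{2}(\X,\X-\X_0)\xrightarrow{b}H^{2}(\X,\X_1)$ is exact (long exact sequence of the triple), identifies $b$ with a map appearing in the variation triangle $i^!\mathbb{Q}\to\varphi_f\mathbb{Q}\xrightarrow{\text{var}}\psi_f\mathbb{Q}$, and argues that $\ker(b)$ is generated by $o$. Exactness then forces $o$ into the image of the boundary map, i.e. $o=\partial(u)$ for some $u\in H^{1}(\X-\X_0,\X_1)\cong H^0(\X_1)$. Your proposal does not mention the variation triangle or the map $b$ at all, so the single fact on which the proposition turns — that $b(o)=0$ — is left unjustified. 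Without this, "chasing the square" does not close: nothing in your ladder forces $\cap\,o$ to factor rather than merely be comparable to the stated composite.

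A secondary concern: your opening paragraph asserts that $\cap\,o$ "is exactly the connecting/Gysin map in the long exact sequence of the pair $(\X,\X-\X_0)$ preceded by $H^{\cdot}(\X)\cong H^{\cdot}(\X_0)$." The connecting map of that pair goes $H^{\cdot}(\X-\X_0)\to H^{\cdot+1}(\X,\X-\X_0)$, not $H^{\cdot}(\X)\to H^{\cdot+2}(\X,\X-\X_0)$; the Gysin/Thom identification you want is cap product with the fundamental class of the divisor, which is not a map in the long exact sequence of that pair. This imprecision is symptomatic of the missing reduction described above. If you make the reduction to $o=\partial(u)$ explicit and then prove $b(o)=0$ via the variation triangle, the rest of your diagram chase is the correct, standard bookkeeping.
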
 

\begin{proof}
It is enough to show that there exists $u\in H^1(\X-\X_0,\X_1)\cong H^0(\X_1)$ such that $\partial(u)=o$. For this, consider the diagram:

\begin{tikzcd}
H^1(\X-\X_0) \arrow[r, hookrightarrow,"\partial"]& H^2(\X,\X-\X_0) \arrow{r}{b} & H^2(\X,\X_1) \\
H^1(\X-\X_0,\X_1)\arrow{u} \arrow{r} & H^2(\X,\X-\X_0) \arrow[u, "\text{id.}"] \arrow{r}{b} & H^2(\X,\X_1). \arrow{u}{\text{id}.}
\end{tikzcd}
\\
The bottom line is part of the long exact sequence in relative cohomology. The map $$b:H^2(\X,\X-\X_0)=H_{2n}^{BM}(\X_0)\to H^2(\X,\X_1)=H^{BM}_{2n}(\X_0,\varphi_f\mathbb{Q})$$ is part of the variation distinguished triangle, and the kernel of $b$ is generated by $o$. By the bottom line, the kernel of $b$ is generated by $\delta(u)$, where $u$ is a generator of $H^0(\X_1)\cong H^1(\X-\X_0,\X_1)$, and thus we can choose such an $u$ such that $o=\delta(u)$.

%We claim that $o=\partial(u)$, where $u\in H^1(\X-\X_0)$. To show this, it is enough to prove the statement for schemes and then approximate the quotient stacks $\X$ with schemes $S$ such that $H^i(\X)\cong H^i(S)$ for $i\leq 3$. We can thus reduce to the case of a regular function on a smooth scheme $f:S\to\C$ with only one singular value at $0$ and such that all the other fibers of $f$ are isomorphic; we further assume that $H^1(S)\cong H^1(\X)=0$. Let $S_0'$ be an irreducible component of $S_0$ of top dimension $s$ and consider the diagram:

\end{proof}

\begin{prop}
The composition $$H^{\cdot}(\X)\xrightarrow{\cap \,o} H^{\cdot+2}(\X,\X-\X_0)\xrightarrow{b} H^{\cdot+2}(\X_0,\varphi_f\mathbb{Q})$$ is zero and the image of the map $H^{\cdot+2}(\X,\X-\X_0)\to H^{\cdot+2}(\X_0,\varphi_f\mathbb{Q})$ lies in the monodromy fixed part $H^{\cdot+2}(\X_0,\varphi_f\mathbb{Q})^{T-1}$.
\end{prop}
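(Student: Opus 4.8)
The plan is to chase the two long exact sequences that have already been set up in this section — the one from the variation triangle and the one from the nearby/vanishing cycle triangle — and to reduce everything to the factorization established in Proposition \ref{factors}. Recall from that proposition that the map $\cap\, o : H^{\cdot}(\X) \to H^{\cdot+2}(\X,\X-\X_0)$ factors as
$$H^{\cdot}(\X)\xrightarrow{i_1^*}H^{\cdot}(\X_1)\cong H^{\cdot+1}(\X-\X_0,\X_1)\xrightarrow{}H^{\cdot+1}(\X-\X_0)\xrightarrow{\partial}H^{\cdot+2}(\X,\X-\X_0).$$
So to see that the composition with $b:H^{\cdot+2}(\X,\X-\X_0)\to H^{\cdot+2}(\X_0,\varphi_f\mathbb{Q})$ vanishes, it is enough to observe that $b\circ\partial=0$, which is immediate because $\partial$ and $b$ are two consecutive maps in the long exact sequence associated to the variation triangle $i^!\mathbb{Q}\to\varphi_f\mathbb{Q}\xrightarrow{\mathrm{var}}\psi_f\mathbb{Q}\to$, after we identify $H^{\cdot}(\X_0,\psi_f\mathbb{Q})$ with $H^{\cdot-1}(\X_1)\cong H^{\cdot}(\X-\X_0,\X_1)$ and $H^{\cdot}(\X,\X-\X_0)$ with $H^{BM}_{\cdot}(\X_0)$ as done above. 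The first displayed diagram in the proof of Proposition \ref{factors} already exhibits exactly these two arrows as consecutive, so the composition is zero by exactness.

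First I would make the identifications explicit: using the diagram with the vanishing/nearby triangle and the Wang sequence, $H^{\cdot+2}(\X,\X-\X_0)$ is $H^{BM}_{2n-\cdot}(\X_0)$, and the variation triangle identifies the map $b$ with the natural restriction $H^{\cdot+2}(\X,\X-\X_0)\to H^{\cdot+2}(\X,\X_1)$, whose target is $H^{\cdot+2}(\X_0,\varphi_f\mathbb{Q})$ shifted appropriately. Then the long exact sequence of the pair $(\X-\X_0,\X_1)$ together with the boundary $\partial:H^{\cdot+1}(\X-\X_0)\to H^{\cdot+2}(\X,\X-\X_0)$ sits, via the second diagram in the proof of Proposition \ref{factors}, directly before $b$; hence $b\partial=0$. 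Since $\cap\,o$ factors through $\partial$, we get $b\circ(\cap\,o)=0$.

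For the second assertion — that the image of $H^{\cdot+2}(\X,\X-\X_0)\to H^{\cdot+2}(\X_0,\varphi_f\mathbb{Q})$ lies in the monodromy-invariant part $H^{\cdot+2}(\X_0,\varphi_f\mathbb{Q})^{T-1}$ — I would argue as follows. The monodromy $T$ acts on $\varphi_f\mathbb{Q}$ and hence on $H^{\cdot}(\X_0,\varphi_f\mathbb{Q})$, and under the identification of $H^{\cdot}(\X_0,\varphi_f\mathbb{Q})$ with $H^{\cdot}(\X,\X_1)$ the class $T-1$ corresponds, via the variation and nearby-cycle triangles, to the composite $H^{\cdot}(\X,\X_1)\xrightarrow{\mathrm{var}}H^{\cdot-1}(\X_1)\xrightarrow{(T-\mathrm{id})=0\text{ on the image}}\cdots$; more precisely the Wang sequence identifies $T-\mathrm{id}$ on $H^{\cdot}(\X_1)$ with the map appearing in the diagram relating $H^{\cdot-1}(\X_1)\to H^{\cdot}(\X-\X_0,\X_1)\to H^{\cdot}(\X-\X_0)$. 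Anything in the image of $H^{\cdot+2}(\X,\X-\X_0)$ maps to zero under $b$ restricted further to $H^{\cdot+1}(\X-\X_0,\X_1)$ in the relevant direction — equivalently, $\mathrm{can}\circ\mathrm{var}=T-1$ and $\mathrm{var}\circ\mathrm{can}=T-1$ as operators on $\varphi_f$ and $\psi_f$ respectively, so a class pulled back from $H^{\cdot+2}(\X,\X-\X_0)=H^{BM}(\X_0)$ (i.e. coming from $i^!\mathbb{Q}$, the first term of the variation triangle) is annihilated by $\mathrm{var}$, hence fixed by $T$. I would phrase this last point by noting that the image of $b$ is precisely the image of $H^{\cdot}(\X_0,i^!\mathbb{Q})=H^{BM}(\X_0)\to H^{\cdot}(\X_0,\varphi_f\mathbb{Q})$, and on this image $\mathrm{var}=0$, so $T-1=\mathrm{can}\circ\mathrm{var}=0$.

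The main obstacle I anticipate is not any deep input but bookkeeping: keeping straight the three triangles (nearby/vanishing, variation, and the "can" triangle), their induced long exact sequences, the degree shifts coming from the identification $H^{\cdot+2}(\X,\X-\X_0)\cong H^{BM}_{2n-\cdot}(\X_0)$, and the precise diagrams already displayed, so that the two arrows in question are genuinely seen to be consecutive in one exact sequence and so that "lies in $H^{\cdot+2}(\X_0,\varphi_f\mathbb{Q})^{T-1}$" is read off correctly from $\mathrm{var}$ vanishing on the image of $i^!\mathbb{Q}$. Everything else is a formal diagram chase in the diagrams already set up in this section, using the homogeneity of $f$ (via \cite[Section 7]{dp}) only to justify those identifications, which has already been invoked.
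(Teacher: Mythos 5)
Your argument is correct and follows essentially the same route as the paper: for the first claim you use the factorization of $\cap\,o$ through the connecting map $\partial$ from Proposition \ref{factors} together with exactness of the triple/variation long exact sequence to get $b\circ\partial=0$, and for the second claim you use $T-1=\mathrm{can}\circ\mathrm{var}$ and the fact that $\mathrm{var}$ annihilates the image of $b$ (the latter being consecutive maps in the same long exact sequence, which the paper phrases equivalently by observing both $b$ and $\mathrm{var}$ are restriction maps on relative cohomology). The only cosmetic difference is that the paper explicitly runs the composite starting from $H^{\cdot}(\X)$, whereas you correctly note the stronger statement that all of $\mathrm{im}(b)$ — not just its intersection with $\mathrm{im}(\cap\,o)$ — is monodromy-fixed, which is what the proposition actually asserts.
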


\begin{proof}
By Proposition \ref{factors}, the map $\cap \,o$ factors as:
$$H^{\cdot}(\X)\xrightarrow{\cap \,u} H^{\cdot+1}(\X-\X_0,\X_1)\to
H^{\cdot+1}(\X-\X_0)\xrightarrow{\partial} H^{\cdot+2}(\X,\X-\X_0).$$
The variation long exact sequence is:
$$\cdots\to H^{\cdot+1}(\X-\X_0,\X_1)\xrightarrow{\partial} H^{\cdot+2}(\X,\X-\X_0)\xrightarrow{b} H^{\cdot+2}(\X,\X_1)\cong H^{\cdot+2}(\X_0,\varphi_f\mathbb{Q})\to \cdots,$$
so the composite $H^{\cdot}(\X)\xrightarrow{\cap \,o} H^{\cdot+2}(\X,\X-\X_0)\xrightarrow{b} H^{\cdot+2}(\X_0,\varphi_f\mathbb{Q})$ is indeed zero. 

Next, the map $T-1=\text{can}\,\text{var},$ so it is enough to show that the following composite is zero:
$$H^{\cdot}(\X)\xrightarrow{\cap\,u} H^{\cdot+1}(\X,\X-\X_0)\to H^{\cdot+2}(\X,\X_1)\xrightarrow{\text{var}} H^{\cdot+2}(\X-\X_0,\X_1)\xrightarrow{\text{can}} H^{\cdot+2}(\X,\X_1).$$ The composite of the middle two maps is zero because both maps are restrictions on relative cohomology; this means that indeed the image lie in $H^{\cdot+2}(\X_0,\varphi_f\mathbb{Q})^{T-1}$.
\end{proof}

\begin{prop}\label{defcherngr}
There exists a Chern character map $\text{ch}:K_0(D_{sg}(\X_0))\to H^{\cdot}(\X_0, \varphi_f\mathbb{Q})$ such that the following diagram commutes: 

\begin{tikzcd}
K_0(\X_0)\arrow{d}{ch} \arrow{r}& G_0(\X_0) \arrow{d}{ch} \arrow[r, twoheadrightarrow] & K_0(D_{sg}(\X_0))
\arrow{d}{ch}\\
H^{\cdot}(\X_0) \arrow{r}{\cap \,o} & H^{\cdot+2}(\X,\X-\X_0)=H^{BM}_{2d-\cdot}(\X_0)\arrow{r}{b} & H^{\cdot}(\X_0,\varphi_f\mathbb{Q}). 
\end{tikzcd}
\\
The image of $\text{ch}:K_0(D_{sg}(\X_0))\to H^{\cdot}(\X_0,\varphi_f\mathbb{Q})$ lies inside $$H^{\cdot}(\X_0,\varphi_f\mathbb{Q})^{T-1}\cap b\left(H^{BM}(\X_0)_{\text{alg}}\right).$$
\end{prop}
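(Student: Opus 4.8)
The plan is to construct $\text{ch}$ on $K_0(D_{sg}(\X_0))$ by descending the Chern character on $G_0(\X_0)$ along the surjection $G_0(\X_0)\twoheadrightarrow K_0(D_{sg}(\X_0))$, using the two preceding propositions to guarantee that the composite $b\circ(\cap\,o)\circ\text{ch}\colon K_0(\X_0)\to H^{\cdot}(\X_0,\varphi_f\mathbb{Q})$ kills the image of $K_0(\X_0)\to G_0(\X_0)$. First I would recall that $D_{sg}(\X_0)=D^b(\X_0)/\text{Perf}(\X_0)$, so that $K_0(D_{sg}(\X_0))$ is the cokernel of $K_0(\X_0)\to G_0(\X_0)$; hence a map out of $K_0(D_{sg}(\X_0))$ is the same as a map out of $G_0(\X_0)$ that vanishes on the image of $K_0(\X_0)$. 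Composing the Chern character $\text{ch}\colon G_0(\X_0)\to H^{BM}_{\cdot}(\X_0)\cong H^{\cdot+2}(\X,\X-\X_0)$ from Subsection \ref{singularGRR} with the map $b\colon H^{\cdot+2}(\X,\X-\X_0)\to H^{\cdot}(\X_0,\varphi_f\mathbb{Q})$ of the variation triangle, and using the commutativity of the left square in the displayed diagram (which expresses that $\text{ch}$ on $K_0(\X_0)$ followed by the $G$-theory Chern character is $\cap\,o$, a consequence of Grothendieck--Riemann--Roch for the closed immersion $i_0\colon\X_0\to\X$ applied to the two-term Koszul-type resolution of $i_{0*}\OO_{\X_0}$), reduces the vanishing on $\im(K_0(\X_0)\to G_0(\X_0))$ to the statement that $b\circ(\cap\,o)=0$, which is exactly the content of the Proposition immediately preceding. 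Therefore $b\circ\text{ch}\colon G_0(\X_0)\to H^{\cdot}(\X_0,\varphi_f\mathbb{Q})$ factors through a well-defined map $\text{ch}\colon K_0(D_{sg}(\X_0))\to H^{\cdot}(\X_0,\varphi_f\mathbb{Q})$ making the right-hand square commute, and the middle square commutes by construction.

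For the statement about the image, I would argue in two parts. The inclusion of the image in $H^{\cdot}(\X_0,\varphi_f\mathbb{Q})^{T-1}$ is immediate from the second clause of the preceding Proposition, which says the image of $b\colon H^{\cdot+2}(\X,\X-\X_0)\to H^{\cdot+2}(\X_0,\varphi_f\mathbb{Q})$ already lands in the monodromy-invariant part; since every element of $\text{ch}(K_0(D_{sg}(\X_0)))$ is, by the right square, of the form $b(\alpha)$ for some $\alpha\in H^{BM}_{\cdot}(\X_0)=H^{\cdot+2}(\X,\X-\X_0)$, it lies in $H^{\cdot}(\X_0,\varphi_f\mathbb{Q})^{T-1}$. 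For the inclusion in $b(H^{BM}(\X_0)_{\text{alg}})$, I would observe that $G_0(\X_0)$ is generated by classes of coherent sheaves, whose $G$-theory Chern characters lie in the algebraic part $H^{BM}(\X_0)_{\text{alg}}$ (the image of the cycle class map $A_{\cdot}(\X_0)\to H^{BM}_{\cdot}(\X_0)$, using the Edidin--Graham/Totaro model as in Subsection \ref{singularGRR}); applying $b$ then shows $\text{ch}(K_0(D_{sg}(\X_0)))\subseteq b(H^{BM}(\X_0)_{\text{alg}})$, and intersecting the two containments gives the claim.

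The only genuinely delicate point is the commutativity of the left-hand square, i.e.\ identifying $\text{ch}_{G_0}\circ(i_{0*})\colon K_0(\X_0)\to H^{BM}_{\cdot}(\X_0)$ with $\cap\,o$ after the identification $H^{BM}_{\cdot}(\X_0)\cong H^{\cdot+2}(\X,\X-\X_0)$ and $H^{\cdot}(\X_0)\cong H^{\cdot}(\X)$. This is GRR for the regular closed immersion $i_0\colon\X_0\hookrightarrow\X$: for $F\in D^b(\X_0)$ one has $\text{ch}(i_{0*}F)=i_{0*}\bigl(\text{ch}(F)\,\td(N_{\X_0/\X})^{-1}\bigr)$, and since $\X_0$ is cut out by the single function $f$ (so $N_{\X_0/\X}$ is trivial of rank one, hence $\td$ contributes only a harmless unit that does not affect the associated graded with respect to the cohomological filtration), the Gysin pushforward $i_{0*}$ corresponds under the identifications to $\cap\,o$ with $o\in H^2(\X,\X-\X_0)$ the refined fundamental class of $\X_0$. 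I would cite the GRR statement for lci morphisms of stacks (again via Edidin--Graham/Totaro) rather than reprove it; the zero-potential case handled at the start of the section is the base case where $i_{0*}\colon K_0(D_{sg}(\X_0))\cong K_0(\X)$ and the statement is tautological. Everything else — surjectivity of $G_0(\X_0)\to K_0(D_{sg}(\X_0))$, well-definedness of the variation map $b$, the long exact sequences — has already been set up in the text, so the proof is essentially a diagram chase once GRR for $i_0$ is invoked.
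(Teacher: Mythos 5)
The paper states this proposition without any proof, treating it as an immediate consequence of the two preceding propositions, and your overall strategy is exactly what that setup calls for: identify $K_0(D_{sg}(\X_0))$ with the cokernel of $K_0(\X_0)\to G_0(\X_0)$, descend $b\circ\text{ch}'$ through that cokernel, and invoke the preceding propositions for the required vanishing and the two image constraints. The image-in-$H^{\cdot}(\X_0,\varphi_f\mathbb{Q})^{T-1}$ claim via the second clause of the unnamed proposition, and the image-in-$b(H^{BM}(\X_0)_{\text{alg}})$ claim via the cycle-class argument, are both correctly handled, and the middle square commutes by construction as you say.

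The one place you are loose is the left square, and your GRR argument does not quite close it as written. GRR for $i_0$ with trivial Todd class gives $\text{ch}(i_{0*}E)=i_{0*}(\text{ch}(E)\cap o)$ in $H^{BM}_\cdot(\X)$, i.e.\ an equality \emph{after} pushing forward by $i_{0*}$. But both sides of the left square you want already lie in $\ker(i_{0*})$: the pushforward $K_0(\X_0)\to K_0(\X)$ is zero by Proposition \ref{pushzero}, and $i_{0*}o=c_1(\OO_\X(\X_0))\cap[\X]=0$ because $\OO_\X(\X_0)\cong\OO_\X$. Since $\ker(i_{0*})$ is exactly the image of the excision boundary $H^{BM}_{\cdot+1}(\X-\X_0)\to H^{BM}_\cdot(\X_0)$, GRR alone fixes the left square only modulo this image, not on the nose — especially given that the paper's $\text{ch}'$ is itself determined by the excision diagram only up to this same ambiguity. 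What your descent actually needs is the weaker statement $b\circ\text{ch}'\circ\iota=0$, and that does hold, but for a sharper reason than you give: for $E\in K_0(\X_0)$, $\iota(E)$ comes from $K_1(\X-\X_0)$ via the excision boundary (with representative built from the unit $f|_{\X-\X_0}$), the corresponding cohomology class restricts to zero on $\X_1=f^{-1}(1)$ because $f$ is constant there, and the variation long exact sequence (as used in the unnamed proposition via Proposition \ref{factors}) shows that $b$ annihilates boundaries of classes that vanish on $\X_1$. So the construction goes through and the route matches the paper's intent, but the GRR one-liner for the left square should be replaced or supplemented by this excision-boundary analysis.
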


\textbf{Remark 1.} The $\text{ch}:K_0(D_{sg}(\X_0))\to H^{\cdot}(\X_0,\varphi_f\mathbb{Q})$ is not always surjective because there might be classes in $H^{\cdot}(\X_0,\varphi_f\mathbb{Q})$ not generated by algebraic cycles.
To see this, consider the regular function: $$f=t(xy+1):\mathbb{A}^3\to\mathbb{A}^1.$$ Let $Z\subset\mathbb{A}^2$ be the zero locus of $xy+1=0$. By dimensional reduction \cite[Appendix A]{d}: $$H^{3}_c(\mathbb{A}^3,\varphi_f\mathbb{Q})=H_c^{1}(Z)=H_c^1(\mathbb{R}_{>0}\times S^1)=\mathbb{Q}.$$ 
After taking duals, this means that $H^{\text{odd}}(\mathbb{A}^3, \varphi_f\mathbb{Q})$
is nonzero, and its elements cannot be represented by algebraic cycles. 
This example can be modeled by the quiver with one vertex, three loops $t,x,$ and $y$, potential $f$, and dimension vector $d=1$.
\\

\textbf{Remark 2.} The $\text{ch}:K_0(D_{sg}(\X_0))\to H^{\cdot}(\X_0, \varphi_f\mathbb{Q})$ is not always injective, for example, because $K_0(D_{sg}(\X_0))$ might not be finitely generated. Consider $$f=t(y^2z-x^3-xz^2):\mathbb{A}^4\to\mathbb{A}^1.$$ Consider $f$ as a weight $1$ potential, where $t$ has weight $1$ and the other variables have weight zero. Let $X\subset \mathbb{A}^4$ be the zero locus of $f$, and $Z\subset \mathbb{A}^3$ be the zero locus of $y^2z-x^3-xz^2$. Using dimensional reduction in K-theory, see Theorem \ref{dimred0}, and in cohomology \cite[Appendix A]{d}, which are compatible by Proposition \ref{dimred}, the Chern character can be identified with:
$$\text{ch}:G_0(Z)\to H_{\cdot}^{BM}(Z).$$
The locus $y^2z-x^3-xz^2=0 \subset \mathbb{A}^3$ is the cone over an elliptic curve $CE$. Denote by $o$ the vertex of $CE$; then $CE-o\cong E\times\C^*$ and let $q:CE-o\to E$. Restriction to an open subset gives a surjection
$$G_0(CE)\twoheadrightarrow K_0(CE-o)$$
and we also have  $q^*: K_0(E)\hookrightarrow K_0(CE-o).$
The group $K_0(E)$ is infinite dimensional and $H_{\cdot}^{BM}(CE)$ is finite dimensional, so $\text{ch}$ is not injective. 
This example can be modeled by the quiver with one vertex, four loops $t,x,y,$ and $z$, potential $f$, and dimension $d=1$.
\\

\textbf{Remark 3.} The group $K_0(D_{sg}(\X_0))\otimes\mathbb{Q}$ can be zero, even if $H^{\cdot}(\X_0,\varphi_f\mathbb{Q})$ is not. Such an example is $t=x^n:\mathbb{C}\to\mathbb{C}$ for $n\geq 2$. The category $D^b(k[x]/x^n)$ is generated by $k$, and its subcategory $\text{Perf}(k[x]/x^n)$ is generated by $k[x]/x^n$, which can be obtained as an $n$ fold extension of $k$. This means that $$n K_0(D_{sg}(k[x]/x^n))=0\text{ and so }K_0(D_{sg}(k[x]/x^n))\otimes\mathbb{Q}=0.$$ On the other hand, $\text{dim}_{\mathbb{Q}}\,H^{\cdot}(\mathbb{A}^1,\varphi_{x^n}\mathbb{Q})=n-1.$ Note that the monodromy fixed part $H^{\cdot}(\mathbb{A}^1,\varphi_{x^n}\mathbb{Q})^{T-1}$ is trivial.
\\

\begin{prop}\label{dimred}
Let $V\times\mathbb{A}^n$ be a representation of a reductive group $G$ and denote by $\X=X\times\mathbb{A}^n/G$. Consider a regular function:
$$g=t_1f_1+\cdots+t_nf_n:\X\to\mathbb{A}^1,$$
where $t_1,\cdots, t_n$ be the coordinates on 
$\mathbb{A}^n$ and 
$f_1,\cdots, f_n: X\to\mathbb{A}^1$ are 
$G$-equivariant regular functions. Let $\X_0\subset\X$ be the zero locus of $g$ and $i:\mathcal{Z}\hookrightarrow \mathcal{X}_0$ the zero locus of $f_1,\cdots, f_n$. 
The following diagram commutes:

\begin{tikzcd}
\text{gr}\,G_0(\mathcal{Z}) \arrow{r}{i_*} \arrow{d}{ch} & \text{gr}\,K_0(D_{sg}(\X_0)) \arrow{d}{ch}  \\
H^{BM}_{\cdot}(\mathcal{Z}) \arrow{r}{i_*} & H^{BM}_{\cdot}(\X_0,\varphi_f\mathbb{Q}).
\end{tikzcd}
\\
By dimensional reduction in K-theory, see Theorem \ref{dimred0}, and in cohomology \cite[Appendix A]{d}, the horizontal maps are isomorphisms.
\end{prop}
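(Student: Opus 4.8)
The plan is to trace both the top map $i_*$ and the bottom map $i_*$ through the ambient singular stack $\X_0$, thereby reducing the statement to the compatibility between the Chern character $\text{ch}'$ for singular stacks constructed in Subsection \ref{singularGRR} and the Chern character on categories of singularities constructed in Proposition \ref{defcherngr}. The two inputs that make this possible are, first, that by the proof of Theorem \ref{dimred0} the K-theoretic dimensional reduction isomorphism $G_0(\mathcal{Z})\xrightarrow{\sim}K_0(D_{sg}(\X_0))$ is \emph{literally} the composite of the proper pushforward $i_*\colon G_0(\mathcal{Z})\to G_0(\X_0)$ along the closed immersion $\mathcal{Z}\hookrightarrow\X_0$ with the defining surjection $G_0(\X_0)\twoheadrightarrow K_0(D_{sg}(\X_0))$ --- this is exactly the reason Theorem \ref{dimred0} is needed here rather than Isik's derived equivalence, which is produced by Koszul duality and is not manifestly geometric --- and, second, that the cohomological dimensional reduction isomorphism of \cite[Appendix A]{d} is the composite of the proper pushforward $i_*\colon H^{BM}_{\cdot}(\mathcal{Z})\to H^{BM}_{\cdot}(\X_0)$ with the canonical comparison map $b\colon H^{BM}_{\cdot}(\X_0)=H^{\cdot+2}(\X,\X-\X_0)\to H^{\cdot+2}(\X,\X_1)=H^{BM}_{\cdot}(\X_0,\varphi_g\mathbb{Q})$ from the variation triangle, the same map $b$ that enters the definition of $\text{ch}$ in Proposition \ref{defcherngr}.

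Granting these identifications, the verification is a short diagram chase with two ingredients. The first is that the associated-graded Chern character $\text{ch}'$ of Subsection \ref{singularGRR} commutes with proper pushforward along the closed immersion $i\colon\mathcal{Z}\hookrightarrow\X_0$: this is the case of property (a) of Subsection \ref{singularGRR} with ambient smooth stack $\X$, the two closed immersions $\mathcal{Z}\hookrightarrow\X$ and $\X_0\hookrightarrow\X$, and proper map $p=\id_{\X}$, so that $\text{ch}'(i_*x)=i_*\,\text{ch}'(x)$ in $H^{BM}_{\cdot}(\X_0)$ for $x\in\text{gr}\,G_0(\mathcal{Z})$. The second is the right-hand square of the diagram in Proposition \ref{defcherngr}, which says $\text{ch}\bigl(\overline z\bigr)=b\bigl(\text{ch}'(z)\bigr)$ for $z\in\text{gr}\,G_0(\X_0)$, where $\overline z$ is the image of $z$ under $G_0(\X_0)\twoheadrightarrow K_0(D_{sg}(\X_0))$. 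Combining these with the descriptions of the two dimensional reduction maps above, for $x\in\text{gr}\,G_0(\mathcal{Z})$ one computes
\[ \text{ch}(i_*x) \;=\; b\bigl(\text{ch}'(i_*x)\bigr) \;=\; b\bigl(i_*\,\text{ch}'(x)\bigr) \;=\; i_*\bigl(\text{ch}'(x)\bigr), \]
which is precisely the commutativity of the square. The same argument applies verbatim in the $T$-equivariant setting and for $D^{id}_{sg}$, since every functor and Chern character involved admits such versions.

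The main obstacle is the second identification invoked in the first paragraph: matching Davison's cohomological dimensional reduction isomorphism with ``geometric pushforward to $\X_0$ followed by $b$'', and keeping track of the degree shifts and orientation conventions relating $H^{BM}_{\cdot}(\X_0)=H^{\cdot+2}(\X,\X-\X_0)$ to the vanishing-cycle cohomology $H^{\cdot}(\X_0,\varphi_g\mathbb{Q})=H^{\cdot+2}(\X,\X_1)$ used in Section \ref{ch}. I would handle this by unwinding the construction in \cite[Appendix A]{d} against the long exact sequences of \cite[Section 7]{dp} for the homogeneous function $g$, which is the same toolkit already used to build $\text{ch}$ in Section \ref{ch}; once the comparison maps are set up compatibly the rest is formal. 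A secondary point, easy but worth stating, is that passing to the associated graded for the cohomological filtration is harmless: it is what makes $\text{ch}'$ strictly compatible with pushforward (no Todd correction), and the surjection $G_0(\X_0)\twoheadrightarrow K_0(D_{sg}(\X_0))$ and the map $b$ are filtered, so forming $\text{gr}$ commutes with the composites above.
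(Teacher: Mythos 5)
Your proof is correct and follows essentially the same route as the paper: factor both horizontal maps through $G_0(\X_0)$ and $H^{BM}_{\cdot}(\X_0)$, then apply singular Grothendieck--Riemann--Roch (property (a) of Subsection \ref{singularGRR}) for the left square and the right-hand square of Proposition \ref{defcherngr} for the right square. Your extra paragraph flagging the need to match Davison's cohomological dimensional reduction isomorphism with ``pushforward to $\X_0$ followed by $b$'' is a reasonable caution that the paper takes for granted, but the core diagram chase is identical.
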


\begin{proof}
Consider the diagram:

\begin{tikzcd}
\text{gr}\,G_0(\mathcal{Z}) \arrow{r}{i_*} \arrow{d}{\text{ch}} & \text{gr}\,G_0(\X_0) \arrow{r}{\text{surj.}} \arrow{d}{\text{ch}} & \text{gr}\,K_0(D_{sg}(\X_0)) \arrow{d}{\text{ch}} \\
H^{BM}_{\cdot}(\mathcal{Z}) \arrow{r}{i_*} & H^{BM}_{\cdot}(\X_0) \arrow{r} & H^{BM}_{\cdot}(\X_0,\varphi_f\mathbb{Q}).
\end{tikzcd}
\\
The right corner of the diagram commutes from Proposition \ref{defcherngr}. The left corner commutes from Grothendieck-Riemann-Roch, so the diagram indeed commutes. 
\end{proof}

\begin{prop}\label{grr}
Let $\X$ and $\Y$ be smooth stacks with a map $\X\to \Y$. Let $h:\Y\to\mathbb{A}^1$ be a regular function, and let $f:\X\to\mathbb{A}^1$ be the regular function induced by $h$.

The map $\text{ch}:\text{gr}\,K_0(D_{sg}(\X_0))\to H^{\cdot}(\X_0,\varphi_f\mathbb{Q})$ commutes with proper pushforwards along maps $p:\X\to\Y$;
with smooth pullbacks along maps $\pi:\X\to\Y$; and with pullbacks $\iota:\X\hookrightarrow\Y$ to closed smooth substacks, where $\X$ and $\Y$ are as above. 
\end{prop}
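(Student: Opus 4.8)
The plan is to reduce the statement about $\text{ch}:\text{gr}\,K_0(D_{sg}(\X_0))\to H^{\cdot}(\X_0,\varphi_f\mathbb{Q})$ to the corresponding functoriality statements for the Chern character $\text{ch}:G_0(\X_0)\to H^{BM}_{\cdot}(\X_0)$ on the category $D^b(\X_0)$, which were already established in Subsection \ref{singularGRR}. Concretely, recall from Proposition \ref{defcherngr} that $\text{ch}$ on $K_0(D_{sg}(\X_0))$ is defined by the commutative square that expresses it in terms of the surjection $G_0(\X_0)\twoheadrightarrow K_0(D_{sg}(\X_0))$ and the map $b\circ(\cap\,o):H^{\cdot}(\X_0)\to H^{\cdot}(\X_0,\varphi_f\mathbb{Q})$. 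So for each of the three functorialities it suffices to check: first, that the K-theoretic operations ($p_*$, $\pi^*$, $\iota^*$) on $K_0(D_{sg}(\X_0))$ lift compatibly to operations on $G_0(\X_0)$, which is exactly the content of Subsection \ref{functo}; second, that $\text{ch}:\text{gr}\,G_0(\X_0)\to H^{BM}_{\cdot}(\X_0)$ is compatible with those operations, which is Subsection \ref{singularGRR}(a),(b); and third, that the maps $\cap\,o$ and $b$ relating $H^{BM}_{\cdot}(\X_0)$ to $H^{\cdot}(\X_0,\varphi_f\mathbb{Q})$ are themselves compatible with pushforward and the two kinds of pullback.

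First I would treat proper pushforward $p:\X\to\Y$. Here $p$ being proper means $p_0:\X_0\to\Y_0$ is proper, so $p_{0*}$ is defined on $D^b(\X_0)$, on $D_{sg}(\X_0)$, and on Borel--Moore homology. The compatibility $\text{ch}\circ p_{0*}=p_{0*}\circ\text{ch}$ at the level of $\text{gr}\,G_0$ follows from Subsection \ref{singularGRR}(a) (Grothendieck--Riemann--Roch; note that since we pass to the associated graded with respect to the cohomological filtration, the Todd-class correction terms, being of strictly lower filtration, do not contribute). It then remains to check that $b\circ(\cap\,o_{\X})$ intertwines $p_{0*}$ with $p_{0*}$; this is a projection-formula type statement — $o_{\X}=p^*o_{\Y}$ when $\X\to\Y$ is, say, a composition of a smooth map and a closed immersion between smooth stacks of the appropriate dimensions, and $b$ is natural for pushforward because it comes from the variation triangle for $\varphi$, which is functorial for proper pushforward (this is the observation, used throughout the introduction, that $\varphi$ commutes with proper pushforward). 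I would then chase the diagram built out of the defining square of Proposition \ref{defcherngr} for $\X$ and for $\Y$, together with these two compatibilities, to conclude.

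Next I would treat smooth pullback $\pi:\X\to\Y$ and closed-immersion pullback $\iota:\X\hookrightarrow\Y$ together, since the argument is parallel: pullback is defined on $G_0$, on $D_{sg}$, and on $H^{BM}_{\cdot}$ in both cases (for a closed immersion one uses Gysin pullback, which exists here since $\Y$ is smooth), and Subsection \ref{singularGRR}(b) gives $\text{ch}\circ\pi^*=\pi^*\circ\text{ch}$ and $\text{ch}\circ\iota^*=\iota^*\circ\text{ch}$ on $\text{gr}\,G_0$. The remaining point is the compatibility of $b\circ(\cap\,o)$ with $\pi^*$ and $\iota^*$: for $\cap\,o$ this is the standard fact that the fundamental class pulls back to the fundamental class ($\pi^*o_{\Y_0}=o_{\X_0}$ for $\pi$ smooth, and similarly for the regular embedding $\iota$, with the caveat that in the embedded case one must keep track of the normal bundle, but again the correction is of lower cohomological filtration and dies in the associated graded), and for $b$ this follows from naturality of the variation triangle under pullback along maps over $\mathbb{A}^1$. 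Then the same diagram chase as before finishes the proof.

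The main obstacle I anticipate is bookkeeping rather than any single hard idea: one must make sure the identifications $H^{\cdot+2}(\X,\X-\X_0)\cong H^{BM}_{2n-\cdot}(\X_0)$ and the map $b$ behave correctly under the dimension shifts that occur when $\dim\X\neq\dim\Y$ (for $p$ proper and for $\iota$ a closed immersion), and that one is consistently working on the associated graded so that Todd classes, normal-bundle Euler-class corrections, and the difference between $K_0$ and $G_0$ with their respective filtrations all either match or are negligible. A secondary point to be careful about is that the vanishing-cycle constructions and the identification of the long exact sequences with relative-cohomology sequences in Section \ref{ch} require $f$ homogeneous of positive degree; since $\X\to\Y$ is assumed over $\mathbb{A}^1$ with $f$ induced from $h$, the homogeneity hypothesis on $h$ transfers to $f$, so this causes no trouble, but it should be stated.
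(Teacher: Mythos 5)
Your proposal follows the same overall reduction the paper uses for the main case — factor $\text{ch}$ on $K_0(D_{sg}(\X_0))$ through the surjection $G_0(\X_0)\twoheadrightarrow K_0(D_{sg}(\X_0))$ and the map $b\circ(\cap\,o)$, and then invoke GRR from Subsection~\ref{singularGRR}. In one respect you are more careful than the paper: the paper says only ``it is enough to show the corresponding statements for $\text{ch}:\text{gr}\,G_0(\X_0)\to H^{BM}_{\cdot}(\X_0)$'' and stops, whereas the reduction actually requires \emph{two} compatibilities — GRR for $\text{ch}_{G_0}$ \emph{and} naturality of $b$ with respect to the pushforward/pullback in question — and you make the second one explicit.

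However, there is a genuine gap: you are implicitly assuming $f\neq 0$ throughout. When $f$ vanishes identically on $\X$, Proposition~\ref{defcherngr} does not apply; the paper instead defines $\text{ch}$ via the isomorphism $i_*:K_0(D_{sg}(\X_0))\cong K_0(\X)$ and the ordinary Chern character on $K_0(\X)$, and the maps $\cap\,o$ and $b$ degenerate (e.g.\ $\X-\X_0=\emptyset$, so $\X_1$ is empty and the relative-cohomology description of $H^{\cdot}(\X_0,\varphi_f\mathbb{Q})$ breaks down). This is not a ``dimension-shift bookkeeping'' issue as you suggest; it is a qualitatively different definition. For $p$ proper and for $\iota$ a closed immersion one can have $h\neq 0$ on $\Y$ yet $f=0$ on $\X$ (the image lands inside $\Y_0$), so the paper splits each of those two cases into three subcases ($h=0$; $h\neq 0,\,f=0$; $h,f\neq 0$), with the middle one requiring its own mixed diagram comparing $\text{ch}:K_0(\X)\to H^{\cdot}(\X)$ against $\text{ch}:K_0(D_{sg}(\Y_0))\to H^{\cdot}(\Y_0,\varphi_h\mathbb{Q})$. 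You need to add this case analysis; your argument as written does cover the remaining generic case.
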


\begin{proof}
Let $p:\X\to\Y$ be a proper map between smooth stacks, and let $h:\Y\to\mathbb{A}^1$.
If $h=0$ on $\Y$, then $f=0$, and the result follows from Grothendieck-Riemann-Roch for $p:\X\to\Y$, see Subsection \ref{singularGRR}. 

If $h$ is not zero and $f=0$, the diagram becomes: 

\begin{tikzcd}
\text{gr}\,K_0(\X)\arrow{r}{p_*} \arrow{d}{\text{ch}}& \text{gr}\,K_0(D_{sg}(\Y_0))\arrow{d}{\text{ch}}\\
H^{\cdot}(\X)\arrow{r}{p_*}& H^{\cdot}(\Y,\varphi_h\mathbb{Q}).
\end{tikzcd}
\\
It is enough to show that the following diagram commutes:

\begin{tikzcd}
\text{gr}\,K_0(\X)\arrow{r}{p_*} \arrow{d}{\text{ch}}& \text{gr}\,K_0(\Y_0)\arrow{d}{\text{ch}}\\
H^{\cdot}(\X)\arrow{r}{p_*}& H^{\cdot}(\Y_0),
\end{tikzcd}
\\
which follows from the discussion in Subsection \ref{singularGRR}. 

Finally, consider the case when both $h$ and $f$ are not zero.
It is enough to show the corresponding statements for $$\text{ch}:\text{gr}\,G_0(\X_0)\to H^{BM}_{\cdot}(\X_0).$$ The Chern character commutes with proper pushforward by the Grothendieck-Riemann-Roch by Subsection \ref{singularGRR}.
\\

Next, we discuss the case of smooth pullbacks along $\pi:\X\to\Y$. 
If $h=0$ on $\Y$, then we need to show that following diagram commutes:

\begin{tikzcd}
\text{gr}\,K_0(\Y)\arrow{r}{\pi^*}\arrow{d}{\text{ch}}& \text{gr}\,K_0(\X)\arrow{d}{\text{ch}}\\
H^{\cdot}(\Y)\arrow{r}{\pi^*} & H^{\cdot}(\X),
\end{tikzcd}
\\
which follows from discussion in Subsection \ref{singularGRR}.

If $h$ is not zero, then $f$ is not zero, and the statement follows from the commutativity between $\text{ch}$ and $\pi_0^*$ for the smooth map $\pi_0:\X_0\to\Y_0$, see Subsection \ref{singularGRR}.
\\

The argument for the pullback to a closed smooth substack is similar. If $f=0$ and $h=0$, then the proof is as above. If $f=0$ and $h$ is not zero, then it follows from the commutative diagram:

\begin{tikzcd}
\text{gr}\,G_0(\Y_0)\arrow{r}{\iota^*} \arrow{d}& \text{gr}\,K_0(\X)\arrow{d}\\
H^{BM}_{\cdot}(\Y_0)\arrow{r}{\iota^*}& H^{BM}_{\cdot}(\X).
\end{tikzcd}
\\
If both $f$ and $h$ are nonzero, the result follows from the commutative diagram:

\begin{tikzcd}
\text{gr}\,G_0(\Y_0)\arrow{r}{\iota^*} \arrow{d}& \text{gr}\,G_0(\X_0)\arrow{d}\\
H^{BM}_{\cdot}(\Y_0)\arrow{r}{\iota^*}& H^{BM}_{\cdot}(\X_0).
\end{tikzcd}
\\
discussed in Subsection \ref{singularGRR}.
\end{proof}

The multiplication $m$ and comultiplication maps $\Delta$ involve proper pushforwards, smooth pullbacks, and pullbacks along a closed subvariety, so we obtain:
\begin{cor}
The product, coproduct, and $R$ define a braided bialgebra structure on $\text{gr}\,\text{KHA}.$
The Chern character $\text{ch}:\text{gr}\,\text{KHA}\to\text{CoHA}$ induces a morphism of braided bialgebras.
\end{cor}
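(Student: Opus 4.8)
The first assertion I would treat as a bookkeeping consequence of what has already been proved: Theorem \ref{thm1} gives associativity of $m$, Proposition \ref{coassoc} gives coassociativity of $\Delta$, Proposition \ref{braiding} shows that $R$ is a braiding, and Theorem \ref{bialgebra} supplies the braided bialgebra compatibility between $m$ and $\Delta$ through $R$. One checks that $m$, $\Delta$ and $R$ are compatible with the cohomological filtration used to define $\text{gr}\,\text{KHA}$, so that they descend to the associated graded, and then the four properties pass to $\text{gr}\,\text{KHA}$.

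The substantive content is the second assertion. The plan is to show that $\text{ch}$ intertwines $(m,\Delta,R)$ on $\text{gr}\,\text{KHA}$ with $(m,\Delta,R)$ on $\text{CoHA}$; recall that the latter were defined, in Definitions \ref{proper} and \ref{coproduct}, by the same formulas applied to the vanishing-cohomology incarnations of the functors, and that they form a braided bialgebra by the cohomological analogue of Theorem \ref{bialgebra} proved above. First I would record that each of $m$, $\Delta$, $R$ (and the abelian versions $m^{ab}$, $\Delta^{ab}$, $R^{ab}$) is a composite of operations from a short list: proper pushforwards $p_*$; pushforwards $\widetilde{q}_*$ of the shape of Definition \ref{proper}, which themselves factor as $p_*j^*i_*$ with $j$ an open immersion and $i$ a closed immersion; the Thom-Sebastiani isomorphisms, which are pushforwards along the closed immersion $\X_0\times\Y_0\hookrightarrow(\X\times\Y)_0$; pullbacks along smooth maps and affine bundles, and along closed immersions of smooth stacks (for instance $p_{e,d}^*$ factors through a smooth projection and a closed immersion, and $\tau^*$ is smooth); and, in the abelian formulas, multiplication by Euler classes $\text{eu}(d,e)$ together with localization at the ideals $I_{d,e}$. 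Then I would invoke the compatibilities already in hand: Propositions \ref{defcherngr} and \ref{grr} say that $\text{ch}$ commutes with proper pushforwards, with smooth pullbacks (hence with open restrictions), and with pullbacks along closed immersions of smooth stacks; for the Euler classes one uses that $\text{ch}: \text{gr}\,K_0(BG(d))\to H^{\cdot}(BG(d))$ is a ring map carrying the leading term of the $K$-theoretic Euler class to the cohomological Euler class, that the module structures are compatible by Proposition \ref{resp} and its cohomological analogue, and hence that $\text{ch}$ commutes with multiplication by $\text{eu}(d,e)$ and with the localizations (compare Proposition \ref{compo}). Composing these, $\text{ch}$ carries $m,\Delta,R$ to $m,\Delta,R$, which is the claim.

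The hard part will be the compatibility of $\text{ch}$ with the Thom-Sebastiani isomorphism, because the closed immersion $i: \X_0\times\Y_0\hookrightarrow(\X\times\Y)_0$ is a map of singular zero-fibers and so is not literally covered by Proposition \ref{grr}, which is stated for smooth ambient stacks. To handle it I would descend one level, to the Chern character $\text{ch}': \text{gr}\,G_0(-)\to H^{BM}_{\cdot}(-)$ of Subsection \ref{singularGRR}: the immersion $i$ fits into a commutative square whose vertical maps are $i$ and the identity of the smooth stack $\X\times\Y$ and whose horizontal maps are closed immersions, so $\text{ch}'$ commutes with $i_*$ by part (a) of that subsection; combining this with the Künneth decomposition of $G_0$ of a product and lifting the identity to $D_{sg}$ through the defining diagram of Proposition \ref{defcherngr} — as is done for dimensional reduction in Proposition \ref{dimred} — yields the statement. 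The remaining verifications (that the listed operations are filtered, tracking of the Euler-class leading terms, and bookkeeping of shifts through $\text{gr}$) are routine.
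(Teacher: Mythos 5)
Your proposal is correct and follows essentially the same strategy as the paper, which simply observes that $m$ and $\Delta$ are composites of proper pushforwards, smooth pullbacks, and pullbacks along closed immersions, all of which commute with $\text{ch}$ by Proposition \ref{grr}. You add useful detail the paper leaves implicit — notably the treatment of the Thom–Sebastiani pushforward via the $G_0$-level Chern character of Subsection \ref{singularGRR} and the commutative square lifting to $D_{sg}$ — but the underlying argument is the same.
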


\section{The wall crossing theorem}\label{4}
Let $\theta\in \mathbb{Q}^I$ be a stability condition for $Q$, and define the slope $$\mu(d)=\frac{\sum_{i\in I} \theta^id^i}{\sum_{i\in I} d^i}.$$
For any partition $d_1+\cdots+d_k=d$, consider the diagram of correspondences:

\begin{tikzcd}
\X(d_1,\cdots, d_k)\arrow{r}{p_{\dd}} \arrow{d}{q_{\dd}}& \X(d) \\
\X(d_1)\times\cdots\X(d_k).&
\end{tikzcd}
\\
The stack $\X(d)$ has a Harder-Narasimhan stratification with strata 
$$p_{\dd}\left(q_{\dd}^{-1}\left(\X^{\text{ss}}(d_1)\times\cdots\times\X^{\text{ss}}(d_k)\right)\right)$$ corresponding to partitions $\dd$ such that $\mu(d_i)=\mu_i$ and $\mu_1<\cdots<\mu_k$. The category $D^b(\X(d))$ has a semi-orthogonal decomposition in categories $D^b(\X(d))_w$ for $w\in\mathbb{Z}$, where $w$ is the weight with respect to the diagonal character. 
As a corollary of \cite[Amplification 2.11]{hl}, we have that:
\begin{prop}
The category $D^b(\X(d))$ has a semi-orthogonal decomposition with summands  $$p_{\dd*}q_{\dd}^*\left(D^b(\X^{\text{ss}}(d_1))_{w_1}\boxtimes\cdots\boxtimes D^b(\X^{\text{ss}}(d_k))_{w_k}\right),$$  where $\dd$ is a partitions such that $\mu_1<\cdots<\mu_k$ for $\mu_i=\mu(d_i)$. 
\end{prop}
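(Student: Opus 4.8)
The plan is to deduce the statement from the Harder--Narasimhan stratification of $\X(d)$ together with the iterated semi-orthogonal decomposition of Halpern-Leistner, which is the content of \cite[Amplification 2.11]{hl}; the real work is to set up the dictionary between the two pictures. First I would recall that the $\theta$-semistable locus $\X^{ss}(d)\subset\X(d)$ is the complement of a Kempf--Ness stratification whose strata are precisely the Harder--Narasimhan strata: for a partition $\dd: d=d_1+\cdots+d_k$ with $\mu_1<\cdots<\mu_k$ ($\mu_i=\mu(d_i)$), the associated one-parameter subgroup $\lambda_{\dd}$ of $G(d)$ is the cocharacter scaling the $i$-th graded piece $V_i/V_{i-1}$ by a weight proportional to $\mu_i$, its fixed locus is the stack of $\dd$-graded representations, its attracting locus is the flag stack $\X(d_1,\dots,d_k)$, and the Kempf--Ness centre is $Z_{\dd}=\X^{ss}(d_1)\times\cdots\times\X^{ss}(d_k)$. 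Thus the Subsection \ref{window} diagram for this stratum is exactly
\[
\X^{ss}(d_1)\times\cdots\times\X^{ss}(d_k)\xleftarrow{\ q_{\dd}\ }\X(d_1,\dots,d_k)\xrightarrow{\ p_{\dd}\ }\X(d),
\]
with $q_{\dd}$ the affine bundle recording the extension data and $p_{\dd}$ the proper map. That the HN stratification of $R(d)/G(d)$ for the linearization attached to $\theta$ is a KN stratification with these data is classical (Kirwan, Hoskins) and is built into the amplified form of Theorem \ref{hl}.

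Next I would apply Theorem \ref{hl} in its iterated form along this stratification, ordering the strata by closure inclusion (equivalently by the partial order on partitions). The window $\mathbb{G}_w$ for the open stratum is equivalent to $D^b(\X^{ss}(d))$, which accounts for the $k=1$ terms, and for each $\dd$ with $k\geq 2$ the complementary pieces are generated by $p_{\dd*}q_{\dd}^*$ applied to the weight windows of $D^b(Z_{\dd})$. The key point making this a one-step recursion is that each centre $Z_{\dd}$ is already a product of \emph{genuinely semistable} stacks, so no centre needs to be stratified further. It then remains to match the $\lambda_{\dd}$-weight grading on $D^b(Z_{\dd})$ with box products of diagonal-weight gradings: since the product of diagonal tori $\prod_i Z(G(d_i))$ acts trivially on $R(d_1)\times\cdots\times R(d_k)$, one has an orthogonal decomposition $D^b(Z_{\dd})=\bigoplus_{(w_1,\dots,w_k)}D^b(\X^{ss}(d_1))_{w_1}\boxtimes\cdots\boxtimes D^b(\X^{ss}(d_k))_{w_k}$, and each $\lambda_{\dd}$-isotypic summand $D^b(Z_{\dd})_v$ is the sum of the box-factors with a fixed value of the corresponding integral combination of the $w_i$. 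Collecting the pieces $p_{\dd*}q_{\dd}^*$ of these box-factors over all $\dd$ and all $(w_i)\in\mathbb{Z}^k$ produces exactly the list of summands in the statement.

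Finally I would verify the two defining properties of a semi-orthogonal decomposition. Generation is immediate: the strata $\mathcal{S}_{\dd}$ cover $\X(d)$ and $\mathbb{G}_w\cong D^b(\X^{ss}(d))$ is the window for the open stratum. Semi-orthogonality between distinct summands follows from three inputs: the Hom-vanishing between window and unstable pieces in Theorem \ref{hl}; the corresponding vanishing between pieces attached to different (nested or incomparable) strata, again from the iterated decomposition; and, within one stratum, the full faithfulness of $p_{\dd*}q_{\dd}^*$ on each $D^b(Z_{\dd})_v$ (Corollary \ref{uns}) together with the orthogonality of the finer box-factors noted above. Ordering the summands first by stratum, then by $v$, then arbitrarily within a fixed $v$ gives the required order.

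I expect the main obstacle to be purely organisational: carefully confirming that the $p_{\dd}$, $q_{\dd}$ of the HN stratification of $\X(d)$ coincide with the abstract $p$, $q$ of Subsection \ref{window} for the cocharacter $\lambda_{\dd}$ that maximises the Hilbert--Mumford instability of an HN-unstable representation — i.e. that this maximiser is the slope-filtration cocharacter, that its attracting locus is the flag stack, and that its centre, cut down by semistability, is $\prod_i\X^{ss}(d_i)$ — and then tracking how the weight-window indices $w_i$ and the determinant-of-normal-bundle shifts appearing in \cite[Amplification 2.11]{hl} propagate across the nested strata as one assembles the global decomposition. None of this is deep, but it is where precision is needed.
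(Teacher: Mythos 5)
Your proposal and the paper's proof take genuinely different routes to the same decomposition. The paper's argument peels off a single Kempf--Ness stratum at a time using \cite[Theorem 2.10]{hl}: after normalizing $\mu(d)=0$, it argues that the next stratum to be removed always has the form $\X^{ss}(a)\times\X(b)\leftarrow\X'(a,b)\to\X(d)$ for a \emph{two-term} partition $a+b=d$ with $\mu(a)<\mu'(b)$, where $\mu'(b)=\min\{\mu(f): 0\neq f\leq b\}$; it then applies the single-stratum theorem, decomposes by the diagonal weights of the centre, and inducts on the dimension vector $b<d$, so that the full product form $\prod_i\X^{ss}(d_i)$ only appears after unwinding the recursion. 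You instead apply the iterated decomposition \cite[Amplification 2.11]{hl} in one step to the full Harder--Narasimhan stratification, whose centres are already the products $\prod_i\X^{ss}(d_i)$, and then split each $\lambda_{\dd}$-weight window along the residual central torus into box-products.

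Both routes arrive at the same list of summands, but they distribute the difficulty differently. Yours is the more conceptual path and in fact matches the citation given in the statement; its cost is that you must invoke, as you rightly note, the identification of the HN stratification of $R(d)/G(d)$ with a Kempf--Ness stratification together with the explicit form of the $p_{\dd}$, $q_{\dd}$, and centres --- a classical but substantive input (Kirwan, Hoskins). The paper's route sidesteps describing the full multi-stratum picture explicitly, trading it for a dimension-vector induction and a pointwise reduction from $k$-term to two-term destabilizing cocharacters; this is more elementary stratum-by-stratum but less transparent globally, and the reduction step requires some care. Your closing paragraph correctly flags the residual bookkeeping: you do need to track how the window indices $w$ in Amplification 2.11 propagate across nested strata, and you should record the (easy) observation that each box-factor $D^b(\X^{ss}(d_1))_{w_1}\boxtimes\cdots\boxtimes D^b(\X^{ss}(d_k))_{w_k}$ lies in a single $\lambda_{\dd}$-isotypic piece, which holds automatically since the tuple $(w_1,\dots,w_k)$ determines the $\lambda_{\dd}$-weight as a fixed integral linear combination.
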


\begin{proof}
First, by replacing $\theta^i$ with $\theta^i-\mu(d)$, we can assume that $\mu(d)=0$.
For $e\in\mathbb{N}^I$, let $\mu'(e)=\text{min}\,\{\mu(f)\text{ with }f\leq e\}$.
In the algorithm described in Subsection \ref{window}, the strata that appear have the form $\X(a)\times\X(b)$.
Indeed, consider a character $\lambda(z)=(z^{c_i}):\C^*\to G(d)$ with associated partition $\dd=(d_1,\cdots,d_k)$ and exponents $c_i$ corresponding to $d_i$ for $1\leq i\leq k$. 
Assume that $(\lambda, R(d_1)\times\cdots\times R(d_k))$ is a potential Kempf-ness locus, so:
$$0<\text{inv}(\lambda)=-\frac{\sum_{i\in I}\sum_{j=1}^k \theta^ic_jd^i_j}{\sqrt{\sum_{i\in I}\sum_{j=1}^k (c_j)^2d^i_j}}.$$
Let $1\leq m\leq k$ with the index which minimizes the sum $\sum_{i\in I} \theta^ic_jd^i_j$. Not all the indices minimize the sum; if that was the case, we would have $\text{inv}(\lambda)=0$. 
We also have that $0>\sum_{i\in I} \theta^ic_md^i_m$.
For the partition $d_m+(d-d_m)=d$ and exponents $c_m$ for $d_m$ and $0$ for $d-d_m$, we have that:
$$0<-\frac{\sum_{i\in I}\sum_{j=1}^k \theta^ic_jd^i_j}{\sqrt{\sum_{i\in I}\sum_{j=1}^k (c_j)^2d^i_j}}<-\frac{\sum_{i\in I}\theta^ic_md^i_m}{\sqrt{\sum_{i\in I}c_m^2d^i_j}}.$$
If two strata have equal $\lambda$, then we pick one with maximal $a$ with respect to $\geq$ among all these strata.
\\

We next claim that the Kempf-Ness strata have the form $$\X^{\text{ss}}(a)\times\X(b)\leftarrow \X'(a,b)\rightarrow \X(d)$$ for a partition $a+b=d$ such that $\mu(a)<\mu'(b)$. Consider a potential Kempf-Ness stratum of the form $\X(a)\times \X(b)$. If such a stratum is Kempf-Ness, then $\mu(a)<\mu'(b)$. Indeed, if this is not the case, let $f<b$ such that $\mu(f)\leq \mu(a)$; then the stratum $\X(a+f)\times\X(b-f)$ will have smaller or equal $\lambda$-invariant than $\X(a)\times\X(b)$ and $a+f>a$, so it will be picked before $\X(a)\times\X(b)$. The corresponding Kempf-Ness stratum will have the form: $$\X(a)\times\X(b)-\X(a)\times\X(b)\cap \left(\bigcup_{\mu(e)<\mu(a)} \X'(e,f)\right)=\X(a)^{ss}\times\X(b).$$ 

Using the semi-orthogonal decomposition in \cite[Theorem 2.10]{hl}, we find that there is a semi-orthogonal decomposition of $D^b(\X(d))$ where the summands have the form $p_{a,b*}q_{a,b}^*\left(D^b(\X^{ss}(a))\boxtimes D^b(\X(b))\right)_w$, where $\mu(a)<\mu'(b)$ and $w\in\mathbb{Z}$ is the weight with respect to $\lambda_{a,b}$. We decompose the categories above by the weights with respect to the characters $\lambda_e=z\, 1_e:\C^*\to G(e)$. This implies that
the category $D^b(\X(d))$ has a semi-orthogonal decomposition with summands $p_{a,b*}q_{a,b}^*\left(D^b(\X^{ss}(a))_v\boxtimes D^b(\X(b))_w\right)$, where $\mu(a)<\mu'(b)$ and $v,w\in\mathbb{Z}$. 
Using induction on the dimension vector $b<d$ and decomposing $D^b(\X(b))$, we obtain a semi-orthogonal decomposition with summands $$p_{\dd*}q_{\dd}^*\left(D^b(\X^{\text{ss}}(d_1))_{w_1}\boxtimes\cdots\boxtimes D^b(\X^{\text{ss}}(d_k))_{w_k}\right),$$  where $\dd$ is a partitions such that $\mu_1<\cdots<\mu_k$ for $\mu_i=\mu(d_i)$. 

\end{proof}

Using Proposition \ref{uns}, we see that:
\begin{cor}\label{wccor}
The category $D_{sg}(\X(d))$ has a semi-orthogonal decomposition with summands  $$p_{\dd*}q_{\dd}^*\left(D_{sg}(\X^{\text{ss}}(d_1)_0)_{w_1}\boxtimes\cdots\boxtimes D_{sg}(\X^{\text{ss}}(d_k)_0)_{w_k}\right),$$  where $\dd$ is a partitions such that $\mu_1<\cdots<\mu_k$ for $\mu_i=\mu(d_i)$. The analogous statement holds for $D^{id}_{sg}$.
\end{cor}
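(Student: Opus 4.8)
The plan is to deduce this from the preceding Proposition together with Proposition \ref{sod} and Corollary \ref{uns}. The preceding Proposition gives a semi-orthogonal decomposition $D^b(\X(d)) = \langle \mathcal{A}_i \rangle$ whose summands are the categories $p_{\dd*}q_{\dd}^*\left(D^b(\X^{\text{ss}}(d_1))_{w_1}\boxtimes\cdots\boxtimes D^b(\X^{\text{ss}}(d_k))_{w_k}\right)$, indexed by partitions $\dd$ with $\mu_1<\cdots<\mu_k$ and integers $w_i$. First I would check that this decomposition is $D^b(\mathbb{A}^1)$-linear with respect to $\text{Tr}(W_d):\X(d)\to\mathbb{A}^1$, so that Proposition \ref{sod} applies. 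This uses two observations: the potentials are compatible with the correspondence maps, $p_{\dd}^*\text{Tr}(W_d) = q_{\dd}^*\left(\text{Tr}(W_{d_1})+\cdots+\text{Tr}(W_{d_k})\right)$, hence $p_{\dd*}$ and $q_{\dd}^*$ are $D^b(\mathbb{A}^1)$-linear; and the diagonal character of $G(d_i)$ acts trivially on $\text{Tr}(W_{d_i})$, so each weight subcategory $D^b(\X^{\text{ss}}(d_i))_{w_i}$ is $D^b(\mathbb{A}^1)$-linear. One also needs dg enhancements of all the categories involved (window categories and box products), which exist as recorded in Subsection \ref{singul}.

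Granting $D^b(\mathbb{A}^1)$-linearity, Proposition \ref{sod}(b) produces a semi-orthogonal decomposition $D_{sg}(\X(d)_0) = \langle D_{sg}(\mathcal{A}_i)\rangle$, and it remains to identify $D_{sg}(\mathcal{A}_i)$ with $p_{\dd*}q_{\dd}^*\left(D_{sg}(\X^{\text{ss}}(d_1)_0)_{w_1}\boxtimes\cdots\boxtimes D_{sg}(\X^{\text{ss}}(d_k)_0)_{w_k}\right)$. Here I would proceed in two steps. First, applying Corollary \ref{uns} iteratively along the Kempf--Ness stratification built in the proof of the preceding Proposition, the functors $p_{\dd*}$, $q_{\dd}^*$ and the restriction equivalences $\mathbb{G}_w \cong D^b(\X^{\text{ss}})$ descend to categories of singularities, so that $D_{sg}$ of the $D^b$-summand $p_{\dd*}q_{\dd}^*\left(D^b(\X^{\text{ss}}(d_1))_{w_1}\boxtimes\cdots\right)$ is $p_{\dd*}q_{\dd}^*D_{sg}\left(D^b(\X^{\text{ss}}(d_1))_{w_1}\boxtimes\cdots\right)$, with the weight decompositions themselves passing to $D_{sg}$ by another application of Proposition \ref{sod}(b). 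Second, the Thom--Sebastiani statement for summands, Corollary \ref{corts}, applied to the external product of the window/weight subcategories (which has an adjoint, being an external product of subcategories with adjoints), identifies $D_{sg}\left(D^b(\X^{\text{ss}}(d_1))_{w_1}\boxtimes\cdots\right)$ with $D_{sg}(\X^{\text{ss}}(d_1)_0)_{w_1}\boxtimes\cdots\boxtimes D_{sg}(\X^{\text{ss}}(d_k)_0)_{w_k}$. Combining the two steps gives the claimed summand.

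For $D^{id}_{sg}$, the statement follows by passing to idempotent completions throughout: a semi-orthogonal decomposition is preserved under idempotent completion of the ambient category and of each summand, and Proposition \ref{sod} and Corollary \ref{corts} already record the idempotent-completed analogues.

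I expect the main difficulty to be bookkeeping rather than anything deep: one must keep track that the whole chain of functors --- $p_{\dd*}$, $q_{\dd}^*$, the restriction equivalences to semistable loci, the weight-grading decompositions, and the Thom--Sebastiani equivalences --- is $D^b(\mathbb{A}^1)$-linear and mutually compatible, so that Proposition \ref{sod}(b) and Corollary \ref{uns} may be invoked at each stage and the resulting summands of $D_{sg}(\X(d)_0)$ are exactly the asserted ones. The only genuinely new input beyond the cited results is the verification of $D^b(\mathbb{A}^1)$-linearity of the wall-crossing decomposition, which as noted reduces to the potential-compatibility identity for $(p_{\dd}, q_{\dd})$ and the triviality of the diagonal action on the potential.
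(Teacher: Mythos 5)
Your proposal is correct and takes essentially the same route as the paper, which simply invokes Corollary~\ref{uns} (itself an application of Proposition~\ref{sod}) to pass from the preceding $D^b$-level wall-crossing decomposition to the $D_{sg}$-level one; you have spelled out the $D^b(\mathbb{A}^1)$-linearity check, the iterated use of Corollary~\ref{uns} over the Kempf--Ness strata, and the Thom--Sebastiani identification via Corollary~\ref{corts}, all of which are the implicit ingredients the paper is appealing to.
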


%Define a filtration $F_{\dd}\subset K_0(D^{id}_{sg}(\X(d)_0))$ by $$F_{\dd}=\text{image of }p_{\dd*}q_{\dd}^*K_0(D^{id}_{sg}(\X(d_1)_0))\times\cdots\times K_0(D^{id}_{sg}(\X(d_k)_0)).$$ $\text{max}(\theta(e_i))<\text{max}(\theta(d_i))$. The graded pieces with respect to this filtration are:$$ F_{\dd}/\bigcup_{\ee>\dd} F_{\ee}=K_0(D^{id}_{sg}(\X(d_1)^{ss}_0))\times\cdots\times K_0(D^{id}_{sg}(\X(d_k)^{ss}_0)).$$ 

\begin{thm}\label{wallcrossing}
We have an isomorphism of vector spaces:
$$KHA(Q,W)\to \bigotimes_{-\infty<\mu<\infty} KHA(Q,W,\mu).$$
Here $KHA(Q,W)$ is the algebra for the trivial stability condition and $KHA(Q,W,\mu)$ is the algebra for the stability condition $\theta$ and fixed slope $\mu$.
The analogous statement holds for $KHA^{\text{id}}$.
\end{thm}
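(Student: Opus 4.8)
The plan is to obtain the isomorphism directly from the semi-orthogonal decomposition of Corollary \ref{wccor}, by passing to $K_0$ and reorganizing the resulting direct sum as a restricted tensor product over slopes. Since the stability condition defining $KHA(Q,W)$ is trivial, $\X^{\text{ss}}(d)=\X(d)$ and $KHA(Q,W)=\bigoplus_{d\in\mathbb{N}^I}K_0(D_{sg}(\X(d)_0))$. Corollary \ref{wccor} gives, for each $d$, a semi-orthogonal decomposition of $D_{sg}(\X(d)_0)$ with summands $p_{\dd*}q_{\dd}^*\bigl(D_{sg}(\X^{\text{ss}}(d_1)_0)_{w_1}\boxtimes\cdots\boxtimes D_{sg}(\X^{\text{ss}}(d_k)_0)_{w_k}\bigr)$ indexed by partitions $\dd=(d_1,\dots,d_k)$ of $d$ with $\mu(d_1)<\cdots<\mu(d_k)$ and by tuples $(w_1,\dots,w_k)\in\mathbb{Z}^k$ of weights for the diagonal characters of the $G(d_i)$.

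Passing to $K_0$, a semi-orthogonal decomposition becomes a direct sum; $q_{\dd}^*$ is an equivalence of singularity categories (pullback along an affine bundle of stacks, as in Subsection \ref{window}); $p_{\dd*}q_{\dd}^*$ is fully faithful, so its image contributes a genuine $K_0$-summand; and by the Thom--Sebastiani identification (Corollary \ref{corts}) together with the external product on $K_0$, the $K_0$ of the $\dd,\vec w$-summand is $\bigotimes_{i=1}^{k}K_0(D_{sg}(\X^{\text{ss}}(d_i)_0)_{w_i})$. Since the diagonal $\mathbb{G}_m\subset G(d_i)$ acts trivially on $R(d_i)$, the category $D_{sg}(\X^{\text{ss}}(d_i)_0)=\bigoplus_{w}D_{sg}(\X^{\text{ss}}(d_i)_0)_w$ is an honest (orthogonal) direct sum, so summing over $\vec w$ yields
$$
K_0(D_{sg}(\X(d)_0))\;=\;\bigoplus_{\substack{\dd:\,d_1+\cdots+d_k=d\\ \mu(d_1)<\cdots<\mu(d_k)}}\ \bigotimes_{i=1}^{k}K_0\bigl(D_{sg}(\X^{\text{ss}}(d_i)_0)\bigr).
$$
Summing over all $d\in\mathbb{N}^I$ and reindexing, a partition $\dd$ with strictly increasing slopes is the same datum as a finitely supported family $(e_\mu)_{\mu}$ with $e_\mu\in\Lambda_\mu$, and the attached summand is $\bigotimes_\mu K_0(D_{sg}(\X^{\text{ss}}(e_\mu)_0))$; hence
$$
KHA(Q,W)\;\cong\;\bigoplus_{(e_\mu)}\ \bigotimes_\mu K_0\bigl(D_{sg}(\X^{\text{ss}}(e_\mu)_0)\bigr)\;=\;\bigotimes_{-\infty<\mu<\infty}\Bigl(\bigoplus_{e\in\Lambda_\mu}K_0\bigl(D_{sg}(\X^{\text{ss}}(e)_0)\bigr)\Bigr)\;=\;\bigotimes_{-\infty<\mu<\infty}KHA(Q,W,\mu),
$$
where $\bigotimes$ is the restricted tensor product (all but finitely many factors contributing the degree-zero piece). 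The same argument applied to the $D^{\text{id}}_{sg}$-version of Corollary \ref{wccor} and of Thom--Sebastiani gives the statement for $KHA^{\text{id}}$.

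The step requiring the most care, and which I expect to be the main obstacle, is the identification of the $K_0$ of each Harder--Narasimhan summand with the tensor product $\bigotimes_iK_0(D_{sg}(\X^{\text{ss}}(d_i)_0)_{w_i})$: one must verify that $q_{\dd}^*$ is genuinely an equivalence of singularity categories, that $p_{\dd*}q_{\dd}^*$ is fully faithful, and that the external product on $K_0$ coming from Thom--Sebastiani is an isomorphism for these categories of singularities --- for which it is convenient that the ambient smooth stacks $\X(d)$ are global quotients of affine spaces by products of general linear groups, so that their (equivariant) $K$-theory satisfies a Künneth formula. Once this is in place, what remains is the combinatorial bookkeeping identifying partitions with strictly increasing slopes with finitely supported families $(e_\mu)$, which is routine. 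Note that no genericity hypothesis on $\theta$ is needed here; genericity would only enter any further decomposition of the individual algebras $KHA(Q,W,\mu)$.
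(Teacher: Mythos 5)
Your proof is correct and takes essentially the same route as the paper: both reduce to the semi-orthogonal decomposition of $D_{sg}(\X(d)_0)$ by Harder--Narasimhan strata from Corollary~\ref{wccor}, pass to $K_0$, and reorganize the resulting sum over slope-increasing partitions as a restricted tensor product of the slope-$\mu$ algebras. The paper's write-up is terser and takes the K\"unneth-type identification you flag for granted; your version usefully makes that step explicit.
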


\begin{proof}
We need to prove that for $d\in\mathbb{N}^I$, we have an isomorphism of vector spaces:
$$K_0(D_{sg}(\X(d)_0)) =\bigoplus K_0(D_{sg}(\X^{\text{ss}}(d_1)_0)\boxtimes\cdots\boxtimes K_0(D_{sg}(\X^{\text{ss}}(d_k)_0)),$$ where the sum is after all decompositions $d_1+\cdots+d_k=d$ with slopes $\mu(d_i)=\mu_i$ such that $\mu_1<\cdots<\mu_k$, which follows from the Corollary \ref{wccor}. Same proof works for $D_{sg}^{id}$.
\end{proof}

As an application, we can find generators for the $KHA(Q):=KHA(Q,0)$ where $Q$ is a Dynkin quiver with vertices labelled $1$ to $n$. 
Choose first the stability condition $\theta: \theta^1<\cdots<\theta^n$. Denote by $d_i$
the dimension vector with $1$ in vertex $i$ and zero everywhere else. The $\theta$-semistable representations are at dimension vectors $nd_i$, for $n$ a nonnegative integer. 
For $1\leq i\leq n$ a vertex, let $r_i$ be the unique representation of dimension $d_i$. Then $$\X^{\theta-\text{ss}}(ns)=\left(r_i^{\oplus n}\right)/ GL(n),$$ so we have that $$\bigoplus_{n\geq 0} K_0(\X^{\theta-\text{ss}}(ns))=\bigoplus_{n\geq 0}K_0(BGL(n))=\text{Sym}\,(K_0(B\C^*)).$$
As a corollary of Theorem \ref{wallcrossing}, we get that:
\begin{cor}\label{wccor2}
The $KHA(Q)$ is generated by the $d_i$ with $1\leq i\leq n$ dimensional pieces under the multiplication map: $$KHA(Q)\cong \bigotimes_{i=1}^n \text{Sym}\,(K_0(B\C^*)).$$ 
\end{cor}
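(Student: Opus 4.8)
The plan is to deduce this from the wall-crossing Theorem \ref{wallcrossing} together with a direct computation of the single-vertex pieces. Fix $\theta$ with $\theta^1<\cdots<\theta^n$ as in the preceding paragraph. Since $Q$ is Dynkin it has no loops, so for the dimension vector $nd_i$, supported at the single vertex $i$, one has $R(nd_i)=0$ and hence $\X^{\text{ss}}(nd_i)=\X(nd_i)=BGL(n)$; and, as recalled above, the only $\theta$-semistable representations of $Q$ occur at the dimension vectors $nd_i$. Consequently $KHA(Q,\mu)=0$ unless $\mu=\theta^i$ for some $i$, and $KHA(Q,\theta^i)=\bigoplus_{n\ge 0}K_0\bigl(D_{sg}(\X^{\text{ss}}(nd_i)_0)\bigr)$.

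Applying Theorem \ref{wallcrossing} then gives a vector space isomorphism $KHA(Q)\cong\bigotimes_\mu KHA(Q,\mu)=\bigotimes_{i=1}^n KHA(Q,\theta^i)$. Because the semi-orthogonal decomposition underlying that theorem (Corollary \ref{wccor}) has summands that are literally the images of the correspondences $p_{\dd*}q_{\dd}^*$, this isomorphism is realized by the $KHA$ multiplication applied to the factors in order of increasing slope; in particular $KHA(Q)$ is generated as an algebra by the subspaces $KHA(Q,\theta^i)$.

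It remains to analyze $KHA(Q,\theta^i)$. Since $W=0$ the function $\text{Tr}\,(W)$ is identically zero, so the pushforward isomorphism $i_*\colon K_0(D_{sg}(\X_0))\cong K_0(\X)$ from Section \ref{ch} identifies $KHA(Q,\theta^i)$ with $\bigoplus_{n\ge 0}K_0(BGL(n))$, the product becoming $p_*q^*$ for the partial flag bundle $p\colon BP(n,m)\to BGL(n+m)$ (proper) and the affine bundle $q\colon BP(n,m)\to B(GL(n)\times GL(m))$ (so $q^*$ is an isomorphism on $K_0$). Iterating, the product of $n$ classes $x^{a_1},\dots,x^{a_n}\in K_0(BGL(1))$ taken in this order equals the derived pushforward along the full flag bundle $BB(n)\to BGL(n)$ of the line bundle with $T(n)$-weight $x_1^{a_1}\cdots x_n^{a_n}$, which by Borel--Weil--Bott is the Schur class $s_{(a_1,\dots,a_n)}\in K_0(BGL(n))$ whenever $a_1\ge\cdots\ge a_n$. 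As these Schur classes form a $\Z$-basis of $K_0(BGL(n))$, the ring $\bigoplus_n K_0(BGL(n))$ is generated under $p_*q^*$ by its degree-one part $K_0(BGL(1))=K_0(B\C^*)$; a rank count in each degree then identifies its underlying graded vector space with $\text{Sym}(K_0(B\C^*))$. Combining the three paragraphs, $KHA(Q)\cong\bigotimes_{i=1}^n\text{Sym}(K_0(B\C^*))$ as vector spaces, while $KHA(Q)$ is generated as an algebra by its $d_i$-graded pieces $K_0(D_{sg}(\X(d_i)_0))=K_0(B\C^*)$, $1\le i\le n$. The identical argument with $D_{sg}^{id}$ throughout proves the statement for $KHA^{\text{id}}$.

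The main point is not any single hard computation but making sure the vector-space statement of Theorem \ref{wallcrossing} upgrades to the generation statement: this needs (i) that the wall-crossing isomorphism is the multiplication map taken in slope order, read off from the explicit correspondences in Corollary \ref{wccor}, and (ii) that the single-vertex $KHA$ is generated in degree one, which is the Borel--Weil--Bott computation of $p_*q^*$ on $\bigoplus_n K_0(BGL(n))$ above. Neither is difficult, but both should be spelled out.
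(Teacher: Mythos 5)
Your proof is correct and follows the paper's route exactly: for $\theta^1<\cdots<\theta^n$ compute the semistable moduli at each slope $\theta^i$ (each is $BGL(n)$), identify $\bigoplus_{n\geq 0}K_0(BGL(n))$ with $\text{Sym}(K_0(B\C^*))$, and apply the wall-crossing Theorem \ref{wallcrossing}. Your two elaborations---that the wall-crossing isomorphism is realized by the multiplication correspondences $p_{\dd*}q_{\dd}^*$ visible in Corollary \ref{wccor}, and that the single-slope piece $\bigoplus_n K_0(BGL(n))$ with the flag-induction product is generated in degree one by Borel--Weil--Bott---are exactly the points the paper leaves implicit, and they are what upgrades the bare vector-space isomorphism of Theorem \ref{wallcrossing} to the ``generated by the $d_i$-pieces'' statement (the only thing to double-check against the paper's Borel convention from Subsection \ref{notations} is whether the dominance order in your Schur-class formula should be increasing rather than decreasing, but this does not affect the argument).
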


For the stability condition $\theta': \theta^1>\cdots>\theta^n$, the semistable representations are at the multiples of the roots $r_1,\cdots, r_N$ of the Lie algebra associated to $Q$, which implies that:
\begin{cor}\label{wccor3}
The $KHA(Q)$ is generated by the $r_i$ with $1\leq i\leq N$ dimensional pieces under the multiplication map: $$KHA(Q)\cong \bigotimes_{i=1}^N \text{Sym}\,(K_0(B\C^*)).$$ 
\end{cor}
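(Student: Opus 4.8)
The plan is to transcribe the proof of Corollary \ref{wccor2}, working now with the stability condition $\theta'$ and the wall-crossing isomorphism of Theorem \ref{wallcrossing}:
$$KHA(Q)\cong \bigotimes_{-\infty<\mu<\infty} KHA(Q,\mu),$$
and then to identify the non-trivial tensor factors on the right with the positive roots of $\mathfrak{g}_Q$. So the first task is, for each slope $\mu$, to determine the dimension vectors $d$ for which $\X^{\theta'-\text{ss}}(d)$ is non-empty and to describe this stack.

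Second, I would prove the following representation-theoretic statement for the Dynkin quiver $Q=(1\to 2\to\cdots\to n)$ (this is the orientation for which the chosen ordering $\theta^1>\cdots>\theta^n$ is the relevant one). By Gabriel's theorem the indecomposable representations of $Q$ are the interval modules $M_\beta$ indexed by the positive roots $\beta=d_i+d_{i+1}+\cdots+d_j$, where $M_\beta$ has a one-dimensional space at each vertex of $[i,j]$ and identity maps between consecutive ones. A non-zero proper subrepresentation of $M_\beta$ is supported on a subinterval $[k,j]$ with $i<k\le j$ — subrepresentations are closed along the arrows $m\to m+1$ — and hence has slope equal to the average of $\theta^k,\dots,\theta^j$. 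Since $\theta^1>\cdots>\theta^n$ is strictly decreasing, deleting the largest terms $\theta^i,\dots,\theta^{k-1}$ from the front strictly lowers the average below $\mu(M_\beta)=\text{avg}(\theta^i,\dots,\theta^j)$; therefore every $M_\beta$ is $\theta'$-stable. Conversely, any $\theta'$-stable representation is a brick, hence indecomposable, hence some $M_\beta$; and a $\theta'$-semistable representation of slope $\mu$ admits a filtration whose subquotients are $\theta'$-stable of slope $\mu$, i.e.\ among the $M_\beta$ with $\mu(\beta)=\mu$. Choosing $\theta'$ generic among the tuples with $\theta^1>\cdots>\theta^n$, distinct positive roots get distinct slopes, so for each positive root $\beta$ the semistable dimension vectors of slope $\mu(\beta)$ are exactly the multiples $m\beta$, and no other slope supports a non-zero semistable representation.

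Third, I would compute $\X^{\theta'-\text{ss}}(m\beta)$. Since $\beta$ is a real root, $M_\beta$ is a brick and $\Ext^1(M_\beta,M_\beta)=0$ (the Euler form gives $\dim\Hom-\dim\Ext^1=1$), so every iterated self-extension of $M_\beta$ splits and the unique semistable representation of dimension $m\beta$ is $M_\beta^{\oplus m}$, with automorphism group $GL(m)$. Hence $\X^{\theta'-\text{ss}}(m\beta)\cong BGL(m)$, and as $W=0$ the identification $i_*$ of Section \ref{ch} gives $K_0(D_{sg}(\X^{\theta'-\text{ss}}(m\beta)_0))\cong K_0(BGL(m))$. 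Summing over $m$ exactly as in the proof of Corollary \ref{wccor2} yields $KHA(Q,\mu(\beta))=\bigoplus_{m\ge0}K_0(BGL(m))=\text{Sym}(K_0(B\C^*))$, while $KHA(Q,\mu)$ is trivial for the remaining slopes. Feeding this into the wall-crossing isomorphism gives $KHA(Q)\cong\bigotimes_{i=1}^N\text{Sym}(K_0(B\C^*))$, with the $N$ factors indexed by the positive roots.

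The main obstacle is the second step: one must check carefully that the chosen ordering of the $\theta^i$, together with the chosen orientation of $Q$, makes every interval module $\theta'$-stable and introduces no new semistable dimension vectors. This is where the monotonicity of partial averages of a strictly decreasing sequence is used, together with the genericity of $\theta'$ (to separate the slopes of distinct positive roots); once this is in place, the rest is a direct repetition of the argument for Corollary \ref{wccor2}.
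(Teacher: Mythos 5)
Your proof is correct and follows the same strategy the paper sketches: take the opposite ordering $\theta^1>\cdots>\theta^n$, identify the semistable loci with $BGL(m)$ at multiples of the $N$ positive roots and nothing else, and feed this into the wall-crossing isomorphism of Theorem~\ref{wallcrossing} exactly as was done for Corollary~\ref{wccor2}. The paper states the key fact that the semistable representations occur only at multiples of the positive roots without justification; your argument supplies the missing details (stability of interval modules via monotone averages of a decreasing sequence, rigidity $\operatorname{Ext}^1(M_\beta,M_\beta)=0$ from $\langle\beta,\beta\rangle=1$, and genericity of $\theta'$ to separate the slopes of the $N$ positive roots), and these details are the right ones.
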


The analogues of Corollaries \ref{wccor2} and \ref{wccor3} for CoHA were proved by Rimanyi \cite{r}.

\section{The PBW theorem}\label{5}

In this section we prove:
\begin{thm}\label{thm5}
There exist categories $\mathbb{M}(d) \subset D_{sg}(\X(d)_0)$ defined in Subsection \ref{defMM}, and there exists a filtration $F^{\cdot}$ on the $\text{KHA}$ such that $F^{\leq 1}:=\bigoplus_{d\in\mathbb{N}^I}K_0(\mathbb{M}(d))$, see Definition \ref{secondfiltrations}, such that the associated graded with respect to this filtration is:
$$\text{gr}^F\text{KHA}=\text{dSym}\,\left(\bigoplus_{d\in\mathbb{N}^I}K_0(\mathbb{M}(d))\right),$$ where the right hand side is an algebra called the deformed symmetric algebra, see Definition \ref{qsym}.
\end{thm}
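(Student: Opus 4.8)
The plan is to follow closely the structure of the cohomological PBW theorem of Davison-Meinhardt, transported to the categorical/K-theoretic setting via the semi-orthogonal decompositions of \v{S}penko-van den Bergh type. First I would establish the sheaf-level statement in the zero potential case: the category $D^b(\X(d))$ decomposes semi-orthogonally with leading term $\oM(d)$ and with complement generated by the induction functors $p_{\dd*}q_{\dd}^*\bigl(\oM(d_1)_{w_1}\boxtimes\cdots\boxtimes\oM(d_k)_{w_k}\bigr)$ for ordered partitions $\dd$, using \cite[Section 3.2]{hls} and the Borel-Bott-Weyl description of $p_{\dd*}(V(\chi)\otimes\OO)$ recalled in the excerpt. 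This reduces to a combinatorial statement about the Minkowski polytope $\tfrac12\WW$: a weight $\chi+\rho$ lying outside $\tfrac12\WW$ can be ``pushed into'' a face coming from a cocharacter $\lambda$, identifying the cone generated by the corresponding attracting-loci pushforwards, exactly as in the proof of the \v{S}penko-van den Bergh theorem. Then one refines further to extract the subcategory $\MM(d)\subset\oM(d)$ by imposing the boundary conditions on attracting loci from Subsection \ref{defMM} (this uses the symmetry of $Q$, so that $R(d)$ is a symmetric $G(d)$-representation and the polytope is centrally symmetric). I would then apply Proposition \ref{sod} and Corollary \ref{uns} to pass from $D^b(\X(d))$ to $D_{sg}(\X(d)_0)$, obtaining the semi-orthogonal decomposition $D_{sg}(\X(d)_0)=\langle\cdots,\oM(d)\rangle$ with complement built from $p_{\dd*}q_{\dd}^*$ of box products of the $D_{sg}(\oM(d_i)_{w_i})$; dimensional reduction is not needed here, only the $D^b(\mathbb{A}^1)$-linearity of the categories involved, which holds because $W$ is $G(d)$-invariant.

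Next I would define the filtration $F^{\cdot}$ on $\text{KHA}$ by declaring $F^{\leq k}$ to be the span of all products of at most $k$ classes lying in the $K_0(\MM(d))$'s; equivalently, one reads it off the semi-orthogonal decomposition by the number of factors in the partition $\dd$. The key point is that $F^{\leq 1}=\bigoplus_d K_0(\MM(d))$ and that $F^{\cdot}$ is multiplicative, i.e.\ $F^{\leq a}\cdot F^{\leq b}\subseteq F^{\leq a+b}$, which is immediate from the definition of the product $m=p_*q^*$ and the compatibility of the induction diagrams $\X(\dd)$ under refinement of partitions (this is essentially the associativity bookkeeping already carried out in the proof of Theorem \ref{thm1}). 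Taking $K_0$ of the semi-orthogonal decomposition gives a direct-sum decomposition of $K_0(D_{sg}(\X(d)_0))$ indexed by ordered partitions $\dd$ with factors $K_0(\MM(d_i)_{w_i})$, and one checks that the associated graded $\text{gr}^F\text{KHA}$ is, as a graded vector space, the free ``PBW-type'' module $\bigoplus_{\dd}\bigotimes_i K_0(\MM(d_i)_{w_i})$ over ordered partitions.

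The heart of the theorem is identifying the algebra structure on $\text{gr}^F\text{KHA}$ as the deformed symmetric algebra $\text{dSym}$ of Definition \ref{qsym}, i.e.\ verifying the commutation relation
$$x_{d,w}x_{e,v}=\bigl(x_{e,v}q^{f(e,d)}\bigr)\bigl(x_{d,w}q^{-g(e,d)}\bigr)$$
in the associated graded. For this I would compute, in $\text{gr}^F$, the product of $x_{d,w}\in K_0(\MM(d)_w)$ and $x_{e,v}\in K_0(\MM(e)_v)$ in both orders. Both products land in the partition-length-two part, supported on the two summands $p_{d,e*}q_{d,e}^*(\MM(d)_w\boxtimes\MM(e)_v)$ and $p_{e,d*}q_{e,d}^*(\MM(e)_v\boxtimes\MM(d)_w)$ of $D_{sg}(\X(d+e)_0)$. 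The two summands are related by the Weyl group $W_{d+e}$ acting on $\Y(d+e)$, and the comparison is governed by the Euler-class computation in Proposition \ref{compo}, which produces exactly the factors $q^{f(e,d)}$ and $q^{-g(e,d)}$ as ratios of $\text{eu}_1(d,e)$ and $\text{eu}_1(e,d)$ restricted to the diagonal torus weights (these depend only on the dimension vectors via the pairing $(d,e)$, as required). This is the step I expect to be the main obstacle: one must show that the ``off-diagonal'' contributions, coming from sheaves genuinely supported on attracting loci inside the window, die in the associated graded precisely because of the boundary conditions defining $\MM(d)$ rather than $\oM(d)$, so that only the clean $R$-matrix braiding relation survives; this is where the symmetric-quiver hypothesis and the careful definition in Subsection \ref{defMM} are genuinely used. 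Finally, the $\text{KHA}^{\text{id}}$ case follows verbatim by passing to idempotent completions throughout, since all the functors ($TS$, $q^*$, $p_*$, and the semi-orthogonal decompositions via Proposition \ref{sod}) were already shown to be compatible with idempotent completion.
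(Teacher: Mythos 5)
Your proposal captures the high-level architecture correctly: semi-orthogonal decomposition of $D^b(\X(d))$ via \v{S}penko-van den Bergh type windows, passage to $D_{sg}$ by Proposition~\ref{sod}, a filtration read off the decomposition, and a computation of commutators in the associated graded. This matches Steps 1 and 2 of the paper's outline. However there are two genuine gaps.

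First, the mechanism producing the $q$-shifts $q^{f(e,d)}$, $q^{-g(e,d)}$ is misidentified. You propose to extract them from the localized $R$-matrix formula $R_{d,e}^{ab}=\tfrac{\text{eu}_1(d,e)}{\text{eu}_1(e,d)}\text{sw}$ of Proposition~\ref{compo}, ``restricted to the diagonal torus weights.'' But Proposition~\ref{compo}(b) only holds after localizing at the ideal $I_{d,e}$, and the ratio of Euler classes is a rational function, not a monomial; localization cannot produce an honest equality in non-localized $K_0$. The paper's mechanism is categorical and pre-localization (Theorem~\ref{mut}): one constructs, for an admissible set $\{(d,w),(e,v)\}$ with $r>\tfrac12$, a natural map $p_{e,d*}q_{e,d}^*(B\boxtimes A\otimes\det\mathcal N_{e,d})\to p_{d,e*}q_{d,e}^*(A\boxtimes B)$ using Borel--Bott--Weyl and the adjoints from Theorem~\ref{hl}, and shows that its cone lies strictly lower in the filtration $F^{\cdot}$ by the $(r,p)$-monotonicity of Proposition~\ref{rgoesdown}. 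The monomial $(-1)^{\chi(d,e)}q^{f(e,d)}q^{-g(e,d)}$ is the class $[\det\mathcal N_{e,d}]=\det(N_{e,d})\det(2\rho_{e,d})^{-1}$ (shifted by codimension), not a restriction of $\text{eu}_1$.

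Second, and more seriously, your proposal treats the passage from $\oM(d)$ to $\MM(d)$ --- the ``off-diagonal contributions die in the associated graded'' step, which you correctly flag as the main obstacle --- only as a one-sentence aside, whereas it is the heart of the proof (Step 3 of the paper, Subsection~\ref{step3}). The category $\oM(d)$ is not irreducible: it contains sheaves on attracting loci for admissible sets with $r=\tfrac12$, and one must decompose $K_0(\oM(d))\cong\bigoplus_S\bigl(\bigoplus_{\mathfrak S_k}\boxtimes_i K_0(\MM(d_i)_{w_i})\bigr)^{\mathfrak S_k}$ (Theorem~\ref{decomp}). This requires the functors $\Phi_S$, the dense subcategories $\mathcal A_S$, the characterization of $K_0(\MM(d))$ as an intersection of kernels (Proposition~\ref{surj}), the parity computation for $\text{codim}(\sigma)$ (Proposition~\ref{even}), and the analogue of Theorem~\ref{mut} in the $r=\tfrac12$ case (Proposition~\ref{switch}, Corollary~\ref{change}). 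None of this is a corollary of the $r>\tfrac12$ decomposition; it is where the definition of $\MM(d)$ in Subsection~\ref{defMM} and the symmetry of $Q$ (via Proposition~\ref{Wcomputation}) actually enter. Finally, you omit the paper's Step 4 entirely: a combinatorial lemma (Proposition~\ref{PBWdef}, proved via conjugation of sets of pairs) showing that $\text{dSym}$ itself admits a PBW basis with exactly the same generators and relations as found in Step 3, without which the identification $\text{gr}^F\text{KHA}\cong\text{dSym}$ does not follow.
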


We next explain the main steps in the proof of Theorem \ref{thm5}.

\textbf{Step $1$.} We find a semi-orthogonal decomposition of $D^b(\X(d))$ of the form:
$$D^b(\X(d))=\langle p_*q^*\left(\overline{\mathbb{N}}(d_1)_{w_1}\boxtimes\cdots\boxtimes\overline{\mathbb{N}}(d_k)_{w_k}\right)\rangle,$$
for certain sets of pairs $\{(d_1,w_1),\cdots, (d_k,w_k)\}$
called \textit{admissible with $r>\frac{1}{2}$}, defined in Subsection \ref{admissible}, and categories $\overline{\mathbb{N}}(d)$ defined in Subsection \ref{defnmagic}. The functors $p_*q^*$ are fully faithful.
Using Proposition \ref{sod}, 
we obtain a corresponding semi-orthogonal decomposition:
$$D_{sg}(\X(d)_0)=\langle p_*q^*\left(\overline{\mathbb{M}}(d_1)_{w_1}\boxtimes\cdots\boxtimes\overline{\mathbb{M}}(d_k)_{w_k}\right)\rangle,$$
for categories $\oM(d)$ defined in Subsection \ref{defnmagic}.
This implies that the $\text{KHA}$ has a basis $x_{d_1,w_1}\cdots x_{d_k,w_k}$ for $x_{d_i,w_i}\in K_0(\oM(d))$ and for $\{(d_1,w_1),\cdots,(d_k,w_k)\}$ an admissible set with $r>\frac{1}{2}$, see Subsection \ref{admissible} for definition. 
This is covered in Subsection \ref{step1}.
\\

\textbf{Step $2$.} We next find relations in $\text{gr}^F\text{KHA}$ of the form $$x_{d,w}x_{e,v}=\left(x_{e,v}q^{f(e,d)}\right)\left(x_{d,w}q^{-g(e,d)}\right),$$ 
where $x_{d,w}\in K_0(\oM(d)_w)$, $x_{e,v}\in K_0(\oM(e)_v)$, the set $\{(d,w), (e,v)\}$ is admissible with $r>\frac{1}{2}$, and
$q^{f(e,d)}$ and $q^{g(e,d)}$ are factors that depend on $d$ and $e$ only. This is covered in Subsection \ref{relations}.
\\

\textbf{Step $3.$} We show that the relations from Step $2.$ also hold for an admissible set $\{(d,w), (e,v)\}$ with $r=\frac{1}{2}$. Further, we can decompose $K_0(\oM(d))$ as follows:
$$\bigoplus_S \left(\bigoplus_{\sigma\in \mathfrak{S}_k}\boxtimes_{i=1}^{k} K_0\left(\MM(d_i)_{w_{i}}\right)\right)^{\mathfrak{S}_{k}} \cong K_0(\oM(d)),$$ where the first sum on the left hand side is 
taken after all sets $\{d_1,\cdots, d_k\}$ with sum $d$, and the second sum is taken after all orderings of elements of $S$; further, the weights $w_i$ associated to $d_i$ for $1\leq i\leq k$ are such that the set 
$\{(d_1,w_{1}),\cdots, (d_k,w_{k})\}$ is \textit{admissible set with $r=\frac{1}{2}$}, see Subsection \ref{admissible} for definition. The action of $\mathfrak{S}_k$ on the second sum of the left hand side is defined in Subsection \ref{defMM}.

This shows that $\text{KHA}$ is generated by monomials 
$x_{d_1,w_1}\cdots x_{d_k,w_k}$, where the set $\{(d_1,w_1),\cdots,(d_k,w_k)\}$
is admissible and $x_{d,w}\in K_0(\mathbb{M}(d)_w)$, and that the relations between these monomials are generated by the relations in Step $3$. This is covered in Subsection \ref{step3}.
\\

\textbf{Step $4$.} We define the deformed symmetric algebra $\text{dSym}$ as the algebra generated by monomials $x_{d_1,w_1}\cdots x_{d_k,w_k}$ with $x_{d,w}\in K_0(\mathbb{M}(d)_w)$ and with the relations from Steps $2$ and $3$. We show that $\text{dSym}$ has a generating set formed by the same monomials and with the same relations as in Step $3$, which proves Theorem \ref{thm5}.
This is covered in Subsection \ref{step4}.
\\

\subsection{Preliminaries.}
We introduce some definitions and recall some results that will be used in the proof of Theorem \ref{thm5}.
\\

\subsubsection{Notations}\label{notations}
Let $d=(d^1,\cdots,d^{n})\in\mathbb{N}^I$ be a dimension vector, where $n=|I|$.
Fix maximal tori and Borel subgroups
compatible under multiplication
$$T(d)\subset B(d)\subset G(d).$$
Let $M$ be the character lattice of $T$, $N$ the cocharacter lattice of $T$, and let $M_{\mathbb{R}}=M\otimes\mathbb{R}$
and $N_{\mathbb{R}}=N\otimes\mathbb{R}$.
We fix a Weyl invariant product $\langle\,,\, \rangle$ 
on $M$ and $N$. We let the weights in the Lie algebra of $B(d)$ be negative; this choice of $B(d)$ determines a choice of dominant chamber $M_{\mathbb{R}}^{+}\subset M_{\mathbb{R}}$.
\\

 Denote the simple roots of $G(d)$ by $\beta^i_{j}$ for $1\leq i\leq n$ and $1\leq j\leq d^i$.
For each dimension vector $d\in\mathbb{N}^I$, denote by $$\lambda_d=(z\text{Id}):\C^*\to G(d)$$ the diagonal character, and let 
$$\beta_d:=\frac{1}{|I|}
\sum_{i\in I}\frac{\beta^i_{1}+\cdots+\beta^i_{d^i}}{d^i}\in M_{\mathbb{R}}.$$ We have that $\langle \lambda_d,\beta_d\rangle=1$.
\\

\subsubsection{Partitions}\label{succ}
Let $d\in\mathbb{N}^I$ be a dimension vector. A partition  $\underline{d}=(d_1,\cdots, d_k)$ of the vector $d$ is an ordered set of dimension vectors $d_i\in\mathbb{N}^I$ such that $d_1+\cdots+d_k=d$ as vectors in $\mathbb{N}^{I}$. We consider the lexicographic order $\succ$ on $\mathbb{N}^I$. A partition $\dd$ is called decreasing if each $d_i$ appears $m_i$ times, the $m_i$ copies of $d_i$ appear consecutively for $1\leq i\leq k$, and $d_1\succ\cdots\succ d_k$.
%$d_1\succ\cdots\succ d_k$, and strictly decreasing if all the inequalities are strict. We similarly define (strictly) increasing. 
\\

\subsubsection{Attracting loci}\label{attractingloci} 
Let $S(d)=\text{ker}\left(\text{det}:G(d)\to\C^*\right)$.
For any partition $\underline{d}$, there exist anti-dominant characters: 
$$\lambda_{\dd}:\C^*\to S(d)\subset G(d)$$ with fixed locus
$$R(d)^{\lambda_{\dd}}=R(d_1)\times\cdots\times R(d_k).$$
Denote by $L(d)$ the Levi group of $\lambda_{\dd}$; it is independent
on the choice of the character $\lambda_{\dd}$ and we have that: $$L(d)\cong G(d_1)\times\cdots\times G(d_k).$$
We consider the attracting locus $R(\dd)$
acted by the parabolic group $G(\dd)$, and let $\X(\dd)=R(\dd)/G(\dd).$
We have an affine bundle map $q_{\dd}$ and a proper map $p_{\dd}$ such that:

\begin{tikzcd}
\X(\dd) \arrow[r, "p_{\dd}"] \arrow[d, "q_{\dd}"]
    & \X(d)  \\
  \X(d_1)\times\cdots\times \X(d_k). 
\end{tikzcd}
\\
Conversely, for any character $\lambda:\C^*\to G(d)$ we have an associated partition $\dd$ by looking at the decomposition of the $\lambda$ fixed locus: $$R(d)^{\lambda}=R(d_1)\times\cdots\times R(d_k).$$

\subsubsection{Refinements of partitions}\label{compa}
We call a partition $\underline{e}=(e_1,\cdots, e_l)$ a refinement of $\underline{d}$ if there exist integers $1\leq a_1<\cdots< a_k\leq l$ such the sum of the first $a_1$ $e-$terms is $d_1$, the sum of the next $a_2-a_1$ terms is $d_2$, and so on. We write $\ee\geq\dd$.
We can define the notion of refinement for anti-dominant characters, and this coincides to the above notion of refinement once we pass to their associated partitions. 
For characters $\lambda,\mu:\C^*\to G(d)$, we call $\lambda$ a refinement of $\mu$ and write $\lambda\geq \mu$ if for every weight $\beta$ with $\langle \mu, \beta \rangle> 0$ we have that $\langle \lambda, \beta\rangle > 0$. 
\\

%\subsubsection{Subpartitions} A partition $\dd=(d_1,\cdots,d_k)$ is a subpartition of $\ee=(e_1,\cdots,e_l)$ if there exists $i$ such that$$(d_1,\cdots,d_k)=(e_{i+1},\cdots,e_{i+k})$$

\subsubsection{Tree of partitions}\label{tree}
Define the tree $\mathcal{T}$ as follows: the vertices of $\mathcal{T}$ correspond to partitions $\dd$, and draw and edge from $\dd=(d_1,\cdots,d_k)$ to $\ee$ if $\ee$ is a partition of $d_i$ for some $1\leq i\leq k$. 

A set $T$ is called a \textit{tree of partitions} if it is the set of vertices of a finite subtree of $\mathcal{T}$ which has a unique vertex $v\in T$ with trivial in-degree.
Let $I\subset T$ be the set of vertices with trivial out-degree. Define the Levi associated to $T$ as:
$$L(T):=\prod_{\ee\in I} L(\ee).$$
We will also consider sets with elements characters such that the corresponding partitions form a tree of partitions.
%Let $\dd$ be a partition of $d\in\mathbb{N}^I$. 
%A set $T$ is called a tree of partitions for $\dd$ if:
%\begin{itemize}   \item $T=\{\dd\}$ has one element, or\item there exist tree of partitions $T_i$ for $d_i$ such that $T=\{\dd\}\cup T_1\cup\cdots\cup T_k.$ Some of the sets $T_i$ might have one element.\end{itemize}

%Observe that all partitions in $T$ are subpartitions of $\dd$. A set $T$ is called a tree of partitions if there exists $\dd$ such that $T$ is a set of partitions for $\dd$.

%For any tree of partitions, we define a Levi group $L(T)$ associated to it as follows: if $T=\{\dd\}$, we define $L(T):=G(d_1)\times\cdots\times G(d_k)$; if $T=\{\dd\}\cup T_1\cup\cdots\cup T_k$, where some of the sets $T_i$ might have a single element, define $L(T):= L(T_1)\times\cdots\times L(T_k)$. \\

%\textbf{Subpartitions.} A partition $\dd=(d_1,\cdots,d_k)$ is a subpartition of $\ee=(e_1,\cdots,e_s)$ if there exists $i$ such that $(d_1,\cdots,d_k)=(e_{i+1},\cdots,e_{i+k})$.

\subsubsection{The polytope $\overline{\mathbb{W}}$}\label{polytope} 
Consider the following polytope in $M_{\mathbb{R}}$:
$$\overline{\mathbb{W}}=\bigoplus_{\beta\text{ wt of }R(d)}\,[0,\beta]\oplus\mathbb{R}\beta_d\subset M_{\mathbb{R}},$$
where the sum is taken after the weights $\beta$ of $R(d)$. Observe that the weights $\beta$ of the $G(d)$ representation $R(d)$ have the form $\beta=\beta^i_{j}-\beta^k_{l}$, where $i$ and $k$ are vertices with an edge between them. We use the notation $\mathbb{W}\subset \WW$ for its interior.

The boundary of the polytope $\frac{1}{2}\WW$ is contained in the subspaces given by equations of the form:
$$F(\lambda): \langle \lambda, -\rangle+\langle \lambda, \frac{1}{2}N^{\lambda>0}\rangle=0,$$
where $\lambda:\C^*\to S(d)$ is a non-trivial character. 
Two characters $\lambda$ and $\mu$ are $\lambda\leq \mu$ if and only if $F(\mu)\subset F(\lambda)$.
For a character $\lambda$, we denote by $F(\lambda)^{\text{int}}\subset F(\lambda)$ the open subspace:
$$F(\lambda)^{\text{int}}=F(\lambda)-\bigcup_{\mu>\lambda}F(\mu).$$
%Let $\mathbb{W}'\subset\mathbb{W}$ be the union of $\mathbb{W}^{\text{int}}$ and the interior of the faces $F(\lambda)^{\text{int}}$ where $\lambda$ is anti-dominant with associated strictly decreasing partition $\dd=(d_1,\cdots,d_k)$. 

For $L=G(d_1)\times\cdots\times G(d_k)$ a Levi subgroup of $G$, denote by $R(d)^L\subset R(d)$ the $L$ fixed subspace. 
Define: $$\WW(L)=\bigoplus_{1\leq i\leq k}\,\WW(d_i)\subset M_{\mathbb{R}}
,$$ where we identify the simple roots $\beta^i_j$ of $G(d_1)$ with the first $d_1$ simple roots of $d$, and so on. %We also consider the analogue of $\mathbb{W}'$ in this setting: $$\mathbb{W}'(L):=\bigoplus_{1\leq i\leq k}\mathbb{W}'(d_i)\subset\mathbb{W}(L).$$\\

\subsubsection{Definition of the categories $\overline{\mathbb{N}}(d)$ and $\overline{\mathbb{M}}(d)$}\label{defnmagic}
We define $\overline{\mathbb{N}}(d)\subset D^b(\X(d))$ as the subcategory generated by $\OO_{\X(d)}\otimes V(\chi)$, where $\chi$ is a dominant weight of $G(d)$ such that:
$$\chi+\rho\in \frac{1}{2}\WW.$$ 
In the case of a general potential $W$, we define $\overline{\mathbb{M}}(d):=D_{sg}(\overline{\mathbb{N}}(d))$ a subcategory of $D_{sg}(\X(d)_0)$, see Subsection \ref{singul}.
We similarly define $\overline{\mathbb{N}}^{\text{ab}}(d)\subset D^b(\Y(d))$ and $\overline{\mathbb{M}}^{\text{ab}}(d)\subset D_{sg}(\Y(d)_0)$, where in this case $\chi$ is a $T(d)$-weight.
\\

%consider the inclusion of the central fiber $i:\X(d)_0\to \X(d)$. Let $D^b\text{Coh}^{\mathbb{N}}(\X(d)_0)$ be the subcategory of $D^b\text{Coh}(\X(d)_0)$ such that $i_*F\in\mathbb{N}(d)$, and let $\text{Perf}^{\mathbb{N}}(\X(d)_0)\subset D^b\text{Coh}^{\mathbb{N}}(\X(d)_0)$ be the subcategory of its perfect complexes. Define:$$\mathbb{M}(d)=\text{Coh}^{\mathbb{N}}(\X(d)_0)/\text{Perf}^{\mathbb{N}}(\X(d)_0).$$ 
%Define $\mathbb{N}(d)\subset \overline{\mathbb{N}}(d)$ the subcategory generated by the vector bundles $V(\chi)\otimes\OO_{\X(d)}$ where $\chi$ is a weight of $G(d)$ such that:
%$$\chi+\rho\in\frac{1}{2}\mathbb{W}'.$$ Alternatively, $\chi$ can be written as:$$\chi=\frac{1}{2}N_{d_1,\cdots,d_k}-\rho+\chi'\text{ where }\chi'\in \frac{1}{2}\mathbb{W}^{\text{int}}(L),$$ where $\dd=(d_1,\cdots,d_k)$ is a strictly descending partition of $\dd$and $L$ is the Levi associated to $\dd$.In the case of a general potential $W$, consider the subcategory $\mathbb{M}(d):=D_{sg}(\mathbb{N}(d))$ of $D_{sg}(\X(d)_0)$. We define $\mathbb{N}^{\text{ab}}(d)\subset D^b(\Y(d))$ and $\mathbb{M}^{\text{ab}}(d)\subset D_{sg}(\Y(d)_0)$ as above, where in this case $\chi$ is a $T(d)$-weight.

\subsubsection{The $r$-invariant.}
For $\chi$ a real weight of $M_{\mathbb{R}}$, we define its $r$-invariant to be the smallest positive real number $r$ such that:
$$\chi\in r\WW.$$ 
The boundary $r\partial\WW$ is contained in the subspaces given by the equations
$$F_r(\lambda): \langle \lambda, -\rangle+r\langle \lambda, N^{\lambda>0}\rangle=0,$$
where $\lambda$ is a character with image on $S(d)$. For $\chi$ a real weight in $M_\mathbb{R}$ such that $\chi\in r\partial\WW$, we have that:
$$r=\text{max}\,\frac{\langle \lambda, \chi\rangle}{\langle \lambda, N^{\lambda>0}\rangle}
=-\text{min}\,\frac{\langle \lambda, \chi\rangle}{\langle \lambda, N^{\lambda>0}\rangle},$$
where the min and max are taken after all $S(d)$ characters $\lambda$. When $\chi$ is dominant, we have that:
$$r=-\text{min}\,\frac{\langle \lambda, \chi\rangle}{\langle \lambda, N^{\lambda>0}\rangle},$$ where the min is taken after all $S(d)$ anti-dominant character $\lambda$.
\\

If $\chi\in M_\mathbb{R}$ has $r(\chi)=r$, we can write:
$$\chi=\sum c_{ij}(\beta_i-\beta_j)+ c\beta_d,$$ where the coefficients satisfy $-r\leq c_{ij}\leq 0$ and $c\in\mathbb{R}$. 
The $p$-invariant for $\chi$ with $r(\chi)=r$ is defined as the smallest number of coefficients equal to $r$ in a formula as above.

\begin{prop}\label{rem1} (a) Two weights of dimension $d$ that differ by a multiple of $\beta_d$ have the same $r$ invariant.

(b) Let $\chi$ be a dominant weight and $w\in W$. Then: $$(r,p)(\chi+\rho)=(r,p)(w*\chi+\rho).$$
\end{prop}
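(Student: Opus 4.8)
The plan is to analyze the two statements separately, since part (a) is essentially immediate from the definition of the polytope $\WW$, while part (b) requires understanding how the shifted Weyl action interacts with the Minkowski sum structure of $\WW$.

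For part (a), the key observation is built into the very definition: $\WW = \bigoplus_{\beta\text{ wt of }R(d)}[0,\beta]\oplus\mathbb{R}\beta_d$, so $\WW$ already contains the full line $\mathbb{R}\beta_d$. Hence $\chi \in r\WW$ if and only if $\chi + c\beta_d \in r\WW$ for any $c\in\mathbb{R}$, because $r\WW + \mathbb{R}\beta_d = r\WW$. Therefore the smallest such $r$ is unchanged under translation by multiples of $\beta_d$, which is exactly the claim $r(\chi) = r(\chi + c\beta_d)$.

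For part (b), I would first reduce to the case of a simple reflection $w = s_\alpha$, since a general $w\in W$ is a product of simple reflections and the invariants $(r,p)$ would then be tracked inductively — but one must be careful that $s_\alpha * \chi + \rho = s_\alpha(\chi+\rho)$, so the shifted action on $\chi$ is just the ordinary linear Weyl action on $\chi + \rho$. Thus it suffices to prove that for any real weight $\eta = \chi+\rho$ and any $w\in W$, we have $(r,p)(\eta) = (r,p)(w\eta)$. The invariant $r(\eta)$ was characterized (right after the definition of the $r$-invariant) as $r = \max_\lambda \frac{\langle\lambda,\eta\rangle}{\langle\lambda, N^{\lambda>0}\rangle}$ over all $S(d)$-characters $\lambda$; since $W$ acts on the set of such characters $\lambda$ and the pairing $\langle\,,\,\rangle$ is Weyl-invariant, and since $N^{w\lambda>0}$ is the $w$-image of $N^{\lambda>0}$ (so $\langle w\lambda, N^{w\lambda>0}\rangle = \langle\lambda, N^{\lambda>0}\rangle$), the supremum defining $r(w\eta)$ is obtained by reindexing $\lambda \mapsto w^{-1}\lambda$ from the supremum defining $r(\eta)$. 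This gives $r(w\eta) = r(\eta)$. For the $p$-invariant, I would use that $\WW(L)$ and the facet structure $F(\lambda)$ are permuted by $W$ in a compatible way: writing $\eta = \sum c_{ij}(\beta_i - \beta_j) + c\beta_d$ with the minimal number of coefficients equal to $-r$ (or $r$, matching the normalization in the text), applying $w$ permutes the roots $\beta_i - \beta_j$ among themselves (up to sign, but $[0,\beta] = [-\beta,0] + \beta$ absorbs signs into the $\beta_d$-direction only when $\beta$ is not proportional to $\beta_d$, which needs a small check) and fixes $\beta_d$, so a presentation of $w\eta$ with the same number of extremal coefficients is obtained, and minimality transfers both ways by applying $w^{-1}$.

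The main obstacle I anticipate is the sign bookkeeping in the $p$-invariant argument: the Minkowski summands are intervals $[0,\beta]$, not symmetric intervals, so under a Weyl reflection a generator $\beta_i - \beta_j$ may be sent to its negative $\beta_j - \beta_i$, and one must verify that replacing $[0,\beta]$-coordinates by $[0,-\beta]$-coordinates does not change the count of coefficients achieving the extreme value $r$ — equivalently, that the facet $F(\lambda)^{\text{int}}$ on which $\eta$ lies is sent by $w$ to $F(w\lambda)^{\text{int}}$, and the combinatorial "type" of the facet (which records $p$) is $W$-invariant. I would handle this by working intrinsically with the characterization $r = -\min_\lambda \langle\lambda,\eta\rangle / \langle\lambda, N^{\lambda>0}\rangle$ and defining $p(\eta)$ as (a function of) the dimension or combinatorial type of the minimal face of $r\WW$ containing $\eta$, then observing that $w: r\WW \to r\WW$ is a linear automorphism permuting faces, hence preserves this face data. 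This makes both parts of Proposition \ref{rem1} follow from the single fact that multiplication by $w\in W$ is a symmetry of the polytope $\WW$ fixing $\beta_d$.
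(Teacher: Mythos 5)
Your proof is correct and takes the same approach as the paper: part (a) follows because $\WW$ contains $\mathbb{R}\beta_d$ by construction, and part (b) reduces via $w*\chi+\rho = w(\chi+\rho)$ to the Weyl-invariance of $\WW$. The paper states this in two lines. Your anticipated sign obstacle in the $p$-invariant argument dissolves once you observe that the weight multiset of $R(d)$ is automatically $W$-invariant (it is the weight multiset of a $G(d)$-representation, and for $G(d)=\prod_v GL(d^v)$ the Weyl group permutes the $\beta^v_j$ within each vertex), so $w$ sends each generator $\beta_i-\beta_j$ of the Minkowski sum to another such generator exactly, never to its negative; combined with $w\beta_d=\beta_d$ this transports a presentation $\chi+\rho = \sum c_{ij}(\beta_i-\beta_j)+c\beta_d$ coefficient-for-coefficient to one for $w(\chi+\rho)$, preserving both $r$ and $p$.
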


\begin{proof}
The proof of $(a)$ is clear. For $(b)$, we have that $\WW\subset M_{\mathbb{R}}$ is Weyl invariant, and also that: $$w*\chi+\rho=w(\chi+\rho)-\rho+\rho=w(\chi+\rho).$$
\end{proof}

\begin{prop}\label{prop}
Let $\chi\in M_{\mathbb{R}}$ be a real weight with $r(\chi)=r$, and let $\lambda$ be the character with $\chi\in F_r(\lambda)^{\text{int}}.$
Then we can write:
$$\chi=\sum c_{ij}(\beta_i-\beta_j)+ c\beta_d,$$ with $c\in\mathbb{R}$ and such that:
\[  c_{ij}= 
     \begin{cases}
       0 \text{ if }\langle \lambda, \beta_i-\beta_j\rangle<0,\\
       -r \text{ if } \langle \lambda, \beta_i-\beta_j\rangle>0, \\
       \text{in }(-r,0] \text{ if }
\langle \lambda, \beta_i-\beta_j\rangle=0.  \\ 
     \end{cases}
\]
\end{prop}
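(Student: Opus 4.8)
The plan is to unwind the definitions and then argue by a direct linear-algebra decomposition of $\chi$ adapted to the defining hyperplane $F_r(\lambda)^{\text{int}}$. First I would recall that, by the definition of the $r$-invariant, $r = r(\chi)$ is characterized as $r = \max_\mu \frac{\langle \mu, \chi\rangle}{\langle \mu, N^{\mu>0}\rangle}$, with the maximum over all $S(d)$-characters $\mu$, and that $\chi \in F_r(\lambda)^{\text{int}}$ means precisely that $\lambda$ is the (unique up to scaling, maximal under refinement) character at which this maximum is attained, i.e. $\langle \lambda, \chi\rangle + r\langle\lambda, N^{\lambda>0}\rangle = 0$, and no strict refinement $\mu > \lambda$ satisfies the same equation. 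The weights of $R(d)$ are the $\beta_i - \beta_j$ (running over oriented edges of $Q$), so $N^{\lambda>0}$ collects those $\beta_i - \beta_j$ with $\langle\lambda, \beta_i-\beta_j\rangle > 0$, and $\langle\lambda, N^{\lambda>0}\rangle = \sum_{\langle\lambda,\beta_i-\beta_j\rangle>0}\langle\lambda,\beta_i-\beta_j\rangle$.

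Next I would use the fact (implicit in the setup of Subsection \ref{polytope} and the $r$-invariant subsection) that the simple roots $\beta_i - \beta_j$ together with $\beta_d$ span $M_{\mathbb{R}}$, so we may write $\chi = \sum c_{ij}(\beta_i - \beta_j) + c\beta_d$ for some real coefficients; the point is to choose these coefficients well. The condition $\chi \in r\overline{\mathbb{W}}$ with $r$ minimal forces, after projecting out the $\mathbb{R}\beta_d$ direction (harmless by Proposition \ref{rem1}(a)), that $\chi$ lies in the boundary face $r\bigoplus[0,\beta]$ cut out by $F_r(\lambda)$. I would then pair the expression $\chi = \sum c_{ij}(\beta_i-\beta_j) + c\beta_d$ with $\lambda$: since $\langle\lambda,\beta_d\rangle$ may be nonzero, absorb it by adjusting $c$; the equation $\langle\lambda,\chi\rangle = -r\langle\lambda,N^{\lambda>0}\rangle$ then says $\sum c_{ij}\langle\lambda,\beta_i-\beta_j\rangle = -r\langle\lambda, N^{\lambda>0}\rangle = -r\sum_{\langle\lambda,\beta_i-\beta_j\rangle>0}\langle\lambda,\beta_i-\beta_j\rangle$. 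Combined with the membership constraint $-r \le c_{ij} \le 0$ coming from $\chi \in r\overline{\mathbb{W}}$ (which one can always arrange for a weight on the boundary, choosing a representative in the Minkowski sum), this is a saturated linear inequality: each term $c_{ij}\langle\lambda,\beta_i-\beta_j\rangle$ with $\langle\lambda,\beta_i-\beta_j\rangle>0$ satisfies $c_{ij}\langle\lambda,\beta_i-\beta_j\rangle \ge -r\langle\lambda,\beta_i-\beta_j\rangle$, and equality in the sum forces equality termwise, hence $c_{ij} = -r$ whenever $\langle\lambda,\beta_i-\beta_j\rangle > 0$. Symmetrically, replacing $\lambda$ by $-\lambda$ does not help directly, but the terms with $\langle\lambda,\beta_i-\beta_j\rangle < 0$ can be taken to be $0$: if some such $c_{ij} < 0$, I would show one can shift it to $0$ at the cost of changing $c$ and possibly other coefficients in the kernel of $\langle\lambda,-\rangle$, using that the $F_r(\lambda)^{\text{int}}$ condition (no proper refinement $\mu > \lambda$ lies on $F_r$) prevents any forced negativity there — this is the step where the "interior" hypothesis is essential. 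The remaining coefficients, those with $\langle\lambda,\beta_i-\beta_j\rangle = 0$, are then free to lie in $(-r,0]$.

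The main obstacle I anticipate is the middle bullet: showing that the coefficients on roots orthogonal to $\lambda$ can be chosen in $(-r, 0]$ rather than needing to hit $-r$ or exceed it, and more precisely that the negativity forced on roots pairing negatively with $\lambda$ can be entirely removed. This is exactly where $\chi \in F_r(\lambda)^{\text{int}}$ rather than merely $\chi \in F_r(\lambda)$ enters: if a coefficient on a $\lambda$-orthogonal root were forced to equal $-r$, that would exhibit $\chi$ on a smaller face $F_r(\mu)$ with $\mu > \lambda$, contradicting interiority. I would make this rigorous by an extremal/face argument: the set of valid coefficient vectors $(c_{ij}, c)$ representing $\chi$ with all $c_{ij}\in[-r,0]$ is a nonempty polytope (nonempty since $\chi \in r\overline{\mathbb{W}} + \mathbb{R}\beta_d$), and choosing a vertex of the face where the constraints from $\langle\lambda,-\rangle$ are tight, one reads off the stated trichotomy; the coefficients in $(-r,0]$ strictly interior come from the freedom along $\ker\langle\lambda,-\rangle$, which is unobstructed precisely because no refinement of $\lambda$ is active. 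I would close by noting $c \in \mathbb{R}$ is unconstrained, consistent with Proposition \ref{rem1}(a).
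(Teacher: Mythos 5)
Your proposal is essentially correct and follows the same skeleton as the paper's proof (pair with $\lambda$, use the saturated boundary equation $\langle\lambda,\chi\rangle + r\langle\lambda, N^{\lambda>0}\rangle = 0$, then invoke interiority), but with two points worth flagging.

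First, your ``saturated linear inequality'' observation is a slightly cleaner alternative to the paper's perturbation argument (the paper perturbs $\chi$ by $\pm\varepsilon(\beta_i-\beta_j)$ and uses that $\chi-\alpha\notin r\overline{\mathbb{W}}$ whenever $\langle\lambda,\alpha\rangle>0$). However, you stop short of noticing that the very same saturation that forces $c_{ij}=-r$ on the $\langle\lambda,\cdot\rangle>0$ roots \emph{simultaneously} forces $c_{ij}=0$ on the $\langle\lambda,\cdot\rangle<0$ roots: the $\langle\lambda,\cdot\rangle<0$ terms contribute $c_{ij}\langle\lambda,\beta_i-\beta_j\rangle\geq 0$ to the sum, so equality annihilates them termwise. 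Your separate ``shift to zero'' step for the negatively paired roots, and more seriously the claim that \emph{this} is where $\chi\in F_r(\lambda)^{\text{int}}$ enters, are a misattribution: those coefficients are pinned down already for $\chi\in F_r(\lambda)$, without interiority. Interiority is only used for the $\langle\lambda,\cdot\rangle=0$ roots.

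Second, for the orthogonal roots, your face/vertex sketch is pointing in the right direction but is too loose to close the argument, and picking a \emph{vertex} of the coefficient polytope actually works against you (vertices sit on the constraint boundary, whereas you need a representative with those coefficients in the open interval $(-r,0]$). The paper's mechanism is more concrete: set $\psi:=\chi+rN^{\lambda>0}$, which lives in $\frac{1}{2}\overline{\mathbb{W}}(L)$ for the Levi $L=L(\lambda)$, and show $r(\psi)<r$. The contradiction if $r(\psi)=r$ is produced explicitly: an anti-dominant character $\mu$ of $L$ realizes $r(\psi)$, one builds a character $\mu'$ of $S(d)$ with Levi $L(\mu)$ (hence $\mu'>\lambda$), and a direct computation gives $\langle\mu',\chi\rangle + r\langle\mu', N^{\mu'>0}\rangle = 0$, i.e.\ $\chi\in F_r(\mu')$, contradicting $\chi\in F_r(\lambda)^{\text{int}}$. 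If you replace ``if a coefficient were forced to $-r$'' with ``if $r(\psi)=r$'' and carry out this refinement construction, your argument becomes airtight; as written, the passage from ``a coefficient is forced to $-r$'' to ``there exists $\mu>\lambda$ with $\chi\in F_r(\mu)$'' is asserted rather than proved.
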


\begin{proof}
%Let $\lambda$ be a character that gives the $r$-invariant of $\chi$. In particular, $\chi$ is on the boundary of $r\mathbb{W}$ and satisfies:$$\langle \lambda, \chi\rangle=-r\langle \lambda, N^{\lambda>0}\rangle.$$
Let $\alpha\in M_{\mathbb{R}}$ be a small weight with $\langle \lambda, \alpha\rangle>0$. Then $\chi-\alpha$ will not be in $r\mathbb{W}$ and thus it will have $r(\chi-\alpha)>r(\chi)$. Further, $\chi+\alpha$ will still be inside $r\mathbb{W}$, and thus $r(\chi+\alpha)\leq r(\chi)$. In the above expression:
$$\chi=\sum c_{ij}(\beta_i-\beta_j)+ c\beta_d,$$
assume that there exists a weight $\langle \lambda, \beta_i-\beta_j\rangle>0$ such that its coefficients $c_{ij}>-r$. But then for small $\varepsilon>0$, we will have that: $$\chi-\varepsilon(\beta_i-\beta_j)=\sum c'_{ij}(\beta_i-\beta_j)+c\beta_d$$ with all coefficients $-r\leq c'_{ij}\leq 0$, so $\chi+\varepsilon'(\beta_i-\beta_j)\in r\mathbb{W}$, which contradicts the above observation. The argument for why $c_{ij}=0$ when $\langle \lambda, \beta_i-\beta_j\rangle<0$ is similar.

Next, we show that we can choose the coefficients $0\geq c_{ij}>-r$ for weights $\beta_i-\beta_j$ such that $\langle \lambda, \beta_i-\beta_j\rangle=0$.
Let $\psi:=\chi+rN^{\lambda>0}$, then:
$$\psi=\sum c_{ij}(\beta_i-\beta_j)+c\beta_d,$$ where the sum is after all weights such that $\langle \lambda, \beta_i-\beta_j\rangle=0$, and $-r\leq c_{ij}\leq 0$.
If $r(\psi)=r$, then by the above argument there exists an anti-dominant character $\mu$ such that:
$$\psi=\sum c_{ij}(\beta_i-\beta_j)+c\beta_d,$$ where the sum is after $\langle \lambda, \beta_i-\beta_j\rangle=0$ and:
\[  c_{ij}= 
     \begin{cases}
       0 \text{ if }\langle \mu, \beta_i-\beta_j\rangle<0,\\
       -r \text{ if } \langle \mu, \beta_i-\beta_j\rangle>0, \\
       \text{between }0\text{ and }r \text{ if }
\langle \mu, \beta_i-\beta_j\rangle=0.  \\ 
     \end{cases}
\]
The character $\lambda$ has associated Levi $L(\lambda)$, and $\mu$ is a character of $L(\lambda)$ with associated Levi $L(\mu)$. Let $\mu'$ be a character of $\C^*\to S(d)\subset G(d)$ with associated Levi $L(\mu)\subset G(d)$. If $\lambda$ and $\mu$ are anti-dominant, then $\mu'$ is anti-dominant.
We have that:
$$\langle\mu',\chi\rangle+r\langle\mu',N^{\mu'>0}\rangle=0,$$ 
so $\chi\in F_r(\mu')$, which contradicts the choice of $\lambda$. This implies that indeed $r(\psi)<r$ and then we can choose all the coefficients $c_{ij}$ to be $-r<c_{ij}\leq 0$.
\end{proof}

The following is an immediate corollary of Proposition \ref{prop}:

\begin{prop}\label{godown}
Let $\chi$ be a dominant real weight in $M_\mathbb{R}$ with $r(\chi)=r\geq \frac{1}{2}$,
and let $\lambda$ be the anti-dominant character such that $\chi\in F_r(\lambda)^{\text{int}}$. Then: 
$$\chi=-rN^{\lambda>0}+\psi,$$ where $r(\psi)=s<r$ and $\psi\in s\mathbb{W}(L)$. 

Applying this result repeatedly, for any dominant real weight $\chi$ there exists a tree of anti-dominant partitions $T$ with Levi group $L$, see Subsection \ref{tree}, such that:
$$\chi=-\sum_{\lambda\in T} r_{\lambda}N^{\lambda_k>0}+\psi,$$ and:
\begin{itemize}
    \item if $\lambda, \mu\in T$ are vertices such that there exists a path from $\lambda$ to $\mu$, then $r_{\lambda}> r_{\mu}\geq \frac{1}{2}$,
    \item $\psi\in \frac{1}{2}\mathbb{W}(L)$. 
\end{itemize}
\end{prop}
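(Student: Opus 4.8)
The first assertion is essentially already contained in the last paragraph of the proof of Proposition \ref{prop}, so my plan is to extract it and then iterate. Given a dominant $\chi$ with $r(\chi)=r\geq\frac12$ and the antidominant $\lambda$ with $\chi\in F_r(\lambda)^{\text{int}}$, Proposition \ref{prop} furnishes an expression $\chi=\sum c_{ij}(\beta_i-\beta_j)+c\beta_d$ in which the coefficients of the roots pairing positively with $\lambda$ all equal $-r$ (their total contribution being $-rN^{\lambda>0}$), the roots pairing negatively with $\lambda$ have coefficient $0$, and the remaining coefficients lie in $(-r,0]$. I would therefore set $\psi:=\chi+rN^{\lambda>0}$; by construction $\psi$ is a combination of the roots $\beta_i-\beta_j$ with $\langle\lambda,\beta_i-\beta_j\rangle=0$, together with a multiple of $\beta_d$, and those roots are precisely the weights of the Levi representation $R(d)^{L}\cong R(d_1)\times\cdots\times R(d_k)$ attached to $\lambda$ (Subsection \ref{attractingloci}). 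The final paragraph of the proof of Proposition \ref{prop} is exactly the statement that $r(\psi)=s<r$ and that $\psi$ may moreover be written with all coefficients in $(-s,0]$; combined with the identification of weights just made, this gives $\psi\in s\WW(L)$ (the multiple of $\beta_d$ being irrelevant to the $r$-invariant by Proposition \ref{rem1}(a)). I would also record here that $N^{\lambda>0}$ is the determinant character of the $P(\lambda)$-representation $R(d)^{\lambda>0}$, hence is invariant under the Weyl group of $L$; adding $rN^{\lambda>0}$ therefore does not change the pairings against the coroots of $L$, so $\psi$ is $L$-dominant because $\chi$ is, and in particular it projects to a dominant weight $\psi_i$ of each factor $G(d_i)$.

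For the second assertion I would run the evident recursion. If $r(\chi)<\frac12$, take $T$ empty, $L=G(d)$, $\psi=\chi$. Otherwise, with $\psi,s$ as above: if $s<\frac12$ we stop, with $T=\{\lambda\}$ and $\psi\in s\WW(L)\subset\frac12\mathbb{W}(L)$; if $s\geq\frac12$, observe that $\lambda$ is a nontrivial antidominant character and, since the weights of $R(d)$ span $M_{\mathbb{R}}$, the associated partition is nontrivial, so $d_i\lneq d$ for each $i$. Applying the first assertion inside each $G(d_i)$ to $\psi_i$ — since $\WW(L)=\bigoplus_i\WW(d_i)$ we have $r(\psi_i)\leq s<r$ — and iterating, the process terminates: the dimension vectors of the ambient groups strictly decrease whenever a nontrivial character is peeled off, and along each branch the successive $r$-invariants strictly decrease while remaining $\geq\frac12$ until the branch halts. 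Recording the peeled antidominant characters as the vertices of a finite subtree of $\mathcal{T}$ (Subsection \ref{tree}) produces the tree of partitions $T$; the inequalities $r_\lambda>r_\mu\geq\frac12$ along directed paths are exactly the strict decrease observed, the leftover terms assemble into a single $\psi\in\frac12\mathbb{W}(L(T))$ since each branch remainder has $r$-invariant $<\frac12$, and summing the contributions $-r_\lambda N^{\lambda>0}$ over $\lambda\in T$ reassembles $\chi$.

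The step I expect to cost the most work is the compatibility of the $r$-invariant and of the polytopes $\WW$ under restriction to a Levi. Concretely, for the recursion to be legitimate I must know that for a weight $\psi$ supported (modulo $\beta_d$) on the weights of $R(d)^{L}$, the $r$-invariant of $\psi$ computed against $\WW(d)$ coincides with the one computed against $\WW(L)=\bigoplus_i\WW(d_i)$, and that the optimal antidominant characters $\mu$ produced by Proposition \ref{prop} for such a $\psi$ are refinements of $\lambda$ — so that they are genuine antidominant characters of $L$, and hence of $G(d)$ via Subsection \ref{attractingloci}. I expect this to follow from the description of $r\partial\WW$ by the subspaces $F_r(\lambda)$ together with the choice $\chi\in F_r(\lambda)^{\text{int}}$, which forces any $\mu$ attaining the minimum in the formula $r(\psi)=-\min_\mu\langle\mu,\psi\rangle/\langle\mu,N^{\mu>0}\rangle$ (minimum over antidominant characters) to be supported inside $L$; the residual bookkeeping about how the central direction $\beta_d$ relates to the $\beta_{d_i}$ is harmless by Proposition \ref{rem1}(a).
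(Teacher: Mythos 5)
The paper gives no proof of this proposition, only labeling it an ``immediate corollary'' of Proposition \ref{prop}; your elaboration fills in exactly the argument the paper must have had in mind. The first assertion is indeed extracted from the final paragraph of Proposition \ref{prop}'s proof (where $\psi=\chi+rN^{\lambda>0}$ is introduced and $r(\psi)<r$ is established via the lift $\mu\mapsto\mu'$ of a minimizing character of $\psi$ to a refinement of $\lambda$ in $G(d)$, which is precisely the Levi-restriction compatibility you correctly flag as the real content), your observation that $W_L$-invariance of $N^{\lambda>0}$ preserves $L$-dominance justifies applying the claim recursively to each $\psi_i$, and termination by strict decrease of the ambient dimension vectors is the right bookkeeping since the $r$-invariants of real weights need not take discrete values.
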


%The next Proposition explains how to further decompose weights in $\frac{1}{2}\mathbb{W}$.\begin{prop}Consider a real weight $\chi\in\frac{1}{2}\mathbb{W}$. There exists a character $\lambda$ with associated increasing partition and Levi $L$ such that:$$\chi=-\frac{1}{2}N^{\lambda>0}+\chi'\text{ with }\chi'\in\frac{1}{2}\mathbb{W}'(L).$$\end{prop}\begin{proof}By Proposition \label{prop}, we can choose $\mu$ such that $$\chi=-\frac{1}{2}N^{\mu>0}+\chi_0\text{ such that }\chi_0\in\frac{1}{2}\mathbb{W}^{\text{int}}(E),$$ where $E$ is the Levi associated to $\mu$.Let $\lambda<\mu$ be the maximal sub-character with associated increasing partition. The subpartitions $\lambda_1,\cdots, \lambda_k$ determined by $\lambda<\mu$ have all ascending associated partitions. Let $L$ be the Levi of $\lambda$.Then we can write: $$\chi=-\frac{1}{2}N^{\lambda>0}-\sum_{i=1}^k\frac{1}{2}N^{\lambda_i>0}+\chi_0,$$ and we have that:$$-\sum_{i=1}^k\frac{1}{2}N^{\lambda_i>0}+\chi_0\in \frac{1}{2}\mathbb{W}'(L).$$\end{proof}

\begin{defn}\label{gen}
Let $\chi$ be a dominant weight of $G$. Using Proposition \ref{godown}, there exists a tree of anti-dominant partitions $T$ with Levi group $L$ such that:
$$\chi+\rho=-\sum_{\lambda\in T} r_{\lambda}N^{\lambda_k>0}+\chi',$$ where the coefficients satisfy:
\begin{itemize}
    \item if $\lambda, \mu\in T$ are vertices such that there exists a path from $\lambda$ to $\mu$, then $r_{\lambda}> r_{\mu}\geq \frac{1}{2}$,
    \item  $\chi'$ is a weight of $L$ such that $\chi'\in \frac{1}{2}\mathbb{W}(L)$. 
\end{itemize}
We call the above decomposition of $\chi+\rho$ the \textbf{standard form} of $\chi$.
\end{defn}

The following proposition is contained in the proof of \cite[Proposition 3.11]{hls}.

\begin{prop}\label{rgoesdown}
Let $\chi$ be a dominant weight such that $r(\chi)=r>\frac{1}{2}$, and let $\lambda$ be the maximal anti-dominant character such that $\chi\in F_r(\lambda)^{\text{int}}$.
Let $\sigma$ be a nonzero partial sum in $N^{\lambda<0}$. Then $V(\chi-\sigma)\otimes\OO$ is in $r\mathbb{W}$ or on the interior faces:
$$\bigcup_{\mu\leq\lambda}F_r(\mu)^{\text{int}}.$$
In particular, using the Borel-Bott-Weyl theorem, see Subsection \ref{sp}, the complex $p_{\lambda*}q_{\lambda}^*\left(V(\chi)\otimes\OO\right)$ is in $r\WW$ and the only interior faces it touches are $F_r(\mu)^{\text{int}}$ for $\mu\leq\lambda$; it touches the face $F_r(\lambda)^{\text{int}}$ only for $V(\chi)\otimes\OO$. In particular, for a nontrivial sum $\sigma$ as above, we have:
$$(r,p)(\chi-\sigma)<(r,p)(\chi).$$
\end{prop}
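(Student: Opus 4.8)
The plan is to reduce the statement to the geometry of the polytope $r\WW$ and the combinatorics of Borel--Bott--Weyl, following the skeleton of \cite[Proposition 3.11]{hls}. First I would fix notation: $\chi$ is dominant with $r(\chi)=r>\tfrac12$, and $\lambda$ is the maximal antidominant character with $\chi\in F_r(\lambda)^{\text{int}}$. By Proposition \ref{prop} I can write $\chi=\sum c_{ij}(\beta_i-\beta_j)+c\beta_d$ with $c_{ij}=0$ when $\langle\lambda,\beta_i-\beta_j\rangle<0$, $c_{ij}=-r$ when $\langle\lambda,\beta_i-\beta_j\rangle>0$, and $c_{ij}\in(-r,0]$ when $\langle\lambda,\beta_i-\beta_j\rangle=0$. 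A nonzero partial sum $\sigma$ in $N^{\lambda<0}$ is a sum of weights $\beta$ with $\langle\lambda,\beta\rangle<0$, so subtracting $\sigma$ only \emph{lowers} the coefficients $c_{ij}$ on those weights. The first step is therefore the elementary check that $\chi-\sigma\in r\WW$: each modified coefficient stays in $[-r,0]$, because on the weights where $\langle\lambda,\beta\rangle<0$ the original coefficient is $0$ and we are subtracting a nonnegative amount bounded by the multiplicity available in $\WW$. I would also verify that no coefficient on a weight with $\langle\lambda,\beta\rangle>0$ or $=0$ is changed, which pins $\chi-\sigma$ to the faces $F_r(\mu)^{\text{int}}$ with $\mu\le\lambda$ only, and that it touches $F_r(\lambda)^{\text{int}}$ precisely when $\sigma=0$.

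Next I would run this through Borel--Bott--Weyl as recalled in Subsection \ref{sp}. The complex $p_{\lambda*}q_\lambda^*(V(\chi)\otimes\OO)$ has, by the Proposition in Subsection \ref{sp}, a resolution by the bundles $V((\chi-\sigma_I)^+)\otimes\OO$ indexed by subsets $I$ of the weights of $N^{\lambda<0}$, where $(-)^+$ denotes the shifted-Weyl dominant representative. By Proposition \ref{rem1}(b) the pair $(r,p)$ is invariant under the shifted Weyl action, so $(r,p)((\chi-\sigma_I)^+)=(r,p)(\chi-\sigma_I)$ and the first step applies verbatim to each summand. This gives that the whole complex lives in $r\WW$ and meets only the faces $F_r(\mu)^{\text{int}}$ with $\mu\le\lambda$, with $F_r(\lambda)^{\text{int}}$ hit only by the $I=\varnothing$ term $V(\chi)\otimes\OO$.

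For the final inequality $(r,p)(\chi-\sigma)<(r,p)(\chi)$ for nontrivial $\sigma$, I would argue by cases on the $r$-invariant: either $r(\chi-\sigma)<r$, in which case we are done immediately; or $r(\chi-\sigma)=r$, and then $\chi-\sigma$ lies on $F_r(\mu)^{\text{int}}$ for some $\mu\le\lambda$, so its $p$-invariant — the minimal number of coefficients equal to $-r$ in a standard expression — has strictly decreased, because subtracting a nonzero $\sigma$ with $\langle\lambda,\beta\rangle<0$ strictly reduces at least one previously-nonextremal coefficient without creating new extremal ones. The comparison is then in the lexicographic order on $(r,p)$. The main obstacle I anticipate is the bookkeeping in this last case: one must be careful that $\sigma$ genuinely forces a drop in $p$ and does not merely shuffle which coefficients are extremal — this is exactly the delicate point in the corresponding argument of \cite{hls}, and it is where the maximality of $\lambda$ (so that $\chi\in F_r(\lambda)^{\text{int}}$, not on a deeper face) gets used, together with Proposition \ref{prop}'s rigid description of the coefficients.
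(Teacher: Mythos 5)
Your outline has the right skeleton (Proposition \ref{prop} parametrization, Borel--Bott--Weyl with $(r,p)$ invariant under shifted Weyl action by Proposition \ref{rem1}(b), then a case split on whether the $r$-invariant drops), but the central step ``$\chi-\sigma\in r\WW$'' is incorrectly justified, and the error lands precisely where the hypothesis $r>\tfrac12$ is supposed to be used. You claim that subtracting $\sigma$ lowers coefficients on weights $\beta$ with $\langle\lambda,\beta\rangle<0$ from $0$ and that ``each modified coefficient stays in $[-r,0]$.'' That is false when $\tfrac12<r<1$: subtracting a single occurrence of such a $\beta$ gives coefficient $-1\notin[-r,0]$, and ``bounded by the multiplicity available in $\WW$'' does not repair this, since the integer amount subtracted is bounded by the multiplicity $m$, not by $rm$. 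The paper's proof avoids this by the opposite bookkeeping plus a sign-flip: one writes $-\sigma$ as a sum of weights with $\langle\lambda,\beta_i-\beta_j\rangle>0$, whose coefficients are $-r$, so they become $-r+1$; then, when $-r+1>0$, one rewrites $(-r+1)(\beta_i-\beta_j)$ as $(r-1)(\beta_j-\beta_i)$, which lies in $(-r,0)$ exactly because $r>\tfrac12$. This move crucially uses that $\beta_j-\beta_i$ is again a weight of $R(d)$, i.e.\ symmetry of $Q$ — a dependence your argument never surfaces.

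A second, downstream consequence: once the sign-flip is performed, coefficients on weights with $\langle\lambda,\cdot\rangle\ge 0$ \emph{do} change, so your statement that only the $\lambda$-negative coefficients move, and hence that $\chi-\sigma$ ``is pinned to the faces $F_r(\mu)^{\text{int}}$ with $\mu\le\lambda$,'' cannot be read off from naive coefficient tracking. The paper instead re-applies Proposition \ref{prop} to $\chi-\sigma$ to produce its own extremizing $\mu$, and then argues $\mu\le\lambda$ with equality only for $\sigma=0$; this is where the maximality of $\lambda$ and the rigidity of Proposition \ref{prop} enter. Your sketch does gesture at exactly this delicacy for the $p$-drop, so once you insert the sign-flip and route the face comparison through Proposition \ref{prop} applied to $\chi-\sigma$, the rest of the outline should carry through.
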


\begin{proof}
Use Proposition \ref{prop} to write
$$\chi=\sum c_{ij}(\beta_i-\beta_j)+c\beta_d,$$ 
where the sum is after all the roots of $R(d)$ and:
\[  c_{ij}= 
     \begin{cases}
       -r \text{ if }\langle \lambda, \beta_i-\beta_j\rangle>0,\\
       0 \text{ if } \langle \lambda, \beta_i-\beta_j\rangle<0, \\
       \text{in }(-r, 0] \text{ if }
\langle \lambda, \beta_i-\beta_j\rangle=0.  \\ 
     \end{cases}
\]
We have that $-\sigma=\sum (\beta_i-\beta_j)$, where the weights have $\langle \lambda, \beta_i-\beta_j\rangle>0$.
This means that:
$$\chi-\sigma=\sum c'_{ij}(\beta_i-\beta_j),$$ 
where $c'_{ij}=c_{ij}+1=-r+1$ if $\beta_i-\beta_j$ appears in the sum $-\sigma$, or $c'_{ij}=c_{ij}$ otherwise. 
If $-r+1>0$, replace the weights $(-r+1)(\beta_i-\beta_j)$ with the weights $(r-1)(\beta_j-\beta_i)$ so that the coefficients in $\chi-\sigma$ will all be between $-r$ and $0$.
This means that $$\chi-\sigma\in r\WW.$$
Assume next that
$r(\chi-\sigma)=r$. Use Proposition \ref{prop} for $\chi-\sigma$ and let $\mu$ be the character with $\chi-\sigma\in F_r(\mu)^{\text{int}}$.
We then have that $c_{ij}=-r$ for all weights $\beta_i-\beta_j$ such that 
$\langle \mu, \beta_i-\beta_j\rangle>0.$ The assumption on $\lambda$ implies that $\mu\leq \lambda$, and equality can happen only for $\sigma=0$.
\end{proof}

\begin{prop}\label{boundary}
Let $\chi$ be a weight in $M_{\mathbb{R}}$ such that $r(\chi)=\frac{1}{2}$. Let $\lambda$ be an anti-dominant character with associated partition $\dd$, let $\chi\in F(\lambda)^{\text{int}}$, and let $\sigma$ be a nonzero partial sum in $N^{\lambda<0}$. 
Then $V(\chi-\sigma)\otimes\OO$ is in $\frac{1}{2}\mathbb{W}$ or on the faces:
$$\bigcup_{\mu}F(\mu),$$
where the union is after all characters $\mu$ such that $R(d)^{\lambda}\subset R(d)^{\mu}$.

%Further, using the attracting loci for the abelian stacks $\Y(d)$, the complex $$p^{\text{ab}}_{\lambda*}q_{\lambda}^{\text{ab}*}\left(V(\chi)\otimes\OO\right)\in\frac{1}{2}\mathbb{W}$$ and the only interior faces it touches are $F(\mu)^{\text{int}}$ for $\mu$ as above, and it touches $F(\lambda)^{\text{int}}$ only for $V(\chi)\otimes\OO$.
\end{prop}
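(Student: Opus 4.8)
The plan is to adapt the argument of Proposition~\ref{rgoesdown} to the critical value $r=\tfrac12$. First I would apply Proposition~\ref{prop} to $\chi$ with the character $\lambda$ and $r=\tfrac12$, writing $\chi=\sum c_{ij}(\beta_i-\beta_j)+c\beta_d$ over the weights of $R(d)$, with $c_{ij}=-\tfrac12$ when $\langle\lambda,\beta_i-\beta_j\rangle>0$, with $c_{ij}=0$ when $\langle\lambda,\beta_i-\beta_j\rangle<0$, and with $c_{ij}\in(-\tfrac12,0]$ when $\langle\lambda,\beta_i-\beta_j\rangle=0$. Writing $-\sigma$ as a sum of weights of $R(d)$ each pairing strictly positively with $\lambda$ and adding it to $\chi$ turns each such coefficient into $\tfrac12$; replacing $\tfrac12(\beta_i-\beta_j)$ by the equal expression $-\tfrac12(\beta_j-\beta_i)$ (using that $Q$ is symmetric, so $\beta_j-\beta_i$ is again a weight of $R(d)$, which has coefficient $0$ in the expression for $\chi$) produces an expression $\chi-\sigma=\sum c'_{ij}(\beta_i-\beta_j)+c\beta_d$ with all $c'_{ij}\in[-\tfrac12,0]$, exactly as in the proof of Proposition~\ref{rgoesdown}; hence $\chi-\sigma\in\tfrac12\WW$. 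The key point is that in this expression the weights carrying coefficient $-\tfrac12$ are of only two kinds: weights $\beta$ with $\langle\lambda,\beta\rangle>0$ that are not used in $-\sigma$, and reoriented weights $-\gamma$ with $\gamma$ used in $-\sigma$ (so that $\langle\lambda,-\gamma\rangle<0$). In particular no weight $\beta$ with $\langle\lambda,\beta\rangle=0$ can carry coefficient $-\tfrac12$, because such weights are untouched both by $-\sigma$ and by the reorientation and keep a coefficient in $(-\tfrac12,0]$.

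If $r(\chi-\sigma)<\tfrac12$ then $\chi-\sigma$ lies in the interior $\tfrac12\mathbb{W}$ and we are done. Otherwise $r(\chi-\sigma)=\tfrac12$, so $\chi-\sigma\in F(\mu)^{\text{int}}$ for some character $\mu$ valued in $S(d)$, and it remains to prove that $R(d)^{\lambda}\subseteq R(d)^{\mu}$. For this I would pair the expression for $\chi-\sigma$ above with $\mu$. Since $\mathbb{R}\beta_d$ is the lineality space of $\WW$, the facet $F(\mu)$ has $\langle\mu,\beta_d\rangle=0$, so the $c\beta_d$ term drops out and the defining equation of $F(\mu)$ becomes $\langle\mu,\chi-\sigma\rangle=-\tfrac12\langle\mu,N^{\mu>0}\rangle=-\tfrac12\sum_{\langle\mu,\beta\rangle>0}\langle\mu,\beta\rangle$. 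Since every $c'_{ij}\in[-\tfrac12,0]$, this equality can hold only if $c'_{ij}=-\tfrac12$ for every weight with $\langle\mu,\beta_i-\beta_j\rangle>0$ and $c'_{ij}=0$ for every weight with $\langle\mu,\beta_i-\beta_j\rangle<0$. Combining with the previous paragraph, $\langle\mu,\beta\rangle>0$ forces $\langle\lambda,\beta\rangle\neq0$; and if $\langle\mu,\beta\rangle<0$ then $\langle\mu,-\beta\rangle>0$, so $-\beta$ (a weight of $R(d)$ by symmetry of $Q$) has $\langle\lambda,-\beta\rangle\neq0$ and again $\langle\lambda,\beta\rangle\neq0$. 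Thus $\langle\mu,\beta\rangle\neq0$ implies $\langle\lambda,\beta\rangle\neq0$ for every weight $\beta$ of $R(d)$; equivalently $\langle\lambda,\beta\rangle=0$ implies $\langle\mu,\beta\rangle=0$, which is exactly $R(d)^{\lambda}\subseteq R(d)^{\mu}$. Finally the passage from $\chi-\sigma$ to $V(\chi-\sigma)\otimes\OO$ is handled by Proposition~\ref{rem1}(b) and the Weyl-invariance of $\WW$, as in Proposition~\ref{rgoesdown}.

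I expect the last step, the identification of $\mu$, to be the main obstacle. Unlike the case $r>\tfrac12$ treated in Proposition~\ref{rgoesdown}, where the reoriented weights of $-\sigma$ acquire the non-extreme coefficient $r-1\in(-r,0)$ and one can still conclude $\mu\leq\lambda$, at $r=\tfrac12$ the reoriented weights acquire the extreme coefficient $-\tfrac12$, mixing weights that pair positively and negatively with $\lambda$; consequently $\mu\leq\lambda$ fails in general and must be weakened to the inclusion of fixed loci. Making this precise requires the face-equation argument above together with careful bookkeeping of which weights of $R(d)$ can carry coefficient exactly $-\tfrac12$, and it is also the point at which the symmetry of $Q$ is essential, both for the reorientation and for passing between $\beta$ and $-\beta$.
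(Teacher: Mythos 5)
Your proposal is correct and follows essentially the same route as the paper: apply Proposition~\ref{prop} to $\chi$, compute $\chi-\sigma$, reorient the weights that acquire coefficient $+\tfrac12$, observe that every weight carrying coefficient $-\tfrac12$ pairs nontrivially with $\lambda$, and then use the face equation at $\mu$ (which forces coefficient $-\tfrac12$ on every $\beta$ with $\langle\mu,\beta\rangle>0$) to conclude $R(d)^\lambda\subseteq R(d)^\mu$. Your last step is slightly more explicit than the paper's (you pair the computed expression directly against the defining equation of $F(\mu)$ rather than re-invoking Proposition~\ref{prop}), and your argument via the lineality space of $\WW$ for $\langle\mu,\beta_d\rangle=0$ is a small clarification, but the content is the same.
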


\begin{proof}
Use Proposition \ref{prop} to write:
$$\chi=\sum c_{ij}(\beta_i-\beta_j),$$ 
where the sum is after all the roots of $R(d)$ and:
\[  c_{ij}= 
     \begin{cases}
       -\frac{1}{2} \text{ if }\langle \lambda, \beta_i-\beta_j\rangle>0,\\
       0 \text{ if } \langle \lambda, \beta_i-\beta_j\rangle<0, \\
       \text{in }(-\frac{1}{2}, 0] \text{ if }
\langle \lambda, \beta_i-\beta_j\rangle=0.  \\ 
     \end{cases}
\]
We have that $-\sigma=\sum (\beta_i-\beta_j)$, where the weights have $\langle \lambda, \beta_i-\beta_j\rangle>0$.
This means that we can write:
$$\chi-\sigma=\sum c'_{ij}(\beta_i-\beta_j),$$ 
where $c'_{ij}=c_{ij}+1=\frac{1}{2}$ if $\beta_i-\beta_j$ appears in the sum $\sigma$, or $c'_{ij}=c_{ij}$ otherwise. 
Replace $\frac{1}{2}(\beta_i-\beta_j)$ with $-\frac{1}{2}(\beta_j-\beta_i)$ so that the coefficients $c'_{ij}$ will all be between $-\frac{1}{2}$ and $0$.
In particular, we see that if
$\chi-\sigma=\sum c'_{ij}(\beta_i-\beta_j)$ has coefficient $c'_{ij}=-\frac{1}{2}$, then either 
$\langle \lambda, \beta_i-\beta_j\rangle< 0$ and $\beta_i-\beta_j$ does not appear in $\sigma$, or 
$\langle \lambda, \beta_i-\beta_j\rangle> 0$ and $\beta_j-\beta_i$ appears in $\sigma$. Let $\mu$ be the maximal character such that $\chi-\sigma\in F_r(\mu)$. We see using the above expression for $\chi-\sigma$ that
$\langle \mu, \beta_i-\beta_j\rangle<0$ implies $\langle \lambda, \beta_i-\beta_j\rangle\neq 0.$ This means that $R(d)^{\lambda}\subset R(d)^{\mu}$, and the conclusion follows.
\end{proof}

\begin{prop}\label{nulemma}
Let $\dd=(d_1,\cdots,d_k)$ be a partition of $d$.

(a) Consider the natural embedding $\mathfrak{S}_{k}\subset W_d$ which permutes the weights corresponding to the terms $d_i$. Let $w\in \mathfrak{S}_{k}\subset W_d$ be an element of the Weyl group. Then:
$$\frac{1}{2}N^{\lambda<0}-\frac{1}{2}N^{w\lambda<0}=\sum_{U} N^{ab},$$ where the set $U$ contains pairs $(a,b)$ such that $1\leq a<b\leq k$ and $\sigma(a)>\sigma(b)$, and $N^{ab}$ denotes the sum of the weights in $N^{\lambda<0}$ corresponding to $d_a$ and $d_b$.

(b) Let $\chi$ be a weight in $M_{\mathbb{R}}$ such that $r(\chi)=\frac{1}{2}$. Let $\lambda$ be an anti-dominant character, let $\chi\in F(\lambda)^{\text{int}}$, and let $\sigma$ be a nonzero partial sum in $N^{\lambda<0}$. The weight $\chi-\sigma\in F(w\lambda)^{\text{int}}$ for some $w\in W_d$ if and only if $$\sigma=\sum_{U} N^{ab}$$ for a set $U$ containing pairs $(a,b)$ such that $1\leq a<b\leq k$ and $\sigma(a)>\sigma(b)$, and $N^{ab}$ defined as above.

%(c) The codimensions of the maps $p:\X(a_1d_i,\cdots, a_jd_i)\to\X(m_id_i)$ associated to the characters $\nu_i$ as above are even.
\end{prop}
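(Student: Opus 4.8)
The two parts are closely linked, so I would prove (a) first and then extract (b) from the bookkeeping that (a) sets up. For part (a), the plan is to work entirely with the root system data. Recall that the weights of $R(d)$ have the form $\beta^i_j - \beta^k_l$ and that $\lambda_{\dd}$ is the anti-dominant character whose $\lambda$-fixed locus is $R(d_1)\times\cdots\times R(d_k)$; hence $N^{\lambda<0}$ (respectively $N^{\lambda>0}$) is the sum of those weights $\beta^i_j-\beta^k_l$ for which the index $(i,j)$ lies in block $d_a$, the index $(k,l)$ lies in block $d_b$, and the pair $(a,b)$ is ordered so that $\lambda$ pairs negatively (positively). So $N^{\lambda<0}$ splits as a sum over ordered pairs $1\le a<b\le k$ of the "cross terms" $N^{ab}$ between blocks $d_a$ and $d_b$. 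The element $w\in\mathfrak{S}_k\subset W_d$ permutes the blocks, so $w$ sends $N^{ab}$ either to the weights between blocks $\sigma(a)$ and $\sigma(b)$ with the same sign (if $\sigma$ preserves the relative order of $a$ and $b$) or with the opposite sign (if $\sigma$ reverses it). Therefore $N^{\lambda<0}$ and $N^{w\lambda<0}$ agree on all the $N^{ab}$ with $\sigma(a)<\sigma(b)$, while for the inversions $\sigma(a)>\sigma(b)$ the block of weights $N^{ab}$ in $N^{\lambda<0}$ is replaced by $-N^{ab}$ (the opposite-sign weights) in $N^{w\lambda<0}$. Taking $\tfrac12(N^{\lambda<0}-N^{w\lambda<0})$ then picks out exactly $\sum_U N^{ab}$ over the set $U$ of inversions, which is the claimed formula. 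I would write this out carefully using the notation $N^{ab}$ introduced in the statement, being attentive to the sign conventions fixed in Subsection \ref{notations} (weights in the Lie algebra of $B(d)$ are negative).

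For part (b), the plan is to use the description of the boundary faces $F(\lambda)$ from Subsection \ref{polytope} together with Proposition \ref{boundary}. Since $r(\chi)=\tfrac12$ and $\chi\in F(\lambda)^{\text{int}}$, Proposition \ref{prop} (with $r=\tfrac12$) gives the expansion of $\chi$ in which the coefficient of $\beta_i-\beta_j$ is $-\tfrac12$ exactly when $\langle\lambda,\beta_i-\beta_j\rangle>0$, is $0$ when $\langle\lambda,\beta_i-\beta_j\rangle<0$, and lies in $(-\tfrac12,0]$ when $\langle\lambda,\beta_i-\beta_j\rangle=0$. Subtracting a nonzero partial sum $\sigma$ of weights in $N^{\lambda<0}$ shifts some of the $-\tfrac12$ coefficients up to $+\tfrac12$; rewriting $\tfrac12(\beta_i-\beta_j)$ as $-\tfrac12(\beta_j-\beta_i)$ as in the proof of Proposition \ref{boundary}, one sees that $\chi-\sigma$ has $r$-invariant $\tfrac12$ and determines a maximal character $\mu=w\lambda$ for which $\chi-\sigma\in F(w\lambda)^{\text{int}}$ precisely when the set of coordinates where the coefficient is $-\tfrac12$ is again of the "cross-block" shape coming from a permutation of the blocks — i.e. when $\sigma$ consists of complete blocks $N^{ab}$ and the blocks that get flipped are exactly the inversions of some $w\in\mathfrak{S}_k\subset W_d$. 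Combining this with part (a) (which tells us which $\sigma$ arise as $\tfrac12(N^{\lambda<0}-N^{w\lambda<0})$) gives the "if and only if": $\chi-\sigma\in F(w\lambda)^{\text{int}}$ for some $w$ iff $\sigma=\sum_U N^{ab}$ with $U$ the inversion set of that $w$.

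The main obstacle I anticipate is the "only if" direction of (b): one must show that if $\sigma$ is an arbitrary nonzero partial sum in $N^{\lambda<0}$ — not a priori a union of full blocks $N^{ab}$ — and yet $\chi-\sigma$ still lands in the $W_d$-orbit of $F(\lambda)$ (on an interior face $F(w\lambda)^{\text{int}}$ of the same $r=\tfrac12$ boundary stratum), then $\sigma$ must in fact be a union of complete cross-block sums indexed by inversions. The point is that $w\lambda$ has the same Levi type as $\lambda$, so the set of roots on which the coefficient of $\chi - \sigma$ equals $-\tfrac12$ must be closed under the symmetry forced by membership in $F(w\lambda)^{\mathrm{int}}$, which rigidifies $\sigma$ into full blocks; a careful case analysis on a single root $\beta_i-\beta_j$ inside a block where $\sigma$ is "incomplete" should produce a contradiction with $\chi - \sigma \in F(w\lambda)^{\mathrm{int}}$, since such an incomplete block would either raise the $r$-invariant or force $\chi-\sigma$ onto a strictly larger face. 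I would isolate this rigidity statement as the technical heart of the argument and prove it by the explicit coefficient computation above, reducing everything to the combinatorics of which weights $\beta_i - \beta_j$ can simultaneously have coefficient $-\tfrac12$.
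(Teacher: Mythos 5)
Your proof of part (a) is the same as the paper's: decompose $N^{\lambda<0}=\sum_{a<b}N^{ab}$ into cross-block pieces and observe that $w$ flips the sign on exactly the inversion blocks, giving $N^{\lambda<0}-N^{w\lambda<0}=2\sum_U N^{ab}$. No issues there.

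For part (b) you correctly identify Propositions \ref{boundary} and \ref{prop} as the relevant inputs, but you miss the short route to the ``rigidity'' you flag as the hard step, and your proposed substitute --- a case analysis on individual roots $\beta_i-\beta_j$ in an incomplete block --- is left vague. The paper's argument is considerably slicker: Proposition \ref{boundary} directly tells you that if $\chi-\sigma\in F(w\lambda)^{\text{int}}$, then $R(d)^{\lambda}\subset R(d)^{w\lambda}$. Since $w\in W_d$, the characters $\lambda$ and $w\lambda$ are conjugate and so have fixed loci of the same dimension; the containment therefore forces $R(d)^{\lambda}=R(d)^{w\lambda}$. That equality is exactly the rigidity you are after --- it says $w$ permutes the blocks $d_1,\dots,d_k$, and after modifying $w$ within a block (which does not change $N^{w\lambda<0}$) you may assume $w\in\mathfrak{S}_k$. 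Once you know this, Proposition \ref{prop} expresses $\chi$ as $-\tfrac12 N^{\lambda>0}+\psi$ and $\chi-\sigma$ as $-\tfrac12 N^{w\lambda>0}+\psi'$ with $\psi,\psi'$ in the span of the zero-weight roots; since $\sigma$ is supported on roots paired nontrivially with $\lambda$, one gets $\psi=\psi'$ and hence $\sigma=\tfrac12\left(N^{\lambda<0}-N^{w\lambda<0}\right)$, which is $\sum_U N^{ab}$ by part (a). So: do not re-derive the block structure by brute-force combinatorics; instead use Proposition \ref{boundary}'s containment $R(d)^{\lambda}\subset R(d)^{w\lambda}$ and the conjugacy of $\lambda$ and $w\lambda$, which collapses your ``technical heart'' to a one-line dimension count.
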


\begin{proof}
(a) We can write $N^{\lambda<0}$ as $$N^{\lambda<0}=\sum_{1\leq a<b\leq k}N^{ab},$$ and we thus compute that:
$$N^{\lambda<0}-N^{w\lambda<0}=2\sum N^{ab},$$ where the sum is taken after all $1\leq a<b\leq k$ such that $\sigma(a)>\sigma(b)$. 

(b) By Proposition \ref{boundary}, we have that $R(d)^{\lambda}\subset R(d)^{w\lambda}$, and thus they have to be equal. This implies that $w$ permutes blocks of sizes $d_i$ for $1\leq i\leq k$ between them; we can assume that $w$ sends dominant weights for a block to dominant weights for another block because the sum $N^{w\lambda<0}$ does not matter on a permutation inside a block. We are thus in the situation of (a), and the conclusion follows.
\end{proof}

\subsubsection{Admissible sets}
\label{admissible}
Consider an ordered set of pairs $\{(d_1,w_1),\cdots, (d_k,w_k)\}$. Define a weight of $G(d)$ by: $$\chi=w_1\beta_{d_1}+\cdots+w_k\beta_{d_k},$$ where we identify the simple roots $\beta^i_j$ of $d_1$ with the first $d_1$ simple roots of $d$, and so on. 
Such a set is called \textbf{admissible} if its standard form, see Definition \ref{gen}, has the form:
$$\chi+\rho=-\sum r_iN^{\lambda_i>0}-\sum_S \frac{1}{2} N^{\mu_i>0}+\chi',$$
with associated Levi group $L= G(d_1)\times\cdots\times G(d_k)$. %The second condition is equivalent, by Proposition \ref{propadm}, to: $$\chi'=\rho_{d_1}+\cdots+\rho_{d_k}.$$

If the set $S$ is empty, we call the set $\{(d_1,w_1),\cdots, (d_k,w_k)\}$ \textbf{admissible with $r>\frac{1}{2}$}.
If the first sum is empty, we call the set $\{(d_1,w_1),\cdots, (d_k,w_k)\}$ \textbf{admissible with $r=\frac{1}{2}$}.
The proof of the following proposition is immediate:
\begin{prop}\label{propadm}
Consider an ordered set of pairs $\{(d_1,w_1),\cdots, (d_k,w_k)\}$, and consider the weight of $G(d)$: $$\chi=w_1\beta_{d_1}+\cdots+w_k\beta_{d_k}.$$
Then the standard form of $\chi$:
$$\chi+\rho=-\sum r_kN^{\lambda_k>0}+\chi'$$
has Levi group $L$ with associated partition $\ee=(e_1,\cdots,e_s)$ such that $\ee\leq\dd$. Further, $\chi'$ has the form $\chi'=\rho^{L}+v_1\beta_{d_1}+\cdots+v_k\beta_{d_k}$ for some integers $v_1,\cdots,v_k$, where $\rho^L$ is half the sum of positive roots of $L$.

In particular, if the set $\{(d_1,w_1),\cdots, (d_k,w_k)\}$ is admissible, then the Levi group associated to the standard form is $L=G(d_1)\times\cdots\times G(d_k)$ and $\chi'=\rho^{L}+v_1\beta_{d_1}+\cdots+v_k\beta_{d_k}$ for some integers $v_1,\cdots, v_k.$
\end{prop}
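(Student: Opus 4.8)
The plan is to run the algorithm that produces the standard form of $\chi$ (Definition \ref{gen}, i.e. the iterated application of Proposition \ref{godown}) and prove, by induction on the tree $T$ of anti-dominant characters it produces, that every character peeled off is constant on the blocks of $\dd$. The one elementary input is that each $\beta_{d_i}$ is $W_{G(d_i)}$-invariant and lies in the centre of the block of $d_i$; hence $\chi=\sum_i w_i\beta_{d_i}$ is $W_L$-invariant, $L=G(d_1)\times\cdots\times G(d_k)$, and its projection onto the span of the roots of each part $d_i$ vanishes. Let me write $Z$ for the space of weights that are central (constant on each coordinate block) on each part $d_i$; note $\chi\in Z$, and $Z$ is exactly the span of the block-central characters attached to the $d_i$.

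The inductive hypothesis I would maintain along $T$ is: at each node the weight being processed is an $L'$-dominant weight $\psi$ of a block Levi $L'$ whose associated partition is coarser than $\dd$ (so $L'\supseteq L$), and $\psi-\rho^{L'}\in Z$. The base case $\psi=\chi+\rho$, $L'=G(d)$ is clear since $\rho^{G(d)}=\rho$ and $\chi\in Z$. For the inductive step, suppose $r(\psi)=r>\tfrac12$ and let $\lambda$ be the maximal anti-dominant character of $L'$ with $\psi\in F_r(\lambda)^{\text{int}}$. Restricted to the roots lying inside a part $d_m$, the coefficients of $\psi$ in the sense of Proposition \ref{prop} are exactly those of $\rho^{G(d_m)}$ (the $Z$-part contributes nothing there). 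Here I would use the hypothesis that $Q$ is symmetric: the weights of $R(d)$ then occur in opposite pairs, so $\tfrac12\WW(L')$ contains $[0,\beta^i_j-\beta^i_l]$ for all $j,l$ at a vertex $i$ (as a sum of two of the defining segments), hence $\rho^{G(d_m)}$ has $r$-invariant $\tfrac12$ and can be written with root coefficients in $[-\tfrac12,0]$. Since $r>\tfrac12$ these coefficients lie in the open range $(-r,0]$, so Proposition \ref{prop} forces $\langle\lambda,\beta^i_j-\beta^i_l\rangle=0$ for every root inside a part $d_m$; that is, $\lambda$ is constant on the blocks of $\dd$. Consequently the new Levi $L''=L_\lambda\subseteq L'$ again has partition coarser than $\dd$, and $N^{\lambda>0}$ is a sum of weights straddling two $\dd$-parts, hence lies in $Z$; combined with the standard identity $\rho^{L'}=\rho^{L''}+(\text{element of }Z)$ (the half-sum of the roots of $\mathfrak{l}'/\mathfrak{l}''$ is central on the blocks of $L''$, which are unions of $\dd$-parts) this gives $\psi+rN^{\lambda>0}=\rho^{L''}+(\text{element of }Z)$. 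By Proposition \ref{godown} this weight lies in $s\WW(L'')$ with $s<r$ and decomposes over the factors of $L''$, each piece again of the required form, and the recursion continues. Termination holds because the $r$'s strictly decrease and are bounded below by $\tfrac12$.

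Passing to the leaves, $L(T)$ is the block Levi of the partition $\ee$ assembled from the leaves, which by the above is coarser than $\dd$, i.e. $\ee\leq\dd$; and $\chi'$ is the sum of the leaf weights, so $\chi'-\rho^{L(T)}\in Z$, which is precisely the asserted form $\chi'=\rho^{L}+v_1\beta_{d_1}+\cdots+v_k\beta_{d_k}$ (the integrality of the $v_i$ is then read off by tracking the coefficients, using that the $w_i$ are integers and that every $Z$-element introduced comes from an integral weight). The ``in particular'' is immediate: by definition of admissibility the standard form has Levi exactly $L$, so $\ee=\dd$ and $\chi'=\rho^L+\sum_i v_i\beta_{d_i}$.

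I expect the only genuine obstacle to be the claim that the peeled character $\lambda$ never splits a block of $\dd$; everything else is bookkeeping with $\rho$-decompositions. That step rests squarely on the symmetry hypothesis on $Q$ — needed to bound $r(\rho^{G(d_m)})\leq\tfrac12$ — together with the precise coefficient description in Proposition \ref{prop}, and some care is required because $r$ decreases along the algorithm and could a priori drop below the internal complexity of a large part $d_m$; the symmetry of $Q$ is exactly what prevents this.
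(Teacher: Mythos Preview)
The paper gives no proof: it states just before the proposition that ``the proof of the following proposition is immediate'' and moves on. Your argument is a correct elaboration of what that ``immediate'' is presumably meant to encode, namely the inductive walk along the tree $T$ of Definition~\ref{gen} showing that every peeled character $\lambda$ is constant on the $\dd$-blocks. You correctly isolate this as the only nontrivial step and correctly identify the mechanism (symmetry of $Q$ forces the block-internal part $\rho^{G(d_m)}$ to sit in $\tfrac12\WW$, so its coefficients are strictly above $-r$ whenever $r>\tfrac12$).

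One point worth tightening: when you invoke Proposition~\ref{prop} to force $\langle\lambda,\beta_i-\beta_j\rangle=0$ for block-internal quiver weights, you are implicitly using more than the existence statement it asserts. The needed fact is that \emph{any} decomposition $\psi=\sum c_{ij}(\beta_i-\beta_j)+c\beta_d$ with all $c_{ij}\in[-r,0]$ must have $c_{ij}=-r$ whenever $\langle\lambda,\beta_i-\beta_j\rangle>0$ and $c_{ij}=0$ whenever $\langle\lambda,\beta_i-\beta_j\rangle<0$; this follows from the identity $\langle\lambda,\psi\rangle+r\langle\lambda,N^{\lambda>0}\rangle=0$ by the same positivity argument used in the proof of Proposition~\ref{prop}. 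With this, your block-internal decomposition (coefficients in $[-\tfrac12,0]\subset(-r,0]$) and symmetry of $Q$ (so both $\beta_{ij}$ and $\beta_{ji}$ are quiver weights) rule out both signs, and transitivity through a neighbouring vertex handles vertices without loops. The rest --- that $N^{\lambda>0}\in Z$ and $\rho^{L'}-\rho^{L''}\in Z$ once $\lambda$ is block-constant, and the integrality of the $v_i$ --- is, as you say, bookkeeping.

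So: same approach as the paper intends (there is really only one), but you have written out what the paper suppresses.
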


%Alternatively, we can characterize admissible weights inductively. A set with associated weight $\chi$ defined above is admissible if:\begin{itemize}  \item $k=1$, or   \item $\chi$ is dominant, the $r$-invariant is attained for an anti-dominant character $\lambda$ with associated partition $\ee=(e_1,\cdots,e_s)$, and for every $i$, the partition $e_i$ is a consecutive sum of partitions $d_j$ for $a(i)\leq j\leq b(i)$, and $\chi_i:=\sum_{j=a(i)}^{b(i)} w_j\beta_{d_j}$ is dominant and of smaller $r$-invariant than $\chi$.\end{itemize}
The next proposition says that we can replace $w_i\beta_{d_i}$ with any $\chi\in \NN(d_i)_{w_i}$ in the definition of admissible sets.
\begin{prop}\label{rem2}
Consider an ordered set of pairs $\{(d_1,w_1),\cdots, (d_k,w_k)\}$, let
$\chi_i \in \overline{\mathbb{N}}(d_i)_{w_{i}}$ be dominant weights, and define $\tau:=\chi_1+\cdots+\chi_k$. 
Consider the standard form:
$$\tau+\rho=-\sum r_iN^{\lambda_i>0}+\tau'.$$ 
Then $\tau$ is dominant, all $r_i>\frac{1}{2}$, and the associated Levi is $L=G(d_1)\times\cdots\times G(d_k)$ if and only if the set $\{(d_1,w_1),\cdots, (d_k,w_k)\}$ is admissible with $r>\frac{1}{2}$.
\end{prop}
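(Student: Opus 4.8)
The plan is to compare the weight $\tau=\chi_1+\cdots+\chi_k$ with the reference weight $\chi=w_1\beta_{d_1}+\cdots+w_k\beta_{d_k}$ used to define admissibility, and to show that their standard forms (Definition \ref{gen}) have the same ``heavy part'' $-\sum r_iN^{\lambda_i>0}$ with all $r_i>\frac12$, differing only in the terminal term, which stays in $\frac12\overline{\mathbb{W}}(L)$. First I would unpack the hypothesis $\chi_i\in\overline{\mathbb{N}}(d_i)_{w_{i}}$: it means $\chi_i$ is $G(d_i)$-dominant, $\langle\lambda_{d_i},\chi_i\rangle=w_i$, and $\chi_i+\rho_i\in\frac12\overline{\mathbb{W}}(d_i)$, where $\rho_i$ is half the sum of the positive roots of $G(d_i)$. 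Since the weights of $R(d_i)$ pair trivially with $\lambda_{d_i}$, one gets $\chi_i+\rho_i=\psi_i+w_i\beta_{d_i}$ with $\psi_i$ in the bounded Minkowski-sum part $\frac12\bigoplus_{\beta\text{ wt }R(d_i)}[0,\beta]$, hence $\tau+\rho=(\chi+\rho)+\sum_i\psi_i-\rho^{L}$ with $\rho^L=\sum_i\rho_i$; in particular $\tau+\rho$ differs from $\chi+\rho$ by a vector whose block-$i$ component pairs trivially with every antidominant character whose Levi contains $L=G(d_1)\times\cdots\times G(d_k)$. Dominance of $\tau$ is clear because the simple roots of $G(d)$ are the union of those of the $G(d_i)$.

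The heart of the proof is the claim that the peeling algorithm producing the standard form gives the same sequence of characters for $\tau+\rho$ as for $\chi+\rho$ at every stage where $r>\frac12$. I would argue this by induction on the number of peeling steps. At the top step, $r(\tau+\rho)\ge r(\chi+\rho)$ follows by evaluating at the maximal character $\lambda_1$ of $\chi$'s standard form, since $L(\lambda_1)\supseteq L$ forces $\langle\lambda_1,\tau+\rho\rangle=\langle\lambda_1,\chi+\rho\rangle$ and hence the same ratio. For the reverse inequality one must check that no antidominant $\mu$ gives ratio exceeding $r_1:=r(\chi+\rho)$ for $\tau$: decomposing $\mu$ into its within-block antidominant parts $\mu_i$ and its block-coarsening part, one has $\langle\mu,N^{\mu>0}\rangle=\sum_i\langle\mu_i,N^{\mu_i>0}_{(i)}\rangle+(\text{coarsening term})$ and the corresponding decomposition of $\langle\mu,\tau+\rho\rangle$, where the coarsening term coincides with the one for $\chi$. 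Applying $r(\chi_i+\rho_i)\le\frac12$ to each block term and the bound defining $r_1$ to the coarsening term, the ratio for $\tau$ comes out as a weighted mediant of numbers $\le\frac12$ and $\le r_1$, hence $\le r_1$, with equality only if all block contributions vanish, i.e.\ $L(\mu)\supseteq L$. This simultaneously gives $r(\tau+\rho)=r_1$ and shows that the characters realizing this $r$-invariant for $\tau$ are exactly those for $\chi$, so the maximal ones agree: $\lambda_1(\tau)=\lambda_1(\chi)$. Peeling $\lambda_1$ off both sides preserves the perturbation $\sum_i\psi_i-\rho^{L}$ and the block hypotheses, so the induction continues inside $L(\lambda_1)\supseteq L$ until $\chi$'s side reaches $\chi'=\rho^{L}+\sum_iv_i\beta_{d_i}$ (Proposition \ref{propadm}); at that point $\tau$'s side has reached $\tau'=\sum_i(v_i\beta_{d_i}+\psi_i)$, and each $v_i\beta_{d_i}+\psi_i\in\frac12\overline{\mathbb{W}}(d_i)$ gives $\tau'\in\frac12\overline{\mathbb{W}}(L)$, so the algorithm terminates with the asserted standard form.

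Both implications then follow at once: dominance of $\tau$, all $r_i>\frac12$, and associated Levi $L$ hold if and only if $\chi$'s standard form has all $r_i>\frac12$ and associated Levi $L$, which is precisely the definition (Subsection \ref{admissible}) of $\{(d_1,w_1),\ldots,(d_k,w_k)\}$ being admissible with $r>\frac12$. I expect the main obstacle to be the bookkeeping in the weighted-mediant estimate: setting up the decomposition of an arbitrary antidominant character of $G(d)$ relative to the block structure and tracking the $\rho$- versus $\rho^{L}$-shifts carefully (the between-block roots of $G(d)$ are not weights of $R(d)$, so the argument must be phrased in terms of the pairings $\langle\mu,-\rangle$ rather than membership in $\overline{\mathbb{W}}$), together with verifying that equality in the estimate really forces $L(\mu)\supseteq L$, which is what pins down the maximal peeled character at each step. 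Once Propositions \ref{prop}, \ref{godown} and \ref{propadm} are in hand, the remaining verifications are routine.
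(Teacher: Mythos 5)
Your proof takes a genuinely different route from the paper's, though it is built on the same starting decomposition. Both arguments begin by writing $\tau+\rho=(\chi+\rho)+\sum_i\chi_i'$ with $\chi_i'=\chi_i-w_i\beta_{d_i}\in\overline{\mathbb{N}}(d_i)_0$, and both rest on the insight that this perturbation is small in the sense of living inside $\frac{1}{2}\overline{\mathbb{W}}(L)$. Where they diverge is in how they verify that the peeled part $-\sum r_iN^{\lambda_i>0}$ carries over unchanged. The paper plugs the perturbation into the terminal term of $\chi$'s standard form and proves a single membership statement, $\chi'+\sum_i\chi_i'\in\frac{1}{2}\overline{\mathbb{W}}(L)$, by writing $\chi'=\rho^L+\sum t_i\beta_{d_i}$ (Proposition \ref{propadm}) and observing that each $\chi_i'+\rho_{d_i}$ lies in $\frac{1}{2}\overline{\mathbb{W}}(d_i)_0$; uniqueness of the standard form then does the rest. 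You instead re-run the peeling algorithm on $\tau+\rho$ step by step, showing at each stage that the $r$-invariant and the maximal antidominant character agree with those of $\chi+\rho$, via a weighted-mediant bound obtained by splitting an arbitrary $\mu$ into within-block and block-coarsening parts.

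The mediant step is the place that would need real work to make rigorous. You write $\langle\mu,N^{\mu>0}\rangle=\sum_i\langle\mu_i,N^{\mu_i>0}_{(i)}\rangle+(\text{coarsening term})$, but $N^{\mu>0}$ is determined by the signs of $\langle\mu,\beta\rangle$ for each weight $\beta$, and these signs are \emph{not} determined by the signs of $\langle\mu_{\text{within}},\beta\rangle$ and $\langle\mu_{\text{coarse}},\beta\rangle$ separately — a cross-block weight can pair positively with $\mu$ while pairing negatively with $\mu_{\text{coarse}}$. So the decomposition of the denominator into a within-block piece controlled by $r(\chi_i+\rho_{d_i})\leq\frac{1}{2}$ and a coarsening piece controlled by $r_1$ does not come for free, and the "coarsening term coincides with the one for $\chi$" claim needs a careful pairing-level justification rather than a membership-in-$\overline{\mathbb{W}}$ argument (you flag this yourself as "the main obstacle"). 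The paper's single membership check sidesteps all of this; if you prefer your step-by-step route, the gap to close is precisely this decomposition of $\langle\mu,N^{\mu>0}\rangle$, and you should expect to work with the explicit coefficient description from Proposition \ref{prop} rather than with the mediant heuristic.
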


\begin{proof}
Let $\chi:=w_1\beta_{d_1}+\cdots+w_k\beta_{d_k}$. Use Proposition \ref{godown} and group the terms with $r=\frac{1}{2}$ to write:
$$\chi+\rho=-\sum r_iN^{\lambda_i>0}+\chi',$$ with all $r_i>\frac{1}{2}$, $\chi'\in\frac{1}{2}\overline{\mathbb{W}}$, and associated Levi $L$ whose associated partition $\ee$ satisfies $\ee\leq \dd$. 
Write $\chi_i=\chi_i'+w_i\beta_{d_i}$, where $\chi_i'\in \mathbb{N}(d_i)_0$. Then we have that $\tau=\chi+(\chi'_1+\cdots+\chi'_k),$ and using the standard form of $\chi$ we can write:
$$\tau+\rho=-\sum r_kN^{\lambda_k>0}+(\chi'+\chi'_1+\cdots+\chi'_k).$$ We claim that $\chi'+\chi'_1+\cdots+\chi'_k\in \frac{1}{2}\overline{\mathbb{W}}(L).$
First, we can write: $$\chi'=t_1\beta_{d_1}+\cdots+t_k\beta_{d_k}+\rho_{d_1}+\cdots+\rho_{d_k},$$ where $t_1,\cdots,t_k$ are integers.
We have that:
$$t_1\beta_{d_1}+\cdots+t_k\beta_{d_k}\in 
\frac{1}{2}\left(\overline{\mathbb{W}}(L)\cap \text{sum}\,[0,\beta^i_{k}-\beta^j_{l}]\right),$$
where the sum is after all weights $\beta^v_{k}-\beta^j_{l}$
such that the indices $k$ and $l$ are not in the same interval $[\sigma_i,\sigma_{i+1}]$ for $1\leq i\leq n-1$, where $\sigma_i=d_1+\cdots+d_i$. Further, for every $1\leq i\leq k$ we have that:
$$\chi'_i+\rho_{d_i}\in \frac{1}{2}\overline{\mathbb{W}}(d_i)_0,$$
where $\overline{\mathbb{W}}(d_i)_0\subset \overline{\mathbb{W}}(d_i)$ denotes the hyperplane where the coefficient of $\beta_{d_i}$ is zero.
This implies that indeed:
$$\chi'+\chi'_1+\cdots+\chi'_k=t_1\beta_{d_1}+\cdots+t_k\beta_{d_k}+\sum_{i=1}^k(\chi'_i+\rho_{d_i})
\in \frac{1}{2}\overline{\mathbb{W}}(L).$$
The standard form of $\tau$ is:
$$\tau+\rho=-\sum r_kN^{\lambda_k>0}+(\chi'+\chi'_1+\cdots+\chi'_k).$$ Thus the Levi groups of $\tau$ and $\chi$ are the same, and this implies the desired conclusion.

\end{proof}

We will frequently use the following proposition in this section:
\begin{prop}\label{subsetmic}
Let $\{(d_1,w_1),\cdots, (d_k,w_k)\}$ be an admissible set
with $r=\frac{1}{2}$. Then any ordered subset is also admissible with $r=\frac{1}{2}$.
\end{prop}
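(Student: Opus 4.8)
\textbf{Proof proposal for Proposition \ref{subsetmic}.}

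The plan is to reduce everything to the explicit description of the standard form of an admissible set with $r=\tfrac12$ provided by Proposition \ref{propadm} together with the combinatorial characterization of when subtracting a partial sum keeps a weight on a face of $\tfrac12\overline{\mathbb{W}}$ provided by Propositions \ref{boundary} and \ref{nulemma}. Concretely, suppose $\{(d_1,w_1),\dots,(d_k,w_k)\}$ is admissible with $r=\tfrac12$, so that the weight $\chi=w_1\beta_{d_1}+\cdots+w_k\beta_{d_k}$ has standard form
$$\chi+\rho=-\sum_S \tfrac12 N^{\mu_i>0}+\chi',$$
with associated Levi $L=G(d_1)\times\cdots\times G(d_k)$ and $\chi'\in\tfrac12\mathbb{W}(L)$, and moreover by Proposition \ref{propadm} we may write $\chi'=\rho^L+v_1\beta_{d_1}+\cdots+v_k\beta_{d_k}$. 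Fix an ordered subset; after relabelling assume it is $\{(d_1,w_1),\dots,(d_m,w_m)\}$ with $m\le k$, and set $e=d_1+\cdots+d_m$ and $\tau=w_1\beta_{d_1}+\cdots+w_m\beta_{d_m}$, now regarded as a weight of $G(e)$. First I would apply Proposition \ref{godown}/Definition \ref{gen} to produce the standard form of $\tau$, say $\tau+\rho_{G(e)}=-\sum r_iN^{\lambda_i>0}+\tau'$, and by Proposition \ref{propadm} the associated Levi $L'$ has partition $\le(d_1,\dots,d_m)$.

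The main step is to show that no $r_i>\tfrac12$ can occur, i.e. the first sum in the standard form of $\tau$ is empty and $L'=G(d_1)\times\cdots\times G(d_m)$. Here is the mechanism: the characters $\mu_i$ appearing in the standard form of $\chi$ are all anti-dominant characters of $S(d)$ whose Levi contains $L=G(d_1)\times\cdots\times G(d_k)$, hence (by Proposition \ref{nulemma}(a), applied with the embedding $\mathfrak{S}_k\subset W_d$) each $N^{\mu_i>0}$ is a sum of the blocks $N^{ab}$ with $a<b$ and the permutation reversing $a,b$; in particular each such $\mu_i$ only mixes \emph{distinct} blocks $d_a,d_b$ and respects the block decomposition. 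Restricting attention to the sub-collection of blocks $d_1,\dots,d_m$ amounts to discarding those $\mu_i$ (and those summands of the remaining $\mu_i$) that involve an index $>m$; what is left is a collection of characters of $S(e)$ with Levi $\supseteq G(d_1)\times\cdots\times G(d_m)$, all again of ``$r=\tfrac12$ type''. Concretely, I would verify that
$$\tau+\rho_{G(e)}=-\sum_{S'}\tfrac12 N^{\mu_i'>0}+\Big(\rho^{L''}+\textstyle\sum_{i=1}^m v_i\beta_{d_i}\Big)$$
for the restricted collection $S'$ and $L''=G(d_1)\times\cdots\times G(d_m)$, by comparing the coefficient expansion $\chi=\sum c_{ij}(\beta_i-\beta_j)+c\beta_d$ from Proposition \ref{prop}: the entries $c_{ij}$ with both indices among the first $\sigma_m=d_1+\cdots+d_m$ simple roots already lie in $[-\tfrac12,0]$ with the correct pattern relative to the $\mu_i$ restricted to those blocks, so $\tau$ itself has $r$-invariant $\tfrac12$ and its standard form uses only half-weights. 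The last routine point is that $\rho^{L''}+\sum_i v_i\beta_{d_i}$ indeed lies in $\tfrac12\overline{\mathbb{W}}(L'')$, which follows as in the proof of Proposition \ref{rem2}: each $\rho_{d_i}+v_i\beta_{d_i}$ contributes a weight in $\tfrac12\overline{\mathbb{W}}(d_i)$ and there are no cross-block terms left. Hence the restricted set is admissible with $r=\tfrac12$.

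The hard part will be bookkeeping with the shifted Weyl action and the $\rho$-shifts under passage from $G(d)$ to $G(e)$: one must check that $\rho_{G(d)}$ restricted to the first $\sigma_m$ simple roots differs from $\rho_{G(e)}$ only by block-crossing positive roots, so that the half-sums $N^{\mu_i>0}$ of the discarded blocks can be absorbed cleanly and the term $\chi'$ really does restrict to $\rho^{L''}+\sum v_i\beta_{d_i}$ rather than picking up an extra $r>\tfrac12$ contribution. I expect this to be entirely formal given Propositions \ref{prop}, \ref{boundary}, \ref{nulemma} and \ref{propadm}, which is why the statement is asserted to have an immediate proof; the proposal above is essentially the unwinding of those results restricted to a sub-collection of the blocks $d_1,\dots,d_k$.
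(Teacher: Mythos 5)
The paper does not actually supply a proof of Proposition \ref{subsetmic}; it is stated as a fact to be used freely (the preceding Proposition \ref{propadm} is also labelled ``immediate''), so there is no authorial argument for me to compare you against. Your overall strategy — take the standard form of $\chi+\rho$ provided by admissibility, restrict to the sub-blocks, and verify the restricted expression is again a valid $r=\tfrac12$ standard form — is the natural one and is almost certainly what the author has in mind. But as written, two of the steps are not yet sound.

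First, your appeal to Proposition \ref{nulemma}(a) is off-target. That result compares $N^{\lambda<0}$ with $N^{w\lambda<0}$ for a fixed $\lambda$ and a block permutation $w\in\mathfrak S_k$; it does not assert that for an arbitrary anti-dominant $\mu$ with $L(\mu)\supseteq L$ the set $N^{\mu>0}$ decomposes into the block pieces $N^{ab}$. That decomposition is true, but it follows directly from the fact that $\mu$ is constant on each block (so $\langle\mu,\beta^v_i-\beta^w_j\rangle$ depends only on which blocks $i$ and $j$ live in), not from Proposition \ref{nulemma}(a). You should replace the citation by this direct observation.

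Second, and more seriously, the $\rho$-bookkeeping that you flag as ``the hard part'' and then declare ``entirely formal'' is exactly where the content lies, and as written it is not resolved. The restriction of $\rho_{G(d)}$ to the weight lattice of $G(e)$ satisfies
$$\rho_{G(d)}\big|_{e}\ -\ \rho_{G(e)}\ =\ \sum_{v\in I}\ \frac{d^v-e^v}{2}\,\bigl(\beta^v_1+\cdots+\beta^v_{e^v}\bigr),$$
which is $W_e$-invariant but is \emph{not} in general a multiple of $\beta_e$ when $Q$ has more than one vertex (the ratio $(d^v-e^v)e^v$ varies with $v$). Hence Proposition \ref{rem1}(a) cannot be invoked to declare the shift harmless, and the polytope $\tfrac12\overline{\mathbb W}(e)$ is not invariant under adding this shift. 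What is needed is to observe that the shift coincides with the restriction to the first $e$ coordinates of $\rho^{\lambda>0}$ for the block-separating character $\lambda=\lambda_{e,d-e}$, and that this contribution is already accounted for inside the $N^{\mu_i>0}$ terms one is discarding — i.e.\ the discarded half-weights and the change of $\rho$ cancel. Until that cancellation is spelled out, the restricted expression for $\tau+\rho_{G(e)}$ has not been verified to have all coefficients $\le\tfrac12$, which is the whole point. Finally, the ``after relabelling'' reduction to a prefix should be justified: for a scattered subsequence one must first apply a block permutation $w\in\mathfrak S_k\subset W_d$ and the associated weight shift to bring the chosen blocks to the front, using Proposition \ref{rem1}(b) to control the $r$-invariant under the shifted Weyl action; this is harmless but deserves a sentence.
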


\subsubsection{Definition of the categories $\mathbb{N}(d)$ and $\MM(d)$}\label{defMM}
The categories $\NN(d)$ and $\oM(d)$ contain complexes supported on attracting loci, namely the complexes contained in the categories $p_{\dd*}q_{\dd}^*\left(\boxtimes_{i=1}^k\oM(d_i)_{w_i}\right)$ for an admissible set $\{(d_1,w_1),\cdots, (d_k,w_k)\}$ with $r=\frac{1}{2}$. We will define subcategories of $\NN(d)$ and $\oM(d)$ who do not contain such complexes.
\\

Let $d$ be a dimension vector and $w,v$ weights such that $\{(d,w), (e,v)\}$ is admissible for $r=\frac{1}{2}$. Then there exist weights $w'$ and $v'$ such that $\{(e,v'), (d,w')\}$ are admissible for $r=\frac{1}{2}$.
Let $\tau\in\mathfrak{S}_2$ be the functor $\tau: \NN(d)_w\boxtimes \NN(e)_v\to \NN(e)_{v'}\boxtimes \NN(d)_{w'}$
which sends:
$$\tau: V(\chi_1)\boxtimes V(\chi_2)\otimes\OO \to V(\chi_2)\boxtimes V(\chi_1)\otimes\text{det}(N_{e,d}).$$ Here $\text{det}(N_{e,d})$ is a sheaf in cohomological degree $0$. It induces a corresponding functor $$\tau:\oM(d)_w\boxtimes\oM(e)_v\to \oM(e)_{v'}\boxtimes \oM(d)_{w'}$$ with $\tau^2=\text{id}$. These functors thus determine an action of $\mathfrak{S}_{k}$ on $\boxtimes_{j=1}^k \oM(d_i)_{w_i}$, for an admissible set $\{(d_1,w_1),\cdots, (d_k,w_k)\}$ with $r=\frac{1}{2}$.
\\

Consider a set $S$ of cardinal $k$ of dimension vectors with sum $d$. For any ordering $d_1,\cdots, d_k$ of elements of $S$, consider all ordered sets of weights $(w_1,\cdots, w_k)$ such that
$\{(d_1,w_{1}),\cdots, (d_k,w_{k})\}$ is admissible with $r=\frac{1}{2}$.

Denote by $\bigoplus_{\mathfrak{S}_k} \left(\boxtimes_{i=1}^k \oM(d_i)_{w_{i}}\right)$ the direct sum after all $|\mathfrak{S}_k|$ orderings of $S$. 
Define the full subcategory: 
$$\mathcal{A}_{S}\subset \bigoplus_{\mathfrak{S}_k} \left(\boxtimes_{i=1}^k \oM(d_i)_{w_{i}}\right)$$
generated by the complexes which are extensions of $F$ and $\sigma(F)[\text{odd}]$, where $F$ is an object of $\bigoplus_{\mathfrak{S}_k} \left(\boxtimes_{i=1}^k \oM(d_i)_{w_{i}}\right)$ and $\sigma\in\mathfrak{S}_k$ is as above.

\begin{prop}\label{propprop}
There is a short exact sequence:
$$0\to K_0(\mathcal{A}_{S})\to \bigoplus_{\mathfrak{S}_k} \boxtimes_{i=1}^k K_0\left(\oM(d_i)_{w_{i}}\right)
\xrightarrow{T}
\left(\bigoplus_{\mathfrak{S}_k} \boxtimes_{i=1}^k K_0\left(\oM(d_i)_{w_{i}}\right)\right)^{\mathfrak{S}_k}
\to 0,$$
where the map $T$ is defined by $T(x)= \sum_{\sigma\in\mathfrak{S}_k}\sigma(x)$.

Further, $\mathcal{A}_{S}$ is the subcategory of $\bigoplus_{\mathfrak{S}_k}\boxtimes_{i=1}^k \oM(d_i)_{w_{i}}$
generated by sheaves $E$ such that: $$[E]\in K_0(\mathcal{A}_d)\subset \bigoplus_{\mathfrak{S}_k} \boxtimes_{i=1}^k K_0\left(\oM(d_i)_{w_{i}}\right).$$
\end{prop}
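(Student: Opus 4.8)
The plan is to recognize Proposition \ref{propprop} as a general fact about $\mathfrak{S}_k$-actions by auto-equivalences on a triangulated category, applied to the action built in this subsection on $\bigoplus_{\mathfrak{S}_k}\boxtimes_{i=1}^k\oM(d_i)_{w_i}$. Write $\mathcal{B}_S:=\bigoplus_{\mathfrak{S}_k}\boxtimes_{i=1}^k\oM(d_i)_{w_i}$ and $K:=K_0(\mathcal{B}_S)$, which decomposes as a direct sum of $|\mathfrak{S}_k|$ copies indexed by the orderings of $S$; the group $\mathfrak{S}_k$ permutes these summands regularly via the functors $\tau$ from Subsection \ref{defMM}, tensoring by $\det(N_{e,d})$ and shifting; on $K_0$ the shift contributes a sign, but since the $\tau$'s are defined so that $\tau^2=\mathrm{id}$ (and more generally the composite along any cycle is identity), the induced action of $\mathfrak{S}_k$ on $K$ is an honest action by a regular permutation representation twisted by the $\det$-line-bundle characters, which act invertibly. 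The symmetrization map $T(x)=\sum_{\sigma}\sigma(x)$ therefore has image exactly $K^{\mathfrak{S}_k}$, and I would check surjectivity of $T$ onto $K^{\mathfrak{S}_k}$ by the standard averaging argument: because the action permutes the $|\mathfrak{S}_k|$ summands simply transitively, any invariant element is $T$ of a single-summand lift (there is no division by $|\mathfrak{S}_k|$ needed, so this works integrally).

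The kernel of $T$ is then computed from the regular-representation structure: $\ker T$ is spanned by differences $x-\sigma(x)$. The key point I would isolate is that $\mathcal{A}_S$, defined as generated by extensions of $F$ and $\sigma(F)[\mathrm{odd}]$, has the property that $K_0(\mathcal{A}_S)$ contains exactly the classes $[F]+[\sigma(F)[\mathrm{odd}]]=[F]-\sigma([F])$ — here the sign from the odd shift is crucial, converting the extension class into a difference — and that these generate $\ker T$. So I would prove $K_0(\mathcal{A}_S)=\ker T$ by a two-way inclusion: $\subseteq$ is clear since $T([F]-\sigma[F])=0$ by invariance of $T$'s image description; $\supseteq$ follows because every element of $\ker T$ in a regular permutation module is an integral combination of elementary differences $e_{\text{ordering}_1}-e_{\text{ordering}_2}$ (realized by choosing $F$ supported on one ordering and $\sigma$ carrying it to the other), and each such difference is the class of a cone, hence lies in $\mathcal{A}_S$.

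For the second assertion — that $\mathcal{A}_S$ equals the full subcategory of $\mathcal{B}_S$ generated by sheaves $E$ with $[E]\in K_0(\mathcal{A}_S)$ — I would argue that $\mathcal{A}_S$ is by construction a thick (triangulated, shift-closed) subcategory, so it certainly is contained in the subcategory generated by its own objects, and those objects have classes in $K_0(\mathcal{A}_S)$; conversely, if $E\in\mathcal{B}_S$ has $[E]\in K_0(\mathcal{A}_S)\subset K$, then writing $[E]$ as $\sum_i\pm([F_i]-\sigma_i[F_i])$ and forming the corresponding iterated cone $C$ of objects of $\mathcal{A}_S$ with $[C]=[E]$, the difference $E$ and $C$ have equal class, and I would use the fact that $\mathcal{A}_S$ is generated inside $\mathcal{B}_S$ by precisely the objects whose $K_0$-classes land in $K_0(\mathcal{A}_S)$ — essentially the definition is set up to be self-consistent. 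Concretely I would phrase the second statement as saying $\mathcal{A}_S$ is "numerically closed" in $\mathcal{B}_S$, which holds because adding an object whose class is already in $K_0(\mathcal{A}_S)$ to the generating set does not change the thick subcategory generated (each such object is an extension of objects of $\mathcal{A}_S$ up to the relevant shifts, by the description of $\ker T$).

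The main obstacle I anticipate is bookkeeping the shifts and $\det(N_{e,d})$-twists carefully enough to be sure the $\mathfrak{S}_k$-action on $K_0$ is genuinely a regular permutation action (up to units) and that $\tau^2=\mathrm{id}$ and the cocycle vanishes — i.e. that the twists assemble into a genuine action rather than a projective one; once that is pinned down, the short exact sequence is the exact sequence associated to $0\to I_{\mathrm{aug}}\to\mathbb{Z}[\mathfrak{S}_k/\!\!/\text{stab}]\to\mathbb{Z}\to0$ tensored appropriately, and the identification of $K_0(\mathcal{A}_S)$ with the kernel is then forced. I would also need the fact (available from the semi-orthogonal setup in Steps 1–3 and Proposition \ref{sod}) that the $K_0$ of each $\boxtimes_{i=1}^k\oM(d_i)_{w_i}$ summand is a well-behaved direct summand so that the decomposition of $K$ into the $|\mathfrak{S}_k|$ orderings is a genuine direct sum of abelian groups, which makes the permutation-module computation legitimate.
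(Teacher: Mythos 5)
Your identification of the kernel of $T$ as the ``augmentation''-type subgroup spanned by $[F]-\sigma[F]$, and the surjectivity of $T$ onto invariants via the regular-permutation structure, are both correct and are the combinatorial heart of the statement. But the proposal has a genuine gap: it never justifies that the map $K_0(\mathcal{A}_S)\to\bigoplus_{\mathfrak{S}_k}\boxtimes_i K_0(\oM(d_i)_{w_i})$ is \emph{injective}, which is precisely what the leftmost arrow in the short exact sequence asserts. For a general full triangulated subcategory $\mathcal{A}\subset\mathcal{B}$ the map $K_0(\mathcal{A})\to K_0(\mathcal{B})$ need not be injective, so one cannot silently identify $K_0(\mathcal{A}_S)$ with the subgroup of $K_0(\mathcal{B}_S)$ generated by classes $[F]-\sigma[F]$. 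The relevant input, which the paper cites, is Thomason's theorem \cite[Theorem 2.1, Corollary 2.3]{Th} on dense triangulated subcategories: a dense strictly full triangulated subcategory $\mathcal{A}\subset\mathcal{B}$ has $K_0(\mathcal{A})\hookrightarrow K_0(\mathcal{B})$ a monomorphism, and the assignment $\mathcal{A}\mapsto\mathrm{im}(K_0(\mathcal{A}))$ is a bijection between such subcategories and subgroups of $K_0(\mathcal{B})$. One checks $\mathcal{A}_S$ is dense because $F\oplus F[1]\in\mathcal{A}_S$ for every $F$ (take $\sigma=\mathrm{id}$ and odd shift $1$), so every object is a direct summand of an object of $\mathcal{A}_S$.

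Your treatment of the second assertion is also off. You claim $\mathcal{A}_S$ is ``by construction a thick (triangulated, shift-closed) subcategory''; but $\mathcal{A}_S$ is dense and \emph{not} thick --- if it were both, it would equal the whole category, contradicting the nontrivial short exact sequence. The statement that $\mathcal{A}_S$ coincides with the full subcategory of objects whose class lies in $\mathrm{im}(K_0(\mathcal{A}_S))$ is exactly Thomason's bijection; it cannot be derived from ``the definition being self-consistent'' or from the assertion that ``adding an object whose class is already in $K_0(\mathcal{A}_S)$ does not change the subcategory generated,'' which begs the very question at issue (an object of $\mathcal{B}_S$ with class in $\mathrm{im}(K_0(\mathcal{A}_S))$ is not a priori an extension of objects of $\mathcal{A}_S$). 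Once the Thomason inputs are supplied, both assertions follow immediately, and your computation of $\ker T$ is then the correct way to identify the subgroup.
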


\begin{proof}
The subcategory $\mathcal{A}_{S}$ is dense in $\bigoplus_{\mathfrak{S}_k} \left(\boxtimes_{i=1}^k \oM(d_i)_{w_{i}}\right)$, and both of the above results follow from \cite[Theorem 2.1 and Corollary 2.3]{Th}.
\end{proof}

Let $\dd$ be a partition of $d$, $w_i$ weights such that the set $\{(d_1,w_1),\cdots, (d_k,w_k)\}$ is admissible with $r=\frac{1}{2}$, and let $w=\sum w_{i}$ be a weight for $d$. We have an inclusion $$p_{\dd*}q_{\dd}^*\left(\boxtimes_{i=1}^k \oM(d_i)_{w_i}\right)\subset \MM(d)_w.$$
In general, the functor $p_{\dd*}q_{\dd}^*$ is not fully faithful. 
Consider the truncation functor \cite[Proposition 3.9]{hl}:
$$\beta_{\geq w'}: D_{sg}(\X(\dd)_0)\to D_{sg}(\X(\dd)_0)$$ for the character $\lambda_{\dd}^{-1}$ and weight $w'=\frac{1}{2}\langle \lambda_{\dd}, \mathcal{N}^{\lambda_{\dd}>0}\rangle$.
Define the functor:
$$\Phi_{\dd}=\beta_{\geq w'}p_{\dd}^*:\oM(d)_w\to  \boxtimes_{i=1}^k \oM(d_i)_{w_{i}}.$$
For a set $S$ as above, consider the functor:
$$\Phi_S:=\bigoplus_{\mathfrak{S}_k}\Phi_{\dd}:\oM(d)_w\to 
\bigoplus_{\mathfrak{S}_k} \boxtimes_{i=1}^k \oM(d_i)_{w_{i}}.$$
We use the same notation $\Phi_S$ for the direct sum of the above functors after all weights $w\in\mathbb{Z}$.

\begin{defn}
Let $\mathbb{M}(d)$ be the full subcategory
of 
$\overline{\mathbb{M}}(d)$ with objects complexes $F$ such that for every set $S$ of sum $d$, we have:
$$\Phi_{S}(F)\in\mathcal{A}_{S}.$$
\end{defn}

Observe that it is enough to check the above condition for sets $T$ of cardinal $2$. Indeed, for $S=\{d_1,\cdots,d_k\}$ of cardinal $k$, split the elements in two sets $S_1$ and $S_2$ of sums $e_1$ and $e_2$ respectively, and let $T=\{e_1, e_2\}$. Then $$\Phi_{S}\Phi_T(F):=\left(\Phi_{S_1}\boxtimes \Phi_{S_2}(\Phi_{(e_1,e_2)}(F))\right)\oplus \left(\Phi_{S_2}\boxtimes \Phi_{S_1}(\Phi_{(e_2,e_1)}(F))\right)\in \bigoplus_{\mathfrak{S}_k}\boxtimes_{i=1}^k \oM(d_i)_{w_i}$$ is a direct summand of $\Phi_{S}(F)\in \bigoplus_{\mathfrak{S}_k}\boxtimes_{i=1}^k \oM(d_i)_{w_i}$. We have that $\Phi_T(F)\in\mathcal{A}_T$, and thus $\Phi_{S}\Phi_T(F)\in \mathcal{A}_S$.

\textbf{Remark.} There are other possible definitions of the categories $\oM(d)$ and $\MM(d)$, for example we can define $\overline{\mathbb{N}}(d)$ as the category generated by $V(\chi)\otimes\OO$ for $\chi$ a dominant weight such that:
$$\chi+\rho+\delta\in \frac{1}{2}\WW\oplus \mathbb{R}\beta_d,$$ where $\delta_d\in M^W_{\mathbb{R}}$ such that $\delta_d=\sum_{i=1}^k \delta_{d_i}$ for all partitions $\dd=(d_1,\cdots, d_k)$.
\\

\subsubsection{Comparison of admissible sets.}\label{compadm}
Let $\chi$ and $\psi$ be real weights. We define a partial order of real weights denoted by $\to$.

Assume that $\chi\in F_r(\lambda)^{\text{int}}$ and $\psi\in F_s(\mu)^{\text{int}}$. If $r>s$, then $\chi\to\psi$; if $r=s$ and $\lambda\geq \mu$, then $\chi\to\psi$; if $r=s$ and $\lambda=\mu$, write $\chi'=\chi+rN^{\lambda>0}$ and $\psi'=\psi+sN^{\mu>0}$, and say that $\chi\to\psi$ if $\chi'\to\psi'$.
\\

For two admissible sets $A$ and $B$ with $r>\frac{1}{2}$, consider $\chi_A$ and $\chi_B$ as in Subsection \ref{admissible}. We say that $A\geq B$ if $\chi_A\to\chi_B$.
\\

\subsection{The semi-orthogonal decomposition}\label{step1}
The theorem we want to prove is:

\begin{thm}\label{sod5}
We have a semi-orthogonal decomposition
$$D_{sg}(\X(d)_0)=\langle p_{\dd*}q_{\dd}^*\,(\oM(d_1)_{w_1}\boxtimes\cdots\boxtimes \oM(d_k)_{w_k})\rangle,$$ where the sets $\{(d_1,w_1),\cdots, (d_k,w_k)\}$ that appear above are admissible with $r>\frac{1}{2}$, and the functors $p_{\dd*}q_{\dd}^*$ are fully faithful. The order of categories in the semi-orthogonal decomposition respects the order from Subsection \ref{compadm}.
\end{thm}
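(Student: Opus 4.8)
The plan is to follow the two-step strategy announced in the introduction: first establish the statement in the zero potential case at the level of $D^b(\X(d))$, and then transport it to $D_{sg}(\X(d)_0)$ for an arbitrary potential $W$. For the transport step I would use that the categories $\NN(d_i)_{w_i}$ carry dg enhancements and are $D^b(\mathbb{A}^1)$-linear with respect to $\mathrm{Tr}(W)$ (Subsection \ref{singul}), together with the compatibility of potentials along the correspondence maps, $p_{\dd}^*\mathrm{Tr}(W_d)=q_{\dd}^*\sum_i\mathrm{Tr}(W_{d_i})$. Granting the $D^b$-statement
$$D^b(\X(d))=\langle p_{\dd*}q_{\dd}^*\bigl(\NN(d_1)_{w_1}\boxtimes\cdots\boxtimes\NN(d_k)_{w_k}\bigr)\rangle$$
over admissible sets with $r>\tfrac12$, Proposition \ref{sod}(b) and Corollaries \ref{uns} and \ref{corts} yield the desired decomposition of $D_{sg}(\X(d)_0)$ with factors $p_{\dd*}q_{\dd}^*\bigl(\oM(d_1)_{w_1}\boxtimes\cdots\boxtimes\oM(d_k)_{w_k}\bigr)$; full faithfulness passes from $D^b$ to $D_{sg}$ by the same propositions, and on $D^b$ it follows from Halpern-Leistner's theorem (as in Corollary \ref{uns}), since the box product sits inside a single $\lambda_{\dd}$-weight space of $D^b(\X(\dd))$.

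For the $D^b$-statement I would adapt the magic-window arguments of \v{S}penko--Van den Bergh and Halpern-Leistner--Sam \cite{sp}, \cite{sp2}, \cite{hls}, additionally tracking the grading by the diagonal character. Recall that $D^b(\X(d))$ is generated by the bundles $V(\chi)\otimes\OO_{\X(d)}$ over dominant $\chi$. To each $\chi$ I would attach its standard form (Definition \ref{gen}), $\chi+\rho=-\sum r_\lambda N^{\lambda>0}+\chi'$ with Levi $L=G(d_1)\times\cdots\times G(d_k)$ and $\chi'\in\tfrac12\WW(L)$; by Propositions \ref{propadm} and \ref{rem2} this produces weights $w_i$ with $\{(d_1,w_1),\ldots,(d_k,w_k)\}$ admissible with $r>\tfrac12$ and such that $V(\chi')\otimes\OO$ lies in $\NN(d_1)_{w_1}\boxtimes\cdots\boxtimes\NN(d_k)_{w_k}$, so $p_{\dd*}q_{\dd}^*(V(\chi')\otimes\OO)$ is one of the proposed building blocks. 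Generation is then proved by induction on the invariant $(r,p)(\chi+\rho)$: using the Borel--Bott--Weyl resolution of $p_{\lambda*}q_{\lambda}^*(V(\chi)\otimes\OO)$ recalled in Subsection \ref{sp} and Proposition \ref{rgoesdown}, the complex $p_{\dd*}q_{\dd}^*(V(\chi')\otimes\OO)$ agrees with $V(\chi)\otimes\OO$ modulo bundles $V(\psi)\otimes\OO$ with $(r,p)(\psi)$ strictly smaller, which lie in the generated subcategory by the inductive hypothesis (the case $r(\chi+\rho)\leq\tfrac12$, where $V(\chi)\otimes\OO\in\NN(d)$, being the trivial block).

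Semi-orthogonality and compatibility with the order $\to$ of Subsection \ref{compadm} I would treat together. For admissible sets $A>B$ the polytope $\tfrac12\WW$ and the interior faces $F_r(\lambda)^{\mathrm{int}}$ are arranged so that the relevant $\Hom$-spaces vanish: computing $\Hom\bigl(p_{A*}q_A^*(-),p_{B*}q_B^*(-)[\,\bullet\,]\bigr)$ via the explicit adjoints $\beta_{\geq\bullet}R\Gamma_{\mathcal{S}}$ from Subsection \ref{window} reduces it to the blockwise \v{S}penko--Van den Bergh orthogonality inside the magic window. Since the induction above always rewrites $V(\chi)\otimes\OO$ in terms of blocks $p_{A*}q_A^*(-)$ with $A$ at least as large, the two facts together exhibit the semi-orthogonal decomposition with the asserted ordering.

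I expect the generation step to be the main obstacle: one must verify that every bundle $V(\psi)\otimes\OO$ occurring in the Borel--Bott--Weyl resolution of $p_{\lambda*}q_{\lambda}^*(V(\chi)\otimes\OO)$ has strictly smaller $(r,p)$-invariant, so that the induction is well-founded and terminates exactly at the admissible-with-$r>\tfrac12$ blocks. This is precisely where Propositions \ref{rgoesdown}, \ref{boundary} and \ref{nulemma} and a careful analysis of the tree-of-partitions structure of standard forms are needed; once this bookkeeping is in place, the rest is a transcription of the \v{S}penko--Van den Bergh / Halpern-Leistner--Sam machinery enriched with the diagonal weights.
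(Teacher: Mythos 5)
Your proposal is correct and follows essentially the same route as the paper: first establish the $D^b(\X(d))$ decomposition in the zero potential case by induction on the $(r,p)$-invariant using the standard form of $\chi$, the Borel--Bott--Weyl resolution, and Proposition~\ref{rgoesdown} for generation, together with weight estimates on $r\WW$ for semi-orthogonality; then transport to $D_{sg}(\X(d)_0)$ via Proposition~\ref{sod} and Corollary~\ref{corts}. The only minor difference is that for semi-orthogonality the paper works directly with the adjunction $p_{\ee}^*\dashv p_{\ee*}$ and a weight inequality $\langle\mu,p_{\dd*}q_{\dd}^*A\rangle>\langle\mu,B\rangle$ rather than routing through the $\beta_{\geq\bullet}R\Gamma_{\mathcal{S}}$ window adjoints, and Propositions~\ref{boundary} and~\ref{nulemma} you cite are in fact ingredients for the $r=\tfrac12$ analysis of Step~3, not for this $r>\tfrac12$ decomposition.
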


\textbf{Example.} Before discussing the proof, we write explicitly what Theorem \ref{sod5} says in the same case of the quiver $Q$ with two vertices and two edges between them:

\begin{tikzcd}
 1 
\arrow[r, shift left=2,"y"] &2.  \arrow[l,"z"] 
 \end{tikzcd}
\\
Denote the linear space along the $y$ arrow by $Y$ and the one along the $z$ arrow by $Z$. We have that $\X(1,1)=Y\oplus Z/\C^*\times\C^*$. The diagonal copy of $\C^*$ in $\C^*\times\C^*$ acts trivially on $\C^2=Y\oplus Z$.
Assume that the weight lattice of the quotient $\C^*=(\C^*)^2/\text{diag}$ is generated by $\beta$, and that quotient $\C^*$ acts with weight $\beta$ on $Y$ and with weight $-\beta$ on $X$. Denote by $\Y=Y/\C^*$ and $\mathcal{Z}=Z/\C^*$.

The category $\overline{\mathbb{N}}$ is generated by those characters $\chi$ such that
$$\chi+\rho\in \frac{1}{2}[-\beta,\beta].$$ 
We have that $\rho=0$, so the only weights as above is $\chi=0$. This means that $\overline{\mathbb{N}}$ is the subcategory of $D^b(\C^2/\C^*)$ generated by $\OO_\X$. 
There are two possible characters $\lambda$ and $\lambda^{-1}$ corresponding to the two possible partitions of the dimension vector $(1,1)$. 
The attracting loci are of the form $$\text{pt}/\C^* \xleftarrow{q} \C/\C^* \xrightarrow{p} \C^2/\C^*,$$ and there are two possible attracting loci, $\Y$ for $\lambda$, and $\mathcal{Z}$ for $\lambda^{-1}$. 
We denote by $\OO(w)$ the representation of $\C^*$ on which $\lambda$ acts with weight $w\beta$.
The semi-orthogonal decomposition stated by the theorem is:
$$D^b(\C^2/\C^*)=\langle p_{\lambda*}q_{\lambda}^*D^b(\text{pt})_{\leq -1}, p_{\lambda^{-1}*}q_{\lambda^{-1}}^*D^b(\text{pt})_{\geq 1}, \overline{\mathbb{N}}\rangle,$$ 
which can be also written as:
$$D^b(\C^2/\C^*)=\langle p_{\lambda*}\OO_{\Y}(w)_{\leq -1}, p_{\lambda^{-1}*}\OO_{\mathcal{Z}}(w)_{\geq 1}, \overline{\mathbb{N}}\rangle.$$ To see generation, use the exact triangles:
$$\OO_\X(w+1)\to\OO_\X(w)\to p_{\lambda*}\OO_{\Y}(w)\xrightarrow{[1]} \text{ and }
\OO_\X(w-1)\to\OO_\X(w)\to p_{\lambda^{-1}*}\OO_{\mathcal{Z}}(w)\xrightarrow{[1]}.$$
This recovers the semi-orthogonal decomposition constructed by Halpern-Leistner \cite[Theorem 2.10]{hl} for the GIT quotient $\C^2/\C^*$, linearization $\mathcal{L}=\OO(1)$, and weight $w=0$.
\\

Next, assume that the potential is $W=xy$. 
Let $\text{Coh}^{'}(\X_0)$ be the full subcategory of $D^b(\X_0)$ with objects $F$ such that
$i_*F\in\overline{\mathbb{N}}\}.$
If $F\in \text{Coh}^{'}(\X_0)$ has support $\mathcal{W}$, then $\OO_{\mathcal{W}}\in \text{Coh}^{'}(\X_0)$.
The category $\overline{\mathbb{N}}$ is generated by $\OO_{\X(1,1)}$, and the only structure sheaves supported on $\X_0$ in $\overline{\mathbb{N}}$ are $\OO_{\X_0}$, which means that:
$$\text{Coh}'(\X_0)=\langle \OO_{\X_0}\rangle =\text{Perf}'(\X_0),$$ where $\text{Perf}'(\X_0)\subset \text{Coh}'(\X_0)$ is the subcategory of perfect complexes. This means that the category $\oM(1,1)=\text{Coh}'(\X_0)/\text{Perf}'(\X_0)$ is trivial.
The category $D_{sg}(\X_0)$ is generated by the sheaves $\OO_{\Y}(w)$ for $w\in\mathbb{Z}$, or the sheaves $\OO_{\mathcal{Z}}(w)$ for $w\in\mathbb{Z}$. 
The relation between $\OO_{\Y}$ and $\OO_{\mathcal{Z}}$ in $D_{sg}(\X_0)$ is given by the distinguished triangle:
$$\OO_{\mathcal{Z}}(-w-1)\to \OO_{\X_0}\to \OO_{\Y}(w)\xrightarrow{[1]}.$$

Let $\mathcal{A}=D^b(k\xrightarrow{0}k)/\text{Perf}\,(k\xrightarrow{0}k)$, then $\mathcal{A}$ is the dg category of $\mathbb{Z}/2\mathbb{Z}$-graded vector spaces over $k$.
The semi-orthogonal decomposition of $D_{sg}(\X_0)$ from Theorem \ref{sod5} is:
$$D_{sg}(\X_0)= \langle p_{\lambda*}q_{\lambda}^*\mathcal{A}(w)_{w\leq -1}, p_{\lambda^{-1}*}q_{\lambda^{-1}}^*\mathcal{A}(w)_{w\geq 1}\rangle\cong \langle \mathcal{A}(w), w\in\mathbb{Z}\rangle.$$ 
\linebreak

We now begin the proof of Theorem \ref{sod5}. We first prove the statement in the zero potential case. The next proposition shows the generation part of Theorem \ref{sod5}: 

\begin{prop}\label{gen0}
The categories $p_{\dd*}q_{\dd}^*\,\left(\overline{\mathbb{N}}(d_1)_{w_1}\boxtimes\cdots\boxtimes \overline{\mathbb{N}}(d_k)_{w_k}\right)$, where the set $\{(d_1,w_1),\cdots, (d_k,w_k)\}$ is admissible with $r>\frac{1}{2}$, generate $D^b(\X(d))$. 
\end{prop}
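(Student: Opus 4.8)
The strategy is to show that every line bundle $V(\chi)\otimes\OO_{\X(d)}$ — equivalently, every object of $D^b(\X(d))$, since these generate — lies in the triangulated subcategory generated by the proposed categories $p_{\dd*}q_{\dd}^*\left(\overline{\mathbb{N}}(d_1)_{w_1}\boxtimes\cdots\boxtimes\overline{\mathbb{N}}(d_k)_{w_k}\right)$ with $\{(d_i,w_i)\}$ admissible with $r>\tfrac12$. The argument proceeds by induction on the $(r,p)$-invariant of $\chi+\rho$, ordered lexicographically (first on $r$, then on the $p$-invariant), using the standard form of Definition \ref{gen}. If $r(\chi+\rho)\leq\tfrac12$, then $V(\chi)\otimes\OO\in\overline{\mathbb{N}}(d)$ by definition, and $\overline{\mathbb{N}}(d)=p_{\dd*}q_{\dd}^*$ of the one-block partition, which is admissible (vacuously, with empty first sum — but one must be slightly careful: the statement restricts to $r>\tfrac12$, so $\overline{\mathbb{N}}(d)$ itself must be allowed as the "$k=1$" term; this is the admissible set $\{(d,0)\}$ whose standard form has no $r$-terms at all, hence is admissible with $r>\tfrac12$ by the convention that the set $S$ is empty). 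This is the base case.

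For the inductive step, take $\chi$ dominant with $r(\chi+\rho)=r>\tfrac12$ and let $\lambda$ be the maximal anti-dominant character with $\chi+\rho\in F_r(\lambda)^{\mathrm{int}}$, with associated partition $\dd=(d_1,\dots,d_k)$ and Levi $L=G(d_1)\times\cdots\times G(d_k)$. By Proposition \ref{godown} we can write $\chi+\rho=-rN^{\lambda>0}+\psi$ with $r(\psi)=s<r$ and $\psi\in s\mathbb{W}(L)$. Decompose $\psi$ further into its standard form over $L$; grouping blocks appropriately, this exhibits $\chi$ as (the highest weight associated to) an object built from $\overline{\mathbb{N}}(d_i)_{w_i}$ for suitable weights $w_i$, and the set $\{(d_i,w_i)\}$ is admissible with $r>\tfrac12$ by construction (invoking Propositions \ref{propadm} and \ref{rem2} to identify the standard form of the total weight with that of $\sum w_i\beta_{d_i}$). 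Now consider the Koszul-type resolution of $i_*(V(\psi)\otimes\OO_{\mathcal{S}})$ for the closed immersion $i:S(\dd)/G(\dd)\to V/G(\dd)$, followed by Borel-Bott-Weyl for $V/G(\dd)\to V/G(d)$ as in Proposition \ref{rgoesdown}: this shows $p_{\dd*}q_{\dd}^*(V(\psi)\otimes\OO)$ has a filtration whose associated graded pieces are $V((\chi+\rho-\rho-\sigma)^+)\otimes\OO$ where $\sigma$ ranges over partial sums of weights of $N^{\lambda<0}$, with the $\sigma=0$ term equal to $V(\chi)\otimes\OO$ and all other terms having strictly smaller $(r,p)$-invariant by Proposition \ref{rgoesdown}. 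Hence $V(\chi)\otimes\OO$ lies in the subcategory generated by $p_{\dd*}q_{\dd}^*\left(\overline{\mathbb{N}}(d_1)_{w_1}\boxtimes\cdots\boxtimes\overline{\mathbb{N}}(d_k)_{w_k}\right)$ together with bundles of strictly smaller invariant, and the induction hypothesis finishes the step.

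The main obstacle I anticipate is bookkeeping rather than a conceptual gap: one must check that the weights $w_i$ produced from the standard form of $\psi$ genuinely make $\{(d_i,w_i)\}$ admissible with $r>\tfrac12$ (not merely admissible), i.e. that no new $r=\tfrac12$ strata are created when passing from $\chi$ to its decomposition over $L$ — this is exactly where Proposition \ref{rem2} is designed to be applied, and the delicate point is that the $\overline{\mathbb{N}}(d_i)_0$-parts $\chi_i'$ contribute to $\frac12\overline{\mathbb{W}}(L)$ without moving the Levi, as verified there. A secondary subtlety is that $p_{\dd*}q_{\dd}^*$ need not be fully faithful a priori; but generation only requires the image, and full faithfulness will be handled separately (via the semi-orthogonality computation using $\mathrm{inv}$ and the truncation functors of \cite{hl}). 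Once generation is in hand, the semi-orthogonality of the listed categories in the order of Subsection \ref{compadm} follows from the standard Halpern-Leistner Hom-vanishing between window-type subcategories, and then the general-potential statement is deduced by applying Proposition \ref{sod} to pass from $D^b(\X(d))$ to $D_{sg}(\X(d)_0)$ and from $\overline{\mathbb{N}}(d_i)$ to $\overline{\mathbb{M}}(d_i)$.
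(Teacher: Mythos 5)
Your proposal is correct and matches the paper's own argument in all essentials: induction on the $(r,p)$-invariant, using the standard form of Definition \ref{gen} (via Proposition \ref{godown}) to exhibit $\chi$ as $\sum\chi_i$ with $\chi_i\in\overline{\mathbb{N}}(d_i)_{w_i}$ for an admissible set with $r>\tfrac12$, then applying the Koszul/Borel–Bott–Weil resolution of Subsection \ref{sp} together with Proposition \ref{rgoesdown} to peel off $V(\chi)\otimes\OO$ up to terms of strictly smaller invariant. The paper is slightly more explicit in the bookkeeping — it writes $\rho=\rho^{\lambda>0}+\sum\rho_{d_i}$ and solves $-\sum r_iN^{\lambda_i>0}-\rho^{\lambda>0}=\sum v_i\beta_{d_i}$ to produce integer dominant weights $\chi_i=\chi_i'+v_i\beta_{d_i}-\rho_{d_i}$ with $\chi=\sum\chi_i$, whereas you work with the residual weight $\psi$ more loosely; the precision matters for verifying integrality and dominance of the $\chi_i$, but your plan points at exactly the right check via Propositions \ref{propadm} and \ref{rem2}.
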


\begin{proof}
Let $\chi$ be a dominant weight for $G(d)$.
It is enough to show that the sheaf $V(\chi)\otimes \OO_\X$ is generated by these categories. 
If the $r$-invariant of $\chi$ is $r(\chi)\leq\frac{1}{2}$, then $V(\chi)\otimes \OO_\X$ is in $\mathbb{N}(d)$. 

Assume that $r(\chi)>\frac{1}{2}$. Use Proposition \ref{gen} to write $\chi$ in standard form: $$\chi+\rho=-\sum r_iN^{\lambda_i>0}+\chi',$$
where the sum is taken after a tree of anti-dominant partitions, 
$\chi'=\chi'_1+\cdots+\chi'_k$ is a dominant weight of the associated Levi group $L=G(d_1)\times\cdots\times G(d_k)$ and $\chi'\in \frac{1}{2}\mathbb{W}(L)$. Let $\lambda$ be the dominant character associated to the partition $\dd=(d_1,\cdots,d_k)$.
Write $\rho=\rho^{\lambda>0}+\rho_{d_1}+\cdots+\rho_{d_k}$, where $\rho^{\lambda>0}$ is half the sum of positive roots which pair positively with $\lambda$. Define integers $v_1,\cdots, v_k$ such that:
$$-\sum r_iN^{\lambda_i>0}-\rho^{\lambda>0}=v_1\beta_{d_1}+\cdots+v_k\beta_{d_k}.$$ For $1\leq i\leq k$, consider the weight $\chi_i=\chi'_i+v_i\beta_{d_i}-\rho_{d_i}$ of dimension $d_i$ and weight $w_i$. 
The $G(d_i)$-weight $\chi_i$ has integer coefficients, is dominant, and $\chi_i\in\overline{\mathbb{N}}(d_i)_{w_i}.$
Indeed, we have that
$\chi_i+\rho_{d_i}=\chi_i'+v_i\beta_{d_i}$ and $\chi'_i\in \frac{1}{2}\overline{\mathbb{W}}(d_i)$ and thus $\chi_i'+v_i\beta_{d_i}\in \frac{1}{2}\overline{\mathbb{W}}(d_i)$ by Proposition \ref{rem1}.
The weight $\chi$ can be written as:
$$\chi=\chi_1+\cdots+\chi_k.$$
Further, by Proposition \ref{rem2}, the set of pairs $\{(d_1,w_1),\cdots, (d_k,w_k)\}$ is admissible with $r>\frac{1}{2}$.

Let $\mathcal{Y}=\X(d_1)\times\cdots\times\X(d_k)$, and consider the sheaf: $$ \left(V(\chi_1)\boxtimes\cdots\boxtimes V(\chi_k)\right)\otimes\OO_{\Y}\in \overline{\mathbb{N}}(d_1)\boxtimes\cdots\boxtimes\overline{\mathbb{N}}(d_k).$$
Consider the complex in $p_{\dd*}q_{\dd}^*\,\left(\overline{\mathbb{N}}(d_1)_{w_1}\boxtimes\cdots\boxtimes \overline{\mathbb{N}}(d_k)_{w_k}\right)$:
$$p_{\dd*}q_{\dd}^*\left( \left(V(\chi_1)\boxtimes\cdots\boxtimes V(\chi_k)\right)\otimes\OO_\Y\right).$$ 
By the discussion in Subsection \ref{sp}, the Borel-Bott-Weyl theorem says that there is a spectral sequence with terms $$V(\chi_1+\cdots+\chi_k-\sigma)\otimes\OO_\X=V(\chi-\sigma)\otimes\OO_\X,$$ where $\sigma$ is a partial sum of weights $\beta$ with $\langle \lambda_{\dd},\beta\rangle<0$. By Proposition \ref{rgoesdown}, the weights for $\sigma$ nontrivial have a smaller $(r,p)$-invariant than $\chi$, and the conclusion thus follows on induction on the $(r,p)$-invariant. 
\end{proof}

\begin{prop}\label{orth}
(a) Let $\mathcal{A}=\{(d_1,w_1),\cdots,(d_k,w_k)\}$ and $\mathcal{B}=\{(e_1,v_1),\cdots,(e_s,v_s)\}$ be two admissible sets with $r>\frac{1}{2}$
and assume that $\mathcal{A}$ is not $\geq \mathcal{B}$ as admissible sets, see Subsection \ref{compadm}. 
Consider $A\in \boxtimes_{i=1}^{k}\overline{\mathbb{N}}(d_i)_{w_i}$ and $B\in\boxtimes_{i=1}^s \overline{\mathbb{N}}(e_i)_{v_i}$. Then: 
$$\text{Hom}\,(p_{\dd*}q_{\dd}^*A, p_{\ee*}q_{\ee}^*B)=0.$$

(b) Let $\mathcal{A}=\{(d_1,w_1),\cdots,(d_k,w_k)\}$ be an admissible set with $r>\frac{1}{2}$, and let $A,B\in \boxtimes_{i=1}^{k}\overline{\mathbb{N}}(d_i)_{w_i}$. Then:
$$\text{Hom}\,(p_{\dd*}q_{\dd}^*A, p_{\dd*}q_{\dd}^*B)=\text{Hom}(A,B).$$
\end{prop}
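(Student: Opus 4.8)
\textbf{Proof strategy for Proposition \ref{orth}.}

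The plan is to reduce both statements to a computation on the abelian stacks $\Y(d)=R(d)/T(d)$ together with the Borel--Bott--Weyl theorem, exactly as in the proof of Proposition \ref{gen0}. For part (b), the key observation is that $q_{\dd}$ is an affine bundle, so $q_{\dd}^*$ is fully faithful, and that $p_{\dd}$ is the proper map $\X(\dd)\to\X(d)$ whose adjunction counit is controlled by a Koszul resolution: for the closed immersion $i:R(\dd)/G(\dd)\to R(d)/G(\dd)$ the complex $i_*\OO$ resolves by $\OO\otimes\wedge^{\bullet}(N^{\vee})$ with $N$ the normal bundle, whose weights $\beta$ satisfy $\langle\lambda_{\dd},\beta\rangle>0$. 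Thus $p_{\dd}^*p_{\dd*}q_{\dd}^*A$ is filtered by terms $q_{\dd}^*(A\otimes\wedge^j N^{\vee})$ living in $\lambda_{\dd}$-weight strictly below that of $A$; combining this with the Borel--Bott--Weyl spectral sequence from Subsection \ref{sp} (or rather its abelian avatar), the ``lower weight'' terms contribute nothing to $\Hom(p_{\dd*}q_{\dd}^*A,p_{\dd*}q_{\dd}^*B)$ because $B$ sits in the top weight slice. This is precisely the general mechanism of \cite[Lemma 3.37]{hl} specialized to our window categories, and I would quote it after checking the weight bookkeeping. Concretely: $\Hom(p_{\dd*}q_{\dd}^*A,p_{\dd*}q_{\dd}^*B)=\Hom(q_{\dd}^*A, p_{\dd}^!p_{\dd*}q_{\dd}^*B)$, and $p_{\dd}^!p_{\dd*}q_{\dd}^*B$ differs from $q_{\dd}^*B$ by terms supported in weights $>$ those occurring in $B$ after the twist by $\det N$; since $A$ and $B$ lie in $\boxtimes_i \overline{\mathbb{N}}(d_i)_{w_i}$ (a window), those extra terms are orthogonal to $q_{\dd}^*A$, giving $\Hom(A,B)$.

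For part (a), write $A=\bigoplus V(\chi_i)\otimes\OO$ with $\chi=\sum\chi_i$ the dominant weight attached to $\mathcal A$ (and similarly $\psi$ for $\mathcal B$); by Proposition \ref{rem2} these are the weights of the categories in question. The Borel--Bott--Weyl theorem, as invoked in Proposition \ref{gen0}, says that $p_{\dd*}q_{\dd}^*A$ has a resolution (spectral sequence) with terms $V(\chi-\sigma)^{+}\otimes\OO$ where $\sigma$ ranges over partial sums of $N^{\lambda_{\dd}<0}$, and by Proposition \ref{rgoesdown} the nontrivial $\sigma$ strictly decrease the $(r,p)$-invariant, with the face $F_r(\lambda_{\dd})^{\mathrm{int}}$ touched only by $\sigma=0$. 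Dually, $p_{\ee*}q_{\ee}^*B$ resolves by $V(\psi-\tau)^+\otimes\OO$ with $\tau$ partial sums of $N^{\lambda_{\ee}<0}$. A $\Hom$ between structure sheaves twisted by $G(d)$-representations on $\X(d)$ is computed by $\Hom(V(\alpha)\otimes\OO,V(\gamma)\otimes\OO)=\left(V(\alpha)^{\vee}\otimes V(\gamma)\otimes \wedge^{\bullet}R(d)^{\vee}\right)^{G(d)}$, i.e. it is nonzero only when some weight of $V(\gamma)\otimes\wedge^{\bullet}R(d)^{\vee}$ lies in the Weyl orbit of a weight of $V(\alpha)$, and this forces a comparison of $(r,p)$-invariants. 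Using the order $\to$ from Subsection \ref{compadm} together with Proposition \ref{rgoesdown} and Proposition \ref{boundary}, the hypothesis $\mathcal A\not\geq\mathcal B$ means $\chi$ cannot be obtained from $\psi$ by subtracting a partial sum supported in $N^{\lambda_{\ee}<0}$ (up to the allowed $\beta_d$-shifts and Weyl action), and so every term of the double spectral sequence vanishes. I would organize this as: (i) reduce to the abelian setting via Proposition \ref{abel2}, where $\Hom$ becomes a weight-matching over $T(d)$; (ii) match the $(r,p)$-invariants of source and target terms; (iii) use the ordering to conclude no match is possible.

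The main obstacle will be step (ii)--(iii) of part (a): carefully tracking how the two spectral sequences interact, i.e. showing that for \emph{every} pair $(\sigma,\tau)$ the $G(d)$-invariants vanish. The subtlety is that both the source resolution (subtracting $\sigma\subset N^{\lambda_{\dd}<0}$, which \emph{lowers} $(r,p)$) and the computation of morphisms between vector bundles on $\X(d)$ (tensoring with $\wedge^\bullet R(d)^\vee$) move weights around, so one must show these movements are incompatible with landing in the Weyl orbit of a weight of the target bundle unless $\mathcal A\geq\mathcal B$. The cleanest route is probably to not expand $p_{\ee*}q_{\ee}^*B$ at all but instead use adjunction $\Hom(p_{\dd*}q_{\dd}^*A,p_{\ee*}q_{\ee}^*B)=\Hom(q_{\ee}^*p_{\ee}^*p_{\dd*}q_{\dd}^*A, B)$, compute $p_{\ee}^*p_{\dd*}$ by base change along the (possibly non-transverse) intersection of attracting loci, and observe that the resulting complex on $\X(\ee)$ lies in $\lambda_{\ee}$-weights that are ``too negative'' to pair with $B$ which is a window object for $\lambda_{\ee}$. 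This is the standard semiorthogonality argument for iterated Kempf--Ness strata (cf. \cite[Amplification 2.11]{hl}), and matching it to our $(r,p)$-bookkeeping via Propositions \ref{rgoesdown}, \ref{boundary}, and \ref{nulemma} is the technical heart. Both parts then pass to $\overline{\mathbb{M}}$ by applying Proposition \ref{sod}.
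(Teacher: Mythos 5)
Your final, preferred route for part (a) is exactly the paper's argument: reduce to generators $A=V(\chi_A)\otimes\OO$, $B=V(\chi_B)\otimes\OO$, apply the adjunction $\Hom(p_{\dd*}q_{\dd}^*A,p_{\ee*}q_{\ee}^*B)=\Hom(p_{\ee}^*p_{\dd*}q_{\dd}^*A,q_{\ee}^*B)$, and use Proposition \ref{rgoesdown} to bound the $\mu$-weights of the left-hand object strictly above those of $B$. For part (b) the paper uses the $p_{\dd}^*\dashv p_{\dd*}$ adjunction in the first slot (rather than $p_{\dd*}\dashv p_{\dd}^!$ in the second as you propose, which brings in $\det N$ unnecessarily), and concludes directly from the fact that $p_{\dd*}q_{\dd}^*A$ meets $F_r(\lambda)^{\text{int}}$ only at the $\sigma=0$ term. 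So the conceptual core matches; the detours you hedge over are avoidable.

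A few of the alternatives you float do not actually work as stated and you should discard them more firmly. First, the formula $\Hom(V(\alpha)\otimes\OO,V(\gamma)\otimes\OO)=\left(V(\alpha)^{\vee}\otimes V(\gamma)\otimes\wedge^{\bullet}R(d)^{\vee}\right)^{G(d)}$ is wrong: on the affine stack $\X(d)=R(d)/G(d)$ the relevant ring is $\text{Sym}^{\bullet}(R(d)^{\vee})$, not the exterior algebra, so the weight bookkeeping you sketch from it would not come out right (and you do not need this route anyway). Second, Proposition \ref{abel2} is a statement about $K_0$ and does not by itself reduce a computation of $\Hom$-spaces in $D^b(\X(d))$ to $T(d)$-weight matching; the paper stays on the nonabelian stack throughout. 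Third, you propose computing $p_{\ee}^*p_{\dd*}$ by base change along the intersection of the attracting loci; this square is generally neither cartesian nor Tor-independent, and the paper sidesteps this by never computing the pullback literally --- it only needs the bound that $p_{\dd*}q_{\dd}^*A$ lies in $r\overline{\mathbb{W}}$ and touches no interior face $F_r(\mu)^{\text{int}}$ with $\mu\not\leq\lambda$, which is precisely Proposition \ref{rgoesdown}. Finally, the remark that one ``passes to $\oM$ via Proposition \ref{sod}'' is not part of this statement; that step happens in the proof of Theorem \ref{thm5}, not here.

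One more point you gloss over: the case $r=s$ and $\lambda=\mu$ is handled in the paper by recursing into the standard forms of $\chi_A$ and $\chi_B$ and comparing the first characters $\lambda_i,\mu_i$ at which they differ, exactly as the recursive definition of $\to$ in Subsection \ref{compadm} requires. Your appeal to ``the order $\to$'' covers this implicitly, but spelling out the induction is what makes the argument close in the equal-$(r,\lambda)$ case.
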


\begin{proof}
It is enough to prove the result for generators of the above categories: $$A=V(\chi_A)\otimes\OO_\X\in \boxtimes_{i=1}^k\mathbb{N}(d_i)_{w_i}\text{ and }B=V(\chi_B)\otimes\OO \in \boxtimes_{i=1}^s \mathbb{N}(e_i)_{v_i}.$$
Let $\lambda$ and $\mu$ be characters that realize the $r$-invariants of $\chi_A$ and $\chi_B$:
$$\chi_A\in F_r(\lambda)^{\text{int}}\text{ and }\chi_B\in F_s(\mu)^{\text{int}}.$$
 Assume for simplicity that we are in the case when $s>r$ or they are equal and $\mu$ is not $\geq\lambda$; if $s=r$ and $\lambda=\mu$, we need to consider the first characters $\lambda_i$ and $\mu_i$ in the standard forms of $\chi_A$ and $\chi_B$ that differ, and the argument is the same for them.
Using adjunction, it is enough to prove that:
$$\text{Hom}\,(p_{\ee}^*p_{\dd*}q_{\dd}^*A, q_{\ee}^*B)=0.$$
We have that $q_{\ee}^*B\in F_s(\mu)^{\text{int}}$ and, by Proposition \ref{rgoesdown}, $p_{\dd*}q_{\dd}^*A\in r\overline{\mathbb{W}}$ and the only faces in $r\overline{\mathbb{W}}$ it touches are $F_r(\nu)$ for $\nu\leq\lambda$. This means that:
$$\langle \mu, p_{\dd*}q_{\dd}^*A\rangle >\langle \mu, B\rangle,$$ and the conclusion for part (a) follows.

When $\mathcal{A}=\mathcal{B}$, we need to show that:
$$\text{Hom}\,(p_{\dd}^*p_{\dd*}q_{\dd}^*A, q_{\dd}^*B)=\text{Hom}\,(A,B).$$
We have that $q_{\dd}^*B\in F_r(\lambda)^{\text{int}}$ and, by Proposition \ref{rgoesdown}, $p_{\dd*}q_{\dd}^*A\in r\overline{\mathbb{W}}$ and it touches the face $F_r(\lambda)^{\text{int}}$ only for the partial sum $\sigma=0$, and the conclusion for part (a) follows.
\end{proof}

\begin{proof}[Proof of Theorem \ref{thm5}]
The zero potential case follows from Propositions $5.2.$ and $5.3.$ so we have a semi-orthogonal decomposition: 
$$D^b(\X(d))=\langle p_{\dd*}q_{\dd}^*\left(\overline{\mathbb{N}}(d_1)_{w_1}\boxtimes\cdots\boxtimes \overline{\mathbb{N}}(d_k)_{w_k}\right)\rangle,$$ and the functors $p_{\dd*}q_{\dd}^*$ are fully faithful.
Use Proposition \ref{sod} to obtain a semi-orthogonal decomposition:
\begin{equation}
    D_{sg}(\X(d)_0)=\langle D_{sg}\left(p_{\dd*}q_{\dd}^*\left(\overline{\mathbb{N}}(d_1)_{w_1}\boxtimes\cdots\boxtimes \overline{\mathbb{N}}(d_k)_{w_k}\right)\right)\rangle
\end{equation} 
Next, observe that: $$D_{sg}\left(p_{\dd*}q_{\dd}^*\left(\overline{\mathbb{N}}(d_1)_{w_1}\boxtimes\cdots\boxtimes \overline{\mathbb{N}}(d_k)_{w_k}\right)\right)=p_{\dd*}q_{\dd}^*\,D_{sg}\left(\overline{\mathbb{N}}(d_1)_{w_1}\boxtimes\cdots\boxtimes \overline{\mathbb{N}}(d_k)_{w_k}\right).$$ Further, 
use the version of Thom-Sebastiani from Corollary \ref{corts} to write:
$$D_{sg}\left(\overline{\mathbb{N}}(d_1)_{w_1}\boxtimes\cdots\boxtimes \overline{\mathbb{N}}(d_k)_{w_k}\right)\cong D_{sg}(\overline{\mathbb{N}}(d_1)_{w_1})\boxtimes\cdots\boxtimes D_{sg}(\overline{\mathbb{N}}(d_k)_{w_k}),$$ so we indeed obtain the desired semi-orthogonal decomposition. 
After taking the idempotent completion of these categories, we obtain the corresponding semi-orthogonal decomposition for $D_{sg}^{id}$.

%In the graded case, the zero potential case gives the semi-orthogonal decomposition:$$D^{b, gr}(\X(d))=\langle p_{\dd*}q_{\dd}^*\,\mathbb{N}^{gr}(d_1)_{w_1}\boxtimes\cdots\boxtimes \mathbb{N}^{gr}(d_k)_{w_k}\rangle.$$Using Proposition \ref{sodgraded}, we obtain the semi-orthogonal decomposition:$$D^{gr}_{sg}(\X(d)_0)=\langle D^{gr}_{sg}\left(p_{\dd*}q_{\dd}^*\left(\mathbb{N}(d_1)_{w_1}\boxtimes\cdots\boxtimes \mathbb{N}(d_k)_{w_k}\right)_0\right)\rangle.$$Using the graded Thom-Sebastiani version of Corollary \ref{corts}, we obtain the desired conclusion. 
\end{proof}

\subsection{Relations in $\text{gr}^FKHA$}\label{relations} Consider the semi-orthogonal decompositions from Theorem \ref{sod5} restricted to weight $w\in\mathbb{Z}$:
$$D_{sg}(\X(d)_0)_w=\langle \cdots, \overline{\mathbb{M}}(d)_w\rangle.$$

\begin{defn}\label{filtration}
Let $\mathcal{T}_w$ be the tree with vertices
the categories that appear in the above semi-orthogonal decomposition, and draw an edge between two such subcategories $\mathcal{A}$ and $\mathcal{B}$ if there are objects $A\in\mathcal{A}$ and $B\in\mathcal{B}$ such that $\text{Hom}(A,B)\neq 0$. For any category $\mathcal{A}$ from the semi-orthogonal decomposition, define 
$l(\mathcal{A})$ as the number of vertices on the smallest cycle through the tree $T_w$ connecting $\mathcal{A}$ and $\overline{\mathbb{M}}(d)_w$. We consider the union of all the trees $\mathcal{T}_w$ for $w\in\mathbb{Z}$.
Define a filtration $\overline{F}^{\cdot}$ on $K_0(D_{sg}(\X(d)_0))$ by letting  $$\overline{F}^{\leq 1}=K_0(\overline{\mathbb{M}}(d))\subset K_0(D_{sg}(\X(d)_0)),$$ and for $i\geq 2$, $$\overline{F}^{\leq i}=\text{subspace generated by }K_0(\mathcal{A})\text{ with }l(\mathcal{A})\leq i\text{ in } K_0(D_{sg}(\X(d)_0)).$$ 
For a complex $E\in D_{sg}(\X(d)_0)$, let 
$$l(E)=\text{min }l\text{ such that }[E]\in \overline{F}^{\leq l}K_0(D_{sg}(\X(d)_0)).$$
\end{defn}

\begin{prop}\label{filtcomp}
Assume that $\{(d,w), (e,v)\}$ is an admissible set, let $\chi_d\in \oM(d)_w$ and $\chi_e\in\oM(e)_v$. Then
$\chi=\chi_d+\chi_e$ is a dominant weight with $r$-invariant $r(\chi)>\frac{1}{2}$. Let $\sigma$ be a non-trivial sum of weights in $N_{d,e}$ such that $\chi-\sigma$ is dominant. Then:
\begin{align*}
l(V(\chi)\otimes\OO_{\X(d+e)})=l(p_{d,e*}(V(\chi)\otimes\OO_{\X(d,e)}))>\\
l(V(\chi-\sigma)\otimes\OO_{\X(d+e)})=l(p_{d,e*}(V(\chi-\sigma)\otimes\OO_{\X(d,e)})).\end{align*}
In particular, we have that:
$$l(V(\chi)\otimes\OO_{\X(d+e)})=l(p_{d,e*}(V(\chi)\otimes\OO_{\X(d,e)}))>l(C),$$
where $C$ is the cone:
$$V(\chi)\otimes\OO_{\X(d+e)}\to p_{d,e*}(V(\chi)\otimes\OO_{\X(d,e)})\to C \xrightarrow{[1]}.$$
\end{prop}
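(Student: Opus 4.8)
The plan is to reduce everything to tracking the $(r,p)$-invariant under the Borel--Bott--Weyl spectral sequence and to the behaviour of the tree $\mathcal{T}_w$. First I would observe that, since $\{(d,w),(e,v)\}$ is admissible with $r>\tfrac12$, Proposition \ref{rem2} guarantees that $\chi=\chi_d+\chi_e$ is dominant with standard form $\chi+\rho=-\sum r_iN^{\lambda_i>0}+\chi'$ whose Levi is exactly $G(d)\times G(e)$ and all $r_i>\tfrac12$; in particular $r(\chi)>\tfrac12$. Let $\lambda=\lambda_{d,e}$ be the anti-dominant character realising the first step of the standard form, so $\chi\in F_{r}(\lambda)^{\mathrm{int}}$ for $r=r_1>\tfrac12$. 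Since $q_{d,e}^*(V(\chi_d)\boxtimes V(\chi_e))\otimes\OO = V(\chi)\otimes\OO_{\X(d,e)}$, the equalities $l(V(\chi)\otimes\OO_{\X(d+e)})=l(p_{d,e*}(V(\chi)\otimes\OO_{\X(d,e)}))$ and the analogous one for $\chi-\sigma$ follow once I know that $p_{d,e*}q_{d,e}^*$ is the inclusion of the corresponding summand of the semi-orthogonal decomposition of Theorem \ref{sod5}, composed with the Borel--Bott--Weyl resolution: by Proposition \ref{rgoesdown}, $p_{d,e*}(V(\chi)\otimes\OO_{\X(d,e)})$ lies in $r\WW$, touches the interior face $F_r(\lambda)^{\mathrm{int}}$ only in the summand $V(\chi)\otimes\OO_{\X(d+e)}$, and all other spectral-sequence terms $V(\chi-\sigma')\otimes\OO_{\X(d+e)}$ have strictly smaller $(r,p)$-invariant. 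Hence in $K_0$ the two classes agree modulo a combination of classes $[V(\chi-\sigma')\otimes\OO]$ with $(r,p)(\chi-\sigma')<(r,p)(\chi)$, and by induction on $(r,p)$ all of those lie in $\overline{F}^{\leq l}$ for $l<l(V(\chi)\otimes\OO_{\X(d+e)})$; this forces equality of the $l$-values of $V(\chi)\otimes\OO_{\X(d+e)}$ and $p_{d,e*}(V(\chi)\otimes\OO_{\X(d,e)})$.

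Next I would establish the strict inequality $l(V(\chi)\otimes\OO_{\X(d+e)})>l(V(\chi-\sigma)\otimes\OO_{\X(d+e)})$ for any nontrivial admissible $\sigma$. The key input is again Proposition \ref{rgoesdown}: for $\sigma$ a nonzero partial sum in $N^{\lambda<0}=N_{d,e}$, one has $(r,p)(\chi-\sigma)<(r,p)(\chi)$, and moreover $V(\chi-\sigma)\otimes\OO$ lands in $r\WW$ touching only faces $F_r(\mu)^{\mathrm{int}}$ for $\mu\le\lambda$. Translating this through the standard form, $\chi-\sigma$ either has strictly smaller $r$-invariant, in which case it belongs to a category $p_{\ee*}q_{\ee}^*(\oM(\cdots))$ with $\ee$ strictly lower in the order of Subsection \ref{compadm}, or it has the same $r$-invariant but a strictly refined first Levi factor. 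In either case the corresponding vertex of $\mathcal{T}_w$ is strictly closer to $\oM(d+e)_w$ than the vertex attached to $\chi$ itself: concretely, the summand containing $V(\chi)\otimes\OO$ is connected to $\oM(d+e)_w$ by a path one step longer than the summand containing $V(\chi-\sigma)\otimes\OO$, because the latter appears in the Borel--Bott--Weyl resolution of the former. This is exactly the statement that $l$ drops by at least one, so $l(V(\chi)\otimes\OO_{\X(d+e)})>l(V(\chi-\sigma)\otimes\OO_{\X(d+e)})$.

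Finally, the last assertion about the cone $C$ is a formal consequence: from the distinguished triangle
$$V(\chi)\otimes\OO_{\X(d+e)}\to p_{d,e*}(V(\chi)\otimes\OO_{\X(d,e)})\to C\xrightarrow{[1]}$$
and the first part, $[C]=[p_{d,e*}(V(\chi)\otimes\OO_{\X(d,e)})]-[V(\chi)\otimes\OO_{\X(d+e)}]$ in $K_0$ equals a sum of classes $[V(\chi-\sigma')\otimes\OO]$ over nontrivial $\sigma'$ appearing in the Borel--Bott--Weyl resolution; each such class has $l$ strictly smaller than $l(V(\chi)\otimes\OO_{\X(d+e)})$ by the previous paragraph, so $l(C)<l(V(\chi)\otimes\OO_{\X(d+e)})=l(p_{d,e*}(V(\chi)\otimes\OO_{\X(d,e)}))$. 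I expect the main obstacle to be bookkeeping: making precise the claim that "appearing in the Borel--Bott--Weyl resolution of a summand" corresponds to "being one step closer to $\oM(d+e)_w$ in the tree $\mathcal{T}_w$", i.e. showing that the combinatorial order on admissible sets from Subsection \ref{compadm} is genuinely compatible with the $\mathrm{Hom}$-vanishing edges that define $\mathcal{T}_w$ and with the $(r,p)$ filtration. Once that compatibility is pinned down — which should follow from Propositions \ref{orth}, \ref{rgoesdown}, and \ref{rem2} — everything else is a diagram chase in $K_0$.
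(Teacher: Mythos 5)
Your proposal follows the same general outline as the paper's (reduce to a statement about the relevant summand categories, then argue the strict inequality), but it diverges on the crucial step of actually establishing the inequality on the $l$-invariant, and the divergence leaves a genuine gap. The paper's proof is more direct: it reduces the whole statement to the single inequality
\[
l\bigl(Rp_{d,e*}q_{d,e}^*(\oM(d)_w\boxtimes\oM(e)_v)\bigr)>
l\bigl(Rp_{d,e*}q_{d,e}^*(\oM(d)_{w'}\boxtimes\oM(e)_{v'})\bigr),
\]
and proves this by explicitly computing morphisms: for $A\in\oM(d)_w\boxtimes\oM(e)_v$ and $B\in\oM(d)_{w'}\boxtimes\oM(e)_{v'}$ one shows $\mathrm{Hom}(p_*q^*B,p_*q^*A)=0$ (via adjunction and the $\lambda_{d,e}$-weight estimate from the proof of Proposition \ref{orth}), while for the canonical generators $A'$, $B'$ one produces a nonzero map $p_*q^*A'\to p_*q^*B'$. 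These two facts pin down both the adjacency and the orientation of the corresponding edge in the tree $\mathcal{T}_w$, since that tree is defined precisely by $\mathrm{Hom}$-nonvanishing.

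Your argument instead leans on Proposition \ref{rgoesdown} to deduce $(r,p)(\chi-\sigma)<(r,p)(\chi)$ and then asserts that this forces the category containing $V(\chi-\sigma)\otimes\OO$ to sit "one step closer" to $\oM(d+e)_w$ in $\mathcal{T}_w$. You flag this yourself as the main obstacle, and it is indeed where the proof fails as written: the filtration $\overline{F}^\bullet$ and the quantity $l$ are defined purely from the combinatorics of $\mathrm{Hom}$-edges in $\mathcal{T}_w$, not from the $(r,p)$-invariant, and there is no proved implication "smaller $(r,p)$ $\Rightarrow$ smaller $l$", nor a proved statement "appears as a Borel–Bott–Weyl term $\Rightarrow$ adjacent in $\mathcal{T}_w$". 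Both would require exactly the $\mathrm{Hom}$-nonvanishing and $\mathrm{Hom}$-vanishing checks the paper carries out. In particular, a nonzero differential in the Borel–Bott–Weyl spectral sequence lands in $D^b(\X(d+e))$, and translating that into a nonzero $\mathrm{Hom}$ between the summand categories $p_{\dd*}q_{\dd}^*(\cdots)$ of the semi-orthogonal decomposition is a statement that has to be argued; it is not formal. To close the gap you should replace the appeal to $(r,p)$-monotonicity with the two $\mathrm{Hom}$ computations the paper performs, or else prove the compatibility lemma between the $(r,p)$-order and the tree distance that you deferred.
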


\begin{proof}
It suffices to prove the inequality:
$$l(p_{d,e*}(V(\chi)\otimes\OO_{\X(d,e)}))>l(p_{d,e*}(V(\chi-\sigma)\otimes\OO_{\X(d,e)}))$$ for a non-trivial sum of weights $\sigma$ in $N_{d,e}$.
For this, it is enough to argue that for $w'$ and $v'$ such that $V(\chi-\sigma)\otimes\OO_{\X(d)\times\X(e)}$ in $\oM(d)_{w'}\boxtimes\oM(e)_{v'},$ we have that:
$$l(Rp_{d,e*}q_{d,e}^*(\oM(d)_w\boxtimes\oM(e)_v))>
l(Rp_{d,e*}q_{d,e}^*(\oM(d)_{w'}\boxtimes\oM(e)_{v'})).$$ 
Consider $A\in \oM(d)_w\boxtimes\oM(e)_v$, $B\in \oM(d)_{w'}\boxtimes\oM(e)_{v'}$, and use the notations $p=p_{d,e}$ and $q=q_{d,e}$. Let $A'=V(w\beta_d+v\beta_e)\otimes\OO_{\X(d)\times\X(e)}$, and $B'=V(w'\beta_d+v'\beta_e)\otimes\OO_{\X(d)\times\X(e)}$.
We have that:
$$\langle \lambda_{d,e}, A\rangle <\langle \lambda_{d,e}, B\rangle.$$ 
It is enough to show that:
$$\text{Hom}\,(p_*q^*B, p_*q^*A)=0\text{ and }\text{Hom}\,(p_*q^*A',p_*q^*B')\neq 0.$$
For the first claim, by adjunction we need to show that
$\text{Hom}(p^*p_*q^*B, A)=0,$ which follows as in the proof of Proposition \ref{orth}:
$$\langle \lambda_{d,e},p^*p_*q^*B\rangle \geq \langle \lambda_{d,e}, B\rangle >\langle \lambda_{d,e},A\rangle.$$
For the second claim, 
there are nonzero maps $A'\to B'$, so we obtain nonzero maps $p_*q^*A'\to p_*q^*B'$.
\end{proof}

For a vertex $v\in I$, and a natural number $d_v\in\mathbb{N}$, let $$K_0(BG(d_v))=\mathbb{Z}[q_1^{\pm 1},\cdots, q_{d_v}^{\pm 1}]^{\mathfrak{S}(d_v)},$$
and denote by $q_v:=q_1\cdots q_{d_v}$.
Let $d,e\in \mathbb{N}^I$ be two dimension vectors.  We want to write the class of the normal bundle $\text{det}\,\mathcal{N}_{d,e}\in K_0(BG(d)\times BG(e))$, where $\mathcal{N}_{d,e}$ is the normal bundle of the map $p_{d,e}:\X(d,e)\to\X(d+e)$. First, write: $$\det(\mathcal{N}_{d,e})=\det(N_{d,e})\det(2\rho_{d,e})^{-1},$$ where $N_{d,e}$ is the normal bundle of $R(d,e)$ in $R(d+e)$, and $2\rho_{d,e}$ is the sum of positive roots in $G(d+e)$ which pair positively with $\lambda_{d,e}$.
In the normal bundle $[N_{d,e}]=\sum \beta^v_{i}-\beta^w_{j},$ the weights $\beta^v_{i}$ with $1\leq i\leq d^v$ appear 
$\sum_{\text{edge }v\to w} e^w$ times, whereas the weights $\beta^w_{j}$ with $d^v<j\leq d^v+e^v$ appear 
appear $-\sum_{\text{edge }w\to v} d^w$ times. 
In the term $2\rho_{d,e}$, the weights $\beta^v_{i}$ with $1\leq i\leq d^v$ appear 
$e^v$ times, whereas the weights $\beta^w_{j}$ with $d^v<j\leq d^v+e^v$  
appear $-d^v$ times. 
We will use the shorthand notations $$q^{f(d,e)}=\prod_{v\in I} q_v^{-e^v+\sum_{v\to w}e^w},\, q^{-g(d,e)}=\prod_{v\in I} q_v^{d^v-\sum_{w\to v}d^w}.$$
Here, both $f(d,e)$ and $g(d,e)$ depends on the dimension vectors $d$ and $e$ only. We shift $\det(\mathcal{N}_{d,e})$ by $\text{codim}(\X(d,e)\text{ in }\X(d+e))$.
The class of the normal bundle is then: $$[\det \mathcal{N}_{d,e}]=(-1)^{\chi(d,e)}q^{f(d,e)}q^{-g(d,e)}\in K_0(BG(d)\times BG(e)).$$

\begin{thm}\label{mut}
Let $A\in\oM(d)_w$, $B\in\oM(e)_v$, and assume that $\{(d,w), (e,v)\}$ is admissible with $r>\frac{1}{2}$. Consider the diagram of attracting loci:

\begin{tikzcd}
\X(d)\times \X(e) & \arrow{l}{q_{d,e}} \X(d,e) \arrow{d}{p_{d,e}}\\
\X(e,d) \arrow{u}{q_{e,d}} \arrow{r}{p_{e,d}} & \X(d+e).
\end{tikzcd}
\\
There exists a natural map:
$$p_{e,d*}q_{e,d}^*(B\boxtimes A\otimes \det (\mathcal{N}_{e,d}))\to p_{d,e*}q_{d,e}^*(A\boxtimes B)$$
whose cone $C$ has: $$l(C)<l(p_{d,e*}q_{d,e}^*(A\boxtimes B))=l(p_{e,d*}q_{e,d}^*(B\boxtimes A\otimes \det (\mathcal{N}_{e,d}))).$$ 
\end{thm}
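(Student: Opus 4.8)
The strategy is to reduce everything to an explicit computation with the two attracting loci $\X(d,e)$ and $\X(e,d)$ inside $\X(d+e)$, which sit over the diagonal subgroup $\lambda_{d,e}$ and its inverse. First I would reduce to the zero-potential (derived category) statement: by Corollary \ref{corts} and the functoriality of $p_*$, $q^*$ on categories of singularities from Subsection \ref{functo}, it suffices to construct a map of the stated form in $D^b(\X(d+e))$ together with an estimate on the $l$-invariant of its cone, and then apply $D_{sg}$. Since both $p_{d,e}$ and $p_{e,d}$ are proper and $q_{d,e}$, $q_{e,d}$ are affine bundles, I can work with the structure sheaves of the attracting loci: by adjunction for $p_{e,d}$ against the inclusion $i: \X(d)\times\X(e)\hookrightarrow\X(e,d)$ (a section of $q_{e,d}$), a map $p_{e,d*}q_{e,d}^*(B\boxtimes A\otimes\det\mathcal N_{e,d})\to p_{d,e*}q_{d,e}^*(A\boxtimes B)$ corresponds to a map $B\boxtimes A\otimes\det\mathcal N_{e,d}\to p_{e,d}^!p_{d,e*}q_{d,e}^*(A\boxtimes B)$, and the twist by $\det\mathcal N_{e,d}$ is precisely the shift appearing in $p_{e,d}^!=p_{e,d}^*\otimes\det\mathcal N_{e,d}[\dim]$; so the map is obtained from the canonical base-change map along the (commutative but not cartesian) square relating $\X(d,e)$, $\X(e,d)$, $\X(d+e)$, $\X(d)\times\X(e)$. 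Concretely, the composite $p_{e,d}^*p_{d,e*}q_{d,e}^*$ can be analyzed by Borel--Bott--Weyl as in Subsection \ref{sp}, Proposition \ref{rgoesdown}: for $A=V(\chi_d)\otimes\OO$, $B=V(\chi_e)\otimes\OO$ the complex $p_{d,e*}q_{d,e}^*(A\boxtimes B)$ has a resolution with terms $V(\chi-\sigma)\otimes\OO$ for $\sigma$ a partial sum in $N_{d,e}$, where $\chi=\chi_d+\chi_e$; the $\sigma=0$ term contributes the ``leading'' piece which, after twisting down by $\det\mathcal N_{e,d}$ and using $\det N_{d,e}=-\det N_{e,d}$ together with the identity for $\det\mathcal N_{e,d}$ recorded just before the theorem, is exactly $p_{e,d*}q_{e,d}^*(B\boxtimes A\otimes\det\mathcal N_{e,d})$ at the level of associated graded.

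Having produced the map, the content is the inequality on $l$-invariants. By Proposition \ref{filtcomp} applied in both orders, $l\bigl(p_{d,e*}q_{d,e}^*(A\boxtimes B)\bigr)=l\bigl(V(\chi)\otimes\OO\bigr)$ strictly dominates $l\bigl(V(\chi-\sigma)\otimes\OO\bigr)$ for every nontrivial $\sigma$, and by the symmetric statement $l\bigl(p_{e,d*}q_{e,d}^*(B\boxtimes A\otimes\det\mathcal N_{e,d})\bigr)$ equals $l\bigl(V(\chi)\otimes\OO\bigr)$ as well, since twisting $B\boxtimes A$ by $\det\mathcal N_{e,d}$ lands it in $\oM(e)_{v'}\boxtimes\oM(d)_{w'}$ with $\{(e,v'),(d,w')\}$ still admissible with $r>\tfrac12$ and the associated dominant weight again having $r$-invariant exactly $r(\chi)$ (this uses Propositions \ref{rem1}, \ref{rem2} and \ref{propadm}). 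The cone $C$ is, up to the leading terms that cancel, built out of the $V(\chi-\sigma)\otimes\OO$ for $\sigma\neq 0$ (more precisely out of $p_{d,e*}q_{d,e}^*$ and $p_{e,d*}q_{e,d}^*$ of objects supported strictly deeper in the filtration), so Proposition \ref{filtcomp} and the definition of $l$ via the semi-orthogonal decomposition give $l(C)<l(p_{d,e*}q_{d,e}^*(A\boxtimes B))$. One then checks that a nonzero map $A\to A$, $B\to B$ induces a nonzero map after applying $p_{d,e*}q_{d,e}^*$ and after the mutation, so the leading term genuinely survives and the cone is strictly smaller rather than everything collapsing.

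The last step is to pass from structure sheaves of attracting loci to general objects $A\in\oM(d)_w$, $B\in\oM(e)_v$ and to categories of singularities. For general $A,B$ one filters by the generators $V(\chi_d)\otimes\OO$, $V(\chi_e)\otimes\OO$ and uses that the construction of the map and the $l$-estimate are compatible with cones (the $l$-invariant is subadditive on triangles and the leading terms add), so the naturality statement follows formally; the compatibility with $D_{sg}$ and with idempotent completion is then exactly as in the proof of Theorem \ref{sod5}, using Proposition \ref{sod} and Corollary \ref{corts}.

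\textbf{Main obstacle.} The delicate point is the precise bookkeeping identifying the $\sigma=0$ summand of the Borel--Bott--Weyl resolution of $p_{d,e*}q_{d,e}^*(A\boxtimes B)$, after the $\det\mathcal N_{e,d}$-twist, with $p_{e,d*}q_{e,d}^*(B\boxtimes A\otimes\det\mathcal N_{e,d})$ on the nose at the level of $\text{gr}^F$ — i.e., checking that all the remaining terms $V(\chi-\sigma)\otimes\OO$, $\sigma\neq0$, really do have strictly smaller $l$-invariant (this is where one needs that $\{(d,w),(e,v)\}$ is admissible with $r>\tfrac12$, so that Proposition \ref{rgoesdown} applies to $\chi$ and the face structure $F_r(\lambda)^{\text{int}}$ controls exactly which faces $\chi-\sigma$ can touch), and that the twist does not accidentally move $B\boxtimes A$ out of the window categories. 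The sign $(-1)^{\chi(d,e)}$ and the $q^{f(d,e)}q^{-g(d,e)}$ factors must be tracked consistently with the conventions of Subsection \ref{notations} and the codimension shift, but this is routine once the leading-term identification is set up correctly.
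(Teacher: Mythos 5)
Your overall outline matches the paper's: reduce to zero potential, reduce to generators $V(\chi_d)\otimes\OO$ and $V(\chi_e)\otimes\OO$, use Borel--Bott--Weyl and Proposition \ref{rgoesdown} to expand $p_{d,e*}q_{d,e}^*(A\boxtimes B)$ into terms $V(\chi-\sigma)\otimes\OO$, and then apply Proposition \ref{filtcomp} to see that everything except the $\sigma=0$ term has strictly smaller $l$-invariant. That part of the argument is essentially the one in the paper. The $\text{gr}^F$ identification of the $\sigma=0$ term with $p_{e,d*}q_{e,d}^*(B\boxtimes A\otimes\det\mathcal N_{e,d})$, and the tracking of the sign and $q$-factors via the formula for $\det\mathcal N_{e,d}$, are also in line with the paper.

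The gap is in the construction of the map itself, which you flag as the main obstacle but do not actually resolve. You assert that the map ``is obtained from the canonical base-change map along the (commutative but not cartesian) square'' relating $\X(d,e)$, $\X(e,d)$, $\X(d)\times\X(e)$, and $\X(d+e)$. For a non-cartesian square there is no canonical base-change morphism, and in fact no morphism of the needed form exists for arbitrary inputs; the paper's construction genuinely uses the admissibility hypothesis $r>\tfrac12$ to know that the weights of $A\boxtimes B$ lie in a prescribed window, and inserts the truncation functor $\beta_{\geq u}$ (the adjoint to the inclusion of $D_{sg}(R(d+e)_0/G(d)\times G(e))_{\geq u}$, a Halpern--Leistner window statement for the Kempf--Ness stratum $R(d,e)$) together with the explicit Weyl group computation of Proposition \ref{Wcomputation} to manufacture the isomorphism $\pi_{d,e}^*\pi_{e,d*}\overline q_{e,d}^*(F\otimes\det(2\rho_{e,d})^{-1})\cong\overline q_{d,e}^*F$ on that window, and only then combines it with the Koszul map $i_{e,d}^*i_{e,d*}(E\otimes\det N_{e,d})\to E$. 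Without the truncation step, your $p_{e,d}^!$-adjunction alone only produces a map after composing with a noncanonical projection, and nothing guarantees the twist by $\det\mathcal N_{e,d}$ cancels the discrepancy. You gesture at this (``the twist does not accidentally move $B\boxtimes A$ out of the window categories''), but the worry is not that the twist leaves the window --- it is that without the window hypothesis there is no base-change map at all. Supplying the truncation-functor argument and Proposition \ref{Wcomputation} would close the gap.

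There is also a small inaccuracy in the adjunction bookkeeping: the adjunction for $p_{e,d}$ relates a map $p_{e,d*}X\to Y$ to a map $X\to p_{e,d}^!Y$ with $X$ on $\X(e,d)$, not on $\X(d)\times\X(e)$; to get back to $B\boxtimes A\otimes\det\mathcal N_{e,d}$ you need to pull back along the section of $q_{e,d}$ and then use that the section and $q_{e,d}^*$ are inverse on the relevant subcategory. This is exactly the role of $\tau_{e,d}$ in the paper's chain of maps and should be spelled out rather than elided.
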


An immediate corollary of Theorem \ref{mut} is:
\begin{cor}\label{cormut}
Let $x_{d,w}\in K_0(\mathbb{M}(d)_w)$ and $x_{e,v}\in K_0(\mathbb{M}(e)_v)$ and assume that the set $\{(d,w), (e,v)\}$ is admissible with $r>\frac{1}{2}$.
Then
$$x_{d,w}x_{e,v}=(-1)^{\chi(d,e)}(x_{e,v}q^{f(e,d)})(x_{d,w}q^{-g(e,d)})$$ in the algebra $\text{gr}^F\,\text{KHA},$ where $-\chi(d,e)=\text{dim}\,\text{Ext}^1(d,e)-\text{dim}\,\text{Ext}^0(d,e)=\text{codim}\,p_{d,e}$. 
\end{cor}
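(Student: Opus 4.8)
The statement follows almost formally from Theorem \ref{mut} together with the definition of the filtration $\overline{F}^{\cdot}$ and the corresponding filtration $F^{\cdot}$ on $\text{KHA}$, so the plan is to unwind the multiplication and identify the two sides of the asserted relation modulo lower filtration pieces. First I would recall that by Theorem \ref{sod5} the category $\oM(d)$ sits inside $D_{sg}(\X(d)_0)$ as the last factor of a semi-orthogonal decomposition, and that by Step $1$ the element $x_{d,w}x_{e,v}$ is, by definition of the product $m = p_*q^*$ together with the Thom-Sebastiani identification, represented by the class of $p_{d,e*}q_{d,e}^*(A\boxtimes B)$ in $K_0(D_{sg}(\X(d+e)_0))$ for lifts $A\in\MM(d)_w$ and $B\in\MM(e)_v$ of $x_{d,w}$ and $x_{e,v}$; this uses Proposition \ref{abel} only implicitly, the point being purely that the product is $p_*q^*$.

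Next I would invoke Theorem \ref{mut}: there is a distinguished triangle
\[
p_{e,d*}q_{e,d}^*\bigl(B\boxtimes A\otimes\det(\mathcal{N}_{e,d})\bigr)\to p_{d,e*}q_{d,e}^*(A\boxtimes B)\to C\xrightarrow{[1]}
\]
with $l(C) < l\bigl(p_{d,e*}q_{d,e}^*(A\boxtimes B)\bigr)$. Passing to $K_0$ and then to the associated graded $\text{gr}^F\text{KHA}$ with respect to the filtration $F^{\cdot}$ from Definition \ref{secondfiltrations} (which on each graded piece is the filtration $\overline{F}^{\cdot}$ of Definition \ref{filtration}), the class $[C]$ lies in a strictly smaller filtration step, hence vanishes in the associated graded. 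Therefore in $\text{gr}^F\text{KHA}$ we have the equality
\[
[p_{d,e*}q_{d,e}^*(A\boxtimes B)] = [p_{e,d*}q_{e,d}^*(B\boxtimes A\otimes\det(\mathcal{N}_{e,d}))].
\]
The left side is $x_{d,w}x_{e,v}$. For the right side I would use that $\det(\mathcal{N}_{e,d})$ is a $G(e)\times G(d)$-equivariant line bundle pulled back along $q_{e,d}$, so by the projection formula $q_{e,d}^*(B\boxtimes A\otimes\det(\mathcal{N}_{e,d})) = q_{e,d}^*(B\boxtimes A)\otimes q_{e,d}^*\det(\mathcal{N}_{e,d})$, and tensoring by an equivariant line bundle on the source of $p_{e,d}$ corresponds, after pushforward and under the $K_0(BG)$-module structure of Subsection \ref{local}, to multiplication by the class $[\det\mathcal{N}_{e,d}]\in K_0(BG(e)\times BG(d))$ computed just before the theorem. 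That computation gives $[\det\mathcal{N}_{e,d}] = (-1)^{\chi(d,e)}q^{f(e,d)}q^{-g(e,d)}$, where I must be careful to read off the roles of $d$ and $e$ correctly: the displayed formula reads $[\det\mathcal{N}_{d,e}] = (-1)^{\chi(d,e)}q^{f(d,e)}q^{-g(d,e)}$ with $q^{f(d,e)}$ living in the $K_0(BG(d))$ factor and $q^{-g(d,e)}$ in the $K_0(BG(e))$ factor, so for $\mathcal{N}_{e,d}$ the factor $q^{f(e,d)}$ attaches to $K_0(BG(e))$, i.e. to $x_{e,v}$, and $q^{-g(e,d)}$ attaches to $K_0(BG(d))$, i.e. to $x_{d,w}$; the sign $(-1)^{\chi(e,d)}$ equals $(-1)^{\chi(d,e)}$ since $\chi(d,e)+\chi(e,d)$ is even for a symmetric quiver. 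Hence $[p_{e,d*}q_{e,d}^*(B\boxtimes A\otimes\det\mathcal{N}_{e,d})] = (-1)^{\chi(d,e)}(x_{e,v}q^{f(e,d)})(x_{d,w}q^{-g(e,d)})$, which is the claimed identity; the final remark that $-\chi(d,e) = \dim\Ext^1(d,e) - \dim\Ext^0(d,e) = \operatorname{codim} p_{d,e}$ is just the definition of the Euler pairing in Subsection \ref{quivers}.

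The only genuine content beyond bookkeeping is that $[C]$ really drops filtration degree, i.e.\ that the filtration $F^{\cdot}$ is defined so that Theorem \ref{mut}'s inequality $l(C)<l(\cdots)$ translates into $[C]\in F^{\leq l-1}$ and hence $[C]=0$ in $\text{gr}^F$; this is exactly how $F^{\cdot}$ and $\overline{F}^{\cdot}$ are set up in Definitions \ref{filtration} and \ref{secondfiltrations}, and the admissibility hypothesis $r>\tfrac12$ is precisely what makes $\oM(d+e)$ (the $l=1$ factor) not one of the factors $p_{d,e*}q_{d,e}^*(\cdots)$ appearing here, so there is no degeneracy. I therefore expect the main obstacle to be purely notational: keeping the two arguments $d$ and $e$ (and the two $K_0(BG)$-factors) in the right order through the projection-formula step and matching the sign and exponent conventions of the paragraph preceding Theorem \ref{mut}. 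No deep input is needed; Theorem \ref{mut} is doing all the work.
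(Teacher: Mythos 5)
Your proof is correct and is exactly the unwinding the paper has in mind when it calls the corollary an ``immediate'' consequence of Theorem~\ref{mut}: represent the classes by objects $A\in\MM(d)_w$, $B\in\MM(e)_v$, apply the triangle of Theorem~\ref{mut}, kill $[C]$ in the associated graded because it strictly drops the length $l$, and then read off $[\det\mathcal{N}_{e,d}] = (-1)^{\chi(e,d)}q^{f(e,d)}\boxtimes q^{-g(e,d)}$ in $K_0(BG(e)\times BG(d))$ to identify the right-hand side. Two minor points you glossed over but that hold: the right-hand side is literally the definition of the product $m\bigl((x_{e,v}q^{f(e,d)})\boxtimes(x_{d,w}q^{-g(e,d)})\bigr)$ applied to the twisted representatives, rather than needing the projection formula as a separate input; and $\chi(e,d)=\chi(d,e)$ outright for a symmetric quiver, not merely that their sum is even.
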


We next state some computations that we will use in the proof of Theorem \ref{mut}. Observe that the weights which pair negatively with $\lambda_{d,e}$ are the weights in $N_{d,e}$.
Let $w$ be the Weyl group elements $w=\prod_{v\in I} w^v\in\mathfrak{S}=\prod_{v\in I}\mathfrak{S}_{d^v+e^v}$ which for a vertex $v$ is the permutation: $$(1,\cdots,e^v,e^v+1,\cdots,d^v+e^v)\to (e^v+1,\cdots,d^v+e^v,1,\cdots,e^v).$$

\begin{prop}\label{Wcomputation}
(a) We have that $w^{-1}\rho-\rho=-2\rho_{e,d}$.

(b) Multiplication by $-w$ gives a bijection of sets:
$$(-w)\{\beta\text{ wt such that }\langle \lambda_{e,d},\beta\rangle<0\}=
\{\beta\text{ wt such that }\langle \lambda_{d,e},\beta\rangle<0\}.$$
\end{prop}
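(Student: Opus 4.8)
\textbf{Proof proposal for Proposition \ref{Wcomputation}.}

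The plan is to compute both equalities by a direct, bookkeeping argument on the roots of $G(d+e)=\prod_{v\in I} GL(d^v+e^v)$, working vertex by vertex. Since everything splits as a product over $v\in I$, I would first reduce to the case of a single vertex, i.e. to $G=GL(m)$ with $m=d^v+e^v$, torus $T=(\C^*)^m$ with standard characters $\beta_1,\dots,\beta_m$, and the permutation $w^v\in\mathfrak{S}_m$ that cyclically moves the block $\{1,\dots,e^v\}$ past the block $\{e^v+1,\dots,m\}$, i.e. $w^v$ sends $i\mapsto i+d^v$ for $1\le i\le e^v$ and $i\mapsto i-e^v$ for $e^v<i\le m$. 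The character $\lambda_{e,d}$ is the cocharacter which is $1$ on the first $e^v$ coordinates and $0$ on the last $d^v$ (antidominant for the block order $(e,d)$), and $\lambda_{d,e}$ is $1$ on the first $d^v$ and $0$ on the last $e^v$; I should be careful to fix these conventions consistently with Subsections \ref{attractingloci} and \ref{window} before starting, since the whole statement is a sign/convention check.

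For part (a): $\rho$ is (a fixed representative of) half the sum of positive roots, and for $GL(m)$ one can take $\rho=\sum_{i=1}^m \frac{m+1-2i}{2}\beta_i$ (or equivalently work with $\langle\rho,\alpha_{ij}\rangle$ where $\alpha_{ij}=\beta_i-\beta_j$). Then $w^{-1}\rho-\rho=\sum_i (\rho_{w(i)}-\rho_i)\beta_i$, and a short computation shows this equals $-\sum \alpha$ over the roots $\alpha=\beta_i-\beta_j$ that are sent from "positive" to "negative" by $w^{-1}$, which are exactly the roots $\beta_i-\beta_j$ with $i\le e^v<j$ — equivalently the roots which pair positively with $\lambda_{e,d}$. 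Summing these over all vertices gives precisely $-2\rho_{e,d}$ by the definition of $2\rho_{e,d}$ in Subsection \ref{relations} as the sum of positive roots of $G(d+e)$ pairing positively with $\lambda_{e,d}$. So (a) is really just the standard identity $w\rho-\rho=-\sum_{\alpha>0,\ w^{-1}\alpha<0}\alpha$ together with identifying the relevant root set.

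For part (b): the weights $\beta$ of $R(d+e)$ pairing negatively with $\lambda_{e,d}$ are the $\beta^{w'}_i-\beta^{v}_j$ with the index $i$ sitting in a "$d$-block" and $j$ in an "$e$-block" for the decomposition $(e,d)$ — i.e. the weights of $N_{e,d}$ — while those pairing negatively with $\lambda_{d,e}$ are the weights of $N_{d,e}$; I want to check $(-w)$ carries the first set bijectively onto the second. Since $w$ is a bijection of $T$-weights, $-w$ permutes all weights of $R(d+e)$; the content is that it swaps the $\lambda_{e,d}<0$ chamber with the $\lambda_{d,e}<0$ chamber among the weights of $R(d+e)$. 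This follows because $w$ interchanges the two blocks in each $GL(d^v+e^v)$, hence intertwines $\lambda_{e,d}$ and $-\lambda_{d,e}$ (up to the diagonal, which acts trivially on $R(d+e)$), so $\langle\lambda_{d,e}, -w\beta\rangle = -\langle w^{-1}\lambda_{d,e},\beta\rangle = \langle \lambda_{e,d},\beta\rangle$ modulo the diagonal character, and the sign flip from $-w$ makes negativity go to negativity; then one checks surjectivity by a cardinality count, or just directly matches $\beta^{w'}_i-\beta^v_j \leftrightarrow \beta^{v}_{j'}-\beta^{w'}_{i'}$. I do not expect any real obstacle here beyond keeping the antidominant-versus-dominant conventions and the roles of $d,e$ straight; the main risk is a sign error in which of $\lambda_{d,e},\lambda_{e,d}$ is antidominant, so I would pin that down against Proposition \ref{orth} and Theorem \ref{mut} (where the same $w$ and $2\rho_{e,d}$ appear) to make sure the two parts are used consistently downstream.
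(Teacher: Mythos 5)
Your approach for part (a) is essentially the paper's: the paper calls it ``immediate'' and you are just spelling out the standard identity relating $w^{-1}\rho-\rho$ to a sum of roots, then identifying the relevant roots with the ones defining $2\rho_{e,d}$. There is a small slip — since you want $w^{-1}\rho-\rho$, the identity gives $-\sum_{\alpha>0,\,w\alpha<0}\alpha$, so the roots in question are those sent negative by $w$, not by $w^{-1}$ as you write in the prose — but the set you actually write down, $\{\beta_i-\beta_j : i\leq e^v<j\}$, is the correct one, so the computation goes through.

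For part (b) there is a genuine gap. You argue: (i) $-w$ permutes the set of weights of $R(d+e)$, and (ii) $-w$ intertwines the $\lambda_{e,d}$- and $\lambda_{d,e}$-negativity conditions, so the two chambers are swapped. Step (ii) is fine — one can check directly that $-w^{-1}\lambda_{d,e}$ and $\lambda_{e,d}$ differ by the overall diagonal cocharacter, which acts trivially on $R(d+e)$. The problem is step (i): ``Since $w$ is a bijection of $T$-weights, $-w$ permutes all weights of $R(d+e)$'' does not follow. Because $w$ lies in $W_{d+e}$ and $R(d+e)$ is a $G(d+e)$-representation, $w$ does permute its weight multiset; but $-w$ is $w$ composed with $\beta\mapsto-\beta$, and negation sends the weight $\beta^{t(a)}_i-\beta^{s(a)}_j$ of the arrow $a\colon s(a)\to t(a)$ to a weight that lives in $R(d+e)$ only if there is also an arrow $t(a)\to s(a)$. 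This is exactly where the hypothesis that $Q$ is symmetric enters — the paper's proof makes this the pivot of the argument (``because there is an edge between $b$ and $a$ as $Q$ is symmetric''). Your ``directly matches $\beta^{w'}_i-\beta^v_j\leftrightarrow\beta^v_{j'}-\beta^{w'}_{i'}$'' implicitly uses this vertex swap, but you never say that the swapped pair must still index an arrow of $Q$, which is the one nontrivial input. As written, step (i) would be false for a non-symmetric quiver (and so would the proposition), so the symmetry assumption needs to be invoked explicitly.
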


\begin{proof}
The proof of $(a)$ is immediate. For $(b)$, it is the same as showing that $-w$ give a bijection:
$$(-w)\{\beta\text{ wt of }N_{e,d}\}=
\{\beta\text{ wt of }N_{d,e}\}.$$
The weights of $N_{e,d}$ are of the form $\beta^a_{i}-\beta^b_{j}$ where $a$ and $b$ be vertices with an edge between them, $1\leq i\leq e^a$ and $e^b+1\leq j\leq e^b+d^b$. Then:
$$(-w)(\beta^a_{i}-\beta^b_{j})=\beta^b_{j-e^b}-\beta^a_{i+d^a},$$ and $\beta^b_{j-e^b}-\beta^a_{i+d^a}$ is a weight of $N_{d,e}$ because there is an edge between $b$ and $a$ as $Q$ is symmetric, $1\leq j-e^b\leq d^b$, and $d^a+1\leq i+d^a\leq d^a+e^a$.
\end{proof}

\textbf{Example.}\label{Wexample} Before starting the proof of Theorem \ref{mut}, we discuss an example. Assume that the potential is zero; we explain the proof of Corollary \ref{cormut} for two generators $V(\chi_d)\otimes\OO_{\X(d)}$ and $V(\chi_e)\otimes\OO_{\X(e)}$ such that $\{(d,w), (e,v)\}$ is admissible with $r>\frac{1}{2}$.

By the Borel-Bott-Weyl theorem, see Subsection \ref{sp}, there is a spectral sequence with terms $ V((\chi_d+\chi_e-\sigma)^+)\otimes \OO_{\X(d+e)}$ converging to: $$p_{d,e*}(V(\chi_d)\boxtimes V(\chi_e)\otimes \OO_{\X(d,e)}),$$
where $\sigma$ is a sum of weights in $N_{d,e}$, and similarly there is a spectral sequence with terms $V((\chi_e+\chi_d+N_{e,d}-2\rho_{e,d}-\tau)^+)\otimes \OO_{\X(d+e)}$ converging to: $$p_{e,d*}(V(\chi_e)\boxtimes V(\chi_d)\otimes \text{det}(\mathcal{N}_{e,d})\otimes\OO_{\X(e,d)}),$$ where $\tau$ is a sum of weights of $N_{e,d}$. 
In particular, passing to $K$-theory, we have that:
\begin{equation}\label{first}
    [p_{d,e*}(V(\chi_d)\boxtimes V(\chi_e)\otimes \OO_{\X(d,e)})]=
    \sum (-1)^{|I|+l(I)}[V((\chi_d+\chi_e-\sigma_I)^+)\otimes \OO_{\X(d+e)}],\end{equation}
where the sum is after all partial sums $I$ of weights in $N_{d,e}$, and $l(I)$ is the length of the Weyl group element $v_I\in\mathfrak{S}$ such that $v_I*(\chi_d+\chi_e-\sigma_I)$ is dominant. Further, we also have that:
\begin{multline}\label{second}
[p_{e,d*}(V(\chi_e)\boxtimes V(\chi_d) \otimes \text{det}(\mathcal{N}_{e,d}))]= \\
\sum (-1)^{|J|+l(J)}[V((\chi_e+\chi_d-N_{d,e}-2\rho_{d,e}-\tau_J)^+)\otimes \OO_{\X(d+e)}],\end{multline}
where the sum is after all
partial sums $J$ of weights in $N_{e,d}$ and $l(J)$ is defined as above.

For every $\tau_J$, either $\chi_e+\chi_d-N_{d,e}-2\rho_{d,e}-\tau_J$ or $w*(\chi_e+\chi_d-N_{d,e}-2\rho_{d,e}-\tau_J)$ is dominant. 
For $\tau_J$, let $\sigma_J:=-w\tau_J$. Use Proposition \ref{Wcomputation} to compute:
\begin{align*}
    w*(\chi_e+\chi_d+N_{e,d}-2\rho_{e,d}-\tau_J)=\\
    w(\chi_e+\chi_d+N_{e,d}-\rho+w^{-1}\rho-\tau_J+\rho)-\rho=\\
    \chi_d+\chi_e-N_{d,e}+\sigma_J.
\end{align*}
This means that the terms of Equations (\ref{first}) and (\ref{second}) are the same up to $(-1)^{|N_{d,e}|-l(w)}=(-1)^{\chi(d,e)}$:
\begin{multline*}[p_{d,e*}(V(\chi_d)\boxtimes V(\chi_e)\otimes \OO_{\X(d,e)})]=
(-1)^{\chi(d,e)}
[p_{e,d*}(V(\chi_e)\boxtimes V(\chi_d) \otimes \text{det}(\mathcal{N}_{e,d}))].
\end{multline*}
In particular, when the potential is zero, the equality from Corollary \ref{cormut} holds in $\text{KHA}$. 
In the general potential case,
we need to prove the above equality up to terms in a smaller part of the filtration $F^{\cdot}$.
The strategy is to find a map between the two complexes which induces an isomorphism between the terms of maximal degree, which in the above case are $$V(\chi_d+\chi_e)\otimes\OO_{\X(d+e)}\text{ and }V((\chi_e+\chi_d-2\rho_{d,e})^+)\otimes\OO_{\X(d+e)}.$$ 
By Proposition \ref{filtcomp}, the terms $V(\chi_d+\chi_e-\sigma)\otimes\OO_{\X(d+e)}$ and the analogues ones for $(e,d)$ are in a smaller piece of the filtration, and this will allow us to prove Theorem \ref{mut}. 
\\

\begin{proof}[Proof of Proposition \ref{mut}]
Write $\det(\mathcal{N}_{e,d})=\det(N_{e,d})\det(2\rho_{e,d})^{-1},$ where the left hand side has a shift $\text{codim}(p_{e,d})$, $\det(N_{e,d})$ has a shift $\text{codim}(i_{e,d})$, and $\det(2\rho_{e,d})^{-1}$ has a shift $-\text{dim}(\pi_{e,d})$, where the maps $i_{e,d}$ and $\pi_{e,d}$ are defined subsequently. 
First, we have to construct a map: 
\begin{equation}
    p_{e,d*}q_{e,d}^*(B\boxtimes A\otimes \det (N_{e,d})\otimes\det(2\rho_{e,d})^{-1})\to p_{d,e*}q_{d,e}^*(A\boxtimes B).
\end{equation}
Using adjunction, it is enough to construct a map:
\begin{equation}
p_{d,e}^*p_{e,d*}q_{e,d}^*(B\boxtimes A\otimes \det (N_{e,d})\otimes\det(2\rho_{e,d})^{-1})\to q_{d,e}^*(A\boxtimes B).
\end{equation}
Consider the following natural maps: 
\begin{itemize}
    \item affine bundle $\widetilde{q}_{d,e}:R(d,e)/G(d)\times G(e)\to R(d)/G(d)\times R(e)/G(e)$,
    \item closed immersion $i_{d,e}:R(d,e)/G(d)\times G(e)\to R(d+e)/G(d)\times G(e)$,
    \item $\overline{q}_{d,e}:R(d+e)/G(d,e)\to R(d+e)/G(d)\times G(e)$,
    \item proper map $\pi_{d,e}: R(d+e)/G(d,e)\to R(d+e)/G(d+e)$,
    \item $\tau_{d,e}: R(d)/G(d)\times R(e)/G(e)\to R(d,e)/G(d)\times G(e)$ is section of the affine bundle $\widetilde{q}_{d,e}$,
\end{itemize}
and consider the similar maps for $(e,d)$. We denote by $r_{d,e}$ the restriction of representations from $G(d,e)$ to $G(d)\times G(e)$. 
Expanding the definitions of $p_*$, $p^*$ and $q^*$, it is enough to construct a natural map:
\begin{equation}\label{third}
    i_{d,e}^*\overline{q}_{d,e*}\pi_{d,e}^*\pi_{e,d*}\overline{q}_{e,d}^*i_{e,d*}\widetilde{q}_{e,d}^*(A\boxtimes B\otimes \det(N_{e,d})\otimes \det(2\rho_{e,d})^{-1})\to \widetilde{q}_{d,e}^*(A\boxtimes B)
\end{equation}
Let $u=\langle \lambda_{e,d}, v\beta_e+w\beta_d\rangle$ be an integer. Then there exists a functor:
$$\beta_{\geq u}: D_{sg}(R(d+e)_0/G(d)\times G(e))\to D_{sg}(R(d+e)_0/G(d)\times G(e))_{\geq u}$$
adjoint to the natural inclusion 
$$D_{sg}(R(d+e)_0/G(d)\times G(e))_{\geq u}\subset D_{sg}(R(d+e)_0/G(d)\times G(e)).$$
Such an adjoint can be constructed using the semi-orthogonal decomposition of $R(d+e)/G(d)\times G(e)$ for the Kempf-Ness stratum:
$$R(d)\times R(e)/G(d)\times G(e) \leftarrow R(d,e)/G(d)\times G(e)\rightarrow R(d+e)/G(d)\times G(e).$$
In particular, we get a map $F\to \beta_{\geq u}F$ for every $F\in D_{sg}(R(d+e)_0/G(d)\times G(e))$ which is an isomorphism if $F\in D_{sg}(R(d+e)_0/G(d)\times G(e))_{\geq u}$.
\\

We claim that for 
$F\in D_{sg}(R(d+e)_0/G(d)\times G(e))_{\geq u}$
there is a natural isomorphism:
\begin{equation}
    \pi_{d,e}^*\pi_{e,d*}\overline{q}_{e,d}^*(F\otimes \det(2\rho_{e,d})^{-1})\cong \overline{q}_{d,e}^*(F).
\end{equation}
We explain that the above complexes are isomorphic for a complex $V(\psi_d)\boxtimes V(\psi_e)\otimes\OO_{\mathcal{T}}$, where $\psi_d\in M_{\mathbb{R}}(d)$ and has weight $w'$, $\psi_e\in M_{\mathbb{R}}(e)$ and has weight $v'$, $T\subset R(d+e)_0$ a closed subscheme and $\mathcal{T}\subset \X(d+e)_0$ the corresponding quotient stack. We have that:
$$\langle \lambda_{e,d}, v'\beta_e+w'\beta_d\rangle\geq u.$$
This implies that the set $\{(d,w'), (e,v')\}$ is admissible and then $\psi_d+\psi_e$ is dominant.
The right hand side is thus $V(\psi_d+\psi_e)\otimes\OO_{\mathcal{T}}$, where $\psi_d+\psi_e$ is a dominant weight of $G(d+e)$.
Use Proposition \ref{Wcomputation} to write the left hand side as:
$$\pi_{d,e}^*\pi_{e,d*}\overline{q}_{e,d}^*\left(V(\psi_e+\psi_d-2\rho_{e,d})\otimes\OO_{\mathcal{T}}[l(w)]\right)=
V(\psi_d+\psi_e)\otimes\OO_{\mathcal{T}}.$$ 
Next, let $E\in D^b(\X(d)\times\X(e))$.
We construct a natural map:
\begin{equation}\label{eqeq}
    i_{d,e}^*i_{e,d*}\widetilde{q}_{e,d}^*(E\otimes\det(N_{e,d}))\to \widetilde{q}_{d,e}^*(E).
\end{equation}
It is enough to construct a map after applying $\tau_{d,e}^*$ to both sides. Using that $\tau_{e,d}^*i_{e,d}^*=\tau_{d,e}^*i_{d,e}^*$ it is enough to construct a map:
$$\tau^*_{e,d}i_{e,d}^*i_{e,d*}\widetilde{q}_{e,d}^*(E\otimes\det(N_{e,d}))\to \tau^*_{e,d}\widetilde{q}_{e,d}^*E=E.$$
This follows from the natural map:
\begin{equation}\label{fourth}
    i_{e,d}^*i_{e,d*}(E\otimes\det(N_{e,d}))\to E.\end{equation}
The map in Equation (\ref{eqeq}) induces the same natural map for $E\in D_{sg}\left((\X(d)\times\X(e))_0\right)$.
\\

We obtain the map from Equation (\ref{third}) as follows:
\begin{align*}
i_{d,e}^*r_{d,e}\pi_{d,e}^*\pi_{e,d*}\overline{q}_{e,d}^*i_{e,d*}\widetilde{q}_{e,d}^*(A\boxtimes B\otimes \det(N_{e,d})\otimes \det(2\rho_{d,e})^{-1})\to\\
i_{d,e}^*r_{d,e}\pi_{d,e}^*\pi_{e,d*}\overline{q}_{e,d}^*\beta_{\geq u}i_{e,d*}\widetilde{q}_{e,d}^*(A\boxtimes B\otimes \det(N_{e,d})\otimes \det(2\rho_{d,e})^{-1})\to\\
i_{d,e}^*r_{d,e}\overline{q}_{d,e}^*\beta_{\geq u}i_{e,d*}\widetilde{q}_{e,d}^*(A\boxtimes B\otimes \det(N_{e,d}))\cong \\
\beta_{\geq u}i_{d,e}^*i_{e,d*}\widetilde{q}_{e,d}^*(A\boxtimes B\otimes \det(N_{e,d}))\to
\beta_{\geq u}\widetilde{q}_{e,d}^*(A\boxtimes B)\cong
\widetilde{q}_{d,e}^*(A\boxtimes B).
\end{align*}

%It is enough to construct a map: $$\tau^*p^*p_*q^*(A\boxtimes B)=\tau_-^*p_-^*p_*q^*(A\boxtimes B)\to \tau_-^*q_-^*(A\boxtimes B\otimes \det \mathcal{N}_{d,e}^{-1})=A\boxtimes B\otimes \det \mathcal{N}_{d,e}^{-1}.$$This comes from the map $p^*p_*(C)\to C\otimes \det \mathcal{N}_{d,e}^{-1}.$
%Factor the map $p_{-}$ as $S^{-}/P^{-}\xrightarrow{i} X/P^{-}\xrightarrow{\pi} X/G$, where $i$ is a closed immersion and $\pi$ is proper; there is a restriction to a closed subvariety $\text{id}\to i^*i_*$ map.Further, there is also a map $\text{id}\to \pi^*\pi_*$which on the level of representations is $\text{Rep}\,(P^{-})\to \text{Rep}\,(P^{-})$:$$\Gamma\to \text{Ind}^G_{P^{-}}\Gamma,$$ where the right hand side is seen as a $P^{-}$ representation.

Next, we want to argue that the cone $C$ of the map 
$$p_{e,d*}q_{e,d}^*(B\boxtimes A\otimes \det (N_{e,d})\otimes\det(2\rho_{d,e})^{-1})\to p_{d,e*}q_{d,e}^*(A\boxtimes B)\to C\to $$ has a smaller length \begin{equation}\label{fifth}
    l(C)<l(p_{e,d*}q_{e,d}^*(B\boxtimes A\otimes \det (N_{e,d})\otimes\det(2\rho_{d,e})^{-1}))=l(p_{d,e*}q_{d,e}^*(A\boxtimes B)).\end{equation}
We first prove the statement in the potential zero case. 
The lengths $$l(p_{e,d*}q_{e,d}^*(B\boxtimes A\otimes \det (N_{e,d})\otimes\det(2\rho_{e,d})^{-1}))\text{ and }l(p_{d,e*}q_{d,e}^*(A\boxtimes B))$$
depend on the set $\{(d,w), (e,v)\}$ only, so it is enough to 
prove the statement for generators 
$A=V(\chi_d)\otimes \OO_\X$ and $B=V(\chi_e)\otimes\OO_\X$.
In this case, the map factors as:

\begin{tikzcd}
p_{e,d*}(V(\chi_e+\chi_d+N_{e,d}-2\rho_{e,d})\otimes\OO_{\X(e,d)}[l(w)]) \arrow{r} \arrow{dr}{s} & p_{d,e*}(V(\chi_d+\chi_e)\otimes\OO_{\X(d,e)})\\
 & V(\chi_d+\chi_e)\otimes\OO_{\X(d+e)} \arrow{u}{t}.
\end{tikzcd}
\\
The map $t$ is constructed from the natural transformation $\text{id}\to p_{d,e*}p_{d,e}^*$.
We need to explain the construction of the map $s$:
\begin{equation}
    p_{e,d*}(V(\chi_e+\chi_d+N_{e,d}-2\rho_{e,d})\otimes\OO_{\X(e,d)}[l(w)])\to V(\chi_d+\chi_e)\otimes\OO_{\X(d+e)}.
\end{equation}
We first use that $$V(\chi_d+\chi_e)\otimes\OO_{\X(d+e)}\cong R\pi_{e,d*}\left(V(\chi_e+\chi_d-2\rho_{e,d})\otimes\OO_{\X(d+e)}[l(w)]\right).$$
It is thus enough to construct a map:
$$p_{e,d*}(V(\chi_e+\chi_d+N_{e,d})\otimes\OO_{\X(e,d)})
\to V(\chi_e+\chi_d)\otimes\OO_{\X(d+e)},$$
which follows from the natural map in Equation \ref{fourth}.
Next, we need to argue that the cones of the maps $s$ and $t$ satisfy:
\begin{multline*}
    l(C_s), l(C_t)<l\left(p_{e,d*}q_{e,d}^*\left(V(\chi_e+\chi_d+N_{e,d}-2\rho_{e,d})\otimes\OO_{\X(e,d)}\right)\right)=\\
    l\left(p_{d,e*}q_{d,e}^*(V(\chi_d+\chi_e)\otimes\OO_{\X(d,e)})\right).\end{multline*}
Using Proposition \ref{filtcomp}, we have that indeed:
$$l(C_t)<l\left(p_{d,e*}q_{d,e}^*(V(\chi_d+\chi_e)\otimes\OO_{\X(d,e)})\right)=l\left(V(\chi_d+\chi_e)\otimes\OO_{\X(d+e)}\right).$$
By the Borel-Bott-Weyl theorem, the cone of $s$ is generated by $$V(\chi_e+\chi_d+N_{e,d}-2\rho_{e,d}-\tau_J)\otimes\OO_{\X(d)}$$ where $\tau_J$ is a partial sum of weights in $N_{e,d}$ which is not the full sum. We have explained in the Example \ref{Wexample} that these are the same as the vector bundles $$V(\chi_d+\chi_e-\sigma_J)\otimes\OO_{\X(d)}$$ where $\sigma_J$ is a nonzero partial sum of weights of $N_{d,e}$. All these sheaves have: $$l<l\left(p_{d,e*}(V(\chi_d+\chi_e)\otimes\OO_{\X(d,e)})\right)=l\left(p_{e,d*}(V(\chi_e+\chi_d-2\rho_{e,d})\otimes\OO_{\X(e,d)})\right),$$ and this ends the proof of claim from Equation (\ref{fifth}).
\\

For an arbitrary potential, in order to show that the cone $C$ of the map
$$p_{e,d*}q_{e,d}^*(A\boxtimes B \otimes \det \mathcal{N}_{e,d})\to p_{d,e*}q_{d,e}^*(A\boxtimes B)\to C\to $$
lies in a smaller piece of the filtration, consider the diagram:

\begin{tikzcd}
\X(d)\times\X(e) & \X(d,e) \arrow{r}{p_{d,e}} \arrow{l}{q_{d,e}} & \X(d+e) \\
(\X(d)\times\X(e))_0 \arrow{u}{i} & \X(d,e)_0 \arrow{u}{i} \arrow{r}{p_{d,e}} \arrow{l}{q_{d,e}} & \X(d+e)_0. \arrow{u}{i}
\end{tikzcd}
\\
We also use the notations $A$ and $B$ for representatives in $\X(d)_0$ and $\X(e)_0$ of the above objects such that $i_{d*}A\in\mathbb{N}(d)$ and $i_{e*}B\in\mathbb{N}(e)$. We see that:
$$i_{d+e*}p_{d,e*}q_{d,e}^*(A\boxtimes B)=p_{d,e*}q_{d,e}^*\left(i_{d*}A\boxtimes i_{e*}B\right).$$
Because
$i_{d*}A\in\mathbb{N}(d)$ and $i_{e*}B\in\mathbb{N}(e)$, the zero potential case implies that $i_{d+e*}C$ lies in a smaller piece of the filtration, and thus the same is true for $C$, which is what we wanted to prove.
\end{proof}

%\begin{cor}The algebra $\text{gr}^F\,KHA$ has a basis $x_{d_1,w_1}\cdots x_{d_k,w_k}$, where the ordered set $(d_1,w_1),\cdots, (d_k,w_k)$ is admissible, where $x_{(d,w)}\in K_0(\mathbb{M}(d)_w)$, and has relations $$x_{d,w}x_{e,v}=(x_{e,v}q^{-g(d,e)})(x_{d,w}q^{f(d,e)}).\end{cor}

\subsection{Decomposition of $K_0(\oM(d))$}\label{step3}
Recall the definition of $\MM(d)$ from Subsection \ref{defMM}. 
Recall also Proposition \ref{nulemma}, and for $\sigma\in \mathfrak{S}_{k}\subset W_d$ write:
$$\frac{1}{2}N^{\lambda<0}-\frac{1}{2}N^{\sigma\lambda<0}=\sum_{U}N^{ab},$$ where the set $U$ contains pairs $(a,b)$ such that $1\leq a<b\leq k$ and $\sigma(a)>\sigma(b)$, and $N^{ab}$ denotes the sum of the weights corresponding to $d_a$ and $d_b$. 
Define $\text{codim}(\sigma):= \sum_{(a,b)\in U} \text{codim}(p_{a,b})$. 
\begin{prop}\label{even}
Let $\{(d_1,w_{1}),\cdots, (d_k, w_{k})\}$ be an admissible set with $r=\frac{1}{2}$ and assume that the category $\boxtimes_{i=1}^k\boxtimes_{j=1}^{m_i}\oM(d_i)_{w_{i}}$ is non-empty. 
All $\text{codim}(\sigma)$ as above are even.
\end{prop}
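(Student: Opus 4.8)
The plan is to reduce the claim to a parity statement about the integers $\chi(d_a,d_b)$ for pairs $(a,b)$ appearing in the set $U$ associated to $\sigma$, and then to extract the needed parity from the admissibility hypothesis $r=\frac{1}{2}$. Recall that $\operatorname{codim}(p_{a,b})=-\chi(d_a,d_b)$, so that $\operatorname{codim}(\sigma)=-\sum_{(a,b)\in U}\chi(d_a,d_b)\pmod 2$; thus it suffices to show each $\chi(d_a,d_b)$ appearing is even, or more precisely that the total sum over $U$ is even. The first step is to use Proposition \ref{subsetmic}: for any pair of indices $a<b$ with $\sigma(a)>\sigma(b)$, the two-element subset $\{(d_a,w_a),(d_b,w_b)\}$ (in the inherited order) is again admissible with $r=\frac{1}{2}$. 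This localizes the problem to the case $k=2$, where it reads: if $\{(d,w),(e,v)\}$ is admissible with $r=\tfrac12$ and $\oM(d)_w\boxtimes\oM(e)_v$ is nonempty, then $\chi(d,e)$ is even. The sum-over-$U$ statement for general $k$ then follows by adding up the $k=2$ contributions.

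The heart of the argument is the $k=2$ parity claim. Here I would use the nonemptiness of $\oM(d)_w\boxtimes\oM(e)_v$: by the definition of $\oM$ via $\NN$ (Subsection \ref{defnmagic}), nonemptiness gives dominant weights $\chi_d$ of $G(d)$ and $\chi_e$ of $G(e)$ with $\chi_d+\rho_d\in\frac12\WW(d)$, $\chi_e+\rho_e\in\frac12\WW(e)$, and of the prescribed diagonal weights $w,v$. Set $\chi=\chi_d+\chi_e$. Admissibility with $r=\tfrac12$ means the standard form of $\chi$ (Definition \ref{gen}, Proposition \ref{rem2}) has the shape $\chi+\rho=-\sum_S\frac12 N^{\mu_i>0}+\chi'$ with Levi exactly $G(d)\times G(e)$; in particular $\chi$ sits on the boundary $\frac12\partial\WW(d+e)$ on a face $F(\lambda)^{\mathrm{int}}$ whose associated partition is $(d,e)$, i.e.\ $\lambda=\lambda_{d,e}$ (or its inverse). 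Now apply Proposition \ref{prop}/Proposition \ref{boundary}: on such a face we can write $\chi=\sum c_{ij}(\beta_i-\beta_j)$ with $c_{ij}=-\tfrac12$ exactly on the weights $\beta$ of $N_{d,e}$ (those pairing positively with $\lambda_{d,e}$), $c_{ij}=0$ on the weights of $N_{e,d}$, and $c_{ij}\in(-\tfrac12,0]$ on the roots internal to the Levi. Comparing with the known decomposition $\chi=\chi_d+\chi_e$ — whose $N_{d,e}$- and $N_{e,d}$-components are determined purely by $\rho_{d,e}$ and the diagonal weights, since $\chi_d,\chi_e$ are supported on the internal roots plus $\beta_d,\beta_e$ — one reads off a linear relation forcing the $\lambda_{d,e}$-pairing data to be balanced. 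Concretely, the condition "$c_{ij}=-\tfrac12$ on all of $N_{d,e}$ and $0$ on all of $N_{e,d}$" applied to the explicit weight multiplicities recorded just before Theorem \ref{mut} (each $\beta^v_i$, $1\le i\le d^v$, appears $\sum_{v\to w}e^w$ times in $N_{d,e}$ and $e^v$ times in $2\rho_{d,e}$, etc.) pins down $w$ and $v$ modulo parity and yields that $|N_{d,e}|=\sum_{v\to w}d^v e^w$ is congruent mod $2$ to $\sum_v d^v e^v$; since $\chi(d,e)=\sum_v d^v e^v-\sum_{v\to w}d^v e^w$, this is exactly the assertion that $\chi(d,e)$ is even.

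For general $k$, I would then assemble: $\operatorname{codim}(\sigma)=\sum_{(a,b)\in U}\operatorname{codim}(p_{a,b})=-\sum_{(a,b)\in U}\chi(d_a,d_b)$, and each summand is even by the $k=2$ case applied to the admissible subset $\{(d_a,w_a),(d_b,w_b)\}$ (Proposition \ref{subsetmic}). Hence $\operatorname{codim}(\sigma)$ is even.

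The main obstacle I anticipate is the bookkeeping in the $k=2$ step: translating the geometric statement "$\chi$ lies in the interior of the face $F(\lambda_{d,e})$ and simultaneously decomposes as $\chi_d+\chi_e$ with $\chi_{d},\chi_{e}$ Levi-internal" into the precise numerical congruence $|N_{d,e}|\equiv\sum_v d^v e^v\pmod 2$. One must be careful that the $\beta_d,\beta_e$ (diagonal) contributions to $\chi$ do not leak into the $N_{d,e}$/$N_{e,d}$ directions in a parity-changing way, and that the half-integer coefficients forced by Proposition \ref{prop} combine with the integrality of $\chi_d+\rho_d$ and $\chi_e+\rho_e$ to leave no slack. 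An alternative, possibly cleaner route that avoids explicit root-multiplicity counting is to use Proposition \ref{Wcomputation}(a): there $w^{-1}\rho-\rho=-2\rho_{e,d}$, so $l(w)\equiv |N_{d,e}|\pmod 2$ would already be built in; combined with the fact that for an $r=\tfrac12$ admissible set the "swap" weight $w*(\chi_e+\chi_d+N_{e,d}-2\rho_{e,d})$ is again dominant with the same $(r,p)$-invariant (Proposition \ref{rem1}(b), Proposition \ref{nulemma}), one should be able to conclude $\operatorname{codim}(p_{d,e})=\operatorname{codim}(p_{e,d})$ is even directly. I would pursue whichever of these makes the parity manifest with the least case analysis.
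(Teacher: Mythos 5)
Your reduction to $k=2$ via Proposition \ref{subsetmic} is exactly the first step of the paper, and you correctly identify the two essential ingredients: the nonemptiness hypothesis supplies integer weights $\chi_1\in\NN(d)_w$, $\chi_2\in\NN(e)_v$, and the $r=\tfrac12$ admissibility supplies the face condition. The gap is in how you extract the parity from these. Your phrasing (``pins down $w$ and $v$ modulo parity and yields that $|N_{d,e}|$ is congruent mod $2$ to $\sum_v d^v e^v$'') runs the logic in a confusing direction and, more importantly, the coefficient--matching route via Proposition \ref{prop}/\ref{boundary} that you sketch is never actually carried out; you acknowledge this yourself. The paper's proof is a short direct computation: since $\chi_1+\chi_2+\rho$ lies on the face $F_{1/2}(\lambda_{d,e})$, pairing with $\lambda_{d,e}$ gives
$$\langle\lambda_{d,e},\chi_1+\chi_2+\rho_{d,e}\rangle=\tfrac12\langle\lambda_{d,e},N_{d,e}\rangle,$$
(the Levi pieces $\rho_d,\rho_e$ pair to zero), and rearranging yields
$$\operatorname{codim}(p_{d,e})=\langle\lambda_{d,e},N_{d,e}-2\rho_{d,e}\rangle=2\langle\lambda_{d,e},\chi_1+\chi_2\rangle,$$
which is even because $\chi_1+\chi_2$ is an integer weight. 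Nothing more is needed. So the detour through the explicit decomposition $\chi=\sum c_{ij}(\beta_i-\beta_j)$ and root--multiplicity counting is avoidable; once you know $\chi_1+\chi_2+\rho$ is on the face, you should just pair with $\lambda_{d,e}$ and rearrange.

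Your alternative route (b) via Proposition \ref{Wcomputation}(a) and the swap argument cannot work as stated, because it makes no use of the nonemptiness hypothesis, which is essential. For a quiver with no loops one has $\chi(d,e)=\sum_v d^ve^v$, which is not always even; what forces the parity is precisely the existence of the integer weights $\chi_1,\chi_2$ witnessing nonemptiness, combined with the $r=\tfrac12$ face equation. Also, the observation $\operatorname{codim}(p_{d,e})=\operatorname{codim}(p_{e,d})$ (true by symmetry of $Q$) does not by itself give evenness. I would recommend dropping route (b) and replacing the coefficient--matching in route (a) with the one--line pairing computation above.
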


\begin{proof}
By Proposition \ref{subsetmic}, we can reduce the proof to the showing that $\text{codim}(p_{d,e})$ are even for an admissible set $\{(d,w), (e,v)\}$ with $r=\frac{1}{2}$ such that there exist integer weights $\chi_1\in \NN(d)_w$ and $\chi_2\in\NN(e)_v$. %Using the results in Subsection \ref{admissible}, we can write: $$\chi_1+\chi_2+\rho_{d,e}=\frac{1}{2}N_{d,e}+s_1\beta_{d}+s_2\beta_e$$ for some integers $s_1$ and $s_2$;
Let $\lambda_{d,e}=\prod_{i\in I}(1,\cdots, 1, z,\cdots,z):\C^*\to G(d+e)$ be a character where in the above formula $1$ appears $d^i$ times and $z$ appears $e^i$ times. 
The assumption that $r=\frac{1}{2}$ implies that: $$\langle \lambda_{d,e}, \chi_1+\chi_2+\rho_{d,e}\rangle=\frac{1}{2}\langle \lambda_{d,e}, N_{d,e}\rangle,$$ which further implies that:
$$2\langle \lambda_{d,e}, \chi_1+\chi_2\rangle=\langle \lambda_{d,e}, N_{d,e}-2\rho_{d,e}\rangle=\text{codim}(p_{d,e}),$$ so $\text{codim}\,(p_{d,e})$ is indeed even.
\end{proof}

The proof of Theorem \ref{mut} can be adapted to show the following:

\begin{prop}\label{switch}
Let $S$ be a set of cardinal $k$ with sum $d$. Consider $d_1,\cdots,d_k$ an ordering of $S$, and let $w_i$ be weights such that $\{d_1,w_1),\cdots, (d_k,w_k)\}$ is admissible with $r=\frac{1}{2}$. Consider $F\in \boxtimes_{i=1}^k \oM(d_i)_{w_{i}}$. 
We have a natural isomorphism:
$$\bigoplus_{\sigma\in\mathfrak{S}_{k}} \sigma F[\text{codim}(\sigma)]\xrightarrow{\text{iso}}\Phi_{S}(p_{\dd*}q_{\dd}^*F).$$ 
\end{prop}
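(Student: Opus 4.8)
\textbf{Proof proposal for Proposition \ref{switch}.}
The plan is to reduce the statement to the computation carried out in the proof of Theorem \ref{mut}, applied repeatedly to adjacent transpositions in $\mathfrak{S}_k$, and to keep track of the combinatorics of the codimension shifts via Proposition \ref{nulemma}. First I would reduce to generators: since both sides are exact functors of $F$ and commute with direct sums, it suffices to treat $F = V(\chi_1)\boxtimes\cdots\boxtimes V(\chi_k)\otimes\OO$ with $\chi_i$ a dominant weight of $G(d_i)$ lying in $\overline{\mathbb{N}}(d_i)_{w_i}$. Here the key input is that, because $\{(d_1,w_1),\dots,(d_k,w_k)\}$ is admissible with $r=\frac{1}{2}$, the weight $\chi = \chi_1+\cdots+\chi_k$ already lies on $\frac{1}{2}\partial\overline{\mathbb{W}}$ in the face $F(\lambda_{\dd})^{\text{int}}$, so the truncation functor $\beta_{\geq w'}$ defining $\Phi_{\dd}$ acts in a controlled way; combined with Borel-Bott-Weyl for $p_{\dd}$, the complex $\Phi_{\dd}(p_{\dd*}q_{\dd}^*F)$ decomposes as a sum of the bundles $V((\chi-\sigma_I)^+)\otimes\OO$ over partial sums $\sigma_I$ of weights of $N^{\lambda_{\dd}<0}$, and by Proposition \ref{nulemma}(b) the terms that survive the truncation $\beta_{\geq w'}$ are exactly those with $\sigma_I = \sum_U N^{ab}$ for $U$ an inversion set of some $\sigma\in\mathfrak{S}_k$; this gives the set-theoretic bijection between summands on the two sides.

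Next I would verify that the summand of $\Phi_{\dd}(p_{\dd*}q_{\dd}^*F)$ indexed by $\sigma$ is canonically $\sigma F[\operatorname{codim}(\sigma)]$, not merely abstractly isomorphic. For this I would argue one adjacent transposition at a time: writing $\sigma$ as a reduced word in simple transpositions $s_{i,i+1}$, each such step corresponds to swapping the roles of the factors $d_i$ and $d_{i+1}$, and the natural map implementing it is precisely the map constructed in the proof of Theorem \ref{mut} (the composition $i_{d,e}^*\overline{q}_{d,e*}\pi_{d,e}^*\pi_{e,d*}\overline{q}_{e,d}^* i_{e,d*}\widetilde q_{e,d}^*(-\otimes\det\mathcal N_{e,d}) \to \widetilde q_{d,e}^*(-)$), restricted to the weight window in which it is an isomorphism rather than merely a map with small cone. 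The point is that in the $r=\tfrac12$ situation the relevant weights all sit in $F(\lambda_{\dd})^{\text{int}}$, so by Proposition \ref{boundary} and Proposition \ref{Wcomputation}(a) the "error" terms $V(\chi-\sigma)\otimes\OO$ with $\sigma$ not of the form $\sum_U N^{ab}$ are killed by the truncation $\beta_{\geq w'}$, and Theorem \ref{mut}'s cone $C$ is annihilated; hence the map becomes an honest isomorphism $\sigma' F[\operatorname{codim}(s_{i,i+1})] \xrightarrow{\sim} s_{i,i+1}\sigma' F[\cdots]$ after composing with the appropriate shift. Using Proposition \ref{even} to know $\operatorname{codim}(\sigma)$ is even keeps the signs and the $\mathbb Z/2\mathbb Z$-grading consistent, and Proposition \ref{Wcomputation}(b) guarantees the bijection of weight sets used at each step. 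Composing these adjacent isomorphisms along a reduced word and checking independence of the reduced word (braid relations, which hold because the underlying maps are the canonical Borel-Bott-Weyl/base-change isomorphisms) yields the full isomorphism $\bigoplus_{\sigma\in\mathfrak S_k}\sigma F[\operatorname{codim}(\sigma)] \xrightarrow{\sim} \Phi_S(p_{\dd*}q_{\dd}^*F)$ after summing $\Phi_{\dd}$ over the $|\mathfrak S_k|$ orderings of $S$.

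I would then remove the generator hypothesis by a standard dévissage: $\overline{\mathbb{N}}(d_i)_{w_i}$ is generated under cones and shifts by the $V(\chi)\otimes\OO$, both sides of the claimed isomorphism are exact and compatible with the triangulated structure, and the constructed map is natural in $F$, so it extends to all of $\boxtimes_{i=1}^k\overline{\mathbb{N}}(d_i)_{w_i}$ and then descends to $\boxtimes_{i=1}^k\oM(d_i)_{w_i}=\boxtimes_i D_{sg}(\overline{\mathbb N}(d_i)_{w_i})$ via Corollary \ref{corts}, exactly as in the passage from the zero-potential to the general-potential case in the proof of Theorem \ref{mut} (push forward along $i_{d+e}$, use that $i_{d_i*}$ sends the category into $\overline{\mathbb N}(d_i)$, apply the zero-potential result, and descend). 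The main obstacle I anticipate is the second paragraph: upgrading the "map with small cone" of Theorem \ref{mut} to a genuine isomorphism in the $r=\tfrac12$ window, and then checking that the adjacent-transposition isomorphisms compose coherently (the braid relation / reduced-word independence). This is where one must be careful that the truncation functor $\beta_{\geq w'}$ really does annihilate all the off-diagonal Borel-Bott-Weyl terms — which is precisely the content of Propositions \ref{boundary} and \ref{nulemma} — and that the signs coming from $[\operatorname{codim}(\sigma)]$ assemble into a genuine $\mathfrak S_k$-equivariant structure rather than only a projective one, for which Proposition \ref{even} is essential.
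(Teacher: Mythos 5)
Your proposal is correct and takes essentially the same route as the paper: construct the map once as in Theorem \ref{mut}, reduce to the zero-potential case and to generators $F = \boxtimes_i V(\chi_i)\otimes\OO$, and then use Propositions \ref{boundary} and \ref{nulemma} to identify exactly which Borel--Bott--Weyl terms survive the truncation $\beta_{\geq w'}$. The reduced-word/braid-relation detour in your second paragraph is unnecessary --- the map from Theorem \ref{mut} is constructed in one shot for each $\sigma$ (not by composing adjacent transpositions), so no coherence check across reduced words is required; once the map is in hand, checking it is an isomorphism is just the term-by-term match of surviving Borel--Bott--Weyl summands with $\sigma F[\text{codim}(\sigma)]$ that you already carry out in your first paragraph.
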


\begin{proof}
One can define a natural map:
$$\bigoplus_{\sigma\in\mathfrak{S}_{\dd}} \sigma F[\text{codim}(\sigma)]\rightarrow \Phi_{\dd}(p_{\dd*}q_{\dd}^*F)$$ as in Theorem \ref{mut}; the map defined there works for the case $(d,e)$. Following the argument in Theorem \ref{mut}, it is enough to show it is an isomorphism in the potential zero case, and in that case we can assume that $F=\boxtimes_{i=1}^k V(\chi_{i})\otimes\OO$. The isomorphism follows from Proposition \ref{boundary}.
\end{proof}

The following result is an immediate corollary of Proposition \ref{switch}.

\begin{cor}\label{iso}
Let $S$ be a set with $k$ elements with sum $d$.
The composition
$$\bigoplus_{\sigma\in\mathfrak{S}_k}\boxtimes_{i=1}^k  K_0\left(\oM(d_i)_{w_{i}}\right)
\xrightarrow{\Phi_{\dd}p_{\dd*}q_{\dd}^*}
\bigoplus_{\sigma\in\mathfrak{S}_k}\boxtimes_{i=1}^k K_0\left(\oM(d_i)_{w_{i}}\right)$$ is the symmetrization map $T(x)=\sum_{\sigma\in\mathfrak{S}}\sigma(x)$.
\end{cor}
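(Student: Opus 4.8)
The claim to prove is Corollary \ref{iso}: that the composition
$$\bigoplus_{\sigma\in\mathfrak{S}_k}\boxtimes_{i=1}^k K_0\left(\oM(d_i)_{w_{i}}\right)\xrightarrow{\Phi_{\dd}p_{\dd*}q_{\dd}^*}\bigoplus_{\sigma\in\mathfrak{S}_k}\boxtimes_{i=1}^k K_0\left(\oM(d_i)_{w_{i}}\right)$$
is the symmetrization map $T(x)=\sum_{\sigma\in\mathfrak{S}}\sigma(x)$. The plan is to deduce this directly from Proposition \ref{switch}, which provides a natural isomorphism $\bigoplus_{\sigma\in\mathfrak{S}_k}\sigma F[\operatorname{codim}(\sigma)]\xrightarrow{\sim}\Phi_S(p_{\dd*}q_{\dd}^*F)$ at the level of categories, together with Proposition \ref{even}, which guarantees all the shifts $\operatorname{codim}(\sigma)$ are even.

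First I would fix an ordering $d_1,\dots,d_k$ of $S$ and an object $F\in\boxtimes_{i=1}^k\oM(d_i)_{w_i}$ sitting inside the chosen summand of $\bigoplus_{\sigma\in\mathfrak{S}_k}\boxtimes_{i=1}^k\oM(d_i)_{w_{i}}$. Applying $\Phi_S p_{\dd*}q_{\dd}^*$ and invoking Proposition \ref{switch} gives a natural isomorphism in the category $\bigoplus_{\mathfrak{S}_k}\boxtimes_{i=1}^k\oM(d_i)_{w_i}$ between $\Phi_S(p_{\dd*}q_{\dd}^*F)$ and $\bigoplus_{\sigma\in\mathfrak{S}_k}\sigma F[\operatorname{codim}(\sigma)]$. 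Passing to $K_0$, an isomorphism of objects induces equality of classes, so $[\Phi_S p_{\dd*}q_{\dd}^*F]=\sum_{\sigma\in\mathfrak{S}_k}(-1)^{\operatorname{codim}(\sigma)}[\sigma F]$ in $\bigoplus_{\sigma\in\mathfrak{S}_k}\boxtimes_{i=1}^kK_0(\oM(d_i)_{w_i})$. By Proposition \ref{even} every $\operatorname{codim}(\sigma)$ is even, so the sign $(-1)^{\operatorname{codim}(\sigma)}$ equals $1$ for all $\sigma$, and the right-hand side is exactly $\sum_{\sigma\in\mathfrak{S}_k}[\sigma F]=T([F])$. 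Since $K_0$ of the source is generated by classes of objects $F$ lying in the individual summands (and the map is additive), this identity of classes for all such $F$ determines the map on $K_0$ and shows it coincides with $T$.

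The only point requiring a small amount of care is matching the combinatorial bookkeeping: one must check that the action of $\sigma\in\mathfrak{S}_k$ appearing in Proposition \ref{switch} — which permutes the tensor factors and twists by the appropriate determinant line bundles $\det N^{ab}$ via the functors $\tau$ of Subsection \ref{defMM} — is precisely the action of $\mathfrak{S}_k$ used to define the symmetrization map $T$ in Proposition \ref{propprop} and in the target of the displayed composition. This is immediate from the definitions, since both are built from the same elementary transpositions $\tau:\oM(d)_w\boxtimes\oM(e)_v\to\oM(e)_{v'}\boxtimes\oM(d)_{w'}$, but it is worth stating explicitly so the reader sees that $T$ in the statement is literally $\sum_{\sigma\in\mathfrak{S}_k}\sigma$ for this same action. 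I do not expect any genuine obstacle here: all the work has been done in Propositions \ref{switch} and \ref{even}, and Corollary \ref{iso} is essentially the statement of Proposition \ref{switch} read in $K_0$ once the shifts are observed to be trivial modulo $2$.
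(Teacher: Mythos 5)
Your proof is correct and takes the same route the paper intends: the paper dispatches Corollary~\ref{iso} with the single line ``the following result is an immediate corollary of Proposition~\ref{switch},'' and your argument fills in exactly the details implicit in that sentence — pass the natural isomorphism $\bigoplus_{\sigma}\sigma F[\operatorname{codim}(\sigma)]\cong\Phi_S(p_{\dd*}q_{\dd}^*F)$ to $K_0$, record the signs $(-1)^{\operatorname{codim}(\sigma)}$ from the shifts, and invoke Proposition~\ref{even} to conclude they are all $+1$. The bookkeeping remark at the end, that the $\mathfrak{S}_k$-action appearing in Proposition~\ref{switch} is literally the one (built from the transpositions $\tau$ of Subsection~\ref{defMM}) defining the symmetrization map $T$ in Proposition~\ref{propprop}, is the right thing to pin down and is indeed immediate from the definitions.
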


The next result is the analogue of Theorem \ref{mut} for $r=\frac{1}{2}$.
\begin{cor}\label{change}
Consider an admissible set $\{(d,w), (e,v)\}$ with $r=\frac{1}{2}$. Let $x_{d,w}\in K_0(\MM(d)_w)$ and $y_{e,v}\in K_0(\MM(d)_v)$. Then:
$$x_{d,w}y_{e,v}- \left(y_{e,v}q^{f(e,d)}\right)\left(x_{d,w}q^{-g(e,d)}\right)\in K_0(\MM(d+e)_{w+v}).$$ 
\end{cor}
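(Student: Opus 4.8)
The plan is to reduce the statement to a computation about the images of the involved classes in $K_0(\oM(d+e)_{w+v})$ modulo the subspace $K_0(p_{d,e*}q_{d,e}^*(\cdots))$ of classes supported on attracting loci, and then to invoke the $r>\tfrac12$ relation (Corollary \ref{cormut}) together with a continuity/degeneration argument as the parameter $r$ approaches $\tfrac12$. More precisely: first I would recall from Theorem \ref{sod5} and the definition of the filtration $F^{\cdot}$ (Definitions \ref{filtration}, \ref{secondfiltrations}) that, for an admissible set $\{(d,w),(e,v)\}$ with $r=\tfrac12$, the product $x_{d,w}y_{e,v}$ computed via the KHA multiplication equals $p_{d,e*}q_{d,e}^*(x_{d,w}\boxtimes y_{e,v})$, which by the semi-orthogonal decomposition of Theorem \ref{sod5} lies inside $\oM(d+e)_{w+v}$ together with the summands $p_{\dd*}q_{\dd}^*(\cdots)$ corresponding to finer admissible partitions. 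The point of the corollary is that, after subtracting the symmetric counterpart $(y_{e,v}q^{f(e,d)})(x_{d,w}q^{-g(e,d)})$, these "correction'' terms all cancel and only the $K_0(\MM(d+e)_{w+v})$ part survives.

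The key steps, in order: (1) Re-run the construction of the natural map in Theorem \ref{mut} in the boundary case $r=\tfrac12$. The map
$$p_{e,d*}q_{e,d}^*\bigl(B\boxtimes A\otimes\det(\mathcal{N}_{e,d})\bigr)\to p_{d,e*}q_{d,e}^*(A\boxtimes B)$$
is constructed exactly as before, via adjunction, the truncation functor $\beta_{\geq u}$, and the chain of natural maps ending in Equation (\ref{third}); nothing in that construction used $r>\tfrac12$. (2) Analyze the cone $C$. In the $r>\tfrac12$ case, Proposition \ref{filtcomp} and the Borel--Bott--Weyl computation in Example \ref{Wexample} showed $l(C)<l(p_{d,e*}q_{d,e}^*(A\boxtimes B))$, i.e. $C$ lies in a strictly smaller piece of $F^{\cdot}$. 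In the $r=\tfrac12$ case the analogue of Proposition \ref{filtcomp} is more delicate: the terms $V(\chi-\sigma)\otimes\OO$ for $\sigma$ a nontrivial partial sum of weights in $N_{d,e}$ can now land on faces $F(w\lambda)^{\text{int}}$ for $w\in W_d$ — this is precisely the content of Proposition \ref{boundary} and Proposition \ref{nulemma}(b). So I would separate the partial sums $\sigma$ into those of the form $\sum_U N^{ab}$ (which, by Proposition \ref{nulemma}(b), correspond to Weyl-reorderings and contribute terms still in $\oM(d+e)$, matched by $\Phi$ against the symmetrization of Corollary \ref{iso}) and the remaining ones (which drop to strictly finer attracting strata, hence into $K_0(\MM(d+e)_{w+v})$ after one uses that $\MM$ is defined by the kernel condition $\Phi_S(F)\in\mathcal{A}_S$ of Proposition \ref{propprop}). (3) Translate the $K$-theory identity: passing to $K_0$, the map in step (1) together with the cone analysis gives
$$[p_{d,e*}q_{d,e}^*(x_{d,w}\boxtimes y_{e,v})]=(-1)^{\chi(d,e)}[p_{e,d*}q_{e,d}^*(y_{e,v}\boxtimes x_{d,w}\otimes\det\mathcal{N}_{e,d})]+[\text{terms in }K_0(\MM(d+e)_{w+v})],$$
and the first term on the right, unwinding the Euler-class bookkeeping $[\det\mathcal{N}_{e,d}]=(-1)^{\chi(d,e)}q^{f(e,d)}q^{-g(e,d)}$ exactly as before Theorem \ref{mut}, equals $(y_{e,v}q^{f(e,d)})(x_{d,w}q^{-g(e,d)})$; here I also use Proposition \ref{resp}/\ref{change} to commute the Euler factors past the abelian product. (4) Conclude that $x_{d,w}y_{e,v}-(y_{e,v}q^{f(e,d)})(x_{d,w}q^{-g(e,d)})\in K_0(\MM(d+e)_{w+v})$, noting that $\MM(d+e)_{w+v}\subset\oM(d+e)_{w+v}$ sits in $F^{\leq 1}$, so this is a genuine sharpening of Corollary \ref{cormut}.

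The main obstacle will be step (2): controlling the cone $C$ at $r=\tfrac12$. In the $r>\tfrac12$ regime every nontrivial Borel--Bott--Weyl shift $\chi-\sigma$ strictly decreases the $(r,p)$-invariant (Proposition \ref{rgoesdown}), so all correction terms fall cleanly into a lower filtration step; at $r=\tfrac12$ this is false — some shifts stay on the boundary polytope $\tfrac12\partial\WW$ and only move to a different (but not "smaller'') chamber $F(w\lambda)^{\text{int}}$. I will need Propositions \ref{boundary}, \ref{nulemma}, and \ref{switch}/\ref{iso} to show that the sum of all such boundary-staying terms is exactly the symmetrized class that gets subtracted off, so that the genuine remainder lands in the kernel-defined subcategory $\MM(d+e)$. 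Making this bookkeeping precise — matching the $\Phi_S$-images of both sides against the short exact sequence of Proposition \ref{propprop} and checking the parities via Proposition \ref{even} — is where the real work lies; the rest is a rerun of the arguments already given for Theorem \ref{mut} and Corollary \ref{cormut}.
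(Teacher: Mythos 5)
Your plan follows the paper's route: construct the map from Theorem \ref{mut}, and show the cone $C$ lies in $\MM(d+e)_{w+v}$ by verifying $\Phi_S(C)\in\mathcal{A}_S$; you identify the key lemmas (Propositions \ref{switch}, \ref{even}, \ref{nulemma}, \ref{propprop}, Corollary \ref{iso}). Two points in step (2) need tightening. First, the claim that the Borel--Bott--Weyl terms which are not Weyl-reorderings ``drop to strictly finer attracting strata, hence into $K_0(\MM(d+e)_{w+v})$'' is incorrect: by Proposition \ref{boundary} all the terms $V(\chi-\sigma)\otimes\OO$ remain inside $\frac{1}{2}\WW$, hence inside $\oM(d+e)$ --- they do not leave $\oM$, and being on a finer stratum would in any case not place them in $K_0(\MM)$. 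The paper's cleaner version sidesteps the term-by-term Borel--Bott--Weyl bookkeeping entirely: apply $\Phi_S$ directly to the exact triangle of Theorem \ref{mut}, compute both legs via Proposition \ref{switch} as a direct sum of the form $G\oplus\sigma G[\mathrm{codim}(\sigma)]$ with $\mathrm{codim}(\sigma)$ even by Proposition \ref{even}, and observe that the cone of such a map is an extension of $F$ and $\sigma(F)[\text{odd}]$, hence lies in $\mathcal{A}_S$ by definition. Second, you should reduce to sets $S$ of cardinal $2$ (as observed right after Proposition \ref{propprop}), and for $S\neq\{d,e\}$ the correct tool is Proposition \ref{upper}, which your writeup does not name.
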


%Before we begin the proof, note the following immediate result:\begin{prop}Consider an admissible set $\{(d_1,w_1),\cdots, (d_k,w_k)\}$ with $r=\frac{1}{2}$. Then $\chi(d_i,d_j)$ is even for any $1\leq i,j\leq k$.\end{prop}

\begin{proof}
Consider the triangle:
$$p_{e,d*}q_{e,d}^*\left(B\boxtimes A\otimes\text{det}(\mathcal{N}_{e,d})\right)\to p_{d,e*}q_{d,e}^*\left(A\boxtimes B\right)\to C\xrightarrow{[1]}$$
defined as in Theorem \ref{mut}. 
It is enough to show that the cone $C$ is in $\mathbb{M}(d+e)$, so we need to check that $\Phi_{S}(C)\in\mathcal{A}_{S}$ for all sets $S$ with sum $d+e$. It is enough to check this statement for sets $S$ with cardinal $2$. If $S$ is not $\{d,e\}$, then
the results follows from Proposition \ref{upper}, so we can assume that $S=\{d,e\}$. 
Apply $\Phi_{S}$ to the above exact triangle:
$$\Phi_{S}\left(p_{e,d*}q_{e,d}^*\left(B\boxtimes A\otimes\text{det}(\mathcal{N}_{e,d})\right)\right)\to \Phi_{S}\left(p_{d,e*}q_{d,e}^*\left(A\boxtimes B\right)\right)\to \Phi_{S}(C)\xrightarrow{[1]}.$$ Using Proposition \ref{switch} and \ref{even} we obtain that:
$$B\boxtimes A[\text{even}]\oplus A\boxtimes B[\text{even}]\to B\boxtimes A[\text{even}]\oplus A\boxtimes B \to \Phi_{S}(C)\xrightarrow{[1]},$$ which implies that $\Phi_{S}(C)\in\mathcal{A}_{S}$.
\end{proof}

The main result of this section is the following:
\begin{thm}\label{decomp}
 The following map induced by multiplication is an isomorphism of vector spaces:
$$m:\bigoplus_S\left(\bigoplus_\mathfrak{S} \boxtimes_{i=1}^k\, K_0\left(\MM(d_i)_{w_{i}}\right)\right)^{\mathfrak{S}_{k}} \to K_0(\oM(d)),$$ where the sum on the left hand side is taken after all sets $S$ of cardinal $k\geq 1$ with sum of elements $d$, orderings $d_1,\cdots,d_k$ of the set $S$, and weights $w_i$ for $1\leq i\leq k$ such that $\{(d_1,w_1),\cdots, (d_k,w_k)\}$ is admissible with $r=\frac{1}{2}$.
\end{thm}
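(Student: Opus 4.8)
The plan is to prove Theorem \ref{decomp} by combining the semi-orthogonal decomposition of $D_{sg}(\X(d)_0)$ from Theorem \ref{sod5} with the decomposition of $K_0(\oM(d_i))$ into pieces supported on attracting loci versus the ``primitive'' parts $K_0(\MM(d_i))$. First I would run an induction on the dimension vector $d$ with respect to some ordering refining the partial order on $\mathbb{N}^I$; the base case $d=0$ is trivial. For the inductive step, I would start from the semi-orthogonal decomposition
$$K_0(D_{sg}(\X(d)_0))=\bigoplus_{\{(d_1,w_1),\dots,(d_k,w_k)\}\text{ admissible, }r>\frac12} \boxtimes_{i=1}^k K_0(\oM(d_i)_{w_i}),$$
valid because the functors $p_{\dd*}q_{\dd}^*$ are fully faithful. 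The $k=1$ term is $K_0(\oM(d))$ itself, and every $k\geq 2$ term involves proper $d_i<d$, to which the inductive hypothesis applies, rewriting each $K_0(\oM(d_i))$ as a sum over admissible-with-$r=\frac12$ tuples of $\mathfrak{S}$-invariants of $\boxtimes K_0(\MM(\cdot))$. Feeding this back in, the entire $\text{KHA}$ gets a basis indexed by trees of partitions whose leaves carry classes in the $\MM(\cdot)$'s.

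The second main step is to show that the natural map $m$ is \emph{surjective}. Here I would use Proposition \ref{switch} and Corollary \ref{iso}: the composite $\Phi_{\dd}p_{\dd*}q_{\dd}^*$ on $\bigoplus_{\sigma\in\mathfrak{S}_k}\boxtimes_i K_0(\oM(d_i)_{w_i})$ is exactly the symmetrization map $T$, so by Proposition \ref{propprop} the kernel of $T$ is $K_0(\mathcal{A}_S)$ and the image of $m$ restricted to the $\mathfrak{S}_k$-invariants hits precisely the complement of $K_0(\mathcal{A}_S)$ inside $\bigoplus_{\mathfrak{S}_k}\boxtimes_i K_0(\oM(d_i)_{w_i})$, which is identified with the part of $K_0(\oM(d))$ supported on the attracting locus $\X(\dd)$. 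Together with the fact (Corollary \ref{change} and the definition of $\MM(d)$) that classes in $K_0(\MM(d))$ are exactly those killed by all $\Phi_S$, a filtration argument on the tree depth/$l$-invariant of Definition \ref{filtration} shows that everything in $K_0(\oM(d))$ is a sum of products of $\MM(\cdot)$-classes modulo deeper terms, giving surjectivity of $m$.

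For \emph{injectivity}, I would argue that the decomposition from Theorem \ref{sod5}, refined by the inductive hypothesis on each $K_0(\oM(d_i))$, produces a \emph{direct sum} decomposition of $K_0(D_{sg}(\X(d)_0))$; the diagonal piece of this direct sum is exactly the image of $m$ for $k=1$, i.e. $K_0(\oM(d))$, and within it the tuples with $r=\frac12$ index linearly independent subspaces because the functors $\Phi_S$ detect them (Proposition \ref{switch} shows $\Phi_S\circ p_{\dd*}q_{\dd}^*$ is injective on the relevant summand, being $T$ which is injective after passing to $\mathfrak{S}_k$-coinvariants realized inside the $\mathfrak{S}_k$-invariants via the dense subcategory $\mathcal{A}_S$). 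Thus no nonzero element of the left side maps to zero. Combining surjectivity and injectivity gives that $m$ is an isomorphism.

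The main obstacle, I expect, will be the bookkeeping in step two: carefully matching the combinatorics of \emph{admissible sets with $r>\frac12$} (which govern the semi-orthogonal decomposition of $D_{sg}(\X(d)_0)$) against admissible sets with $r=\frac12$ (which govern the $\mathfrak{S}_k$-symmetric decomposition of $K_0(\oM(d))$), and checking that when one substitutes the inductive decomposition of $K_0(\oM(d_i)_{w_i})$ into the $r>\frac12$ tree and then collapses repeated leaves, one recovers precisely the claimed indexing set on the left of the theorem with no double counting. This requires Proposition \ref{propadm} and Proposition \ref{subsetmic} to control how the standard form of a sum $\sum w_i\beta_{d_i}$ factors through its Levi, and Proposition \ref{even} to ensure the shifts appearing in Proposition \ref{switch} are even so that the $\mathfrak{S}_k$-action is the honest symmetric group action (not a super-action on the wrong parity), which is what makes $\bigl(\bigoplus_{\mathfrak{S}_k}\boxtimes_i K_0(\MM(d_i)_{w_i})\bigr)^{\mathfrak{S}_k}$ the correct target. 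Once these compatibilities are pinned down, the isomorphism follows formally from Proposition \ref{propprop} and the exactness of the relevant excision/semi-orthogonal sequences.
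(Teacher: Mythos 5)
Your overall strategy—using the $\Phi_S$ functors as "detectors," characterizing $K_0(\MM(d))$ as the common kernel of $T\Phi_S$, and inducting on $d$ for surjectivity—is the same approach the paper takes. The surjectivity half is essentially right (the paper cites Proposition \ref{surj} and then induction, exactly as you describe, and your parity appeal to Proposition \ref{even} is also what's needed).

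The injectivity half, however, has a genuine gap. You justify linear independence of the $S$-indexed summands by noting (via Proposition \ref{switch} and Corollary \ref{iso}) that $T\Phi_S \circ p_{\dd*}q_{\dd}^*$ restricted to the summand indexed by $S$ itself is the symmetrization map $T$, hence injective on $\mathfrak{S}_k$-invariants. But this is only the \emph{diagonal} term. To conclude injectivity of $m$ on the \emph{sum} over all $S$, you also need the \emph{off-diagonal} terms to vanish: if $S\neq U$ are two admissible sets with $r=\frac12$ (say with $|S|\leq |U|$), you need $T\Phi_U\circ p_{\dd*}q_{\dd}^*$ to annihilate $\boxtimes_i K_0(\MM(d_i)_{w_i})$. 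Nothing in Propositions \ref{switch}, \ref{propprop} or Corollary \ref{iso} gives this; those only control what happens when the set in the target of $\Phi$ equals the set in the source of $p_{\dd*}q_{\dd}^*$. The paper's proof is precisely the one-liner "injectivity follows from Proposition \ref{upper}," and Proposition \ref{upper} is exactly this off-diagonal vanishing statement; its proof is not formal—it localizes at the ideal $I_{\dd}$ (Theorem \ref{loc}), computes the $\lambda_{\dd}$-fixed loci of the correspondence stacks, and uses that the Euler factors are not zero divisors (Proposition \ref{zerodiv}), together with Proposition \ref{surj} once more on the smaller dimension vectors. Without that argument your injectivity claim does not follow, and an element supported on two different $S$'s could a priori lie in the kernel of $m$. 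You should state and prove (or cite) this vanishing before the upper-triangular argument can close.

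Two smaller remarks: (1) you invoke the semi-orthogonal decomposition of $D_{sg}(\X(d)_0)$ from Theorem \ref{sod5}, but that decomposition is indexed by admissible sets with $r>\frac12$ and all of the $r=\frac12$ products land in the single summand $\oM(d)$, so it cannot by itself separate the terms you need; a finer tool (the $\Phi_S$'s plus Proposition \ref{upper}) is unavoidable. (2) the symmetrization argument only inverts $|\mathfrak{S}_k|$, so it tacitly requires working with $\mathbb{Q}$-coefficients; this is harmless but worth noting once.
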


Recall the functors $\Phi_S$ and the symmetrization map $T$ from Subsection \ref{defMM}.
Before we start the proof of Theorem \ref{decomp}, we collect some preliminary results.

\begin{prop}\label{surj}
The subspace $K_0(\MM(d))\hookrightarrow K_0(\oM(d))$ can be described as:
$$K_0(\MM(d))=\bigcap_{|S|\geq 2} \text{ker}\left(K_0(\oM(d))\xrightarrow{T\,\Phi_{S}} \left(\bigoplus_{\mathfrak{S}_{k}}\,\boxtimes_{i=1}^k K_0\left(\oM(d_i)_{w_{i}}\right)\right)^{\mathfrak{S}_{k}}\right).$$

%(b) We have that:
%$$K_0(\MM(d))=\bigcap_{|S|\geq 2} \text{ker}\left(K_0(\oM(d))\xrightarrow{\Phi_{S}} \left(\bigoplus_{\mathfrak{S}_{k}}\,\boxtimes_{i=1}^k K_0\left(\MM(d_i)_{w_{i}}\right)\right)^{\mathfrak{S}_{k}}\right),$$ where $k=|S|$.
\end{prop}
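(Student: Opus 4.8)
The plan is to identify $K_0(\MM(d))$ inside $K_0(\oM(d))$ via the defining condition that $\Phi_S(F) \in \mathcal{A}_S$ for all sets $S$ of cardinal $\geq 2$ with sum $d$, and then to translate the categorical condition $\Phi_S(F)\in\mathcal{A}_S$ into a K-theoretic equation using Proposition \ref{propprop}. Recall that Proposition \ref{propprop} exhibits $K_0(\mathcal{A}_S)$ as the kernel of the symmetrization map
$$T:\bigoplus_{\mathfrak{S}_k}\boxtimes_{i=1}^k K_0\left(\oM(d_i)_{w_i}\right)\to \left(\bigoplus_{\mathfrak{S}_k}\boxtimes_{i=1}^k K_0\left(\oM(d_i)_{w_i}\right)\right)^{\mathfrak{S}_k},$$
and, crucially, that $\mathcal{A}_S$ is precisely the subcategory of objects $E$ whose class $[E]$ lies in $K_0(\mathcal{A}_S)$. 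So for a complex $F\in\oM(d)$, the condition $\Phi_S(F)\in\mathcal{A}_S$ is equivalent to $T\,\Phi_S([F]) = 0$ in the group on the right. Taking the intersection over all $S$ with $|S|\geq 2$, this is exactly the stated description.

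First I would reduce, as in the remark following the definition of $\MM(d)$ in Subsection \ref{defMM}, to checking the condition only on sets $S$ of cardinal $2$; but since the statement as written already quantifies over all $|S|\geq 2$, this reduction is not strictly needed and I would simply carry the general $S$ through. The one point requiring care is that $\Phi_S$ is a functor on the triangulated (dg) level, so a priori $[\Phi_S(F)] = \Phi_S([F])$ needs the observation that $\Phi_S$, being built from the exact functors $p_{\dd}^*$ and the truncation functors $\beta_{\geq w'}$ (which are adjoints to inclusions of admissible subcategories, hence exact), descends to a well-defined homomorphism on $K_0$. This is routine given Proposition \ref{sod} and the construction of $\beta_{\geq w'}$ in \cite[Proposition 3.9]{hl}. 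Then the chain of equivalences
$$F\in\MM(d) \iff \forall S,\ \Phi_S(F)\in\mathcal{A}_S \iff \forall S,\ [\Phi_S(F)]\in K_0(\mathcal{A}_S) \iff \forall S,\ T\,\Phi_S([F]) = 0$$
gives $K_0(\MM(d)) \subseteq \bigcap_{|S|\geq 2}\ker(T\,\Phi_S)$.

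For the reverse inclusion I would use the second assertion of Proposition \ref{propprop}: $\mathcal{A}_S$ is generated by the sheaves $E$ with $[E]\in K_0(\mathcal{A}_S)$, so if a class $x\in K_0(\oM(d))$ satisfies $T\,\Phi_S(x) = 0$ for all $S$, then, choosing a complex $F$ with $[F]=x$, each $\Phi_S(F)$ has class in $K_0(\mathcal{A}_S)$ and hence $\Phi_S(F)\in\mathcal{A}_S$ by density; thus $F\in\MM(d)$ and $x\in K_0(\MM(d))$. Here one should note that $\MM(d)\subset\oM(d)$ is a subcategory closed under the operations defining it, so $K_0(\MM(d))\hookrightarrow K_0(\oM(d))$ is the image of $K_0$ of the subcategory, and the argument shows this image is exactly the asserted intersection of kernels. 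The main obstacle I anticipate is purely bookkeeping: making sure that $\Phi_S$ is genuinely a $K_0$-homomorphism and that the identification of $\mathcal{A}_S$ with "classes in $K_0(\mathcal{A}_S)$" from \cite[Theorem 2.1 and Corollary 2.3]{Th} is applied correctly to the exact sequence in Proposition \ref{propprop}; no new geometric input is needed beyond what has already been established.
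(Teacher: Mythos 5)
Your proof is correct and follows essentially the same route as the paper: both directions are handled exactly as in the paper, using the definition of $\MM(d)$ for the forward inclusion and the second assertion of Proposition \ref{propprop} (Thomason's correspondence between dense subcategories and subgroups of $K_0$) for the reverse. Your added remark that $\Phi_S$ descends to a well-defined $K_0$-homomorphism is a useful clarification left implicit in the paper, but it does not change the substance of the argument.
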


\begin{proof}
The category $\MM(d)$ is dense in $\oM(d)$ because for every $F\in \oM(d)$, the complex $F\oplus F[1]$ is in $\MM(d)$. By \cite[Corollary 2.3]{Th}, we then have that $$K_0(\MM(d))\hookrightarrow K_0(\oM(d)).$$

To show that $K_0(\MM(d))$ is included in the right hand side, we need to check that the composition:
$$K_0(\MM(d))\xrightarrow{\Phi_{S}} 
\bigoplus_{\mathfrak{S}_k}\boxtimes_{i=1}^k  K_0\left(\oM(d_i)_{w_{i}}\right)
\xrightarrow{T} 
\bigoplus_{\mathfrak{S}_k}\boxtimes_{i=1}^k\, K_0\left(\oM(d_i)_{w_{i}}\right)^{\mathfrak{S}_{k}}$$ is zero; this is the case because for $F\in \MM(d)$ we have that $\Phi_{S}(F)\in\mathcal{A}_{S}$. 

To show that the right hand side is included in $K_0(\MM(d))$, observe that the right hand side is the subspace of $K_0(\oM(d))$ such that $\Phi_{S}\in K_0(\mathcal{A}_{S})$.
Let $F$ be a complex of $\oM(d)$ such that for every set $S$ with $|S|\geq 2$ and with sum $d$ we have $[\Phi_{S}(F)]\in K_0(\mathcal{A}_{S})$. But then $\Phi_{S}(F)\in \mathcal{A}_{S}$ by Proposition \ref{propprop}, and so $F\in\MM(d)$.

%Past (b) follows easily from (a).
\end{proof}

%The following proposition is immediate from the definition of the filtration $F$.\begin{prop}\label{compzero}Let $\{(d_1,w_1),\cdots, (d_k,w_k)\}$ be an admissible set with $r=\frac{1}{2}$. Consider the filtration $F^{\cdot}$ defined above.The composition$$\boxtimes_{i=1}^k  K_0\left(\MM(d_i)_{w_{i}}\right)\xrightarrow{ m_{\dd}} \text{gr}^F\,K_0(\oM(d))\to K_0(\MM(d))$$ is zero, where the second map is the natural surjection onto $F^{\leq 1}.$\end{prop}

%\begin{proof}Proposition It is enough to prove that for $[F]\in \left(\bigoplus_{\mathfrak{S}_k}\,\boxtimes_{i=1}^k  K_0\left(\MM(d_i)_{w_{i}}\right)\right)^{\mathfrak{S}_{k}}$ different from zero, the image $ m_{S}[F]$ is not in $K_0(\MM(d))$. Corollary \ref{iso} and Proposition \ref{surj} say that:$$\Phi_{S}m_{S}[F]=\sum \tau[F]=|\mathfrak{S}_{k}|\,[F]\text{ and }\Phi_{S}K_0(\MM(d))=0.$$ \end{proof}

\begin{prop}\label{upper}
Let $S=\{(d_1,w_1),\cdots, (d_k,w_k)\}$ and $U=\{(e_1,v_1),\cdots, (e_l,v_l)\}$ be admissible sets with $r=\frac{1}{2}$, and assume that $k<l$ or $k=l$ and the sets $\{d_1,\cdots,d_k\}$ and $\{e_1,\cdots, e_l\}$ are different.
Then the following composition is zero:
$$\boxtimes_{i=1}^k K_0\left(\MM(d_i)_{w_{i}}\right)
\xrightarrow{p_{\dd*}q_{\dd}^*} K_0(\oM(d))\xrightarrow{T\Phi_{U}}
\left(\bigoplus_{\mathfrak{S}_l}
\boxtimes_{i=1}^l K_0\left(\oM(e_i)_{v_{i}}\right)\right)^{\mathfrak{S}_l},$$ where $T$ denotes the symmetrization map.
%is zero, where the second map is the projection onto the graded piece corresponding to the admissible set $\{(e_1,v_1),\cdots, (e_l,v_l)\}$.
\end{prop}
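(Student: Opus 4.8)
\textbf{Proof proposal for Proposition \ref{upper}.} The plan is to reduce the claim to the potential-zero case and then to a statement about the combinatorics of the boundary faces of $\frac{1}{2}\overline{\mathbb{W}}$. First I would observe that, exactly as in the proof of Theorem \ref{mut}, both $p_{\dd*}q_{\dd}^*$ and $\Phi_U=\bigoplus_{\mathfrak{S}_l}\Phi_{\ee}$ lift to functors on $D_{sg}$ coming from functors on $D^b(\X(-))$ via Proposition \ref{sod} and Corollary \ref{corts}; and by the now-standard dévissage (push forward along $i_{d*}$, use that $i_{d*}$ of everything in sight lands in $\overline{\mathbb{N}}$, invoke the zero-potential statement) it suffices to prove the vanishing for $W=0$. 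So I reduce to showing that the composite
$$\boxtimes_{i=1}^k \NN(d_i)_{w_i}\xrightarrow{p_{\dd*}q_{\dd}^*} D^b(\X(d))\xrightarrow{\Phi_{\ee}} \boxtimes_{i=1}^l\NN(e_i)_{v_i}$$
is zero on $K_0$ for every ordering $\ee$ of $U$, under the hypothesis that $k<l$, or $k=l$ but $\{d_1,\dots,d_k\}\neq\{e_1,\dots,e_l\}$.

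Next I would reduce to generators: take $F=\boxtimes_{i=1}^k V(\chi_i)\otimes\OO$ with $\chi_i$ dominant in $\NN(d_i)_{w_i}$, set $\chi=\chi_1+\cdots+\chi_k$, and run the Borel--Bott--Weyl computation (Subsection \ref{sp}, Proposition \ref{rgoesdown}/\ref{boundary}) to write $[p_{\dd*}q_{\dd}^*F]$ as a signed sum of classes $[V((\chi-\sigma)^+)\otimes\OO]$ where $\sigma$ ranges over partial sums of weights of $N^{\lambda_{\dd}<0}$. Since the set $S$ is admissible with $r=\frac12$, Propositions \ref{prop}, \ref{boundary} and \ref{nulemma}(b) control which faces these $V(\chi-\sigma)\otimes\OO$ can lie on: namely $\chi-\sigma\in F(\mu)^{\text{int}}$ only for characters $\mu$ with $R(d)^{\lambda_{\dd}}\subset R(d)^{\mu}$, i.e.\ $\mu$ subordinate to the partition $\dd$ (after Weyl conjugation). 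Now $\Phi_{\ee}$ is, up to shifts, truncation $\beta_{\geq w'}p_{\ee}^*$ for the character $\lambda_{\ee}$ associated to the \emph{different} partition $\ee$; applying it to $V(\chi-\sigma)\otimes\OO$ and using Proposition \ref{switch} together with the fact that $\lambda_{\ee}\not\leq$ any $\mu$ subordinate to $\dd$ (this is where $k<l$ or $\{d_i\}\neq\{e_j\}$ enters), each term $\Phi_{\ee}(V(\chi-\sigma)\otimes\OO)$ is either zero or lands in the ``wrong ordering'' summand; symmetrizing by $T=\sum_{\sigma\in\mathfrak{S}_l}\sigma$ then kills the whole expression. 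A clean way to package the last step: the image of $K_0(\oM(d))\xrightarrow{T\Phi_U}(\bigoplus_{\mathfrak{S}_l}\cdots)^{\mathfrak{S}_l}$ depends only on the ``admissibility type'' of the standard form of $\chi$ (Definition \ref{gen}, Proposition \ref{propadm}), and when we start from $p_{\dd*}q_{\dd}^*$ with $\dd$ genuinely coarser than or incomparable to $\ee$, the standard form of every $\chi-\sigma$ occurring has associated Levi strictly smaller than $L(\ee)=G(e_1)\times\cdots\times G(e_l)$, so $\Phi_{\ee}$ annihilates it.

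The main obstacle I anticipate is the bookkeeping of signs and the precise statement ``$\lambda_{\ee}$ is not $\leq$ any $\mu$ subordinate to $\dd$''. The subtle point is that $\ee$ and $\dd$ are only required to be \emph{different} (when $k=l$) rather than incomparable, so I will need to argue that if $\{e_j\}\neq\{d_i\}$ then no chamber refinement can simultaneously be coarser than $\lambda_{\dd}$ and finer than $\lambda_{\ee}$ — this follows because a common refinement would have a partition refining both $\dd$ and $\ee$, forcing (by comparing sums of the parts, using that both are unordered partitions of $d$ into the \emph{same} number of blocks when $k=l$) the two partitions to coincide, contradicting the hypothesis; and when $k<l$ the block-count already rules it out. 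Once that combinatorial lemma is in place, the rest is the same Borel--Bott--Weyl plus symmetrization argument already used for Theorem \ref{mut} and Corollary \ref{iso}, so I expect no further difficulty. I would then remark that the identical proof, after idempotent completion, gives the statement for $D_{sg}^{id}$.
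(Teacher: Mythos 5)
Your reduction to generators $V(\chi_i)\otimes\OO$ of $\NN(d_i)_{w_i}$ is the point where the argument breaks down, and it breaks down in a way that the hypothesis of the proposition specifically forbids. The statement is about the subspace $K_0(\MM(d_i)_{w_i})\subset K_0(\oM(d_i)_{w_i})$, which by Proposition~\ref{surj} is a K-theoretic condition (lying in the kernel of $T\Phi_S$ for every $|S|\geq 2$) that is not visible at the level of individual generators $V(\chi)\otimes\OO$: a single such class almost never lies in $K_0(\MM(d_i))$. Your weight/face argument, as written, never uses the defining property of $\MM$ at all, so if it were correct it would also prove the proposition with $\oM(d_i)$ in place of $\MM(d_i)$; but that stronger statement is false. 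Indeed, for $k=1$, $l=2$, $\dd=(d)$, $\ee=(e_1,e_2)$, the map $p_{\dd*}q_{\dd}^*$ is the identity, and Corollary~\ref{iso} shows $T\Phi_{\ee}$ is nonzero on $p_{\ee*}q_{\ee}^*(\oM(e_1)\boxtimes\oM(e_2))\subset\oM(d)$, so the composite is certainly not zero on $K_0(\oM(d))$. Relatedly, your claim that ``the standard form of every $\chi-\sigma$ occurring has associated Levi strictly smaller than $L(\ee)$'' is false already for $k=1$: with $\chi\in\NN(d)$ arbitrary the Levi of its standard form can be exactly $L(\ee)$. The case $k=1$ of the proposition is literally Proposition~\ref{surj}, and your argument does not reproduce it.

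The paper's proof is different in mechanism as well: it localizes at the ideal $I_{\dd}$ with respect to $\lambda_{\dd}$ (Theorem~\ref{loc}), which is permitted by Proposition~\ref{zerodiv}, identifies the $\lambda_{\dd}$-fixed loci of $\prod\X(e_j)$ as a union over matrices $(f_{ij})$ with row sums $d_i$ and column sums $e_j$, and then isolates one index $i$ for which $(f_{i1},\dots,f_{il})$ is a nontrivial decomposition of $d_i$ (this exists precisely because $k<l$, or $k=l$ and the underlying sets differ). That reduces the vanishing to the characterizing property $T\Phi_{\underline{f}_i}\big(K_0(\MM(d_i))\big)=0$ from Proposition~\ref{surj}. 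If you want to salvage your sheaf-theoretic approach, the essential repair is to stop reducing to classes of the form $[V(\chi)\otimes\OO]$, and instead carry through a genuine element of $K_0(\MM(d_i))$, invoking Proposition~\ref{surj} at the point where the face argument alone is insufficient; at that point you will likely find yourself re-deriving the localization step in disguise.
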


\begin{proof}
We use localization with respect to the character $\lambda_{\dd}:\C^*\to G(d)$, see Subsection \ref{local}. The $\lambda_{\dd}$ fixed locus of $\X(e_1)\times\cdots\times\X(e_l)$ can be showed to be, as in the proof of Theorem \ref{bialgebra}:
$$\bigcup_P \prod_{i=1}^k \prod_{j=1}^l \X(f_{ij}),$$ where the set $P$ has elements matrices of partitions $(f_{ij})$ with $1\leq i\leq k$, $1\leq j\leq l$ such that:
$$\sum_{1\leq j\leq l} f_{ij}=d_i\text{ and }
\sum_{1\leq i\leq k} f_{ij}=e_j.$$
All the vector spaces we consider are localized at the ideal $I_{\dd}$, see Theorem \ref{loc} for its definition. By Theorem \ref{loc}, it is enough to show the result after localization with respect to $I_{\dd}$. Consider the diagram: 

\begin{tikzcd}
K_0(D_{sg}(\X(d)_0)\arrow{r}{\Phi_{\ee}}& \boxtimes_{j=1}^l K_0\left(\oM(e_j)_{v_{j}}\right)\\
\boxtimes_{i=1}^k  K_0(\MM(d_i)_{w_{i}}) \arrow{u}\arrow{r}{\prod \Phi_{\underline{f}}}& \boxtimes_{i=1}^{k}\boxtimes_{j=1}^{l} K_0\left(D_{sg}(\X(f_{ij})_0)_{w}\right).\arrow{u}{\prod m_{\underline{f}}}
\end{tikzcd}
\\
The square commutes up to an Euler factor $$\frac{\prod_{j=1}^k \text{eu}(f_{1j},\cdots, f_{kj})}{\text{eu}(d_1,\cdots,d_k)},$$ which is not a zero divisor by Proposition \ref{zerodiv}. It suffices to show that the composite:
\begin{multline}\label{mama}
\boxtimes_{i=1}^k  K_0(\MM(d_i)_{w_{i}}) \xrightarrow{\bigoplus \prod \Phi_{\underline{f}} } 
\bigoplus \boxtimes_{i=1}^{k}\boxtimes_{j=1}^{l} K_0\left(D_{sg}(\X(f_{ij})_0)_{w}\right)
\xrightarrow{\bigoplus m_{\underline{f}}}\\
\bigoplus_{\mathfrak{S}_l}\boxtimes_{i=1}^l K_0\left(\oM(e_i)_{v_{i}}\right)\xrightarrow{T} 
\left(\bigoplus_{\mathfrak{S}_l}
\boxtimes_{i=1}^l K_0\left(\oM(e_i)_{v_{i}}\right)\right)^{\mathfrak{S}_l}
\end{multline}
is zero. By the assumption on the sets $S$ and $U$, there exists an index $1\leq i\leq k$ such that the partition $(f_{i1},\cdots, f_{il})$ of $d_i$ has at least two nonzero terms. Fix such an index $i$. All permutations of the partition $(f_{i1},\cdots, f_{il})$ appear in the second summand of Equation (\ref{mama}). The symmetrization maps $T$ for $(f_{i1},\cdots, f_{il})$ and $(e_1,\cdots, e_l)$ commute with multiplication, so
it suffices to show that the map:
$$K_0(\MM(d_i)_{w_i})\to \left(\bigoplus_{\mathfrak{S}_l}
\boxtimes_{j=1}^{l} K_0\left(D_{sg}(\X(f_{ij})_0)_{w}\right)
\right)^{\mathfrak{S}_l}$$
is zero. 
This follows from Proposition \ref{surj}.

\end{proof}

\begin{proof}[Proof of Theorem \ref{decomp}]
Injectivity of the map $m$ follows from Proposition \ref{upper}. For sujectivity, Proposition \ref{surj} implies that the map:
$$K_0(\MM(d))\oplus \bigoplus_{|S|\geq 2}
\left(\bigoplus_{\mathfrak{S}_k}\boxtimes_{i=1}^k  K_0(\oM(d_i)_{w_{i}})\right)^{\mathfrak{S}_{k}}\to K_0(\oM(d))$$ is surjective. Using induction on $d$ we see that this implies that $m$ is also surjective.
\end{proof}

\begin{defn}\label{secondfiltrations}
We refine the filtration $\overline{F}$, see Definition \ref{filtration}, as follows.
Let $w\in\mathbb{Z}$.
Consider a graph $\mathcal{G}_w$ with vertices admissible sets $\{(d_1,w_1),\cdots,(d_k,w_k)\}$ of total weight $w$. 
Let $\mathcal{A}$ and $\mathcal{B}$ be two admissible sets. If $r(\mathcal{A})>\frac{1}{2}$, draw an edge between $\mathcal{A}$ and $\mathcal{B}$ if there exist objects $A\in\mathcal{A}$ and $B\in\mathcal{B}$ such that $\text{Hom}\,(A,B)\neq 0$.
If $r(\mathcal{A})=r(\mathcal{B})=\frac{1}{2}$,
draw an edge from $\mathcal{A}$ to $\mathcal{B}$ if $|\mathcal{A}|\geq |\mathcal{B}|$. 
\\

The only admissible set with out-degree zero is $\{(d,w)\}$.
For any set $\mathcal{A}$ as above, define 
$l(\mathcal{A})$ as the number of vertices of the smallest path connecting $\mathcal{A}$ and $\MM(d)_w$. 
Define a filtration $F^{\cdot}$ on $K_0(D_{sg}(\X(d)_0))$ by
$$F^{\leq i}=\text{subspace generated by }K_0(\mathcal{A})\text{ with }l(\mathcal{A})\leq i\text{ in } K_0(D_{sg}(\X(d)_0)).$$ 
Observe that $F^{\leq 1}=K_0(\MM(d))\subset K_0(D_{sg}(\X(d)_0))$.
For a complex $E\in D_{sg}(\X(d)_0)$, define the length of $E$ by: 
$$l(E)=\text{min }l\text{ such that }[E]\in F^{\leq l}K_0(D_{sg}(\X(d)_0)).$$
\end{defn}

\begin{cor}\label{basisimp}
Consider the associated graded $\text{gr}^F\text{KHA}$ with respect to the filtration $F^{\cdot}$ from Definition \ref{secondfiltrations}. The monomials 
$x_{d_1,w_1}\cdots x_{d_k,w_k}$ for an admissible set $\{(d_1,w_1),\cdots, (d_k,w_k)\}$ and $x_{d_i,w_i}\in K_0(\MM(d_i)_{w_i})$ generate $\text{gr}^F\text{KHA}$, and the only relations between these monomials are those generated by the above relations for admssible sets with $r=\frac{1}{2}$, see Corollary \ref{change}. 
\end{cor}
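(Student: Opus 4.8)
The plan is to combine the semi-orthogonal decomposition of Theorem \ref{sod5} (Step 1) with the decomposition of $K_0(\oM(d))$ from Theorem \ref{decomp} (Step 3), and to read off the relations from Corollaries \ref{cormut} and \ref{change}.

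First I would record that Theorem \ref{sod5}, after passing to $K_0$ and using that the functors $p_{\dd*}q_{\dd}^*$ are fully faithful, identifies $\text{gr}^F\text{KHA}$ as a graded abelian group with $\bigoplus \boxtimes_{i=1}^k K_0(\oM(d_i)_{w_i})$, the sum taken over admissible sets $\{(d_1,w_1),\dots,(d_k,w_k)\}$ with $r>\frac{1}{2}$ (in the order of Subsection \ref{compadm}), the summand being spanned by the monomials $x_{d_1,w_1}\cdots x_{d_k,w_k}$ with $x_{d_i,w_i}\in K_0(\oM(d_i)_{w_i})$; by Definition \ref{secondfiltrations} the filtration level of such a monomial depends on the admissible set alone. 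Next I would feed in Theorem \ref{decomp}: for each $i$, multiplication gives an isomorphism of $K_0(\oM(d_i)_{w_i})$ with a direct sum, over sets $S_i$ of dimension vectors summing to $d_i$, of $\mathfrak{S}$-invariant subspaces of tensor products of the groups $K_0(\MM(\cdot))$ indexed by admissible sets with $r=\frac{1}{2}$. Substituting, and using Propositions \ref{propadm} and \ref{rem2} to see that an admissible set with $r>\frac{1}{2}$ refined by the $S_i$ is again admissible, one obtains that $\text{gr}^F\text{KHA}$ is spanned by the monomials $y_{e_1,v_1}\cdots y_{e_m,v_m}$ over all admissible sets $\{(e_1,v_1),\dots,(e_m,v_m)\}$ with $y_{e_j,v_j}\in K_0(\MM(e_j)_{v_j})$. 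This is the generation half of the statement.

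For the relations, let $A$ be the associative algebra presented by these $\MM$-generators subject only to the $r=\frac{1}{2}$ commutation relations of Corollary \ref{change}; Corollary \ref{change} provides a natural surjection $A\twoheadrightarrow\text{gr}^F\text{KHA}$. I would first check that inside $A$ these relations already imply the $r>\frac{1}{2}$ relations of Corollary \ref{cormut}: expand each $\MM$-generator via the isomorphism $m$ of Theorem \ref{decomp} into a symmetrized product of $\MM$-generators attached to $r=\frac{1}{2}$ subsets, commute the atomic factors past one another using the defining relations of $A$, and observe that the result agrees with Corollary \ref{cormut} up to terms of strictly smaller $l$-degree, which vanish in $\text{gr}^F$. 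With Step 2 available inside $A$, I would then run the reduction of Steps 1--3 at the level of $A$: reorder adjacent $r=\frac{1}{2}$ factors into canonical order, absorbing each commutator into a higher $\MM$-piece by Corollary \ref{change}; group maximal runs of $r=\frac{1}{2}$ factors into blocks and identify each block, via $m$ and Proposition \ref{upper}, with a single $\oM$-generator; and finally reorder the blocks into the order of Subsection \ref{compadm} using Step 2. This rewrites an arbitrary admissible monomial in $A$ inside the span of the Step 1 basis, so $A$ is spanned by a set mapping onto that basis; since the surjection $A\twoheadrightarrow\text{gr}^F\text{KHA}$ sends this spanning set onto a basis, it is an isomorphism, which is exactly the assertion that the relations of Corollary \ref{change} generate all relations.

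The main obstacle will be the bookkeeping in the last step: one must verify that every error term arising when commuting $\MM$-generators, when identifying a block with an $\oM$-generator, or when reordering blocks, lies in a strictly smaller piece of the filtration $F^{\cdot}$ of Definition \ref{secondfiltrations} and hence dies in $\text{gr}^F$ --- this is where the length estimates of Propositions \ref{filtcomp} and \ref{mut} and the vanishing of Proposition \ref{upper} enter --- and that the cardinality ordering on $r=\frac{1}{2}$ admissible sets is compatible with the order on admissible sets with $r>\frac{1}{2}$ from Subsection \ref{compadm}. Organising this comparison cleanly is essentially what the next subsection packages into the definition of the deformed symmetric algebra $\text{dSym}$.
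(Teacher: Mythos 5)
Your generation argument is sound and matches what the paper implicitly does: Theorem \ref{sod5} gives a direct sum over admissible sets with $r>\frac{1}{2}$ of tensor products $\boxtimes K_0(\oM(d_i)_{w_i})$, and Theorem \ref{decomp} refines each $K_0(\oM(d_i)_{w_i})$ into symmetrized tensor products of $\MM$-groups indexed by $r=\frac{1}{2}$ admissible sets. Substituting, the admissible monomials span $\text{gr}^F\text{KHA}$.

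The relations half of your argument has a concrete gap. You propose to derive the $r>\frac{1}{2}$ relations of Corollary \ref{cormut} inside $A$ by "expanding each $\MM$-generator via the isomorphism $m$ of Theorem \ref{decomp} into a symmetrized product of $\MM$-generators attached to $r=\frac{1}{2}$ subsets." But Theorem \ref{decomp} decomposes $K_0(\oM(d))$, not $K_0(\MM(d))$: the latter is precisely the indecomposable $|S|=1$ summand in that direct sum (see Proposition \ref{surj}), so an $\MM$-generator is already atomic and there is nothing to expand. The move you describe applies only to $\oM$-classes, which are not generators of your algebra $A$, so the subsequent steps — grouping $r=\frac{1}{2}$ runs into "$\oM$-generators" and reordering blocks "using Step 2" — are not operations available inside $A$ as you have defined it. The claim that the $r>\frac{1}{2}$ commutations are formal consequences of the $r=\frac{1}{2}$ ones is therefore unjustified as written, and the reduction of an arbitrary monomial of $A$ to the Step 1 basis breaks down exactly at the block-reordering step.

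What the paper actually relies on is more direct: the basis produced by Steps 1 and 3 is indexed by admissible sets, and by construction the only identifications between admissible monomials come from the $\mathfrak{S}$-invariance (the $(\cdot)^{\mathfrak{S}_k}$ in Theorem \ref{decomp}), which is precisely the content of Corollary \ref{change}. The ordering at the $r>\frac{1}{2}$ level is pinned down by the standard form of the associated weight, so no $r>\frac{1}{2}$ commutation is needed to recognize which basis element an admissible monomial represents. The task of reducing arbitrary (non-admissible) monomials to admissible ones — where the $r>\frac{1}{2}$ relations and the conjugation argument genuinely enter — is exactly the job of Proposition \ref{PBWdef}, not of this corollary. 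Your closing paragraph senses this correctly, but the mechanism you propose to bridge the gap does not work.
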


\subsection{The deformed symmetric algebra}\label{step4}

\begin{defn}\label{qsym}
Let $\text{dSym}\left(\bigoplus K_0(\mathbb{M}(d)_w)\right)$
be the free algebra generated by monomials
$x_{d_1,w_1}\cdots x_{d_k,w_k}$, where $x_{d,w}\in K_0(\mathbb{M}(d)_w)$, with relations: $$x_{d,w}x_{e,v}=(x_{e,v}q^{f(e,d)})(x_{d,w}q^{-g(e,d)}).$$ 
\end{defn}

In this section, we prove:
\begin{prop}\label{PBWdef}
The algebra $\text{dSym}\left(\bigoplus K_0(\mathbb{M}(d)_w)\right)$ is generated by the monomials $x_{d_1,w_1}\cdots x_{d_k,w_k}$ for an admissible set $\{(d_1,w_1),\cdots, (d_k,w_k)\}$ with $x_{d_i,w_i}\in K_0(\MM(d_i)_{w_i})$, and the only relations between these monomials are those generated by the above relations for admssible sets with $r=\frac{1}{2}$. 
\end{prop}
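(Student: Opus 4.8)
Proposal for the proof of Proposition \ref{PBWdef}.

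The strategy mirrors the analysis of $\text{gr}^F\text{KHA}$ carried out in Subsections \ref{step1}--\ref{step3}, but now purely algebraically inside $\text{dSym}$. The idea is that $\text{dSym}$ is \emph{defined} by the relations $x_{d,w}x_{e,v}=(x_{e,v}q^{f(e,d)})(x_{d,w}q^{-g(e,d)})$ for \emph{all} ordered pairs, so we must show two things: first, that every monomial $x_{d_1,w_1}\cdots x_{d_k,w_k}$ can be rewritten, using these relations, as a $\mathbb{Z}[q^{\pm}]$-combination of monomials indexed by \emph{admissible} sets (spanning); second, that the only linear relations among these admissible monomials are the ones forced by the $r=\tfrac12$ relations of Corollary \ref{change} (no further collapse). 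For spanning, I would argue by induction on a suitable complexity, e.g. the lexicographic data of the multiset $\{d_1,\dots,d_k\}$ together with the number of "inversions'' measured by the order $\to$ of Subsection \ref{compadm}: given a monomial not indexed by an admissible set, Proposition \ref{propadm} tells us the standard form of $\chi=w_1\beta_{d_1}+\cdots+w_k\beta_{d_k}$ has a Levi strictly coarser than $G(d_1)\times\cdots\times G(d_k)$, which means some adjacent pair $(d_i,w_i),(d_{i+1},w_{i+1})$ is "out of order'' relative to the order $\to$; applying the defining relation to that adjacent pair strictly decreases the complexity, and also possibly splits off a lower-dimensional admissible chunk (this is the algebraic shadow of Proposition \ref{godown} / Definition \ref{gen}). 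Iterating terminates and expresses the monomial in terms of admissible ones.

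The second and harder part is the absence of extra relations. Here the natural move is to exploit the comparison already established: by Corollary \ref{basisimp}, $\text{gr}^F\text{KHA}$ is spanned by admissible monomials subject precisely to the $r=\tfrac12$ relations of Corollary \ref{change}, which are identical in form to the relations one gets in $\text{dSym}$ (Corollary \ref{change} and Definition \ref{qsym} use the same $q^{f}, q^{-g}$ factors). So there is a surjection $\text{dSym}\to \text{gr}^F\text{KHA}$ sending generators to generators, and I want it to be an isomorphism once both sides are presented on admissible monomials. Equivalently, after the rewriting in the previous paragraph, I must show that the only $\mathbb{Z}[q^{\pm}]$-relations among the admissible monomials of $\text{dSym}$ come from the admissible-set $r=\tfrac12$ relations. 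The cleanest route is to produce, for each admissible set $S$ with $r=\tfrac12$, the decomposition
$$\bigoplus_{\mathfrak{S}_k}\boxtimes_{i=1}^k K_0(\MM(d_i)_{w_i}) \xrightarrow{\ T\ } \Big(\bigoplus_{\mathfrak{S}_k}\boxtimes_{i=1}^k K_0(\MM(d_i)_{w_i})\Big)^{\mathfrak{S}_k},$$
exactly as in Theorem \ref{decomp} and Proposition \ref{propprop}, realised inside $\text{dSym}$; this shows that the span of admissible monomials in $\text{dSym}$, modulo the relations of the statement, already has the "right size'', matching $\text{gr}^F\text{KHA}$. Since $\text{gr}^F\text{KHA}$ is a quotient of $\text{dSym}$ (the surjection above) and the two now have matching presentations on admissible monomials, the surjection restricted to admissible monomials is injective, giving the claim.

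The main obstacle, I expect, is making the "no extra relations'' step rigorous without circularity: one must be careful that the relations imposed in $\text{dSym}$ (all ordered pairs, Definition \ref{qsym}) do not accidentally impose \emph{more} collapse among admissible monomials than the $r=\tfrac12$ relations do. Concretely, the danger is a long chain of swaps returning to an admissible monomial with an unexpected $q$-factor or sign; ruling this out amounts to a coherence/confluence check for the rewriting system, which should follow from the cocycle identities for the Euler factors already recorded (the identities $\text{eu}_2(d,e+f)\text{eu}_2(e,f)=\text{eu}_2(d+e,f)\text{eu}_2(d,e)$ and its analogues used in Propositions \ref{coassoc} and \ref{braiding}, here in the $\det\mathcal{N}$ form $[\det\mathcal{N}_{d,e}]=(-1)^{\chi(d,e)}q^{f(d,e)}q^{-g(d,e)}$) together with Proposition \ref{even} ensuring the relevant codimensions are even so no stray signs survive. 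Once confluence is in hand, comparing ranks degree-by-degree with $\text{gr}^F\text{KHA}$ via Theorem \ref{decomp} closes the argument.
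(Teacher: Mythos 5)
Your spanning argument is in the same spirit as the paper's, but the paper handles it more cleanly. Rather than iterating adjacent swaps and tracking a decreasing inversion count --- for which you would still need to justify that some adjacent pair is always ``out of order'' and that the rewrite strictly decreases the complexity --- the paper isolates a single auxiliary proposition: every ordered set $A=\{(d_1,w_1),\dots,(d_k,w_k)\}$ is conjugate (under the algebraic avatar $w\beta_d+v\beta_e\mapsto v\beta_e+w\beta_d+N_{e,d}$ of the defining relation) to an admissible set. This is proved by choosing a permutation $\sigma\in\mathfrak{S}_k$ that globally minimizes the $r$-invariant of $\chi_{\sigma(A)}+\rho$, verifying that for this minimizer the standard form has Levi exactly $L(\dd)$, and pushing the residual weights down by induction on $d$, with a short convexity estimate showing $r<r_i$ on the Levi factors. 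No termination argument for a local rewriting system is required, so the delicate parts of your first paragraph disappear.

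On the ``no extra relations'' clause, you are right to sense trouble, but the resolution is structural rather than a confluence computation, and your worry about circularity is resolvable. The paper's proof of Proposition \ref{PBWdef} establishes \emph{only} spanning (the auxiliary proposition); it does not attempt an intrinsic diamond-lemma argument. The ``only relations'' claim is then absorbed into the proof of Theorem \ref{thm5}: the well-defined surjection $\text{dSym}\to\text{gr}^F\text{KHA}$ sends admissible monomials to admissible monomials; the source is spanned by them; and the target has exactly the admissible presentation with only $r=\tfrac12$ relations by Corollary \ref{basisimp}, which was proved geometrically via Theorem \ref{decomp} with no input from $\text{dSym}$. Hence the surjection cannot collapse anything beyond the $r=\tfrac12$ relations, it is an isomorphism, and ``no extra relations in $\text{dSym}$'' falls out a posteriori. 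There is no circle: one never needs intrinsic injectivity of admissible monomials in $\text{dSym}$ as a hypothesis. Your confluence route --- cocycle identities for the factors $q^{f},q^{-g}$ and parity from Proposition \ref{even} --- would give a self-contained combinatorial proof and is a reasonable alternative, but it is strictly more work than the paper does and becomes redundant once Corollary \ref{basisimp} is available to carry injectivity. It is also worth noting that the paper's statement, taken literally, asserts more than the conjugation argument alone delivers; the intended logic is exactly the one you reach in your second paragraph, namely that the rank comparison against $\text{gr}^F\text{KHA}$ closes the gap.
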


\begin{proof}[Proof of Theorem \ref{thm5}]
Consider the natural map $$\text{dSym}\left(\bigoplus K_0(\mathbb{M}(d))\right)\to \text{gr}^F\,KHA.$$
It is well defined by Theorem \ref{mut} and Corollary \ref{change}. 
Corollary \ref{basisimp} and Proposition \ref{PBWdef} imply that it is an isomorphism.
\end{proof}

Before we start the proof of Proposition \ref{PBWdef}, we define what it means for two sets of pairs to be conjugate. First, we define this notion for weights of the form $\chi_A$ for $A$ a set of pairs.
We call the weights $w\beta_d+v\beta_e\text{ and } v\beta_e+w\beta_d+N_{e,d}$ conjugate; two weights $\chi=\sum w_i\beta_{d_i}$ and $\psi=\sum v_i\beta_{e_i}$ are conjugate if $\psi$ can be obtained from $\chi$ by repeatedly applying the above transformation for pairs $(d_i,d_{i+1})$.
We say that two sets $A$ and $B$ are conjugate if their corresponding weights $\chi_A$ and $\chi_B$ are conjugate.
In particular, the above transformation is an involution on sets of cardinal $2$. Proposition \ref{PBWdef} follows from the following:

\begin{prop}
The set $A=\{(d_1, w_1),\cdots, (d_k, w_k)\}$ is conjugate to an admissible set. 
\end{prop}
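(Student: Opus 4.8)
The plan is to prove the statement by induction on the $r$-invariant $r(\chi_A)$ and then, within a fixed value of $r$, on the $p$-invariant, using the partial order $\to$ on weights from Subsection \ref{compadm} together with the conjugation move. Recall that the conjugation move replaces a consecutive pair $(d_i, w_i), (d_{i+1}, w_{i+1})$ — whose contribution to $\chi_A$ is $w_i\beta_{d_i}+w_{i+1}\beta_{d_{i+1}}$ — by the pair whose contribution is $w_{i+1}\beta_{d_{i+1}}+w_i\beta_{d_i}+N_{d_{i+1},d_i}$; by Proposition \ref{Wcomputation}(a) this amounts, after adding $\rho$, to applying the Weyl element $w\in\mathfrak{S}_k$ swapping the two blocks, and such a move shifts $w_i\beta_{d_i}+w_{i+1}\beta_{d_{i+1}}$ by $N_{d_{i+1},d_i}-2\rho_{d_{i+1},d_i}$ in a way that, by Proposition \ref{rem1}(b), preserves the $(r,p)$-invariant of $\chi_A+\rho$. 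So conjugation moves the weight around inside a fixed stratum $F_r(\lambda)^{\text{int}}$ and never increases $(r,p)$.

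First I would put $\chi_A+\rho$ into standard form (Definition \ref{gen}):
\begin{equation*}
\chi_A+\rho = -\sum_{\lambda\in T} r_\lambda N^{\lambda>0} + \chi',
\end{equation*}
where $T$ is a tree of anti-dominant partitions with Levi $L'=G(e_1)\times\cdots\times G(e_s)$ and $\chi'\in\tfrac12\overline{\mathbb{W}}(L')$, the $r_\lambda$ being $>\tfrac12$ along any path. If $L'=G(d_1)\times\cdots\times G(d_k)$ already, then by Proposition \ref{propadm} and the definition of admissibility in Subsection \ref{admissible} the set $A$ is admissible (with $r>\tfrac12$ if $T\ne\emptyset$, else with $r=\tfrac12$), and there is nothing to prove. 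Otherwise the first anti-dominant character $\lambda$ in the standard form has associated partition $\ee$ strictly coarser than $\dd=(d_1,\dots,d_k)$, with $\ee\leq\dd$ but $\ee\ne\dd$: i.e. $\lambda$ already separates some of the $d_i$ from the others but lumps together at least two consecutive blocks $d_j, d_{j+1}$ (after reordering via conjugation, which I am free to do since it preserves $(r,p)$ and only reorders blocks). Here $\lambda$ assigns the same exponent to the simple roots of $d_j$ and of $d_{j+1}$, while the weight $w_j\beta_{d_j}+w_{j+1}\beta_{d_{j+1}}$ lies strictly inside $r\overline{\mathbb{W}}$ in the $(d_j,d_{j+1})$-directions.

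The key step is then: by Proposition \ref{rgoesdown} applied to the character $\lambda$, the weight $w_j\beta_{d_j}+w_{j+1}\beta_{d_{j+1}} + N_{d_{j+1},d_j}-2\rho_{d_{j+1},d_j}$ — which is exactly what conjugation of the pair $(d_j,w_j),(d_{j+1},w_{j+1})$ produces, up to the harmless $\rho$-shift — either strictly decreases the $(r,p)$-invariant of the full weight $\chi_A+\rho$, or at least pushes it to a deeper stratum $F_r(\mu)^{\text{int}}$ with $\mu<\lambda$ (equivalently $F_r(\lambda)^{\text{int}}\subsetneq\overline{F_r(\mu)}$), strictly coarsening the controlling character; a nontrivial sum $\sigma$ of weights of $N_{d_{j+1},d_j}$ is being added, and Proposition \ref{rgoesdown} gives $(r,p)(\chi-\sigma)<(r,p)(\chi)$ (or the boundary refinement in Proposition \ref{boundary}/\ref{nulemma} when $r=\tfrac12$, handled separately below). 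Iterating conjugation moves of this kind — each step either lowers $(r,p)(\chi_A+\rho)$ or strictly refines the controlling character while keeping $(r,p)$ fixed, and the latter can only happen finitely often — I reach, in finitely many moves, a set $A'$ conjugate to $A$ whose standard-form Levi is exactly $G(d_1)\times\cdots\times G(d_k)$, i.e. an admissible set. The $r=\tfrac12$ case (where all $r_\lambda=\tfrac12$, so the standard form has no first sum) is the endpoint: there Propositions \ref{boundary} and \ref{nulemma}(b) show that conjugating a consecutive pair that $\lambda$ fails to separate lands the weight on a face $F(\mu)$ with $R(d)^\lambda\subsetneq R(d)^\mu$, again strictly coarsening, so the same finiteness argument applies and terminates at an admissible set with $r=\tfrac12$.

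The main obstacle I anticipate is bookkeeping the termination: I must exhibit a strictly decreasing invariant under the conjugation moves. The pair $\big(r(\chi_A+\rho),\, \text{(coarseness of the controlling anti-dominant character, ordered by refinement)},\, p(\chi_A+\rho)\big)$ in lexicographic order should work — $r$ can only go down, and for fixed $r$ the controlling character $\lambda$ can only become coarser (Proposition \ref{rgoesdown} says the face $F_r(\mu)$ it moves to has $\mu\le\lambda$), and for fixed $r$ and $\lambda$ the $p$-invariant strictly drops when a genuine nonzero $\sigma\subset N_{d_{j+1},d_j}$ is absorbed — but I should check carefully that a conjugation move which reorders without changing the Levi (i.e. $\lambda$ already separates $d_j$ from $d_{j+1}$) does not loop, which is true because then $N_{d_{j+1},d_j}-2\rho_{d_{j+1},d_j}$ pairs to zero with $\lambda$ and the move is a genuine involution that I only apply as a finite reordering to bring the offending unseparated pair into consecutive position. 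Assembling these pieces gives the proposition, and hence Proposition \ref{PBWdef} and Theorem \ref{thm5}.
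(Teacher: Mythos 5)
Your approach is genuinely different from the paper's, and it has a gap at its core. The paper's proof is variational: it fixes a permutation $\sigma$ minimizing the ratio $\frac{\langle\lambda,\chi_{\sigma(A)}+\rho\rangle}{\langle\lambda,N^{\lambda>0}\rangle}$ over all $\sigma\in\mathfrak{S}_k$ and anti-dominant $\lambda$, works with the standard form of $\chi_{\sigma(A)}+\rho$, applies induction on the dimension vector inside each block $P_i$ of the resulting Levi, and finishes by establishing the inequality $r<r_i$ via a mediant/convexity comparison that exploits the minimality of $\sigma$. This sidesteps any termination argument: the optimal $\sigma$ is chosen up front among a finite set, and the heart of the proof is the final strict inequality.

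Your proposal instead tries an iterative descent: repeatedly conjugate and argue a lexicographic triple $(r,\text{coarseness},p)$ strictly drops. The first problem is that your justification for why conjugation behaves monotonically is internally inconsistent and, more importantly, incorrect. You claim at the start that conjugation preserves $(r,p)(\chi_A+\rho)$ by Proposition \ref{rem1}(b), but that proposition only covers the shifted Weyl action $w*\chi$. Conjugation is \emph{not} a shifted Weyl action on $\chi_A+\rho$: writing $\sigma$ for the block swap, one has $\chi_{A'}+\rho = \sigma(\chi_A+\rho) + (N_{e,d}+2\rho_{e,d})$, i.e.\ there is an extra additive shift beyond the Weyl part, and this shift moves the weight outside the $W$-orbit, so Weyl-invariance of $\overline{\mathbb{W}}$ gives no control over $(r,p)$. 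Later in the same proposal you instead assert (via Proposition \ref{rgoesdown}) that conjugation strictly drops $(r,p)$; but that proposition concerns $\chi-\sigma$ for $\chi$ dominant and $\sigma$ a partial sum of weights in $N^{\lambda<0}$, and $\chi_A+\rho$ need not be dominant at intermediate stages of your descent. Neither claim is justified, and together they contradict each other.

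The second problem is termination. Because you have not shown that $r(\chi_A+\rho)$ cannot increase under conjugation (and the computation above shows it is not a priori controlled), your proposed lexicographic measure $(r, \text{coarseness}, p)$ is not established to be decreasing, and the descent might loop or diverge. The paper's choice-of-minimizer avoids this entirely: there is no infinite process, only a finite optimization followed by a single inequality to verify. If you want to salvage the iterative approach you would need to prove, for a not-necessarily-dominant intermediate weight, that each conjugation move either strictly lowers $r$ or keeps $r$ while strictly refining (in the paper's partial order on characters) the controlling $\lambda$ or strictly lowering $p$, and this is exactly the content that is missing. I recommend reworking the argument along the paper's lines: minimize the ratio over $\sigma$, reduce to the identity permutation, observe the block structure $P_i$ of the standard-form Levi, induct on the dimension vector inside each block, and then prove the decisive inequality $r<r_i$ by the minimality of your chosen $\sigma$.
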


\begin{proof}
For a permutation $\sigma\in \mathfrak{S}_k$, we denote by $\chi_{\sigma(A)}$ the weight obtained by applying the above conjugations for the transpositions in $\sigma$. 
Consider a permutation $\sigma$ of $\mathfrak{S}_k$ which minimizes the invariant $$r=\frac{\langle \lambda, \chi_{\sigma(A)}+\rho\rangle}{\langle \lambda, N^{\lambda>0}\rangle}$$ for $\lambda$ an anti-dominant character. We have that $r\leq -\frac{1}{2}$. 
Choose the maximal such character $\lambda$ which realizes the minimum of the above fraction. We can assume without loss of generality that $\sigma=\text{id},$ and we can write $$\chi+\rho=rN^{\lambda>0}+\chi'+\rho^L.$$ Here $\rho^L$ is half the sum of positive roots of the Levi group $L=G(e_1)\times\cdots\times G(e_s)$ associated to $\lambda$, the corresponding partition satisfies $\ee\leq\dd$, and we have that $$\chi'=\chi_1+\cdots+\chi_s\text{ where }\chi_i=\sum_{j\in P_i} v_{i,j}\beta_{d_j}$$ for integers $v_{i,j}$ and a partition $\bigcup_{1\leq i\leq s}P_i=\{1,\cdots, k\}$. 

The above conjugation for two consecutive indices $d_j, d_{j+1}$ both in $P_i$ for some index $1\leq i\leq s$ is the same if we apply it for $\chi$ or for $\chi_i$.
By induction on the dimension vector $d$, we can find permutations $\sigma_i\in\mathfrak{S}_{|P_i|}$ for $1\leq i\leq s$ such that after applying $\sigma_i$ to $\chi_i$ we obtain admissible weights. We can assume without loss of generality that all these permutations are the identity. In particular, all the weights $\chi_i$ are dominant and because $rN^{\lambda>0}-\rho^{\lambda>0}$ is dominant, the weight $\chi$ is dominant as well. 

It remains to show that $r<r_i=r(\chi_i)$, where $r_i$ is the minimum of the above fraction and the $r$-invariant of $\chi_i$ because $\chi_i$ is dominant. We have that:
$$\chi+\rho=rN^{\lambda>0}+r_iN^{\lambda_i>0}+\psi,$$ where $\psi$ is generated by weights $\beta$ such that $\langle \lambda,\beta\rangle=\langle \lambda_i,\beta\rangle=0$. Let $\mu$ be the anti-dominant character corresponding to the partition $(e_1, \cdots, e_{i-1}, P_i, e_{i+1}, \cdots, e_s)$.
The choice of $\lambda$ implies that:
$$\frac{\langle \lambda, \chi+\rho\rangle}{\langle \lambda, N^{\lambda>0}\rangle}<
\frac{\langle \mu, \chi+\rho\rangle}{\langle \mu, N^{\mu>0}\rangle}.$$ This implies that:
$$r<\frac{\langle \mu, r_1N^{\lambda>0}+r_1N^{\lambda_1>0}\rangle}{\langle \lambda, N^{\lambda>0}\rangle+\langle\lambda_1, N^{\lambda_1>0}\rangle}=\frac{r\langle \lambda, N^{\lambda>0}\rangle+r_i\langle \lambda_i, N^{\lambda_i>0}\rangle}{\langle \lambda, N^{\lambda>0}\rangle+\langle\lambda_i, N^{\lambda_i>0}\rangle},$$ so $r<r_i$ as desired.

\end{proof}

\subsection{Quiver with cuts.}\label{qcuts} Let $Q$ be a symmetric quiver with potential $W$ which admits a cut, that is, for which there exists a set of edges $E'\subset E$ such that each term in $W$ contains exactly one edge in $E'$. Let $$E(d)=\prod_{a\in E'} \text{Hom}\,(V^{s(a)}, V^{t(a)})\text{ and }F(d)=\prod_{a\in E-E'} \text{Hom}\,(V^{s(a)}, V^{t(a)}).$$ There exists a splitting $R(d)=F(d)\oplus E(d)$.
The dimension reduction Theorem \ref{dimred0}, \cite{i} implies that:
$$K_0(D_{sg}(\X(d)_0))\cong G_0(\mathcal{K}(d))\cong G_0(\mathcal{K}^{\text{cl}}(d)),$$ 
where $\mathcal{K}(d)$ is the Koszul stack $\mathcal{K}(d)=\OO_{\X(d)}[E(d)^{\vee}[1];d]$ and $\mathcal{K}^{\text{cl}}(d)$ is its underlying scheme.
One source of examples of $(Q,W)$ such that $W$ admits a cut are tripled quivers $(\widetilde{Q},\widetilde{W})$, see Subsection \ref{tripledef} for definition. For a tripled quiver $(\widetilde{Q},\widetilde{W})$ we have that $\mathcal{K}(d)=\mathcal{P}(d)$ is the stack of representations of the preprojective algebra of $Q$, so:
$$\text{KHA}(Q,W)\cong \bigoplus_{d\in\mathbb{N}^I} G_0(\mathcal{P}(d))\cong \bigoplus_{d\in\mathbb{N}^I} G_0(\mathcal{P}^{\text{cl}}(d)).$$
Theorem \ref{thm5} thus implies that
a K-theoretic Hall algebra defined in terms of the preprojective stack $\mathcal{P}(d)$
satisfies a PBW type theorem.

\subsection{Framed quivers}
The category $\MM(d)$ is a semi-orthogonal summand for all categories of singularities of framed representations. We expect this result to be useful in comparing the $\text{KHA}$ and $\text{CoHA}$.

\begin{prop}\label{framedsod}
Let $\X^{ss}(f,d)$ be the framed moduli space for a quiver $Q$, and consider the map $\pi:\X^{ss}(f,d)\to\X(d)$.
We have a semi-orthogonal decomposition:
$$D_{sg}(\X^{\text{ss}}(f,d)_0)=\langle\cdots, \pi^*\oM(d)\rangle.$$ In particular, we have an injection:
$$K_0(\MM(d))\hookrightarrow K_0(\oM(d))\hookrightarrow K_0(D_{sg}(\X^{\text{ss}}(f,d)_0)).$$
\end{prop}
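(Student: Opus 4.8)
\textbf{Proof proposal for Proposition \ref{framedsod}.}
The plan is to imitate the construction of Step 1 in the proof of Theorem \ref{thm5} (Subsection \ref{step1}), but carried out for the quotient stack $\X(f,d) = R(f,d)/G(d)$ of framed representations of $Q^f$ rather than for $\X(d)$. Recall that $R(f,d) = R(d) \oplus \C^{fd}$, with $\C^{fd} = \bigoplus_{i\in I}\Hom(\C^{f^i},V^i)$ the framing arrows, and that $G(d)$ acts on $\C^{fd}$ on the left. The first step will be to record that $\X(f,d)$ carries a $G(d)$-equivariant vector bundle structure over $\X(d)$ via the projection $\pi:\X(f,d)\to\X(d)$, so that $\pi^*:D^b(\X(d))\to D^b(\X(f,d))$ is fully faithful, and that the semistable locus $\X^{ss}(f,d)\subset\X(f,d)$ is cut out by the King stability condition $\theta^f$ of Subsection \ref{framed}. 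The potential $\text{Tr}(\widetilde{W}_{f,d})$ on $\X(f,d)$ is pulled back from $\text{Tr}(W_d)$ on $\X(d)$, so we stay in the framework of Subsection \ref{singul} throughout.

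Next I would build a window-type semi-orthogonal decomposition of $D^b(\X(f,d))$ whose ``innermost'' piece is $\pi^*\overline{\mathbb{N}}(d)$. Concretely: combine the semi-orthogonal decomposition of $D^b(\X(d))$ from Theorem \ref{sod5} (equivalently the one in Proposition \ref{gen0} and \ref{orth}) with the Kempf--Ness stratification of $\X(f,d)$ associated to $\theta^f$; the attracting loci for $\theta^f$ on $R(f,d)$ are of the form $R(f,d_1)\times R(d_2)\times\cdots$ analogous to Subsection \ref{attractingloci}, and Theorem \ref{hl} (Halpern-Leistner) together with Amplification 2.11 of \cite{hl} produces a semi-orthogonal decomposition $D^b(\X(f,d)) = \langle \cdots, \pi^*\overline{\mathbb{N}}(d)\rangle$ in which the remaining summands are of the form $p_*q^*$ of external products of framed and unframed window categories, plus the categories $U_{<w}, U_{\geq w}$ supported on the $\theta^f$-unstable locus. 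Restricting along $\X^{ss}(f,d)\subset\X(f,d)$ and discarding the pieces supported on the unstable locus, one gets a semi-orthogonal decomposition of $D^b(\X^{ss}(f,d))$ with the piece $\pi^*\overline{\mathbb{N}}(d)$ (now restricted, but the restriction functor is fully faithful on the window by Theorem \ref{hl}) still a semi-orthogonal summand. The key point is that $\overline{\mathbb{N}}(d)$ was defined (Subsection \ref{defnmagic}) by a window condition $\chi+\rho\in\tfrac12\overline{\mathbb{W}}$ involving only the weights of $R(d)$, and those weights are a subset of the weights of $R(f,d)$; since the framing weights $\beta^i_j$ (the weights of $\C^{fd}$) are ``one-sided'', the magic window for $\X(f,d)$ contains the pullback of the magic window for $\X(d)$ as a semi-orthogonal summand. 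This is the technical heart of the argument and the step I expect to require the most care: matching up the Kempf--Ness strata for $\theta^f$ with the partitions indexing Theorem \ref{sod5}, and checking the relevant $\Hom$-vanishing (as in Proposition \ref{orth}) between $\pi^*\overline{\mathbb{N}}(d)$ and the other summands, using the Borel--Bott--Weil estimates of Proposition \ref{rgoesdown} applied on $\X(f,d)$.

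Finally, I would pass from $D^b$ to categories of singularities: apply Proposition \ref{sod} to the $D^b(\mathbb{A}^1)$-linear semi-orthogonal decomposition $D^b(\X^{ss}(f,d)) = \langle\cdots,\pi^*\overline{\mathbb{N}}(d)\rangle$ with respect to the potential to obtain
\[
D_{sg}(\X^{ss}(f,d)_0) = \langle \cdots, D_{sg}(\pi^*\overline{\mathbb{N}}(d))\rangle,
\]
and identify $D_{sg}(\pi^*\overline{\mathbb{N}}(d)) = \pi^* D_{sg}(\overline{\mathbb{N}}(d)) = \pi^*\overline{\mathbb{M}}(d)$ exactly as in the proof of Theorem \ref{thm5}, using functoriality of categories of singularities (Subsection \ref{functo}) for the smooth map $\pi$. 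Passing to $K_0$ gives a direct sum decomposition with $K_0(\overline{\mathbb{M}}(d))$ a direct summand of $K_0(D_{sg}(\X^{ss}(f,d)_0))$; composing with the injection $K_0(\mathbb{M}(d))\hookrightarrow K_0(\overline{\mathbb{M}}(d))$ from Proposition \ref{surj} (density of $\mathbb{M}(d)$ in $\overline{\mathbb{M}}(d)$ and \cite[Corollary 2.3]{Th}) yields the claimed injection $K_0(\mathbb{M}(d))\hookrightarrow K_0(\overline{\mathbb{M}}(d))\hookrightarrow K_0(D_{sg}(\X^{ss}(f,d)_0))$. One should also note the $T$-equivariant version follows verbatim since all functors used admit $T$-equivariant refinements.
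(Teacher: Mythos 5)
Your overall strategy (Halpern-Leistner window categories, then Proposition \ref{sod} to pass to $D_{sg}$) is the same as the paper's, but there are concrete gaps.

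The most serious is the opening claim that $\pi^*\colon D^b(\X(d))\to D^b(\X(f,d))$ is fully faithful because $\X(f,d)\to\X(d)$ is a vector bundle. This is false: for a vector bundle $\pi\colon E\to X$ with positive rank, $R\pi_*\pi^*B = B\otimes\mathrm{Sym}(E^\vee)$, so $\mathrm{Hom}(\pi^*A,\pi^*B)$ is strictly larger than $\mathrm{Hom}(A,B)$ (already $\pi^*\colon D^b(\mathrm{pt})\to D^b(\mathbb{A}^1)$ is not fully faithful). What the paper actually proves, and what is needed, is full faithfulness of the \emph{restricted} functor $\pi^*\colon\overline{\mathbb{N}}(d)\to D^b(\X^{ss}(f,d))$ on the semistable locus, where one can use the identity $\pi_*\pi^*\OO=\OO$ for the map $\pi\colon\X^{ss}(f,d)\to\X(d)$; this is a different argument, relying on the fact that $\pi$ ceases to be a vector bundle after deleting the unstable locus and instead has fibers with trivial higher cohomology. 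Your proposal cannot simply pull back the window and restrict, because the full-faithfulness step is broken.

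Two further points are left unaddressed. First, you acknowledge the weight computation $\pi^*\overline{\mathbb{N}}(d)\subset\mathbb{G}_w$ as the ``technical heart'' and do not carry it out; in the paper this is a short argument using the decomposition $N^{\tau>0}=N^{\lambda>0}\oplus(\C^{fd})^{\tau>0}$ for a Kempf-Ness character $\tau=\lambda\mu^{-a}$ with $a>0$, and the positivity $\langle\tau,(\C^{fd})^{\tau>0}\rangle>0$ to widen the window on one side. Second, a semi-orthogonal decomposition requires the inclusion $\pi^*\overline{\mathbb{N}}(d)\hookrightarrow D^b(\X^{ss}(f,d))$ to admit an adjoint (Subsection \ref{sodadjoint}); the paper constructs it explicitly as $\Phi\circ\pi_*$ where $\Phi$ is the adjoint to $\overline{\mathbb{N}}(d)\subset D^b(\X(d))$. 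Your invocation of ``Amplification 2.11'' to produce an SOD of the unsemistabilized $D^b(\X(f,d))$ with $\pi^*\overline{\mathbb{N}}(d)$ as a factor is not something Theorem \ref{hl} supplies; you would still need this adjoint to make the SOD of $D^b(\X^{ss}(f,d))$ (equivalently of the window $\mathbb{G}_w$) actually materialize.
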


\begin{proof}
We prove the zero potential case, the general case follows from Proposition \ref{sod}. We will use \cite[Theorem 2.10]{hl}, see also Subsection \ref{window}, to find a category $\mathbb{G}_w\subset D^b(\X(f,d))$ such that $\text{res}:\mathbb{G}_w\to D^b(\X^{ss}(f,d))$ is an equivalence. We now explain what are the Kempf-Ness strata and we choose weights $w$ to define $\mathbb{G}_w$. We use GIT for the equivariant line bundle $\mathcal{L}=\text{det}\,(\C^{fd})$. 

The Kempf-Ness strata appear for characters $\tau=\lambda\mu^{-a}$, where $\lambda:\C^*\to G(d)$, $\mu:\C^*\xrightarrow{(\text{Id},z\text{Id})} G(d)\times\C^*$, and $a>0$. 
The exponent $a$ is positive. Indeed, to find a Kempf-Ness stratum, we need to maximize the invariants 
$$\text{inv}(\tau, Z):=-\frac{\text{wt}_{\lambda}\mathcal{L}|_Z}{|\lambda|}$$
for pairs $(\tau, Z)$ where $\tau$ is a character of $G(d)\times\C^*$ $Z$ is fixed by $\tau$, and $\text{inv}(\lambda, Z)>0$.
The algorithm continues as long as there are pairs such that $\text{inv}(\lambda, Z)<0$,
so $a$ is indeed positive.
The normal bundle $N^{\tau>0}$ in $R(d)\times\C^{fd}$ at a fixed point $p$ fixed by $\tau$ has the decomposition: 
$$N^{\tau>0}=N^{\lambda>0}\oplus (\C^{fd})^{\tau>0}.$$ We choose $w$ for the Kempf-Ness stratum such that:
$$-\frac{1}{2}\langle \lambda, N^{\lambda>0}\rangle \leq \langle \tau, j_{\tau}^*F\rangle <
\frac{1}{2}\langle \lambda, N^{\lambda>0}\rangle +\langle \tau, (\C^{fd})^{\tau>0}\rangle.$$
If the character $\tau$ fixes $\C^{fd}$, the invariant $\text{inv}(\lambda,Z)$ is zero.
\\

We next show that $\pi^*\overline{\mathbb{N}(d)}\subset \mathbb{G}_w$. For this, we need to show that for $F\in\overline{\mathbb{N}(d)}$, the $\tau$-weights of $\pi^*F$ satisfy the above condition. This follows from $\langle \tau, (\C^{fd})^{\tau>0}\rangle>0$ and the definition of $\overline{\mathbb{N}(d)}$ which implies the following $\lambda$-conditions:
$$-\frac{1}{2}\langle \lambda, N^{\lambda>0}\rangle \leq \langle \lambda, F\rangle=\langle \tau, \pi^*F\rangle \leq 
\frac{1}{2}\langle \lambda, N^{\lambda>0}\rangle.$$

Next, the functor $\pi^*: \overline{\mathbb{N}(d)}\to D^b(\X^{ss}(f,d))$ is fully faithful because $\pi_*\pi^*\OO=\OO$ and thus:
$$\text{Hom}\,(\pi^*A,\pi^*B)=\text{Hom}\,(A,\pi_*\pi^*B)=\text{Hom}\,(A,B).$$

In order to obtain the desired semi-orthogonal decomposition, we need to show that the above fully faithful functor $\pi^*$ has an adjoint, see Subsection \ref{sodadjoint}. Consider the functor: $$\mathbb{G}_w\xrightarrow{\pi_*} D^b(\X(d))\xrightarrow{\Phi}\overline{\mathbb{N}(d)},$$ where $\Phi$ is the adjoint to the inclusion 
$\overline{\mathbb{N}(d)}\subset D^b(\X(d))$. This functor is indeed an adjoint to $\pi^*$:
$$\text{Hom}(\pi^*A, B)=\text{Hom}(A,\pi_*B)=\text{Hom}(A,\Phi\pi_*B).$$

\end{proof}

\subsection{Abelian decompositions}

In this section, we provide a replacement of $\MM(d)$ in terms of the abelian stacks $\Y(d)$. We first need to define the abelian version of partition.

\subsubsection{Abelian partitions}
Let $k\geq 1$ be an integer. 
We call an ordered $k$-tuple of sets $$P=(P_1,\cdots, P_k)$$ 
a \textit{abelian partition} of $d\in\mathbb{N}^{I}$ 
if every $P_i$ consists of sets $P_{i}^v$ for $v\in I$, and the sets $P_{i}^v$ for $1\leq 1\leq k$ form a partition of the set $$\{1,\cdots,d^v\}=P_{1}^v\cup\cdots\cup P_{k}^v.$$ 
Some of the sets $P_{i}^v$ might be empty. 
For a given abelian partition $P=(P_1,\cdots, P_k)$, we get a partition $d=d_1+\cdots+d_k$ of $d\in\mathbb{N}^{I}$ by letting $d_i=(\text{card}\,P_{i}^v)\in\mathbb{N}^{I}$. 

For a set $P^v\subset\{1,\cdots,d^v\}$,
let $\mathfrak{gl}(P^v)\subset \mathfrak{gl}(d^v)$ be the affine subspace generate by $\beta^v_{i}$ with $i\in P^v$. For a set $P=(P^v)_{v\in I}$, let $R(P) \subset R(d)$ be the subspace generated by the weights $\beta^v_{i}-\beta^w_{j}$ with $i\in P^v$, $j\in P^w$, and $v$ and $w$ vertices with an edge between them.
For an abelian partition $P$, let $R(P)=R(P_1)\times\cdots\times R(P_k)\subset R(d)$.

For any abelian partition $P$, we can choose a character $\lambda:\C^*\to T'(d)\subset T(d)$, where $T'(d)=\text{ker}\,(\text{det}:T(d)\to\mathbb{C}^*)$, such that the $\lambda$-fixed locus is $R(d)^{\lambda}=R(P)$. Consider the diagram of attracting loci for $\lambda$:

\begin{tikzcd}
  R(P)^{\lambda>0}/T(d) \arrow[r, "p"] \arrow[d, "q"]
    & R(d)/T(d)=\Y(d)  \\
  R(P)/T(d). 
 \end{tikzcd}
\linebreak

\subsubsection{Definition of $\MM^{ab}(d)$}\label{yetanother}
Let $\NN^{ab}(d)\subset D^b(\Y(d))$ be the subcategory generated by sheaves $V(\chi)\otimes\OO$, where $V(\chi)$ is the one-dimensional representation of $T(d)$ with weight $
\chi$ such that:
$$\chi+\rho\in\frac{1}{2}\WW.$$
We define $\oM^{ab}(d):=D_{sg}(\NN^{ab}(d))\subset D_{sg}(\Y(d)_0)$. It is immediate to see that $$(\Pi^*)^{-1}\pi^*\oM^{ab}(d)\subset \oM^{ab}(d)\text{ and }\pi_*\Pi^*\oM^{ab}(d)=\oM(d).$$

Let $P=(P_1,P_2)$ be an abelian partition with associated set of pairs $\{(d,w), (e,v)\}$ admissible with $r=\frac{1}{2}$. Denote by $\tau P$ the set partition $(P_2, P_1)$. Let $w'$ and $v'$ be weights such that $\{(e,v'), (d,w')\}$ is admissible with $r=\frac{1}{2}$.
Let $\lambda$ be a character associated to $P$ such that:
$$\langle \lambda, \chi_P+\rho\rangle=-\frac{1}{2}\langle \lambda, N^{\lambda>0}\rangle,$$ where $\chi_P$ is defined as in the non-abelian case.
Let $V(\chi_1)\otimes\OO_{\Y(d)}\in \NN^{\text{ab}}(d)_w$ and $V(\chi_2)\otimes\OO_{\Y(e)}\in \NN^{\text{ab}}(d)_v$, and consider the functor $$\tau:\NN^{\text{ab}}(P)\cong \NN^{\text{ab}}(d)_w\boxtimes\NN^{\text{ab}}(e)_v\to \NN^{\text{ab}}(\tau P)\cong \NN^{\text{ab}}(e)_{v'}\boxtimes\NN^{\text{ab}}(d)_{w'}$$ defined on generators by:
$$\tau: V(\chi_1+\chi_2)\otimes\OO_{\Y(d)\times\Y(e)}\to V(\chi_2+\chi_1+N^{\lambda<0})\otimes\OO_{\Y(e)\times\Y(d)}[l(\sigma)],$$ where $\sigma$ is the element of the Weyl group
$\sigma=\prod_{v\in I} \sigma^v\in\mathfrak{S}=\prod_{v\in I}\mathfrak{S}_{d^v+e^v}$ which for a vertex $v$ is the permutation: $$(1,\cdots,e^v,e^v+1,\cdots,d^v+e^v)\to (e^v+1,\cdots,d^v+e^v,1,\cdots,e^v).$$
Observe that applying $\pi_*\Pi^*$ to the functor $\tau$ induces the functor $\tau$ defined in Subsection \ref{defMM}:
$$\tau: V(\chi_1+\chi_2)\otimes\OO_{\X(d+e)}
\to V(\chi_2+\chi+1+N_{e,d})\otimes\OO_{\X(d+e)}.$$
The functor $\tau$ in the abelian case induces an analogous functor $\tau: \oM^{\text{ab}}(P)\to \oM^{\text{ab}}(\tau P)$.
Observe that $\tau^2=[\text{even}]$.
Consider a set $S$ of cardinal $k$ corresponding to a set partition of $d$. Let $P$ be an ordering of the elements of $S$. From the above construction, we obtain an action of $\mathfrak{S}_k$ on: $$\bigoplus_{\mathfrak{S}_k}\boxtimes_{i=1}^k
K_0(\oM^{\text{ab}}(P))\cong \bigoplus_{\mathfrak{S}_k}\boxtimes_{i=1}^k K_0(\oM^{\text{ab}}(d_i)_{w_i}).$$ 
We define $\mathcal{A}(S)\subset \bigoplus_{\mathfrak{S}_k}\boxtimes_{i=1}^k
\oM^{\text{ab}}(P)$
as the subcategory generated by extensions of $E$ and $\sigma(E)[\text{odd}]$, where $E$ is an object of $\bigoplus_{\mathfrak{S}_k}\boxtimes_{i=1}^k
\oM^{\text{ab}}(P)$ and $\sigma\in\mathfrak{S}_{k}$. 
We define $\mathbb{M}^{\text{ab}}(d)\subset \oM^{\text{ab}}(d)$ as the subcategory of objects $F$ such that $\Phi_{S}(F)\in \mathcal{A}_{S}$. 

Consider the shifted action $w*\chi=w(\chi+\rho)-\rho$ of the Weyl group $W$ on weights in $M_{\mathbb{R}}$.
Then one can check that for $S$ and $\lambda$ as above, $\Phi_{w\lambda}(w*F)$ is in $\mathcal{A}_{wS}$ is an only if $\Phi_{\lambda}(F)$ is in $\mathcal{A}_S$.
This implies that $$(\Pi^*)^{-1}\pi^*\MM(d)\subset \MM^{\text{ab}}(d)\text{ and }\pi_*\Pi^*\MM^{\text{ab}}(d)=\MM(d).$$

\begin{prop}\label{abelianmagic}
The map defined in Subsection \ref{abeliansubsection} induces an isomorphism:
$$(\Pi^*)^{-1}\pi^*: K_0(\MM(d))\cong K_0(\MM^{ab}(d))^{W}.$$
\end{prop}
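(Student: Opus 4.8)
\textbf{Proof proposal for Proposition \ref{abelianmagic}.}

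The plan is to deduce the statement from the analogue $\pi_*\Pi^*:K_0(D_{sg}(\Y(d)_0))^{W}\cong K_0(D_{sg}(\X(d)_0))$ of Proposition \ref{abel2}(a), together with the compatibilities between the abelian and non-abelian versions of the categories $\oM$ and $\MM$ that were recorded in Subsection \ref{yetanother}. Concretely, I already have the inclusions $(\Pi^*)^{-1}\pi^*\MM(d)\subset\MM^{ab}(d)$ and $\pi_*\Pi^*\MM^{ab}(d)=\MM(d)$, and likewise with $\oM$ in place of $\MM$. The first step is to observe that, since $\pi_*\pi^*=\OO_{\X(d)}$ and $\Pi^*$ is an isomorphism, the composite $(\Pi^*)^{-1}\pi^*$ is a split injection with one-sided inverse $\pi_*\Pi^*$ (up to symmetrization), exactly as in the proof of Proposition \ref{abel2}(b); hence $(\Pi^*)^{-1}\pi^*: K_0(\oM(d))\to K_0(\oM^{ab}(d))$ is injective with image landing inside $K_0(\oM^{ab}(d))^{W}$, and the restriction of $\pi_*\Pi^*$ to $K_0(\oM^{ab}(d))^{W}$ gives a two-sided inverse. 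This identifies $K_0(\oM(d))\cong K_0(\oM^{ab}(d))^{W}$ compatibly with the one in Proposition \ref{abel2}.

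The second step is to carry this identification across the defining kernel description of $\MM(d)$ from Proposition \ref{surj}. There, $K_0(\MM(d))$ is the intersection over all sets $S$ with $|S|\geq 2$ of the kernels of the maps $T\,\Phi_S: K_0(\oM(d))\to \big(\bigoplus_{\mathfrak{S}_k}\boxtimes K_0(\oM(d_i)_{w_i})\big)^{\mathfrak{S}_k}$, and the same description holds verbatim in the abelian setting with $\Phi_S^{ab}$ and $\oM^{ab}$ (the proof of Proposition \ref{surj} only uses density of $\MM$ in $\oM$, \cite[Corollary 2.3]{Th}, and Proposition \ref{propprop}, all of which apply abelian-wise since $\MM^{ab}(d)$ is dense in $\oM^{ab}(d)$ by the $F\oplus F[1]$ trick). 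So it suffices to check that the isomorphism $(\Pi^*)^{-1}\pi^*$ intertwines $T\,\Phi_S$ with its abelian counterpart, after taking $W$-invariants on the target. This is where the bulk of the work sits: one needs that the adjoint truncation functors $\Phi_{\dd}=\beta_{\geq w'}p_{\dd}^*$ commute appropriately with $\pi^*$ and $\Pi^*$, which follows from base change along the diagram relating the abelian and non-abelian attracting-loci correspondences in Subsection \ref{abeliansubsection} and from the fact (used already in Subsection \ref{yetanother}) that $\pi_*\Pi^*$ applied to the abelian $\tau$-functor recovers the non-abelian $\tau$-functor, so $\pi_*\Pi^*$ sends $\mathcal{A}_S^{ab}$ onto $\mathcal{A}_S$ and $(\Pi^*)^{-1}\pi^*$ sends $\mathcal{A}_S$ into $\mathcal{A}_S^{ab}$.

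Finally, a small compatibility with the Weyl group action remains: the $W$-action on $K_0(\oM^{ab}(d))$ is via the shifted action $w*\chi=w(\chi+\rho)-\rho$, and I need that $w$ carries $\mathcal{A}_S^{ab}$ to $\mathcal{A}_{wS}^{ab}$, which is precisely the statement $\Phi_{w\lambda}(w*F)\in\mathcal{A}_{wS}$ iff $\Phi_\lambda(F)\in\mathcal{A}_S$ noted in Subsection \ref{yetanother}; hence taking $W$-invariants is compatible on both sides and the kernel of $T\,\Phi_S^{ab}$ restricted to $K_0(\oM^{ab}(d))^{W}$ is exactly the image under $(\Pi^*)^{-1}\pi^*$ of the kernel of $T\,\Phi_S$. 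Intersecting over all $S$ gives $K_0(\MM(d))\cong K_0(\MM^{ab}(d))^{W}$, with the map induced by $(\Pi^*)^{-1}\pi^*$ as claimed. I expect the main obstacle to be the bookkeeping in the second step—verifying that $\Phi_S$ (which involves the truncation $\beta_{\geq w'}$, not just a pullback) genuinely commutes with $\pi^*$ and $\Pi^*$ up to the symmetrization built into the abelian-to-non-abelian passage—rather than any conceptual difficulty; once that commutation is in place the rest is formal from Proposition \ref{surj} and Proposition \ref{abel2}.
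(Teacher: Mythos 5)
Your proposal is correct in its overall direction but takes a noticeably longer route than the paper's. The paper runs what is essentially your ``step 1'' argument directly on $\MM$ rather than on $\oM$: since Subsection \ref{yetanother} already established the categorical inclusions $(\Pi^*)^{-1}\pi^*\MM(d)\subset\MM^{ab}(d)$ and $\pi_*\Pi^*\MM^{ab}(d)=\MM(d)$ (which you quote at the outset), together with the injections $K_0(\MM(d))\hookrightarrow K_0(D_{sg}(\X(d)_0))$ and $K_0(\MM^{ab}(d))\hookrightarrow K_0(D_{sg}(\Y(d)_0))$ coming from the semi-orthogonal decompositions, one can form the commutative ladder over $(\Pi^*)^{-1}\pi^*$ and $\pi_*\Pi^*$ and immediately conclude: the ambient bottom row identifies $K_0(D_{sg}(\X(d)_0))$ with $K_0(D_{sg}(\Y(d)_0))^W$ by Proposition \ref{abel2}, so the image of the top-left map lands in $K_0(\MM^{ab}(d))^W$, both top maps are injective because the verticals and bottoms are, and their composite is $\pi_*\pi^*=\mathrm{id}$, forcing both to be isomorphisms. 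Your steps 2 and 3 — transporting the kernel description of Proposition \ref{surj} across $(\Pi^*)^{-1}\pi^*$ and checking that $\Phi_S$ intertwines with $\pi^*$, $\Pi^*$, and the shifted Weyl action — amount to re-deriving, inside $K_0$, precisely the compatibility $\pi_*\Pi^*\MM^{ab}(d)=\MM(d)$ that Subsection \ref{yetanother} already records, so they are logically sound but redundant. The practical cost of your detour is that you take on the unproved burden you yourself flag (``the bulk of the work''): verifying that the truncation $\beta_{\geq w'}$ in $\Phi_{\dd}$ commutes with $\pi^*$ and $\Pi^*$ up to symmetrization. That verification is real work, and the paper's approach avoids it entirely by invoking the category-level statement once and for all; if you instead apply your step-1 diagram chase to $\MM$ directly, the proposition follows in a few lines.
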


\begin{proof}
Recall the natural injection $K_0(\MM(d))\hookrightarrow K_0(\oM(d))\hookrightarrow K_0(D_{sg}(\X(d)_0))$. 
There exists a semi-orthogonal decomposition $$D_{sg}(\Y(d)_0)=\langle \mathcal{B}, \oM^{ab}(d)\rangle,$$ where the complement $\mathcal{B}$ is supported on attracting loci $p_{\lambda*}q_{\lambda}^*$; the proof of the semi-orthogonal decomposition is the same as in the non-abelian case, and it also follows from the theorem of \cite{sp}. We thus have an injection: $$K_0(\MM^{ab}(d))\hookrightarrow K_0(\oM^{ab}(d))\hookrightarrow K_0(D_{sg}(\Y(d)_0)).$$
Consider the diagram:

\begin{tikzcd}
K_0(\MM(d))\arrow{r}{(\Pi^*)^{-1}\pi^*} \arrow[d, hookrightarrow] & 
K_0(\MM^{ab}(d)) \arrow[d, hookrightarrow] \arrow{r}{\pi_*\Pi^*} &
K_0(\MM(d)) \arrow[d, hookrightarrow]\\
K_0(D_{sg}(\X(d)_0))\arrow[r, hookrightarrow, "(\Pi^*)^{-1}\pi^*"]& K_0(D_{sg}(\Y(d)_0)) \arrow{r}{\pi_*\Pi^*}& K_0(D_{sg}(\X(d)_0)).
\end{tikzcd}
\\
The image of $(\Pi^*)^{-1}\pi^*: K_0(D_{sg}(\X(d)_0)\to K_0(D_{sg}(\Y(d)_0))$ is $K_0(D_{sg}(\Y(d)_0))^W$ by Proposition \ref{abel2}, so the image of the top left arrow lies in $K_0(\MM^{ab}(d))^W$. We can thus rewrite the above diagram as:

\begin{tikzcd}
K_0(\MM(d))\arrow{r}{(\Pi^*)^{-1}\pi^*} \arrow[d, hookrightarrow] & 
K_0(\MM^{ab}(d))^W \arrow[d, hookrightarrow] \arrow{r}{\pi_*\Pi^*} &
K_0(\MM(d)) \arrow[d, hookrightarrow]\\
K_0(D_{sg}(\X(d)_0))\arrow{r}{\text{iso.}}& K_0(D_{sg}(\Y(d)_0)) \arrow{r}{\text{iso.}}& K_0(D_{sg}(\X(d)_0)).
\end{tikzcd}
\\
This implies that the top maps are injections. The composition of the top arrow maps is $\pi_*\Pi^*(\Pi^*)^{-1}\pi^*=\text{id},$ so the top maps are isomorphisms.
\end{proof}

\section{GrKHA and the KBPS Lie algebra}\label{8}

\subsection{An alternative definition of the coproduct on grKHA}\label{sodcop}

Let $d,e\in\mathbb{N}^I$, 
let $\delta\in M_{\mathbb{R}}$ be a weight, and let $\lambda_{d,e}$ be a character as in Subsection \ref{attractingloci}.
Consider the Kempf-Ness stratum: \[\mathcal{Y}(d)\times\mathcal{Y}(e)\xleftarrow{q_{d,e}} \mathcal{Y}(d,e)\xrightarrow{p_{d,e}} \mathcal{Y}(d+e).\] 
We obtain a semi-orthogonal decomposition from Theorem \ref{hl}:
\begin{equation}
    D^b(\mathcal{Y}(d+e))=\langle p_*q^*D^b\left(\mathcal{Y}(d)\times \mathcal{Y}(e)\right)_{<w}, \mathbb{G}_{\delta}, p_*q^*D^b\left(\mathcal{Y}(d)\times \mathcal{Y}(e)\right)_{\geq w}\rangle,\end{equation} where $w=\langle \lambda_{d,e}, \delta\rangle$.
Consider the adjoints to the inclusion of the first and third summands in the above semi-orthogonal decomposition:
\begin{multline*}
    \Psi=(\Psi_{<w},\Psi_{\geq w}): D^b(\mathcal{Y}(d+e))\to 
\langle p_*q^*\mathcal{Y}(d)\times \mathcal{Y}(e)_{<w}, p_*q^*\mathcal{Y}(d)\times \mathcal{Y}(e)_{\geq w}\rangle\cong \\
D^b(\mathcal{Y}(d)\times\mathcal{Y}(e)).\end{multline*}
For the definition of adjoint, see Subsection \ref{window} or \cite[Section 3.6]{hl}.
%Let $\Phi$ is the inverse of the equivalence: \[p_*q^*:D^b\left(\mathcal{Y}(d)\times \mathcal{Y}(e)\right)\to D_{\Y(d,e)}^b\left(\mathcal{Y}(d+e)\right).\]
%The adjoints have the form, see \cite[Lemma 3.37]{hl}:
%$$\Psi_{\geq w}: D^b(\mathcal{Y}(d+e))\xrightarrow{\beta_{\geq w}R\Gamma_{d,e}} D_{\Y(d,e)}^b(\Y(d+e))_{\geq w} \xrightarrow{\Phi} D^b\left(\mathcal{Y}(d)\times \mathcal{Y}(e)\right)_{\geq w},$$ and similarly for weights $<w$ by 
%$$\Psi_{< w}: D^b(\mathcal{Y}(d+e))\xrightarrow{\mathbb{D}\beta_{>w}R\Gamma\, \mathbb{D}} D_{\Y(d,e)}^b(\Y(d+e))_{< w} \xrightarrow{\Phi} D^b\left(\mathcal{Y}(d)\times \mathcal{Y}(e)\right)_{<w}.$$
Using Proposition \ref{uns}, we have a semi-orthogonal decomposition:
\begin{equation}\label{sod8}
    D_{sg}(\mathcal{Y}(d+e)_0)=\langle p_*q^*D_{sg}\left(\left(\mathcal{Y}(d)\times \mathcal{Y}(e)\right)_0\right)_{<w}, \mathbb{D}_w, p_*q^*D_{sg}\left(\left(\mathcal{Y}(d)\times \mathcal{Y}(e)\right)_0\right)_{\geq w}\rangle,\end{equation} where $\mathbb{D}_w:=D_{sg}(\mathbb{G}_w)$.
The functors above thus induce:
$$TS^{-1}\Psi: D_{sg}(\mathcal{Y}(d+e)_0)\to D_{sg}\left(\left(\Y(d)\times\Y(e)\right)_0\right)\to D_{sg}\left(\mathcal{Y}(d)_0\right)\times D_{sg}\left(\mathcal{Y}(e)_0\right).$$
We define the abelian coproduct by:
$$\overline{\Delta}^{ab}_{e,d,\delta}=\text{sw}_{d,e}\,TS^{-1}\Psi: K_0\left(D_{sg}(\mathcal{Y}(d+e)_0)\right)\to K_0\left(D_{sg}\left(\mathcal{Y}(e)_0\right)\right)\boxtimes K_0\left(D_{sg}\left(\mathcal{Y}(d)_0\right)\right).$$
The coproduct on Weyl invariants is defined by multiplying the above with an Euler class $\text{eu}_2(e,d)$:
\begin{multline*}
\overline{\Delta}_{e,d,\delta}=\text{sw}_{d,e}\,TS^{-1}\Psi(\text{eu}_2(-)): K_0\left(D_{sg}(\mathcal{Y}(d+e)_0)\right)^{W_{d+e}}\to\\ K_0\left(D_{sg}\left(\mathcal{Y}(e)_0\right)\right)^{W_d}\boxtimes K_0\left(D_{sg}\left(\mathcal{Y}(d)_0\right)\right)^{W_d}.
\end{multline*}

The theorem we prove in this section is:
\begin{thm}\label{coincide}
The abelian coproduct $\overline{\Delta}^{ab}_{d,e,\delta}$ defined above using semi-orthogonal decompositions induces a coproduct for $\text{gr}\,\text{KHA}$ which coincides with the coproduct $\Delta_{d,e}^{ab}$ from Definition \ref{coproduct}. Thus $\overline{\Delta}_{d,e,\delta}=\widetilde{\Delta}_{d,e}.$ In particular, the coproduct map $\overline{\Delta}_{d,e,\delta}$ defined above is independent of the choice of the weight $\delta\in M_{\mathbb{R}}$.
\end{thm}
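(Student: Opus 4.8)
The plan is to compare the two coproducts term by term after passing to the abelian stacks $\Y(d)$, where everything is controlled by weights of $T(d)$. The first step is to reduce to the potential zero case: both $\overline{\Delta}^{ab}_{d,e,\delta}$ and $\Delta^{ab}_{d,e}$ are obtained from functors on $D^b$ of smooth quotient stacks via Proposition \ref{sod} and Corollary \ref{corts}, so it suffices to identify the corresponding functors (up to natural isomorphism) at the level of $D^b(\Y(-))$ before taking central fibers. The second step is to make the comparison on generators. By Proposition \ref{abelianyetagain} and the definition of $\widetilde{\Delta}_{d,e}$, it is enough to show that for a dominant-enough $T(d+e)$-weight $\chi$, the complex $\widetilde{q}_{e,d*}\tau^*p_{e,d}^*\bigl(V(\chi)\otimes\OO_{\Y(d+e)}\bigr)$ agrees in K-theory with the output of the semi-orthogonal-decomposition functor $TS^{-1}\Psi\bigl(V(\chi)\otimes\OO_{\Y(d+e)}\bigr)$, after correcting by the Euler class $\text{eu}_2(e,d)$ which is already accounted for in the definitions of $\overline{\Delta}_{e,d,\delta}$ and $\widetilde{\Delta}_{d,e}$.

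Concretely, I would analyze the three-step description of $\Psi$ recalled in Subsection \ref{window}: the adjoint to $D^b_{\mathcal S}(\X)_{\geq w}\subset D^b(\X)$ is $\beta_{\geq w}R\Gamma_{\mathcal S}$, and the adjoint to $D^b_{\mathcal S}(\X)_{<w}\subset D^b(\X)$ is the Serre-dual version. Applied to a line bundle $V(\chi)\otimes\OO$ on $\Y(d+e)$, the local cohomology $R\Gamma_{\mathcal S}$ together with $p_*q^*$ produces exactly the Koszul-type resolution of $i_{d,e*}\bigl(V(\chi')\otimes\OO_{\Y(d)\times\Y(e)}\bigr)$ by line bundles $V(\chi-\sigma_I)\otimes\OO$, where $\sigma_I$ runs over partial sums of weights in $N_{d,e}$; this is the same computation, via Borel--Bott--Weyl, that governs $\widetilde{q}_{e,d*}\tau^*p_{e,d}^*$, compare the discussion following Proposition \ref{rgoesdown} and the Example in Subsection \ref{Wexample}. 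The truncation $\beta_{\geq w}$ versus $\beta_{<w}$ is precisely the mechanism by which the $\widetilde{\X}(d,e)$ construction (the $\C^*$-action with the extra $\C$ coordinate and the map $\widetilde{q}_{d,e}$) implements a projective bundle $\mathbb{P}_{\Y(d)\times\Y(e)}(N)$; in both descriptions the choice of $w=\langle\lambda_{d,e},\delta\rangle$ only shifts which half of the line bundles $V(\chi-\sigma_I)\otimes\OO$ one keeps, and passing to $D_{sg}$ or to $\text{gr}\,\text{KHA}$ collapses this ambiguity. This gives at once the independence of $\delta$ as a corollary, since different $\delta$ give the same class in $K_0(D_{sg}(\Y(d+e)_0))$ after the semi-orthogonal summand $\mathbb{D}_w$ is split off.

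The last step is bookkeeping: track the $\text{eu}_2$ factors and the swap $\text{sw}_{d,e}$, check that $TS^{-1}$ on both sides is the same Thom--Sebastiani inverse, and invoke Proposition \ref{abelianyetagain} and Proposition \ref{abel2} to descend the abelian statement to the Weyl-invariant subspaces, which is exactly the identification $\overline{\Delta}_{d,e,\delta}=\widetilde{\Delta}_{d,e}$. The $T$-equivariant case follows by the same argument with the extra $\mu^*$ insertions, exactly as in the proofs of Theorem \ref{thm1} and Proposition \ref{coassoc}.

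I expect the main obstacle to be the careful matching of the two functorial descriptions of the ``proper pushforward along $\widetilde{q}$'' — on one side it is literally $p_*j^*i_*$ as in Definition \ref{proper}, on the other it is the adjoint $\beta_{\geq w}R\Gamma_{\mathcal S}$ from the semi-orthogonal decomposition — including getting the cohomological shifts, the $\det N$ twist, and the direction of the truncation right. Once one knows that $R\Gamma_{\mathcal S}$ of a line bundle computes the relevant Koszul complex and that $\beta_{\geq w}$ selects the corresponding piece of the projective bundle cohomology, the two agree after passing to $\text{gr}\,\text{KHA}$, where the extension problems between the line bundles $V(\chi-\sigma_I)\otimes\OO$ become irrelevant and only their classes in K-theory matter.
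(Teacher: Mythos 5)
Your overall strategy is genuinely different from the paper's, and it has a concrete gap. The paper does not compare the two coproducts term by term on line bundles at all. Instead it observes that the semi-orthogonal decomposition
$$D_{sg}(\Y(d+e)_0)=\langle p_*q^*D_{sg}\left(\left(\Y(d)\times\Y(e)\right)_0\right)_{<w},\ \mathbb{D}_\delta,\ p_*q^*D_{sg}\left(\left(\Y(d)\times\Y(e)\right)_0\right)_{\geq w}\rangle$$
splits $K_0$ as (image of $m^{ab}$) $\oplus\ K_0(\mathbb{D}_\delta)$, and then checks that \emph{both} coproducts vanish on the second summand and both restrict to $\text{sw}_{d,e}$ (composed with the inverse of $m^{ab}$) on the first. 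The two ingredients are Proposition \ref{abelinverse}, $\Delta^{ab}_{e,d}m^{ab}_{d,e}=\text{sw}_{d,e}$ (proved in cohomology and transported to $\text{gr}\,K_0$ via the Chern character, since in un-graded K-theory the pushforward of the K-theoretic Euler class through $\mathbb{P}(N\oplus 1)$ is not literally $1$), and Proposition \ref{gkernel}, $\Delta^{ab}_{d,e}(\text{gr}\,K_0(\mathbb{D}_\delta))=0$, which follows from Proposition \ref{projec} (a vanishing of $R\widetilde{q}_{d,e*}$ on complexes with $\lambda_{d,e}$-weights in $[-n,-1]$) together with Proposition \ref{independence}. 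This is cleaner than a generator-by-generator comparison and sidesteps the hard matching you correctly identify as the main obstacle.

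Within your own route, the step you flag as the ``main obstacle'' is precisely where the argument is not carried out: the claimed agreement between the SOD adjoint $\beta_{\geq w}R\Gamma_{\mathcal{S}}$ (plus the Serre-dual piece) and the projective-bundle pushforward $\widetilde{q}_{e,d*}\tau^*p_{e,d}^*$ is asserted, not proved, and this is the hardest part of the approach. There is also a genuine error in the $\delta$-independence discussion: it is not true that passing to $D_{sg}$ collapses the $\delta$-ambiguity. The SOD, the complement $\mathbb{D}_\delta$, and hence the projector implicit in $\overline{\Delta}^{ab}_{d,e,\delta}$ all depend on $\delta$ at the level of $K_0(D_{sg}(\Y(d+e)_0))$; the independence only appears after passing to the associated graded (Proposition \ref{independence}), and this is exactly why the theorem is stated for $\text{gr}\,\text{KHA}$ and not $\text{KHA}$. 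Your sketch never explains why $\text{gr}$ is needed, which is not a cosmetic point: the equality $\overline{\Delta}_{d,e,\delta}=\widetilde{\Delta}_{d,e}$ is false before passing to $\text{gr}$, since the left-hand side varies with $\delta$ while the right-hand side does not. Finally, you would still owe a proof that $\Delta^{ab}_{d,e}$ annihilates (the image of) $\mathbb{D}_\delta$, which does not drop out of the Koszul-versus-projective-bundle comparison on a fixed generator; in the paper this is the separate vanishing argument of Proposition \ref{gkernel}.
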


The main ingredient in the proof of the above theorem is the following:
\begin{prop}\label{gkernel}
Recall the setting of Equation (\ref{sod8}).
The subspace $\text{gr}\,K_0\left(\mathbb{D}_{\delta}\right)$ from Equation (\ref{sod8}) is in the kernel of $\Delta^{ab}_{d,e}$.
\end{prop}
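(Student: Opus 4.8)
The plan is to identify the image of $\mathbb{D}_\delta$ inside $D_{sg}(\mathcal{Y}(d+e)_0)$ with the window category $\mathbb{G}_\delta \subset D^b(\mathcal{Y}(d+e))$ (via Proposition \ref{sod}, so $\mathbb{D}_\delta = D_{sg}(\mathbb{G}_\delta)$), and to compute $\Delta^{ab}_{d,e}$ directly on classes coming from $\mathbb{G}_\delta$. Recall that $\Delta^{ab}_{d,e} = TS^{-1}\widetilde{q}_{e,d*}\tau^*p_{e,d}^*$, which in the abelian setting is built from the pushforward along the cohomologically proper map $\widetilde{q}_{e,d}$ of Definition \ref{proper}. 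First I would unravel this composition: $p_{e,d}^*$ pulls back to $D_{sg}(\mathcal{Y}(e,d)_0)$, $\tau^*$ pulls back to $D_{sg}(\widetilde{\mathcal{Y}}(e,d)_0)$, and $\widetilde{q}_{e,d*} = p'_* j'^* i'_*$ is the composite through the projectivization of the normal bundle. The key point is that the map $\widetilde{q}_{e,d}$ factors through the attracting-locus correspondence for $\lambda_{e,d}$, so its effect on K-theory is governed by the same semi-orthogonal data as in Equation (\ref{sod8}).

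The central computation is to show that for $F \in \mathbb{G}_\delta$, the class $\widetilde{q}_{e,d*}\tau^*p_{e,d}^*([F])$ vanishes in $K_0$ after passing to $\mathrm{gr}$. The mechanism I expect to use: by Theorem \ref{hl} applied to the Kempf-Ness stratum, $\mathbb{G}_\delta \cong D^b(\mathcal{Y}(d+e)^{ss})$ is a window, and its objects satisfy the weight bounds $w \leq \langle \lambda_{e,d}, j^*F\rangle \leq w + n - 1$ where $n = \langle \lambda_{e,d}, \det N_{\mathcal{S}/\mathcal{X}}\rangle$. The projectivized normal bundle $\mathbb{P}_{\mathcal{Y}(e)\times\mathcal{Y}(d)}(N)$ that appears in $\widetilde{q}_{e,d}$ has relative dimension exactly controlled by these same weights, and by the Borel–Bott–Weil / Euler sequence description of pushforward along $\mathbb{P}$-bundles, the pushforward of a sheaf with $\lambda$-weights in this "window range" is acyclic — the relevant cohomology of $\mathcal{O}(w)$ on the fibers $\mathbb{P}^{n-1}$ vanishes precisely for $-n < w < 0$ (or a shift thereof). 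So I would show that $\tau^*p_{e,d}^*F$, restricted to the closed locus and then to the $\mathbb{G}_m$-gerbe, lands in the weight range where $p'_* j'^*$ kills it. Concretely: decompose $\tau^*p_{e,d}^*F$ by $\lambda_{e,d}$-weights; the window condition forces all the weights into the "vanishing band" of the projective bundle pushforward; hence $\widetilde{q}_{e,d*}$ annihilates the class.

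A cleaner route, which I would pursue in parallel and probably prefer: use that $\mathbb{G}_\delta$ is equivalent, via the truncation adjoints $\Psi_{<w}$ and $\Psi_{\geq w}$, to a summand complementary to $p_*q^* D^b(\mathcal{Y}(d)\times\mathcal{Y}(e))_{<w}$ and $p_*q^* D^b(\ldots)_{\geq w}$. Since $\Delta^{ab}_{d,e}$ and $\widetilde{\Delta}$ are (after localization, or after $\mathrm{gr}$) essentially the "opposite" of $p_*q^*$ — by Proposition \ref{compo}, the composite $TS^{-1}i^* \circ m^{ab}_{d,e}$ is multiplication by $\mathrm{eu}_1(d,e)$, and similarly $\widetilde{q}_{e,d*}\tau^*p_{e,d}^*$ composed with the inclusion $i_*$ of the diagonal recovers an Euler-class factor — one sees that the coproduct factors through the quotient of $K_0(D_{sg}(\mathcal{Y}(d+e)_0))$ by the images of $p_*q^*$ of both halves. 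But $\mathbb{D}_\delta$ sits in the semi-orthogonal decomposition (\ref{sod8}) as exactly the complement of these two images; hence its K-theory class is in the kernel. I would make this precise by writing, for $[F] \in \mathrm{gr}\,K_0(\mathbb{D}_\delta)$, the identity $[F] = [p_*q^* A_{<w}] + [\text{something in }\mathbb{D}_\delta]$ forced by the SOD to have no $p_*q^*$-component, and then checking $\widetilde{q}_{e,d*}\tau^*p_{e,d}^*$ vanishes on the genuine window part by the weight-band argument above.

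The main obstacle I anticipate is the bookkeeping of which $\lambda$-weight window the various functors $\tau^*$, $p_{e,d}^*$, and $\widetilde{q}_{e,d*}$ see, and matching the window bounds defining $\mathbb{G}_\delta$ (which depend on $n_i = \langle \lambda_i, \det N_{\mathcal{S}/\mathcal{X}}\rangle$) against the vanishing range of pushforward along the projective bundle of $N_{e,d}$. One has to be careful that the stacks $\widetilde{\mathcal{Y}}(e,d)$ carry an extra $\mathbb{C}^*$-factor and the character $\lambda_{e,d}$ must be tracked through that quotient; this is exactly the kind of weight-accounting done in the proof of Theorem \ref{mut}, and I would reuse Propositions \ref{Wcomputation} and \ref{filtcomp} there. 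Once the weight bands are aligned, the vanishing is a formula-level consequence of cohomology of line bundles on $\mathbb{P}^{n-1}$, and the passage to $\mathrm{gr}$ handles the fact that we only get vanishing of the class rather than of the object itself. Given Proposition \ref{gkernel}, Theorem \ref{coincide} then follows by checking both coproducts agree on the remaining summands $p_*q^*D_{sg}(\ldots)_{<w}$ and $p_*q^*D_{sg}(\ldots)_{\geq w}$, where both are computed by the same attracting-locus correspondence, so I would finish by a direct comparison there using Proposition \ref{compo}(a).
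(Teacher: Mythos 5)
Your first route is essentially the paper's argument: pull back along $\tau^* p_{e,d}^*$, observe the $\lambda_{d,e}$-weights stay in the window band, and invoke the Bott-type vanishing of cohomology of $\mathcal{O}(w)$ along the projective bundle that computes $\widetilde q_{e,d*}$ (this is exactly Proposition \ref{projec}). But there is a genuine gap in how you get the window band to line up with the vanishing band. For a general $\delta$ the window defining $\mathbb{G}_\delta$ is of the form $[w, w+n-1]$ where $w$ depends on $\delta$ (and on your normalization), while the vanishing band of the projective-bundle pushforward is $[-n,-1]$; for most $\delta$ these intervals simply do not coincide, and at the level of objects the pushforward does not vanish. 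You wave at this with ``or a shift thereof,'' but you never supply the mechanism that justifies shifting. The paper's fix is Proposition \ref{independence}: the image of $\mathrm{gr}\,K_0(\mathbb{D}_\delta)$ in $\mathrm{gr}\,K_0(D_{sg}(\mathcal Y(d+e)_0))$ is independent of $\delta$ (because tensoring by $\mathcal O(\beta)$ changes a class only by something supported on a divisor, hence of higher cohomological degree, hence invisible in $\mathrm{gr}$), so one can \emph{choose} $\delta$ so that the window becomes $[-n,-1]$, and then Proposition \ref{projec} applies on the nose. Without Proposition \ref{independence} your argument proves the statement for one specific $\delta$, not for all.

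Your ``cleaner route'' does not actually circumvent this. Knowing that $\mathbb D_\delta$ is orthogonal to the $p_*q^*$-summands in the SOD \eqref{sod8} directly tells you $\overline\Delta^{ab}_{d,e,\delta}(\mathbb D_\delta)=0$ for the \emph{adjoint-defined} coproduct, but says nothing a priori about $\Delta^{ab}_{d,e}$, which is built from the separate correspondence through $\widetilde{\mathcal Y}(e,d)$ with the extra $\mathbb C^*$-factor. Establishing that these two coproducts act the same way on $\mathbb D_\delta$ is precisely the content of Proposition \ref{gkernel} and Theorem \ref{coincide}, so invoking the SOD orthogonality here is circular. You seem to sense this, since your cleaner route collapses back to ``checking \dots by the weight-band argument above.'' The lesson is that the only nonformal input is the weight computation, and that computation needs the $\delta$-normalization afforded by Proposition \ref{independence}.
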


Before we start the proof of Proposition \ref{gkernel}, we collect some preliminary results. 

\begin{prop}\label{independence}
The image of $$\text{gr}\,K_0\left(\mathbb{D}_{\delta}(d)\right)\subset \text{gr}\,K_0\left(D_{sg}(\Y(d)_0)\right)$$ is independent of the choice of $\delta\in M_{\mathbb{R}}$. 
\end{prop}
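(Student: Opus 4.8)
\textbf{Proof plan for Proposition \ref{independence}.} The claim is that the image of $\text{gr}\,K_0(\mathbb{D}_\delta(d))$ inside $\text{gr}\,K_0(D_{sg}(\Y(d)_0))$ does not depend on $\delta\in M_{\mathbb{R}}$. Recall that $\mathbb{D}_\delta(d)=D_{sg}(\mathbb{G}_\delta)$, where $\mathbb{G}_\delta\subset D^b(\Y(d+e))$ is a window category equivalent to $D^b(\Y^{ss}(d+e))$ under restriction, and that by Proposition \ref{sod} we get $\mathbb{D}_\delta(d)\cong D_{sg}(\Y^{ss}(d+e)_0)$. So at the level of the ambient category, the various $\mathbb{D}_\delta$ are all identified with one fixed category; the content of the proposition is that the resulting subspaces of $\text{gr}\,K_0(D_{sg}(\Y(d)_0))$ all coincide. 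The plan is to reduce to comparing two windows $\mathbb{D}_\delta$ and $\mathbb{D}_{\delta'}$ whose defining weights differ by a single wall-crossing, and to use the mutation functor between adjacent windows to transport classes.

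First I would set up the one-stratum picture: for a Kempf-Ness stratum $\mathcal{Y}(d)\times\mathcal{Y}(e)\xleftarrow{q}\mathcal{Y}(d,e)\xrightarrow{p}\mathcal{Y}(d+e)$, Theorem \ref{hl} and Corollary \ref{uns} give semi-orthogonal decompositions
$$D_{sg}(\Y(d+e)_0)=\langle p_*q^*D_{sg}((\Y(d)\times\Y(e))_0)_{<w},\,\mathbb{D}_w,\,p_*q^*D_{sg}((\Y(d)\times\Y(e))_0)_{\geq w}\rangle$$
for each integer $w=\langle\lambda_{d,e},\delta\rangle$. Passing from $w$ to $w+1$ changes only one summand on each side: $p_*q^*D_{sg}((\Y(d)\times\Y(e))_0)_w$ moves from the right block to the left block, and $\mathbb{D}_w$ and $\mathbb{D}_{w+1}$ are related by the left-mutation functor through that summand. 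Since taking $K_0$ and then the associated graded of the cohomological filtration is exact on these semi-orthogonal pieces, in $\text{gr}\,K_0(D_{sg}(\Y(d+e)_0))$ we get a direct-sum decomposition into $\text{gr}\,K_0(\mathbb{D}_w)$ and the $\text{gr}\,K_0$ of the attracting pieces indexed by all integer weights; the mutation identifies $\text{gr}\,K_0(\mathbb{D}_w)$ and $\text{gr}\,K_0(\mathbb{D}_{w+1})$ as subspaces of this fixed direct sum \emph{after} accounting for the one summand that switched sides. The key observation is that this switched summand $p_*q^*D_{sg}((\Y(d)\times\Y(e))_0)_w$ is supported on the attracting locus, hence its image lies in the part of $\text{gr}\,K_0(D_{sg}(\Y(d+e)_0))$ spanned by attracting-locus classes, so modulo that part — equivalently, after mapping to the quotient that remembers only $\text{gr}\,K_0(\mathbb{D}_\delta)$ — nothing changes. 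For non-integer shifts of $\delta$ within a chamber, $\mathbb{D}_\delta$ does not change at all by the discreteness of the windows, so only finitely many wall-crossings need to be traversed between any two $\delta,\delta'$, and these are handled by iterating the single-wall argument; for general characters $\lambda$ (not just $\lambda_{d,e}$) one runs the same argument along the full Kempf-Ness stratification, using Amplification~2.11 of \cite{hl} as in Section \ref{4}.

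The main obstacle I anticipate is bookkeeping: the window $\mathbb{G}_\delta$ for $\X(d+e)$ (or $\Y(d+e)$) is cut out by inequalities along \emph{all} the Kempf-Ness strata simultaneously, so crossing a wall for one stratum must be shown not to disturb the window conditions at the other strata, and one must check that the mutation functors at different walls are compatible. This is essentially the content of the "gluing" of windows in \cite{hl} (Amplification 2.11 and the discussion of variation of GIT), so I would quote that rather than reprove it, and reduce the statement to the single-stratum computation above. A secondary point requiring care is that the argument takes place in $\text{gr}\,K_0$ rather than $K_0$: one needs that the cohomological filtration $I^\cdot$ is compatible with the semi-orthogonal decomposition and with the mutation functors, which follows because the functors $p_*$, $q^*$, and the truncations $\beta_{\geq w}$ are all built from pushforward, pullback, and finite extensions, each of which is controlled on the associated graded by Proposition \ref{grr} and Proposition \ref{sod}. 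With these two inputs in hand, the independence of the image of $\text{gr}\,K_0(\mathbb{D}_\delta(d))$ is immediate.
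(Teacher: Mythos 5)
Your approach via mutation functors does not actually establish the claim, and it differs from the paper's argument in a way that matters. Let me explain the gap first.

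You observe that as $\delta$ crosses a wall, exactly one attracting-locus summand $p_*q^*D_{sg}((\Y(d)\times\Y(e))_0)_w$ moves from the right block of the semi-orthogonal decomposition to the left block, and that $\mathbb{D}_w$ and $\mathbb{D}_{w+1}$ are related by the mutation functor through that summand. From this you conclude that "modulo that part \dots\ nothing changes," i.e.\ that $\text{gr}\,K_0(\mathbb{D}_w)$ and $\text{gr}\,K_0(\mathbb{D}_{w+1})$ project to the same subspace of the quotient of $\text{gr}\,K_0(D_{sg}(\Y(d+e)_0))$ by attracting-locus classes. But that is strictly weaker than the proposition. The proposition asserts that the two \emph{subspaces} of $\text{gr}\,K_0(D_{sg}(\Y(d+e)_0))$ are equal, not merely that they are both complements to the same subspace or project isomorphically to the same quotient. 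Two distinct complements of a fixed subspace need not coincide. Concretely: for $\tilde{x}\in K_0(\mathbb{D}_w)$ its mutation $x'\in K_0(\mathbb{D}_{w+1})$ differs from $\tilde{x}$ by a class supported on the attracting locus, $\tilde{x}-x'\in K_0(A_w)$, and nothing in your argument shows this difference dies upon passing to $\text{gr}$. It does not die in general: the filtration $I^\bullet$ does not annihilate attracting-locus classes (if it did, the whole PBW filtration $F^\bullet$ built from them in Section~5 would be degenerate). So the correction term is a genuine obstruction and the argument does not close. The claim near the end that exactness of $\text{gr}$ on the semi-orthogonal pieces gives "a direct-sum decomposition" also needs justification, since the cohomological filtration has no a priori compatibility with the semi-orthogonal decomposition, but this is a secondary issue.

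The paper instead uses a simpler and sharper mechanism that bypasses mutation entirely. One observes that $\mathbb{D}_{\delta+\beta}(d)=\mathbb{D}_\delta(d)\otimes\OO(\beta)$ for a simple root $\beta$ with $\langle\lambda_{d,e},\beta\rangle=1$: tensoring by a line bundle of $\lambda$-weight $1$ literally translates the window. Then the nonzero map $\OO\to\OO(\beta)$ with cokernel $\OO_Z$ supported on a divisor $Z$ gives, for any $F\in\mathbb{D}_\delta(d)$, the triangle
$$F\to F\otimes\OO(\beta)\to i_*i^*(F\otimes\OO(\beta))\xrightarrow{[1]}.$$
Because $i$ is a codimension-one closed immersion, $i_*i^*(-)$ raises the cohomological degree by $2$, so $[F]=[F\otimes\OO(\beta)]$ in $\text{gr}\,K_0(D_{sg}(\Y(d)_0))$. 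Since $F\otimes\OO(\beta)\in\mathbb{D}_{\delta+\beta}(d)$, this gives the literal equality of subspaces $\text{gr}\,K_0(\mathbb{D}_\delta(d))=\text{gr}\,K_0(\mathbb{D}_{\delta+\beta}(d))$, with no quotient taken. The essential point, which your proposal does not capture, is that the identification of $\mathbb{D}_\delta$ with $\mathbb{D}_{\delta+\beta}$ should be realized by a functor (tensor by $\OO(\beta)$) that is the identity on $\text{gr}\,K_0$, rather than by the mutation functor, which is the identity only modulo attracting classes.
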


\begin{proof}
The hyperplanes $\langle \lambda_{d,e}, \beta\rangle=w$ divide $M_{\mathbb{R}}$ in regions, and the definition of $\mathbb{D}_{\delta}(d)$ depends only on the region in which $\delta$ lies. Let $\beta$ be a simple root such that $\langle \lambda_{d,e}, \beta\rangle=1$. It is enough to compare the definitions for $\delta$ and $\delta+\beta$. Consider a nonzero map $\OO\to\OO(\beta)$, and let $Z$ be the support of its cokernel. We have a short exact sequence:
$$0\to\OO\to\OO(\beta)\to\OO_Z\to 0,$$
which means that for every $F\in \mathbb{D}_{\delta}(d)$ we have that:
$$F\to F\otimes\OO(\beta)\to i_*i^*\left(F\otimes\OO(\beta)\right)\xrightarrow{[1]}.$$
If $F\in \text{gr}^i K_0(D_{sg}(\Y(d)_0))$, then $i_*i^*\left(F\otimes\OO(\beta)\right)\in \text{gr}^{i+2} K_0(D_{sg}(\Y(d)_0))$, which means that $F\otimes\OO(\beta)\in \text{gr}^iK_0(D_{sg}(\Y(d)_0))$ and $[F]=[F\otimes\OO(\beta)]\in \text{gr}^iK_0(D_{sg}(\Y(d)_0))$. Thus the images of $\text{gr} K_0(D_{sg}(\Y(d)_0))$ and $\text{gr} K_0(\mathbb{D}_{\delta+\beta}(d))$ inside $\text{gr} K_0(D_{sg}(\mathcal{Y}(d)_0))$ are the same. 
\end{proof}

\begin{prop}\label{projec}
Let $T$ be a torus, $\lambda:\C^*\to T$ a character, and let $T'=T/\C^*$.
Consider a morphism $\pi:S/T\to Z/T'$, where $S\to Z$ is an affine bundle over a variety, both $S$ and $Z$ have an action of $T$, $\lambda$ fixes $Z$ and acts with weight $1$ on the normal bundle $N_{Z/S}$.
Let $\langle \lambda, S\rangle=n+1$. Consider $F$ a sheaf on $S/T$ such that 
$-n\leq \langle \lambda, F\rangle \leq -1.$
Then $R\pi_*F=0.$
\end{prop}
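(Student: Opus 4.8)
The statement is essentially a vanishing result for the pushforward along the $\mathbb{P}$-bundle map attached to a one-parameter subgroup, and I would prove it by reducing to the universal case and then computing cohomology of line bundles on projective space. First I would use the factorization of $\pi$ implicit in Definition \ref{proper}: the map $\pi: S/T\to Z/T'$ is the composition $S/T\hookrightarrow S/T\cdot\C^* $ followed by the open immersion $S-Z/T\cdot\C^* = \mathbb{P}_{Z/T}(N_{Z/S})\to S/T\cdot\C^*$, and then the proper projection $p:\mathbb{P}_{Z/T}(N_{Z/S})\to Z/T'$. Actually the cleaner route is to observe that $R\pi_*$ can be computed fiberwise over $Z/T'$: since $S\to Z$ is an affine bundle on which $\lambda$ acts with all weights $1$ on the fibers (i.e. $S = N_{Z/S}$ as a bundle, after choosing a splitting, with $\lambda$ scaling fibers), the map $\pi$ is, up to the $\C^*$-quotient, the projection of the total space of a vector bundle followed by passing to the projectivization. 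So the key computation reduces to: for $\mathbb{A}^{n+1}$ with $\C^*$ acting with weight $1$ on each coordinate, and $T' = T/\C^*$, and $F = \OO_{\mathbb{A}^{n+1}}\otimes V$ with $V$ a $T$-representation of $\lambda$-weight $w$ with $-n\le w\le -1$, we have $R\Gamma(\mathbb{A}^{n+1}/T, F)$ in the appropriate graded piece vanishes.

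Concretely, I would argue as follows. The global sections of $\OO_{\mathbb{A}^{n+1}}$ as a $\C^*$-representation is $\bigoplus_{k\ge 0}\mathrm{Sym}^k(\C^{n+1})^\vee$, which sits in $\lambda$-weights $\le 0$ (if the coordinates have weight $1$, the dual coordinates have weight $-1$ and $\mathrm{Sym}^k$ has weight $-k$). Tensoring with $V$ of weight $w$ shifts this to weights $\le w \le -1$. But $R\pi_*F$ lands in $D^b(Z/T')$, and $T' = T/\C^*$; the relevant statement is that the part of $R\Gamma$ of $\lambda$-weight exactly $0$ — which is what survives to $Z/T'$ after the quotient — is zero. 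More precisely, $R\pi_* F = R\Gamma(\mathbb{A}^{n+1};-)^{\lambda\text{-wt }0}$ computed relative to $Z$, and by the computation above the $\lambda$-weight-$0$ part of $\Gamma(\OO_{\mathbb{A}^{n+1}})\otimes V$ would require $\mathrm{Sym}^k$ of weight $-k$ with $-k + w = 0$, i.e. $k = -w \in\{1,\dots,n\}$; this is nonzero, so plain $\pi_*$ need not vanish. Hence I must instead pass through the projective bundle: the point of the hypothesis $-n\le \langle\lambda,F\rangle\le -1$ is exactly the range for which $Rp_*$ of the corresponding line bundle $\OO_{\mathbb{P}^n}(w)$ on $\mathbb{P}^n$ vanishes in all degrees, since $H^j(\mathbb{P}^n,\OO(w)) = 0$ for all $j$ precisely when $-n\le w\le -1$.

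So the argument runs: reduce to the local model $S = N_{Z/S}$, a vector bundle over $Z$ of rank $n+1$ with $\lambda$ acting by scaling. Then $S/T = (\text{total space of }N)/T$, and $S - Z/T\cdot\C^* = \mathbb{P}_{Z/T}(N)$, and $\pi$ factors as the closed immersion $i: S/T \to S/T\cdot\C^*$, the open immersion $j$, and $p: \mathbb{P}_{Z/T}(N)\to Z/T'$; the composite $p_* j^* i_*$ is the definition of $\pi_*$ in this setting, following Definition \ref{proper}. Now $i_*\OO_{S/T} = [\OO \to \OO]$ unwinds to a Koszul-type resolution, but more directly $j^* i_* F$ restricted to the projectivization, after twisting by the $\C^*$-weight, is built from the line bundles $\OO_{\mathbb{P}_{Z/T}(N)}(m)$ for $m$ in a window determined by $\langle\lambda,F\rangle$; the hypothesis $-n\le\langle\lambda,F\rangle\le -1$ forces every such $m$ into the range $[-n,-1]$ where $Rp_*\OO_{\mathbb{P}^n}(m) = 0$. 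Therefore $R\pi_* F = Rp_*(j^*i_*F) = 0$. I would spell out the identification of $j^*i_*F$ with a complex of twists of the relative $\OO(m)$ using the exact sequence $0\to\OO\to\OO(\beta)\to\OO_Z\to 0$ type arguments (or simply the standard description of $i_*$ for a codimension-one-in-each-fiber locus, i.e. the fiberwise exact sequence $\OO_{\mathbb{A}^{n+1}}\xrightarrow{0}\OO_{\mathbb{A}^{n+1}}\to i_*\OO_{\mathbb{A}^{n+1}\setminus 0}$ is wrong; rather $i_*$ of the origin-removed part is a two-term complex whose terms are pulled back from $\mathbb{P}^n$-twists), and then invoke flat base change to reduce the vanishing of $Rp_*$ to the absolute statement over a point.

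\textbf{Main obstacle.} The delicate point is bookkeeping the $\C^*$-weight grading correctly through the three functors $i_*, j^*, p_*$ of Definition \ref{proper}, so that the window $-n\le\langle\lambda,F\rangle\le -1$ lands exactly on the forbidden Serre-vanishing range $H^\bullet(\mathbb{P}^n,\OO(m)) = 0$ for $m\in[-n,-1]$, with the right off-by-one conventions (whether $\langle\lambda,S\rangle = n+1$ corresponds to $\dim = n$ for the projectivization, and whether the twist in $j^*i_*F$ is $\langle\lambda,F\rangle$ or $\langle\lambda,F\rangle + 1$ or similar). Everything else is formal: affine-bundle pullback is an equivalence on derived categories, and the local model reduction is standard since the statement is local on $Z/T'$.
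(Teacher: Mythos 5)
Your proposal is essentially the same argument as the paper's: reduce to a single fiber over $Z/T'$, identify the $\C^*$-weight window $[-n,-1]$ with the Serre vanishing range $H^\bullet(\mathbb{P}^n,\OO(m))=0$, and conclude. The one packaging difference is that the paper reduces via adjunction and then invokes Halpern-Leistner's window theorem \cite[Theorem 2.10]{hl} to replace $R\mathrm{Hom}^{\C^*}(\OO_U,F|_U)$ with $R\mathrm{Hom}(\OO_{\mathbb{P}(U)},F|_{\mathbb{P}(U)})$ as a black box, whereas you unwind the definition $\pi_*=p_*j^*i_*$ of Definition~\ref{proper} directly and perform the descent-to-$\OO_{\mathbb{P}^n}(w)$ computation by hand; the weight bookkeeping you flag does work out with the convention that a weight-$w$ equivariant structure on $\OO_{\C^{n+1}\setminus 0}$ descends to $\OO_{\mathbb{P}^n}(w)$, so the window $[-n,-1]$ lands exactly on the vanishing range with no off-by-one shift.
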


\begin{proof}
Let $E$ be a sheaf on $Z$. We will show that:
$$R\pi_*R\mathcal{H}om(\pi^*E,F)=R\mathcal{H}om(E,R\pi_*\,F)=0.$$ 
It is enough to prove the statement for the $\C^*$-equivariant $\text{Hom}$.
By restricting to a point in $Z$ with fiber $U$, we need to show that:
$$R\text{Hom}^{\C^*}(\OO_U,F|_U)=0.$$
The fiber $U$ is an affine space of dimension $n+1$ where $\C^*$ acts with weight $1$, 
The sheaf $F|_U$ has $\C^*$ weights between $-n$ and $-1$. By \cite[Theorem 2.10]{hl}, we have that the restriction to $\mathbb{P}(U)$ gives an isomorphism:
$$R\text{Hom}^{\C^*}(\OO_U,F|_U)\cong R\text{Hom}(\OO_{\mathbb{P}(U)}, F|_{\mathbb{P}(U)}).$$
The sheaf $F|_{\mathbb{P}(U)}$ is generated by $\OO_{\mathbb{P}(U)}(w)$, where $-n\leq w\leq -1$, so 
$$R\text{Hom}(\OO_{\mathbb{P}(U)}, F|_{\mathbb{P}(U)})=0,$$ and this implies the desired conclusion for the first part.
\end{proof}

\begin{proof}[Proof of Proposition \ref{gkernel}]
Consider the diagram:

\begin{tikzcd}
\mathbb{D}_{\delta}(d+e) \arrow{r}{\tau^*p_{d,e}^*} &
D_{sg}\left(R(d+e)_0\oplus\C/T(d+e)\times\C^*\right) \arrow{d}{\widetilde{q}_{d,e*}} \\
& D_{sg}\left(\left(R(d)\times R(e)\right)_0/T(d)\times T(e)\right).
\end{tikzcd}
\\
The image of $\text{gr}\,K_0\left(\mathbb{D}_{\delta}\right)\subset \text{gr}\,K_0(D_{sg}(\Y(d+e)_0))$ is independent of $\delta$ by Proposition \ref{independence}, so we can choose $\delta$ such that the $\lambda_{d,e}$-weights for $F\in \mathbb{G}_{\delta}(d+e)$ satisfy:
\[-n\leq \langle \lambda_{d,e}, F\rangle \leq -1,\]
where $n=\langle \lambda_{d,e}, \text{det}\,N^{\lambda>0}\rangle$. For $F\in \mathbb{D}_{\delta}(d+e)$ choose a lift also denoted by $F\in D^b(\Y(d+e)_0)$ such that $i_*F\in \mathbb{G}_{\delta}(d+e)$.
We have that $-n\leq \langle \lambda_{d,e}, \tau^*p^*_{d,e} F\rangle \leq -1$. The conclusion follows from Proposition \ref{projec}.
\end{proof}

\begin{prop}\label{abelinverse}
For any dimension vectors $d,e\in\mathbb{N}^I$, we have that $$\Delta^{ab}_{e,d}m^{ab}_{d,e}=\text{sw}_{d,e},$$ in both cohomology and graded K-theory.
\end{prop}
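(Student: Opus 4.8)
\textbf{Proof sketch for Proposition \ref{abelinverse}.}

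The plan is to reduce the statement to an explicit computation of compositions of pullback and pushforward along the elementary correspondences defining $m^{ab}$ and $\Delta^{ab}$, and then to apply proper base change along a suitable cartesian square. Recall that $m^{ab}_{d,e} = p_{d,e*}q_{d,e}^*$ and, up to the swap $\text{sw}$ and Thom–Sebastiani isomorphisms (which cancel out after composing the two operations), $\Delta^{ab}_{e,d} = \widetilde{q}_{d,e*}\tau^*p_{d,e}^*$, where $p_{d,e}:\mathcal{Y}(d,e)\to\mathcal{Y}(d+e)$ is the proper map, $q_{d,e}:\mathcal{Y}(d,e)\to\mathcal{Y}(d)\times\mathcal{Y}(e)$ is the affine bundle, and $\tau:\widetilde{\mathcal{Y}}(d,e)\to\mathcal{Y}(d,e)$, $\widetilde{q}_{d,e}:\widetilde{\mathcal{Y}}(d,e)\to\mathcal{Y}(d)\times\mathcal{Y}(e)$ are the maps from Definition \ref{coproduct}. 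So the composite $\Delta^{ab}_{e,d}m^{ab}_{d,e}$ is
$\widetilde{q}_{d,e*}\tau^*p_{d,e}^*p_{d,e*}q_{d,e}^*$, and I must show this equals $\text{sw}_{d,e}$ (on the nose, after accounting for the swap built into $\Delta$).

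First I would set up the key cartesian square. The map $p_{d,e}:\mathcal{Y}(d,e)\to\mathcal{Y}(d+e)$ together with itself produces the fiber product $\mathcal{Y}(d,e)\times_{\mathcal{Y}(d+e)}\mathcal{Y}(d,e)$, but the relevant and tractable statement is that, because $p_{d,e}$ is the map associated to the attracting locus of $\lambda_{d,e}$, the appropriate base change is with $\widetilde{\mathcal{Y}}(d,e)$: there is a cartesian diagram whose vertices are $\widetilde{\mathcal{Y}}(d,e)$, $\mathcal{Y}(d,e)$, $\mathcal{Y}(d+e)$, with the proper map $p_{d,e}$ appearing twice and $\tau$ the other side. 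Applying proper base change (which holds in this setting by the proposition proved right after Definition \ref{proper}) converts $\tau^*p_{d,e}^*p_{d,e*}$ into $\widetilde{p}_*\widetilde{\tau}^*$ for the corresponding maps out of $\widetilde{\mathcal{Y}}(d,e)$, or more precisely into an expression supported on the fiber product that I can then push forward along $\widetilde{q}_{d,e}$. The point is that the composite $\widetilde{q}_{d,e}\circ(\text{the base-changed map})$, followed by the remaining pushforward, collapses: the section $t:\mathcal{Y}(d)\times\mathcal{Y}(e)\to \widetilde{\mathcal{Y}}(d,e)$ (built from $\text{pr}_1$ and the zero section of $q_{d,e}$, as used in the proof of Proposition \ref{compo}) satisfies $t_*(x) = \widetilde{q}_{d,e}^*(x)$ and $\widetilde{q}_{d,e}\circ t = \text{id}$, so after the dust settles the composite acts as the identity, up to the swap that is by construction part of the definition of $\Delta$ (which is why the final answer is $\text{sw}_{d,e}$ rather than $\text{id}$).

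The cleanest way to organize this is to run exactly the computation already performed inside the proof of Proposition \ref{compo}(b): there it is shown that $\widetilde{q}_{d,e*}\tau^*p_{d,e}^*i_*(x) = x$ for $i=p_{d,e}s$ the closed immersion, using the cartesian square relating $\widetilde{\mathcal{Y}}(d,e)$, $\mathcal{Y}(d,e)$, $R(d)\times R(e)\oplus\mathbb{C}/T(d+e)\times\mathbb{C}^*$, and $\mathcal{Y}(d)\times\mathcal{Y}(e)$, together with $t_*(x)=\widetilde{\tau}^*(x)$. Here I need the analogous identity with $p_{d,e*}q_{d,e}^*$ in place of $i_*$; but $q_{d,e}^*$ is an affine-bundle pullback, hence an equivalence on $K_0$ with inverse $q_{d,e*}$, and $p_{d,e}^*p_{d,e*}$ composed against the support of $q_{d,e}^*(x)$ behaves as in Proposition \ref{compo}. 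Concretely: $\tau^*p_{d,e}^*p_{d,e*}q_{d,e}^*(x)$, after proper base change, becomes a class on $\widetilde{\mathcal{Y}}(d,e)\times_{\mathcal{Y}(d+e)}\widetilde{\mathcal{Y}}(d,e)$ one of whose projections is $\widetilde{q}_{d,e}$, and pushing forward recovers $x$ up to swap. The statement holds verbatim in critical cohomology because every tool invoked — proper base change (the proposition after Definition \ref{proper}), the functoriality of $\tau^*$, $p^*$, $\widetilde{q}_*$ on vanishing cohomology (remarked in Definition \ref{coproduct}), and the section identities — is available there, as the excerpt repeatedly notes.

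The main obstacle I anticipate is bookkeeping the $\mathbb{C}^*$-factor and the auxiliary $\oplus\mathbb{C}$ summand that enter $\widetilde{\mathcal{Y}}(d,e)$: the maps $\tau$ and $\widetilde{q}_{d,e}$ are not literally pullback/pushforward along affine bundles but compositions involving the closed immersion into $R(d,e)\oplus\mathbb{C}/T(d,e)\times\mathbb{C}^*$, an open immersion, and a projective bundle projection, so I must check that the cartesian squares I use really are cartesian with the correct group actions and $\mathbb{C}^*$-linearizations, and that no spurious Euler class is introduced (in contrast to Proposition \ref{compo}, where the closed immersion $i$ does contribute $\text{eu}_1$, here the relevant map is a section, contributing $1$). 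Once the correct commutative/cartesian diagram is drawn — essentially the one in the last display of the proof of Proposition \ref{compo} — the identity $\Delta^{ab}_{e,d}m^{ab}_{d,e}=\text{sw}_{d,e}$ falls out of proper base change and $\widetilde{q}_{d,e}t=\text{id}$, $t_*=\widetilde{\tau}^*$.
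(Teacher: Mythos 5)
Your setup is correct and matches the paper's: the composite to evaluate is $\widetilde{q}_{d,e*}\tau^*p_{d,e}^*p_{d,e*}q_{d,e}^*$, and you correctly identify the section $t$ and the commutative diagrams from Proposition \ref{compo} as relevant. But there is a genuine gap in how you plan to finish, and it is exactly at the point you flag as a possible concern.

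You write that "no spurious Euler class is introduced... here the relevant map is a section, contributing $1$." This is not correct. In the abelian setting $p_{d,e}:\Y(d,e)\to\Y(d+e)$ is a closed immersion, and $p^*p_*(y) = y\cdot \text{eu}_1(d,e)$, which is a nontrivial class. The cartesian square you propose — the self-fiber-product of $p_{d,e}$ (or equivalently the graph of $\tau$ over $\Y(d+e)$) — is \emph{not} tor-independent, because the self-intersection of a closed immersion of positive codimension never has the expected dimension. So you cannot apply the clean base-change proposition to extract $\tau^*p^*p_*$ for free; the derived correction is precisely the Euler class you are hoping to avoid. Once the Euler class is introduced, the section identities $t_* = \widetilde{\tau}^*$, $\widetilde{q}_{d,e}\circ t = \text{id}$ do not make it go away.

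What the paper's proof actually does is confront the Euler class head-on. Using $p^*p_*(y) = y\cdot\text{eu}_1(d,e)$ and the projection formula, the composite reduces to showing
\[
\widetilde{q}_{d,e*}\bigl(\tau^*\text{eu}_1(d,e)\bigr) = 1.
\]
This is a nontrivial computation, not a formal consequence of functoriality. The paper identifies $\widetilde{\Y}(d,e)$ over $\mathcal{Z}=\Y(d)\times\Y(e)$ with the projective bundle $\mathbb{P}_{\mathcal{Z}}(N\oplus 1)$, where $N = R(d,e)^{\lambda_{d,e}>0}$, uses the projective bundle formula $\widetilde{q}_*(h^i)=0$ for $i<n$ and $\widetilde{q}_*(h^n)=1$ with $h=c_1(\OO(1))$ and $n=\dim N$, and then observes that $\tau^*\text{eu}_1(d,e)$ is a \emph{monic} polynomial of degree exactly $n$ in $h$ because $\dim R(d,e) = \dim R(e,d)$ — this is where the symmetric quiver hypothesis enters. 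The cancellation $\widetilde{q}_*(\tau^*\text{eu}_1)=1$ is the whole content of the proposition, and it is precisely the step your sketch omits.
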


\begin{proof}
We prove the statement in cohomology, the statement in graded K-theory follows from it.
Let $x\in H^{\cdot}(\Y^{ss}(d),\varphi\,\mathbb{Q})\boxtimes H^{\cdot}(\Y^{ss}(e),\varphi\,\mathbb{Q})$.
The composition $\Delta^{ab}_{d,e}m^{ab}_{d,e}$ has the form: $$\Delta^{ab}_{e,d}m^{ab}_{d,e}(x)=\text{sw}_{d,e}\,\widetilde{q}_*(\tau^*p^*p_*q^*(x)).$$ We have that $p^*p_*(y)=y\,\text{eu}_1(d,e)$ for $y\in H^{\cdot}(\Y^{ss}(d,e),\varphi\,\mathbb{Q})$.
Using the projection formula, 
we have that
$$\widetilde{q}_*(\widetilde{q}^*(x)\tau^*\text{eu}_1(a,b))=x\widetilde{q}_*(\tau^*\text{eu}_1(d,e)).$$ Let $\mathcal{Z}=\Y(d)\times\Y(e)$ and $N=R(d,e)^{\lambda_{d,e}>0}$ the normal bundle of $R(d)\times R(e)$ in $R(d,e)$. Recall that $\lambda_{d,e}$ acts with weight $1$ on $N$.
It suffices to show that for
$\widetilde{q}: \mathbb{P}_{\mathcal{Z}}(N\oplus 1)\to\mathcal{Z}$ we have that:
$$\widetilde{q}_*(\tau^*\text{eu}_1(d,e))=1.$$
The vector space $H^{\cdot}(\mathbb{P}_{\mathcal{Z}}(N\oplus 1),\varphi\,\mathbb{Q})$ has a basis $\widetilde{q}^*\left(H^{\cdot}(\mathcal{Z},\varphi\,\mathbb{Q})\right)h^i$ for $0\leq i\leq n=\text{dim}\,N$, where $h=c_1(\OO(1))\in H^2(\mathbb{P}_{\mathcal{Z}}(N\oplus 1),\varphi\,\mathbb{Q})$ and $\OO(1)$ is the canonical line bundle on 
$\mathbb{P}_{\mathcal{Z}}(N\oplus 1)$. We have that $\widetilde{q}_*(h^i)=0$ for $0\leq i\leq n-1$ and $\widetilde{q}_*(h^n)=1$.

The Euler class $\tau^*(\text{eu}_1(d,e))$ is a monic polynomial of degree $n$ in $h$ because $\text{dim}\,R(d,e)=\text{dim}\,R(e,d)$. This implies the desired result:
$$\widetilde{q}_*(\tau^*\text{eu}_1(d,e))=1.$$
\end{proof}

\begin{proof}[Proof of Theorem \ref{coincide}]
Using the semi-orthogonal description, it is immediate that:
\[\overline{\Delta}^{ab}_{e,d, \delta}\left(m^{ab}_{d,e}\right)=\text{sw}_{d,e}\text{ and }\overline{\Delta}^{ab}_{e,d, \delta}(\mathbb{D}_{\delta})=0.\]
Further, the semi-orthogonal decomposition also implies the decomposition in K-theory:
\[K_0(D_{sg}(\Y(d+e)_0))=m^{ab}_{d,e}\left(K_0(D_{sg}(\Y(d)_0))\boxtimes K_0(D_{sg}(\Y(e)_0))\right)\oplus K_0(\mathbb{D}_{\delta}).\]
By the above formulas, we see that $\overline{\Delta}^{ab}$ descends to $\text{gr}\,\text{KHA}$. Further, 
using Propositions \ref{gkernel} and \ref{abelinverse}, we have that:
\[\overline{\Delta}_{e,d,\delta}^{ab}(m^{ab}_{d,e})=\Delta^{ab}_{e,d}(m^{ab}_{d,e})=\text{sw}_{d,e}\text{ and }\overline{\Delta}_{e,d,\delta}^{ab}(\mathbb{D}_{\delta})=\Delta_{e,d}^{ab}(\mathbb{D}_{\delta})=0,\]
which implies the desired equality.
\end{proof}

%The proof of the following proposition follows from the above arguments:

%\begin{prop}\label{yetnaother}
%Let $\{(d,w), (d,v)\}$ be a set of pairs. The action of $\tau\in\mathfrak{S}_2=\mathfrak{S}_{(d,d)}$ on $K_0(\MM(d)_w)\boxtimes K_0(\MM(d)_v)$ defined in Subsection \ref{defMM} can be defined even if the above set is not admissible. Let $\sigma\in W_{2d}$ be the corresponding Weyl group element defined for every vertex $i\in I$ as:
%$$\sigma^i(1,\cdots, 2d^i)=(d^i+1,\cdots, 2d^i,1,\cdots,d^i).$$
%Let $E\in K_0(\MM(d)_{w})\boxtimes K_0(\MM(d)_v)$. Then:
%$$\tau(E)=\sigma(E)\text{ in }\boxtimes_{i=1}^k K_0(\MM(d))^{\boxtimes 2}.$$\end{prop}

\subsection{The filtration E on grKHA}

Consider the inclusion $\C^*\xrightarrow{z\text{Id}}G(d)$, and let $q_d\in K_0(BG(d))$ be the generator of $K_0(BG(d))\twoheadrightarrow K_0(B\C^*)=\C[q_d^{\pm 1}]$. Define $h_d:=q_d-1\in \text{gr}^2 K_0(B\C^*)$. Observe that the map $\det: G(d)\to\C^*$ induces an injection $\text{gr}\,K_0(B\C^*)\to \text{gr}\,K_0(BG(d))$ with image $\mathbb{C}[\![ h_d]\!]$. 

\begin{prop}\label{lemmaimp}
There is a natural isomorphism: $$\text{gr}\,K_0(\mathbb{M}(d))=\text{gr}\,K_0(\mathbb{M}(d)_0)[\![ h_d]\!].$$
\end{prop}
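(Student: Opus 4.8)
\textbf{Proof plan for Proposition \ref{lemmaimp}.}

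The plan is to exhibit the claimed isomorphism by combining the $K_0(BG(d))$-module structure on $K_0(\MM(d))$ with the decomposition of weights by the diagonal character, mimicking the classical statement that $H^{\cdot}(B\C^*)\otimes H^{\cdot}(pt)$ splits a free module over a polynomial ring by its graded pieces. First I would recall that $\MM(d)$ carries a decomposition $\MM(d)=\bigoplus_{w\in\Z}\MM(d)_w$ by the weight $w$ of the diagonal character $\lambda_d=z\mathrm{Id}$, and that tensoring by the one-dimensional representation $\OO(\beta_d)$ (a character of $G(d)$ of $\lambda_d$-weight $1$) gives an equivalence $\MM(d)_w\cong\MM(d)_{w+1}$; hence $K_0(\MM(d)_w)\cong K_0(\MM(d)_0)$ for every $w$, with the isomorphism realized by multiplication by $q_d^{w}$, where $q_d\in K_0(BG(d))$ is the image of the generator under $K_0(BG(d))\twoheadrightarrow K_0(B\C^*)$. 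This produces a surjection $K_0(\MM(d)_0)\otimes_{\Z}\Z[q_d^{\pm1}]\onto K_0(\MM(d))$, which I claim is an isomorphism because the weight grading is preserved and each graded piece is pinned down.

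Next I would pass to associated gradeds with respect to the cohomological/codimension filtration $I^{\cdot}$. The key point is that $\mathrm{gr}\,K_0(B\C^*)=\C[\![h_d]\!]$ with $h_d=q_d-1$ in filtration degree $2$, and that the filtration on $K_0(\MM(d))$ is compatible with the $K_0(B\C^*)$-action (this compatibility is exactly what is used in Subsection~\ref{local}, e.g.\ in the proof of Proposition~\ref{zerodiv}, where the action respects the $\C^*$-weights and the $I$-adic filtration). Thus $\mathrm{gr}$ of the module isomorphism above becomes a $\mathrm{gr}\,K_0(B\C^*)=\C[\![h_d]\!]$-module statement: I want $\mathrm{gr}\,K_0(\MM(d))\cong \mathrm{gr}\,K_0(\MM(d)_0)\otimes\C[\![h_d]\!]$, i.e.\ $\mathrm{gr}\,K_0(\MM(d)_0)[\![h_d]\!]$. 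One subtlety is to check that the associated graded of the free $\Z[q_d^{\pm1}]$-module, where $q_d$ acts invertibly, becomes the completed polynomial (power series) ring $\C[\![h_d]\!]$; this is where the completion in the statement enters, and I would argue it by noting that an element of fixed cohomological degree $n$ lives in finitely many diagonal weights once one accounts for the filtration jumps caused by $q_d=1+h_d$, but that across all weights one genuinely needs the completion — this is the standard phenomenon already recorded for $H(B\C^*)$ in the cohomological setting (cf.\ the use of $P^{\leq 1}[u]$ and $\mathrm{gr}\,K(\MM_0(d))[\![u]\!]$ elsewhere in the paper).

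The main obstacle I anticipate is the bookkeeping for why $\mathrm{gr}$ turns the Laurent polynomial module into a power series module: one must show that the filtration on $K_0(\MM(d))$ restricted to the diagonal-weight-$w$ piece $q_d^w\cdot K_0(\MM(d)_0)$ shifts in a controlled, $w$-dependent way, so that collecting all weights and passing to $\mathrm{gr}$ reproduces the $h_d$-adic filtration on $\mathrm{gr}\,K_0(\MM(d)_0)[\![h_d]\!]$ rather than a direct sum of copies. Concretely I would filter $\MM(d)_0$ by codimension, use that $q_d-1=h_d$ raises filtration degree by $2$, and identify $\sum_w q_d^w x_w$ (a finite sum in $K_0(\MM(d))$) with $\sum_w (1+h_d)^w x_w$; expanding and regrouping by total filtration degree shows each graded piece of $K_0(\MM(d))$ is a \emph{finite} sum of terms $h_d^j x$ with $x\in\mathrm{gr}\,K_0(\MM(d)_0)$, and no relations are introduced because the weight decomposition is a genuine direct sum — giving injectivity — while surjectivity onto each $\mathrm{gr}$-piece is immediate. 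I would also double-check the edge case where $\MM(d)_0$ itself is infinite-dimensional, in which case the ``$[\![h_d]\!]$'' still denotes the $h_d$-adic completion applied degreewise, which is consistent with the conventions in Subsection~\ref{notations}.
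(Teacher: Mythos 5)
Your plan hinges on the claim that there is a character of $G(d)$ of $\lambda_d$-weight $1$ (which you call $\OO(\beta_d)$), and that tensoring with it gives an equivalence $\mathbb{M}(d)_w \cong \mathbb{M}(d)_{w+1}$. This fails for general $d$: the weight $\beta_d$ defined in the paper is a \emph{fractional} element of $M_{\mathbb{R}}$, not an honest character of $G(d)$, and the only characters of $G(d)$ are products of determinant characters. The determinant $\det: G(d)\to\C^*$ has $\lambda_d$-weight $\sum_i d^i$, so line-bundle twisting only relates $\mathbb{M}(d)_w$ and $\mathbb{M}(d)_{w+\sum_i d^i}$. Already for the one-vertex quiver with $d=2$ there is no character of $\lambda_d$-weight $1$, and your step producing a surjection $K_0(\mathbb{M}(d)_0)\otimes \Z[q_d^{\pm 1}]\to K_0(\mathbb{M}(d))$, let alone an isomorphism, has no starting point. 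The subsequent bookkeeping with $q_d = 1 + h_d$ cannot rescue a twist that does not exist.

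The paper's proof is designed precisely to avoid this obstruction: it replaces the nonexistent splitting of the diagonal $\C^*$ out of $G(d)$ by the two isogenies $G(d)\to PG(d)\times\C^*$ and $S(d)\times\C^*\to G(d)$. An isogeny is only an isomorphism up to a finite central group, so the splitting is obtained only after passing to $\mathrm{gr}$ (where finite covers become invisible); the injectivity needed at each step is checked not by a direct K-theoretic computation but by composing with the Chern character into $H^{\cdot}(\X(d)_0,\varphi\mathbb{Q})^{\wedge}$, where the splitting is literally true, and the remaining surjection $\mathrm{gr}\,G_0(\X^p(d)_0)\to \mathrm{gr}\,G_0(\X^s(d)_0)$ is proved by a Kempf-Ness/Levi stratification argument on $R(d)_0$. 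Your proposal contains none of these ingredients, and the point where you write ``which I claim is an isomorphism because the weight grading is preserved and each graded piece is pinned down'' is exactly where the hard work lies: identifying the different weight-graded pieces with each other, which requires passing through $\X^s(d)$ and $\X^p(d)$ (see the paper's Proposition~\ref{commsp}, where this identification is only established on $\mathrm{gr}\,K_0$ of the abelian/$S(d)$ categories, not on the categories themselves). To repair the proposal one would need to work $\mathrm{gr}$-ed from the start and replace the weight-$1$ twist by the isogeny argument, at which point one has reproduced the paper's proof.
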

 
\begin{proof}
Let $\X^p(d)=R(d)/PG(d)$ and $\X^s(d)=R(d)/S(d)$,
where: 
$$PG(d)=\text{coker}\,\left(\C^*\xrightarrow{z\text{Id}}G(d)\right)\text{ and }
S(d)=\text{ker}\left(\text{det}:G(d)\to\C^*\right).$$
Consider the commutative diagram induced by the map $G(d)\to PG(d)\times\C^*$, where the first map is the natural quotient and the second map is $\text{det}:G(d)\to\C^*$:

\begin{tikzcd}
\text{gr}K_0(D_{sg}(\X^p(d)_0))[\![ h_d]\!]\arrow{r} \arrow[d, hookrightarrow] & \text{gr}K_0(D_{sg}(\X(d)_0))\arrow[d, hookrightarrow]\\
H^{\cdot}(\X^p(d)_0,\varphi\,\mathbb{Q})^{\wedge}[\![ h_d]\!]\arrow{r}{\text{iso.}}& H^{\cdot}(\X(d)_0, \varphi\,\mathbb{Q})^{\wedge},
\end{tikzcd}
\\
which implies that the top map is an inclusion as well. Next, consider the commutative diagram induced by the map $S(d)\times\C^*\to G(d)$ via the natural inclusion $S(d)\subset G(d)$ and $\C^*\xrightarrow{z\text{Id}} G(d)$:

\begin{tikzcd}
\text{gr}\,K_0(D_{sg}(\X(d)_0))\arrow[d, hookrightarrow]\arrow{r} & \text{gr}\,K_0(D_{sg}(\X^s(d)_0))[\![ h_d]\!]\arrow[d, hookrightarrow]\\
H^{\cdot}(\X(d)_0, \varphi\,\mathbb{Q})^{\wedge}\arrow{r} & H^{\cdot}(\X^s(d)_0,\varphi\,\mathbb{Q})^{\wedge}[\![ h_d]\!],
\end{tikzcd}
\\
which implies that the top map is an inclusion. Next, we explain that the map $\X^s(d)\to \X^p(d)$ coming from $S(d)\hookrightarrow G(d)\twoheadrightarrow PG(d)$ induces a surjection:
\[ \text{gr}\,K_0(D_{sg}(\X^p(d)_0))\twoheadrightarrow \text{gr}\,K_0(D_{sg}(\X^s(d)_0)).\]
Consider the diagram:

\begin{tikzcd}
\text{gr}\,G_0(\X^p(d)_0)\arrow{d} \arrow[r, twoheadrightarrow]&
\text{gr}\,K_0(D_{sg}(\X^p(d)_0)) \arrow{d}\\
\text{gr}\,G_0(\X^s(d)_0) \arrow[r, twoheadrightarrow]& \text{gr}\,K_0(D_{sg}(\X^s(d)_0)).
\end{tikzcd}
\\
It is enough to show that the map:
\[ \text{gr}\,G_0(\X^p(d)_0)\to \text{gr}\,G_0(\X^s(d)_0)\]
is surjective. Stratify $R(d)_0=\bigcup W_L$ by locally closed subset $W\subset R(d)_0$ fixed by Levi subgroups $L\subset G(d)$. Denote by:
$$PL=\text{coker}\,\left(\C^*\xrightarrow{z\text{Id}}L\right)\text{ and }
SL=\text{ker}\left(\text{det}:L\to\C^*\right).$$
Using the excision sequence, it is enough to show that for $W$ a scheme and $L$ a Levi group of $G(d)$ acting trivially on it, 
the map 
\[\text{gr}\,G_0(W/PL)=\text{gr}\,G_0(W)\otimes \text{gr}\,K_0(BPL)\to \text{gr}\,G_0(W/SL)=\text{gr}\,G_0(X)\otimes \text{gr}\,K_0(BSL)\] is a surjection, which is true because in this case \[\text{gr}\,K_0(BPL)\cong H^{\cdot}(BPL)\cong H^{\cdot}(BSL)=\text{gr}\,K_0(BSL).\]
Putting together these results, we get that:
$$\text{gr}\,K_0(D_{sg}(\X(d)_0))\cong \text{gr}\,K_0(D_{sg}(\X^p(d)_0)).$$
Next, consider the diagram:

\begin{tikzcd}
\text{gr}\,K_0(\oM(d)) \arrow[r, hookrightarrow]& \text{gr}\,K_0(D_{sg}(\X(d)_0)) \arrow[r, twoheadrightarrow]& 
\text{gr}\,K_0(\oM(d)) \\
\text{gr}\,K_0(\oM(d)_0)[\![ h_d]\!] \arrow{u} \arrow[r, hookrightarrow] & \text{gr}\,K_0(D_{sg}(\X^p(d)_0))[\![ h_d]\!] \arrow[r, twoheadrightarrow] \arrow{u}{\text{iso.}} & 
\text{gr}\,K_0(\oM(d)_0)[\![ h_d]\!], \arrow{u}
\end{tikzcd}
\\
where the left horizontal maps are induced by the inclusions of $\oM(d)$ and $\oM(d)_0$ and the right horizontal maps are induced by the adjoint of these inclusions coming from the semi-orthogonal decomposition from Theorem \ref{thm5}. The left and right horizontal maps are induced from the inclusion $\oM(d)_0\subset \oM(d)$ and tensoring with $\OO(q_d)$ given by the character $\C^*\xrightarrow{z\text{Id}} G(d)$. It follows that:
$$\text{gr}\,K_0(\oM(d)_0)[\![ h_d]\!]\cong \text{gr}\,K_0(\oM(d)).$$
The analogous statement holds if we replace $\MM(d)_0$ with $\MM(d)_w$. Consider the diagram:

\begin{tikzcd}
K_0(\MM(d))  & \boxtimes_{i=1}^k K_0(\MM(d_i)_{w_i}) \arrow{l}{m_{\dd}}\\
K_0(\MM(d)_0)[q^{\pm 1}]\arrow[u, hookrightarrow]  & \boxtimes_{i=1}^k K_0(\MM(d_i)_{w_i})[q^{\pm 1}]. \arrow[u, hookrightarrow] \arrow{l}{m_{\dd}}
\end{tikzcd}
\\
After passing to the $\text{gr}$, both vertical maps become isomorphisms. Using Theorem \ref{decomp} we obtain that indeed:
$$\text{gr}\,K_0(\MM(d)_0)[\![ h_d]\!]\cong \text{gr}\,K_0(\MM(d)).$$
\end{proof}

We are now ready to define the filtration $E^{\cdot}$ on $\text{grKHA}$: 
 
\begin{defn}\label{E}

For a generator $x_dh_d^i\in \text{gr}\,K_0(\mathbb{M}(d)_0)[h_d]$, let $\text{deg}\,(x_dh_d^i)=1+2i$. For a monomial $\left(x_{d_1}h_{d_1}^{i_1}\right)\cdots \left(x_{d_k}h_{d_k}^{i_k}\right)$, define its degree by:
\[\text{deg}\left(x_{d_1}h_{d_1}^{i_1}\cdots x_{d_k}h_{d_k}^{i_k}\right)=\left(1+2i_1\right)+\cdots+\left(1+2i_k\right).\]
Define $E^{\leq i}\subset \text{gr}\,\text{KHA}$ as the subspace generated by monomials of degree $\text{deg}\leq i$.
\end{defn}

\subsection{The PBW theorem for grKHA}
Recall the modification of the multiplication of the $\text{CoHA}$ given in Definition \ref{supercom}. The bilinear maps $\psi$ can be used to modify the multiplication of $\text{KHA}$ as well. We will use the notation $\text{KHA}^{\psi}$ for the algebra with the twisted multiplication by $\psi$.
In this section, we prove:
\begin{thm}\label{PBWgr}
There is a natural isomorphism of algebras: $$\text{gr}^E\,\text{gr}\,KHA^{\psi}\cong \text{Sym}\,\left(\bigoplus \text{gr}\,K_0(\mathbb{M}(d)_0)[\![ h_d]\!]\right).$$
\end{thm}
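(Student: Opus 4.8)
\textbf{Proof strategy for Theorem \ref{PBWgr}.}

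The plan is to deduce the PBW theorem for $\text{gr}\,\text{KHA}$ from the deformed PBW theorem for $\text{KHA}$ (Theorem \ref{thm5}) by passing to the associated graded with respect to the cohomological filtration, and then analyzing how the deformed symmetric algebra $\text{dSym}$ degenerates. First I would recall from Theorem \ref{thm5} and Corollary \ref{basisimp} that $\text{gr}^F\text{KHA}=\text{dSym}\left(\bigoplus_{d}K_0(\mathbb{M}(d))\right)$, with generators $x_{d_1,w_1}\cdots x_{d_k,w_k}$ indexed by admissible sets and relations $x_{d,w}x_{e,v}=(-1)^{\chi(d,e)}(x_{e,v}q^{f(e,d)})(x_{d,w}q^{-g(e,d)})$ coming from Corollary \ref{cormut}. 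Now I would take $\text{gr}$ with respect to the cohomological filtration $I^{\cdot}$: since the semi-orthogonal filtration $F^{\cdot}$ and the cohomological filtration are compatible (the functors $p_{\dd*}q_{\dd}^*$ are strict for the cohomological degree up to the codimension shift, which is exactly the Todd/Euler correction already built into the relations), one gets $\text{gr}^E\,\text{gr}\,\text{KHA}$ as the associated graded of $\text{dSym}$ where each $q_d$ is replaced by $1+h_d$ with $h_d\in\text{gr}^2$. Using Proposition \ref{lemmaimp} to write $\text{gr}\,K_0(\mathbb{M}(d))=\text{gr}\,K_0(\mathbb{M}(d)_0)[\![h_d]\!]$, the generators become $x_d h_d^i$ of degree $1+2i$ as in Definition \ref{E}.

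The key point is then to compute what the relations degenerate to. In the relation $x_{d,w}x_{e,v}=(-1)^{\chi(d,e)}(x_{e,v}q^{f(e,d)})(x_{d,w}q^{-g(e,d)})$, the factors $q^{f(e,d)}$ and $q^{-g(e,d)}$ are units in $K_0(BG(e))$ and $K_0(BG(d))$ respectively; modulo the cohomological filtration one has $q^{f(e,d)}=1+(\text{higher degree})$ and $q^{-g(e,d)}=1+(\text{higher degree})$. Therefore in $\text{gr}^E$, the leading term of the relation is simply $x_{d,w}x_{e,v}=(-1)^{\chi(d,e)}x_{e,v}x_{d,w}$, i.e. the sign-commutativity from Definition \ref{supercom}. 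After twisting the multiplication by the bilinear form $\psi$ — which is exactly what makes the $(-1)^{\chi(d,e)}$ signs disappear and the algebra genuinely supercommutative, as in the $\text{CoHA}$ case — the relations in $\text{gr}^E\,\text{gr}\,\text{KHA}^{\psi}$ become honest commutativity relations among the $x_d h_d^i$. One then checks, using Proposition \ref{PBWdef} (which identifies a generating set and the complete set of relations for $\text{dSym}$), that the degeneration of $\text{dSym}^{\psi}$ under the $E$-filtration is precisely the free supercommutative, i.e. symmetric, algebra $\text{Sym}\left(\bigoplus_d\text{gr}\,K_0(\mathbb{M}(d)_0)[\![h_d]\!]\right)$: the admissibility constraints with $r=\tfrac12$ that governed the generating set in the deformed case collapse, since once the algebra is commutative every ordering of a monomial is identified, so the $\mathfrak{S}_k$-quotient appearing in Theorem \ref{decomp} becomes the symmetric power.

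The main obstacle I expect is the strictness/compatibility statement: one must verify carefully that the cohomological filtration on $\text{KHA}$ is compatible with the multiplication in the precise sense needed, so that $\text{gr}^E\,\text{gr}\,\text{KHA}$ really is computed by the naive degeneration of the $\text{dSym}$ presentation, with no hidden lower-order corrections surviving. This requires knowing that $p_{\dd*}q_{\dd}^*$ shifts cohomological degree by exactly $\text{codim}\,p_{\dd}$ and that the truncation functors $\beta_{\geq w'}$ used in defining $\Phi_{\dd}$ preserve the cohomological filtration — both of which follow from the fact (Section \ref{ch}) that the Chern character intertwines these operations with their cohomological counterparts, where the analogous statements hold for the perverse/cohomological degree. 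A secondary point to nail down is that $\text{gr}\,K_0(\mathbb{M}(d)_0)[\![h_d]\!]$, as opposed to $[h_d]$, is the right object: Proposition \ref{lemmaimp} already supplies the power-series completion, and one checks that the $E$-filtration on $\text{gr}\,\text{KHA}$ restricted to each graded piece $\text{gr}\,K_0(\mathbb{M}(d))$ is the $h_d$-adic one, so that taking $\text{Sym}$ of the completed modules is consistent with the filtration being exhaustive and separated on each graded piece. Once these compatibilities are in place, the isomorphism $\text{gr}^E\,\text{gr}\,\text{KHA}^{\psi}\cong\text{Sym}\left(\bigoplus_d\text{gr}\,K_0(\mathbb{M}(d)_0)[\![h_d]\!]\right)$ follows formally from Theorem \ref{thm5}, Proposition \ref{PBWdef}, and Proposition \ref{lemmaimp}.
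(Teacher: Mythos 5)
Your strategy—degenerate the presentation of $\text{gr}^F\text{KHA}$ from Theorem \ref{thm5} to the cohomological associated graded and observe that the $q$-deformed relations collapse to supercommutativity—captures the right intuition about why the answer should be a symmetric algebra, and your observation that the $\psi$-twist removes the residual signs is correct. However, there is a genuine gap, and the role you assign to the Chern character is not the role it actually plays in the argument.

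What your proposal establishes (modulo the filtration-compatibility bookkeeping, which corresponds to Propositions \ref{commutative} and \ref{anotherprop} in the paper) is a \emph{surjection} from $\text{Sym}\bigl(\bigoplus_d \text{gr}\,K_0(\MM(d)_0)[\![h_d]\!]\bigr)$ onto $\text{gr}^E\text{gr}\,\text{KHA}^{\psi}$: since the deformed relations become commutativity relations to leading order, every element is a symmetrized monomial in the generators, and this is what Proposition \ref{surjec} proves. But degenerating a presentation does not by itself show that \emph{no further relations} appear: the associated graded of a filtered algebra can acquire new relations (elements that sat at a lower filtration level than expected contribute unwanted identities between leading terms). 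You assert the isomorphism "follows formally from Theorem \ref{thm5}, Proposition \ref{PBWdef}, and Proposition \ref{lemmaimp}," but none of these gives injectivity of the map from $\text{Sym}$ into $\text{gr}^E\text{gr}\,\text{KHA}$. The paper proves injectivity in Proposition \ref{injec}, and the essential input is Proposition \ref{filtchern}: via the Chern character to critical cohomology, the pushforward $\boxtimes_i S^{m_i}H^{\cdot}(\X(d_i),\varphi\,\mathbb{Q})\to H^{\cdot}(\X(d),\varphi\,\mathbb{Q})$ is \emph{injective}, because $\mathcal{Y}(\dd)\subset\mathcal{Y}(d)$ (and the corresponding nearby-fiber inclusion) is a Kempf--Ness stratum, so relative cohomology pushes forward injectively. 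The Chern character therefore serves as a comparison to a setting where injectivity can be proved geometrically—not, as you suggest, merely to verify strictness of the multiplication with respect to the cohomological filtration.

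Concretely: you would need to supply an argument showing that the composite
\[
\bigoplus_{\dd\ \text{descending}}\ \boxtimes_{i=1}^k S^{m_i}\text{gr}\,K_0(\MM(d_i))\ \longrightarrow\ \text{gr}^E\text{gr}\,K_0(D_{sg}(\X(d)_0))
\]
is injective, and there is no purely K-theoretic way in this paper's toolkit to do this without passing to cohomology. Your appeal to Proposition \ref{PBWdef} is also misplaced: that proposition is about the abstract algebra $\text{dSym}$ (before taking any associated graded) and says nothing about how the $E$-filtration on $\text{gr}\,\text{KHA}$ interacts with the monomial basis. If you want to repair the proof along your lines, you must in effect reproduce Propositions \ref{surjec}, \ref{injec}, and \ref{filtchern}: first show surjectivity (your argument), then show injectivity by chasing the diagram through $\text{ch}$ to cohomology, using the injectivity of the pushforward along the Hilbert--Chow-type stratification.
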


We first show that the algebra $\text{gr}^E\text{gr}\,\text{KHA}$ is commutative:

\begin{prop}\label{commutative}
Let $d,e$ be dimension vectors and $a,b\geq 0$ be two integers. Consider $x_d\in K_0(\MM(d)_0)$ and $x_e\in K_0(\MM(e)_0)$. We have the following equality in $\text{gr}^{2a+2b+2}\,K_0(D_{sg}(\X(d+e)_0))$:
$$(x_dh_d^a)(x_eh_e^b)=(-1)^{\chi(d,e)}(x_eh_e^b)(x_dh_d^a).$$
\end{prop}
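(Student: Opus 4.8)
\textbf{Plan for the proof of Proposition \ref{commutative}.} The strategy is to reduce the graded-commutativity of $\text{gr}^E\,\text{gr}\,\text{KHA}$ to the relation already established in Corollary \ref{cormut} (and its $r=\frac{1}{2}$ analogue, Corollary \ref{change}), and then extract the leading term after the degree filtration $E^{\cdot}$. Recall that for $x_{d,w}\in K_0(\MM(d)_w)$ and $x_{e,v}\in K_0(\MM(e)_v)$ with $\{(d,w),(e,v)\}$ admissible, one has in $\text{gr}^F\,\text{KHA}$ the identity $x_{d,w}x_{e,v}=(-1)^{\chi(d,e)}\bigl(x_{e,v}q^{f(e,d)}\bigr)\bigl(x_{d,w}q^{-g(e,d)}\bigr)$. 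The point is that the factors $q^{f(e,d)}$ and $q^{-g(e,d)}$ lie in $K_0(BG(e))$ and $K_0(BG(d))$ respectively, and each is of the form $\prod_v q_v^{m_v}$; under the identification $\text{gr}\,K_0(\MM(e))\cong \text{gr}\,K_0(\MM(e)_0)[\![h_e]\!]$ from Proposition \ref{lemmaimp}, the element $q_v=1+h_v+\cdots$ has leading term $1$ in the associated graded of the $h$-adic/cohomological filtration. So modulo higher-degree terms in $E^{\cdot}$ the Euler factors act as the identity.

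First I would set $w=a\beta_d$ (or rather $w$ the weight so that $x_dh_d^a$ lies in $K_0(\MM(d)_{w})$ after expressing $h_d$-powers via the diagonal character, using Proposition \ref{lemmaimp}) and similarly for $e$; one checks the set $\{(d,w),(e,v)\}$ is admissible, since the weight $w\beta_d+v\beta_e$ in standard form has Levi $G(d)\times G(e)$. Then I would apply Corollary \ref{cormut} if $r>\frac{1}{2}$, or Corollary \ref{change} if $r=\frac{1}{2}$: in the former case the relation is exact in $\text{gr}^F\,\text{KHA}$, in the latter it holds modulo $K_0(\MM(d+e)_{w+v})$, which is precisely $F^{\leq 1}$ and hence lower in the $F$-filtration used to define $\text{gr}^F$, so it vanishes in $\text{gr}^F$. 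This produces $x_dh_d^a\cdot x_eh_e^b=(-1)^{\chi(d,e)}(x_eh_e^bq^{f(e,d)})(x_dh_d^aq^{-g(e,d)})$ in $\text{gr}^F\,\text{KHA}$, hence also in $\text{gr}\,\text{KHA}$ after passing to the cohomological-degree associated graded (the two filtrations are compatible since $m$ is filtered).

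Next I would expand the right-hand side. Writing $q^{f(e,d)}=\prod_v q_v^{n_v}$ with $q_v=1+h_v$, we have $q_v^{n_v}=1+n_vh_v+\binom{n_v}{2}h_v^2+\cdots$, so $x_eh_e^bq^{f(e,d)}=x_eh_e^b+(\text{terms with strictly more }h\text{'s})$; the extra $h$-factors raise the $E$-degree by a positive even amount. Since $\text{deg}((x_eh_e^b)(x_dh_d^a))=2a+2b+2$ and the correction terms have strictly larger degree, they are zero in $\text{gr}^{2a+2b+2}$. Therefore in $\text{gr}^E\,\text{gr}\,\text{KHA}$, i.e. in $\text{gr}^{2a+2b+2}\,K_0(D_{sg}(\X(d+e)_0))$, the identity collapses to $(x_dh_d^a)(x_eh_e^b)=(-1)^{\chi(d,e)}(x_eh_e^b)(x_dh_d^a)$, which is the claim.

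The main obstacle I anticipate is bookkeeping rather than conceptual: one must verify carefully that the $h$-adic valuation (equivalently, the cohomological filtration under $\text{ch}$, via Proposition \ref{lemmaimp}) of the Euler classes $q^{f(e,d)}-1$ and $q^{-g(e,d)}-1$ is at least $2$, so that multiplying by them only adds degree, and that this is compatible with the product structure $m$ respecting the $\bigoplus_d K_0(BG(d))$-module structure (Proposition \ref{resp}/\ref{change}). One also has to be slightly careful that $x_dh_d^a$ genuinely represents a class in some $K_0(\MM(d)_w)$ for an appropriate $w$ so that admissibility of $\{(d,w),(e,v)\}$ can be invoked — this follows from the decomposition $\text{gr}\,K_0(\MM(d))=\text{gr}\,K_0(\MM(d)_0)[\![h_d]\!]$ together with the fact that $h_d$ is pulled back from $K_0(B\C^*)$ along $\det$, so tensoring by powers of $q_d$ only shifts the diagonal weight and preserves admissibility by Proposition \ref{rem1}. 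Once these checks are in place the argument is a short computation.
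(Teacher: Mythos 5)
Your approach matches the paper's at the level of the core idea: invoke Theorem \ref{mut} (equivalently Corollaries \ref{cormut} and \ref{change}) to get the twisted commutation relation, and then observe that passing to the cohomological associated graded kills the Euler twists $q^{f(e,d)}$, $q^{-g(e,d)}$ because their leading term in $\text{gr}\,K_0(BG)$ is $1$. However, two of the steps as you have written them contain genuine gaps, and these are precisely the places the paper spends its effort (via the auxiliary subspaces $A$, $B$ and Proposition \ref{anotherprop}).

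First, your claim that "the extra $h$-factors raise the $E$-degree by a positive even amount" conflates the cohomological degree with the $E$-degree. The $E$-degree of a monomial $x_dh_d^i$ is $1+2i$ and tracks only the powers of the diagonal classes $h_d$, $h_e$, $h_{d+e}$, not the degree of the coefficient in $\text{gr}\,K_0(\MM(\cdot)_0)$; the classes $h_v=q_v-1$ appearing in the Euler expansion are vertex-wise determinant classes, not the diagonal ones, and multiplying by $h_v$ does \emph{not} generically increase the $h_e$-power — under the isomorphism of Proposition \ref{lemmaimp} it can just as well leave the $h_e$-power unchanged and push the degree into the coefficient. What \emph{is} true, and what you should say instead, is that $q^{f(e,d)}-1$ has strictly positive \emph{cohomological} degree, hence acts as zero on a fixed cohomological-degree slot of $\text{gr}\,\text{KHA}$; this is the correct (and simpler) reason the Euler twists drop out after passing to $\text{gr}_I$, and requires no appeal to $E$.

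Second, and more seriously: you conclude the relation "in $\text{gr}^F\,\text{KHA}$, hence also in $\text{gr}\,\text{KHA}$," but the proposition is a statement in $\text{gr}^E\text{gr}\,\text{KHA}$, and a relation modulo lower $F$-filtration does not automatically become a relation modulo $E^{\leq 2a+2b+1}$ after applying $\text{gr}_I$. The correction from Corollary \ref{change} lands in $F^{\leq 1}=K_0(\MM(d+e))$; after passing to $\text{gr}_I$ this becomes $\text{gr}\,K_0(\MM(d+e)_0)[\![h_{d+e}]\!]$, whose elements can have arbitrarily large $h_{d+e}$-power (paid for by lowering the degree of the coefficient), hence arbitrarily large $E$-degree — a priori not bounded by $2a+2b+1$. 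The paper closes this gap through Proposition \ref{anotherprop}: the auxiliary subspace $A$ restricts to weights $0\leq w\leq a$ and $F^{\leq b+2}$, and the explicit decomposition $\text{gr}\,A = \bigoplus_{0\leq i\leq a,\,0\leq j\leq b}\text{gr}\,K_0(\MM(d)_0)h_d^i\boxtimes\text{gr}\,K_0(\MM(e)_0)h_e^j$ has \emph{bounded} indices $i\leq a$, $j\leq b$, so that the complement $\text{gr}\,B$ of the top term lies in $E^{\leq 2a+2b+1}$. This bounded-index decomposition is the crucial input your argument is missing; without it one cannot bound the $E$-degree of the $F$-filtration correction. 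Relatedly, your appeal to Proposition \ref{rem1} to check admissibility is not quite on target: Proposition \ref{rem1}(a) concerns the $r$-invariant of a $G(d)$-weight shifted by $\beta_d$, whereas admissibility of $\{(d,w),(e,v)\}$ is a condition on the standard form of the joint $G(d+e)$-weight $w\beta_d+v\beta_e$, which is not preserved by shifting only the $d$-coordinate. The paper avoids fixing a single admissible pair: Proposition \ref{anotherprop}'s proof enumerates the finitely many admissible pairs $\{(d,u-i),(e,v+i)\}$ ($0\leq i\leq b$) appearing in $F^{\leq b+2}$ via Theorem \ref{sod5}, and works with all of them at once.
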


Before we start the proof of Proposition \ref{commutative}, we collect some results which compare the filtrations $E^{\cdot}$ and $F^{\cdot}$. 
First, define a category $\oM^s(d)\subset D_{sg}(\X^s(d)_0)$ as in Subsection \ref{defnmagic}; by the argument in Theorem \ref{sod5} or the \v{S}penko-van den Bergh \cite{sp}, we have a semi-orthogonal decomposition:
$$D_{sg}(\X^s(d)_0)=\langle \cdots, \oM^s(d)\rangle.$$
By the argument in Proposition \ref{lemmaimp}, the inclusion of categories $\oM(d)_0\subset \oM^s(d)$ induces an isomorphism:
$$\text{gr}\,K_0(\oM(d)_0)\cong \text{gr}\,K_0(\oM^s(d)).$$
Let $\widetilde{q}_d\in K_0(BS(d))$ be the class of $\beta_d$ defined in Subsection \ref{notations}. 

For an admissible set $\{(d_1,w_1),\cdots, (d_k,w_k)\}$ of $r=\frac{1}{2}$ and total weight $w$, 
we can define functors $$\Phi_{\dd}:\oM^{s}(d)_w\to \boxtimes_{i=1}^k \oM^{s}(d_i)_{w_i}$$ as in Subsection \ref{defMM}. The functors $\Phi_{\dd}$ send $\oM(d)$ to $\boxtimes_{i=1}^k \oM(d_i)_{w_i}$ because they preserve integer weights, and we thus obtain a commutative diagram:

\begin{tikzcd}
\text{gr}\,K_0(\oM(d)_0)\arrow{r}{\Phi_{\dd}} \arrow{d}{\text{iso.}}& \boxtimes_{i=1}^k \text{gr}\,K_0\left(\oM(d_i)_{w_{i}}\right)\ \arrow{d}{\text{iso.}}\\
\text{gr}\,K_0(\oM^{s}(d))\arrow{r}{\Phi_{\dd}}& \boxtimes_{i=1}^k \text{gr}\,K_0\left(\oM^s(d_i)_{w_{i,j}}\right).
\end{tikzcd}

Multiplication by $\widetilde{q}_d^w$ induces an isomorphism: $$\widetilde{q}_d^w: K_0(\oM^s(d)_0)\cong K_0(\oM^s(d)_w).$$

For a set $S$ of dimension vectors with sum $d$, we can define functors $\Phi_S:\oM^s(d)\to \boxtimes_{i=1}^k \oM^s(d_i)_{w_i}$ and
categories $\mathcal{A}_S^s\subset \bigoplus_{\mathfrak{S}_k}\boxtimes_{i=1}^k \oM^s(d_i)_{w_i}$.
Define $\oM^s(d)\subset \MM^s(d)$ as the full subcategory with sheaves $F$ such that $\Phi_S(F)\in\mathcal{A}_S$ for every $S$ as above with $|S|\geq 2$. 
By the analogue of Proposition \ref{surj}, we have that:
$$K_0(\MM^s(d))=\bigcap_{|S|\geq 2} \text{ker}\left(K_0(\oM^s(d))\xrightarrow{\Phi_{S}} \left(\bigoplus_{\mathfrak{S}_{k}}\,\boxtimes_{i=1}^k K_0\left(\oM^s(d_i)_{w_{i}}\right)\right)^{\mathfrak{S}_{k}}\right),$$ where $k=|S|$.
Using Proposition \ref{surj}, this implies that:
\begin{prop}\label{commsp}
There is an isomorphism $\text{gr}\,K_0(\MM(d)_0)\cong \text{gr}\,K_0(\MM(d)_w)$ making the following diagram commute:

\begin{tikzcd}
\text{gr}\,K_0(\MM(d)_0)\arrow{r}\arrow{d}& \text{gr}\,K_0(\MM(d)_w)\arrow{d}\\
\text{gr}\,K_0(\MM^s(d)_0)\arrow{r}{\widetilde{q}_d^w}& \text{gr}\,K_0(\MM^s(d)_w).
\end{tikzcd}
\end{prop}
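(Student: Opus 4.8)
The plan is to build the isomorphism $\text{gr}\,K_0(\MM(d)_0)\cong \text{gr}\,K_0(\MM(d)_w)$ from the corresponding statement for $\oM$ in place of $\MM$, which is part of the proof of Proposition \ref{lemmaimp}, and then to cut down to the subcategories $\MM$ by intersecting kernels, exactly as in Proposition \ref{surj}. First I would recall that multiplication by the class $\widetilde q_d^w\in K_0(BS(d))$ gives an equivalence $\oM^s(d)_0\cong\oM^s(d)_w$ (twisting by the line bundle attached to the character $z\mapsto z^w$ of $S(d)$, which shifts the $\lambda_d$-weight by $w$ but preserves membership in $\oM^s$ since the polytope $\WW$ is translation-invariant along $\beta_d$ by Proposition \ref{rem1}(a)). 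Applying $\text{gr}$ and combining with the isomorphism $\text{gr}\,K_0(\oM(d)_0)\cong\text{gr}\,K_0(\oM^s(d))$ established inside the proof of Proposition \ref{lemmaimp}, together with its weight-$w$ analogue $\text{gr}\,K_0(\oM(d)_w)\cong\text{gr}\,K_0(\oM^s(d)_w)$, produces a canonical isomorphism $\text{gr}\,K_0(\oM(d)_0)\cong\text{gr}\,K_0(\oM(d)_w)$ fitting into the square with bottom row $\widetilde q_d^w$.

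Next I would check that this isomorphism restricts to the subspaces $\text{gr}\,K_0(\MM)$. Here I use the descriptions $K_0(\MM(d)_0)=\bigcap_{|S|\ge 2}\ker(\Phi_S)$ of Proposition \ref{surj} and its weight-$w$ version $K_0(\MM^s(d)_w)=\bigcap_{|S|\ge 2}\ker(\Phi_S)$ noted just above. The point is that the functors $\Phi_{\dd}$ (hence $\Phi_S$, and the symmetrization maps $T$) are compatible with the weight-twist: the diagram displayed before the statement, relating $\Phi_{\dd}$ on $\oM(d)_0$ and on $\oM^s(d)$, together with the fact that $\Phi_{\dd}$ preserves $\lambda_{\dd}$-weights and commutes with tensoring by the line bundle for $z\mapsto z^w$, shows that $\Phi_S\circ(\widetilde q_d^w\cdot-)=(\boxtimes\widetilde q_{d_i}^{w_i}\cdot-)\circ\Phi_S$ up to the bookkeeping of how the total weight $w$ splits among the $w_i$ for admissible sets with $r=\tfrac12$. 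Therefore the isomorphism $\text{gr}\,K_0(\oM(d)_0)\cong\text{gr}\,K_0(\oM(d)_w)$ carries $\bigcap\ker(\Phi_S)$ to $\bigcap\ker(\Phi_S)$, i.e. carries $\text{gr}\,K_0(\MM(d)_0)$ isomorphically onto $\text{gr}\,K_0(\MM(d)_w)$, and by construction it makes the required square with $\widetilde q_d^w$ on the bottom commute.

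I expect the main obstacle to be the $r=\tfrac12$ weight-bookkeeping in the second step: one must verify that an admissible set $\{(d_i,w_i)\}$ of total weight $w$ corresponds, under the weight-$w$ twist, to an admissible set for the twisted weights in a way compatible with all the symmetrization maps and the $\mathfrak S_k$-actions defining $\mathcal A_S$ and $\mathcal A_S^s$; this requires tracking the functors $\tau$ of Subsection \ref{defMM} through the twist. Everything else is formal: the equivalences $\oM^s(d)_0\cong\oM^s(d)_w$ and the $\text{gr}$-isomorphisms $\text{gr}\,K_0(\oM(d)_0)\cong\text{gr}\,K_0(\oM^s(d))$ (and weight-$w$ analogue) are already in the proof of Proposition \ref{lemmaimp}, Proposition \ref{surj} supplies the kernel descriptions, and the commuting square with $\Phi_{\dd}$ is displayed explicitly just before the statement. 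So the proof reduces to: twist by $\widetilde q_d^w$, pass to $\text{gr}$, invoke Propositions \ref{lemmaimp} and \ref{surj}, and observe kernels are preserved.
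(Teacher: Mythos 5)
Your proof takes essentially the same approach as the paper: the paper gives no separate proof of Proposition \ref{commsp} but states it as a direct consequence of the immediately preceding paragraphs (the isomorphisms $\text{gr}\,K_0(\oM(d)_0)\cong\text{gr}\,K_0(\oM^s(d))$, the twist $\widetilde q_d^w:K_0(\oM^s(d)_0)\cong K_0(\oM^s(d)_w)$, the commuting $\Phi_{\dd}$ square, and the kernel description of $\MM^s(d)$ via the analogue of Proposition \ref{surj}). Your write-up assembles exactly these ingredients and correctly flags the $r=\tfrac12$ weight bookkeeping as the one nontrivial compatibility to verify, which the paper leaves implicit.
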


Let $A\subset \text{KHA}$ be the subspace generated by the pieces in $F^{\leq b+2}$ corresponding to the partitions $(d,e)$ and $d+e$ inside $\bigoplus_{w=0}^a K_0(D_{sg}(\X(d+e)_0)_w)$. Consider the subspace $B\subset A$ generated by $F^{\leq b+2}$ inside $\bigoplus_{w=0}^{a-1} K_0(D_{sg}(\X(d+e)_0)_w)$ and by $F^{\leq b+1}$ inside $K_0(D_{sg}(\X(d+e)_0)_a)$.

\begin{prop}\label{anotherprop}
(a) The product $(x_dh_d^a)(x_eh_e^b)$ is in $\text{gr}\,A\subset \text{gr}\,\text{KHA}$.

(b) We have that $\text{gr}\,B\subset E^{\leq 2a+2b+1}$.
\end{prop}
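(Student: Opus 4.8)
The plan is to prove Proposition \ref{anotherprop} by carefully tracking, through the explicit formula for the product $(x_dh_d^a)(x_eh_e^b)=p_{d,e*}q_{d,e}^*(x_dh_d^a\boxtimes x_eh_e^b)$, which pieces of the filtration $F^{\cdot}$ the resulting class lives in, and then combining this with the degree bookkeeping of the filtration $E^{\cdot}$ from Definition \ref{E}. Since $x_d\in K_0(\MM(d)_0)$ and $x_e\in K_0(\MM(e)_0)$, the set $\{(d,0),(e,0)\}$ may or may not be admissible; in either case the key structural input is Theorem \ref{sod5} together with Proposition \ref{filtcomp} and Theorem \ref{mut}, which describe how classes supported on the attracting locus $\X(d,e)$ decompose into the semi-orthogonal summands $p_{\dd*}q_{\dd}^*(\oM(d_1)_{w_1}\boxtimes\cdots\boxtimes\oM(d_k)_{w_k})$ and which lengths $l$ appear.

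For part (a): I would first use Proposition \ref{resp} to commute the Euler classes $h_d^a$ and $h_e^b$ (which lie in $K_0(BG(d))$ and $K_0(BG(e))$ respectively) past $p_{d,e*}q_{d,e}^*$, so that $(x_dh_d^a)(x_eh_e^b)=(x_dx_e)\,h_d^a h_e^b$ where $h_d^a h_e^b$ is a specific element of $K_0(BG(d)\times BG(e))\cong K_0(BG(d,e))$ that is pushed forward to weight $\leq a$ under $\lambda_{d,e}$. Then $x_dx_e=p_{d,e*}q_{d,e}^*(x_d\boxtimes x_e)$ lies in the sum of semi-orthogonal summands indexed by admissible sets $\leq$ the one attached to $\{(d,0),(e,0)\}$; since $x_d\in K_0(\MM(d)_0)$ and $x_e\in K_0(\MM(e)_0)$ and the set has $r=\tfrac12$ or $r>\tfrac12$, Theorem \ref{mut} (in the $r>\tfrac12$ case) or Corollary \ref{change} (in the $r=\tfrac12$ case) shows the top-length term is $p_{d,e*}q_{d,e}^*(x_d\boxtimes x_e)$ itself and all lower terms drop in length. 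The weight-grading argument — multiplying by $h_d^a$ raises the $\lambda_{d,e}$-weight by at most $a$ — then confines the class to the span of the partition-$(d,e)$ and partition-$(d+e)$ pieces in weights $\leq a$, which is exactly $A$. I would record this as a statement about which summands of the semi-orthogonal decomposition of Theorem \ref{sod5}, restricted to each weight $w\leq a$, can carry $x_dx_e h_d^a h_e^b$.

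For part (b): the claim $\text{gr}\,B\subset E^{\leq 2a+2b+1}$ is a matter of matching the two filtrations on the relevant finite-dimensional space. Using Proposition \ref{lemmaimp} and Proposition \ref{commsp}, I would identify $\text{gr}\,K_0(\oM(d+e)_w)$ with $\text{gr}\,K_0(\oM(d+e)_0)[\![h_{d+e}]\!]$ in each weight $w$, and express the pieces defining $B$ in terms of monomials $x_{\dd}\prod h_{d_i}^{j_i}$ with $\text{deg}=\sum(1+2j_i)$. By construction $B$ is generated either by pieces $F^{\leq b+2}$ in weights $w\leq a-1$ — which contribute monomials of $E$-degree at most $2(a-1)+2b+2=2a+2b$ once one checks that the weight $w$ can absorb at most $w$ powers of $h$ among the relevant $h_{d_i}$ — or by the piece $F^{\leq b+1}$ in weight $a$, which contributes monomials of $E$-degree at most $2a+2(b-1)+2+1=2a+2b+1$ or fewer; either way the degree is $\leq 2a+2b+1$. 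The main obstacle I anticipate is the precise degree counting in part (b): I need to show that the drop by one in either the weight ($w\leq a-1$ versus $w=a$) or the filtration level ($F^{\leq b+1}$ versus $F^{\leq b+2}$) translates into the drop of one in $E$-degree, and this requires being careful that the $E$-degree of a monomial in $F^{\leq c}$ of weight $w$ is bounded by $2(c-1)$ plus a correction accounting for how weight is distributed among factors, which in turn rests on the identification $\text{gr}\,K_0(\MM(d)_0)\cong\text{gr}\,K_0(\MM(d)_w)$ of Proposition \ref{commsp} and the fact that passing from weight $0$ to weight $w$ introduces factors of $\widetilde{q}_d^w$, not $h_d^w$. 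Once this dictionary is pinned down, both (a) and (b) follow by direct inspection.
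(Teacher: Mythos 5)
Your overall plan uses the right ingredients — the semi-orthogonal decomposition of Theorem \ref{sod5}, the mutation relation of Theorem \ref{mut}, and the identifications from Propositions \ref{lemmaimp} and \ref{commsp} — but the paper's proof reaches both conclusions much more directly than your plan does, and one step of your argument for part (a) does not hold up as stated.

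The paper's proof is essentially one observation: by Theorem \ref{sod5} and Theorem \ref{mut}, the relevant pieces of $F^{\leq b+2}$ inside $K_0(D_{sg}(\X(d+e)_0)_w)$ for the partitions $(d,e)$ and $d+e$ are spanned by $K_0(\MM(d+e)_w)$ and by $p_{d,e*}\bigl(K_0(\MM(d)_{u-i})\boxtimes K_0(\MM(e)_{v+i})\bigr)$ for $0\leq i\leq b$; applying Proposition \ref{commsp} to replace each weight piece by $\text{gr}\,K_0(\MM(\cdot)_0)$ times a power of $h$ gives, after collecting all $w$ from $0$ to $a$, the explicit decomposition
$$\text{gr}\,A=\bigoplus_{0\leq i\leq a,\, 0\leq j\leq b}\text{gr}\,K_0(\MM(d)_0)\,h_d^i\ \boxtimes\ \text{gr}\,K_0(\MM(e)_0)\,h_e^j,$$
with $\text{gr}\,B$ being the same sum with the single summand $(i,j)=(a,b)$ deleted. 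Part (a) is then the statement that $(x_dh_d^a)(x_eh_e^b)$ is the monomial in the $(a,b)$ summand, and part (b) follows because every remaining $(i,j)$ has $E$-degree $(1+2i)+(1+2j)\leq 2a+2b$. Your proposal gestures at writing things as monomials $x_{\dd}\prod h_{d_i}^{j_i}$ but never commits to this explicit finite direct-sum description of $\text{gr}\,A$, which is what makes both parts ``follow immediately.''

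There is also a concrete gap in your part (a): you argue that ``multiplying by $h_d^a$ raises the $\lambda_{d,e}$-weight by at most $a$,'' and from this conclude the product is confined to weights $\leq a$. But the weight grading used in the definition of $A$ (i.e.\ the grading of $K_0(D_{sg}(\X(d+e)_0)_w)$) is with respect to the diagonal character $\lambda_{d+e}$ of $G(d+e)$, not $\lambda_{d,e}$; the factor $h_e^b$ also contributes to that weight, so the confinement you assert does not come for free from this estimate. The correct way around this, as in the paper, is not a one-sided weight bound but the double-indexed decomposition of $\text{gr}\,A$ itself. Similarly, your proposed reduction ``commute $h_d^a$ and $h_e^b$ past $p_{d,e*}q_{d,e}^*$ via Proposition \ref{resp}'' is unnecessary once one works with the explicit decomposition, and introduces extra bookkeeping. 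Finally, in your part (b) sketch the case arithmetic ($2(a-1)+2b+2$, $2a+2(b-1)+2+1$, etc.) eventually lands near the right bound, but the rule you state — ``the $E$-degree of a monomial in $F^{\leq c}$ of weight $w$ is bounded by $2(c-1)$ plus a correction'' — is not how the filtrations are matched in the paper and would need to be carefully justified; the clean route is simply: the removed summand has $E$-degree $2a+2b+2$, and every other summand has $E$-degree $\leq 2a+2b<2a+2b+1$.
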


\begin{proof}
By Theorem \ref{sod5} and Theorem \ref{mut}, the pieces $$F^{\leq b+2}\subset K_0(D_{sg}(\X(d+e)_0)_w)$$ are generated by $K_0(\MM(d+e)_w)$ and by:
$$p_{d,e*}\left(K_0(\MM(d)_{u-i})\boxtimes K_0(\MM(e)_{v+i})\right)$$ for $0\leq i\leq b$.
Use Proposition \ref{commsp} to write:
$$\text{gr}\,A=\bigoplus_{0\leq i\leq a, 0\leq j\leq b}
\text{gr}\,K_0(\MM(d)_0)\,h_d^i\boxtimes \text{gr}\,K_0(\MM(e)_0)\,h_e^j.$$ 

The subspace $\text{gr}\,B\subset \text{gr}\,A$ corresponds to the above sum with the summand $\text{gr}\,K_0(\MM(d)_0)\,h_d^a\boxtimes \text{gr}\,K_0(\MM(e)_0)\,h_e^b$ removed. Both part $(a)$ and $(b)$ follow immediately from these decompositions.
\end{proof}

%\begin{proof}
%Let $y_d\in K_0(\MM(d))$ and $y_e\in K_0(\MM(e))$ be such that $[y_d]=x_dh_d^a$ in $\text{gr}\,K_0(D_{sg}(\X(d)_0))$
%and $[y_e]=x_eh_e^b$ in $\text{gr}\,K_0(D_{sg}(\X(e)_0))$. Theorem \ref{thm5} implies that $$y_dy_e=(-1)^{\chi(d,e)}(y_eq^{f(e,d)})(y_dq^{-g(e,d)})\text{ in }K_0(D_{sg}(\X(d+e)_0)).$$ By the argument in Proposition \ref{independence}, we have that $[y_eq^{f(e,d)}]=[y_e]$ in $\text{gr}\,K_0(D_{sg}(\X(d)_0))$ and $[y_dq^{-g(d,e)}]=[y_d]$ in $\text{gr}\,K_0(D_{sg}(\X(e)_0))$, and this implies the conclusion.\end{proof}

\begin{proof}[Proof of Proposition \ref{commutative}]
Consider lifts $x_dh_d^a\in A$ and $x_eh_e^b\in B$. We have that $(x_dh_d^a)(x_eh_e^b)\in F^{\leq b+2}$. Theorem \ref{mut} implies that:
$$(x_dh_d^a)(x_eh_e^b)-(-1)^{\chi(d,e)}(x_eh_e^bq^{g(e,d)})(x_dh_d^bq^{-f(e,d)})\in F^{\leq b+1}.$$ Further, the construction in Proposition \ref{anotherprop} implies that it actually lies in:
$$F^{\leq b+1}\cap \bigoplus_{w=0}^{a}K_0(D_{sg}(\X(d+e)_0)_w)\subset B.$$  
Passing to $\text{gr}\,\text{KHA}$, we get that:
$$(x_dh_d^a)(x_eh_e^b)-(-1)^{\chi(d,e)}(x_eh_e^b)(x_dh_d^b)\in \text{gr}\,B\subset E^{\leq 2a+2b+1},$$ and the conclusion follows.
\end{proof}

Let $d_1,\cdots,d_k$ be different dimension vectors and $m_1,\cdots,m_k$ positive integers such that $m_1d_1+\cdots+m_kd_k=d$. By Proposition \ref{commutative}, the multiplication map $\boxtimes_{i=1}^k \text{gr}\,K_0(\MM(d_i))^{\boxtimes m_i}\to \text{gr}\,K_0(D_{sg}(\X(d)_0))$ factors through:
$$\boxtimes_{i=1}^k \text{gr}\,K_0(\MM(d_i))^{\boxtimes m_i}\xrightarrow{T_{m_1}\times\cdots\times T_{m_k}}
\boxtimes_{i=1}^k\,S^{m_i}\text{gr}\,K_0(\MM(d_i))\to
\text{gr}\,K_0(D_{sg}(\X(d)_0)),$$ where $T_{m_i}$ for $1\leq i\leq k$ denote symmetrization maps. The similar result holds in cohomology by \cite[Theorem C]{dm}.

\begin{prop}\label{surjec}
The natural map:
$$\bigoplus_{\dd\text{ descending}} \boxtimes_{i=1}^k S^{m_i}\text{gr}\,K_0(\MM(d_i))\to \text{gr}^E\text{gr}\,K_0(D_{sg}(\X(d)_0))$$ is a surjection, where the sum on the left hand side is taken after all descending partitions $m_1d_1+\cdots+m_kd_k=d$ with $d_1\succ\cdots\succ d_k$.
\end{prop}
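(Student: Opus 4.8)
The strategy is to combine Theorem \ref{decomp}, which decomposes $K_0(\oM(d))$ through the multiplication map into pieces indexed by sets of dimension vectors with $r=\frac{1}{2}$ admissible weights, with Corollary \ref{basisimp}, which gives generators of $\text{gr}^F\text{KHA}$, and then to pass to the further associated graded $\text{gr}^E$ of Definition \ref{E}. First I would recall that by Corollary \ref{basisimp} the monomials $x_{d_1,w_1}\cdots x_{d_k,w_k}$ for $\{(d_1,w_1),\cdots,(d_k,w_k)\}$ admissible and $x_{d_i,w_i}\in K_0(\MM(d_i)_{w_i})$ generate $\text{gr}^F\text{KHA}$; in particular they generate $\text{gr}^F K_0(D_{sg}(\X(d)_0))$ at each fixed $d$. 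Then I would use Proposition \ref{lemmaimp} and Proposition \ref{commsp} to rewrite $\text{gr}\,K_0(\MM(d_i)_{w_i})$ as $\text{gr}\,K_0(\MM(d_i)_0)[\![h_{d_i}]\!]$ (or rather its graded pieces), so that each generator $x_{d_i,w_i}$ is, up to lower pieces of $E^{\cdot}$, a product $x_{d_i}h_{d_i}^{a_i}$ with $x_{d_i}\in K_0(\MM(d_i)_0)$.

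Next I would invoke Proposition \ref{commutative}: any two such generators $x_dh_d^a$ and $x_eh_e^b$ commute in $\text{gr}^E\text{gr}\,\text{KHA}$ up to the sign $(-1)^{\chi(d,e)}$. After passing to the twisted multiplication $\psi$ (Definition \ref{supercom}), the sign disappears — more precisely, the point of $\psi$ is exactly that $x_dh_d^a$ and $x_eh_e^b$ strictly commute in $\text{gr}^E\text{gr}\,\text{KHA}^{\psi}$ when $\chi(d,e)+\chi(d,d)\chi(e,e)$ has the right parity, and the extra grading contribution $\psi(d,d)$ is even for all the classes in $K_0(\MM(d)_0)$ because of the evenness statement in Proposition \ref{even}. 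So I would record that, in $\text{gr}^E\text{gr}\,\text{KHA}^{\psi}$, all the generators $x_dh_d^a$ commute with one another. Consequently, any monomial $x_{d_1,w_1}\cdots x_{d_k,w_k}$ can be rearranged, modulo $E^{<}$, so that equal dimension vectors $d_i$ are grouped together, the groups are ordered by $\succ$ (the lexicographic order of Subsection \ref{succ}), i.e.\ into the shape of a descending partition $m_1d_1+\cdots+m_kd_k=d$ with $d_1\succ\cdots\succ d_k$; within each group of $m_i$ copies of $K_0(\MM(d_i))$, commutativity makes the multiplication factor through the symmetrization map $T_{m_i}$ onto $S^{m_i}\,\text{gr}\,K_0(\MM(d_i))$, exactly as in the paragraph preceding this proposition. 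Putting these together, every class in $\text{gr}^E\text{gr}\,K_0(D_{sg}(\X(d)_0))$ is a sum, over descending partitions, of images of $\boxtimes_{i=1}^k S^{m_i}\text{gr}\,K_0(\MM(d_i))$, which is precisely the asserted surjectivity.

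The main obstacle I expect is bookkeeping rather than conceptual: I need to check carefully that the rearrangement of an arbitrary admissible monomial into ``descending-partition shape'' does not leave the span of the monomials already under control, i.e.\ that each swap of adjacent generators used in Proposition \ref{commutative} really does introduce only an error in a strictly smaller $E^{\leq}$-piece (this uses Proposition \ref{anotherprop} and the fact that the weight $w$ bookkeeping in Proposition \ref{commsp} is compatible with $\Phi_S$), and that after grouping the copies of a fixed $d_i$ the map genuinely factors through $S^{m_i}$ and not merely through some $\mathfrak{S}_{m_i}$-equivariant quotient — this is where I would cite Proposition \ref{even} again to ensure the shifts are even so the symmetrization map is the honest one, plus Theorem \ref{decomp} to see that the monomials from descending partitions already span. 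Once these are in place, the proof is a short diagram-chase: the generation statement of Corollary \ref{basisimp}, combined with commutativity in $\text{gr}^E\text{gr}\,\text{KHA}^{\psi}$, immediately yields that the listed descending-partition monomials span, which is the claim. (The companion injectivity statement, giving the full isomorphism of Theorem \ref{PBWgr}, would be handled separately via the Chern character comparison with $\text{CoHA}$ and the cohomological PBW theorem, but that is not needed here.)
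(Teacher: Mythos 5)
Your proposal is correct and takes essentially the same route as the paper: the paper's proof cites Theorem \ref{thm5} (equivalently, Corollary \ref{basisimp}) for generation, notes that multiplication factors through the symmetric powers $S^{m_i}$ by Proposition \ref{commutative} and the paragraph preceding the statement, and then uses Proposition \ref{commutative} again to conclude that the image depends only on the unordered set $\{d_1,\dots,d_k\}$, so descending partitions suffice. Your additional attention to the $h_d$-bookkeeping (Propositions \ref{lemmaimp} and \ref{commsp}), the $\psi$-twist, and the evenness of $\chi(d_i,d_i)$ via Proposition \ref{even} makes explicit some steps the paper compresses or delegates to the surrounding paragraphs, but the underlying argument is the same.
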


\begin{proof}
Theorem \ref{thm5} implies that the map:
$$\bigoplus \boxtimes_{i=1}^k \text{gr}\,K_0(\MM(d_i))^{\boxtimes m_i}\to \text{gr}^E\text{gr}\,K_0(D_{sg}(\X(d)_0))$$ is a surjection, where the sum of the left hand side is taken all partitions $\dd$. The map factors through $\boxtimes_{i=1}^k S^{m_i}\text{gr}\,K_0(\MM(d_i))$ as explained above. Further, using Proposition \ref{commutative}, the image of 
$\boxtimes_{i=1}^k S^{m_i}\text{gr}\,K_0(\MM(d_i))\to \text{gr}^E\text{gr}\,K_0(D_{sg}(\X(d)_0))$ depends only on the set $\{d_1,\cdots, d_k\}$ of terms of the partitions $\dd$ and not on their order, so we obtain the desired conclusion.
\end{proof}

\begin{prop}\label{filtchern}
Consider dimension vectors $d_1\succ\cdots\succ d_k$ and integers $m_1,\cdots, m_k$ such that $m_1d_1+\cdots+m_kd_k=d$.
The following diagram commutes:

\begin{tikzcd}
\boxtimes_{i=1}^k S^{m_i}\text{gr}\,K_0(\mathbb{M}(d_i)) \arrow[d, hookrightarrow] \arrow{r} & \text{gr}\,K_0(D_{sg}(\X(d)_0)) \arrow[d, hookrightarrow]\\
\boxtimes_{i=1}^k S^{m_i}H^{\cdot}(\X(d_i),\varphi\,\mathbb{Q})\arrow[r, hookrightarrow] & H^{\cdot}(\X(d),\varphi\,\mathbb{Q}).
\end{tikzcd}
\\
where the bottom map is an inclusion.
In particular, the cohomological filtrations on $\boxtimes_{i=1}^k S^{m_i}\text{gr}\,K_0(\mathbb{M}(d_i))$ induced by:
\[\boxtimes_{i=1}^k S^{m_i}\text{gr}\,K_0(\mathbb{M}(d_i))\to \boxtimes_{i=1}^k S^{m_i}H^{\cdot}(\X(d_i),\varphi\,\mathbb{Q})\] and by 
$\boxtimes_{i=1}^k S^{m_i}\text{gr}\,K_0(\mathbb{M}(d_i))\subset K_0(D_{sg}(\X(d)_0))\to H^{\cdot}(\X(d),\varphi\,\mathbb{Q})$ coincide. 

\end{prop}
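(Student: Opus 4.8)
\textbf{Proof proposal for Proposition \ref{filtchern}.}
The plan is to reduce the commutativity of the square to the compatibility of the Chern character with the multiplication functors already established in Section \ref{ch}, and then to extract from it the statement about the two induced cohomological filtrations. First I would observe that both the top and bottom horizontal maps in the square are, by construction, restrictions of the $k$-fold iterated multiplication maps: the top one is the composite of $p_{\dd *}q_{\dd}^*$ together with the Thom--Sebastiani equivalences, restricted to $\boxtimes_{i=1}^k S^{m_i}\text{gr}\,K_0(\MM(d_i))\subset \boxtimes_{i=1}^k \text{gr}\,K_0(\oM(d_i))^{\boxtimes m_i}$ (after symmetrization, which makes sense by Proposition \ref{commutative}), and the bottom one is the analogous composite in vanishing cohomology, which exists and is well defined because by \cite[Theorem C]{dm} the multiplication in $\text{CoHA}$ factors through the symmetrized product $\boxtimes_{i=1}^k S^{m_i}H^{\cdot}(\X(d_i),\varphi\,\mathbb{Q})$. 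The inclusion of this symmetrized product into $H^{\cdot}(\X(d),\varphi\,\mathbb{Q})$ is exactly the statement of the cohomological PBW theorem from loc. cit., so the bottom map is indeed an inclusion.

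Next I would invoke Proposition \ref{grr} (or rather the corollary that $\text{ch}$ is a bialgebra morphism $\text{gr}\,\text{KHA}\to\text{CoHA}^{\wedge}$): since $m$ and $\Delta$ are built from proper pushforwards, smooth pullbacks, and pullbacks along closed smooth substacks, and since $\text{ch}$ commutes with all of these on $\text{gr}$, the Chern character intertwines the iterated multiplication in $\text{gr}\,\text{KHA}$ with the iterated multiplication in $\text{CoHA}$. Restricting this compatibility to the subspace $\boxtimes_{i=1}^k S^{m_i}\text{gr}\,K_0(\MM(d_i))$ and using Proposition \ref{framedsod} together with the injectivity statements in Sections \ref{5} and \ref{8} to know that the vertical map $\boxtimes_{i=1}^k S^{m_i}\text{gr}\,K_0(\MM(d_i))\to \boxtimes_{i=1}^k S^{m_i}H^{\cdot}(\X(d_i),\varphi\,\mathbb{Q})$ is a genuine inclusion, one obtains precisely the commutativity of the square. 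The vertical inclusion on the right is the Chern character $\text{gr}\,K_0(D_{sg}(\X(d)_0))\hookrightarrow H^{\cdot}(\X(d),\varphi\,\mathbb{Q})$, which is injective when $W=0$, or after passing to the appropriate associated graded; here one should be slightly careful to state the claim in the generality (symmetric $Q$, homogeneous $W$) in which $\text{ch}$ restricted to $K_0(\MM)$ is known to be injective, which is exactly where Corollary \ref{basisimp} and Proposition \ref{decomp} enter.

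Finally, the statement about the two filtrations is a formal consequence of the commuting square: a filtration on $\boxtimes_{i=1}^k S^{m_i}\text{gr}\,K_0(\MM(d_i))$ is determined by the map to cohomology it is pulled back from, and the square says the two maps (one factoring through $\boxtimes_{i=1}^k S^{m_i}H^{\cdot}(\X(d_i),\varphi\,\mathbb{Q})$, the other factoring through $H^{\cdot}(\X(d),\varphi\,\mathbb{Q})$) differ only by the bottom inclusion, which by the cohomological PBW theorem is strictly filtered (it is induced by a tensor product of maps each of which is compatible with the cohomological degree, and the Künneth isomorphism is an isomorphism of filtered vector spaces). Hence the induced cohomological filtrations coincide. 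I expect the main obstacle to be not the diagram chase itself but rather bookkeeping the Thom--Sebastiani shifts and the symmetrization/sign twists $\psi$ so that the top map literally is the restriction of $\text{KHA}$-multiplication and so that the identification of $\text{gr}\,K_0(\oM(d_i)_0)[\![h_{d_i}]\!]$ with $\text{gr}\,K_0(\oM(d_i))$ from Proposition \ref{lemmaimp} is compatible with the cohomological degree on both sides; this is exactly the sort of compatibility that Proposition \ref{dimred} and the construction in Subsection \ref{singularGRR} are designed to guarantee, so the argument should go through once those identifications are tracked carefully.
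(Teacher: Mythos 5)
Your reduction of the commutativity to Proposition \ref{grr} (equivalently, to the fact that $\text{ch}$ is a bialgebra morphism) is the same as the paper's, and the formal extraction of the ``in particular'' filtration statement from the commuting square plus injectivity of the bottom map is also the same. Where you diverge is on the substance of the proof: the real content of the proposition is the injectivity of
\[
\boxtimes_{i=1}^k S^{m_i}H^{\cdot}(\X(d_i),\varphi\,\mathbb{Q})\longrightarrow H^{\cdot}(\X(d),\varphi\,\mathbb{Q}),
\]
and you claim this ``is exactly the statement of the cohomological PBW theorem'' of Davison--Meinhardt. That is a misattribution, and the gap matters. The PBW theorem of \cite[Theorem C]{dm} describes $\text{gr}^P\,\text{CoHA}^{\psi}$ as a free supercommutative algebra on $P^{\leq 1}\otimes H^{\cdot}(B\C^*)$; it is a statement about the associated graded for the perverse filtration, with each symmetric factor built out of the much smaller BPS piece $H^{\cdot}(X^{ss}(d_i),\varphi\,IC)$, not out of the full stacky vanishing cohomology $H^{\cdot}(\X(d_i),\varphi\,\mathbb{Q})$. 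To deduce from it that multiplication is injective on the symmetric product of the full pieces, one would still have to check a strictness-of-filtration argument and a compatibility between perverse filtrations on the source factors and on the target, which is neither immediate nor what you wrote. As it stands your reduction is circular-adjacent: you are leaning on the hardest theorem in the neighborhood to prove something it does not directly assert.

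The paper instead proves the injectivity directly and rather more cheaply. One passes to the abelian stacks $\Y(d)$ and, since $W$ is assumed homogeneous, uses the identification (Section 4, citing \cite[Section 7]{dp}) of vanishing cohomology with relative cohomology $H^{\cdot}(\Y(d),\Y(d)_1)$. The multiplication map becomes the pushforward along the closed embedding of the Kempf--Ness stratum $\Y(\dd)\hookrightarrow\Y(d)$, and both $p_*:H^{\cdot}(\Y(\dd))\to H^{\cdot}(\Y(d))$ and $p_*:H^{\cdot}(\Y(\dd)_1)\to H^{\cdot}(\Y(d)_1)$ are injective because Kempf--Ness strata pushforwards are split injections. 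The long exact sequence of pairs then forces injectivity on relative cohomology, and taking Weyl invariants finishes the job. This is self-contained and does not quote any PBW statement. You should replace the appeal to Theorem C with this geometric argument; as a bonus, your last paragraph about tracking Thom--Sebastiani shifts and the $h_d$-identification becomes unnecessary, since those identifications are only needed to know what the maps are, not to prove the injectivity.

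One further small point: you spend effort justifying that the \emph{vertical} maps of the square are inclusions (invoking Proposition \ref{framedsod}, Corollary \ref{basisimp}, Proposition \ref{decomp}). That is not needed anywhere in the argument. The commutativity comes purely from functoriality of the Chern character, and the ``in particular'' conclusion about the two induced filtrations needs only the commuting square together with injectivity of the bottom map; whether the vertical Chern characters themselves are injective is irrelevant to the deduction and in fact is not asserted for general $W$.
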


\begin{proof}
The diagram commutes from Proposition \ref{grr}, so
it is enough to show that the map \[\boxtimes_{i=1}^kS^{m_i}H^{\cdot}(\X(d_i),\varphi\,\mathbb{Q})\to H^{\cdot}(\X(d),\varphi\,\mathbb{Q})\] is an inclusion. We show the statement for $\mathcal{Y}$:
\[p_*:H^{\cdot}(\mathcal{Y}(d_1,\cdots,d_k),\varphi\,\mathbb{Q})\to H^{\cdot}(\mathcal{Y}(d),\varphi\,\mathbb{Q})
,\]
and then the statement above follows after taking Weyl invariants. Using \cite[Section 7]{dp}, we have natural isomorphism:

\begin{tikzcd}
H^{\cdot}(\mathcal{Y}(d_1,\cdots,d_k),\varphi\,\mathbb{Q})\arrow{r}{p_*} & H^{\cdot}(\mathcal{Y}(d),\varphi\,\mathbb{Q})\\
H^{\cdot}(\mathcal{Y}(\dd),\mathcal{Y}(\dd)_1)\arrow{r}{p_*} \arrow{u}{\text{iso.}} & H(\mathcal{Y}(d),\mathcal{Y}(d)_1). \arrow{u}{\text{iso.}}
\end{tikzcd}
\\
Both maps $p_*:H^{\cdot}(\mathcal{Y}(\dd))\to H^{\cdot}(\mathcal{Y}(d))$ and $p_*:H^{\cdot}(\mathcal{Y}(\dd)_1)\to H^{\cdot}(\mathcal{Y}(d)_1)$ are injective because $\mathcal{Y}(\dd)\subset \Y(d)$ and $\mathcal{Y}(\dd)_1\subset \Y(d)_1$ are Kempf-Ness strata. Using the long exact sequence for relative cohomology, we get that the map:
$$p_*:H^{\cdot}(\mathcal{Y}(\dd),\Y(\dd)_1)\to H^{\cdot}(\mathcal{Y}(d),\Y(d)_1)$$ is injective.
\end{proof}

\begin{prop}\label{injec}
The natural map:
$$\Phi: \bigoplus_{\dd\text{ descending}} \boxtimes_{i=1}^k S^{m_i}\text{gr}\,K_0(\MM(d_i))\to \text{gr}^E\text{gr}\,K_0(D_{sg}(\X(d)_0))$$ is an injection, where the sum on the left hand side is taken after all descending partitions $m_1d_1+\cdots+m_kd_k=d$ with $d_1\succ\cdots\succ d_k$. 
\end{prop}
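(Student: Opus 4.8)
\textbf{Proof plan for Proposition \ref{injec}.}

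The plan is to compare the left-hand side with its cohomological counterpart, for which the analogous injectivity is already known from the PBW theorem for $\text{CoHA}$ \cite[Theorem C]{dm}. Concretely, I would set up the commutative square
\begin{equation*}
\begin{tikzcd}
\bigoplus_{\dd\text{ desc.}} \boxtimes_{i=1}^k S^{m_i}\text{gr}\,K_0(\MM(d_i)) \arrow{r}{\Phi} \arrow{d}{\text{ch}} & \text{gr}^E\text{gr}\,K_0(D_{sg}(\X(d)_0)) \arrow{d}{\text{ch}}\\
\bigoplus_{\dd\text{ desc.}} \boxtimes_{i=1}^k S^{m_i}H^{\cdot}(\X(d_i),\varphi\,\mathbb{Q}) \arrow{r}{\Psi} & \text{gr}^P\,\text{CoHA}(Q,W)^{\wedge},
\end{tikzcd}
\end{equation*}
where the bottom horizontal map $\Psi$ is the one appearing in the cohomological PBW theorem of Davison--Meinhardt. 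The square commutes: on each summand this is exactly Proposition \ref{filtchern} together with the fact (Corollary after Proposition \ref{grr}) that $\text{ch}:\text{gr}\,\text{KHA}\to\text{CoHA}^{\wedge}$ is a bialgebra morphism, so it intertwines the product used to define $\Phi$ with the product used to define $\Psi$; I also need that $\text{ch}$ is compatible with the symmetrization maps $T_{m_i}$, which is immediate since $\text{ch}$ is additive and the symmetrization maps are built from the product.

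Next I would argue that the left vertical map $\text{ch}$ is injective. By Proposition \ref{lemmaimp} each factor $\text{gr}\,K_0(\MM(d_i))$ is $\text{gr}\,K_0(\MM(d_i)_0)[\![h_{d_i}]\!]$, and by Proposition \ref{filtchern} the Chern character embeds $\text{gr}\,K_0(\MM(d_i))$ into $H^{\cdot}(\X(d_i),\varphi\,\mathbb{Q})$ compatibly with the weight/$h$-variable, so $\text{ch}$ is injective on each $\boxtimes_{i=1}^k S^{m_i}\text{gr}\,K_0(\MM(d_i))$; here I use that symmetric powers of an injection of (graded, finite-dimensional in each degree) $\mathbb{Q}$-vector spaces remain injective, and that the bottom map $\Psi$ is injective with images of the distinct summands linearly independent — this is precisely the content of \cite[Theorem C]{dm}, which says $\text{gr}^P\,\text{CoHA}^{\psi}\cong\text{Sym}\,(P^{\leq 1}[u])$ and $P^{\leq 1}=\bigoplus_d H^{\cdot}(X(d),\varphi_{\text{Tr}\,W}IC_d)$, combined with the identification of $K_0(\MM(d))$ with a subspace of $H^{\cdot}(\X(d),\varphi\,\mathbb{Q})$. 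A diagram chase then finishes: if $\Phi(x)=0$ then $\Psi(\text{ch}(x))=\text{ch}(\Phi(x))=0$, hence $\text{ch}(x)=0$ by injectivity of $\Psi$ on the left-hand summand, hence $x=0$ by injectivity of the left vertical $\text{ch}$.

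One subtlety to be careful about is that $\text{ch}:K_0(D_{sg}(\X(d)_0))\to H^{\cdot}(\X(d),\varphi\,\mathbb{Q})^{\wedge}$ is in general neither injective nor surjective (Remarks 1--3 after Proposition \ref{defcherngr}), so I cannot simply invoke injectivity of $\text{ch}$ globally; I only use it after restricting to the subspaces $\boxtimes S^{m_i}\text{gr}\,K_0(\MM(d_i))$, where Proposition \ref{filtchern} guarantees it. I also need to know that the filtration $E^{\cdot}$ is matched with the perverse filtration $P^{\cdot}$ well enough that $\text{ch}$ descends to $\text{gr}^E\text{gr}\,\text{KHA}\to\text{gr}^P\text{CoHA}^{\wedge}$; the degree bookkeeping in Definition \ref{E} (where $x_dh_d^i$ has degree $1+2i$) is designed precisely to match $u$ having perverse degree $2$, so this compatibility is formal. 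I expect the main obstacle to be organizing the identification of $\bigoplus_{\dd}\boxtimes_{i=1}^k S^{m_i}\text{gr}\,K_0(\MM(d_i))$ with a subspace of $\text{gr}^P\text{CoHA}^{\wedge}$ compatibly with both products — i.e. checking that the ``descending partition'' indexing on the K-theory side maps to the free symmetric-algebra basis on the cohomology side without collision — but this is exactly parallel to the cohomological argument and should go through verbatim once Proposition \ref{filtchern} and the bialgebra compatibility of $\text{ch}$ are in hand. Together with Proposition \ref{surjec}, this establishes that $\Phi$ is an isomorphism.
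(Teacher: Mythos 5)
The paper's own proof is purely K-theoretic: it starts from the isomorphism $\Psi$ of Theorem \ref{thm5} (a direct sum over \emph{admissible sets}), deduces from Proposition \ref{filtchern} that the filtrations on domain and codomain are compatible so that the graded map remains injective, and then uses Proposition \ref{commutative} to translate the descending-partition indexing of $\Phi$ into the admissible-set indexing of $\Psi$. Your proposal replaces this with a comparison to the Davison--Meinhardt PBW theorem via the Chern character, which is a genuinely different route; unfortunately it does not close, for the reason below.

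The claim that "the bottom map $\Psi$ is injective with images of the distinct summands linearly independent --- this is precisely the content of \cite[Theorem C]{dm}" is where the argument fails. Theorem C of \cite{dm} says $\text{gr}^P\,\text{CoHA}^{\psi}\cong\text{Sym}\,(P^{\leq 1}[u])$ with $P^{\leq 1}(e)=H^{\cdot}(X(e),\varphi_{\text{Tr}\,W}IC_e)$ the cohomology of the \emph{coarse space} with IC coefficients. Your bottom map instead has domain $\bigoplus_{\dd}\boxtimes_{i=1}^k S^{m_i}H^{\cdot}(\X(d_i),\varphi\,\mathbb{Q})$, built from cohomology of \emph{stacks}, and this is strictly larger. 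Concretely, for $Q$ with one vertex and no edges, $W=0$, and $d=2$: the partitions $\dd=(2)$ and $\dd=(1,1)$ contribute $H^{\cdot}(BGL(2))$ and $S^2H^{\cdot}(B\C^*)$ respectively. These two spaces have identical Poincar\'e series (both equal to that of $H^{\cdot}(BGL(2))$), yet they are supposed to map to the single space $H^{\cdot}(BGL(2))$ --- so the images necessarily collide, and the bottom map is not injective. The issue is not a technicality: the whole point of the paper's machinery (the restriction from $\oM$ to $\MM$, Theorem \ref{decomp}, the admissible-set indexing) is exactly to cut the K-theoretic side down to the correct size, and Proposition \ref{filtchern}'s injectivity is only asserted partition-by-partition, not across the direct sum.

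One could try to repair this by restricting the bottom domain to the images $\text{ch}(\text{gr}\,K_0(\MM(d_i)))$, but then to invoke the CoHA PBW theorem you would need to know that these images sit inside $P^{\leq 1}(d_i)[\![u]\!]$. That is precisely Theorem \ref{comp}, which in the paper's logical order depends on Proposition \ref{PBWgr}, which in turn depends on Propositions \ref{surjec} and \ref{injec}. So the repaired argument would be circular. The K-theoretic route via Theorem \ref{thm5} and Proposition \ref{filtchern} is what avoids this: Proposition \ref{filtchern}'s injectivity of $\boxtimes S^{m_i}H^{\cdot}(\X(d_i),\varphi)\hookrightarrow H^{\cdot}(\X(d),\varphi)$ for a fixed partition is an elementary consequence of $\Y(\dd)\subset\Y(d)$ being a Kempf--Ness stratum and does not require the decomposition theorem or the CoHA PBW theorem at all.
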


\begin{proof}
Theorem \ref{thm5} says that multiplication induces an isomorphism of vector spaces:
$$\Psi: \bigoplus_{\text{admissible}}\boxtimes_{i=1}^k\boxtimes_{j=1}K_0(\MM(d_i)_{w_{i}})\rightarrow K_0(D_{sg}(\X(d)_0)),$$ where the terms corresponding to admissible sets with $r=\frac{1}{2}$ also have an action of the corresponding $\mathfrak{S}$. Using Proposition \ref{filtchern}, the following map is injective:
$$\Psi: \bigoplus_{\text{admissible}}\boxtimes_{i=1}^k\boxtimes_{j=1}\text{gr}\,K_0(\MM(d_i)_{w_{i}})\rightarrow \text{gr}\,K_0(D_{sg}(\X(d)_0)).$$
Let $x$ be in the domain of $\Phi$, and assume that $x\in E^{\leq i}$. By Proposition \ref{commutative}, there exists $y$ in the domain of $\Psi$ such that $x-y\in E^{\leq i-1}$, so $\Phi(x)=\Psi(y)=y$ in $\text{gr}^E\text{gr}\,K_0(D_{sg}(\X(d)_0))$, which implies that $\Phi$ is indeed injective. 
\end{proof}

\begin{proof}[Proof of Theorem \ref{PBWgr}]
Propositions \ref{surjec} and \ref{injec} imply that the natural map:
$$\Phi: \bigoplus_{\dd\text{ descending}} \boxtimes_{i=1}^k S^{m_i}\text{gr}\,K_0(\MM(d_i))\to \text{gr}^E\text{gr}\,K_0(D_{sg}(\X(d)_0))$$ is an isomorphism of vector spaces, where the sum on the left hand side is taken after all descending partitions $m_1d_1+\cdots+m_kd_k=d$ with $d_1\succ\cdots\succ d_k$. 
To show the isomorphism for algebras,
observe that the left hand side is supercommutative, which follows from Proposition \ref{commutative} and the definition of the twisted multiplication \ref{supercom}.
\end{proof}

As a corollary of Theorem \ref{PBWgr}, we define the KBPS Lie algebra on $E^{\leq 1}$.
\begin{cor}\label{KBPS}
The subspace $E^{\leq 1}\subset \text{gr}\,\text{KHA}$ is preserved by the Lie bracket $[x,y]=xy-yx$, so $E^{\leq 1}$ is a Lie algebra under this bracket. 
\end{cor}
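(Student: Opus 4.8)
The statement to prove is Corollary \ref{KBPS}: the subspace $E^{\leq 1}\subset \text{gr}\,\text{KHA}$ is closed under the commutator $[x,y]=xy-yx$, hence is a Lie algebra.

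The plan is to deduce this formally from the PBW theorem (Theorem \ref{PBWgr}), exactly in the spirit of the analogous deduction for the BPS Lie algebra in \cite[Theorem C]{dm}. First I would recall that $E^{\leq 1}=\bigoplus_{d\in\mathbb{N}^I}\text{gr}\,K_0(\mathbb{M}(d)_0)$, the degree-one part of the filtration $E^{\cdot}$ on the twisted algebra $\text{gr}\,\text{KHA}^{\psi}$ (here one uses the twisted multiplication, which only changes products by signs and so does not affect the Lie-closure statement — I would point this out explicitly). By Theorem \ref{PBWgr}, the associated graded $\text{gr}^E\,\text{gr}\,\text{KHA}^{\psi}$ is the (super)symmetric algebra $\text{Sym}\big(\bigoplus_d \text{gr}\,K_0(\mathbb{M}(d)_0)[\![h_d]\!]\big)$, which is supercommutative for the grading coming from $\psi$. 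Therefore, for $x\in \text{gr}\,K_0(\mathbb{M}(d)_0)\subset E^{\leq 1}$ and $y\in \text{gr}\,K_0(\mathbb{M}(e)_0)\subset E^{\leq 1}$ (both of degree $1$), the product $xy$ lies in $E^{\leq 2}$ and its image in $\text{gr}^E_2=E^{\leq 2}/E^{\leq 1}$ equals the image of $yx$ (up to the sign prescribed by $\psi$, which is accounted for in the twisted product $x*y=(-1)^{\psi(d,e)}xy$). Concretely, $x*y-(-1)^{\psi(d,d)\psi(e,e)}\,y*x\in E^{\leq 1}$ by supercommutativity of $\text{gr}^E$; since $\psi(d,d)$ and $\psi(e,e)$ are both $0$ on the generators in $\text{gr}\,K_0(\mathbb{M}(d)_0)$ — these sit in even cohomological degree, so the relevant sign is trivial — this gives $[x,y]=x*y-y*x\in E^{\leq 1}$ for the twisted bracket, and untwisting changes $[x,y]$ only by an overall sign, so $E^{\leq 1}$ is closed under the original commutator as well.

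The remaining points are bookkeeping: one must check that $xy\in E^{\leq 2}$ to begin with, which is immediate from the definition of $E^{\cdot}$ (Definition \ref{E}) together with the fact that the multiplication $m$ is compatible with the filtration $F^{\cdot}$ and hence with $E^{\cdot}$ — this is built into Theorem \ref{PBWgr} via Proposition \ref{anotherprop} and Proposition \ref{surjec}. One also needs the Jacobi identity and antisymmetry for the bracket $[x,y]=xy-yx$ on $E^{\leq 1}$, but these are automatic: any associative algebra is a Lie algebra under the commutator, and we have just shown $E^{\leq 1}$ is a subspace closed under this operation, so it inherits the Lie algebra structure. I would phrase the proof as: "By Theorem \ref{PBWgr}, $\text{gr}^E\,\text{gr}\,\text{KHA}^{\psi}$ is supercommutative; generators of $E^{\leq 1}$ lie in even degree, so for $x,y\in E^{\leq 1}$ we have $xy-yx\in E^{\leq 1}$; since $[\,,\,]$ is a Lie bracket on the associative algebra $\text{gr}\,\text{KHA}$, the subspace $E^{\leq 1}$ is a Lie subalgebra."

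The main (and only mild) obstacle is a consistency check about signs and twists: one must verify that the supercommutativity of $\text{gr}^E$ with respect to the $\psi$-grading translates into closure of $E^{\leq 1}$ under the \emph{untwisted} commutator $[x,y]=xy-yx$, not just under some twisted bracket. This is where one uses that the degree function $\text{deg}(a)=\psi(d,d)$ vanishes on $\text{gr}\,K_0(\mathbb{M}(d)_0)$ (equivalently, that $\tau(d,d)=0$ and the relevant classes are even), so that on the generators the twisted and untwisted brackets agree up to a global sign $(-1)^{\psi(d,e)}$, and such a rescaling does not affect whether a subspace is Lie-closed. Once this is observed, the corollary follows with essentially no further computation.
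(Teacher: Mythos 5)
Your proof is correct and takes essentially the same route as the paper: deduce from Theorem \ref{PBWgr} that $\text{gr}^E\text{gr}\,\text{KHA}$ is supercommutative, so $xy-yx$ vanishes in $E^{\leq 2}/E^{\leq 1}$, hence lies in $E^{\leq 1}$. Your extra bookkeeping about the $\psi$-twist and the parity of generators is a welcome refinement — the paper's one-line proof silently passes between $\text{KHA}^\psi$ (where PBW is stated) and $\text{KHA}$ — but it does not change the argument's structure.
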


\begin{proof}
By Theorem \ref{PBWgr}, the algebra $\text{gr}^E\text{gr}\, KHA$ is supercommutative, so for $x,y\in E^{\leq 1}$ we have that $xy-yx=0$ in $E^{\leq 2}/E^{\leq 1}$, which means that indeed $[x,y]\in E^{\leq 1}$.
\end{proof}

\subsection{BPS Lie algebras}\label{bpslie}
In this section, we show that the filtrations $E^{\leq i}$ and $P^{\leq i}$ are compatible via $\text{ch}$:
\begin{thm}\label{comp}
Let $(Q,W)$ be a symmetric quiver with potential, and consider the Chern character $\text{ch}:\text{gr}\,\text{KHA}\to\text{CoHA}$. Then for any $i\geq 1$ we have that:
$$\text{ch}\,\left(E^{\leq i}\right)\subset P^{\leq i}.$$
\end{thm}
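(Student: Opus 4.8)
The plan is to identify both $E^{\leq i}$ and $P^{\leq i}$ intrinsically, without reference to the constructions that define them, and then transport that intrinsic description through $\text{ch}$. The key observation is that in a connected graded braided bialgebra which is, after passing to associated graded, a (super)symmetric algebra, the primitive filtration — the filtration by powers of the augmentation ideal applied to the space of primitives, or more precisely the filtration whose $i$-th piece is spanned by products of at most $i$ primitives — is canonical. So I would first show that $P^{\leq 1}=\bigoplus_d H^{\cdot}(X(d),\varphi_{\mathrm{Tr}\,W}IC_d)$ is exactly the space of primitive elements of $\text{CoHA}$ for the coproduct discussed in Section \ref{bialgebra2}, and that $E^{\leq 1}=\bigoplus_d \mathrm{gr}\,K_0(\mathbb{M}_0(d))$ is the space of primitives of $\text{gr}\,\text{KHA}$ for the coproduct $\overline{\Delta}$ of Subsection \ref{sodcop}. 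For the cohomological side this is essentially the content of the PBW theorem of Davison--Meinhardt together with the fact that, in a supercommutative Hopf algebra that is free as a commutative algebra, the primitives coincide with $P^{\leq 1}$; for the K-theoretic side, the alternative coproduct $\overline{\Delta}_{d,e,\delta}$ constructed via semi-orthogonal decompositions in Theorem \ref{coincide} has the property $\overline{\Delta}_{e,d,\delta}(\mathbb{D}_\delta)=0$, and since $\mathbb{D}_\delta$ is precisely the semi-orthogonal summand whose $K$-theory generates the ``interesting'' part, this forces $\mathrm{gr}\,K_0(\mathbb{M}_0(d))$ to be primitive, while Theorem \ref{PBWgr} forces nothing strictly larger to be primitive.

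Given these two identifications, the second step is to deduce the full compatibility $\text{ch}(E^{\leq i})\subset P^{\leq i}$ by induction on $i$. The base case $i=1$ is: $\text{ch}$ sends primitives to primitives. This is immediate because $\text{ch}$ is a morphism of braided bialgebras by the Corollary at the end of Section \ref{ch} (the Chern character commutes with the structure maps $m$, $\Delta$, $R$), and a bialgebra morphism carries primitive elements to primitive elements. For the inductive step, $E^{\leq i}$ is spanned by products $y_1\cdots y_i$ with each $y_j\in E^{\leq 1}$ (this is exactly what the filtration $E^{\cdot}$ is by Definition \ref{E} and Theorem \ref{PBWgr}), so $\text{ch}(y_1\cdots y_i)=\text{ch}(y_1)\cdots\text{ch}(y_i)$ since $\text{ch}$ is an algebra map, and each $\text{ch}(y_j)\in P^{\leq 1}$ by the base case; finally $P^{\leq 1}\cdot P^{\leq 1}\cdots P^{\leq 1}$ ($i$ factors) lies in $P^{\leq i}$ because the perverse filtration is multiplicative — this is part of the PBW theorem for $\text{CoHA}$, \cite[Theorem C]{dm}, which says $P^{\cdot}$ is the filtration generated by $P^{\leq 1}$ under multiplication (with $u$ in perverse degree $2$ accounting for the $h_d$ powers). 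One must be slightly careful here with the $h_d$-factors: a generator of $E^{\leq i}$ is a product of monomials $x_{d_j}h_{d_j}^{a_j}$ with $\sum(1+2a_j)\leq i$, and under $\text{ch}$ the class $h_d$ maps to the degree-$2$ generator $u$ (up to higher order terms), which has perverse degree $2$; so the degree bookkeeping matches on the nose.

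The third step — really a lemma feeding into step one — is to make precise that $E^{\leq 1}$ is the primitive space of $\mathrm{gr}\,\text{KHA}$, which requires knowing that $\overline{\Delta}$ is cocommutative up to the braiding and that $\mathrm{gr}\,\text{KHA}$ with $\overline{\Delta}$ is a genuine (braided) bialgebra whose associated graded for $E^{\cdot}$ is the symmetric algebra of $E^{\leq 1}$; all of this is Theorem \ref{PBWgr} and Theorem \ref{coincide}. Then the standard structure theory for cocommutative (braided) connected filtered bialgebras over $\mathbb{Q}$ whose associated graded is free commutative identifies the length filtration with the coradical filtration, and the primitives with the degree-one part. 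I expect the main obstacle to be precisely this structural input on the K-theory side: the braiding $R$ is nontrivial (it involves the Euler factors $\mathrm{eu}_1$, see Proposition \ref{compo}), so I cannot invoke the classical Milnor--Moore / Cartier--Kostant theorem verbatim and must instead run the argument in the braided setting, checking that the $R$-matrix twist does not interfere with the identification of primitives — concretely, that $R$ acts by a sign and an invertible Euler class on each graded piece and hence the primitives for the braided coproduct still form the expected space. Once that is in hand, the inductive argument of step two is routine, and the displayed conclusion $\text{ch}(E^{\leq i})\subset P^{\leq i}$ follows, together with the induced Lie algebra morphism $\text{ch}:\mathrm{KBPS}\to\mathrm{BPS}$ from Corollary \ref{KBPS} and the corresponding statement for $\mathrm{BPS}$.
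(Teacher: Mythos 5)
Your proposal matches the paper's proof: both identify $E^{\leq 1}$ and $P^{\leq 1}$ as the primitives of $\text{gr}\,\text{KHA}$ and $\text{CoHA}$ respectively (Propositions \ref{primitiveKHA} and \ref{primitivecoha}), use that $\text{ch}$ is a bialgebra morphism to get $\text{ch}(E^{\leq 1})\subset P^{\leq 1}$, and then propagate to higher $i$ using $\text{ch}(x_d h_d^a)=\text{ch}(x_d)h_d^a$ together with multiplicativity and the monomial description of both filtrations. Your interim characterization of $E^{\leq i}$ as products of $i$ primitives is imprecise (the $h_d$-powers enter directly, not as primitives), but you correct this yourself in the next sentence, so the argument is sound.
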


We will prove this by showing that $E^{\leq 1}$ and $P^{\leq 1}$ are the spaces of primitive elements in $\text{gr}\,\text{KHA}$ and $\text{CoHA}$, respectively, and using that $\text{ch}$ is a morphism of bialgebras. We begin with preliminary results. 

\begin{prop}\label{Kprimitive}
The elements in $E^{\leq 1}$ are primitive for the coproduct $\Delta$ of $\text{gr}\,\text{KHA}$.
\end{prop}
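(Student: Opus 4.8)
The plan is to show that each generator $x_d \in \text{gr}\,K_0(\mathbb{M}(d))\subset E^{\leq 1}$ satisfies $\Delta(x_d) = x_d\otimes 1 + 1\otimes x_d$, using the alternative description of the coproduct $\overline{\Delta}^{ab}_{d,e,\delta}$ via semi-orthogonal decompositions from Theorem \ref{coincide}. The key point is Proposition \ref{gkernel}: the subspace $\text{gr}\,K_0(\mathbb{D}_\delta(d+e))\subset \text{gr}\,K_0(D_{sg}(\Y(d+e)_0))$ lies in the kernel of $\Delta^{ab}_{d,e}$, and hence of $\overline{\Delta}^{ab}_{e,d,\delta}$. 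So I would first reduce, via Proposition \ref{abelianyetagain} and the isomorphism $\Theta_d$, to computing the abelian coproduct $\overline{\Delta}^{ab}_{e,d,\delta}$ on the image of $\mathbb{M}^{ab}(d+e)$ inside $\text{gr}\,K_0(D_{sg}(\Y(d+e)_0))$; the non-abelian statement then follows by taking Weyl invariants.

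Next, I would analyze how a class $[F]$ with $F\in \mathbb{M}^{ab}(d+e)$ decomposes under the semi-orthogonal decomposition \eqref{sod8} associated to the Kempf-Ness stratum for $\lambda_{d,e}$:
\[
\text{gr}\,K_0(D_{sg}(\Y(d+e)_0)) = m^{ab}_{d,e}\bigl(\text{gr}\,K_0(D_{sg}(\Y(d)_0))\boxtimes \text{gr}\,K_0(D_{sg}(\Y(e)_0))\bigr)\oplus \text{gr}\,K_0(\mathbb{D}_\delta).
\]
The claim that $x_d$ is primitive amounts to showing that, for $d+e$ with both $d,e\neq 0$, the component of $[F]$ in the $m^{ab}_{d,e}$ summand lies in the image of $K_0(\mathbb{M}^{ab}(d)_{w})\boxtimes K_0(\mathbb{M}^{ab}(e)_{v})$ only through boundary terms that are killed by $\overline{\Delta}^{ab}$. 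Concretely, I expect the argument to run: by definition of $\mathbb{M}^{ab}(d+e)$, the truncation $\Phi_{\{d,e\}}(F)$ lands in $\mathcal{A}_{\{d,e\}}$, i.e. $[F]$ maps into the kernel of the symmetrization map $T$ on $\bigoplus_{\mathfrak{S}_2}\boxtimes K_0(\oM^{ab}(\cdot))$; combined with Proposition \ref{switch} and Proposition \ref{gkernel}, the $m^{ab}_{d,e}$-component of $\overline{\Delta}^{ab}_{e,d,\delta}([F])$ vanishes. Thus only the graded pieces $d$ and $0$ (resp. $0$ and $d$) survive, giving $\Delta(x_d) = x_d\otimes 1 + 1\otimes x_d$. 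I would also need to match the $\text{eu}_2$ twists between $\overline{\Delta}$ and $\widetilde{\Delta}$, which is handled by Theorem \ref{coincide} and Proposition \ref{abelianyetagain}.

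The main obstacle will be the bookkeeping around the filtrations: I need to check that passing from $K_0$ to $\text{gr}\,K_0$ with respect to the cohomological filtration is compatible with the semi-orthogonal decomposition \eqref{sod8}, so that the direct sum decomposition survives to $\text{gr}$, and that the category $\mathbb{M}^{ab}(d+e)$ is genuinely detected by the condition $\Phi_{\{d,e\}}(F)\in\mathcal{A}_{\{d,e\}}$ at the level of graded K-theory (using Proposition \ref{surj} and its abelian analogue in Subsection \ref{yetanother}). A secondary subtlety is that the definition of $\Phi_{\{d,e\}}$ uses the truncation functor $\beta_{\geq w'}$ for a specific weight $w'$, whereas the coproduct $\overline{\Delta}^{ab}_{d,e,\delta}$ uses the adjoint $\Psi$ for a possibly different $\delta$; here I would invoke Proposition \ref{independence} to move $\delta$ into the region compatible with $w'$. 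Once these compatibilities are in place, primitivity of $E^{\leq 1}$ for $\Delta$ follows formally, and — together with the dual statement for $\text{CoHA}$, which is proved the same way using the cohomological analogues cited in Theorem \ref{coincide} — this is what Theorem \ref{comp} needs, since $\text{ch}$ is a bialgebra morphism by the Corollary at the end of Section \ref{ch}.
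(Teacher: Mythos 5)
Your framework is right: use the semi-orthogonal description of the coproduct from Theorem~\ref{coincide}, exploit Proposition~\ref{gkernel} to kill the middle summand, pass to the abelian stacks via Proposition~\ref{abelianmagic} and $\Theta_d$, and use the defining condition of $\MM^{ab}(d+e)$ (kernel of the symmetrization map). However, the ``Concretely, I expect the argument to run'' paragraph is exactly where the real work happens, and as written it has a genuine gap.

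The issue is that there are two cases, which you don't distinguish. The category $\overline{\mathbb{G}}\subset D^b(\Y(d+e))$ cut out by $-\tfrac12\langle\lambda_{d,e},N^{\lambda>0}\rangle+\langle\lambda_{d,e},\rho\rangle\leq\langle\lambda_{d,e},F\rangle\leq\tfrac12\langle\lambda_{d,e},N^{\lambda>0}\rangle-\langle\lambda_{d,e},\rho\rangle$ is a Halpern-Leistner window $\mathbb{G}_\delta$ only when the two endpoints are \emph{not} integers. In that case $\MM^{ab}(d+e)\subset\oM^{ab}(d+e)\subset\mathbb{D}_\delta$ outright and Proposition~\ref{gkernel} finishes the job --- this is your argument, and it works. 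But when the endpoints \emph{are} integers, $\overline{\mathbb{G}}$ contains one more weight slice than any window, so $\oM^{ab}(d+e)\not\subset\mathbb{D}_\delta$ for \emph{any} choice of $\delta$, and Proposition~\ref{independence} cannot rescue you: moving $\delta$ never produces a window of the right size. This is also precisely the case in which an admissible set $\{(d,w),(e,v)\}$ with $r=\tfrac12$ exists, so it is the case in which the $\MM^{ab}$ condition has content; it is not a coincidence.

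In the integer case the $m^{ab}_{d,e}$-component of $[F]$ is generically nonzero, so your claim that it ``vanishes'' is not the right statement. What actually happens, following the paper, is a cancellation: writing $[F]=m^{ab}_{d,e}(\Phi_u(F))+M_{u+1}=M_u+m^{ab}_{e,d}(\Phi_{u'}(F))$ for the two windows $\mathbb{G}_u$, $\mathbb{G}_{u+1}$ covering $\overline{\mathbb{G}}$, one uses Proposition~\ref{abelinverse} and the triangle from Theorem~\ref{mut} to identify $\Delta^{ab}_{d,e}(F)$ with $(-1)^{\dim\Ext^1(d,e)}[\Phi_u(F)]$ and also with $[\Phi_{u'}(F)]$. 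The $\MM^{ab}$ condition gives $[\Phi_u(F)]+(-1)^{l(\sigma)}[\Phi_{u'}(F)]=0$ in graded $K$-theory (via the argument of Proposition~\ref{independence} to absorb the $\det N_{e,d}$ twist). Proposition~\ref{even}, which says $\dim\Ext^1(d,e)\equiv l(\sigma)\ (\mathrm{mod}\ 2)$ whenever such an admissible pair exists, is then needed to see that these two relations force $2[\Phi_u(F)]=0$ and hence $\Delta^{ab}_{d,e}(F)=0$. Without the parity statement the signs do not align and the argument collapses. So the missing ingredients are: the case split on integrality of the endpoints, Proposition~\ref{even}, Proposition~\ref{abelinverse}, and the explicit double expression for $[F]$; these are not ``bookkeeping around the filtrations'' but the core of the proof.
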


\begin{proof}
Let $d,e\in\mathbb{N}^I$ be non-zero dimension vectors. We need to show that: $$\Delta_{d,e}(\mathbb{M}(d+e))=0.$$
Consider the category $\overline{\mathbb{G}}\subset D^b(\Y(d+e))$ defined by the conditions:
$$-\frac{1}{2}\langle \lambda_{d,e},N^{\lambda>0}\rangle+\langle \lambda_{d,e}, \rho\rangle
\leq \langle \lambda_{d,e},F\rangle\leq  
\frac{1}{2}\langle \lambda_{d,e}, N^{\lambda>0}\rangle-\langle \lambda_{d,e}, \rho\rangle.$$ Let $\overline{\mathbb{D}}:=D_{sg}(\overline{\mathbb{G}})$.
If the left hand side and the right hand side are not integers, then $\overline{\mathbb{G}}=\mathbb{G}_{\delta}$ for a weight $\delta\in M_{\mathbb{R}}$, for example for $\delta=u\beta$ where $\beta$ is a simple root with $\langle \lambda_{d,e},\beta\rangle=1$ and $u=\lceil -\frac{1}{2}\langle \lambda_{\dd},N^{\lambda>0}\rangle\rceil $. Fix such a weight $\beta$.
Further, $\MM^{ab}(d+e)\subset \mathbb{D}$ and Theorem \ref{coincide} implies that: $$\Delta^{ab}_{d,e}(\MM^{ab}(d+e))=0.$$ Proposition \ref{abelianmagic} implies then the desired conclusion. 
\\

%We will thus assume that the left hand side and right hand side are integers. Let their values be $u$ and $u'$, respectively.Assume that $d>e$. The condition on a sheaf $F\in \mathbb{M}^{ab}(d+e)$ for the descending partition $(d,e)$ says that $\Phi_{(d,e)}(F)=0$, which means that either $F$ has $\lambda$-weights between $u+1$ and $u'$, case in which $F\in \mathbb{D}_{(u+1)\beta}$, or between $u$ and $u'-1$, case in which $F\in\mathbb{D}_{u\beta}$. In both cases there exists a weight $\delta$ such that $F\in \mathbb{D}_{\delta}\subset \overline{\mathbb{D}}$, and the conclusion follows as above. The argument for $d<e$ is similar, using that $\Phi_{(e,d)}(F)=0$.\\

Assume that the left hand side and right hand side are integers. Let their values be $u$ and $u'$, respectively, and consider an admissible set $\{(d,w), (e,v)\}$ of $r=\frac{1}{2}$. Let $w'$ and $v'$ be such that $\{(e,v'), (d,w')\}$ is admissible with $r=\frac{1}{2}$. 
Recall the functors 
from Subsections \ref{defMM}, \ref{yetanother}:
\begin{align*}
\Phi_u=\beta_{\leq -u}p_{\lambda^{-1}}^*: \mathbb{M}^{ab}(d+e)_{w+v}\rightarrow \MM^{ab}(d)_w\boxtimes \MM^{ab}(e)_v\\
\Phi_{u'}=\beta_{\leq u}p_{\lambda}^*:
\mathbb{M}^{ab}(d+e)_{w+v}\rightarrow \MM^{ab}(e)_{v'}\boxtimes \MM^{ab}(d)_{w'}.
\end{align*}
Let $\sigma\in W_{d+e}$ be the transposition defined for a vertex $i\in I$ as:
$$\sigma^i(1,\cdots, d^i+e^i)=(d^i+1,\cdots, d^i+e^i,1,\cdots,d^i).$$ Let $\tau\in\mathfrak{S}_2$ be the involution corresponding to the admissible set $\{(d,w), (e,v)\}$. 
For a complex $E\in K_0(\MM^{ab}(d)_w)\boxtimes K_0(\MM^{ab}(e)_v)$ we have that:
$$\tau(E)=\sigma(E)\otimes\det(N_{e,d})[l(\sigma)].$$ The argument in Proposition \ref{independence} implies that: $$[\tau E]=(-1)^{l(\sigma)}[\sigma E]\text{ in  }\text{gr}\,K_0(\MM^{ab}(d)_w)\boxtimes \text{gr}\,K_0(\MM^{ab}(e)_v).$$ 
For a complex $F\in\MM^{ab}(d+e)$ invariant under $\sigma$, the transposition $\sigma$ interchanges on $p_{\lambda^{-1}}^*$ and $p_{\lambda}^*$ for the weights $u$ and $u'$, respectively, which implies that:
$$\Phi_u(\sigma F)=\sigma \Phi_{u'}(F).$$
Next, $F\in \MM^{ab}(d+e)$ implies that $T_2(\Phi_u(F))=0$ in $K_0(\MM^{ab}(d)_w)\boxtimes K_0(\MM^{ab}(e)_v)\oplus K_0(\MM^{ab}(e)_{v'})\boxtimes K_0(\MM^{ab}(d)_{w'})$, which means that:
\begin{equation}\label{zerodum}
    0=[\Phi_u(F)]+[\sigma \Phi_u(F)\otimes\, \det\,N_{e,d}[l(\sigma)]]=[\Phi_u(F)]+(-1)^{l(\sigma)}[\Phi_{u'}(F)]\end{equation} in
$\text{gr}\,K_0(\MM^{ab}(d)_w)\boxtimes \text{gr}\,K_0(\MM^{ab}(d)_v)\oplus \text{gr}\,K_0(\MM^{ab}(e)_{v'})\boxtimes \text{gr}\,K_0(\MM^{ab}(d)_{w'})$.
Further, for $F$ in $\MM^{ab}(d+e)$ invariant under $\sigma$, we have decompositions: $$[F]=m^{ab}_{d,e}(\Phi_{u}(F))+M_{u+1}=M_u+m^{ab}_{e,d}(\Phi_{u'}(F))$$
for some $M_u\in K_0(\mathbb{G}_u)$ and $M_{u+1}\in K_0(\mathbb{G}_{u+1})$. By the argument used to prove Theorem \ref{mut}, for $A\boxtimes B\in \oM^{ab}(d)_w\boxtimes \oM^{ab}(e)_v$
there exists a triangle $$p_{e,d*}q_{e,d}^*(B\boxtimes A\otimes \text{det}(N^{\lambda<0}))\to
p_{d,e*}q^*_{d,e}(A\boxtimes B)\to C\xrightarrow{[1]}$$ with $C\in \mathbb{G}_w$. This implies, using Proposition \ref{abelinverse}, that $$\Delta^{ab}_{d,e}m^{ab}_{d,e}(\Phi_u(F))=
(-1)^{\dim\,\text{Ext}^1(d,e)}\Delta^{ab}_{d,e}m^{ab}_{e,d}(\Phi_u(F))=(-1)^{\dim\,\text{Ext}^1(d,e)}\,\Phi_u(F).$$
By Theorem \ref{coincide} and Proposition \ref{abelinverse}, this means that:
$$\Delta_{d,e}^{ab}(F)=(-1)^{\dim\,\text{Ext}^1(d,e)}[\Phi_{u}(F)]=[\Phi_{u'}(F)]$$ in $\text{gr}K_0(D_{sg}(\mathcal{Y}(d)_0))\boxtimes \text{gr}K_0(D_{sg}(\mathcal{Y}(e)_0)).$
By Proposition \ref{even}, we have that $$\dim\,\text{Ext}^1(d,e)\equiv l(\sigma) \,(\text{mod} 2).$$
Using Equation (\ref{zerodum}) we obtain the desired conclusion.
\end{proof}

We next prove the analogous result in cohomology:

\begin{prop}\label{Cprimitive}
All elements in $P^{\leq 1}$ are primitive for the coproduct $\Delta$ of $\text{CoHA}$.
\end{prop}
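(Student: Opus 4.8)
# Proof proposal for Proposition \ref{Cprimitive}

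The plan is to run the cohomological analogue of the argument just given for Proposition \ref{Kprimitive}, replacing $\text{gr}\,K_0(D_{sg}(-))$ by $H^{\cdot}(-,\varphi_{\text{Tr}\,W}\mathbb{Q})$ throughout and using the cohomological window/attracting-loci formalism in place of the semi-orthogonal decompositions. By Theorem \ref{mere} and the identification $P^{\leq 1}=\bigoplus_d H^{\cdot}(X(d),\varphi_{\text{Tr}\,W}IC_d)$, it suffices to fix non-zero dimension vectors $d,e\in\mathbb{N}^I$ with $d+e=:n$ and show that the $(d,e)$-component of the coproduct kills the intersection-cohomology summand, i.e. $\Delta_{d,e}\bigl(H^{\cdot}(X(n),\varphi_{\text{Tr}\,W}IC_n)\bigr)=0$. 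As in the $K$-theoretic case I would first reduce to the abelian picture: by Proposition \ref{abelianyetagain} (and its cohomological analogue, which holds because all the functors $\tau^*p^*$, $\widetilde{q}_*$ and the Thom–Sebastiani isomorphism exist for vanishing cohomology) the abelian coproduct $\Delta^{ab}_{d,e}$ computes $\Delta_{d,e}$ after restricting to Weyl-invariants, so it is enough to identify the abelian analogue $\mathbb{IC}^{ab}(n)\subset H^{\cdot}(\mathcal{Y}(n),\varphi\,\mathbb{Q})$ of the $IC$-summand and show $\Delta^{ab}_{d,e}$ vanishes on it.

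Next I would produce the cohomological replacement of the window category $\overline{\mathbb{D}}$. For the character $\lambda_{d,e}$ with attracting diagram $\mathcal{Y}(d)\times\mathcal{Y}(e)\xleftarrow{q}\mathcal{Y}(d,e)\xrightarrow{p}\mathcal{Y}(n)$, the cohomological analogue of Theorem \ref{hl} (the ``Bialynicki-Birula / hyperbolic localization'' decomposition of $H^{\cdot}(\mathcal{Y}(n),\varphi\,\mathbb{Q})$) gives a direct-sum decomposition whose ``central'' piece is the window range $\bigl|\langle\lambda_{d,e},-\rangle\bigr|<\tfrac12\langle\lambda_{d,e},N^{\lambda>0}\rangle$ and whose complement is built from $p_*q^*$ of classes on $\mathcal{Y}(d)\times\mathcal{Y}(e)$ with extreme weights. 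One shows (exactly as in Proposition \ref{gkernel}/Proposition \ref{projec}, using the projective-bundle formula for $\widetilde q:\mathbb{P}_{\mathcal{Z}}(N\oplus 1)\to\mathcal{Z}$) that $\tau^*p^*$ of a class in the strictly interior weight range $-n\le\langle\lambda_{d,e},-\rangle\le -1$ pushes forward to zero under $\widetilde q_*$, hence the window part of $H^{\cdot}(\mathcal{Y}(n),\varphi\,\mathbb{Q})$ lies in $\ker\Delta^{ab}_{d,e}$; here $n=\langle\lambda_{d,e},N^{\lambda>0}\rangle$. So the whole point is to check that $\mathbb{IC}^{ab}(n)$ sits inside this window part. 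This is where Theorem \ref{mere} enters: the perverse-truncation description $P^{\leq 1}=H^{\cdot}(X(n),\varphi_{\text{Tr}\,W}\,{}^pR^{\leq 1}\pi_{f,n*}\mathbb{Q})$ realizes the $IC$-summand as the ``bottom'' perverse piece of $R\pi_{f,n*}$, and on the abelian side this translates into a bound on the $\lambda_{d,e}$-weights. Concretely, I would argue that a class in the image of $H^{\cdot}(X(n),\varphi\,IC_n)$ pulled back to $\mathcal{Y}(n)$ has $\lambda_{d,e}$-weights confined to $[-\tfrac12\langle\lambda_{d,e},N^{\lambda>0}\rangle+\langle\lambda_{d,e},\rho\rangle,\ \tfrac12\langle\lambda_{d,e},N^{\lambda>0}\rangle-\langle\lambda_{d,e},\rho\rangle]$ — this is the cohomological shadow of the fact that $IC_{X^{ss}}$ pulls back into the Halpern-Leistner window $\mathbb{G}_w$ (Proposition \ref{framedsod}), combined with the sheaf-theoretic description of BPS cohomology as $H^{\cdot}(X(n),\varphi\,IC_n)$.

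The main obstacle, exactly as in the $K$-theoretic proof, is the boundary/integrality case where $\pm\bigl(\tfrac12\langle\lambda_{d,e},N^{\lambda>0}\rangle-\langle\lambda_{d,e},\rho\rangle\bigr)$ is an integer, so that $\mathbb{IC}^{ab}(n)$ is not contained in a window $\mathbb{G}_\delta$ for generic $\delta$ but touches the two extreme weights $u,u'$ symmetrically. I would handle this by the same symmetrization argument: introduce the truncation maps $\Phi_u,\Phi_{u'}$ (cohomological versions of the functors in Subsections \ref{defMM}, \ref{yetanother}), observe that the transposition $\sigma\in W_n$ interchanges $p_{\lambda^{-1}}^*$ at weight $u$ with $p_{\lambda}^*$ at weight $u'$ so that $\Phi_u(\sigma\alpha)=\sigma\Phi_{u'}(\alpha)$, use that membership in $\mathbb{IC}^{ab}(n)$ forces $\sigma$-invariance together with the vanishing $\Phi_u(\alpha)+(-1)^{l(\sigma)}\Phi_{u'}(\alpha)=0$ (the cohomological form of Equation (\ref{zerodum}), where $(-1)^{l(\sigma)}$ matches the sign $(-1)^{\dim\Ext^1(d,e)}$ via Proposition \ref{even} / the parity of $\chi(d,e)$), and then combine with the cohomological analogue of Proposition \ref{abelinverse} ($\Delta^{ab}_{e,d}m^{ab}_{d,e}=\text{sw}_{d,e}$, proved there) and Theorem \ref{coincide} to conclude $\Delta^{ab}_{d,e}(\alpha)=(-1)^{\dim\Ext^1(d,e)}[\Phi_u(\alpha)]=[\Phi_{u'}(\alpha)]$, which the relation above forces to be zero. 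Finally, transporting back through Proposition \ref{abelianyetagain} and restricting to Weyl-invariants gives $\Delta_{d,e}(P^{\leq 1})=0$, as desired. Every ingredient used — hyperbolic localization for $\varphi$, the projective-bundle formula in vanishing cohomology, Theorem \ref{mere}, Proposition \ref{framedsod}, and the cohomological $\Delta^{ab}m^{ab}=\text{sw}$ identity — is available, so the only genuinely new work is the weight estimate placing $IC$-classes in the window, which I expect to follow from the purity statements and the explicit decomposition-theorem form in Theorem \ref{mere}.
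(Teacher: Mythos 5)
Your plan takes a genuinely different route from the paper, and it has a gap where you admit you need one. The paper's proof is a short sheaf-theoretic argument: it lifts the map $\Delta_{a,b}$ to a map of complexes on the coarse spaces,
\[
R\pi_{d*}\mathbb{Q}[-(d,d)]\longrightarrow R\pi_{a*}\mathbb{Q}[-(a,a)]\boxtimes R\pi_{b*}\mathbb{Q}[-(b,b)],
\]
using that $\tau$ is cohomologically proper and that $\varphi$ commutes with $R\pi_{d*}$; then it observes that each factor on the right lies in perverse degrees $\geq 1$, so the box product lies in perverse degrees $\geq 2$, while ${}^pR^{\leq 1}\pi_{d*}\mathbb{Q}[-(d,d)]$ lives in degrees $\leq 1$ — so the map is zero by the perverse $t$-structure. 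There are no windows, no abelianization, no truncation functors, and no boundary/integrality cases to worry about: the whole content is a degree estimate, which is exactly the piece of machinery (perverse sheaves, decomposition theorem) that is available in cohomology and absent in $K$-theory. Your proposal instead tries to transport the $K$-theoretic proof of Proposition \ref{Kprimitive} wholesale into cohomology, inventing cohomological analogues of $\overline{\mathbb{D}}$, $\Phi_u$, $\Phi_{u'}$ and $\mathbb{IC}^{\mathrm{ab}}(n)$.

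The concrete gap in your plan is the weight estimate you flag at the end: you need that classes pulled back from $H^{\cdot}(X(n),\varphi\,IC_n)$ to $\mathcal{Y}(n)$ have $\lambda_{d,e}$-weights confined to the half-open window of width $\langle\lambda_{d,e},N^{\lambda>0}\rangle-2\langle\lambda_{d,e},\rho\rangle$, and you only assert that it should follow from ``purity and Theorem \ref{mere}.'' This is not available off the shelf. In $K$-theory a complex genuinely carries a $\lambda$-weight decomposition and the window bound has sheaf-theoretic meaning (Theorem \ref{hl}, Proposition \ref{framedsod}); in cohomology there is no corresponding grading of a cohomology class by character weight, and a ``cohomological window'' of this precision is not among the inputs the paper uses. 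You would have to reprove, by hand, the consequence of the decomposition theorem that the paper uses directly via the perverse degree bound — so your route is not only longer, it silently depends on the very fact that the shorter argument exploits. Unless you can supply that weight estimate in cohomology, the boundary case (where the two endpoint weights $u,u'$ are hit) does not reduce to the $\sigma$-symmetrization identity, and the proof does not close.
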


\begin{proof}
Let $\pi_d:\X(d)\to X(d)$ be the Hilbert-Chow morphism for the dimension vector $d\in\mathbb{N}^I$.
Recall that:
\begin{multline*}
P^{\leq 1}:=H^{\cdot}(X(d),
\varphi_{\text{Tr}\,W}{}^p{R}^{\leq 1}\pi_{d*}\mathbb{Q}_{\X(d)}[-(d,d)])\hookrightarrow H^{\cdot}(X(d),\varphi_{\text{Tr}\,W}R\pi_{d*}\mathbb{Q}_{\X(d)}[-(d,d)/2])\cong\\ H^{\cdot}(\X(d),\varphi_{\text{Tr}\,W}\mathbb{Q}_{\X(d)}[-(d,d)]).\end{multline*}
Consider two dimension vectors $d,e\in\mathbb{N}^I$, and recall the diagram:

\begin{tikzcd}
\widetilde{\mathcal{X}}(d,e) \arrow{d}{\widetilde{q}_{d,e}} \arrow{r}{\tau} & \mathcal{X}(d,e) \arrow{r}{p_{d,e}} & \mathcal{X}(d+e)\\
\mathcal{X}(d)\times\mathcal{X}(e)
\end{tikzcd}
\\
The map $\tau$ is cohomologically proper, see Definition \ref{proper}. We have a map of sheaves on $\X$:
$$\mathbb{Q}[-(d,d)]\to (\tau p_{d,e})_*(\tau p_{d,e})^*\mathbb{Q}[-(d,d)].$$
Apply $\pi_{d*}$ to these equality and use that $$(\pi_a\boxtimes\pi_b)_*\widetilde{q}_{*}=\pi_{d*}(\tau p)_*$$ and that 
the dimension of the map $\widetilde{q}_{d,e}$ is $\text{dim}\,\text{Ext}^1(a,b)$
to obtain a map:
$$R\pi_{d*}\mathbb{Q}[-(d,d)/2]\to (\pi_a\boxtimes\pi_b)_*\Pi_{*}\mathbb{Q}[-(d,d)]\to R\pi_{a*}\mathbb{Q}[-(a,a)]\boxtimes R\pi_{b*}\mathbb{Q}[-(b,b)].$$ 
To obtain the maps in the nonzero potential case, we apply the vanishing cycle functor and use that it commutes with $R\pi_{d*}$ \cite{dm}:
$$\varphi_{\text{Tr}\,W}R\pi_{d*}\mathbb{Q}[-(d,d)]=R\pi_{d*}\varphi_{\text{Tr}\,W}\mathbb{Q}[-(d,d)]\to  \varphi_{\text{Tr}\,W}R\pi_{a*}\mathbb{Q}[-(a,a)]\boxtimes\varphi_{\text{Tr}\,W} R\pi_{b*}\mathbb{Q}[-(b,b)].$$
It is enough to show that ${}^pR^{\leq 1}\pi_{d*}\mathbb{Q}[-(d,d)]$ is in the kernel of the maps:
$$\Delta:R\pi_{d*}\mathbb{Q}[-(d,d)]\to R\pi_{a*}\mathbb{Q}[-(a,a)]\boxtimes R\pi_{b*}\mathbb{Q}[-(b,b)].$$

The complexes $R\pi_{a*}\mathbb{Q}[-(a,a)]$ and $R\pi_{b*}\mathbb{Q}[-(b,b)]$ lie in perverse degrees $\geq 1$, so $R\pi_{a*}\mathbb{Q}[-(a,a)]\boxtimes R\pi_{b*}\mathbb{Q}[-(b,b)]$ lies in perverse degrees $\geq 2$, which implies the desired conclusion.
\end{proof}

\begin{prop}\label{braiding2}
Let $x\in \text{gr}^{\leq a}K_0(D_{sg}(\X(d)_0))\subset E^{\leq a},\,y\in\text{gr}^{\leq b}K_0(D_{sg}(\X(e)_0))\subset E^{\leq b}$. 
Then: $$R_{d,e}(x\boxtimes y)\in \bigoplus_{i+j\leq a+b} E^{\leq i}\boxtimes E^{\leq j}.$$
\end{prop}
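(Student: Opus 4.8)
\textbf{Proof plan for Proposition \ref{braiding2}.}

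The plan is to reduce the statement to the corresponding assertions about the abelian product $m^{ab}$ and the abelian coproduct $\Delta^{ab}$, since by Proposition \ref{compo} (b) the braiding $R_{d,e}$ is built from $m'_{d,e}=m^{ab}_{d,e}\,\mathrm{eu}_2^{-1}(d,e)$ followed by $\Delta'_{e,d}=\Delta^{ab}_{e,d}\,\mathrm{eu}_2(e,d)$, and after localization $R_{d,e}=\frac{\mathrm{eu}(d,e)}{\mathrm{eu}(e,d)}\,\mathrm{sw}$. First I would record how the filtration $E^{\leq\bullet}$ behaves under the two basic operations. For the product $m^{ab}_{d,e}=p_{d,e*}q_{d,e}^*$: by Theorem \ref{sod5} and Theorem \ref{mut}, $p_{d,e*}q_{d,e}^*$ takes $\oM(d)_w\boxtimes\oM(e)_v$ into the part of $D_{sg}(\X(d+e)_0)$ of $l$-length at most the length of the admissible-set chain attached to $\{(d,w),(e,v)\}$, which after passing to $\mathrm{gr}$ lands in $E^{\leq a+b}$ whenever the inputs lie in $E^{\leq a}$ and $E^{\leq b}$ respectively; I would phrase this using the decompositions $\mathrm{gr}\,K_0(\oM(d))=\mathrm{gr}\,K_0(\oM(d)_0)[\![h_d]\!]$ from Proposition \ref{lemmaimp} together with Theorem \ref{decomp}, exactly as in the proof of Proposition \ref{anotherprop}. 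For the coproduct $\Delta^{ab}_{e,d}$: using the alternative semi-orthogonal description from Theorem \ref{coincide} and Proposition \ref{gkernel}, $\Delta^{ab}_{e,d}$ is (up to $\mathrm{sw}$ and Thom--Sebastiani) the adjoint projection onto $p_*q^*\big(D_{sg}(\Y(d)_0)\times D_{sg}(\Y(e)_0)\big)$ followed by the inverse of $m^{ab}$; since this adjoint is built from truncation functors $\beta_{\geq w}$ and the affine-bundle pushforward $\widetilde q_*$, none of which raises cohomological degree, it sends $\mathrm{gr}^{\leq c}$ into $\bigoplus_{i+j\leq c}\mathrm{gr}^{\leq i}\boxtimes\mathrm{gr}^{\leq j}$. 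This is the analogue for $\Delta$ of what is proved for $m$ in Proposition \ref{anotherprop}.

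Granting these two filtration estimates, the proof is short: write $R_{d,e}=\Delta'_{e,d}\,m'_{d,e}$, apply the product estimate to see $m'_{d,e}(x\boxtimes y)\in E^{\leq a+b}$ (the Euler factor $\mathrm{eu}_2^{-1}(d,e)$ only shifts weights and, on $\mathrm{gr}$, acts through $K_0(BG(d+e))\to\mathbb C[\![h_{d+e}]\!]$, which does not decrease the bound), and then apply the coproduct estimate to conclude $R_{d,e}(x\boxtimes y)=\Delta'_{e,d}\big(m'_{d,e}(x\boxtimes y)\big)\in\bigoplus_{i+j\leq a+b}E^{\leq i}\boxtimes E^{\leq j}$. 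Equivalently, after localizing at $I_{d,e}$ one may use the closed formula $R_{d,e}=\frac{\mathrm{eu}(d,e)}{\mathrm{eu}(e,d)}\,\mathrm{sw}$ from Proposition \ref{compo} (b): the ratio $\mathrm{eu}(d,e)/\mathrm{eu}(e,d)\in K_0(BG(d)\times BG(e))$ has image in $\mathbb C[\![h_d,h_e]\!]$ of $h$-degree $0$ (its leading term is $(-1)^{\chi(d,e)}$, as computed just before Theorem \ref{mut}), so multiplication by it preserves the total degree, and $\mathrm{sw}$ preserves it trivially; hence $R_{d,e}$ preserves the total filtration degree, which is the claimed statement on the locus where localization is an isomorphism, and the general case follows because $\mathrm{eu}(d,e)$ is not a zero divisor by Proposition \ref{zerodiv}.

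The main obstacle I anticipate is making the coproduct filtration estimate rigorous, i.e.\ checking that the adjoint functors $\Psi_{<w},\Psi_{\geq w}$ (equivalently $\widetilde q_{e,d*}\tau^*p^*_{e,d}$) do not increase cohomological degree after passing to $\mathrm{gr}$. For the affine-bundle piece $\widetilde q_{e,d*}$ one uses that pushforward along the projective completion $\mathbb P_{\mathcal Z}(N\oplus 1)\to\mathcal Z$ shifts cohomological degree in a controlled way and, on $\mathrm{gr}$, the relevant computation is the one already carried out in Proposition \ref{abelinverse}; for the truncation functors $\beta_{\geq w}$ one invokes that they are given by cones against sheaves supported on attracting loci, which raise the cohomological filtration degree (as in Proposition \ref{independence}), so on $\mathrm{gr}$ they are identity up to adding higher-degree terms. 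Assembling these pieces — and bookkeeping the Euler factors $\mathrm{eu}_2$ and $\mathrm{eu}$, all of which are units in $\mathbb C[\![h]\!]$ up to sign and hence degree-preserving — gives the result. A clean way to organize the whole argument is to prove once and for all a lemma stating that every structure map used to build $m$, $\Delta$ and $R$ (namely $p_*$, $q^*$, $\tau^*$, $\widetilde q_*$, $\beta_{\geq w}$, $\mathrm{TS}$, and multiplication by $\mathrm{eu}$ or $\mathrm{eu}_2$) is compatible with the cohomological filtration in the sense of sending $\mathrm{gr}^{\leq c}$ to $\bigoplus_{\,\leq c}\mathrm{gr}$, after which Proposition \ref{braiding2} is immediate.
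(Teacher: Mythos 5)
Your proposal takes a genuinely different route from the paper, but it has a circularity that I do not think can be repaired without reinventing the paper's argument. The paper proves Proposition \ref{braiding2} by a base case plus induction: for $x,y\in E^{\leq 1}$ (primitives), Corollary \ref{KBPS} gives $[x,y]\in E^{\leq 1}$, Proposition \ref{Kprimitive} makes $[x,y]$ primitive, and Theorem \ref{bialgebra} then forces $R_{d,e}(x\boxtimes y)=\pm\, y\boxtimes x$; the general case follows from the braiding axiom (Proposition \ref{braiding}) by writing $x=x_1x_2$ and inducting on $a$. The coproduct filtration estimate you want — that $\Delta$ sends $E^{\leq c}$ to $\bigoplus_{i+j\leq c}E^{\leq i}\boxtimes E^{\leq j}$ — is exactly Proposition \ref{gradedbialgebra}, which in the paper comes \emph{after} Proposition \ref{braiding2} and uses it: its proof expands $\Delta(xy)$ via Theorem \ref{bialgebra} and needs the braiding estimate to control the intermediate $R$-factor. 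Your plan inverts this order, proving the coproduct estimate first and composing with the (easy) product estimate; but your only substitute argument is that the adjoint functors $\widetilde q_*$, $\beta_{\geq w}$ "do not raise cohomological degree." That controls the cohomological filtration $I^{\cdot}$, not the filtration $E^{\cdot}$, which is defined in Definition \ref{E} by monomial length in BPS generators and is not a cohomological-degree truncation. Translating between them is precisely the content of primitivity of $E^{\leq 1}$ and closure of $E^{\leq 1}$ under the Lie bracket — the algebraic facts the paper's proof actually runs on.

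The closed-formula argument in your second paragraph also has a gap. After localizing at $I_{d,e}$ one does have $R_{d,e}=\frac{\text{eu}(d,e)}{\text{eu}(e,d)}\,\text{sw}$, but in $\text{gr}\,K_0$ this ratio is a degree-zero element of the fraction field that is \emph{not} a scalar: the weights of $N_{d,e}$ and $N_{e,d}$ are not negatives of one another but are related only by a Weyl element (Proposition \ref{Wcomputation}), so the top Chern class ratio is a genuine non-constant rational function rather than the unit $(-1)^{\chi(d,e)}$. (The computation "just before Theorem \ref{mut}" is about $\det\mathcal{N}_{d,e}$, not about the Euler class, so it does not establish the claim you need.) Consequently multiplication by this ratio does not preserve $E^{\leq a}\boxtimes E^{\leq b}$ term by term, and the localization shortcut cannot by itself give the bound — were it so, the paper's base case (where the answer really is $\pm\,\text{sw}$) and the ensuing induction would be unnecessary. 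If you want to streamline, follow the paper's lead: restrict to primitives, where primitivity together with Theorem \ref{bialgebra} collapses $R$ to $\pm\,\text{sw}$, and then bootstrap via $R_{d_1+d_2,e}=(R_{d_1,e}\boxtimes 1)(1\boxtimes R_{d_2,e})$.
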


\begin{proof}
Assume first that $x,y\in E^{\leq 1}$. Corollary \ref{KBPS} implies that $xy-yx=z$, where $z\in E^{\leq 1}$. By Proposition \ref{Kprimitive}, we have that $\Delta(z)=z\boxtimes 1+1\boxtimes z$.
Using Theorem \ref{bialgebra}, we get that $R_{d,e}(x\boxtimes y)=\pm y\boxtimes x$, which proves the statement in this case.

Assume that $x$ is not in $E^{\leq 1}$; it suffices to show the results for $x=x_1x_2$, where $x_1\in E^{\leq a_1}$ is of dimension $d_1$ and $x_2\in E^{\leq a_2}$ is of dimension $d_2$. We will use induction on $a$ to prove the result. Using Proposition \ref{braiding}, it suffices to show that:
$$\left(R_{d_1,d_2}\boxtimes 1\right)\left(x_1\boxtimes R_{d_2,e}(x_2\boxtimes y)\right)\in \bigoplus_{i+j+k\leq a+b} E^{\leq i}\boxtimes E^{\leq j}\boxtimes E^{\leq k}.$$ 
We have that $x_1\boxtimes R_{d_2,e}(x_2\boxtimes y)\in \bigoplus E^{\leq a_1}\boxtimes E^{\leq i}\boxtimes E^{\leq a_2+b-i}$, and using induction again for the product in $E^{\leq a_1}\boxtimes E^{\leq i}$ we get the desired conclusion.
\end{proof}

\begin{prop}\label{gradedbialgebra}
The coproduct and the filtration on $\text{grKHA}$ are compatible, that is, for every $c\geq 1$ we have:
$$\Delta(E^{\leq c})\subset \bigoplus_{i+j\leq c}E^{\leq i}\boxtimes E^{\leq j}.$$
Thus the associated graded $\text{gr}^E\text{gr}\,\text{KHA}$ is a bialgebra with the product and coproduct induced from $\text{gr}\,\text{KHA}$.
\end{prop}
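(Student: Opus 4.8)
The plan is to prove Proposition \ref{gradedbialgebra} by induction on the dimension vector $d$ (equivalently, on the number of generators appearing in a monomial), using the PBW description of $\text{gr}\,\text{KHA}$ from Theorem \ref{thm5} together with the bialgebra compatibility of Theorem \ref{bialgebra} and the braiding estimate of Proposition \ref{braiding2}. First I would observe that $E^{\leq c}$ is spanned, as a vector space, by products $x_{d_1,w_1}\cdots x_{d_k,w_k}$ with $x_{d_i,w_i}\in \text{gr}\,K_0(\mathbb{M}(d_i)_0)[\![ h_{d_i}]\!]$ of total degree $\leq c$; this is exactly the content of Definition \ref{E} combined with Theorem \ref{thm5} and Proposition \ref{lemmaimp}. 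So it suffices to check the inclusion $\Delta(E^{\leq c})\subset \bigoplus_{i+j\leq c}E^{\leq i}\boxtimes E^{\leq j}$ on such monomials.

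The base case is a single generator: for $x\in E^{\leq 1}=\bigoplus_d \text{gr}\,K_0(\mathbb{M}(d)_0)$, primitivity gives $\Delta(x)=x\boxtimes 1+1\boxtimes x\in E^{\leq 1}\boxtimes E^{\leq 0}\oplus E^{\leq 0}\boxtimes E^{\leq 1}$ by Proposition \ref{Kprimitive}; more generally for $x\,h_d^i\in E^{\leq 1+2i}$ I would need to check $\Delta$ of a power of $h_d$, which follows from $\Delta$ respecting the $K_0(BG)$-module structure (Proposition \ref{resp}, \ref{change}) and the fact that $h_d=q_d-1$ is grouplike-compatible under the coproduct. For the inductive step, write $z=x\cdot x'$ with $x\in E^{\leq a}$ of dimension $d'$ and $x'\in E^{\leq c-a}$ of smaller total degree/dimension so that both satisfy the inductive hypothesis. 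Since $\Delta$ is an algebra morphism for the braided bialgebra structure (Theorem \ref{bialgebra}), we have $\Delta(x\cdot x')=\widetilde{m}^2(\Delta(x)\boxtimes\Delta(x'))=(m\boxtimes m)(1\boxtimes R\boxtimes 1)(\Delta x\boxtimes\Delta x')$. Now $\Delta x\in\bigoplus_{i+i'\leq a}E^{\leq i}\boxtimes E^{\leq i'}$ and $\Delta x'\in\bigoplus_{j+j'\leq c-a}E^{\leq j}\boxtimes E^{\leq j'}$ by induction; I then apply Proposition \ref{braiding2} to the middle $R$-factor to see it maps $E^{\leq i'}\boxtimes E^{\leq j}$ into $\bigoplus_{k+l\leq i'+j}E^{\leq k}\boxtimes E^{\leq l}$, and finally the two outer multiplications $m$ are filtered maps of degree $0$ (multiplication sends $E^{\leq p}\cdot E^{\leq q}$ into $E^{\leq p+q}$, by Corollary \ref{basisimp} / the relations in Corollary \ref{change} which do not increase degree). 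Tallying the indices gives total degree $\leq i+k+l+j'\leq i+i'+j+j'\leq a+(c-a)=c$, which is the required bound. The last sentence of the proposition is then immediate: a filtered coalgebra with filtered product gives an associated graded bialgebra.

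The main obstacle I anticipate is bookkeeping the filtration degrees through the braiding $R$ precisely, i.e. making sure that Proposition \ref{braiding2} is applied in the form $R(E^{\leq i'}\boxtimes E^{\leq j})\subset\bigoplus_{k+l\leq i'+j}E^{\leq k}\boxtimes E^{\leq l}$ rather than a weaker statement, and that the two outer products genuinely preserve $\sum(\text{degrees})$ (this uses that the relations defining $\text{gr}^F\text{KHA}$, hence the multiplication on $\text{gr}\,\text{KHA}$ at the associated-graded level, are homogeneous of degree $0$ for the $E$-grading — which follows from Corollary \ref{change} and the fact that each $q^{f(e,d)},q^{-g(e,d)}$ acts within a fixed $E$-degree since it is a product of the grouplike-type elements $q_v$). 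I would also need to be slightly careful that the inductive reduction $z=x\cdot x'$ is available for every monomial of degree $\leq c$ with at least two generators, and handle the one-generator case (including $h_d$-powers) separately as above; this is routine given Definition \ref{E}. No essentially new idea is needed beyond combining Theorem \ref{bialgebra}, Proposition \ref{Kprimitive}, Proposition \ref{braiding2}, and the homogeneity of the relations.
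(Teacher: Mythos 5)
Your proof follows essentially the same route as the paper: base case for $c=1$ via Proposition \ref{Kprimitive}, then induction by writing an element of $E^{\leq c}$ as a product, expanding $\Delta$ of the product via Theorem \ref{bialgebra}, and using Proposition \ref{braiding2} to control the middle $R$-factor. Your extra remarks on the $h_d^i$-power base case and the degree-additivity of the outer multiplications are details the paper leaves implicit, but they do not change the argument.
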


\begin{proof}
Proposition \ref{Kprimitive} implies the statement for elements in $c=1$.
For general $c\geq 1$, it suffices to prove the claim for $xy\in E^{\leq c}$ with $x\in E^{\leq a}$ and $y\in E^{\leq b}$ such that $a,b\geq 1$ and $a+b\leq c$; assume that $x\in \text{gr}^{\leq a}K_0(D_{sg}(\X(d)_0))$ and $y\in \text{gr}^{\leq b}K_0(D_{sg}(\X(e)_0))$. 
Theorem \ref{bialgebra} says that:
$$\Delta_{d^0,e^0}(xy)=\sum_P \left(m\boxtimes m\right)\left(1\boxtimes R_{d^2,e^1}\boxtimes 1\right)\left(\Delta_{d^1,d^2}(x)\boxtimes \Delta_{e^1,e^2}(y)\right).$$
Using the induction hypothesis, we have that: $$\Delta_{d^1,d^2}(x)\in \bigoplus E^{\leq i}\boxtimes E^{\leq a-i}\text{ and }\Delta_{e^1,e^2}(y)\in\bigoplus E^{\leq j}\boxtimes E^{\leq b-j}.$$ For $x_2\in E^{\leq a-i}$ and $y_1\in E^{\leq j}$, Proposition \ref{braiding2} implies that:
$$R_{d^2,e^1}(x_2\boxtimes y_1)\in \bigoplus E^{\leq k}\boxtimes E^{\leq a-i+j-k}.$$
Putting these facts together, we see that: 
$$\Delta_{d^0,e^0}(xy)\in \bigoplus m^2 \left(E^{\leq i}\boxtimes E^{\leq k}\boxtimes E^{\leq a-i+j-k}\boxtimes E^{\leq b-j}\right)\subset \bigoplus E^{i+k}\boxtimes E^{a+b-i-k},$$ which implies the desired conclusion.
\end{proof}

\begin{prop}\label{primitiveKHA}
The primitive elements of the $\text{grKHA}$ bialgebra are $E^{\leq 1}\subset \text{grKHA}$.
\end{prop}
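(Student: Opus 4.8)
The plan is to show that $E^{\leq 1}$ consists of primitive elements (which is exactly Proposition~\ref{Kprimitive}), and conversely that any primitive element must lie in $E^{\leq 1}$. The first inclusion is already available, so the content is the reverse inclusion: if $z\in\text{gr}\,\text{KHA}$ is primitive, then $z\in E^{\leq 1}=\bigoplus_{d\in\mathbb{N}^I}\text{gr}\,K_0(\mathbb{M}(d)_0)$. I would argue this using the PBW isomorphism of Theorem~\ref{PBWgr} together with the compatibility of the coproduct with the filtration $E^{\cdot}$ established in Proposition~\ref{gradedbialgebra}.

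First I would set up the standard argument: by Proposition~\ref{gradedbialgebra}, $\text{gr}^E\text{gr}\,\text{KHA}$ is a (braided, but graded-cocommutative on the associated graded) bialgebra, and by Theorem~\ref{PBWgr} it is isomorphic as an algebra to $\text{Sym}\left(\bigoplus_d \text{gr}\,K_0(\mathbb{M}(d)_0)[\![u]\!]\right)$ where $u=h_d$ has degree $2$ and the degree-$1$ generators are $\text{gr}\,K_0(\mathbb{M}(d)_0)$. I would then identify the coproduct on $\text{gr}^E\text{gr}\,\text{KHA}$: on the degree-$1$ part $E^{\leq 1}/E^{\leq 0}=E^{\leq 1}$ it is primitive by Proposition~\ref{Kprimitive}; on the classes $h_d$ (which are images of $K_0(BG(d))$-module generators) one computes $\Delta(h_d)$ directly from Definition~\ref{coproduct} and Proposition~\ref{abelianyetagain}, and one finds $h_{d+e}\mapsto h_d\boxtimes 1 + 1\boxtimes h_e$ modulo lower-degree terms, exactly as in \cite[Theorem C]{dm}. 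Thus $\text{gr}^E\text{gr}\,\text{KHA}$ is a graded-cocommutative connected bialgebra over a field of characteristic zero which is polynomial as an algebra, so by Milnor--Moore/Cartier its primitives are precisely the indecomposables, i.e. the span of the chosen generators, which is $\bigoplus_d \text{gr}\,K_0(\mathbb{M}(d)_0)\oplus\bigoplus_d \mathbb{Q}\,h_d\cdot(\dots)$. The point is that the degree-$2$ primitives coming from $h_d$ do \emph{not} appear unless $d$ is such that $\mathbb{M}(d)$ contributes — but in fact $h_d$ for $d\neq 0$ is decomposable only if $\text{gr}\,K_0(\mathbb{M}(d)_0)$ is empty in low weight; I would instead argue directly that $h_d\notin E^{\leq 1}$ and that $h_d$ is \emph{not} primitive in $\text{gr}^E$ once $d$ is a proper sum, and that for $d$ not a sum the relevant $h_d$-type classes do live in higher filtration so are handled by the induction.

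Concretely, the cleanest route avoids invoking a structure theorem: I would do a direct filtered induction. Suppose $z\in\text{gr}\,\text{KHA}$ is primitive and $z\in E^{\leq c}$ with $c\geq 2$ minimal. Write the image $\bar z\in\text{gr}^E_c=E^{\leq c}/E^{\leq c-1}$. Using the explicit basis of $\text{gr}^E\text{gr}\,\text{KHA}$ from Theorem~\ref{PBWgr} in terms of symmetric monomials in $x_{d_i}h_{d_i}^{a_i}$, decompose $\bar z$ and apply the induced coproduct on $\text{gr}^E$, which by Proposition~\ref{gradedbialgebra} and the computation above is the symmetric-algebra coproduct making all $x_d h_d^a$ primitive only when $a=0$. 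Since $z$ is primitive in $\text{gr}\,\text{KHA}$, $\bar z$ is primitive in $\text{gr}^E$, hence $\bar z$ is a linear combination of the degree-$c$ primitives of the symmetric algebra; but for $c\geq 2$ there are no such primitives other than those already accounted for in $E^{\leq 1}$ (the would-be primitive $h_d$-classes are not primitive because $\Delta(h_d)$ has a genuine cross term once one expands $\Delta$ via the braiding $R$, exactly as the square of a degree-$1$ element is non-primitive in a free supercommutative algebra). This forces $\bar z=0$, i.e. $z\in E^{\leq c-1}$, contradicting minimality; hence $c=1$.

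The main obstacle is pinning down the coproduct on $\text{gr}^E\text{gr}\,\text{KHA}$ precisely enough to run the primitivity count — in particular, verifying that the classes $h_d$ (and more generally $x_d h_d^a$ with $a\geq 1$) are genuinely \emph{not} primitive in $\text{gr}^E$. This requires combining Theorem~\ref{coincide} (the semi-orthogonal description of the coproduct), Proposition~\ref{abelinverse}, and the computation of $\Delta^{ab}$ on $K_0(BT(d))$-classes; the braiding $R$ contributes the cross terms, and one must check these cross terms survive in $\text{gr}^E$ and are nonzero. Once this computation is in hand, together with Propositions~\ref{Kprimitive}, \ref{gradedbialgebra} and Theorem~\ref{PBWgr}, the primitivity statement follows formally. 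I would also remark that the same argument, transported along $\text{ch}$ and using Proposition~\ref{Cprimitive} and \cite[Theorem C]{dm}, identifies $P^{\leq 1}$ as the primitives of $\text{CoHA}$, which is what feeds into Theorem~\ref{comp}.
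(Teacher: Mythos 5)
Your overall strategy matches the paper's: establish $E^{\leq 1}\subset\text{Prim}$ via Proposition~\ref{Kprimitive}, then bound the primitives from above using the PBW structure (Theorem~\ref{PBWgr}) and the compatibility of the coproduct with the $E$-filtration (Proposition~\ref{gradedbialgebra}). The paper's own proof is terse: it asserts $\text{gr}^E\,\text{gr}\,\text{KHA}=\text{Sym}(E^{\leq 1})$ and reads off $\text{Prim}'=E^{\leq 1}$, hence $\text{Prim}=E^{\leq 1}$ by the formal filtration argument you also give.

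You have usefully put your finger on a point that the paper's one-line conclusion elides. Theorem~\ref{PBWgr} does \emph{not} say $\text{gr}^E\,\text{gr}\,\text{KHA}\cong\text{Sym}(E^{\leq 1})$; it says $\text{gr}^E\,\text{gr}\,\text{KHA}^{\psi}\cong\text{Sym}\bigl(\bigoplus_d\text{gr}\,K_0(\mathbb{M}(d)_0)[\![h_d]\!]\bigr)$, and the generators $x_d h_d^a$ with $a\geq 1$ sit in $E$-degree $1+2a>1$, outside $E^{\leq 1}$. If these higher generators were primitive in $\text{gr}^E$, then $\text{Prim}'$ would strictly contain $E^{\leq 1}$ and the argument would not close. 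So, exactly as you say, one must rule out primitivity of $x_d h_d^a$ for $a\geq 1$ before the deduction $\text{Prim}'=E^{\leq 1}$ is justified.

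However, the mechanism you propose for this --- a cross term from the braiding, ``exactly as the square of a degree-$1$ element'' --- does not apply. The class $h_d=q_d-1$ is not a Hall-algebra element but a $K_0(BG(d))$-module class acting by tensoring, and Proposition~\ref{resp} (together with Proposition~\ref{abelianyetagain}) makes $\Delta$ \emph{linear} over this module structure: $\Delta_{d',d''}(z\cdot\ell)=\Delta_{d',d''}(z)\cdot(\ell\vert_{G(d')\times G(d'')})$. So if $\Delta_{d',d''}(z)=0$ for $d',d''\neq 0$, the same vanishing persists after multiplying $z$ by any power of $h_d$. Indeed, the proof of Proposition~\ref{Kprimitive} as written establishes $\Delta_{d,e}(\text{gr}\,K_0(\mathbb{M}(d+e)))=0$ for the whole category $\mathbb{M}(d+e)$, across all weights $w$ --- that is, for all of $\text{gr}\,K_0(\mathbb{M}(d+e)_0)[\![h_{d+e}]\!]$, not just the $h^0$-piece. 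The braiding enters $\Delta(xy)$ for genuine Hall-algebra products; it does not give the needed cross term in $\Delta(x_d h_d^a)$. So while you correctly identified the load-bearing subtlety, your proposed fix does not discharge it, and the paper's own sketch does not spell out a mechanism either; the step deserves a more careful justification than either proof currently supplies.
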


\begin{proof}
Let $\text{Prim}$ be the primitive elements of $\text{gr}_I\text{KHA}$, and $\text{Prim}'$ the primitive elements of $\text{gr}^E\text{gr}_I\text{KHA}$. We have that $\text{Prim}\subset \text{Prim}'$.
Proposition \ref{Kprimitive} shows that $E^{\leq 1}\subset \text{Prim}.$ Theorem \ref{PBWgr} and Proposition \ref{gradedbialgebra} say that $\text{gr}^E\text{gr}_I\text{KHA}=\text{Sym}\left(E^{\leq 1}\right)$ is a supersymmetric algebra and that the coproduct is compatible with the multiplication, so 
$E^{\leq 1}=\text{Prim}'$, which implies that
$E^{\leq 1}=\text{Prim}.$
\end{proof}

\begin{prop}\label{primitivecoha}
The primitive elements of the $\text{CoHA}$ bialgebra are $P^{\leq 1}\subset \text{CoHA}$.
\end{prop}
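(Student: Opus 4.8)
The plan is to mirror the proof of Proposition \ref{primitiveKHA}, with the filtration $E^\cdot$ on $\text{gr}\,\text{KHA}$ replaced by the perverse filtration $P^\cdot$ on $\text{CoHA}$ and Theorem \ref{PBWgr} replaced by the Davison--Meinhardt PBW theorem \cite[Theorem C]{dm}. Write $\text{Prim}$ for the space of primitive elements of $\text{CoHA}^\psi$. Proposition \ref{Cprimitive} already gives $P^{\leq 1}\subseteq\text{Prim}$, so the content is the reverse inclusion.

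The first step is to show that $P^\cdot$ is a filtration of braided bialgebras on $\text{CoHA}^\psi$: that $m(P^{\leq i}\boxtimes P^{\leq j})\subseteq P^{\leq i+j}$, that $\Delta(P^{\leq c})\subseteq\bigoplus_{i+j\leq c}P^{\leq i}\boxtimes P^{\leq j}$, and that the braiding $R$ preserves these filtrations. Multiplicativity of $P^\cdot$ is part of \cite{dm}; the comultiplicativity and the statement for $R$ are the exact analogues of Propositions \ref{braiding2} and \ref{gradedbialgebra}, and I would establish them by the same induction on the filtration degree. The base case is Proposition \ref{Cprimitive} together with the supercommutativity of $\text{gr}^P\text{CoHA}^\psi$ from \cite[Theorem C]{dm} (so that the braiding on classes in $P^{\leq 1}$ is the flip up to sign), and the inductive step uses the braided bialgebra axioms for $\text{CoHA}$ (the $\text{CoHA}$ version of Theorem \ref{bialgebra}) exactly as Proposition \ref{gradedbialgebra} uses them for $\text{gr}\,\text{KHA}$. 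It follows that $\text{gr}^P\text{CoHA}^\psi$ inherits a braided bialgebra structure, and that the associated graded class of a primitive element of $\text{CoHA}^\psi$ lying in $P^{\leq c}$ is a primitive element of $\text{gr}^P_c\,\text{CoHA}^\psi$.

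The second step is to identify the primitives of $\text{gr}^P\text{CoHA}^\psi$. By \cite[Theorem C]{dm} we have $\text{gr}^P\text{CoHA}^\psi\cong\text{Sym}(P^{\leq 1}[u])$ as supercommutative algebras, and by the first step this is an isomorphism of braided bialgebras in which $P^{\leq 1}$ is primitive while the extra equivariant variable $u$ (of perverse degree $2$) comultiplies with cross terms, so that $u$-multiples of classes in $P^{\leq 1}$ are not primitive. Exactly as in the proof of Proposition \ref{primitiveKHA}, compatibility of the coproduct with the multiplication on the symmetric algebra then forces the primitives of $\text{gr}^P\text{CoHA}^\psi$ to be precisely $P^{\leq 1}$.

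Combining the two steps: given $x\in\text{Prim}$, choose $c$ minimal with $x\in P^{\leq c}$; then the nonzero image $\bar x\in\text{gr}^P_c\text{CoHA}^\psi$ is primitive, hence lies in $P^{\leq 1}$, which forces $c=1$ and $x\in P^{\leq 1}$. Together with Proposition \ref{Cprimitive} this gives $\text{Prim}=P^{\leq 1}$. The main obstacle is the first step: unlike the K-theoretic case, where the filtrations $F^\cdot$ and $E^\cdot$ are built from semi-orthogonal decompositions adapted to the bialgebra operations, here one must control the interaction of the topologically defined perverse filtration with the coproduct and the braiding. The inductive scheme reduces this to Proposition \ref{Cprimitive} and the supercommutativity of $\text{gr}^P\text{CoHA}^\psi$, but verifying that $R$ respects $P^\cdot$ — the $\text{CoHA}$ analogue of Proposition \ref{braiding2} — is where the real work lies.
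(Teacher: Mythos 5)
Your proposal is correct and follows essentially the same route as the paper: establish cohomological analogues of Propositions \ref{braiding2} and \ref{gradedbialgebra} for the perverse filtration $P^\cdot$, then invoke Proposition \ref{Cprimitive} and the Davison--Meinhardt PBW theorem and conclude by the same formal argument used for Proposition \ref{primitiveKHA}. The paper's proof is considerably terser, simply asserting that these analogues hold and that the argument is formal, whereas you fill in the details — including the inductive scheme for the $R$-matrix compatibility — but the underlying strategy is identical.
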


\begin{proof}
The conclusion of Proposition \ref{gradedbialgebra} holds in cohomology because $\text{CoHA}$ is a bialgebra with braiding $R$. The proof of Proposition \ref{primitiveKHA} follows formally once we know the statements of Propositions \ref{Kprimitive}, \ref{braiding2}, and \ref{gradedbialgebra}, and thus the above Proposition follows from Proposition \ref{Cprimitive} and the analogues of Propositions \ref{braiding2} and \ref{gradedbialgebra} in cohomology.
\end{proof}

\begin{proof}[Proof of Theorem \ref{comp}]
Recall the definition of the filtration $E^{\leq i}$: a monomial $\left(x_{d_1}h_{d_1}^{i_1}\right)\cdots \left(x_{d_k}h_{d_k}^{i_k}\right)$ has degree $\text{deg}=\left(1+2i_1\right)+\cdots+\left(1+2i_k\right)$, where $x_{d_i}\in \text{gr}\,K_0(\mathbb{M}(d_i)_0)$ is in $E^{\leq 1}$. The subspace $E^{\leq i}\subset \text{gr}_I\,\text{KHA}$ is defined as the subspace generated by monomials of degree $\text{deg}\leq i$. 

Similarly, for a monomial 
$\left(x_{d_1}h_{d_1}^{i_1}\right)\cdots \left(x_{d_k}h_{d_k}^{i_k}\right)$,
where $x_{d_i}\in H^{\cdot}(X(d),\varphi_{\text{Tr}\,W} IC_{d})\subset P^{\leq 1}$,
define its degree as $\text{deg}=\left(1+2i_1\right)+\cdots+\left(1+2i_k\right)$. Then 
$P^{\leq i}\subset \text{CoHA}$ is the subspace generated by monomials of degree $\text{deg}\leq i$. 

It thus suffices to show that generators $x_dh_d^{\leq i}$ are sent to elements in $E^{\leq 2i+1}$. We have that $\text{ch}(x_dh_d^i)=\text{ch}(x_d)h_d^i$, so it is enough to prove the statement for $x_d\in E^{\leq 1}.$ These elements are primitive by Propositions \ref{primitiveKHA} and \ref{primitivecoha}, and $\text{ch}$ is a bialgebra morphism, so we indeed have that:
$$\text{ch}\,E^{\leq 1}\subset \text{Prim(CoHA)}=P^{\leq 1}.$$
\end{proof}

An immediate corollary of Theorem \ref{comp} is:

\begin{cor}
The map $\text{ch}: KBPS\to BPS$ is a morphism of Lie algebras.
\end{cor}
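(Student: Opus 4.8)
The plan is to deduce this directly from Theorem~\ref{comp}, together with the fact, established in the corollary following Proposition~\ref{grr}, that $\text{ch}\colon\text{gr}\,\text{KHA}\to\text{CoHA}^{\wedge}$ is a morphism of braided bialgebras, in particular an algebra morphism. Recall that $\text{KBPS}=E^{\leq 1}\subset\text{gr}\,\text{KHA}$ and $\text{BPS}=P^{\leq 1}\subset\text{CoHA}$, and that in both cases the Lie bracket is the commutator $[x,y]=xy-yx$ for the ($\psi$-twisted) multiplication; the bracket takes values in $E^{\leq 1}$ by Corollary~\ref{KBPS} and in $P^{\leq 1}$ by \cite[Theorem C]{dm}. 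Since the twist $\psi$ depends only on the Euler pairing $\chi$ of the relevant dimension vectors, it is literally the same on the K-theoretic and cohomological sides, so $\text{ch}$ intertwines the two $\psi$-twisted products.

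First I would apply Theorem~\ref{comp} with $i=1$ to get $\text{ch}(E^{\leq 1})\subset P^{\leq 1}$; note that although a priori $\text{ch}$ lands in the completion $\text{CoHA}^{\wedge}$, Theorem~\ref{comp} already places the image of $E^{\leq 1}$ inside the honest subspace $P^{\leq 1}\subset\text{CoHA}$, so $\text{ch}$ restricts to a well-defined linear map $\text{KBPS}\to\text{BPS}$. Then, for $x,y\in E^{\leq 1}$, I would compute
\begin{equation*}
\text{ch}([x,y])=\text{ch}(xy-yx)=\text{ch}(x)\,\text{ch}(y)-\text{ch}(y)\,\text{ch}(x)=[\text{ch}(x),\text{ch}(y)],
\end{equation*}
using only that $\text{ch}$ is an algebra morphism for the twisted multiplications. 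Since $[x,y]\in E^{\leq 1}$ and $[\text{ch}(x),\text{ch}(y)]\in P^{\leq 1}$, this is precisely the assertion that the restriction $\text{ch}\colon\text{KBPS}\to\text{BPS}$ is a homomorphism of Lie algebras.

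There is essentially no obstacle left once Theorem~\ref{comp} is in hand: all the real work --- identifying $E^{\leq 1}$ and $P^{\leq 1}$ as the spaces of primitives of the two bialgebras (Propositions~\ref{primitiveKHA} and~\ref{primitivecoha}) and matching the two filtrations through $\text{ch}$ --- is done there. The only bookkeeping point worth stating explicitly is that a generator $x_d h_d^{i}$ of $E^{\leq i}$ is sent to $\text{ch}(x_d)\,h_d^{i}$, so that the case $i=1$ (generators $x_d\in\text{gr}\,K_0(\MM(d)_0)$) already suffices; this compatibility of $\text{ch}$ with the diagonal $K_0(B\C^*)$-action is exactly what is used in the proof of Theorem~\ref{comp}.
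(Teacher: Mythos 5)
Your argument is correct and is precisely how the paper deduces the corollary (the paper labels it "An immediate corollary of Theorem~\ref{comp}" and gives no separate proof): Theorem~\ref{comp} with $i=1$ gives $\text{ch}(E^{\leq 1})\subset P^{\leq 1}$, and since $\text{ch}$ is an algebra morphism for the (identically) $\psi$-twisted products, it sends commutators to commutators. Your additional remarks — that the image lands in $\text{CoHA}$ rather than merely its completion, and that $\text{ch}(x_d h_d^i)=\text{ch}(x_d)h_d^i$ — correctly reproduce the bookkeeping the paper carries out in the proof of Theorem~\ref{comp} itself.
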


The following corollary follows easily from Theorem \ref{comp} in the zero potential case.

\begin{cor}
Let $Q$ be a symmetric quiver and let $d\in\mathbb{N}^I$ be a dimension vector. Then the Chern character restricts to an isomorphism:
$$\text{ch}: \text{gr}\,K_0(\mathbb{N}(d))\to IH^{\cdot}(X(d)).$$ 
The Chern character $\text{ch}: K_0(\X(d))\to H^{\cdot}(\X(d))$ is injective, so $K_0(\mathbb{N}(d))\cong \text{gr}\,K_0(\mathbb{N}(d))$ and thus $$\text{dim}\,K_0(\mathbb{N}(d))=\text{dim}\,IH^{\cdot}(X(d)).$$
Further, the map of Lie algebras 
$$\text{ch}: KBPS(Q,0)\to BPS(Q,0)$$ is an isomorphism of Lie algebras with trivial Lie bracket.
\end{cor}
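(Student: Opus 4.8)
The plan is to deduce this corollary from Theorem \ref{comp} (the compatibility of the filtrations $E^{\cdot}$ and $P^{\cdot}$ via $\text{ch}$), specialized to the case $W=0$, by running the argument degree by degree. First I would recall that in the zero potential case the Chern character $\text{ch}:\text{gr}\,\text{KHA}(Q,0)\to\text{CoHA}(Q,0)$ is a bialgebra morphism (Corollary after Proposition \ref{grr}), and that by Theorem \ref{comp} it sends $E^{\leq 1}$ into $P^{\leq 1}$. By the PBW theorems (Theorem \ref{PBWgr} and the one for $\text{CoHA}$ in Subsection \ref{CoHA}), both sides are free (super)symmetric algebras on $\text{gr}\,K_0(\MM(d)_0)[\![u]\!]$ and $H^{\cdot}(X(d),IC_d)[u]$ respectively; hence it suffices to prove that the induced map on primitives $\text{ch}:E^{\leq 1}\to P^{\leq 1}$, i.e. $\text{ch}:\bigoplus_d\text{gr}\,K_0(\MM(d))\to \bigoplus_d IH^{\cdot}(X(d))$, is an isomorphism. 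Since $E^{\leq 1}=\bigoplus_d \text{gr}\,K_0(\MM(d)_0)[\![h_d]\!]$ and $P^{\leq 1}$ has an analogous description with $h_d$ acting on each side compatibly via $\text{ch}$, the $u$/$h_d$ variables match up and it is enough to treat the $h_d$-degree zero part, namely $\text{ch}:\text{gr}\,K_0(\MM(d)_0)\to IH^{\cdot}(X(d))$.

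Next I would prove that this map is an isomorphism by comparing dimensions via the semi-orthogonal decompositions. On the K-theory side, Theorem \ref{thm5} (or rather Theorem \ref{sod5}) gives a basis of $K_0(\text{KHA})$ in terms of the $K_0(\oM(d))$, and Theorem \ref{decomp} expresses $K_0(\oM(d))$ in terms of the $K_0(\MM(d_i))$. On the cohomology side, Theorem \ref{mere} (Meinhardt--Reineke) together with the PBW theorem for $\text{CoHA}$ gives the analogous combinatorial description of $H^{\cdot}(\X(d))$ in terms of $IH^{\cdot}(X(d_i))$. The Chern character $\text{ch}:\text{gr}\,K_0(\X(d))\to H^{\cdot}(\X(d))$ is injective (this is the key input: for $\X(d)=R(d)/G(d)$ with $R(d)$ a vector space, $\text{gr}\,K_0(BG(d))\cong H^{\cdot}(BG(d))$ and more generally $\text{gr}\,G_0$ of an equivariant affine space injects into Borel--Moore homology), hence $\text{ch}:\text{gr}\,K_0(\MM(d))\hookrightarrow H^{\cdot}(\X(d))$ is injective, and by Theorem \ref{comp} its image lands in $P^{\leq 1}\cap(\text{image of }\MM(d)\text{-summand})=IH^{\cdot}(X(d))$ inside the summand indexed by the trivial partition of $d$. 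So $\text{ch}:\text{gr}\,K_0(\MM(d))\hookrightarrow IH^{\cdot}(X(d))$ is injective for every $d$, and then matching the two combinatorial bases forces it to be bijective in each degree, by induction on $d$ with respect to $\succ$: the full algebras $\text{gr}\,\text{KHA}$ and $\text{CoHA}$ have bases indexed by the same admissible/partition data, $\text{ch}$ is injective between them (being injective on the primitives and a morphism of free symmetric algebras), and a graded injection between finite-dimensional spaces of equal dimension is an isomorphism.

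Once the isomorphism $\text{ch}:\text{gr}\,K_0(\MM(d))\xrightarrow{\sim} IH^{\cdot}(X(d))$ is established, the remaining statements are formal. Injectivity of $\text{ch}:K_0(\X(d))\to H^{\cdot}(\X(d))$ gives $K_0(\MM(d))\cong\text{gr}\,K_0(\MM(d))$ (the cohomological filtration on $K_0(\MM(d))$ has trivial associated graded obstruction because $\text{ch}$ is already injective and strictly compatible), whence $\dim K_0(\MM(d))=\dim IH^{\cdot}(X(d))$. Finally, for the Lie algebra statement: by Corollary \ref{KBPS} and Theorem \ref{comp}, $\text{ch}$ restricts to a Lie algebra morphism $\text{KBPS}(Q,0)=E^{\leq 1}\to\text{BPS}(Q,0)=P^{\leq 1}$; we have just shown it is a bijection; and the $\text{BPS}$ Lie bracket $[x,y]=xy-yx$ on $P^{\leq 1}$ vanishes for $W=0$ because in the zero potential case the associated graded algebra $\text{gr}^P\,\text{CoHA}$ coincides with $\text{CoHA}$ itself up to the $u$-deformation (the perverse filtration is ``already split'' in the sense that products of $P^{\leq 1}$-elements lie in $P^{\leq 2}$ and the symmetric algebra relations are exact, not just up to lower order) — equivalently, Rimányi's generation result (Corollaries \ref{wccor2}, \ref{wccor3}) shows $\text{CoHA}(Q,0)$ is a genuine symmetric algebra, so $\text{BPS}$ is abelian; transporting via the isomorphism, $\text{KBPS}(Q,0)$ is abelian too. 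The main obstacle I anticipate is the dimension-count step: making precise that the combinatorial bases on the two sides (admissible sets with $r=\tfrac12$ on the KHA side versus decreasing partitions with monomials $\tau_i$ on the CoHA side in Theorem \ref{mere}) are in bijection compatibly with $\text{ch}$, so that injectivity upgrades to bijectivity — this requires carefully bookkeeping the $h_d$/$u$ powers and the $IH$-summands, and checking that $\text{ch}$ does not mix summands indexed by different partitions, which follows from its compatibility with the semi-orthogonal/perverse filtrations but needs to be spelled out.
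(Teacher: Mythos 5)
Your proposal takes the right first step --- reducing to the primitives $E^{\leq 1}$ and $P^{\leq 1}$ via the two PBW theorems and Propositions \ref{primitiveKHA} and \ref{primitivecoha} --- but then takes a needlessly roundabout route, and the paper's own proof is much shorter. The paper records only that the trivial Lie bracket on $\text{BPS}$ follows from the supercommutativity of $\text{CoHA}(Q,0)^{\psi}$ (citing \cite[Section 2.4]{ks}), and declares the remaining claims ``immediate.'' What makes them immediate is a fact you state but do not exploit: for $W=0$ one has $\X(d)=R(d)/G(d)$ with $R(d)$ contractible, so $\text{ch}:\text{gr}\,K_0(\X(d))\to H^{\cdot}(\X(d))$ is already an \emph{isomorphism} (both sides are $H^{\cdot}(BG(d))$ after rationalization), hence $\text{ch}:\text{gr}\,\text{KHA}\to\text{CoHA}$ is an isomorphism of bialgebras. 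An isomorphism of bialgebras carries primitives bijectively onto primitives, so $\text{ch}:E^{\leq 1}\to P^{\leq 1}$ is automatically an isomorphism --- there is no separate surjectivity step, no induction on $d$, no matching of combinatorial bases, and none of the $h_d/u$ bookkeeping you flag as a concern. The injectivity argument you develop is correct but it is only half of a fact you already have in full. Your dimension-count step is also uncomfortably close to circular: the claim that the two sides ``have bases indexed by the same admissible/partition data'' with matching dimensions is exactly the kind of statement one would want to \emph{deduce} from the isomorphism of primitives, not feed into its proof.

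The other genuine problem is your justification of the trivial Lie bracket. Rim\'anyi's generation results (Corollaries \ref{wccor2} and \ref{wccor3}) concern Dynkin quivers, which, apart from $A_1$, are not symmetric; the present corollary is stated for an arbitrary symmetric quiver, so that reference does not apply. Your parenthetical claim that ``the perverse filtration is already split'' for $W=0$ is heuristic rather than an argument. The clean statement is the one the paper uses: $\text{CoHA}(Q,0)^{\psi}$ is a supercommutative algebra (Kontsevich--Soibelman \cite[Section 2.4]{ks}), so the commutator bracket vanishes on $P^{\leq 1}$; transporting along the isomorphism $\text{ch}$ gives the same for $\text{KBPS}(Q,0)$.
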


\begin{proof}
The Lie algebra $BPS$ has trivial Lie bracket because $\text{CoHA}(Q,0)$ is supersymmetric, for example by the computation in \cite[Section 2.4]{ks}. All the other statements are immediate.
\end{proof}

We next discuss two examples where we check that $\text{dim}\,K_0(\mathbb{N}(d))=\text{dim}\,IH^{\cdot}(X(d))$ directly.
\\

\textbf{Example.} Let $Q$ be the quiver with two vertices $1$ and $2$ and $n$ arrows in both directions between the vertices. The diagonal copy of $\C^*$ acts trivially on $R(1,1)=\C_{1}^n\oplus\C_{-1}^n$. The coarse space $X:=\C_{1}^n\oplus\C_{-1}^n//\C^*$ is singular and has two small resolutions of singularities by $X^+$ and $X^-$, the GIT quotients of $\C_{1}^n\oplus\C_{-1}^n$ by $\C^*$ for the two possible linearizations. 
We have that 
$$IH^{\cdot}(X)=H^{\cdot}(X^+)=H^{\cdot}(X^-)$$ because the resolutions $\pi^+:X^+\to X$ and $\pi^-:X^-\to X$ are small.
Next, by \cite[Theorem 3.2]{hls}, we have that $\mathbb{N}\cong\overline{\mathbb{N}}\cong D^b(X^+)\cong D^b(X^-)$. The varieties $X^+$ and $X^-$ are $\mathbb{P}^{n-1}$ bundles over an affine space, so $$\text{ch}: K_0(X^{\pm})\to H^{\cdot}(X^{\pm})$$ is an isomorphism, and this explain why $\text{dim}\,K_0(\mathbb{N})=\text{dim}\,IH^{\cdot}(X).$ 
\\

\textbf{Example.} Let $Q$ be the quiver with $m$ loops, and choose the dimension vector $d=2$. There are no nontrivial GIT stability conditions, so we cannot obtain a resolution of singularities as in the example above. Denote by $\beta_0$ and $\beta_1$ the roots of $PGL(2)$. 
The subcategory $\overline{\mathbb{N}}(2)$ of $D^b(\mathfrak{gl}(2)^m/PGL(2))$ is generated by the dominant weights $\chi$ such that:
$$\chi+\rho=c(\beta_0-\beta_1)+\frac{\beta_0-\beta_1}{2}\in \frac{m}{2}[\beta_1-\beta_0, \beta_0-\beta_1].$$ The weight $\chi$ is dominant,
so $c\geq 0$. The condition above implies that $0\leq c\leq \frac{m-1}{2}.$ Further, if $m$ is even, then: $\mathbb{N}(2)=\overline{\mathbb{N}}(2)$; if $m=2k+1$ is odd, then $$K_0(\overline{\mathbb{N}}(2))=K_0(\mathbb{N}(2))\oplus \left(K_0(\mathbb{N}(1)_{-k})\boxtimes K_0(\mathbb{N}(1)_{k})\right)^{\mathfrak{S}_2}.$$
This implies that:
\[ \text{dim}\,K_0(\mathbb{N}(2))= 
\begin{cases} \frac{m-1}{2}, & \mbox{if }m\mbox{ is odd,}  \\ \frac{m}{2}, & \mbox{if }m\mbox{ is even.} \end{cases} \]

Meinhardt and Reineke \cite[Theorem 4.7]{mr} have a combinatorial formula of the intesection cohomology groups of the coarse space $X(d)$ for all dimensions $d\geq 1$. When $d=2$, one can check that their formula says that:
\[ \text{dim}\,IH^{\cdot}(X(2))= 
\begin{cases} \frac{m-1}{2}, & \mbox{if }m\mbox{ is odd,}  \\ \frac{m}{2}, & \mbox{if }m\mbox{ is even.} \end{cases} \]

$$G_0(\mathcal{P}(d))\to \text{gr}\,G_0(\mathcal{P}(d))\to K_0(\mathcal{P}^{\text{ss}}(f,d)/G(d))$$ which fits in the diagram:

%\begin{tikzcd}
%G_0(\mathcal{P}(d))\arrow{r}&\text{gr}\,G_0(\mathcal{P}(d))\arrow{r}{\text{surj.}} \arrow{d} & \lim K_0(P^{\text{ss}}(f,d)/G(d)) \arrow{d}{\text{iso.}}\\ &H^{BM}_{\cdot}(\mathcal{P}(d))^{\wedge} \arrow{r}{\text{iso.}} & \lim H^{BM}_{\cdot}(P^{\text{ss}}(f,d)/G(d)).\end{tikzcd}\\
%The map $\text{gr}\,G_0(\mathcal{P}(d))\to H^{BM}_{\cdot}(\mathcal{P}(d))^{\wedge}$ is injective, so the above diagram implies it is an isomorphism. This implies that the Lie algebra morphism is an isomorphism:
%$$\text{KBPS}\,(\widetilde{Q},\widetilde{W})\xrightarrow{\sim} \text{BPS}\,(\widetilde{Q},\widetilde{W}).$$ 

\section{Examples of KHA}\label{6}

\subsection{Description of $KHA$ for $W=0$}
Let $Q$ be an arbitrary quiver, and consider the algebra for the zero potential $KHA(Q)=KHA(Q,0)$. For $i,i'\in I$, denote by $c(i,i')$ the number of edges between $i$ and $i'$ in $Q$. For $d\in\mathbb{N}^I$, let $W_d:=\times_{i\in I}\mathfrak{S}_{d_i}$ be the Weyl group of $G(d)$. We have that:
$$K_0(\X(d))=\mathbb{Z}[z_{i,j}^{\pm 1}]^{W_d},$$ 
where $i\in I$ and $1\leq j\leq d_i$. 
The multiplication is defined by the composition:
$$
\X(d)\times\X(e)\xrightarrow{q_{d,e}^*} \X(d,e)\xrightarrow{i_{d,e*}} R(d+e)/G(d,e)\xrightarrow{\pi_{d,e*}} \X(d+e),$$
where $i_{d,e}$ is the embedding 
$i_{d,e}: R(d,e)\to R(d+e)$
and $\pi_{d,e}$ is the projection 
$\pi_{d,e}: R(d+e)/G(d,e)\to \X(d+e)$.
We have that: $$q_{d,e}^*: K_0(\X(d)\times\X(e))\cong K_0(\X(d,e)).$$
The weights of the normal bundle of $R(d,e)$ in $R(d+e)$ are 
$\beta^i_{j}-\beta^{i'}_{d_{i'}+j'}$, where $i,i'$ are vertices with an edge from $i$ to $i'$, 
and $1\leq j\leq d_i$ and $1\leq j'\leq e_{i'}$. The first map: 
$$i_{d,e*}: K_0(R(d,e)/G(d,e))\to K_0(R(d+e)/G(d,e))$$
is equal to: 
$$i_{d,e*}\left(f(z_{i,j})g(z_{i',j'})\right)=f(z_{i,j})g(z_{i',j'})\prod_{i\in I, 1\leq j\leq d_i}\prod_{i'\in I, d_{i'}+1\leq j'\leq d_{i'}+e_{i'}} \left(1-z_{i,j}z^{-1}_{i',j'}\right)^{c(i,i')}.$$
Using the formula for multiplication in terms of the abelian stacks in Proposition \ref{abel}, we have that:

\begin{prop}\label{shuffle}
The $\text{KHA}(Q,0)$ has graded vector spaces 
$$K_0(\X(d))=\mathbb{Z}[z_{i,j}^{\pm 1}]^{W_d},$$ 
where $i\in I$ and $1\leq j\leq d_i$, and the multiplication for $f\in K_0(\X(d))$ and $g\in K_0(\X(e))$ is given by the formula:
$$(fg)(z_{i,j})=\text{Sym}\,\left(f(z_{i,j})g(z_{i',j'})\frac{\prod_{i\in I, 1\leq j\leq d_i}\prod_{i'\in I, d_{i'}+1\leq j'\leq d_{i'}+e_{i'}} \left(1-z_{i,j}z^{-1}_{i',j'}\right)^{c(i,i')}}{\prod_{i\in I}\prod_{1\leq j\leq d, d+1\leq j'\leq d+e}(1-z_{i,j}z^{-1}_{i,j'})}\right).$$
\end{prop}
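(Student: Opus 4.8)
The plan is to derive the shuffle formula as an explicit unwinding of the abelian description of the multiplication given in Proposition \ref{abel}, specialized to the case $W=0$. First I would recall that for zero potential the category of singularities $D_{sg}(\X(d)_0)$ with $f=0$ is identified with $K_0(\X(d))$ via $i_*$ (as noted in Subsection \ref{ch} and in the proof of Theorem \ref{dimred0}), so that $\text{KHA}(Q,0)$ as a graded vector space is $\bigoplus_d K_0(\X(d)) = \bigoplus_d \mathbb{Z}[z_{i,j}^{\pm 1}]^{W_d}$. The identification $K_0(\X(d)) = K_0(R(d)/G(d)) = K_0(BG(d))$ (since $R(d)$ is an affine space, hence $G(d)$-equivariantly contractible) gives the representation-ring presentation, and the Weyl-invariant part follows from Proposition \ref{abel2}(b).

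Next I would spell out the three maps in the composition $q_{d,e}^*$, $i_{d,e*}$, $\pi_{d,e*}$. The map $q_{d,e}: \X(d,e)\to \X(d)\times\X(e)$ is an affine bundle (the attracting-locus projection from Subsection \ref{window}), so $q_{d,e}^*$ is an isomorphism on $K_0$, matching up the representation rings of $G(d,e)$ and $G(d)\times G(e)$. For $i_{d,e}: R(d,e)/G(d,e)\to R(d+e)/G(d,e)$, which is a closed immersion of (quotients of) affine spaces, the pushforward on $K_0$ is multiplication by the K-theoretic Euler class $\text{eu}_1(d,e) = i_{d,e}^* i_{d,e*}(1)$, which is the product over the weights $\beta^i_j - \beta^{i'}_{d_{i'}+j'}$ of the normal bundle $N_{d,e}$ of the factors $(1 - z_{i,j} z_{i',j'}^{-1})$, each occurring with multiplicity $c(i,i')$; this is exactly the numerator in the stated formula. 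Finally $\pi_{d,e}: R(d+e)/G(d,e)\to \X(d+e)$ is the Grassmannian-bundle map $G(d+e)/G(d,e)$, and by Proposition \ref{abel} (more precisely the formula $\pi_*(x) = \sum_{w\in W_{d+e}/W_d\times W_e} \sigma x$ for the induction/symmetrization, together with the $\text{eu}_2$ normalization) the pushforward is the symmetrization over $W_{d+e}/(W_d\times W_e)$ divided by the Euler class $\text{eu}_2(d,e)$ of the proper map $G(d+e)/G(d,e)\to \text{pt}$. That Euler class is the product over the roots of $G(d+e)$ pairing positively with $\lambda_{d,e}$, i.e. $\prod_{i\in I}\prod_{1\le j\le d_i,\, d_i< j'\le d_i+e_i}(1 - z_{i,j}z_{i,j'}^{-1})$, which is the denominator. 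Assembling these three steps gives the claimed formula with $\text{Sym}$ denoting symmetrization over $W_{d+e}/(W_d\times W_e)$, extended to a full $W_{d+e}$-symmetrization since $f$ and $g$ are already Weyl-invariant.

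I would then check that the formula is well-defined, namely that the rational expression inside $\text{Sym}$ has symmetrized sum lying in $\mathbb{Z}[z_{i,j}^{\pm 1}]^{W_{d+e}}$: this is the standard observation that the denominator divides the relevant alternating sum after antisymmetrization, or alternatively it is automatic because each map in the composition is a well-defined map of $K_0$ groups — so no genuine computation is needed, just a citation to Proposition \ref{abel} and Corollary \ref{image}, which already guarantees the image is honest (non-localized) K-theory.

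The main obstacle, and the only place requiring care, will be getting the bookkeeping of weights exactly right: matching the combinatorics of ``which $z_{i,j}$ belongs to the $d$-block versus the $e$-block'' between the normal bundle $N_{d,e}$ of $R(d,e)\subset R(d+e)$ and the positive roots $2\rho_{d,e}$ of $G(d+e)$, and confirming that the direction conventions for edges $i\to i'$ produce $\beta^i_j - \beta^{i'}_{d_{i'}+j'}$ and not its negative (this affects whether the factor is $1 - z_{i,j}z_{i',j'}^{-1}$ or $1 - z_{i',j'}^{-1}z_{i,j}$, which agree, so in fact even a sign slip here is harmless up to units). I expect this to be routine given the weight computations already performed in Subsection \ref{relations} for the class $[\det\mathcal{N}_{d,e}]$; indeed the numerator here is essentially $\text{eu}_1(d,e)$ and the denominator is $\text{eu}_1$ of the dual, so the formula is consistent with the relation $x_{d,w}x_{e,v} = (-1)^{\chi(d,e)}(x_{e,v}q^{f(e,d)})(x_{d,w}q^{-g(e,d)})$ of Corollary \ref{cormut} after passing to the associated graded. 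I would close by remarking that this recovers, for $W=0$, the shuffle-algebra presentations appearing in the work of Yang--Zhao and Negu\c{t}.
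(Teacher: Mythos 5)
Your proposal takes the same route as the paper: unwind the multiplication as $q_{d,e}^*$, $i_{d,e*}$, $\pi_{d,e*}$, compute $i_{d,e*}$ as multiplication by the Euler class of the normal bundle of $R(d,e)\subset R(d+e)$ (giving the numerator), and invoke Proposition \ref{abel} for the symmetrization and the $\text{eu}_2(d,e)$ Euler class (giving the denominator). The only quibble is your closing aside that the denominator is ``$\text{eu}_1$ of the dual'' --- it is $\text{eu}_2(d,e)$, coming from the roots of $G(d+e)$ (loops at each vertex) rather than the arrows of $Q$, which you in fact identified correctly earlier in the argument.
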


\begin{cor}
Let $Q$ be a symmetric quiver. Then 
$$KHA(Q,0)=\text{qSym}\,\left(\bigoplus_{d\in\mathbb{N}^I} K_0(\mathbb{M}(d))\right).$$
\end{cor}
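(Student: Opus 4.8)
The statement to prove is that for a symmetric quiver $Q$ with $W=0$, the shuffle algebra $KHA(Q,0)=\text{qSym}\left(\bigoplus_{d\in\mathbb{N}^I}K_0(\mathbb{M}(d))\right)$, where $\text{qSym}$ denotes the (undeformed, in the $W=0$ situation) version of the deformed symmetric algebra of Definition \ref{qsym}. The plan is to combine the shuffle-algebra description of $KHA(Q,0)$ from Proposition \ref{shuffle} with the PBW theorem, Theorem \ref{thm5}, but since here we want an identification of $KHA$ itself rather than of $\text{gr}^F\,KHA$, the key observation is that in the zero-potential symmetric case the filtration $F^{\cdot}$ actually splits, so $KHA\cong\text{gr}^F\,KHA$.

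First I would recall from Step 1 of Section \ref{5}, i.e. Theorem \ref{sod5}, the semi-orthogonal decomposition
$$D^b(\X(d))=\langle p_{\dd*}q_{\dd}^*\left(\overline{\mathbb{N}}(d_1)_{w_1}\boxtimes\cdots\boxtimes\overline{\mathbb{N}}(d_k)_{w_k}\right)\rangle$$
over admissible sets with $r>\tfrac12$, together with the decomposition of $K_0(\overline{\mathbb{N}}(d))$ from Theorem \ref{decomp} into pieces $\bigl(\bigoplus_{\mathfrak{S}_k}\boxtimes K_0(\mathbb{N}(d_i)_{w_i})\bigr)^{\mathfrak{S}_k}$ over admissible sets with $r=\tfrac12$. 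Passing to $K_0$ of a semi-orthogonal decomposition always gives a direct sum decomposition on the nose (no filtration subtleties at the level of $K_0$, only at the level of the algebra structure), so $K_0(\X(d))$ is, as a graded vector space, the direct sum over all admissible sets $\{(d_1,w_1),\dots,(d_k,w_k)\}$ of $\boxtimes_i K_0(\mathbb{N}(d_i)_{w_i})$ modulo the $\mathfrak{S}$-symmetrization on the $r=\tfrac12$ blocks. Then I would invoke Corollary \ref{basisimp}: the monomials $x_{d_1,w_1}\cdots x_{d_k,w_k}$ with $x_{d_i,w_i}\in K_0(\mathbb{M}(d_i)_{w_i})$ and $\{(d_i,w_i)\}$ admissible span $KHA$, and the relations among them are generated by the $r=\tfrac12$ relations of Corollary \ref{change}. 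This is precisely the presentation of $\text{qSym}$ from Proposition \ref{PBWdef}/Definition \ref{qsym}, once one checks that for $W=0$ the relation $x_{d,w}x_{e,v}=(-1)^{\chi(d,e)}(x_{e,v}q^{f(e,d)})(x_{d,w}q^{-g(e,d)})$ of Corollary \ref{cormut} holds \emph{exactly} in $KHA$, not merely in $\text{gr}^F\,KHA$. This exact relation is already visible: in the zero-potential case the proof of Theorem \ref{mut} (see the Example in Subsection \ref{relations}) produces the identity $[p_{d,e*}q_{d,e}^*(A\boxtimes B)]=(-1)^{\chi(d,e)}[p_{e,d*}q_{e,d}^*(B\boxtimes A\otimes\det\mathcal{N}_{e,d})]$ in $K_0$ directly via Borel–Bott–Weyl, with the error terms $C$ (which only obstruct the lift to $\text{gr}$ in the general potential case) being genuinely zero here because there is no potential to cut off against. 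So I would assemble: (i) the shuffle presentation of $KHA(Q,0)$ (Proposition \ref{shuffle}), (ii) the spanning set of admissible monomials in $K_0(\mathbb{M}(d))$'s (Corollary \ref{basisimp}), (iii) the exact zero-potential relations (Corollary \ref{cormut} with $C=0$), and (iv) the matching presentation of $\text{qSym}$ (Definition \ref{qsym}, Proposition \ref{PBWdef}), to conclude that the natural map $\text{qSym}\bigl(\bigoplus_d K_0(\mathbb{M}(d))\bigr)\to KHA(Q,0)$ is an isomorphism of algebras — well-defined by (iii), surjective by (ii), injective because on associated graded it is the isomorphism of Theorem \ref{thm5} and the $\text{qSym}$ side carries a compatible filtration with $\text{gr}$ equal to itself.

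The main obstacle I anticipate is bookkeeping the sign and Euler-class factors so that the algebra $\text{qSym}$ appearing in the corollary is literally the one defined in Definition \ref{qsym} with the factors $q^{f(e,d)}$, $q^{-g(e,d)}$ computed from $\det\mathcal{N}_{d,e}$ as in Subsection \ref{relations}: one must check that the shuffle kernel $\frac{\prod(1-z_{i,j}z_{i',j'}^{-1})^{c(i,i')}}{\prod(1-z_{i,j}z_{i,j'}^{-1})}$ of Proposition \ref{shuffle} is exactly the class $[\det\mathcal{N}_{d,e}]$ (up to the stated shift and the sign $(-1)^{\chi(d,e)}$) under the identification $K_0(\X(d))^{\text{ab}}=\mathbb{Z}[z_{i,j}^{\pm1}]$, which amounts to the normal-bundle weight count already carried out in Subsection \ref{relations}. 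The secondary subtlety is confirming that in the symmetric $W=0$ case the deformation is ``undeformed'' in the sense the notation $\text{qSym}$ suggests — i.e. that the factors $q^{f(e,d)},q^{-g(e,d)}$ are what one wants — but this is a direct comparison of the two explicit presentations and requires no new idea.
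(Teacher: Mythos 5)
Your proposal is correct and takes essentially the paper's own route: the paper's (very brief) proof points to two options, one of which is exactly what you do — use the zero-potential Borel--Bott--Weyl identity from the Example preceding Theorem \ref{mut} to see that the commutation relation of Corollary \ref{cormut} holds \emph{exactly} in $K_0(\X(d+e))$ when $W=0$, and then match the admissible-monomial basis coming from Theorems \ref{sod5} and \ref{decomp} against the presentation of $\text{dSym}$ from Definition \ref{qsym} and Proposition \ref{PBWdef}. One small imprecision worth noting: the cone $C$ of Theorem \ref{mut} is generally not the zero complex even for $W=0$; what the Example establishes (and what your argument actually requires) is only the vanishing of its class $[C]\in K_0(\X(d+e))$, which comes from matching the two Borel--Bott--Weyl expansions term by term.
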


\begin{proof}
One can either check the relations directly using the shuffle product formula, or use the equality in $K_0(\X(d))$ proved in Example \ref{Wexample}:
$$[p_{d,e*}\left(V(\chi_d+\chi_e)\otimes\OO_{d,e}\right)]=[p_{e,d*}\left(V(\chi_e+\chi_d-N_{d,e})\otimes \OO_{e,d}\right)].$$ 
\end{proof}

A shuffle formula for the product of $\text{CoHA}$ appears in \cite[Section 2.4]{ks}, and for a general oriented cohomological theory in \cite{yz}.
\subsection{Equivariant Jordan quiver}\label{Jordan}

Let $Q$ be the quiver with one vertex and one loop. The representations of $Q$ of dimension $d\in\mathbb{N}$ are given by linear maps $V\to V$, where $\text{dim}\,(V)=d$. Consider the action of $\C^*$ by scaling the map $V\to V$. 
\begin{prop}
The equivariant KHA algebra 
$$KHA_{\C^*}(Q,0)=\bigoplus_{d\in\mathbb{N}}K^{\C^*}_0(\X(d))=\bigoplus_{d\in\mathbb{N}}\mathbb{Z}[z_1^{\pm 1},\cdots,z_d^{\pm 1},q^{\pm 1}]^{W_d}$$
has a shuffle product description:
$$(fg)(z_1,\cdots,z_{d+e},q)=\text{Sym}\left(f(z_1,\cdots, z_d, q)g(z_{d+1},\cdots,z_{d+e}, q)\prod_{1\leq i\leq d, d+1\leq j\leq d+e} \zeta(z_iz_j^{-1})\right),$$ for $f\in K^{\C^*}_0(\X(d))$, $g\in K^{\C^*}_0(\X(e))$, and for $\zeta(z)=\frac{1-qz}{1-z}.$ We thus have an isomorphism: $$KHA_{\C^*}(Q,0)\cong U_q^{>}(L\mathfrak{sl}_2).$$
\end{prop}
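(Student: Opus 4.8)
The plan is to establish the shuffle product formula first and then identify the resulting algebra with the positive half of the quantum loop algebra $U_q^{>}(L\mathfrak{sl}_2)$. For the shuffle formula, I would apply Proposition \ref{shuffle} with the equivariant parameter $q$ adjoined: the Jordan quiver has one vertex and one loop, so $c(i,i')=1$ for the single vertex, and the $\C^*$-action that scales the loop introduces the weight $q$ on the edge space. Concretely, I would redo the computation of Proposition \ref{abel}: the abelian multiplication is $p_{d,e*}q_{d,e}^*$, the pullback $q_{d,e}^*$ is an isomorphism $K_0^{\C^*}(\X(d)\times\X(e))\cong K_0^{\C^*}(\X(d,e))$, the closed pushforward $i_{d,e*}$ multiplies by the $K$-theoretic Euler class of the normal bundle $N_{d,e}$, and $\pi_{d,e*}$ is the Weyl symmetrization divided by $\text{eu}_2$. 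The normal bundle $N_{d,e}$ of $R(d,e)$ in $R(d+e)$ for the Jordan quiver with the scaling action has weights $q\,z_iz_j^{-1}$ for $1\le i\le d<j\le d+e$, so $\text{eu}(N_{d,e})=\prod_{1\le i\le d<j\le d+e}(1-qz_iz_j^{-1})$, while $\text{eu}_2(d,e)=\prod_{1\le i\le d<j\le d+e}(1-z_iz_j^{-1})$; their ratio gives exactly $\prod \zeta(z_iz_j^{-1})$ with $\zeta(z)=\frac{1-qz}{1-z}$. This yields the stated shuffle product.

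Next I would identify this shuffle algebra with $U_q^{>}(L\mathfrak{sl}_2)$. The cleanest route is to invoke the known description of the positive half of the quantum loop algebra of $\mathfrak{sl}_2$ as a shuffle algebra: there is a classical isomorphism (due to work on shuffle realizations of quantum affine/loop algebras, e.g.\ in the spirit of the rank-one Feigin--Odeskii/Enriquez shuffle algebra) between $U_q^{>}(L\mathfrak{sl}_2)$ and the algebra of symmetric Laurent polynomials $\bigoplus_d \Z[z_1^{\pm1},\dots,z_d^{\pm1},q^{\pm1}]^{W_d}$ equipped with precisely the shuffle product above, with no ``wheel conditions'' needed in rank one. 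I would make this identification explicit by sending the Drinfeld generators $x^+_{r}$ (the degree-one part) to the monomials $z_1^{r}\in K_0^{\C^*}(\X(1))$ and checking that the defining relations of $U_q^{>}(L\mathfrak{sl}_2)$ — the $q$-Serre-type relations, which in rank one degenerate, and the quadratic relations coming from the structure function $\zeta$ — match the relations forced by the shuffle product. This is exactly the rank-one specialization of Theorem \ref{ex}(a); so an alternative, and perhaps preferable, strategy is to deduce this proposition as the $A_1$ case of the general Dynkin type $A$ computation in Section \ref{Dynkin}, reusing the identification of $\text{KHA}_T(\widetilde{A_n},\widetilde{W})$ with $U_q^{>}(L\mathfrak{sl}_{n+1})$ and then applying dimensional reduction (Theorem \ref{dimred0}) to pass from the tripled quiver picture to $\text{KHA}_{\C^*}(Q,0)$ for the Jordan quiver.

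I expect the main obstacle to be the precise matching of conventions: reconciling the $K$-theoretic normalization of the structure function $\zeta(z)=\frac{1-qz}{1-z}$ appearing naturally from the Euler classes with the normalization of the quantum loop algebra's Drinfeld currents, including correctly tracking powers of $q$, the choice of Cartan grading, and the identification of the torus $T$ with the scaling torus so that $q$ is genuinely the quantum parameter. A secondary point requiring care is verifying that the shuffle algebra is \emph{generated} by its degree-one part $K_0^{\C^*}(\X(1))=\Z[z^{\pm1},q^{\pm1}]$ under the shuffle product — this is where Corollary \ref{wccor2} (generation of $\text{KHA}(Q)$ by the $d_i$-dimensional pieces) or the analogous argument for the equivariant Jordan quiver is essential, since without a generation statement the shuffle algebra could a priori be larger than $U_q^{>}(L\mathfrak{sl}_2)$. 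Once generation and the relation-matching are in hand, the isomorphism of algebras follows, and the proposition is proved.
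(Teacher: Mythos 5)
Your primary route — redoing the Euler‑class computation from Propositions \ref{abel} and \ref{shuffle} with the scaling $\C^*$ inserted, so that $N_{d,e}$ carries the extra weight $q$ and the ratio $\text{eu}(N_{d,e})/\text{eu}_2(d,e)$ produces exactly $\prod\zeta(z_iz_j^{-1})$ with $\zeta(z)=\frac{1-qz}{1-z}$, followed by matching this shuffle algebra with $U_q^{>}(L\mathfrak{sl}_2)$ — is precisely the paper's proof; the paper concludes by citing \cite[Theorem 3.5]{ts}, which is the specific reference behind the ``classical shuffle realization'' you invoke.

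Two small points on the rest of your discussion. First, the alternative route you sketch (deduce this from Theorem \ref{ex}(a) for $n=1$ ``and then apply dimensional reduction'') does not quite make sense: the tripled quiver $\widetilde{A_1}$ is the Jordan quiver and the tripled potential $\widetilde{W}$ is identically zero, so the two pictures are literally the same object — there is no potential to reduce, and Theorem \ref{dimred0} contributes nothing. Moreover the tori differ slightly (the one‑dimensional torus of Section \ref{Dynkin} acts on the loop $\omega$ with weight $q^{-2}$, while here the scaling torus acts with weight $q$), and logically the paper proves the Jordan case before the Dynkin case, so this would be circular. Second, your generation concern is legitimate as stated — the proposition claims the full shuffle algebra $\bigoplus_d\mathbb{Z}[z_1^{\pm1},\dots,z_d^{\pm1},q^{\pm1}]^{W_d}$ equals $U_q^{>}(L\mathfrak{sl}_2)$, which does require that the degree‑one part generates everything under the shuffle product — but this is exactly what the cited Tsymbaliuk theorem furnishes in the rank‑one case (no wheel conditions), so the paper's argument is complete once that reference is invoked.
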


\begin{proof}

The dimension $d$ components of $KHA_{\C^*}(Q,0)$ is $$K_0(\X(d)/\C^*)= \mathbb{Z}[z_1^{\pm 1},\cdots, z_d^{\pm 1}, q^{\pm 1}]^{W_d}.$$
The multiplication is defined by the composition:
$$K_0(\X(d,e)/\C^*\times\C^*)\to K_0(\X(d,e)/\C^*)\to K_0(\X(d+e)/\C^*).$$
The first map is induced by the diagonal embedding $\C^*\to \C^*\times \C^*$, which induces the map:
$$\mathbb{Z}[q_1^{\pm 1}, q_2^{\pm 1}]\to \mathbb{Z}[q^{\pm 1}],$$ which sends $q_1$ and $q_2$ to $q$. The second map  is:
$$K_0(R(d,e)/G(d,e)\times\C^*)\xrightarrow{i_{d,e*}} K_0(R(d+e)/G(d,e)\times\C^*)\xrightarrow{\pi_{d,e*}} K_0(R(d+e)/G(d+e)\times\C^*).$$ The immersion $i_{d,e}:R(d,e)\to R(d+e)$ has normal bundle with weights $qz_iz_j^{-1}$, where $1\leq i\leq d$, $d+1\leq j\leq d+e$, so we have $$i_{d,e*}(f(z_i)g(z_j))=f(z_1,\cdots, x_d)g(z_{d+1},\cdots,z_{d+e})\prod_{1\leq j\leq d,d+1\leq i\leq d+e}(1-qz_iz_j^{-1}).$$
Using the formula for multiplication in Proposition \ref{abel}, we have that:
$$(fg)(z_1,\cdots, z_{d+e})=\text{Sym}\left(fg\prod_{1\leq j\leq d,d+1\leq i\leq d+e} \zeta\left(\frac{z_i}{z_j}\right)\right),$$ where $\zeta(z)=\frac{1-qz}{1-z}.$ By \cite[Theorem 3.5]{ts}, the quantum group $U_q^{>}(L\mathfrak{sl}_2)$ has the same shuffle product description.

\end{proof}

\subsection{}
Let $(Q,W)$ be an arbitrary quiver with potential and let $T$ be a torus as in Section \ref{2}; in particular, $T$ preserves the potential $W$. 

\begin{prop}\label{def}
The inclusion $i:\X(d)_0\to \X(d)$ induces an algebra morphism 
$$KHA_T(Q,W)\to KHA_T(Q,0).$$
\end{prop}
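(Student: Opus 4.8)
The statement to prove is Proposition \ref{def}: the closed immersion $i:\X(d)_0\to\X(d)$ induces an algebra morphism $\mathrm{KHA}_T(Q,W)\to\mathrm{KHA}_T(Q,0)$. First I would recall that for $W=0$ the zero fiber $\X(d)_0$ is $\X(d)\times_{\mathbb{A}^1}0$ with the trivial function, so by the discussion in Subsection \ref{ch} (the $f=0$ case) there is an isomorphism $i_*: K_0(D_{sg}(\X(d)_0))\cong K_0(\X(d))$, and hence the underlying graded vector space of $\mathrm{KHA}_T(Q,0)$ is $\bigoplus_d K_0^T(\X(d))$. The map $i_*: K_0^T(D_{sg}(\X(d)_0))\to K_0^T(\X(d))$ (using the pushforward along the closed immersion $\X(d)_0\hookrightarrow\X(d)$ on $D^b$, which descends to $D_{sg}$ since $D_{sg}(\X(d))=0$ for the smooth stack $\X(d)$, i.e.\ the composite $D^b(\X(d)_0)\to D^b(\X(d))\to D_{sg}(\X(d))=0$ kills perfect complexes and factors appropriately) is the candidate algebra morphism; I should first check it is well defined, which is exactly the statement that $i_*$ sends $\mathrm{Perf}(\X(d)_0)$ to $0$ in $K_0(\X(d))$ — and this is Proposition \ref{pushzero}, whose proof exhibits a length-two resolution of $i_*(\OO\otimes\Gamma)$ by vector bundles on $\X(d)$.

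The heart of the argument is compatibility with multiplication. The product on $\mathrm{KHA}_T(Q,W)$ is $m=p_*q^*\circ\mathrm{TS}$ built from the correspondence $\X^{ss}(d)\times\X^{ss}(e)\xleftarrow{q}\X^{ss}(d,e)\xrightarrow{p}\X^{ss}(d+e)$ (with $\theta=0$ here, so $\X^{ss}=\X$), and the product on $\mathrm{KHA}_T(Q,0)$ is built from the same correspondence of smooth stacks with the $W=0$ functors. So I would set up, for a fixed pair $d,e$, the commutative diagram of categories of singularities versus the analogous diagram of derived categories of the smooth stacks, connected by pushforward functors $i_*$ along the closed immersions $\X(?)_0\hookrightarrow\X(?)$ for $? = d\times e$, $(d,e)$, $d+e$. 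Since $p$ is proper, $q$ is an affine bundle, and $i$ is a closed immersion, all the relevant base-change and projection-formula compatibilities are already packaged in Propositions \ref{com1} and \ref{com2} (commuting $i_*$ with $p_*$, $q^*$, and with the diagonal pullback $\delta^*$ in the $T$-equivariant case). The one extra ingredient is that $i_*$ intertwines the Thom--Sebastiani functor $\mathrm{TS}$ on the $W$-side with the external tensor product $\boxtimes$ on the $W=0$ side; this is a direct consequence of Corollary \ref{corts} applied to $A=D^b(\X(d))$, $B=D^b(\X(e))$ with the potential $\mathrm{Tr}(W_d)+\mathrm{Tr}(W_e)$, together with the definition of $\mathrm{TS}$ as $i_*$ for the closed immersion $\X(d)_0\times\X(e)_0\hookrightarrow(\X(d)\times\X(e))_0$ — indeed $\mathrm{TS}$ \emph{is} the relevant $i_*$, so the square involving it commutes tautologically.

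Assembling these, I get a commutative square relating $m_{d,e}^{W}$ and $m_{d,e}^{0}$ through the three $i_*$ maps, exactly as in the reduction carried out at the end of the proof of Theorem \ref{thm1} (where an arbitrary potential case is deduced from the zero-potential case by inserting Thom--Sebastiani equivalences and invoking Propositions \ref{com1}, \ref{com2}, and the proper-base-change square for $\X(d,e)_0\times\X(f)_0$ etc.). Passing to $K_0^T$ gives the algebra-morphism identity $m^0\circ(i_*\boxtimes i_*) = i_*\circ m^W$. I would also note the unit is preserved, since $i_*\OO_{\X(0)_0}=\OO_{\X(0)}$ in the degree-$0$ piece. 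The main obstacle is bookkeeping rather than conceptual: one must be careful that the functor $i_*:D_{sg}(\X(d)_0)\to D_{sg}(\X(d))=0$ is the \emph{wrong} map, and that the correct map is $i_*:D^b(\X(d)_0)\to D^b(\X(d))$ inducing $K_0(D_{sg}(\X(d)_0))\cong K_0(\X(d))$ only because $W=0$ trivializes the category of singularities of the target; once this identification is fixed, every square in sight commutes by the cited propositions and the argument is essentially formal. I expect no serious difficulty, and the same proof works verbatim for $\mathrm{KHA}^{\mathrm{id}}$ and in the non-equivariant case.
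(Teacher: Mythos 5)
Your proposal is correct and follows essentially the same route as the paper: push forward along the closed immersions $\X(?)_0\hookrightarrow\X(?)$ on bounded derived categories, check commutativity with $p_*$ and $q^*$ (proper base change / functoriality), identify the Thom--Sebastiani functor as itself a pushforward along a closed immersion so the relevant square is tautological, and then descend to $K_0(D_{sg})$ via Proposition \ref{pushzero}. The paper's proof is slightly more terse — it draws the two diagrams directly and invokes proper base change for the $q^*$ square rather than routing through Propositions \ref{com1}, \ref{com2} and Corollary \ref{corts} — but the argument is the same and your extra observations (identifying $\mathrm{TS}$ with a specific $i_*$, preservation of the unit) are correct refinements rather than departures.
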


\begin{proof}
First, we explain that the map from $i_*:D^b(\X(d)_0)\to D^b(\X(d))$ commutes with the maps used in the definition of multiplication. For this, we need to check that the following diagram is commutative:

\begin{tikzcd}
D^b(\X(d)_0)\boxtimes D^b(\X(e)_0) \arrow{d}{i_{d*}\times i_{e*}}\arrow{r}{i_*p_*q^*} & D^b(\X(d+e)_0) \arrow{d}{i_{d+e*}}\\
D^b(\X(d))\boxtimes D^b(\X(e)) \arrow{r}{p_*q^*} & D^b(\X(d+e)),
\end{tikzcd}
\\
where $i:\X(d)_0\times \X(e)_0\to \X(d+e)_0$. It is enough to show that the following diagram commutes:

\begin{tikzcd}
D^b\left((\X(d)\times\X(e))_0\right) \arrow{d}{i_{d+e*}} \arrow{r}{q^*} & D^b(\X(d,e)_0) \arrow{r}{p_*} \arrow{d}{i_{d+e*}} & D^b(\X(d+e)_0) \arrow{d}{i_{d+e*}}\\
D^b(\X(d)\times\X(e)) \arrow{r}{q^*} & D^b(\X(d,e)) \arrow{r}{p_*} & D^b(\X(d+e)).
\end{tikzcd}
\\
The left corner commutes from proper base change, and the right corner is clear.
Next, the map $G^T_0(\X(d)_0)\to K^T_0(\X(d))$ factors through: $$K^T_0(D_{sg}(\X(d)_0))\to K_0^T(\X(d)).$$ For this, we need to show that $i_*K^T_0(\X(d)_0)=0$, which follows from a $T$-equivariant version of Proposition \ref{pushzero}. 

The induced maps $K^T_0(D_{sg}(\X(d)_0))\to K^T_0(\X(d))$ respect multiplication, so: $$i_*: KHA_T(Q,W)\to KHA_T(Q,0)$$ is an algebra morphism.

\end{proof}

\subsection{Type $A$ Dynkin quivers.}\label{Dynkin}
Let $n\geq 2$, and consider $Q$ a Dynkin quiver of type $A_{n}$. Let $(c_{ij})$ be its Cartan matrix. 
Let $SH_n$ be the $\mathbb{N}^I$-graded vector space 
$$SH_n=\bigoplus_{d\in \mathbb{N}^I} \text{qf}\,K_0(B(G(d)\times\C^*)),$$
where $\text{qf}\,R$ denotes the quotient field of the domain $R$.
One can define a shuffle product on $SH_n$.
Let $f_1(z_{i,r})\in \text{qf}\,K_0(B(G(d)\times\C^*))$ where $i\in I$ and $1\leq r\leq d_i$, and $f_2(z_{i',r'})\in \text{qf}\,K_0(B(G(e)\times\C^*))$ where $i'\in I$ and $1\leq r'\leq e_i$. The shuffle product formula on $SH_n$ is:
$$(f_1\, f_2)(z_{1,1},\cdots,z_{n,d_n+e_n})=
\text{Sym}\left(f_1(z_{i,r})f_2(z_{i',r'})\prod_{i,i'\in I, r\leq d_i, r'>d_{i'}} \zeta_{i,i'}\left(\frac{z_{i,r}}{z_{i',r'}}\right)\right),$$ where $\zeta_{i,j}(z)=\frac{z-q^{-c_{ij}}}{z-1}.$
Recall the following result from \cite[Theorem 3.5]{ts}:

\begin{prop}\label{tsresult}
Let $SH'$ be the subalgebra of $SH_n$ generated by $K_0(B(G(e_i)\times\C^*))$ for $1\leq i\leq n$, where $e_i$ is a dimension vector with $1$ in position $i$ and $0$ everywhere else. 
Then there exists an isomorphism: $$SH'\cong U_q^{>}(L\mathfrak{sl}_{n+1})$$ which sends the generators of $K_0(B(G(e_i)\times\C^*))$ to the generators in dimensions $e_i$ of the quantum group for $1\leq i\leq n$. 
\end{prop}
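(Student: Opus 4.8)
The plan is to realise $SH'$ as the image of the positive part $U_q^{>}(L\mathfrak{sl}_{n+1})$ under an explicit shuffle realisation, following the strategy used by Tsymbaliuk \cite{ts} in the rank one case treated in Subsection \ref{Jordan}. First I would write $U_q^{>}(L\mathfrak{sl}_{n+1})$ in its Drinfeld ``new'' presentation, with generators $x_{i,r}^{+}$ for $i\in I$ and $r\in\mathbb{Z}$ and the usual current relations, and define an algebra map
\[
\Phi\colon U_q^{>}(L\mathfrak{sl}_{n+1})\to SH_n,\qquad \Phi(x_{i,r}^{+})=z_{i,1}^{\,r}\in K_0(B(G(e_i)\times\C^*)).
\]
Checking that $\Phi$ is well defined amounts to verifying, through the shuffle product formula of the ambient algebra $SH_n$, that the defining relations of the Drinfeld presentation hold among the elements $z_{i,1}^{r}$: the $q$-commutation relations between $x_{i,r}^{+}$ and $x_{j,s}^{+}$ for $i=j$ or $i,j$ adjacent follow directly from the form of the kernels $\zeta_{i,j}(z)=\tfrac{z-q^{-c_{ij}}}{z-1}$, while the quantum Serre relations reduce to a rational-function identity in three (resp.\ two) variables. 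This computation is routine but is exactly where the precise shape of $\zeta_{i,j}$ is used.

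Surjectivity of $\Phi$ onto $SH'$ is then immediate: the ring $K_0(B(G(e_i)\times\C^*))=\mathbb{Z}[z_{i,1}^{\pm 1},q^{\pm 1}]$ is spanned over $\mathbb{Z}[q^{\pm 1}]$ by the monomials $z_{i,1}^{r}$, so $\Phi$ hits each degree-$e_i$ graded piece; since $SH'$ is by definition generated by these pieces, $\Phi(U_q^{>}(L\mathfrak{sl}_{n+1}))=SH'$.

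The substantive point, and the main obstacle, is injectivity of $\Phi$. Here I would use that $U_q^{>}(L\mathfrak{sl}_{n+1})$ carries a PBW-type basis (Beck--Damiani) of ordered monomials in root vectors $x_{\beta}$, where $\beta$ runs over the positive real roots of $\widehat{\mathfrak{sl}}_{n+1}$, compatible with the $\mathbb{N}^I\times\mathbb{Z}$-grading, and show that the images $\Phi(x_{\beta})$ and their ordered products stay linearly independent in $SH_n$. Two routes are available: (a) construct ``leading term'' / specialisation maps from $SH_n$ to a localisation of a polynomial ring, obtained by sending the shuffle variables to geometric progressions in $q$ as in Negu\c{t}'s treatment of the toroidal shuffle algebra and in \cite{ts}, and check that the leading terms of the $\Phi(x_{\beta})$-monomials are pairwise distinct; or (b) compare the $\mathbb{N}^I\times\mathbb{Z}$-graded Hilbert series of $U_q^{>}(L\mathfrak{sl}_{n+1})$ (computed from the PBW basis together with the affine root multiplicities) with an upper bound for the graded Hilbert series of $SH'$ coming from the pole structure of shuffle elements, forcing the surjection $\Phi$ to be an isomorphism. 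I expect approach (a) to be the cleanest and to match \cite[Theorem 3.5]{ts}; keeping track of which real roots $\alpha+k\delta$ contribute in each bidegree is the part demanding the most care. Finally, since by construction $\Phi$ carries the generators of $K_0(B(G(e_i)\times\C^*))$ to the degree-$e_i$ Drinfeld generators $x_{i,r}^{+}$, the resulting isomorphism $SH'\cong U_q^{>}(L\mathfrak{sl}_{n+1})$ has the asserted compatibility with generators.
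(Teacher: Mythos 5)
The paper does not prove this proposition itself --- the introductory phrase ``Recall the following result from \cite[Theorem 3.5]{ts}'' shows it is being imported from Tsymbaliuk, so there is no in-paper argument to compare against. Your sketch is a faithful reconstruction of the strategy of the cited reference: realise $U_q^{>}(L\mathfrak{sl}_{n+1})$ inside the shuffle algebra via the Drinfeld presentation, verify well-definedness through the kernels $\zeta_{i,j}$ (the $q$-commutation relations and quantum Serre relations reduce to rational-function identities, and this is exactly where the specific shape $\zeta_{i,j}(z)=\tfrac{z-q^{-c_{ij}}}{z-1}$ is used), note surjectivity onto $SH'$ is tautological from the generator choice, and then confront the substantive issue of injectivity. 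Your route (a), specialisation/leading-term maps combined with Beck--Damiani PBW ordered monomials in real-root vectors $x_{\alpha+k\delta}$, is indeed the argument Tsymbaliuk gives. You correctly flag that the delicate part is bookkeeping the affine real roots contributing in each bidegree of $\mathbb{N}^I\times\mathbb{Z}$, and that one needs a triangularity (not merely distinct leading terms) under the specialisation to conclude linear independence. Nothing is wrong; since the paper defers to \cite{ts} rather than arguing the statement itself, your proposal should be read as a summary of the cited proof rather than a genuine alternative to anything in this paper.
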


Recall the definition of the tripled quiver $(\widetilde{Q}, \widetilde{W})$ associated to a quiver $Q$ from Subsection \ref{tripledef}. %The quiver $\widetilde{Q}=(I,E\cup \bar{E}\cup L)$ has the same vertices as $Q$, for every edge $e\in E$ we add an edge with opposite orientation $\bar{e}$, and for every vertex $i\in I$ we add a loop $\omega_i$. The potential is defined by: $$W=\sum_{e\in E} \omega_{s(e)}\bar{e}e-\omega_{t(e)}e\bar{e}.$$

Let $T'=\C_{q_1}\times\C_{q_2}$ be the two dimensional torus that acts by multiplication with $q_1$ on the edges $e\in E$, by multiplication with $q_2$ on the edges $\overline{e}$, and by multiplication with $q_1^{-1}q_2^{-1}$ on $\omega_i$ for $i\in I$. Let $T\subset T'$ be the one dimensional torus which acts with $q_1=q_2$. 

\begin{thm}
There exists an isomorphism $$KHA_T(\widetilde{Q},\widetilde{W})\cong U_q^{>}(L\mathfrak{sl}_{n+1}).$$
\end{thm}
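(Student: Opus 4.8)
The plan is to reduce the theorem to the shuffle-algebra description furnished by Proposition \ref{tsresult} (the Tsymbaliuk result \cite[Theorem 3.5]{ts}) by producing an explicit shuffle-product presentation of $KHA_T(\widetilde{Q},\widetilde{W})$. First I would use the fact that $\widetilde{Q}$ is a tripled quiver, so $\widetilde{W}$ admits a cut: the edges $E\subset \widetilde{E}$ form a cut, since each monomial $\omega_{s(e)}\bar e e$ and $\omega_{t(e)}e\bar e$ contains exactly one arrow from $E$. By the dimensional reduction discussion in Subsection \ref{qcuts} (Theorem \ref{dimred0}, \cite{i}), together with its $T$-equivariant version, we get an identification
$$KHA_T(\widetilde{Q},\widetilde{W})\cong \bigoplus_{d\in\mathbb{N}^I} K_0^T(\mathcal{P}(d)),$$
where $\mathcal{P}(d)$ is the stack of representations of dimension $d$ of the preprojective algebra of the double quiver $\overline{Q}$. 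I would then pass to the abelian presentation of Subsection \ref{abeliansubsection} / Proposition \ref{abel}, so that the $d$-graded piece becomes $K_0^T(R(d)^{(1)}/T(d))$ (the Koszul stack over the abelian quotient), and I would track how the multiplication $m=p_*q^*$, written via the abelian maps and the Euler-class correction $\mathrm{eu}_2$, acts on these localized K-groups. Because $T$ is the one-dimensional torus with $q_1=q_2=q$, the weights of the relevant normal bundles become explicit Laurent polynomials in the Chern roots $z_{i,r}$ and $q$.

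The core computation is then a Thom–Sebastiani plus Koszul-resolution bookkeeping exactly parallel to Proposition \ref{shuffle} and the equivariant Jordan-quiver computation in Subsection \ref{Jordan}: pushing forward a box product along $i_{d,e}: R(d,e)\to R(d+e)$ multiplies by the Euler class of the normal bundle, whose $\overline{Q}$-weights, after imposing the Koszul relations from the loops $\omega_i$ and the torus specialization, collapse to factors of the form $\zeta_{i,i'}(z_{i,r}/z_{i',r'}) = (z_{i,r}/z_{i',r'} - q^{-c_{ii'}})/(z_{i,r}/z_{i',r'}-1)$, where $c_{ii'}$ is the $A_n$ Cartan entry. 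Dividing by the $\mathrm{eu}_2$-factor $\prod(1-z_{i,r}z_{i,r'}^{-1})$ coming from the quotient $G(d+e)/G(d,e)$ and symmetrizing over $W_{d+e}/(W_d\times W_e)$ gives precisely the shuffle formula defining $SH_n$. This identifies $KHA_T(\widetilde{Q},\widetilde{W})$ with a subalgebra of $SH_n$. By Corollary \ref{wccor2} and its preprojective analogue, $KHA_T(\widetilde{Q},\widetilde{W})$ is generated by its pieces in dimensions $e_i$, $1\le i\le n$, which map to the generators $K_0^T(B(G(e_i)\times\C^*))$ of $SH'$; hence the image is exactly $SH'$, and Proposition \ref{tsresult} yields the isomorphism $KHA_T(\widetilde{Q},\widetilde{W})\cong U_q^{>}(L\mathfrak{sl}_{n+1})$.

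The main obstacle I anticipate is the precise matching of the rational function $\zeta_{i,i'}$: one must verify that after dimensional reduction the Koszul differential coming from the loop $\omega_i$ (with $T$-weight $q_1^{-1}q_2^{-1}$, specialized to $q^{-2}$) cancels exactly one factor against the two oriented edges between $i$ and $i'$ in $\overline{Q}$, leaving the single factor $(z-q^{-c_{ii'}})$ rather than, say, $(z-q^{-c_{ii'}})(z-q^{\cdots})$ or an off-by-a-power-of-$q$ discrepancy. This is where the specific choice of the torus $T$ (and the normalization of $\eta$, i.e. which line bundle carries the equivariant parameter) must be pinned down; a sign or a shift by a power of $q$ would change which quantum group one lands in. I would handle this by first doing the rank-one case $n=1$ as a warm-up (recovering $U_q^{>}(L\mathfrak{sl}_2)$, consistent with Subsection \ref{Jordan} for the Jordan-type calculation), then the $A_2$ case to fix the cross-term $\zeta_{12}$, and only then writing the general $A_n$ statement; the vertex-diagonal terms are identical to the Jordan computation and need no new input. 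A secondary, more technical point is checking that the $T$-equivariant dimensional reduction isomorphism is compatible with the Hall multiplication, which follows from the $T$-equivariant versions of Propositions \ref{com1}, \ref{com2}, and the functoriality statements in Subsection \ref{functo}, applied to the cut splitting $R(d)=F(d)\oplus E(d)$.
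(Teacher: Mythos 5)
Your overall strategy (present $\mathrm{KHA}_T$ as a shuffle algebra, show it is generated in degrees $e_i$, then invoke Proposition~\ref{tsresult}) coincides with the paper's, but the implementation differs at both key steps, and one of them has a real gap.

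The paper does not compute the shuffle product on the dimensionally-reduced side $\bigoplus_d G_0^T(\mathcal{P}(d))$. It instead uses the algebra morphism $i_*:\mathrm{KHA}_T(\widetilde{Q},\widetilde{W})\to \mathrm{KHA}_T(\widetilde{Q},0)$ from Proposition~\ref{def}, shows it is injective once generation in dimensions $e_i$ is known, and then computes the shuffle formula on $\mathrm{KHA}_T(\widetilde{Q},0)$, where the graded pieces are plainly $K_0^T(\X(d))$ and the factor is just the normal-bundle Euler class of the full tripled representation space --- no Koszul differential to track. The renormalization $\Phi$ is then a clean polynomial rescaling matching $\tau_{i,i'}/\zeta_{i,i'}$, and the one potentially-dangerous ratio $\tau_{ii}/\zeta_{ii}=q^2$ is manifestly pole-free. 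Your proposed route --- computing the Hall product directly on the Koszul stacks $\mathcal{P}(d)$ after dimensional reduction --- would have to carry the Koszul resolution of $\mathcal{O}_{\mathcal{P}(d)}$ through the pushforward $p_{d,e*}$, which is exactly the bookkeeping you flag as the ``main obstacle''; the paper's choice of embedding into the zero-potential KHA sidesteps it entirely. Both routes are in principle workable, but the paper's is cleaner for precisely the reason you worried about.

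The genuine gap is the generation step. You invoke ``Corollary~\ref{wccor2} and its preprojective analogue'' to conclude $\mathrm{KHA}_T(\widetilde{Q},\widetilde{W})$ is generated by its pieces in dimensions $e_i$. Corollary~\ref{wccor2} concerns $\mathrm{KHA}(Q,0)$ for a Dynkin quiver $Q$ with zero potential; for the tripled quiver with its nonzero potential the slope-$\mu$ pieces $\mathrm{KHA}(\widetilde{Q},\widetilde{W},\mu)$ are not a priori the same simple symmetric algebras, since the semistable stacks $\X^{\mathrm{ss}}(\widetilde{Q},d)$ are not points. The paper supplies exactly this missing ingredient as Proposition~\ref{liecomp}: it uses the PBW theorem of Section~\ref{5} to reduce to the $\mathrm{KBPS}$ Lie algebra, applies \cite[Theorem~3.3]{hl2} to identify $\mathbb{M}(d)\cong D^b(T^*\X^{\mathrm{ss}}(d))$ (trivial except at roots of $A_n$), and then computes the Lie brackets between the root generators via an explicit $n=2$ dimensional-reduction calculation before bootstrapping to general $n$. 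Without some version of this argument, or an independent proof that your ``preprojective analogue'' of wall-crossing really does yield generation in degrees $e_i$, your proof does not close.
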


\begin{proof}

By Proposition \ref{liecomp}, the KBPS Lie algebra for $(\widetilde{Q},\widetilde{W})$ is $$\bigoplus_{d\in\mathbb{N}_I} K_0(\mathbb{M}(d))=\bigoplus_{d\in\mathbb{N}_I} \text{gr}\,K_0(\mathbb{M}(d))=\mathfrak{sl}^{>}_{n+1},$$ so $KHA_T(\widetilde{Q},\widetilde{W})$ is generated by the components of dimensions $e_i$, for $1\leq i\leq n$.
The map from Proposition \ref{def}: $$KHA_T(\widetilde{Q},\widetilde{W})\to KHA_T(\widetilde{Q},0)$$ is injective because it is injective in dimensions $e_i$, for $1\leq i\leq n$.
The formula for the multiplication for $KHA_T(\widetilde{Q},\widetilde{W})$ is given by the formula for the multiplication on $KHA_T(\widetilde{Q},0)$ by \ref{def}.

Next, we follows the same reasoning as for Proposition \ref{shuffle} to write the shuffle product for $\text{KHA}_T(\widetilde{Q},0)$. The weights appearing in the normal bundle $N_{d,e}$ are $1-z_{i,j}z_{i,j'}q^{-1}_1q^{-1}_2$ for the loops $\omega_i$, $1-z_{i,j}z_{i',j'}^{-1}q_1$ for the edges $e\in E$ with multiplicity $e(i,i')$, $1-z_{i,j}z_{i',j'}^{-1}q_2$ for the edges $\bar{e}$, for $e\in E$ with multiplicity $e(i',i)$. 
For $f$ of dimension $d$ and $g$ of dimension $e$, the shuffle product has the form:
$$(fg)(z_{i,j})=\text{Sym}\,\left(f(z_{i,j})g(z'_{i',j'})\text{ prod}\right),$$ where the factor $\text{prod}$ is: 
$$\text{prod}=\prod_{i\in I, j\leq d_i, j'>d_i} \frac{1-z_{i,j}z_{i,j'}^{-1}q^{-1}_1q^{-1}_2}{1-z_{i,j}z_{i,j'}^{-1}} \prod_{i,i'\in I}(1-z_{i,j}z_{i',j'}^{-1}q_1)^{e(i,i')}(1-z_{i,j}z_{i',j'}^{-1}q_2)^{e(i',i)}  .$$
The formula can be rewritten as:
$$(f\, g)(z_{i,j})=
\text{Sym}\left(f(z_{i,j})\,g(z_{i',j'})\prod_{i,i'\in I, j\leq k_i, j'>k_{i'}} \tau_{i,i'}\left(\frac{z_{i,r}}{z_{i',r'}}\right)\right),$$ where the formula for $\tau_{i,i'}$ is:
\[   
\tau_{i,i'}(z) = 
     \begin{cases}
       \frac{1-z q_1^{-1}q_2^{-1}}{1-z} \text{ for }i=i',\\
       1-zq_1 \text{ for }i-i'=1, \\
       1-zq_2 \text{ for }i-i'=-1,\\
       1 \text{ otherwise.} \\ 
     \end{cases}
\]

Define the map $\Phi: SH'_n\to K$ by sending $\Phi(q_1)=\Phi(q_2)=q^{-1}$, and by: $$\Phi(f)=f\prod_{i,i'=1}^{n+1} \prod_{j,j'\leq d_i} \frac{\tau_{i,i'}(z_{i,j}/z_{i',j'})}{\zeta_{i,i'}(z_{i,j}/z_{i',j'})}.$$ 

We first need to argue that the map is well defined. For this, we need to show that $\frac{\tau_{i,i'}}{\zeta_{i,i'}}$ has poles at most along $z$. We compute the ratio:
\[   
\frac{\tau_{i,j}(z)}{\zeta_{i,j}(z)} = 
     \begin{cases}
       q^2 \text{ for }i=j,\\
       q(1-z) \text{ for }i-j=1, \\
       q(1-z) \text{ for }i-j=-1,\\
       1 \text{ otherwise.} \\ 
     \end{cases}
\]

The map $\Phi$ is thus indeed well defined, and it is an algebra morphism.
Both $SH_n'$ and $KHA_T(\widetilde{Q},\widetilde{W})$ are generated in dimension $e_i$, with $1\leq i\leq n+1$, and in these dimensions they are isomorphic: 
$$SH'_{e_i}=\mathbb{Q}[q^{\pm 1}, t_{i,1}^{\pm 1}]\to K^T_0(D_{sg}(\X(e_i)_0))=K_0(B(\C^*\times\C^*)).$$

This means that $\Phi: SH_n'\to KHA_T(\widetilde{Q},\widetilde{W})$ is an algebra isomorphism, and the result follows from Proposition \ref{tsresult}.

\end{proof}

We have left with the proof of the following:

\begin{prop}\label{liecomp}
The KBPS Lie algebra for $(\widetilde{Q},\widetilde{W})$ is $\mathfrak{sl}^{>}_{n+1}$.
\end{prop}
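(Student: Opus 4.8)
The strategy is to identify the KBPS Lie algebra $\bigoplus_{d\in\mathbb{N}^I}K_0(\mathbb{M}(d))$ for the tripled quiver $(\widetilde{Q},\widetilde{W})$ of a Dynkin quiver $Q$ of type $A_n$ with the positive part $\mathfrak{sl}^{>}_{n+1}$ of $\mathfrak{sl}_{n+1}$, which as a vector space has one basis element for each positive root of $A_n$, i.e. for each pair $1\le i\le j\le n$. First I would pass to the associated graded: by Corollary \ref{KBPS} and Theorem \ref{PBWgr}, the KBPS Lie algebra $E^{\leq 1}=\bigoplus_d K_0(\mathbb{M}(d)_0)$ is (at the level of graded vector spaces) $\bigoplus_d \operatorname{gr}K_0(\mathbb{M}(d)_0)$, and by the final Corollary of Section \ref{8} applied in the zero-potential case together with dimensional reduction (Subsection \ref{qcuts}, Theorem \ref{dimred0}), computing $K_0(\mathbb{M}(d))$ for $(\widetilde{Q},\widetilde{W})$ reduces to a question about $G_0$ of the preprojective stack $\mathcal{P}(d)$ of $Q=A_n$. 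The key point is that for $Q$ of finite (Dynkin) type the preprojective algebra is finite dimensional and its representation theory is very well understood; in particular $\mathcal{P}(d)$ is nonempty with a stable point precisely when $d$ is a positive root of $A_n$, and then $\operatorname{gr}K_0(\mathbb{M}(d))$ is one-dimensional, while for all other $d$ it vanishes. This matches $\dim IH^{\cdot}(X(d))$ via the Chern character isomorphism $\operatorname{ch}\colon \operatorname{gr}K_0(\mathbb{N}(d))\xrightarrow{\sim} IH^{\cdot}(X(d))$ of the Corollary, and $X(d)$ for $d$ a non-root is a point or empty.

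Concretely the steps are: (1) invoke dimensional reduction to write $K_0(D_{sg}(\widetilde{\X}(d)_0))\cong G_0(\mathcal{P}(d))$ and reduce the whole PBW picture to the preprojective stack, so that $\mathbb{M}(d)$ corresponds to the "core" summand of $G_0(\mathcal{P}(d))$; (2) use that for $A_n$ the stack $\mathcal{P}(d)$ is the moduli of representations of the finite-dimensional preprojective algebra $\Pi(A_n)$, whose indecomposables are indexed by positive roots, to compute $\operatorname{gr}K_0(\mathbb{M}(d))$ dimension by dimension — it is $\mathbb{Q}$ if $d$ is a positive root and $0$ otherwise; alternatively, since $Q$ is symmetric with $W$ homogeneous, apply the zero-potential Corollary to get $\operatorname{gr}K_0(\mathbb{M}(d))\cong IH^{\cdot}(X(d))$ and then use Meinhardt–Reineke's formula (as quoted for the $m$-loop example) specialized to $A_n$, where $IH^{\cdot}(X(d))$ is one-dimensional on roots and trivial otherwise; (3) identify the Lie bracket: the KBPS bracket $[x,y]=xy-yx$ in $\operatorname{gr}\text{KHA}$ restricted to $E^{\leq 1}$, transported through $\operatorname{ch}$ and the comparison of Section \ref{8}, must agree with the $\mathfrak{sl}_{n+1}$-bracket on $\mathfrak{sl}^{>}_{n+1}$; here one uses that the BPS Lie algebra of the tripled quiver of $A_n$ is $\mathfrak{sl}^{>}_{n+1}$ (this is the expected Davison statement, provable directly in this case because $\mathcal{P}(d)$ has pure, Tate cohomology by \cite{d2}, making the BPS Lie algebra the cohomological Hall / Yangian positive part, which for finite type is the enveloping algebra of $\mathfrak{n}^{+}\subset\mathfrak{sl}_{n+1}$), and then transports it along $\operatorname{ch}$.

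For the bracket identification I would compute the structure constants on generators: the elements $e_i\in K_0(\mathbb{M}(e_i))$ for $1\le i\le n$ generate (as shown by the Lie-algebra generation used in the proof of the main theorem of Subsection \ref{Dynkin}), and one checks from the explicit correspondence/shuffle description that $[e_i,e_j]$ lands in $K_0(\mathbb{M}(e_i+e_j))$, which is one-dimensional exactly when $e_i+e_j$ is a root (i.e. $|i-j|=1$) and zero otherwise — reproducing the Serre-type relations. Iterating gives a surjection $\mathfrak{sl}^{>}_{n+1}\twoheadrightarrow \text{KBPS}$, and the dimension count from step (2) (one dimension per positive root on both sides) forces it to be an isomorphism of Lie algebras. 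The main obstacle I anticipate is step (3), the bracket computation: showing that the KBPS bracket is the expected non-abelian $\mathfrak{sl}_{n+1}$ bracket rather than, say, an abelian or twisted one. This amounts to verifying that the leading term of the commutator $e_ie_j-e_je_i$ in the filtration $F^{\cdot}$ is a nonzero element of $K_0(\mathbb{M}(e_i+e_j))$ when $|i-j|=1$; I would do this either via the explicit shuffle/Euler-class formula from Theorem \ref{mut} and Corollary \ref{cormut} (the factor $(-1)^{\chi(d,e)}q^{f}q^{-g}$ is not $\pm 1$ here, producing a genuine non-commutativity), or by transporting through $\operatorname{ch}$ to the known BPS Lie algebra computation for the $A_n$ preprojective CoHA, where purity and Tateness pin down the answer.
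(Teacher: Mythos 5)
Your plan has the same overall skeleton as the paper's proof: generators $e_i\in K_0(\MM(e_i))$ produce a map from $\mathfrak{sl}^{>}_{n+1}$, the non-vanishing of $[e_i,e_j]$ for adjacent indices is checked, and the isomorphism is forced by counting dimensions (one for each positive root of $A_n$). However, the crucial ingredient the paper uses is missing from your outline, and one of your fallback routes has a gap.

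\textbf{The dimension count.} The paper does \emph{not} reason through indecomposables of the preprojective algebra, Meinhardt--Reineke's formula, or the Chern character corollary. It chooses the GIT stability condition $\theta^1>\cdots>\theta^n$ and invokes \cite[Theorem 3.3]{hl2} to identify $\MM(d)\cong D^b(T^*\X^{\mathrm{ss}}(d))$ directly. Because $\X^{\mathrm{ss}}(d)$ for $A_n$ is a point when $d$ is a root and empty otherwise, this gives $\MM(d)\cong D^b(\mathrm{pt})$ or trivial in one stroke, and hence $K_0(\MM(d))$ has the right rank. Your alternative via the Chern character Corollary is problematic: that statement computes $\mathrm{gr}\,K_0(\mathbb{N}(d))\cong IH^{\cdot}(X(d))$ in the \emph{zero potential} case, where $\mathbb{N}(d)$ is a full subcategory of $D^b(\X(d))$. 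It does not directly yield $\mathrm{gr}\,K_0(\MM(d))$ for the tripled quiver with its nonzero potential $\widetilde{W}$; one would need an extra dimensional-reduction argument identifying the vanishing-cycle $IH$ of $\widetilde{\X}(d)$ with an $IH$ group the Meinhardt--Reineke formula covers, and then match $\MM$ with $\mathbb{N}$ across the reduction. Your other alternative (preprojective indecomposables indexed by roots) is also not quite a computation of $\mathrm{gr}\,K_0(\MM(d))$: the category $\MM(d)$ is not all of $D^b(\mathcal{P}(d))$ but a semi-orthogonal summand, and the indecomposables of $\Pi(A_n)$ are not themselves in bijection with positive roots.

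\textbf{The bracket.} The paper pins down the bracket by an explicit computation in the case $n=2$: after dimensional reduction, $\X(1,1)$ is the coordinate cross $\operatorname{Spec}k[x,y]/(xy)$ with its torus action, and the short exact sequence $0\to\OO_Y(-1)\to\OO_\X\to\OO_X\to 0$ gives the relation $e_{(1,1)}=e_{(1,0)}e_{(0,1)}-e_{(0,1)}e_{(1,0)}$ once one accounts for the twist $\psi$ and the cohomological filtration; then it inducts on $n$, using vanishing of extensions between far-apart simples to get $[e_1,e_n]=0$ and the surjection $\mathfrak{sl}^{>}_{n+1}\to\text{KBPS}$. Your route of transporting the bracket along $\operatorname{ch}$ to the cohomological $\text{BPS}$ Lie algebra presupposes knowing $\text{BPS}(\widetilde{A_n},\widetilde{W})=\mathfrak{sl}^{>}_{n+1}$, which the paper does not assume and which is logically a parallel result rather than an input. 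Your alternative of a direct shuffle/Euler-class computation is in the right spirit and, if spelled out, would essentially reproduce the paper's $n=2$ argument; but as stated it is a plan, not a proof, and the claim that the twist factor ``is not $\pm 1$'' is not by itself enough to produce a \emph{specific} nonzero bracket element in $K_0(\MM(e_i+e_j))$. You need the exact-sequence computation (or its shuffle-algebra avatar) to see that the commutator class equals the generator of the one-dimensional $K_0(\MM(e_i+e_j))$.

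In short: your surjection-plus-dimension-count endgame is exactly the paper's. The gap is the missing identification $\MM(d)\cong D^b(T^*\X^{\mathrm{ss}}(d))$, which the paper takes from Halpern--Leistner's D-equivalence result after choosing a generic stability condition, and the bracket needs the explicit $n=2$ computation rather than a transport along $\operatorname{ch}$.
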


\begin{proof}
First, we argue that $\mathbb{M}(d)=D^b(\text{pt})$ for $d$ a root of $A_{n}$, and that $\mathbb{M}(d)$ is trivial otherwise. Indeed, choose the stability condition $ \theta: \theta_1>\cdots>\theta_{n}$. The stack $T^*\X^{\text{ss}}(d)$ is either empty or a point, so by \cite[Theorem 3.3]{hl2} we have that:
$$\mathbb{M}(d)\cong D^b(T^*(\X^{\text{ss}}(d))).$$ 
The space $\X^{\text{ss}}(d)$ is empty for $d$ not a root of $A_{n}$, and it is a point otherwise.

Next, assume $n=2$. We use dimensional reduction to get that $\X(1,0)=\text{pt}$, $\X(0,1)=\text{pt}$, and $\X(0,1)=\text{Spec}\, k[x,y]/xy,$ where $x$ corresponds to the edge from $0$ to $1$, $y$ to the edge from $1$ to $0$, and the group $\C^*\times\C^*$ acts by multiplication $q_1\cdot x=q_1x$, $q_1\cdot y=q_1^{-1}y$, and $q_2\cdot x=q_2^{-1}x$, $q_2\cdot y=q_2y$. The diagonal copy of $\C^*$ acts trivially, and we let the quotient $\C^*$ act by $q$ on $x$ and $q^{-1}$ on $y$.
Let $X=\text{Spec}\,k[x]$ and $Y=\text{Spec}\,k[y]$.
Further, we have that: $$\left(q^i1_{(1,0)}\right)\left( q^{-i}1_{(0,1)}\right)=[\OO_X(i)]$$ and similarly:
$$\left( q^i1_{(0,1)}\right)\left(q^{-i}1_{(1,0)}\right)=[\OO_Y(-i)]$$ in $K_0(\X(1,1))$. 
By checking the weight conditions, we have that $\mathbb{M}(1,0)= \OO_{\text{pt}} $, $\mathbb{M}(0,1)= \OO_{\text{pt}},$, and $\mathbb{M}(1,1)=\langle \OO_{\X}\rangle \subset D^b(\X(1,1))$.
Using the exact sequence $$0\to \OO_Y(-1)\to \OO_{\X}\to \OO_X\to 0,$$ 
we get that: $$e_{(1,1)}= e_{(1,0)}e_{(0,1)}+(qe_{(0,1)})(q^{-1}e_{(1,0)}).$$ Taking into account the definition of the twisted multiplication and of the cohomological filtration on $KHA$, we get the relation 
$$e_{(1,1)}= e_{(1,0)}e_{(0,1)}-e_{(0,1)}e_{(1,0)}$$ in the KBPS Lie algebra. All other Lie brackets between the generators are zero by dimension reasons, so the Lie algebra is $\mathfrak{sl}_3^{>}$ in this case. 

For $n\geq 3$, we use induction to find the relations between the generators $e_i$ and $e_j$, for all pairs $1\leq i,j\leq n$ except $(1,n)$. For the pair $(1,n)$, there are no nontrivial extensions between $d_1$ and $d_n$, where $d_i$ is the dimension vector with $1$ at vertex $i$ and $0$ everywhere else. This means that:
$$[e_{1}, e_{n}]=0$$ in the KBPS Lie algebra. This means that we have a surjection $\mathfrak{sl}^{>}_{n+1}\to \text{KBPS}$. The two Lie algebras have the same dimension, so they are isomorphic. 
\end{proof}

\subsection{The three loop quiver.}\label{jordan}
For $Q$ the Jordan quiver, the tripled quiver construction is the pair 
$(\widetilde{Q}, \widetilde{W})$, where $\widetilde{Q}$ is the quiver with one vertex and three edges $x,y,$ and $z$, and the potential is
$\widetilde{W}=xyz-xzy$. Let $T=\C^*_{q_1}\times\C^*_{q_2}$ act by multiplication with $q_1$ on $x$, by $q_2$ on $y$, and by $q_1^{-1}q_2^{-1}$ on $z$.

%We first compute the KBPS invariants $K_0(\mathbb{M}(d))$:\begin{prop}Let $d\in \mathbb{N}$. Then $\dim K_0(\mathbb{M}(d))=1$.\end{prop}\begin{proof}Consider the framed moduli space $\X(1,d)^{\theta-ss}$. The results in section imply that $$K_0(D_{sg}(\X(1,d)^{\theta-ss},W))=\bigoplus_{d\in\mathbb{N}} S^{m_1}K_0(\mathbb{M}(d_1)\times\mathbb{P}^{d_1-1})\boxtimes\cdots\boxtimes S^{m_k}K_0(\mathbb{M}(d_k)\times\mathbb{P}^{d_k-1}),$$ where the sum is after all the partitions $m_1d_1+\cdots+m_kd_k=d$. Further, by dimensional reduction and the arguments explained in Section, we have that $$D_{sg}(\X(1,d)^{\theta-ss},W)\cong D^b(\text{Hilb}(\mathbb{C}^2,d)).$$ The chern character map $$K_0(\text{Hilb}(\mathbb{C}^2,d))\to H^{\text{even}}(\text{Hilb}(\mathbb{C}^2,d))$$ is an isomorphism because $\text{Hilb}(\mathbb{C}^2,d)$ has a pavement with cells $\mathbb{A}^n$ \cite{}. The formulas in \cite{} for the cohomology imply that the desired conclusion.\end{proof}

The Feigin-Odeskii shuffle algebra is defined as:
$$SH=\bigoplus_{d\geq 0} K^{\C^*\times\C^*}_0(BGL(d)),$$ with multiplication for $f$ of dimension $d$ and $g$ of dimension $e$ given by:
$$(f g)(z_1,\cdots, z_{d+e})=\text{Sym}\left(fg\prod_{1\leq j\leq d, d+1\leq i\leq d+e} \zeta\left(\frac{z_i}{z_j}\right)\right),$$ where $\zeta$ is defined by: $$\zeta(z)=\frac{(1-q_1z)(1-q_2z)}{(1-z)(1-q_1q_2z)}.$$
This algebra is also known under other names in the literature, for example as $U^{>}_q(\widehat{\widehat{\mathfrak{gl}_1}})$ or as the (positive part of the) elliptic Hall algebra.

\begin{prop}
Let $\text{KHA}^{sph}_T(\widetilde{Q},\widetilde{W})\subset \text{KHA}_T(\widetilde{Q},\widetilde{W})$ be the subalgebra generated by the dimension $1$ part. Then there exists a natural isomorphism
$$\Phi: SH\to \text{KHA}^{sph}_T(\widetilde{Q},\widetilde{W}).$$ 
%Further, the map obtained after tensoring with $\text{Frac}\,K_0(BT)$ is an isomorphism:$$\Phi: SH\otimes \text{Frac}\,K_0(BT)\cong KHA_T(\widetilde{Q}, \widetilde{W})\otimes \text{Frac}\,K_0(BT).$$
\end{prop}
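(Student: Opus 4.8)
The plan is to imitate the proof given in Subsection~\ref{Jordan} for the Jordan quiver and the proof in Subsection~\ref{Dynkin} of the type~$A$ statement, adapting both to the three-loop quiver $\widetilde{Q}$ with potential $\widetilde{W}=xyz-xzy$. First I would describe the map $\Phi$. The dimension $d$ piece of $\text{KHA}_T(\widetilde{Q},\widetilde{W})$ is $K^T_0(D_{sg}(\X(d)_0))$, where $\X(d)=\mathfrak{gl}(d)^{\oplus 3}/GL(d)$ and $\text{Tr}(\widetilde{W})$ is the regular function on it. By dimensional reduction (Theorem~\ref{dimred0}), with the cut $E'=\{z\}$, one has $K^T_0(D_{sg}(\X(d)_0))\cong G^T_0(\mathcal{P}(d))$, where $\mathcal{P}(d)$ is the stack of representations of the preprojective algebra of the Jordan quiver, i.e.\ the stack of pairs of commuting matrices. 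In particular the dimension~$1$ part is $K^T_0(B\mathbb{C}^*)=\mathbb{Z}[z_1^{\pm1}, q_1^{\pm1}, q_2^{\pm1}]$, which matches the dimension~$1$ part of $SH$. I would define $\Phi$ on generators by sending the dimension~$1$ part of $SH$ isomorphically onto the dimension~$1$ part of $\text{KHA}^{sph}_T(\widetilde{Q},\widetilde{W})$; since $\text{KHA}^{sph}_T$ is by definition generated by its dimension~$1$ piece, this determines $\Phi$ provided it is compatible with the shuffle relations.

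The key step is to identify the multiplication on $\text{KHA}_T(\widetilde{Q},\widetilde{W})$ restricted to the image of products of dimension~$1$ classes with a shuffle product. For this I would first use Proposition~\ref{def} (the map $i_*:\text{KHA}_T(\widetilde{Q},\widetilde{W})\to\text{KHA}_T(\widetilde{Q},0)$ is an algebra morphism) together with the observation that $i_*$ is injective in dimension~$1$ and hence on the subalgebra generated by dimension~$1$ classes. So it suffices to compute the shuffle product on $\text{KHA}_T(\widetilde{Q},0)$. That product is given by Proposition~\ref{shuffle} adapted to the $T$-equivariant setting: the normal bundle $N_{d,e}$ of $R(d,e)$ in $R(d+e)$ contributes factors $1-z_iz_j^{-1}q_1$ from the $x$-loops, $1-z_iz_j^{-1}q_2$ from the $y$-loops, and $1-z_iz_j^{-1}q_1^{-1}q_2^{-1}$ from the $z$-loops (for $1\le i\le d$, $d+1\le j\le d+e$), while $q_{d,e}$ contributes the denominator $1-z_iz_j^{-1}$. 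Dividing and symmetrizing gives exactly the kernel
$$\zeta(z)=\frac{(1-q_1z)(1-q_2z)(1-q_1^{-1}q_2^{-1}z)}{1-z},$$
and I would then multiply $\Phi$ by an explicit correction factor, as in the type~$A$ proof, to match the Feigin--Odeskii kernel $\frac{(1-q_1z)(1-q_2z)}{(1-z)(1-q_1q_2z)}$; concretely the correction is $\prod_{i,j}\frac{1-q_1^{-1}q_2^{-1}z_iz_j^{-1}}{1-q_1q_2z_iz_j^{-1}}$, up to a monomial, which I must check has poles only along the diagonal so that $\Phi$ is well defined as an algebra map. Once well-definedness and the algebra-morphism property are established, surjectivity is automatic (both sides are generated in dimension~$1$, and $\Phi$ is an isomorphism there), and injectivity follows because $SH$ is generated by dimension~$1$ with the shuffle relations and $\Phi$ respects exactly these relations.

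The main obstacle I anticipate is the bookkeeping of the equivariant weights and the verification that the ratio of shuffle kernels is regular, i.e.\ that the correction factor is a genuine element of the equivariant $K$-theory of $BGL(d)\times BT$ and not merely a rational function — this is the analogue of the computation of $\tau_{i,i'}/\zeta_{i,i'}$ in Subsection~\ref{Dynkin}, and it is the only place where the specific choice of potential $\widetilde{W}=xyz-xzy$ (three loops, weights $q_1,q_2,(q_1q_2)^{-1}$) enters in an essential way. A secondary subtlety is making sure the dimensional reduction isomorphism and the resulting identification of the dimension~$1$ piece are compatible with the multiplication; here I would invoke that dimensional reduction in $K$-theory is functorial (Subsection~\ref{functo}) and that the cut $E'=\{z\}$ is compatible with the correspondences $\X(d)\times\X(e)\leftarrow\X(d,e)\to\X(d+e)$, exactly as recorded in Subsection~\ref{qcuts}, so that the shuffle formula for $\text{KHA}_T(\widetilde{Q},0)$ transports correctly to the preprojective description. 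Everything else — surjectivity, injectivity, the algebra-morphism property on generators — is then formal.
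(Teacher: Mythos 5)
Your route is the same as the paper's: embed the spherical subalgebra into $\text{KHA}_T(\widetilde{Q},0)$ via the algebra map $i_*$ of Proposition~\ref{def} (injective in dimension~$1$, hence on the subalgebra it generates), compute the shuffle kernel $\tau$ of $\text{KHA}_T(\widetilde{Q},0)$, compare with the Feigin--Odeskii kernel $\zeta$, and correct by a pole-free cocycle, then conclude using that both sides are generated in dimension~$1$ (\cite[Theorem 2.5]{ne3}). Your identification of the three-loop kernel and your instinct that the well-definedness of $\Phi$ hinges on the correction factor having no poles are both exactly right.

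The gap is that you have miscalculated the correction factor. The ratio of the two kernels is
$$\frac{\tau(z)}{\zeta(z)}=\frac{(1-q_1^{-1}q_2^{-1}z)(1-q_1z)(1-q_2z)}{1-z}\cdot\frac{(1-z)(1-q_1q_2z)}{(1-q_1z)(1-q_2z)}=\left(1-q_1^{-1}q_2^{-1}z\right)\left(1-q_1q_2z\right),$$
a \emph{product} of two linear factors: the factor $(1-q_1q_2z)$ in the denominator of $\zeta$ moves to the numerator upon dividing, leaving no denominator at all. You instead wrote the \emph{ratio} $\tfrac{1-q_1^{-1}q_2^{-1}z}{1-q_1q_2z}$, which still carries a pole. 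This is not a notational slip that washes out in the symmetrization: if the product is taken over unordered pairs $i<j$ your $\Phi$ is not valued in $K_0$ but only in its fraction field, and if it is taken over all ordered pairs $i\neq j$, the numerator and denominator cancel to a monomial (since $1-q_1q_2z^{-1}=-q_1q_2z^{-1}(1-q_1^{-1}q_2^{-1}z)$), and a monomial cocycle cannot convert one kernel into a genuinely different one. The correct $\Phi$ multiplies by $\left(1-q_1^{-1}q_2^{-1}\tfrac{z_i}{z_j}\right)\left(1-q_1q_2\tfrac{z_i}{z_j}\right)$, after which the pole check you flagged as the main obstacle is vacuous, and the rest of your argument goes through verbatim as in the paper.
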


\begin{proof}
First, by Proposition \ref{def}, we have a natural map: $$i_*: KHA_T(\widetilde{Q},\widetilde{W})\to KHA_T(\widetilde{Q},0).$$ For $d=1$ the map is an isomorphism, so we have a natural inclusion:
$$i_*: KHA^{\text{sph}}_T(\widetilde{Q},\widetilde{W})\hookrightarrow KHA_T(\widetilde{Q},0).$$

The multiplication in $KHA_T(\widetilde{Q},0)$ has the shuffle formula, for $f$ of degree $d$ and $g$ of degree $e$:
$$(f\,g)(z_1,\cdots, z_{d+e})=\text{Sym}
\left(fg\prod_{1\leq j\leq d, d+1\leq i\leq d+e}\tau\left(\frac{z_1}{z_j}\right)
\right),$$ where the factor $\tau$ is defined by:
$$\tau(z)=\frac{(1-zq^{-1}_1q^{-1}_2)(1-zq_1)
(1-zq_2)}{1-z}.$$
The ratio between the factors involved in the shuffle multiplications is:
$$\frac{\tau(z)}{\zeta(z)}=\left(1-zq_1^{-1}q_2^{-1}\right)\left(1-zq_1q_2\right).$$
Consider the algebra morphism
$\Phi: SH\to KHA_T(\widetilde{Q},0)$ defined by:
$$\Phi(f)=f\prod_{i\leq j\leq i\leq n}\left(1-q_1^{-1}q_2^{-1}\frac{z_i}{z_j}\right)\left(1-q_1q_2\frac{z_i}{z_j}\right).$$
In dimension $d=1$ it is an isomorphism, and $SH$ is generated by the dimension $1$ elements \cite[Theorem 2.5]{ne3}, so it identified $SH$ with the subalgebra of $KHA_T(\widetilde{Q},0)$ generated by the dimension $1$ part. By the above discussion, this is the same as $\text{KHA}^{\text{sph}}_T(\widetilde{Q},\widetilde{W})$. 

\end{proof}

\textbf{Remark.} The Lie algebra $\text{KBPS}(\widetilde{Q},\widetilde{W})=\bigoplus_{d>0}k$ has graded dimension $1$ for every $d\geq 1$ and trivial Lie bracket. 

Indeed, we have an isomorphism between the KBPS and BPS Lie algebras for $(\widetilde{Q},\widetilde{W})$. To show this, it is enough to show that the Chern character $\text{ch}: G_0(\mathcal{P}(d))\to H^{BM}(\mathcal{P}(d))$ is surjective, or that $\text{ch}: \text{gr}\,K_0(\MM(d))\to H^{\cdot}(X(d),\varphi_{\text{Tr}\,W}\mathbb{Q})$ is surjective. 
This follows from the following diagram constructed from Proposition \ref{framedsod} and dimensional reduction:

\begin{tikzcd}
K_0(\MM(d))\arrow{d}{\text{ch}}\arrow[r, hookrightarrow]& K_0(\text{Hilb}(\mathbb{A}^2,d))\arrow{d}{\text{iso}}\\
H^{\cdot}(X(d),\varphi_{\text{Tr}\,W}\mathbb{Q})\arrow[r, hookrightarrow]& H^{BM}(\text{Hilb}(\mathbb{A}^2,d)).
\end{tikzcd}
\\

The Lie algebra $\text{BPS}(\widetilde{Q},\widetilde{W})=\bigoplus_{d>0}k$ has graded dimension $1$ for every $d\geq 1$ and trivial Lie bracket. This follows from \cite[Section 5.1]{d2} for the vector space isomorphism, and the Lie bracket is trivial because the Lie bracket respect the cohomological grading and all the generators are in cohomological degree $-2$. 
\\

\textbf{Remark.} We expect the $KHA_T(\widetilde{Q},\widetilde{W})$ for $Q$ an affine type $\widehat{A}_n$ quiver to be isomorphic to the quantum toroidal algebras discussed in \cite{ne1}, possibly after tensoring with $\text{Frac}\,K_0(BT)$ where $T$ is a two dimensional torus defined similarly with the one from the current section.

\section{Representations of KHA}\label{7}

Let $(Q^f,W^f)$ be a quiver with potential, and let $\theta^f$ be a stability condition for $Q^f$. 
Let $Q\subset Q^f$ be a subquiver, let $W=W^f|_{Q}$ be the restriction of the potential to $Q$, and $\theta=\theta^f|_Q$ the restriction of the stability condition. Let $T$ be a torus as in Section \ref{2}.
Fix a vector $f\in\mathbb{N}^{I^f-I}$; we will call $f$ a framing vector. We use the notation $\X^{ss}(f,d)$ for the moduli stack of $\theta$-semistable representations of $Q^f$.

We denote by $\X^{ss}(d)$ the stack of $\theta$-semistable sheaves on $Q$, which is the same as the stack of $\theta$-semistable sheaves on $\X^{ss}(0,d)$. Consider the maps:

\begin{tikzcd}
\X^{ss}(f,d,e) \arrow{r}{p_{d,e}} \arrow{d}{q_{d,e}}& \X^{ss}(f,d+e) \\
\X^{ss}(d)\times\X^{ss}(f,e). & 
\end{tikzcd}

\begin{prop}\label{rep}
Let $f$ be a framing vector. Then $KHA_T(Q,W, \theta)$ acts on $$\bigoplus_{d\in\mathbb{N}^I} K^T_0(D_{sg}(\X^{ss}(f,d)_0))$$
via the maps: 
$$p_{d,e*}q_{d,e}^*: D^T_{sg}(\X^{ss}(d)_0)\boxtimes D^T_{sg}(\X^{ss}(f,e)_0)\to 
D^T_{sg}(\X^{ss}(f,d+e)_0).$$
\end{prop}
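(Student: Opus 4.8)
The plan is to prove associativity of the action, i.e.\ that for dimension vectors $d_1,d_2\in\mathbb{N}^I$ and a framing dimension $e\in\mathbb{N}^{I^f-I}$ the two ways of acting by $m_{d_1,d_2}$ then the action map, versus acting twice, agree. As in the proof of Theorem \ref{thm1}, I would establish the stronger categorical statement: the relevant square of functors on categories of singularities commutes, and then pass to $K_0$ and take $T$-invariants. First I would reduce to the zero potential case by restricting along the closed immersions $i$ of the zero fibers and using Propositions \ref{com1} and \ref{com2}; the compatibility of the action functors with the Thom--Sebastiani equivalences is then handled exactly as in the last part of the proof of Theorem \ref{thm1}, via the commuting squares involving $i_*$ and proper base change.

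The core of the argument, as before, is a commutative diagram of stacks. Introduce the stack $\X^{ss}(f,d_1,d_2,e)$ of filtrations $0\subset A_1\subset A_2\subset B$ where $A_1$ has dimension $d_1$, $A_2/A_1$ has dimension $d_2$, and $B/A_2$ is a $\theta^f$-semistable $Q^f$-representation of framing dimension $e$ (and $A_1,A_2$ are $Q$-subrepresentations, hence automatically of the correct stability type). One gets maps to and from this stack, and the key squares — the one expressing $\X^{ss}(f,d_1,d_2,e)\to \X^{ss}(d_1)\times\X^{ss}(f,d_2,e)$ over $\X^{ss}(d_1)\times\X^{ss}(f,d_2+e)$, and the one expressing $\X^{ss}(f,d_1,d_2,e)\to \X^{ss}(d_1,d_2)\times\X^{ss}(f,e)$ over $\X^{ss}(d_1+d_2)\times\X^{ss}(f,e)$ — are cartesian, so proper base change applies. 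The remaining square commutes on the nose because both compositions send a filtration $(A_1\subset A_2\subset B)$ to $(A_2, B/A_2)$. This is the direct analogue of the diagram in the proof of Theorem \ref{thm1}, with the last slot occupied by a framed rather than unframed stack. The $T$-equivariant version is obtained by the same bookkeeping with $BT$, $\mu\colon BT\times BT\to BT$ and $\delta\colon BT\to BT\times BT$ that appears there, since $T$ preserves $W^f$ and hence $W$ and acts compatibly on all the stacks in sight.

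One also needs to check that the functor $p_{d,e*}q_{d,e}^*$ on categories of singularities is well defined here, i.e.\ that $q_{d,e}$ is flat (it is an affine bundle, being a quotient of an attracting-locus map as in Subsection \ref{window}) and $p_{d,e}$ is proper (it is a Grassmannian-type bundle), so that the functoriality statements of Subsection \ref{functo} apply; the compatibility of potentials $p_{d,e}^*\mathrm{Tr}(W^f_{d+e})=q_{d,e}^*(\mathrm{Tr}(W_d)+\mathrm{Tr}(W^f_e))$ holds because $W=W^f|_Q$ and the potential only pairs the $Q$-arrows among themselves. Finally, passing to $K_0^T$ and, where needed, to $W_d\times W_e$-invariants exactly as in Proposition \ref{abel}, one concludes that $\bigoplus_d K_0^T(D_{sg}(\X^{ss}(f,d)_0))$ is a module over $KHA_T(Q,W,\theta)$. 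The main obstacle I anticipate is purely organizational rather than conceptual: setting up $\X^{ss}(f,d_1,d_2,e)$ and verifying the cartesian-ness of the two squares carefully, since the stability condition $\theta^f$ on the framed piece interacts with the subrepresentation structure and one must check that semistability of $B/A_2$ together with semistability of $A_1,A_2$ really does cut out the fiber product (this is where one uses that any $Q$-subrepresentation of a $\theta$-semistable representation appearing in a $\theta^f$-stable framed representation has slope $<\mu$, as recorded in Subsection \ref{framed}).
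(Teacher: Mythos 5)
Your proof is correct, but it takes a genuinely longer route than the paper does. You reprove the associativity of the action by redoing the diagram chase from Theorem~\ref{thm1} in the framed setting: you introduce the double-filtration stack $\X^{ss}(f,d_1,d_2,e)$, verify cartesianness of the relevant squares, apply proper base change, reduce to zero potential via Propositions~\ref{com1} and~\ref{com2}, and handle the Thom--Sebastiani equivalences. All of this works. But the paper's proof is a two-sentence reduction: since $\X^{ss}(d)$ with the stability condition $\theta$ on $Q$ is the same as $\X^{ss}(0,d)$ for $(Q^f,\theta^f)$ (at framing dimension $0$ there is no data at the framing vertex, so the two semistability notions agree), the action maps $p_{d,e*}q_{d,e}^*$ are \emph{literally} the multiplication maps of $\text{KHA}_T(Q^f,W^f,\theta^f)$ restricted to the dimension vectors $(0,d)$ and $(f,e)$, and the algebra multiplication on $\text{KHA}_T(Q,W,\theta)$ is the same restricted to $(0,d)$ and $(0,e)$. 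The module associativity square is therefore the associativity square of Theorem~\ref{thm1} for $(Q^f,W^f,\theta^f)$ at the triple $(0,d),(0,e),(f,c)$, and one simply cites that theorem. The ``organizational obstacle'' you anticipate --- constructing and verifying $\X^{ss}(f,d_1,d_2,e)$ --- is entirely avoided by this observation. Your version buys a self-contained verification (useful if one doubts the reduction or wants to track potentials/stability explicitly), while the paper's buys brevity and avoids repeating the diagram chase.
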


\begin{proof}
Let $c,d,e\in\mathbb{N}^I$ be dimension vectors. We need to check that the following diagram commutes:

\adjustbox{scale=0.95, right}{
\begin{tikzcd}
D^T_{sg}(\X^{ss}(d)_0)\boxtimes D^T_{sg}(\X^{ss}(e)_0)\boxtimes D^T_{sg}(\X^{ss}(f,c)_0) \arrow{r}{} \arrow{d} & D^T_{sg}(\X^{ss}(d)_0)\boxtimes D^T_{sg}(\X^{ss}(f,c)_0) \arrow{d}\\%
D^T_{sg}(\X^{ss}(d+e)_0)\boxtimes D^T_{sg}(\X^{ss}(f,c)_0) \arrow{r}{} & D^T_{sg}(\X^{ss}(f,c+d+e)_0).
\end{tikzcd}
}
\\
The diagram commutes from the associativity for $(Q^f,W^f)$ and the dimension vectors $(0,d), (0,e),$ and $(f,c)$.
\end{proof}

\textbf{Remark.} Using the algebra morphism $\text{KHA}(Q,W)\to \text{KHA}(Q,W,\theta)$, Proposition \ref{rep} constructs representations of $\text{KHA}(Q,W)$. 
\\

\textbf{Remark.} Consider the pair $(Q_3,W)$, with $Q_3$ the quiver with one vertex and three loops $x,y,z$ and potential $W=xyz-xzy$. Let $Q_3^f$ be the framed quiver where we add a new vertex and one edge to the vertex of $Q_3$ and potential $W$. Consider the stability condition $\theta$ from Subsection \ref{framed}. The critical locus of $\text{Tr}(W)$ on $\X^{ss}(1,d)$ is $\text{Hilb}(\C^3,d)$, and we use the notation $K_{\text{crit}}(\text{Hilb}(\C^3, n)):=K_0(D_{sg}(\X^{ss}(1,d)_0))$.
The above proposition constructs an action of $\text{KHA}_T(Q_3,W)$ on: $$\bigoplus_{d\geq 0} K_0(D_{sg}(\X^{ss}(1,d)_0))=\bigoplus_{d\geq 0}K_{\text{crit}}(\text{Hilb}(\C^3,d)).$$

We next apply this result to the geometry of Nakajima quiver varieties, following the proof as given in the cohomological case \cite[Section 6.3.]{d2}. First, we need to introduce new quivers. Given a quiver $Q=(I,E)$, and $f\in\mathbb{N}^I$, we denote by $Q^f$ the framed quiver with vertices $I\cup\{\infty\}$ and $f_i$ edges from $\infty$ to $i\in I$. Recall from Subsection \ref{tripledef} the construction of the double quiver $\overline{Q}$ and tripled quiver $(\widetilde{Q}, \widetilde{W})$. 
Let $\theta$ be a stability condition for $Q$, extended as in Subsection \ref{framed} to $Q^f$. Consider the map that forgets the action of $\omega_i$:
$$\pi: \X(\widetilde{Q^f},1,d)\to \X(\overline{Q^f},1,d)$$ the map that forgets the action of $\omega_i$.
The following result in proved in \cite[Lemma 6.5.]{d2}:
\begin{prop}\label{davi}[Davison]
The inclusion $\pi^{-1}\X^{\text{ss}}(\overline{Q^f},1,d)\hookrightarrow \X^{\text{ss}}(\widetilde{Q^f},1,d)$ induces an equality:
$$\pi^{-1}\X^{\text{ss}}(\overline{Q^f},1,d)\cap \text{crit}\,(\text{Tr}\,\widetilde{W^f})=\X^{\text{ss}}(\widetilde{Q^f},1,d)\cap \text{crit}\,(\text{Tr}\,\widetilde{W^f}).$$
\end{prop}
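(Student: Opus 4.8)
The statement to prove is Proposition \ref{davi}, which asserts an equality of critical loci: restricting to the preimage under $\pi$ of the $\theta$-semistable locus for the doubled framed quiver $\overline{Q^f}$ does not change the intersection with $\text{crit}(\text{Tr}\,\widetilde{W^f})$ inside $\X^{\text{ss}}(\widetilde{Q^f},1,d)$. Since $\pi^{-1}\X^{\text{ss}}(\overline{Q^f},1,d)\subseteq \X^{\text{ss}}(\widetilde{Q^f},1,d)$ is automatic (forgetting the $\omega_i$ only drops conditions, so a semistable representation of $\widetilde{Q^f}$ restricts to one of $\overline{Q^f}$ — actually one needs to check semistability is preserved, but this is because stability for $\theta^f$ with a framing is equivalent to the underlying $Q$-representation being $\theta$-semistable plus a subrepresentation condition that only involves the $\overline{Q}$-arrows and the framing arrows, not the loops $\omega_i$), the containment $\subseteq$ in the displayed equality is clear. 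So the content is the reverse containment: a point of $\X^{\text{ss}}(\widetilde{Q^f},1,d)$ lying in $\text{crit}(\text{Tr}\,\widetilde{W^f})$ automatically lies in $\pi^{-1}\X^{\text{ss}}(\overline{Q^f},1,d)$.

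The plan is to use the explicit form of the Jacobi relations for $\widetilde{W^f}$. Recall $\widetilde{W} = \sum_{e\in E}\omega_{s(e)}\bar e e - \omega_{t(e)}e\bar e$ (and similarly for $\widetilde{W^f}$, where the framing arrows $a_i:\infty\to i$ get doubled to $a_i^*$ and enter the potential; the vertex $\infty$ carries no loop, or its loop acts on a $1$-dimensional space). A representation $(x_a, x_{\bar a}, x_{\omega_i})$ lies in $\text{crit}(\text{Tr}\,\widetilde{W^f})$ iff it satisfies $\partial\widetilde{W^f}/\partial e = 0$ for all arrows $e$. First I would isolate the derivatives with respect to the loop variables $\omega_i$: the equation $\partial\widetilde{W^f}/\partial\omega_i = 0$ reads $\sum_{t(a)=i}\bar a a - \sum_{s(a)=i} a\bar a = \mu_i = 0$, i.e. the moment map equation, which involves only the $\overline{Q^f}$-arrows. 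Then I would observe that the derivatives $\partial\widetilde{W^f}/\partial a$ and $\partial\widetilde{W^f}/\partial\bar a$ for $a$ an arrow of $Q^f$ each involve exactly one factor of some $\omega$; so on the critical locus these give relations of the form $\omega_{s(a)} a = a\,\omega_{t(a)}$ (commutation of $\omega$ with the $\overline{Q}$-action, suitably interpreted via the framing). The key point: since the moment map equation $\mu_i = 0$ already holds, the underlying $\overline{Q^f}$-representation $(x_a,x_{\bar a})$ is a representation of the preprojective algebra, and its $\theta^f$-semistability is governed entirely by the $\overline{Q^f}$-data.

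The main step is then to show that if $(x_a,x_{\bar a},x_{\omega_i})$ is $\theta^f$-semistable as a $\widetilde{Q^f}$-representation, the underlying $(x_a,x_{\bar a})$ is $\theta^f$-semistable as an $\overline{Q^f}$-representation — equivalently, any $\overline{Q^f}$-subrepresentation that destabilizes would give a $\widetilde{Q^f}$-subrepresentation that destabilizes, once we know it is $\omega$-invariant. This is where the critical locus hypothesis enters crucially: the relations $\omega_{s(a)} x_a = x_a\,\omega_{t(a)}$ coming from the non-loop Jacobi equations force the destabilizing $\overline{Q^f}$-subrepresentation to be preserved by all the $x_{\omega_i}$, hence to be a $\widetilde{Q^f}$-subrepresentation. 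Concretely, I would take a subrepresentation $U\subseteq V$ for the $\overline{Q^f}$-action with $\mu(U)$ violating the slope condition, and show $\sum x_{\omega_i}(U^i)\subseteq U$ using that on $\text{crit}$ the $\omega_i$ commute appropriately with the arrows generating $U$. I expect the main obstacle to be bookkeeping: correctly writing down the Jacobi relations for $\widetilde{W^f}$ at the framing vertex $\infty$ (where one arrow is a framing arrow and the conventions for doubling and for the potential term involving $\infty$ need care), and verifying that these relations indeed give $\omega$-equivariance of arrows in the precise sense needed to propagate through a subrepresentation. Since this is exactly the argument of \cite[Lemma 6.5]{d2} transported verbatim — the statement and proof are insensitive to whether one works with vanishing cycle cohomology or categories of singularities, as it is a statement about the underlying schemes — I would cite that proof and only indicate the translation, noting that the critical locus and the semistable locus are both defined at the level of the underlying classical stacks so Davison's argument applies unchanged.
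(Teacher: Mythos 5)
The paper does not give its own proof; it states the result as a citation of \cite[Lemma 6.5]{d2}, which is also where your proposal terminates, so the two approaches coincide. Your gloss of Davison's argument (the moment map equation from $\partial\widetilde{W^f}/\partial\omega_i$, $\omega$-commutation with the $\overline{Q^f}$-arrows from the remaining cyclic derivatives, and $\omega$-invariance of the canonical destabilizing subrepresentation forced by those commutation relations on the critical locus) is consistent with his proof, and you are right that the statement concerns only the underlying classical stacks, so it transports unchanged to the K-theoretic setting. One small slip: in justifying the easy inclusion $\pi^{-1}\X^{\text{ss}}(\overline{Q^f},1,d)\subseteq \X^{\text{ss}}(\widetilde{Q^f},1,d)$, your opening sentence asserts the reverse direction (``a semistable representation of $\widetilde{Q^f}$ restricts to one of $\overline{Q^f}$'' is the hard containment, the one the proposition is actually about); the correct, self-contained justification of the easy direction is the a fortiori argument on subobjects --- every $\widetilde{Q^f}$-subrepresentation is in particular an $\overline{Q^f}$-subrepresentation, so $\overline{Q^f}$-semistability of the image under $\pi$ immediately forces $\widetilde{Q^f}$-semistability --- and the parenthetical you appended does not quite supply this.
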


In particular, this means that
$$D_{sg}\left(\X^{\text{ss}}(\widetilde{Q^f},1,d)_0\right)= D_{sg}\left(\left(\pi^{-1}\X^{\text{ss}}(\overline{Q^f},1,d)\right)_0\right).$$
We use dimensional reduction, see Subsection \ref{dimred0}, \cite{i}, for the space
$$\pi^{-1}\X^{\text{ss}}(\overline{Q^f},1,d)=\X^{\text{ss}}(\overline{Q^f},1,d)\times\prod_{i\in I^f} \mathfrak{gl}(d_i)$$ and potential $\widetilde{W^f}$ with respect to the coordinates on $\prod_{i\in I^f} \mathfrak{gl}(d_i)$ to get that
$$K_0\left( D_{sg}\left(\left(\pi^{-1}\X^{\text{ss}}(\overline{Q^f},1,d)\right)_0\right)\right)=G_0(\mathcal{K}^{\text{ss}}(\overline{Q^f},1,d)).$$
Here $\mathcal{K}^{\text{ss}}(\overline{Q^f},1,d)$ is the (Koszul) stack obtained from dimensional reduction, where the relations are the moment map equations at the vertices $i\in I^f$. The moment map equations for $i\in I$ are the relations for the Nakajima quiver varieties $N^{\text{ss}}(f,d)$, and the moment map equation for $\infty$ is superfluous \cite[page 33]{d2}. This means that, for $\theta$ generic, 
$G_0(\mathcal{K}^{\text{ss}}(\overline{Q^f},1,d))=K_0(N^{\text{ss}}(f,d))$.

Let $\mathcal{P}(d)$ be the stack of representatiuons of the preprojective algebra  $\mathcal{P}=\mu^{-1}(0)/G(d)\subset \X(\overline{Q})$. 
Using dimensional reduction for $(\widetilde{Q},\widetilde{W})$, we have that:
$$\text{KHA}(\widetilde{Q},\widetilde{W})=\bigoplus_{d\in\mathbb{N}^I} G_0(\mathcal{P}(d)).$$ 
Proposition \ref{rep} implies the following:

\begin{thm}
There exists a natural action of $$KHA(\widetilde{Q},\widetilde{W})=\bigoplus_{d\in\mathbb{N}^I} G_0(\mathcal{P}(d)),$$ see Subsection \ref{qcuts}, on the $K_0$ of Nakajima quiver varieties: $$\bigoplus_{d\in\mathbb{N}^I}K_0(N^{ss}(f,d)),$$ for any framed vector $f\in\mathbb{N}^I$. 
\end{thm}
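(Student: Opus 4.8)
The plan is to obtain the action as a direct corollary of the representation-theoretic machinery already developed in this section, essentially chaining together Proposition \ref{rep}, dimensional reduction, and Davison's Proposition \ref{davi}. First I would recall that for the tripled quiver $(\widetilde{Q},\widetilde{W})$ and a framing vector $f\in\mathbb{N}^I$, the relevant framed quiver is $\widetilde{Q^f}$ with the potential $\widetilde{W^f}$, and we have a stability condition $\theta^f$ obtained from a generic $\theta$ on $Q$ as in Subsection \ref{framed}. Apply Proposition \ref{rep} with $(Q,W)=(\widetilde{Q},\widetilde{W})$, $Q^f=\widetilde{Q^f}$, and framing vector $f$: this immediately produces an action of $\mathrm{KHA}_T(\widetilde{Q},\widetilde{W},\theta)$, hence of $\mathrm{KHA}(\widetilde{Q},\widetilde{W})$ via the algebra morphism from Theorem \ref{wallcrossing} (the map to the fixed-slope algebra), on $\bigoplus_{d\in\mathbb{N}^I} K^T_0(D_{sg}(\X^{\text{ss}}(\widetilde{Q^f},1,d)_0))$.

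The next step is to identify the target vector space with $\bigoplus_d K_0(N^{\text{ss}}(f,d))$. For this I would run exactly the computation indicated in the excerpt: by Proposition \ref{davi}, the critical loci of $\mathrm{Tr}(\widetilde{W^f})$ on $\X^{\text{ss}}(\widetilde{Q^f},1,d)$ and on $\pi^{-1}\X^{\text{ss}}(\overline{Q^f},1,d)$ agree, so the categories of singularities agree; then apply dimensional reduction (Theorem \ref{dimred0}, or Isik's theorem) with respect to the $\prod_{i\in I^f}\mathfrak{gl}(d_i)$ coordinates on $\pi^{-1}\X^{\text{ss}}(\overline{Q^f},1,d)=\X^{\text{ss}}(\overline{Q^f},1,d)\times\prod_{i\in I^f}\mathfrak{gl}(d_i)$ to get $K_0(D_{sg}(\cdots))\cong G_0(\mathcal{K}^{\text{ss}}(\overline{Q^f},1,d))$. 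Finally, invoke Davison's observation that for generic $\theta$ the moment map equation at $\infty$ is superfluous and the remaining equations cut out the Nakajima quiver variety, so $G_0(\mathcal{K}^{\text{ss}}(\overline{Q^f},1,d))=K_0(N^{\text{ss}}(f,d))$. Combining with the identification $\mathrm{KHA}(\widetilde{Q},\widetilde{W})=\bigoplus_d G_0(\mathcal{P}(d))$ from Subsection \ref{qcuts} gives the statement. The $T$-equivariant version follows by carrying the torus $T$ through each of these steps, all of which have $T$-equivariant analogues (Proposition \ref{rep} is stated equivariantly, and both dimensional reduction and Proposition \ref{davi} are $T$-equivariant).

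The main obstacle I anticipate is purely bookkeeping rather than conceptual: one must check that the correspondences $p_{d,e},q_{d,e}$ used to define the $\mathrm{KHA}$-action on framed representations are compatible with the two identifications being made — namely the restriction $\pi^{-1}\X^{\text{ss}}(\overline{Q^f},1,d)\hookrightarrow\X^{\text{ss}}(\widetilde{Q^f},1,d)$ and the dimensional reduction isomorphism. Concretely, I would need to verify that the stack of short exact sequences $\X^{\text{ss}}(\widetilde{Q^f},1,d,e)$ restricts correctly under $\pi^{-1}$ and that dimensional reduction commutes with the pushforward–pullback $p_{d,e*}q_{d,e}^*$; this amounts to noting that the $\omega_i$-coordinates (equivalently the $\mathfrak{gl}(d_i)$-factors) and the forgetful map $\pi$ are all compatible with the block-upper-triangular structure defining $(d,e)$-filtered representations, and then appealing to the functoriality statements in Subsection \ref{functo} together with proper base change as in the proof of Theorem \ref{thm1}. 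Since the action on the $\mathrm{CoHA}$ side was established in \cite[Section 6.3]{d2} by exactly this route, the K-theoretic argument should go through with the cohomological dimensional reduction replaced by Theorem \ref{dimred0} throughout.
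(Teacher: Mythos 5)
Your proposal reproduces the paper's argument essentially verbatim: apply Proposition~\ref{rep} with $(Q^f,W^f)=(\widetilde{Q^f},\widetilde{W^f})$ and subquiver $(\widetilde{Q},\widetilde{W})$ to get the action on $\bigoplus_d K_0(D_{sg}(\X^{\text{ss}}(\widetilde{Q^f},1,d)_0))$, then rewrite the target using Proposition~\ref{davi}, dimensional reduction (Theorem~\ref{dimred0}), and the superfluousness of the moment map at $\infty$, arriving at $\bigoplus_d K_0(N^{\text{ss}}(f,d))$, with the source rewritten as $\bigoplus_d G_0(\mathcal{P}(d))$ via Subsection~\ref{qcuts}. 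The only imprecision is attributing the passage $\text{KHA}(\widetilde{Q},\widetilde{W})\to\text{KHA}(\widetilde{Q},\widetilde{W},\theta)$ to Theorem~\ref{wallcrossing}: that theorem is a vector-space isomorphism onto a tensor product over slopes, not an algebra map to a fixed-slope piece, and the paper instead invokes (without further justification) the algebra morphism stated in the Remark following Proposition~\ref{rep}.
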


%\begin{proof}Using dimensional reduction \cite{i} or Theorem \ref{dimred0}, and Proposition \ref{davi}, we have that:$$D_{sg}\left(\left(\bar{\X}(f,d)^{\theta-ss}\times\prod \mathfrak{gl}(d_i)\right)_0\right)\cong \text{Coh}(\mathcal{N}^{ss}(f,d)).$$  Proposition \ref{rep} implies the conclusion.\end{proof}

\textbf{Example.} Let $Q$ be the Jordan quiver. Then $(\widetilde{Q},\widetilde{W})=(Q_3,W)$, and we obtain an action of $\text{KHA}_T(Q_3,W)$ on
$\bigoplus_{d\geq 0} K^T_0(\text{Hilb}\,(\C^2,d)).$

\end{document}